\renewcommand{\epsilon}{\varepsilon}
\newcommand{\N}{\mathbb{N}}
\newcommand{\Z}{\mathbb{Z}}
\newcommand{\R}{\mathbb{R}}
\newcommand{\C}{\mathbb{C}}
\renewcommand{\Re}{\operatorname{Re}}
\newcounter{mtheorem}
\newtheorem{mtheorem}[mtheorem]{Theorem}
\newcommand{{\vol}}{\rm vol}
\newcommand{\p}{\partial}
\newcommand{\norm}[1]{\Vert #1 \Vert}
\newcommand{\Ric}{\operatorname{Ric}}
\newcommand{\Rm}{\operatorname{Rm}}
\providecommand{\norm}[1]{\lVert#1\rVert}
\def\tr{\operatorname{tr}}
\def\inj{\operatorname{inj}}
\def\Id{\operatorname{Id}}
\def \Cstarn{(\mathbb{C}^*)^n}
\def \t {\mathfrak{t}}
\def \bp {\bar{\partial}}
\def \Pol {P_{-K_M}}
\def\tr{\operatorname{tr}}
\def\Ric{\operatorname{Ric}}
\def\tr{\operatorname{tr}}
\def\inj{\operatorname{inj}}
\def\Id{\operatorname{Id}}
\def\vol{\operatorname{vol}}
\newtheoremstyle{fancy}{}{}{\itshape}{}{\textbf\bgroup}{.\egroup}{ }{}
\newtheoremstyle{fancy2}{}{}{\rm}{}{\textbf\bgroup}{.\egroup}{ }{}
\theoremstyle{fancy}
\newtheorem{theorem}{Theorem}[section]
\newtheorem{lemma}[theorem]{Lemma}
\newtheorem{corollary}[theorem]{Corollary}
\newtheorem{prop}[theorem]{Proposition}
\theoremstyle{fancy2}
\newtheorem{definition}[theorem]{Definition}
\newtheorem{remark}[theorem]{Remark}
\newtheorem{claim}[theorem]{Claim}
\setlist{leftmargin=*}
\numberwithin{equation}{section}
\begin{document}
\title{An Aubin continuity path for asymptotically conical toric shrinking gradient K\"ahler-Ricci solitons: openness and a solution for $t=0$}
\date{\today}

\author{Ivin Babu}
\address{Department of Mathematical Sciences, The University of Texas at Dallas, Richardson, TX 75080}
\email{ivin.babu@utdallas.edu}

\author{Ronan J.~Conlon}
\address{Department of Mathematical Sciences, The University of Texas at Dallas, Richardson, TX 75080}
\email{ronan.conlon@utdallas.edu}

\author{Alix Deruelle}
\address{Universit\'e Paris-Saclay, CNRS, Laboratoire de math\'ematiques d'Orsay, 91405, Orsay, France}
\email{alix.deruelle@universite-paris-saclay.fr}

\date{\today}

\begin{abstract}
We show that any toric asymptotically conical
shrinking gradient K\"ahler-Ricci soliton on an anti-canonically polarised resolution of a K\"ahler cone
satisfies a complex Monge-Amp\`ere equation. We then set up an Aubin continuity path to
solve the resulting equation and show that it has a solution at the initial value of the path parameter
in the toric case. This we do by implementing another continuity method.
Finally, we prove openness of the initial value of the path
parameter independent of the toricity.
\end{abstract}

\maketitle

\markboth{Ivin Babu, Ronan J.~Conlon and Alix Deruelle}{An Aubin continuity path for AC toric shrinking gradient K\"ahler-Ricci solitons}

\tableofcontents

\section{Introduction}

\subsection{Overview}\label{overview}
A \emph{Ricci soliton} is a triple $(M,\,g,\,X)$, where $M$ is a Riemannian manifold endowed with a complete Riemannian metric $g$
and a complete vector field $X$, such that
\begin{equation}\label{hot}
\Ric_{g}+\frac{1}{2}\mathcal{L}_{X}g=\lambda g
\end{equation}
for some $\lambda\in\mathbb{R}$. The vector field $X$ is called the
\emph{soliton vector field}. If $X=\nabla^{g} f$ for some smooth real-valued function $f$ on $M$,
then we say that $(M,\,g,\,X)$ is \emph{gradient}. In this case, the soliton equation \eqref{hot}
becomes $$\Ric_{g}+\operatorname{Hess}_{g}(f)=\lambda g,$$
and we call $f$ the \emph{soliton potential}. In the case of gradient Ricci solitons, the completeness of $X$ is guaranteed by the completeness of $g$
\cite{zhang12}.

Let $(M,\,g,\,X)$ be a Ricci soliton. If $g$ is K\"ahler and $X$ is real holomorphic, then we say that $(M,\,g,\,X)$ is a \emph{K\"ahler-Ricci soliton}. Let $\omega$ denote the K\"ahler
form of $g$. If $(M,\,g,\,X)$ is in addition gradient, then \eqref{hot} may be rewritten as
\begin{equation}\label{krseqn}
\rho_{\omega}+i\partial\bar{\partial}f=\lambda\omega,
\end{equation}
where $\rho_{\omega}$ is the Ricci form of $\omega$ and $f$ is the soliton potential.

Finally, a Ricci soliton and a K\"ahler-Ricci soliton are called \emph{steady} if $\lambda=0$, \emph{expanding}
if $\lambda<0$, and \emph{shrinking} if $\lambda>0$ in \eqref{hot}.
One can always normalise $\lambda$, when non-zero, to satisfy $|\lambda|=1$. We henceforth assume that this is the case.

Ricci solitons are interesting both from the point of view of canonical metrics and of the Ricci flow. On one hand, they represent one direction in which the notion of an Einstein manifold can be generalised. On compact manifolds, shrinking Ricci solitons are known to exist in several instances where there are obstructions to the existence of Einstein metrics; see for example \cite{soliton}. By the maximum principle, there are no nontrivial expanding or steady Ricci solitons on compact manifolds. However, there are many examples on noncompact manifolds; see for example \cite{Cao-KR-sol, FIK, Wang, con-der, schafer2, schaffer, heather, conlon33} among others. On the other hand, one can associate to a Ricci soliton a self-similar solution of the Ricci flow, and gradient shrinking Ricci solitons in particular provide models for finite-time Type I singularities of the flow \cite{topping, naber}. From this perspective, it is an important problem to classify and construct examples of such solitons in order to better understand singularity formation in the Ricci flow.

In this article, we are concerned with the construction of toric shrinking gradient K\"ahler-Ricci solitons. In essence, we set up an Aubin continuity path for a complex Monge-Amp\`ere equation to construct such solitons in the asymptotically conical toric setting that allows for control at infinity on the data of the equation. We then show that there is a solution to this equation for the initial value of the path parameter in the toric case. This we do by implementing another continuity path. Finally, we prove openness of the initial value of the path parameter in a proof that does not involve the toricity.

\subsection{Main results}\label{sec:main}

Before stating our main results, recall that a ``toric manifold'' is an $n$-dimensional manifold $M$ endowed with an effective holomorphic action of the complex
torus $(\mathbb{C}^{*})^{n}$ with compact fixed point set admitting an orbit which is open and dense in $M$. The $(\mathbb{C}^{*})^{n}$-action of course determines the holomorphic action of a real torus $T^{n}\subset(\mathbb{C}^{*})^{n}$. A ``toric K\"ahler cone'' is a K\"ahler cone endowed with an effective
holomorphic action of the complex torus $(\mathbb{C}^{*})^{n}$, admits an open dense orbit, has fixed point set only the apex of the cone, and the action of the real torus $T^{n}\subset(\mathbb{C}^{*})^{n}$ is isometric (and holomorphic). We further assume that the Reeb vector field of the cone lies in the Lie algebra $\mathfrak{t}$ of $T^{n}$.
For a given cone metric $g_{0}$ with Levi-Civita connection $\nabla^{g_{0}}$ and radial function $r$, and
for a tensor $\alpha$ on the cone, we say that
``$\alpha=O(r^{\lambda})$ with $g_{0}$-derivatives'' whenever $|(\nabla^{g_{0}})^{k}\alpha|_{g_{0}}=O(r^{\lambda-k})$ for every $k\in\mathbb{N}_{0}$.

Our first result can now be stated as follows.

\begin{mtheorem}[Equation set-up]\label{mainthm}
Let $C_{0}$ be a toric K\"ahler cone of complex dimension $n$ with complex structure $J_{0}$, and let
$\pi:M\to C_{0}$ be a toric-equivariant quasi-projective resolution of $C_{0}$ with complex structure $J$ and exceptional
set $E$ such that $-K_{M}$ is $\pi$-ample. Let $T^{n}$ denote the real $n$-torus acting (equivariantly) on $\pi:M\to C_{0}$
and write $\mathfrak{t}$ for the Lie algebra of $T^{n}$. Recall by assumption that the Reeb vector field of $C_{0}$ lies in $\mathfrak{t}$.
Then:
\begin{enumerate}
\item There exists a unique complete real holomorphic vector field $JX\in\mathfrak{t}$ such that $X$ is the soliton vector field of any complete toric shrinking gradient K\"ahler-Ricci soliton on $M$.
 \item There exists a $T^{n}$-invariant K\"ahler cone metric $g_{0}$ on $C_{0}$ with radial function $r$ such that
$J_{0}r\partial_{r}\in\mathfrak{t}$ and $d\pi(X)=r\partial_{r}$.
      \item  There exists a complete K\"ahler metric $\omega$ on $M$ invariant under the action of $T^{n}$ such that
outside a compact subset of $M$ containing $E$, $\omega=\pi^{*}(\omega_{0}+\rho_{\omega_{0}})$. Here, $\omega_{0}$ is the K\"ahler form of $g_{0}$ and $\rho_{\omega_{0}}$ is the associated Ricci form. In particular, $\omega-\pi^{*}\omega_{0}=O(r^{-2})$ with $g_{0}$-derivatives.
\item There exists a unique $T^{n}$-invariant function $f\in C^{\infty}(M)$ with
$-\omega\lrcorner JX=df$ such that outside a compact subset of $M$ containing $E$, $f=\pi^{*}\left(\frac{r^{2}}{2}-n\right)$ and
$\Delta_{\omega}f+f-\frac{X}{2}\cdot f=O(r^{-2})$ with $g_{0}$-derivatives.
\item Every shrinking gradient K\"ahler-Ricci soliton on $M$ with soliton vector field $X$ is of the form
  $\omega+i\partial\bar{\partial}\varphi$ for some smooth real-valued function $\varphi\in C^{\infty}(M)$ with $\omega+i\partial\bar{\partial}\varphi>0$
  satisfying the complex Monge-Amp\`ere equation
\begin{equation*}
(\omega+i\partial\bar{\partial}\varphi)^{n}=e^{F+\frac{X}{2}\cdot\varphi-\varphi}\omega^{n},
\end{equation*}
where $F\in C^{\infty}(M)$ is equal to a constant outside a compact subset of $M$ and is determined by the fact that
$$\rho_{\omega}+\frac{1}{2}\mathcal{L}_{X}\omega-\omega=i\partial\bar{\partial}F\qquad\textrm{and}\qquad\int_{M}(e^{F}-1)e^{-f}\omega^{n}=0.$$
Here, $\rho_{\omega}$ denotes the Ricci form of $\omega$. Moreover, if the shrinking soliton is $T^{n}$-ivnariant, then 
so is $\varphi$.
\end{enumerate}
\end{mtheorem}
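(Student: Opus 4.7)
For parts (i) and (ii), the standard theory of toric Sasaki geometry (Martelli-Sparks-Yau, Futaki-Ono-Wang) produces, for any Reeb vector field in the interior of the Reeb cone of a toric K\"ahler cone, a compatible $T^{n}$-invariant K\"ahler cone metric; applied with the given $\xi \in \mathfrak{t}$, this yields the cone metric $g_{0}$ in (ii) with $J_{0}r\partial_{r} = \xi$. Define $JX \in \mathfrak{t}$ to be the generator of the $T^{n}$-action that pushes forward to $\xi$ under $d\pi$; then $X$ is complete real holomorphic with $d\pi(X) = r\partial_{r}$ on the end. For uniqueness in (i), the soliton vector field $Y$ of any toric shrinking K\"ahler-Ricci soliton on $M$ has $JY \in \mathfrak{t}$ (by toric equivariance of $g$ plus the fact that $JY$ is Killing), and the asymptotically conical condition forces $JY$ to match $\xi$ at infinity; unique continuation of holomorphic vector fields then gives $JY = JX$ globally, so $Y = X$.

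For part (iii), the essential input is the asymptotic $|\rho_{\omega_{0}}|_{g_{0}} = O(r^{-2})$ with $g_{0}$-derivatives, which is immediate from $\rho_{\omega_{0}}$ being a scale-invariant $(1,1)$-form on the cone. In particular $\omega_{0}+\rho_{\omega_{0}}$ is a positive K\"ahler form outside a compact subset of $C_{0}$, admitting the local potential $\psi_{0} = r^{2}/2 - \log\det g_{0}$ via $\rho_{\omega_{0}} = -i\partial\bar\partial\log\det g_{0}$. To upgrade to a global K\"ahler form on $M$, invoke the $\pi$-ampleness of $-K_{M}$ to produce a $T^{n}$-invariant Hermitian metric on $-K_{M}$ of strictly positive curvature in a neighbourhood of $E$, and hence a smooth local K\"ahler potential there; a standard $T^{n}$-invariant regularised-maximum gluing of the two local potentials produces the global K\"ahler form $\omega$ agreeing with $\pi^{*}(\omega_{0}+\rho_{\omega_{0}})$ off a compact set. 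This is the most delicate step of the theorem, as it is where one must upgrade asymptotic data on the end to a genuine K\"ahler form on all of $M$, with $\pi$-ampleness playing a crucial role.

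For part (iv), Cartan's formula combined with $JX$ being Killing for $\omega$ gives $d(\iota_{JX}\omega)=0$; exactness $\iota_{JX}\omega = -df$ follows from $H^{1}(M,\mathbb{R}) = 0$ on the toric resolution. On the cone, $\rho_{\omega_{0}}$ is basic for the Reeb foliation, so $\iota_{\xi}\rho_{\omega_{0}} = 0$ and hence $\iota_{\xi}(\omega_{0}+\rho_{\omega_{0}}) = -d(r^{2}/2)$, forcing $f = \pi^{*}(r^{2}/2 + c)$ on the end for some constant $c$. The choice $c = -n$ is characterised by making the cone-level identity $\Delta_{\omega_{0}}f + f - (X/2)\cdot f = n + (r^{2}/2 - n) - (r^{2}/2) = 0$ hold exactly (using $\Delta_{\omega_{0}}(r^{2}/2) = \operatorname{tr}_{\omega_{0}}\omega_{0} = n$). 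Replacing $\omega_{0}$ by $\omega = \omega_{0} + \rho_{\omega_{0}}$ perturbs the Laplacian by terms of size $|\rho_{\omega_{0}}|_{g_{0}}\cdot|\nabla^{2}f|_{g_{0}} = O(r^{-2})$ in $g_{0}$-norm, giving the asserted asymptotic.

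For part (v), since $\tilde\omega$ and $\omega$ share asymptotics on the end, an $i\partial\bar\partial$-lemma in the asymptotically conical setting yields $\tilde\omega = \omega + i\partial\bar\partial\varphi$ for some $\varphi \in C^{\infty}(M)$. Expanding the soliton equation via $\rho_{\tilde\omega} = \rho_{\omega} - i\partial\bar\partial\log(\tilde\omega^{n}/\omega^{n})$ and $\mathcal{L}_{X}(i\partial\bar\partial\varphi) = i\partial\bar\partial(X\cdot\varphi)$ (valid for real holomorphic $X$) collapses it to
\begin{equation*}
i\partial\bar\partial\bigl(F - \log(\tilde\omega^{n}/\omega^{n}) + (X/2)\cdot\varphi - \varphi\bigr) = 0,
\end{equation*}
so the bracketed expression is a constant; absorbing it into the additive ambiguity of $\varphi$ produces the stated Monge-Amp\`ere equation. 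The integral normalisation on $F$ is the weighted-volume identity obtained by integrating the MA equation against $e^{-f}\omega^{n}$ and applying integration by parts via the drift Laplacian $\Delta_{\omega} - X\cdot$; it uniquely fixes the additive constant in $F$. Finally, averaging $\varphi$ over $T^{n}$ preserves $i\partial\bar\partial\varphi$ and so recovers a $T^{n}$-invariant potential whenever $\tilde\omega$ is $T^{n}$-invariant.
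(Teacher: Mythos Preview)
Your treatment of parts (iii)--(v) is broadly in line with the paper, though in (v) you should justify why the pluriharmonic expression is constant: you need the fact (Lemma~2.4 in the paper) that a $\xi$-invariant pluriharmonic function on the end of a K\"ahler cone is constant.

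The genuine gap is in parts (i) and (ii), where your logic is inverted relative to the paper and does not actually establish what is claimed. You fix the initial Reeb field $\xi$, take $JX$ to be its preimage under $d\pi$, and then argue that any toric shrinking soliton must have this $X$ as soliton vector field by ``asymptotic matching and unique continuation.'' This fails on two counts. First, the statement makes no asymptotically conical hypothesis on the soliton, so there is no asymptotic Reeb to match against; even in the AC case, there is no a priori reason the soliton's asymptotic cone has Reeb equal to the original $\xi$. Second, and more fundamentally, the soliton vector field is determined intrinsically by $M$ and its torus action, not by any chosen cone metric. The paper proceeds in the opposite order: part (i) is established first, independently of $g_0$, by identifying $JX$ as the unique critical point of the weighted volume functional $\mathcal{F}$ on the open cone $\Lambda\subset\mathfrak{t}$ (properness and strict convexity of $\mathcal{F}$ in the toric setting give existence and uniqueness, and \cite{charlie} shows any toric shrinking soliton vector field must be this critical point). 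Only then, in part (ii), does one perform a Type~I deformation of the cone metric to arrange that the new Reeb vector field equals $J_0\,d\pi(X)$, i.e.\ $d\pi(X)=r\partial_r$. Without the weighted volume functional characterisation, you have no mechanism to single out $X$, and your $X$ need not be a soliton vector field at all.
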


Note that since in our geometric set-up the torus action on $M$ contains a fixed point, $M$ is simply connected \cite[Theorem 12.1.10]{cox},
a necessary condition for the existence of a shrinking gradient K\"ahler-Ricci soliton \cite{carlos, sun2024}. Our assumption of
quasi-projectivity of the resolution is also necessary \cite{sun2024} and is used in the proof of Theorem \ref{mainthm}(iii).
If $M$ compactifies to be a Fano orbifold, then the condition of quasi-projectiveness and $\pi$-ampleness of
$-K_{M}$ are automatically satisfied, with the latter yet another necessary condition resulting from the shrinking soliton equation.
 Note that throughout, our convention for the K\"ahler Laplacian $\Delta_{\omega}$ is that with respect to the K\"ahler form $\omega$, $\Delta_{\omega}h=\operatorname{tr}_{\omega}\left(i\partial\bar{\partial}h\right)$ for $h$ a smooth real-valued function, so that the eigenvalues of minus the Laplacian are non-negative on a compact Riemannian manifold.

Part (i) of the theorem determines the soliton vector field of any complete toric shrinking gradient K\"ahler-Ricci soliton on $M$ and follows immediately
from \cite[Theorem A]{charlie}, where it is asserted that a complete toric shrinking gradient K\"ahler-Ricci soliton is unique up to biholomorphism. The vector field $JX$ is characterised by the fact that it is the point in a specific open convex subset of $\mathfrak{t}$ at which a certain strictly convex functional attains its minimum. More precisely, because $M$ is simply connected
and toric, the action of $T^{n}$ is Hamiltonian and there exists a strictly convex functional $\mathcal{F}_{\omega}:\Lambda_{\omega}\to\mathbb{R}_{>0}$, the ``weighted volume functional'' \cite[Definition 5.16]{cds}, defined on an open convex cone $\Lambda_{\omega}\subset\mathfrak{t}$ uniquely determined by the image of $M$ under the moment map defined by the action of $T^{n}$ and the choice of $\omega$ \cite[Proposition 1.4]{wu} and well-defined by the non-compact version of the Duistermaat-Heckman formula \cite{wu} (see also \cite[Theorem A.3]{cds}). Because $T^{n}$ provides  full-dimensional torus symmetry, the domain $\Lambda_{\omega}$ of $\mathcal{F}_{\omega}$ and $\mathcal{F}_{\omega}$ itself only depend on the torus action \cite[Lemmas 2.26 and 2.27]{ccd} so that both are independent of the choice of $\omega$. In addition, in the toric case $\mathcal{F}_{\omega}$ is proper on $\Lambda$ and so must attain a unique minimum on $\Lambda$.
This minimum defines a distinguished point in $\mathfrak{t}$, namely the only vector field in $\mathfrak{t}$ that can admit a complete toric shrinking gradient K\"ahler-Ricci soliton \cite[Theorem 4.6]{charlie}. This is precisely the vector field $JX$ of Theorem \ref{mainthm}(i). Since all the data is explicit and determined by the torus action, one can a priori determine this vector field for a given $M$; see for example \cite[Section A.4]{cds}.

Part (ii) gives a reference K\"ahler cone metric on $C_{0}$ compatible with the hypothetical soliton vector field, whereas part (iii) details a reference metric on $M$ that is
asymptotic to this cone metric. With respect to this background metric, part (iv) gives a normalised Hamiltonian potential $f$ for the soliton vector field $X$, and
(v) gives a complex Monge-Amp\`ere equation \eqref{cmaa} that any complete toric shrinking gradient K\"ahler-Ricci soliton on $M$ must satisfy, notably with control on the asymptotics of the data $F$ of the equation. By \cite{charlie}, we know that there is at most one such soliton on $M$. Our ultimate aim is to show that this equation has a solution, resulting in an asymptotically conical (AC) complete toric shrinking gradient K\"ahler-Ricci soliton on $M$.
By \cite{Esp25}, we would then know that this is the only AC shrinking gradient K\"ahler-Ricci soliton on $M$. Our attempt to solve equation \eqref{cmaa} is by implementing the Aubin continuity path that was introduced for K\"ahler-Einstein manifolds \cite[Section 7.26]{Aubin}. Specifically in our case, we consider the path
\begin{equation}\label{ast-t}
\left\{
\begin{array}{rl}
(\omega+i\partial\bar{\partial}\varphi_{t})^{n}=e^{F+\frac{X}{2}\cdot\varphi_{t}-t\varphi_{t}}\omega^{n},&\quad\varphi\in C^{\infty}(M),\quad\mathcal{L}_{JX}\varphi=0,\quad\omega+i\partial\bar{\partial}\varphi>0,\quad t\in[0,\,1],\\
\int_{M}e^{F-f}\omega^{n}=\int_{M}e^{-f}\omega^{n}. &
\end{array} \right.\tag{$\ast_{t}$}
\end{equation}
Then we have:

\begin{mtheorem}[Existence when $t=0$]\label{mainthm2}
In the setting of Theorem \ref{mainthm}, there exists a $T^{n}$-invariant smooth function $\psi\in \mathcal{M}_{X}^{\infty}(M)$ (cf.~Section \ref{function-spaces-subsection}) inducing a K\"ahler form $\omega+i\partial\bar{\partial}\psi$ on $M$ such that
\begin{equation}\label{ronan1}
(\omega+i\partial\bar{\partial}\psi)^{n}=e^{F+\frac{X}{2}\cdot\psi}\omega^{n},
\end{equation}
where $\int_{M}\psi\,e^{-f}\omega^{n}=0$. In particular, outside a compact subset, $\psi=c\log f+\vartheta$ for some constant $c\in\mathbb{R}$ and
a smooth real-valued function $\vartheta:M\to\mathbb{R}$ satisfying $\vartheta=O(1)$ with $g_{0}$-derivatives.
\end{mtheorem}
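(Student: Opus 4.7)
The plan is to solve \eqref{ronan1} by implementing a second continuity method, independent of the Aubin parameter $t$ of \eqref{ast-t}. A natural choice is to switch on the datum $F$ gradually: for $s\in[0,1]$, consider the family
\begin{equation*}
(\omega+i\partial\bar\partial\psi_{s})^{n} = e^{sF+\frac{X}{2}\cdot\psi_{s}}\,\omega^{n},\qquad \int_{M}\psi_{s}\,e^{-f}\omega^{n}=0.
\end{equation*}
At $s=0$ the trivial function $\psi_{0}\equiv 0$ solves the equation, while at $s=1$ one recovers \eqref{ronan1}. Let $S\subset[0,1]$ denote the set of parameters admitting a $T^{n}$-invariant solution in $\M_{X}^{\infty}(M)$; since $0\in S$, it suffices to show that $S$ is both open and closed in $[0,1]$.

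For openness, the linearisation at a solution $\psi_{s}$ acts on an unknown $u$ by
\begin{equation*}
L u := \Delta_{\omega_{\psi_{s}}} u - \tfrac{1}{2}X\cdot u,
\end{equation*}
which, up to a factor of two, is the Bakry--\'Emery drift Laplacian of $(\omega_{\psi_{s}},f)$. Since $f\sim r^{2}/2$ at infinity by Theorem~\ref{mainthm}(iv), the weighted measure $e^{-f}\omega_{\psi_{s}}^{n}$ is finite; self-adjointness of $L$ on $L^{2}(e^{-f}\omega_{\psi_{s}}^{n})$ then gives that $\ker L$ consists only of constants and that $L$ enjoys a positive spectral gap on its orthogonal complement. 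In weighted H\"older spaces modelled on $\M_{X}^{\infty}(M)$, the normalisation $\int u\,e^{-f}\omega^{n}=0$ kills the constant ambiguity and $L$ becomes an isomorphism, so the implicit function theorem supplies openness of $S$.

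Closedness reduces to uniform a priori estimates for $\psi_{s}$ along the path. The $C^{0}$ bound is the main difficulty: unlike the Aubin path, the operator $L$ has zero at the bottom of its spectrum, so the coercivity that drives estimates for $t>0$ is absent. Here the plan is to exploit the toric setting---pulling back to the moment polytope converts the equation into a real Monge--Amp\`ere equation for a convex function on a Delzant cone, and the properness of the weighted volume functional $\mathcal{F}_{\omega}$ from Theorem~\ref{mainthm}(i) should translate into a two-sided logarithmic bound of the form $-C(1+\log f)\leq\psi_{s}\leq C(1+\log f)$ uniform in $s$. A drift-modified Aubin--Yau computation then yields the $C^{2}$ bound from the $C^{0}$ one, after which Evans--Krylov and Schauder theory close out higher regularity.

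The refined asymptotic $\psi=c\log f+\vartheta$ with $\vartheta=O(1)$ in $g_{0}$-derivatives is then extracted by matching $\psi$ at infinity to the unique radial solution of the model equation on $C_{0}$; since $F$ is constant and $\omega=\pi^{*}(\omega_{0}+\rho_{\omega_{0}})$ outside a compact set by Theorem~\ref{mainthm}(iii), the leading-order problem at infinity reduces to an ODE in $r$ whose bounded-at-infinity solutions carry precisely a $c\log r^{2}$ contribution. I expect the coupled $C^{0}$/asymptotic estimate to be the main obstacle: the lack of coercivity at $s=1$ forces one to close the a priori bounds and identify the logarithmic coefficient $c$ simultaneously, using the full strength of the toric input from Theorem~\ref{mainthm}(i).
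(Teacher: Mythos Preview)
Your overall architecture---interpolate the datum, use the drift Laplacian isomorphism for openness, and obtain a priori estimates with toric input for closedness---matches the paper. But several of the steps, as you describe them, would not close.

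First, a minor point: with the path $e^{sF}$ the necessary solvability condition $\int_M (e^{sF}-1)e^{-f}\omega^n=0$ fails for intermediate $s$, so the range of the linearisation is missed. The paper instead uses $F_s=\log(1+s(e^F-1))$, for which $e^{F_s}-1=s(e^F-1)$ and the normalisation is preserved identically. More importantly, the paper then reparametrises $\psi_s=\Phi_s+\vartheta_s$ with $\Phi_s=-2c_s\chi\log r$ (where $c_s$ is the constant value of $F_s$ at infinity) so that the new datum $G_s$ decays like $r^{-2}$; this reparametrisation is what makes the localisation and weighted estimates tractable, and it also explains the asymptotic form $\psi=c\log f+\vartheta$ directly from the function-space decomposition $\mathcal{D}_X=\mathbb{R}\chi\log r\oplus C_{X,0}$, not from an ODE matching at infinity.

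The more serious gap is in the $C^0$ estimate. The properness of the weighted volume functional $\mathcal{F}_\omega$ is used only to produce the soliton vector field $X$; it does not by itself yield a two-sided bound on $\psi_s$. The paper's route is quite different. One must first obtain a lower bound on the radial derivative $X\cdot\vartheta_s$ \emph{before} any $C^0$ bound (otherwise the argument is circular), via a minimum-principle localisation together with the key observation that the normalised Hamiltonian potential $f_{\sigma_s}$ with respect to the solution metric depends only on initial data (Lemma~\ref{lemma-tr-star-star}). A weighted Poincar\'e inequality then feeds into Aubin--Tian--Zhu $I$--$J$ functionals to give an energy bound and, via Nash--Moser and a localisation of the supremum, the upper bound on $\vartheta_s$. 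The lower bound on $\inf_M\vartheta_s$ is where the full toric structure enters: one passes to the Legendre transform on the moment polytope, bounds the $\hat{F}$-functional and the weighted $L^1$ and $L^p$ norms of $u_s$ and $\nabla u_s$, and applies a Sobolev inequality on polytope slabs to get pointwise control $|u_s-u_0|\le Ce^{\langle b_X,x\rangle}$, hence $\psi_s\ge -Ce^{f}$ locally; a separate localisation of the infimum then finishes. Finally, new weighted a priori estimates (Section~\ref{sec-wei-bd}) are needed to keep the conical geometry fixed along the path; standard Evans--Krylov and Schauder do not suffice in this non-compact setting.
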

\noindent This provides the asserted solution to $(\ast_{0})$. We obtain this solution by implementing another continuity path. In the compact case,
this was achieved by Zhu \cite{Zhu-KRS-C1}, and in the non-compact toric case, this was achieved in \cite{ccd2} in a different asymptotic setting to that considered here.
The proof of Theorem \ref{mainthm2} involves working in function spaces
suitably adapted to the conical geometry at infinity. Their definition is given at the beginning of Section \ref{function-spaces-subsection}.
Here (and in \cite{ccd2}), the toric assumption is only used in two places, namely:
\begin{itemize}
\item To find the soliton vector field $X$ via properness of the weighted volume functional in the toric setting \cite{charlie} (cf.~Lemma \ref{vector-field});
\item To achieve a uniform infimum bound of the solution along the continuity path (cf.~Section \ref{infmyballs}).
\end{itemize}
None of the other a priori estimates require toricity. As in \cite[Lemma 7.6]{ccd2}, the infimum of the solution is localised along the continuity path; cf.~Lemma \ref{lemma-loc-crit-pts}. However,
contrary to \cite[Lemma 7.6]{ccd2}, the localisation of the infimum here relies crucially on the fact that the normalisation of the Hamiltonian potential of $X$
with respect to the solution metric as dictated in Lemma \ref{lemma-tr-star-star} surprisingly depends only the initial data, rather than on the unknown solution. In order to keep the geometry at infinity fixed along the path, new weighted a priori estimates are also derived.

By showing that the linearisation of $(\ast_{0})$ is invertible between polynomially weighted function spaces (cf.~Section \ref{opennesss} for the precise definitions), we further prove:
\begin{mtheorem}[Openness when $t=0$]\label{mainthm3}
In the setting of Theorem \ref{mainthm}, there exists $\varepsilon>0$ such that \eqref{ast-t} has a $T^{n}$-invariant polynomially growing smooth solution for all $t\in[0,\,\varepsilon)$.
\end{mtheorem}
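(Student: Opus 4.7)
The plan is to invoke the implicit function theorem (IFT) at the base solution $\psi$ of $(\ast_{0})$ furnished by Theorem \ref{mainthm2}. Set
\[
\Phi(\varphi, t) \;:=\; \log\frac{(\omega + i\partial\bar{\partial}\varphi)^{n}}{\omega^{n}} - F - \frac{X}{2}\cdot\varphi + t\varphi,
\]
so that $(\ast_{t})$ is equivalent to $\Phi(\varphi_{t}, t) = 0$. By Theorem \ref{mainthm2} we have $\Phi(\psi, 0) = 0$, and the linearisation of $(\ast_{0})$ in $\varphi$ at $\psi$ is the drift Laplacian
\[
L\,u \;=\; \Delta_{\omega_{\psi}}u - \frac{X}{2}\cdot u.
\]
Writing $\varphi = \psi + u$, the result will follow from the IFT applied in $T^{n}$-invariant polynomially weighted H\"older spaces (defined in Section \ref{opennesss}) once $L$ is shown to be a Banach-space isomorphism between appropriate such spaces.

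The main technical step is this invertibility claim. Outside a compact set, $X$ coincides with the radial vector field $r\partial_{r}$ of the asymptotic K\"ahler cone (Theorem \ref{mainthm}(ii)), and $\omega_{\psi}$ differs from $\pi^{*}\omega_{0}$ by a tensor of order $O(r^{-2})$ plus the $i\partial\bar{\partial}$ of the logarithmic term $c\log f$ coming from Theorem \ref{mainthm2}. Consequently $L$ is asymptotic on the cone to the Ornstein--Uhlenbeck-type operator $\Delta_{g_{0}} - \tfrac{1}{2}r\partial_{r}$. Separating variables against the spectral decomposition of the Laplacian on the link reduces the study of this model operator to a family of radial ODEs, each of which possesses one polynomially bounded solution and one solution growing like $e^{r^{2}/4}$; this yields a discrete set of \emph{critical weights}. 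Choosing the decay rate $\delta$ strictly between two consecutive critical values, a parametrix construction for elliptic operators of asymptotically conical type shows that $L$ is Fredholm between the corresponding polynomially weighted H\"older spaces. The kernel is trivial because no polynomially decaying solution of $Lu = 0$ exists, and the cokernel is trivial by duality with the formal adjoint of $L$ taken against the soliton-weighted measure $e^{-f}\omega^{n}$ (which has the same Gaussian decay at infinity), yielding the same analysis for the adjoint. Hence $L$ is an isomorphism.

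With the invertibility of $L$ in hand, the IFT produces a unique smooth curve $t \mapsto \varphi_{t}$ with $\varphi_{0} = \psi$ and $\Phi(\varphi_{t}, t) = 0$ for all $t \in [0,\,\varepsilon)$ and some $\varepsilon > 0$. The $T^{n}$-invariance is automatic because the IFT is carried out in the $T^{n}$-invariant subspaces, which is permissible since $L$ commutes with the torus action, while standard elliptic bootstrapping upgrades the H\"older regularity of $\varphi_{t}$ to smoothness. The hard part will be the Fredholm analysis of $L$ on these polynomially weighted spaces---specifically, the identification of the critical weights and the selection of $\delta$ such that both the linear forcing $t\varphi$ and the Monge--Amp\`ere nonlinearities arising from the substitution $\varphi = \psi + u$ land in the target space at the required order, in a manner compatible with the logarithmic background $\psi = c\log f + \vartheta$ from Theorem \ref{mainthm2}.
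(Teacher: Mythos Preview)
Your overall strategy---set up the equation relative to the $t=0$ solution $\psi$ and invoke the implicit function theorem on the linearisation $L=\Delta_{\omega_{\psi}}-\tfrac{X}{2}\cdot$---matches the paper's. The gap is in the invertibility claim for $L$.

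First, $L$ is \emph{not} an isomorphism on any reasonable weighted space. Constants lie in $\ker L$; since $L$ is symmetric with respect to the Gaussian-weighted measure $e^{-\tilde f}\tilde\omega^{n}$, the image of $L$ is $L^{2}(e^{-\tilde f})$-orthogonal to constants. Thus if you work in decaying spaces (as your ``no polynomially decaying solution of $Lu=0$'' suggests), the kernel is trivial but the cokernel is one-dimensional, not zero---your duality argument actually shows this rather than ruling it out. If instead you allow bounded or growing functions, constants sit in the kernel. Either way $L$ fails to be an isomorphism. The paper handles this by replacing $L$ with $L+\pi_{0}$, where $\pi_{0}$ is the $L^{2}(e^{-\tilde f})$-projection onto constants; this \emph{is} an isomorphism between spaces $\mathcal{D}^{4,2\alpha}_{X,\beta}\to\mathcal{C}^{2,2\alpha}_{X,\beta}$ of the \emph{same} polynomial growth rate $\beta\in(-2,0)$. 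The IFT is then applied to $\Phi(t,\psi)=MA(t,\psi)+\tfrac{1}{2}\pi_{0}(\psi)$; the resulting solution solves $(\ast_{t})$ only up to an additive constant, which for $t>0$ is absorbed by shifting $\varphi_{t}$.

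Second, your proposed Fredholm package---indicial roots, critical weights, parametrix on an asymptotically conical manifold---does not apply here. The drift term $-\tfrac{1}{2}r\partial_{r}$ is not a decaying perturbation of the cone Laplacian: on $u\sim r^{\alpha}$ one has $\Delta_{g_{0}}u\sim r^{\alpha-2}$ but $r\partial_{r}u\sim r^{\alpha}$, so the drift dominates and $L$ maps weight $\beta$ to weight $\beta$, not $\beta+2$. There is no Lockhart--McOwen-type theory for this operator. The paper instead establishes the isomorphism directly: a weak solution via the weighted $L^{2}$ spectral theory of $L$, then barrier arguments (powers of $f$) to upgrade the a priori exponential growth to the correct polynomial rate, and finally parabolic Schauder estimates (exploiting that $\widetilde{Lu}$ satisfies a heat equation in the self-similar variables) for the higher-order bounds. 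The restriction $\beta\in(-2,0)$ arises not from indicial roots but from two requirements: $\beta<0$ so that the logarithmically-growing $\varphi_{0}$ and the constant $\pi_{0}$ land in the target, and $\beta>-2$ so that the Monge--Amp\`ere nonlinearity $|\partial\bar\partial\psi|^{2}\sim \tilde\rho^{-\beta-2}$ stays in $\mathcal{C}^{2,2\alpha}_{X,\beta}$ and so that $\mathcal{M}^{4,2\alpha}_{X,\beta}$ is open.
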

\noindent The proof of this theorem requires function spaces different from those considered in the proof of Theorem \ref{mainthm2} for a good reason--the linearised operator associated to \eqref{ast-t} has a kernel which contains functions that grow polynomially at infinity. In addition, the decay rate of the solution provided by Theorem \ref{mainthm3} degenerates
as $t\in[0,\,\varepsilon)$ increases. This is consistent with the fact that, as noted on \cite[p.19]{sun2024}, the construction of non-compact complete shrinking K\"ahler-Ricci solitons is a free boundary problem, and so in the conical setting one is unable to determine in advance the asymptotic cone of the shrinking soliton. Indeed, the cone is the singular
end time of the associated K\"ahler-Ricci flow, which results in an ill-posed problem for parabolic systems.

\subsection{Outline of paper}

We begin in Section \ref{sec-srs} by recalling the definitions of Riemannian and K\"ahler cones, and of manifolds for which these serve as asymptotic models.
This makes up Sections \ref{cones1}--\ref{cones2}. We then recall the definition of a metric measure space in Section \ref{metricmeasure}. In Section \ref{pooly}, we digress and define
polyhedrons and polyhedral cones before moving on to the definition of a Hamiltonian action in Section \ref{hamilton}.
Section \ref{toric-geom} then comprises the background material on toric geometry that we require.

In Section \ref{sec-construction-back-metric}, namely in Proposition \ref{mainprop}, we construct a background metric with the desired properties, set up the 
complex Monge-Amp\`ere equation, and provide the normalisation for the Hamiltonian of $JX$,
resulting in the proof of Theorem \ref{mainthm}. Our background metric is asymptotic to a K\"ahler cone at infinity, which itself is a shrinking 
gradient K\"ahler-Ricci soliton compatible with $X$ up to terms of order $O(r^{-2})$. This property is what allows us to set up the complex Monge-Amp\`ere equation with decaying data.

Section \ref{sec-poin-inequ} establishes sufficient geometric conditions for an $L^p$-Poincar\'e inequality, $p\geq 1$, to hold with respect to a given weighted measure. The result is
Proposition \ref{poincare} and is of independent interest. The proof is purely Riemannian in nature and is independent of the rest of the paper.
Notice that this proposition plays a crucial role in deriving the a priori weighted energy estimate for the complex Monge-Amp\`ere equation \eqref{ronan1} with decaying data
in Proposition \ref{prop-a-priori-ene-est}.

From Section \ref{linear-theory-section} onwards, the content takes on a more analytic flavour with the proof of Theorem \ref{mainthm2} making up Sections \ref{linear-theory-section}--\ref{sec-a-priori-est}. We prove Theorem \ref{mainthm2} by implementing the continuity method, the specific path of which is
outlined at the beginning of Section \ref{sec-a-priori-est}. 

In Section \ref{linear-theory-section}, we study the properties of the drift Laplacian of our background metric acting on polynomially weighted function spaces. More precisely, in Section \ref{function-spaces-subsection} we introduce polynomially weighted function spaces,
the elements of which are invariant under the flow of $JX$. We follow this up in Section \ref{linear} by showing that the drift Laplacian of our
background metric is an isomorphism between such spaces. This result is the content of Theorem \ref{iso-sch-Laplacian-pol}.
Using it, we then prove Theorem \ref{Imp-Def-Kah-Ste} in Section \ref{invert-poly} that serves as the openness part of the continuity argument.
The closedness part involves a priori estimates and these make up Section \ref{sec-a-priori-est}.

Contrary to \cite{conlon33}, the presence of the unbounded vector field $X$ makes the analysis here much more involved. An a priori lower bound for the radial derivative $X\cdot \psi$, where $\psi$ solves \eqref{ronan1}, has to be proved \emph{before} the a priori $C^{0}$-bound in order to avoid a circular argument; see Section \ref{sec-low-bd-rad-der}. An a priori energy bound is established in Section \ref{sec-a-priori-energy} through the use of the so-called Aubin-Tian-Zhu's functionals, and leads to an a priori upper bound on a solution to the complex Monge-Amp\`ere equation \eqref{ronan1}; cf.~Propositions \ref{prop-a-priori-ene-est} and \ref{prop-bd-abo-uni-psi}. As explained above, the invariance of the solution under the whole torus action (and not just under the flow of $JX$) is crucial in deriving an a priori lower bound on the infimum; cf.~Proposition \ref{prop-bd-bel-uni-psi}. Then and only then is an a priori upper bound on the radial derivative of a solution to \eqref{ronan1} achieved; cf.~Proposition \ref{prop-bd-uni-X-psi}. Section \ref{sec-high-der} is devoted to proving a local bootstrapping phenomenon for \eqref{ronan1} that results in a priori bounds for higher derivatives of the solution. Finally, Section \ref{sec-wei-bd} comprises new a priori weighted estimates for \eqref{ronan1}, leading to the completion of the proof of Theorem \ref{mainthm2} in Section \ref{sec-proof-main-thm}.

We conclude the paper in Section \ref{opennesss} with the proof of Theorem \ref{mainthm3}.

\subsection{Acknowledgments}
This material is based upon work supported by the National Science Foundation under Grant No.~DMS-1928930, while the second author was in residence
at the Simons Laufer Mathematical Sciences Institute (formerly MSRI) in Berkeley, California, during the Fall 2024 semester. He wishes to thank SLMath for their excellent working conditions and hospitality during this time. He is also supported by a Simons Travel Grant.

The third author is partially supported by grants from the French National Research Agency  ANR-24-CE40-0702 (Project OrbiScaR) and the Charles Defforey Fondation-Institut de France via the project ``KRIS''. He also benefits from a Junior Chair from the Institut Universitaire de France.

The authors wish to thank Charles Cifarelli for providing the proof of Lemma \ref{normpolytope-tof} and for useful discussions, and Emmanuel Milman for enlightening discussions concerning $L^{p}$-Poincar\'e inequalities on metric measure spaces and for providing us with the crucial reference \cite{Bak-Bar-Cat-Gui}.

\section{Preliminaries}\label{sec-srs}

\subsection{Riemannian cones}\label{cones1}
 For us, the definition of a Riemannian cone will take the following form.
\begin{definition}\label{cone}
Let $(S, g_{S})$ be a closed Riemannian manifold. The \emph{Riemannian cone} $C_{0}$ with \emph{link} $S$ is defined to be $\R_{>0}\times S$ with metric $g_0 = dr^2 \oplus r^2g_{S}$ up to isometry. The radius function $r$ is then characterized intrinsically as the distance from the apex in the metric completion.
\end{definition}

Suppose that we are given a Riemannian cone $(C_0,g_{0})$ as above. Let $(r,x)$ be polar coordinates on $C_{0}$, where $x\in S$, and for $t>0$, define a map
$$\nu_{t}: [1,2]\times S \ni (r,x) \mapsto (tr,x) \in  [t,2t]\times S.$$ One checks that $\nu_{t}^{*}(g_{0})=t^{2}g_{0}$ and $\nu^{*}_{t}\circ\nabla^{g_0}=\nabla^{g_0}\circ\nu_{t}^{*}$, where $\nabla^{g_0}$ is the  Levi-Civita connection of $g_{0}$.

\begin{lemma}\label{simple321}
Suppose that $\alpha\in\Gamma((TC_0)^{\otimes p}\otimes (T^{*}C_0)^{\otimes q})$ satisfies $\nu_{t}^{*}(\alpha)=t^{k}\alpha$ for every $t>0$ for some $k\in\R$. Then $|(\nabla^{g_0})^{l}\alpha|_{g_{0}}=O(r^{k+p-q-l})$ for all $l\in\N_0$.
\end{lemma}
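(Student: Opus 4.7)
The plan is to exploit the scaling invariance of $\alpha$ together with the homogeneity of the cone metric to reduce everything to a statement at $r=1$, where compactness of the link $S$ finishes the job.

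First I would observe the general naturality formula: for any diffeomorphism $\phi$ and any $(p,q)$-tensor $T$, one has $|\phi^{*}T|_{\phi^{*}g}(x)=|T|_{g}(\phi(x))$. Applied to $\phi=\nu_{t}$ and $g=g_{0}$, together with the identity $\nu_{t}^{*}g_{0}=t^{2}g_{0}$ mentioned just before the lemma, this reads
\[
|\nu_{t}^{*}\alpha|_{t^{2}g_{0}}(r,x)=|\alpha|_{g_{0}}(tr,x).
\]
Next I would use the elementary fact that rescaling the metric by a factor $t^{2}$ multiplies the norm of a $(p,q)$-tensor by $t^{p-q}$ (each contravariant slot contributes a factor $t$, each covariant slot a factor $t^{-1}$), so $|\nu_{t}^{*}\alpha|_{t^{2}g_{0}}=t^{p-q}|\nu_{t}^{*}\alpha|_{g_{0}}$. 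Combining this with the homogeneity assumption $\nu_{t}^{*}\alpha=t^{k}\alpha$ gives
\[
|\alpha|_{g_{0}}(tr,x)=t^{k+p-q}|\alpha|_{g_{0}}(r,x)\qquad \text{for all } t>0.
\]
Specialising to $r=1$ and using that $S$ is compact, the function $x\mapsto |\alpha|_{g_{0}}(1,x)$ is bounded on $S$, which yields the desired estimate $|\alpha|_{g_{0}}=O(r^{k+p-q})$, i.e.~the case $l=0$.

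For general $l$, I would apply the same argument to $(\nabla^{g_{0}})^{l}\alpha$. The key observation is the stated commutation relation $\nu_{t}^{*}\circ\nabla^{g_{0}}=\nabla^{g_{0}}\circ\nu_{t}^{*}$, which iterated gives
\[
\nu_{t}^{*}\bigl((\nabla^{g_{0}})^{l}\alpha\bigr)=(\nabla^{g_{0}})^{l}(\nu_{t}^{*}\alpha)=t^{k}(\nabla^{g_{0}})^{l}\alpha.
\]
Thus $(\nabla^{g_{0}})^{l}\alpha$ is again homogeneous of degree $k$ under $\nu_{t}^{*}$, but its tensor type is now $(p,q+l)$ (each covariant derivative adds one covariant slot). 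Plugging this into the $l=0$ conclusion applied to $(\nabla^{g_{0}})^{l}\alpha$ replaces the exponent $k+p-q$ by $k+p-(q+l)=k+p-q-l$, giving exactly the claimed bound.

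The proof is essentially a bookkeeping exercise, so there is no real obstacle; the only point where one must be careful is the exponent count for how metric rescaling acts on norms of mixed tensors and how covariant differentiation shifts the tensor type, which is why the resulting exponent picks up $p-q-l$ rather than just $p-q$ or $-l$.
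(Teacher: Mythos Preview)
Your proof is correct. The paper states this lemma without proof, treating it as an elementary scaling computation; your argument fills in exactly the expected details (naturality of pullback, homogeneity of the cone metric, the commutation $\nu_t^*\circ\nabla^{g_0}=\nabla^{g_0}\circ\nu_t^*$, and the tensor-type shift under covariant differentiation), so there is nothing to compare.
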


We shall say that ``$\alpha=O(r^{\lambda})$ with $g_{0}$-derivatives'' whenever $|(\nabla^{g_0})^{k}\alpha|_{g_{0}}=O(r^{\lambda-k})$ for every $k \in \N_0$.
We will then also say that $\alpha$ has ``rate at most $\lambda$'', or sometimes, for simplicity, ``rate $\lambda$'', although it should be understood that (at least when $\alpha$ is purely polynomially behaved and does not contain any $\log$ terms) the rate of $\alpha$ is really the infimum of all $\lambda$ for which this holds.

\subsection{K\"ahler cones}\label{conekahler}
We may further impose that a Riemannian cone is K\"ahler, as the next definition demonstrates.
\begin{definition}A \emph{K{\"a}hler cone} is a Riemannian cone $(C_{0},g_0)$ such that $g_0$ is K{\"a}hler, together with a choice of $g_0$-parallel complex structure $J_0$. This will in fact often be unique up to sign. We then have a K{\"a}hler form $\omega_0(X,Y) = g_0(J_0X,Y)$, and $\omega_0 = \frac{i}{2}\p\bar{\p} r^2$ with respect to $J_0$.
\end{definition}

The vector field $r\partial_{r}$ is real holomorphic and $\xi:=J_{0}r\partial_r$ is real holomorphic and Killing \cite[Appendix A]{Yau}. This latter vector field is known as the \emph{Reeb vector field}. The closure of its flow in the isometry
group of the link of the cone generates the holomorphic isometric action of a real torus on $C_{0}$ that
fixes the apex of the cone. The real $(1,\,1)$-form $\omega^{T}:=i\partial\bar{\partial}\log(r)$ defines a K\"ahler form on 
the local leaf space of the foliation of the link of the cone induced by the flow of $\xi$ and is called the 
\emph{transverse K\"ahler form}.

Every K\"ahler cone is affine algebraic.
\begin{theorem}\label{t:affine}
For every K{\"a}hler cone $(C_{0},g_0,J_0)$, the complex manifold $(C_{0},J_0)$ is isomorphic to the smooth part of a normal algebraic variety $V \subset \C^N$ with one singular point. In addition, $V$ can be taken to be invariant under a $\C^*$-action $(t, z_1,\ldots,z_N) \mapsto (t^{w_1}z_1,\ldots,t^{w_N}z_N)$ such that all $w_i $ are positive.
\end{theorem}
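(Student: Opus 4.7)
The plan is to exploit the two commuting real holomorphic vector fields on $C_0$, the dilation $r\p_r$ and the Reeb field $\xi=J_0 r\p_r$, in order to build a rich holomorphic torus action, decompose the ring of holomorphic functions on $C_0$ into eigenspaces of $r\p_r$ with positive weights, and apply a Kodaira--Baily type embedding arising from the transverse K\"ahler geometry in order to embed $C_0$ into $\mathbb{C}^{N}$.

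First I would observe that $r\p_r$ and $\xi$ commute and are real holomorphic, and that the closure of the Reeb flow in the isometry group of $(C_0, g_0)$ is a compact torus $T$ acting holomorphically; its complexification $T_{\mathbb{C}}\cong(\mathbb{C}^{*})^{k}$ acts holomorphically on $C_0$ and contains the dilation $\mathbb{R}_{>0}$ as a real one-parameter subgroup (coming from $-i\xi\in i\mathfrak{t}\subset\mathfrak{t}_{\mathbb{C}}$). Decomposing holomorphic functions on $C_0$ into joint weight eigenspaces for $T_{\mathbb{C}}$, and using the identity $\xi(f)=i\,r\p_r(f)$ valid for any holomorphic $f$, shows that the weight-$\mu$ piece is an $r\p_r$-eigenspace with eigenvalue $\langle\mu,\xi\rangle$. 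A removable-singularity argument based on the plurisubharmonic exhaustion $\tfrac{1}{2}r^{2}$ then shows that the only weights appearing satisfy $\langle\mu,\xi\rangle\geq 0$, with equality exactly on the constants.

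The main obstacle is to produce enough positive-weight eigenfunctions to separate points and tangent directions. For this I would pass to the transverse K\"ahler geometry of the Reeb foliation: the transverse K\"ahler form $\omega^{T}=i\p\bp\log r$ is transversally positive, and after approximating $\xi$ by a rational direction in $\mathfrak{t}$ and reducing to the quotient of the link by the corresponding $S^{1}$-closure, one obtains a compact K\"ahler orbifold on which $\omega^{T}$ defines an ample orbi-line bundle. Baily's orbifold embedding theorem applied to sufficiently high tensor powers of this line bundle yields an embedding whose defining sections lift to holomorphic functions on $C_0$ that are $r\p_r$-eigenfunctions of positive rational weight.

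Finally, I would choose such a basis $f_{1},\ldots,f_{N}$ separating points and tangent directions of $C_0$, and, after rescaling $r$ if necessary to clear denominators, arrange that the weights are positive integers $w_{1},\ldots,w_{N}$. Then $p\mapsto(f_{1}(p),\ldots,f_{N}(p))$ is a proper holomorphic embedding $C_0\hookrightarrow\mathbb{C}^{N}$ whose image is manifestly invariant under the diagonal $\mathbb{C}^{*}$-action $(t,z_{1},\ldots,z_{N})\mapsto(t^{w_{1}}z_{1},\ldots,t^{w_{N}}z_{N})$ with all $w_{i}>0$. The Zariski closure $V\subset\mathbb{C}^{N}$ of the image is the required affine algebraic variety; finite generation of the graded ring of positive-weight holomorphic functions (a consequence of Baily's theorem applied to ample powers) identifies $C_0$ with $V\setminus\{0\}$, and smoothness of $C_0$ away from the apex, together with standard orbifold-quotient arguments for the graded ring, give normality of $V$ and the uniqueness of the singular point at the origin.
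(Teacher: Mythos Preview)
Your sketch is essentially correct and follows the same line of reasoning as the source the paper cites: the paper does not give its own argument but simply refers to van Coevering \cite[\S 3.1]{vanC4}, whose proof proceeds exactly by exploiting the torus generated by the Reeb flow, decomposing holomorphic functions into positive-weight eigenspaces of $r\partial_r$, and using the transverse K\"ahler (orbifold) structure together with a Kodaira--Baily embedding to produce the generators of the affine coordinate ring. So your approach matches the intended one.
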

\noindent This can be deduced from arguments written down by van Coevering in \cite[\S 3.1]{vanC4}.

The holomorphic torus action on a K\"ahler cone leads to the notion of an \emph{equivariant resolution}.
\begin{definition}\label{equivariantt}
Let $C_0$ be a K\"ahler cone and let $\pi:M\to C_0$ be a resolution of $C_0$. We say that
$\pi:M\to C_0$ is an \emph{equivariant resolution} with respect to the holomorphic action of a Lie group on $C_{0}$ if that group
action lifts via $\pi$ to a holomorphic action on $M$.
\end{definition}
Such a resolution of a K\"ahler cone always exists; see \cite[Proposition 3.9.1]{kollar}. We recall:

\begin{lemma}[\protect{\textnormal{\cite[Lemma 2.4]{conlon33}}}]\label{nice}
Let $(C_{0},\,g_{0})$ be a K\"ahler cone with Reeb vector field $\xi$ and let $K\subseteq C_{0}$ be a compact subset containing the apex of $C_{0}$ such that $C_{0}\setminus K$ is connected.
If $u:C_{0}\setminus K\to\mathbb{R}$ is a smooth real-valued function defined on $C_{0}\setminus K$
that is pluriharmonic (meaning that $\partial\bar{\partial}u=0$) and invariant under the flow of
$\xi$, then $u$ is a real constant.
\end{lemma}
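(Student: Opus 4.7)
The plan is to separate variables into a radial factor and a link factor and then exploit the compactness of the link $S$. The key intermediate step is the claim that $r\partial_{r}u$ is a real constant on $C_{0}\setminus K$.

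To establish this claim, I would work with the $(1,0)$-vector field $W:=(r\partial_{r})^{1,0}=\tfrac{1}{2}(r\partial_{r}-i\xi)$. Since $r\partial_{r}$ is real holomorphic on the K\"ahler cone as recalled in Section \ref{conekahler}, $W$ is a genuine holomorphic section of $T^{1,0}C_{0}$. A direct computation in local holomorphic coordinates shows that for any holomorphic $(1,0)$ vector field $W=\sum w^{i}\partial_{z^{i}}$ and any pluriharmonic function $u$, the function $Wu$ is itself holomorphic, since
$\bar\partial(Wu)=\sum_{i,j}\bigl(\partial_{\bar z^{j}}w^{i}\bigr)\partial_{z^{i}}u\,d\bar z^{j}+\sum_{i,j}w^{i}\partial_{\bar z^{j}}\partial_{z^{i}}u\,d\bar z^{j}=0$,
the first sum vanishing by holomorphicity of $W$ and the second by pluriharmonicity of $u$. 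Applied here, $Wu=\tfrac{1}{2}(r\partial_{r}u-i\xi u)=\tfrac{1}{2}r\partial_{r}u$ by the assumed $\xi$-invariance, so $Wu$ is real-valued. A real-valued holomorphic function on a connected complex manifold is locally constant by the open mapping theorem, so $r\partial_{r}u\equiv c$ on $C_{0}\setminus K$ for some constant $c\in\mathbb{R}$.

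Integrating this ODE along radial rays on the cylindrical end $\{r>R_{0}\}\times S\subset C_{0}\setminus K$, and then extending by real-analyticity of pluriharmonic functions together with the connectedness of $C_{0}\setminus K$, I would conclude that $u(r,x)=c\log r+h(x)$ on all of $C_{0}\setminus K$ for some smooth function $h$ on the closed link $S$ that is automatically $\xi$-invariant. Applying the Riemannian Laplacian $\Delta_{g_{0}}$ to this decomposition, and using the standard cone formulas $\Delta_{g_{0}}\log r=2(n-1)/r^{2}$ and $\Delta_{g_{0}}h(x)=r^{-2}\Delta_{g_{S}}h$ together with the fact that pluriharmonic functions are harmonic for $g_{0}$, one obtains $\Delta_{g_{S}}h=-2c(n-1)$ on $S$. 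Integration over the closed manifold $S$ then forces $c=0$ (assuming $n\geq 2$, which is the case of interest); the function $h$ is thereafter harmonic on the closed manifold $S$, hence constant by the strong maximum principle, and so $u$ is constant on $C_{0}\setminus K$.

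The main obstacle is concentrated in the first step: organising the hypotheses so that the real part of a natural holomorphic quantity is forced to vanish. The real holomorphicity of $r\partial_{r}$ together with the identity $\xi=J_{0}(r\partial_{r})$ is precisely what turns $\xi$-invariance into the statement that $Wu$ is real-valued, after which rigidity of $r\partial_{r}u$ is automatic. Once this separation of variables has been achieved, only standard tools (integration by parts and the strong maximum principle on the closed link) are needed to finish.
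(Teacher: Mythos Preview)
The paper does not supply its own proof of this lemma; it is simply quoted from \cite{conlon33}. So there is no proof in the present paper to compare against, and your argument must be assessed on its own merits.

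Your proof is correct. The key step---observing that $Wu$ is holomorphic for $W=(r\partial_r)^{1,0}$ and pluriharmonic $u$, and that $\xi$-invariance makes $Wu$ real, hence constant---is clean and standard. The Laplacian computation on the cylindrical end is right, and integration over the closed link forces $c=0$ when $n\geq 2$. Your caveat about $n\geq 2$ is genuinely necessary: for $n=1$ the function $\log r$ on $\mathbb{C}^*$ is pluriharmonic and $\xi$-invariant but not constant, so the lemma as literally stated fails there; in the paper's context this is harmless.

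One minor point of phrasing: you invoke real-analyticity to extend the decomposition $u=c\log r+h(x)$ to all of $C_0\setminus K$, but the radial ray from a point of $C_0\setminus K$ to the cylindrical end need not stay inside $C_0\setminus K$. The cleaner way to organise this is to run your Laplacian argument on the cylindrical end $\{r>R_0\}$ alone, conclude that $u$ is constant there, and \emph{then} use real-analyticity of pluriharmonic functions together with connectedness of $C_0\setminus K$ to propagate the constancy. This is clearly what you intend, and it works.
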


We conclude this section with a gluing lemma, analogous to \cite[Lemma 2.4]{ccd2}.
\begin{lemma}[Gluing lemma]\label{glue}
Let $(C_{0},\,g_{0})$ be a K\"ahler cone with radius function $r$ and let $\pi:(M,\,g)\to(C_{0},\,g_{0})$ be an asymptotically conical K\"ahler metric
on a resolution $\pi:M\to C_{0}$ with exceptional set $E$ with tangent cone $(C_{0},\,g_{0})$. Consider $r$ as a function on $M\setminus E$.

Let $\phi\in C^{\infty}(M\setminus E)$ be such that $\phi=O(\log(r))$, $|d\phi|_{g_{0}}=O(r^{-1})$, and
$|i\partial\bar{\partial}\phi|_{g_{0}}=O(r^{-2})$ for $r>\frac{1}{2}$. Then for all $R\geq1$,
there exists a cut-off function $\chi_{R}:M\to\mathbb{R}$ supported on $M\setminus\{r\leq R\}$ with $\chi_{R}(x)=1$
if $r(x)>2R$ such that $$|i\partial\bar{\partial}(\chi_{R}\cdot\phi)|_{g}\leq\frac{C}{R}\left(\|(\log(r)+1)^{-1}\cdot\phi\|_{C^{0}(M\setminus\{r\,\leq\,1\},\,g)}
+\|r\cdot d\phi\|_{C^{0}(M\setminus\{r\,\leq\,1\},\,g)}+\|r^{2}\cdot i\partial\bar{\partial}\phi\|_{C^{0}(M\setminus\{r\,\leq\,1\},\,g)}\right)$$
for  some $C>0$ independent of $R$. In particular, $\chi_{R}\cdot\phi=\phi$ on $\{r(x)>2R\}$.
\end{lemma}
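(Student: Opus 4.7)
The plan is to take $\chi_R$ as a rescaled radial cut-off and to expand $i\partial\bar\partial(\chi_R\phi)$ by Leibniz, estimating each resulting term on the annulus where the derivatives of $\chi_R$ are supported. Fix once and for all a smooth $\chi:\mathbb{R}\to[0,1]$ with $\chi\equiv 0$ on $(-\infty,1]$ and $\chi\equiv 1$ on $[2,\infty)$, and for $R\geq 1$ define $\chi_R(x):=\chi(r(x)/R)$ on $M\setminus E$, extended by $0$ across $E$ (which is consistent since $\pi(E)$ is the apex). Then $\chi_R$ is supported in $\{r\geq R\}$, equals $1$ on $\{r\geq 2R\}$, and its first and second derivatives are supported in the annulus $A_R:=\{R\leq r\leq 2R\}$.

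The pointwise input on the cone is the identity $\omega_0 = i\,\partial r\wedge\bar{\partial}r + r\cdot i\partial\bar\partial r$, which combined with $|dr|_{g_0}=1$ forces $|i\partial\bar\partial r|_{g_0}=O(r^{-1})$. Since $\chi_R$ depends only on $r$, a direct computation yields
$$i\partial\bar\partial\chi_R = \frac{1}{R^{2}}\chi''(r/R)\,i\,\partial r\wedge\bar{\partial}r + \frac{1}{R}\chi'(r/R)\,i\partial\bar\partial r,$$
so on $A_R$ one has $|d\chi_R|_{g_0}\leq CR^{-1}$ and $|i\partial\bar\partial\chi_R|_{g_0}\leq CR^{-2}$ with $C$ depending only on $\chi$. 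The asymptotically conical assumption $g\sim\pi^{*}g_0$ at infinity then gives the same bounds in the $g$-norm, uniformly in $R\geq 1$ (for $R$ in a bounded range the bound holds by compactness after enlarging $C$).

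Now expand
$$i\partial\bar\partial(\chi_R\phi) = \chi_R\cdot i\partial\bar\partial\phi + i\partial\chi_R\wedge\bar\partial\phi + i\partial\phi\wedge\bar\partial\chi_R + \phi\cdot i\partial\bar\partial\chi_R.$$
On $\{r\geq 2R\}$ only the first term survives and is bounded pointwise by $r^{-2}\|r^{2}\,i\partial\bar\partial\phi\|_{C^{0}}\leq R^{-2}\|r^{2}\,i\partial\bar\partial\phi\|_{C^{0}}$. On $A_R$ the first term admits the same bound; the two cross terms are each controlled by $CR^{-1}|d\phi|_g\leq CR^{-2}\|r\cdot d\phi\|_{C^{0}}$; and the final term by $CR^{-2}|\phi|\leq CR^{-2}(\log(2R)+1)\|(\log r+1)^{-1}\phi\|_{C^{0}}$. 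Since $R^{-2}\leq R^{-1}$ and $(\log(2R)+1)/R$ is uniformly bounded for $R\geq 1$, summing the four contributions yields a single $C/R$ factor in front of the stated sum of norms.

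The main obstacle, modest as it is, is to verify that the Hessian bound $|i\partial\bar\partial\chi_R|_g=O(R^{-2})$ survives the passage from $g_0$ to the AC metric $g$ uniformly in $R$. This uses that $g-\pi^{*}g_0$ and its derivatives decay at some rate $\tau>0$ in the $g_0$-metric, so the $g$-norms of $dr$ and $i\partial\bar\partial r$ differ from their $g_0$-counterparts by uniformly bounded factors on the annuli $A_R$, $R\geq 1$. With this comparison in hand, every remaining step reduces to a direct application of the Leibniz rule.
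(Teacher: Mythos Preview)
Your proof is correct and follows essentially the same route as the paper's: choose a radial cut-off, expand $i\partial\bar\partial(\chi_R\phi)$ by Leibniz, and estimate term by term on the annulus. The only cosmetic difference is that the paper takes $\chi_R(x)=\chi(r(x)^2/R^2)$ rather than $\chi(r(x)/R)$, which lets it work directly with $i\partial\bar\partial r^2=2\omega_0$ instead of your observation $|i\partial\bar\partial r|_{g_0}=O(r^{-1})$; the resulting estimates are identical.
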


\begin{proof}
Let $\chi:\mathbb{R}\rightarrow\mathbb{R}$ be a smooth function satisfying
$\chi(x)=0$ for $x\leq 1$, $\chi(x)=1$ for $x\geq 4$, and $|\chi(x)|\leq 1$ for all $x$, and with it, define a function $\chi_{R}:M\to\mathbb{R}$ by
$$\chi_{R}(x)=\chi\left(\frac{r(x)^{2}}{R^{2}}\right)\qquad\textrm{for $R\geq1$ as in the statement of the lemma}.$$
Then $\chi_{R}$ is identically zero on $\{x\in M\setminus E\,|\,r(x)<R\}\cup E$ and identically equal to one on the set $\{x\in M\,|\,r(x)>2R\}$.
Define $\phi_{R}:=\chi_{R}\cdot\phi$. Then the closed real $(1,\,1)$-form $i\partial\bar{\partial}(\chi_{R}\cdot\phi)$ on $M$ is given by
\begin{equation*}
\begin{split}
i\partial\bar{\partial}(\chi_{R}\cdot\phi)=\chi_{R}(r)\cdot i\partial\bar{\partial}\phi
&+\chi'\left(\frac{r^{2}}{R^2}\right)\cdot
i\frac{\partial r^{2}}{R}\wedge\frac{\bar{\partial}\phi}{R}+\frac{\phi}{R^{2}}\cdot\chi'\left(\frac{r^{2}}{R^{2}}\right)
\cdot i\partial\bar{\partial}r^{2}\\
&+\chi'\left(\frac{r^{2}}{R^{2}}\right)\cdot\frac{i\partial\phi}{R}\wedge\frac{\bar{\partial}r^{2}}{R}+\frac{\phi}{R^{2}}\cdot\chi''\left
(\frac{r^{2}}{R^{2}}
\right)\cdot i\frac{\partial r^{2}}{R}\wedge \frac{\bar{\partial}r^{2}}{R}.
\end{split}
\end{equation*}
The assumptions on $\phi$ and its derivatives then imply for example that
\begin{equation*}
\begin{split}
|\chi_{R}(x)\cdot i\partial\bar{\partial}\phi|_{g_{0}}&\leq\sup_{r\,\in\,[R,\,\infty)}|i\partial
\bar{\partial}\phi|_{g_{0}}\\
&\leq\left(\sup_{r\,\in\,[R,\,\infty)}r^{-2}\right)
\left(\sup_{r\,\in\,[R,\,\infty)}r^{2}\cdot|i\partial
\bar{\partial}\phi|_{g_{0}}\right)\leq R^{-2}\|r^{2}\cdot i\partial\bar{\partial}\phi\|_{C^{0}(M\setminus\{r\,<\,R\},\,g_{0})}
\end{split}
\end{equation*}
and that
\begin{equation*}
\left|\chi'\left(\frac{r^{2}}{R^{2}}\right)\cdot
i\frac{\partial r^{2}}{R}\wedge\frac{\bar{\partial}\phi}{R}\right|_{g_{0}}\leq\frac{C}{R^{2}}\left(\sup_{r\,\in\,[R,\,2R]}\left|
r\partial r\wedge\bar{\partial}\phi\right|_{g_{0}}\right)\leq CR^{-2}\|r\cdot d\phi\|_{C^{0}(M\setminus\{r\,<\,R\},\,g_{0})}.
\end{equation*}
The estimate of the lemma is now clear.
\end{proof}

\subsection{Asymptotically conical Riemannian manifolds}

We have the following definition.
\begin{definition}\label{d:AC}
Let $(M,g)$ be a complete Riemannian manifold and let $(C_0,g_0)$ be a Riemannian cone. We call $M$ \emph{asymptotically conical} (AC) with tangent cone $C_{0}$ if there exists a diffeomorphism $\Phi: C_0\setminus K \to M \setminus K'$ with $K,K'$ compact, such that $\Phi^*g - g_0 = O(r^{-\epsilon})$ with $g_0$-derivatives for some $\epsilon > 0$. A \emph{radius function} is a smooth function $\rho: M \to [1,\infty)$ with $\Phi^*\rho = r$ away from $K$.
\end{definition}

\subsection{Asymptotically conical K\"ahler manifolds}\label{cones2}

We also have:
\begin{definition}\label{d:ACK}
Let $(M,\,g)$ be a complete K\"ahler manifold with complex structure $J$ and let $(C_0,g_0)$ be a K\"ahler cone with a choice of $g_0$-parallel complex structure $J_0$. We call $M$ \emph{asymptotically conical} (AC) \emph{K\"ahler} with tangent cone $C_{0}$ if there exists a diffeomorphism $\Phi: C_0\setminus K \to M \setminus K'$ with $K,K'$ compact, such that $\Phi^*g - g_0 = O(r^{-\epsilon})$ with $g_0$-derivatives
and $\Phi^*J - J_0 = O(r^{-\epsilon})$ with $g_0$-derivatives for some $\epsilon > 0$. In particular, $(M,\,g)$ is AC with tangent cone $C_{0}$.
\end{definition}

We implicitly only allow for one end in Definitions \ref{d:AC} and \ref{d:ACK}. This is because shrinking gradient K\"ahler-Ricci solitons only have one end \cite{munteanu}.
Furthermore, for our applications, the map $\Phi$ in Definition \ref{d:ACK} will always be a biholomorphism.

\subsection{Basics of metric measure spaces}\label{metricmeasure}

We take the following from \cite{fut}.

A smooth metric measure space is a Riemannian manifold endowed with a weighted volume.
\begin{definition}
A \emph{smooth metric measure space} is a triple $(M,\,g,\,e^{-f}dV_{g})$, where $(M,\,g)$ is a complete Riemannian manifold with Riemannian metric $g$,
$dV_{g}$ is the volume form associated to $g$, and $f:M\to\mathbb{R}$ is a smooth real-valued function.
\end{definition}
\noindent A shrinking gradient Ricci soliton $(M,\,g,\,X)$ with $X=\nabla^g f$ naturally defines a smooth metric measure space $(M,\,g,\,e^{-f}dV_{g})$.
On such a space, we define the weighted Laplacian $\Delta_{f}$ by
$$\Delta_{f}u:=\Delta u-g(\nabla^g f,\,\nabla u)$$
on smooth real-valued functions $u\in C^{\infty}(M)$. There is a natural $L^{2}$-inner product $\langle\cdot\,,\,\cdot\rangle_{L^{2}_{f}}$ on the space $L^{2}_{f}$ of square-integrable smooth real-valued functions on $M$
with respect to the measure $e^{-f}dV_g$ defined by $$\langle u,\,v\rangle_{L_{f}^{2}}:=\int_{M}uv\,e^{-f}dV_{g},\qquad u,\,v\in L_{f}^{2}.$$
As one can easily verify, the operator $\Delta_{f}$ is symmetric with respect to $\langle\cdot\,,\,\cdot\rangle_{L_{f}^{2}}$.

\subsection{Polyhedrons and polyhedral cones}\label{pooly}

We take the following from \cite{cox}.

Let $E$ be a real vector space of dimension $n$ and let $E^{*}$ denote the dual. Write $\langle\cdot\,,\,\cdot\rangle$ for the evaluation $E^{*}\times E\to\mathbb{R}$. Furthermore, assume that we are given a \emph{lattice} $\Gamma \subset E$, that is, an additive subgroup $\Gamma \simeq \Z^n$. This gives rise to a dual lattice $\Gamma^* \subset E^*$. For any $\nu\in E$, $c\in\mathbb{R}$, let
$K(\nu,\,c)$ be the (closed) half space $\{x\in E\:|\:\langle\nu,\,x\rangle\geq c\}$ in $E$. Then we have:

\begin{definition}
A \emph{polyhedron} $P$ in $E$ is a finite intersection of half spaces,
i.e., $$P=\bigcap_{i=1}^{r}K(\nu_{i},\,c_{i})\qquad\textrm{for $\nu_{i}\in E^{*},\,c_{i}\in\mathbb{R}$}.$$
It is called a \emph{polyhedral cone} if all $c_{i}=0$, and moreover a \emph{rational polyhedral cone} if all $\nu_i \in \Gamma^*$ and $c_i = 0$. In addition, a polyhedron is called \emph{strongly convex} if it does not contain any affine subspace of $E$.
\end{definition}

The following definition will be useful.

\begin{definition}
A polyhedron $P \subset E^{*}$ is called \emph{Delzant} if its set of vertices is non-empty and each vertex $v \in P$ has the property that there are precisely $n$ edges $\{e_1, \dots e_n\}$ (one-dimensional faces) emanating from $v$ and there exists a basis $\{\varepsilon_1, \dots, \varepsilon_n\}$ of $\Gamma^*$ such that $\varepsilon_i$ lies along the ray $\R (e_i - v) $.
\end{definition}

\noindent Note that any such $P$ is necessarily strongly convex.

The asymptotic cone of a polyhedron contains all the directions going off to infinity in the polyhedron.

\begin{definition}\label{recession}
Let $P$ be a polyhedron in $E$. Its \emph{asymptotic cone}, denoted
by $\mathcal{C}(P)$, is the set of vectors $\alpha\in E$ with the property that there exists $\alpha^{0}\in E$
such that $\alpha^{0}+t\alpha\in P$ for sufficiently large $t>0$.
\end{definition}

The asymptotic cone may be identified as follows.

\begin{lemma}[{\cite[Lemma A.3]{wu}}]\label{alpha}
If $P=\bigcap_{i=1}^{r}K(\nu_{i},\,c_{i})$, then $\mathcal{C}(P)=\bigcap_{i=1}^{r}K(\nu_{i},\,0)$.
\end{lemma}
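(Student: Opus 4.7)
The plan is to prove the two inclusions $\mathcal{C}(P)\subseteq\bigcap_{i=1}^{r}K(\nu_{i},0)$ and $\bigcap_{i=1}^{r}K(\nu_{i},0)\subseteq\mathcal{C}(P)$ separately, both directly from the definitions. Throughout, we assume $P$ is nonempty, since otherwise the statement is either vacuous or requires clarifying convention.

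For the forward inclusion, I would start with $\alpha\in\mathcal{C}(P)$ and unpack Definition~\ref{recession}: there exists $\alpha^{0}\in E$ and some $T>0$ such that $\alpha^{0}+t\alpha\in P$ for all $t\geq T$. For each defining half-space $K(\nu_{i},c_{i})$, this gives
\begin{equation*}
\langle\nu_{i},\alpha^{0}\rangle+t\,\langle\nu_{i},\alpha\rangle\geq c_{i}\qquad\text{for all } t\geq T.
\end{equation*}
If $\langle\nu_{i},\alpha\rangle<0$ for some $i$, then the left-hand side tends to $-\infty$ as $t\to\infty$, contradicting the lower bound $c_{i}$. Hence $\langle\nu_{i},\alpha\rangle\geq 0$ for every $i$, so $\alpha\in\bigcap_{i=1}^{r}K(\nu_{i},0)$.

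For the reverse inclusion, I would take $\alpha\in\bigcap_{i=1}^{r}K(\nu_{i},0)$, so $\langle\nu_{i},\alpha\rangle\geq 0$ for all $i$. Since $P\neq\emptyset$, pick any $\alpha^{0}\in P$. Then for every $t\geq 0$ and every $i$,
\begin{equation*}
\langle\nu_{i},\alpha^{0}+t\alpha\rangle=\langle\nu_{i},\alpha^{0}\rangle+t\,\langle\nu_{i},\alpha\rangle\geq c_{i}+0=c_{i},
\end{equation*}
using $\alpha^{0}\in K(\nu_{i},c_{i})$ and $t\langle\nu_{i},\alpha\rangle\geq 0$. Thus $\alpha^{0}+t\alpha\in P$ for all $t\geq 0$, which certifies $\alpha\in\mathcal{C}(P)$.

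There is no real obstacle here; the argument is purely linear-algebraic and the only subtlety is making sure to use the correct direction of each half-space inequality and to allow $\alpha^{0}$ to lie in $E$ rather than $P$ in the forward direction (which is why the ``take $t\to\infty$'' step is the right way to extract the sign of $\langle\nu_{i},\alpha\rangle$). If one wishes to be fastidious about the case $P=\emptyset$, both sides still agree under the convention $\mathcal{C}(\emptyset)=\emptyset$ (or, trivially, that no such $\alpha$ exists since no $\alpha^{0}+t\alpha$ can lie in $P$), so the statement holds in that case as well.
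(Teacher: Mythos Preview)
Your proof is correct. The paper does not give its own proof of this lemma but simply cites \cite[Lemma A.3]{wu}; your direct argument via the two inclusions is the standard elementary one and matches what one finds in that reference.
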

\noindent In particular, the asymptotic cone of a polyhedron is a polyhedral cone.

We also have
\begin{definition}
The \emph{dual} of a polyhedral cone $C$ is the set $C^{\vee}=\{x\in E^{*}\:|\:\langle x,\,C\rangle\geq0\}$.
\end{definition}

\subsection{Hamiltonian actions}\label{hamilton}

Recall what it means for an action to be Hamiltonian.

\begin{definition}
Let $(M,\,\omega)$ be a symplectic manifold and let $T^{n}$ be a real torus acting by symplectomorphisms on $(M,\,\omega)$.
Denote by $\mathfrak{t}$ the Lie algebra of $T^{n}$ and by $\mathfrak{t}^{*}$ its dual. Then we say that the action of $T^{n}$ is \emph{Hamiltonian}
if there exists a smooth map $\mu_{\omega}:M\to\mathfrak{t}^{*}$ such that for all $\zeta\in\mathfrak{t}$,
\begin{equation*}
-\omega\lrcorner\zeta=du_{\zeta},
\end{equation*}
where $u_{\zeta}(x)=\langle\mu_{\omega}(x),\,\zeta\rangle$ for all $\zeta\in\mathfrak{t}$ and $x\in M$
and $\langle\cdot\,,\cdot\rangle$ denotes the dual pairing between $\mathfrak{t}$ and $\mathfrak{t}^{*}$.
We call $\mu_{\omega}$ the \emph{moment map} of the $T^{n}$-action and we call $u_{\zeta}$ a \emph{Hamiltonian (potential)} of $\zeta$.
This is defined up to a constant.
\end{definition}

Define
\begin{equation*}
\Lambda_{\omega}:=\{Y\in\mathfrak{t}\:|\: \textrm{$\langle\mu_{\omega},\,Y\rangle$ is proper and bounded below}\}\subseteq\mathfrak{t}.
\end{equation*}
This set can be identified through the image of $\mu_{\omega}$ in the following way.
\begin{prop}[{\cite[Proposition 1.4]{wu}}]\label{identify}
Let $(M,\,\omega)$ be a (possibly non-compact) symplectic manifold of real dimension $2n$ with symplectic form $\omega$ on which there is a Hamiltonian
action of a real torus $T^{n}$ with moment map $\mu_{\omega}:M\to\mathfrak{t}^{*}$, where $\mathfrak{t}$ is the Lie algebra of $T^{n}$ and $\mathfrak{t}^{*}$
its dual. Assume that the fixed point set of $T^{n}$ is compact and that $\Lambda_{\omega}\neq\emptyset$. Then $\Lambda_{\omega}=\operatorname{int}(\mathcal{C}(\mu_{\omega}(M))^{\vee})$.
\end{prop}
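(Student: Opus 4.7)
The plan is to prove the equality by a double inclusion, modulo one global structural input: a non-compact version of the Atiyah--Guillemin--Sternberg convexity theorem (Prato, Lerman--Meinrenken--Tolman--Woodward, Hilgert--Neeb--Plank, and others). Under our hypotheses that $\Lambda_\omega \neq \emptyset$ (so some component of the moment map is proper and bounded below) and that the fixed point set of $T^n$ is compact, this theorem gives that $\mu_\omega(M) \subseteq \mathfrak{t}^*$ is a closed convex locally polyhedral set. By Lemma \ref{alpha}, its asymptotic cone $\mathcal{C}(\mu_\omega(M))$ is then a genuine closed convex polyhedral cone, which legitimises the convex-geometric manipulations used below.

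For the inclusion $\Lambda_\omega \subseteq \operatorname{int}(\mathcal{C}(\mu_\omega(M))^\vee)$, fix $Y \in \Lambda_\omega$ and any nonzero $\alpha \in \mathcal{C}(\mu_\omega(M))$. By Definition \ref{recession} there exists $\alpha^0$ with $\alpha^0 + t\alpha \in \mu_\omega(M)$ for all large $t$, so that $\alpha^0 + t\alpha = \mu_\omega(x_t)$ for some $x_t \in M$. Since $|\mu_\omega(x_t)| \to \infty$ and $\langle \mu_\omega, Y\rangle$ is proper and bounded below, the points $x_t$ must exit every compact subset of $M$, and hence $\langle \mu_\omega(x_t), Y\rangle \to +\infty$. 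Reading off the $t$-behaviour of $\langle \alpha^0, Y\rangle + t\langle \alpha, Y\rangle$ forces $\langle \alpha, Y\rangle > 0$ (the value $0$ would make the expression constant, a negative value would send it to $-\infty$). Thus $Y$ is strictly positive on $\mathcal{C}(\mu_\omega(M)) \setminus \{0\}$; this in particular shows that the cone is pointed, and a compactness argument on its intersection with a unit sphere upgrades strict positivity to a uniform bound $\langle \alpha, Y\rangle \geq \delta|\alpha|$ for some $\delta > 0$, which is exactly the condition for $Y$ to lie in $\operatorname{int}(\mathcal{C}(\mu_\omega(M))^\vee)$.

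For the reverse inclusion, take $Y \in \operatorname{int}(\mathcal{C}(\mu_\omega(M))^\vee)$, so that $\langle \alpha, Y\rangle \geq \delta |\alpha|$ on $\mathcal{C}(\mu_\omega(M))$ for some $\delta > 0$. Using the polyhedral decomposition $\mu_\omega(M) = V + \mathcal{C}(\mu_\omega(M))$ of a (locally) polyhedral convex set as the Minkowski sum of a bounded piece $V$ and its asymptotic cone, any $p \in \mu_\omega(M)$ may be written $p = v + c$ with $|v|$ uniformly controlled and $c \in \mathcal{C}(\mu_\omega(M))$, whence $\langle p, Y\rangle \geq \delta|c| - |v|\,|Y| \geq \delta|p| - C$ for some constant $C > 0$. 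This yields boundedness below of $\langle \mu_\omega, Y\rangle$ on $M$. To pass from an affine lower bound on $\mu_\omega(M)$ to properness on $M$, pick a reference $Y_0 \in \Lambda_\omega$: properness of $\langle \mu_\omega, Y_0\rangle$ guarantees that $|\mu_\omega(x_k)| \to \infty$ along any sequence $x_k$ escaping to infinity in $M$, and combining this with the bound above yields $\langle \mu_\omega(x_k), Y\rangle \to \infty$. Therefore $Y \in \Lambda_\omega$.

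The crux of the argument is the convexity-theorem input, which is where the global symplectic geometry enters; once $\mu_\omega(M)$ is known to be a closed convex polyhedral set with a pointed recession cone, both inclusions reduce to the elementary polar duality between a pointed polyhedral cone and the interior of its dual, together with the elementary observation that a proper and bounded-below linear Hamiltonian forces the sequence realising a recession direction to escape to infinity in $M$.
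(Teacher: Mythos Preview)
The paper does not supply its own proof of this proposition; it is quoted directly from \cite[Proposition 1.4]{wu}. Your argument is a correct reconstruction of the standard proof, and there is nothing to compare it against here.

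One point deserves tightening. In the reverse inclusion you invoke a Minkowski decomposition $\mu_\omega(M) = V + \mathcal{C}(\mu_\omega(M))$ with $V$ bounded. This holds for polyhedra (finite intersections of half-spaces) but can fail for merely locally polyhedral closed convex sets. You should therefore be explicit that the non-compact convexity theorem you cite, under the hypotheses of a full-dimensional torus, compact fixed-point set, and $\Lambda_\omega \neq \emptyset$, actually yields that $\mu_\omega(M)$ is a genuine polyhedron with finitely many facets (its vertices being the finitely many images of the isolated fixed points). This is indeed the case---it is exactly what is established in \cite{wu}---but as written your phrase ``(locally) polyhedral'' suggests you may be relying on a weaker structural conclusion than the decomposition requires. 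Once the image is known to be a polyhedron, Lemma \ref{alpha} applies and both inclusions go through as you have written them.
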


\subsection{Toric geometry}\label{toric-geom}

In this section, we collect together some standard facts from toric geometry as well as recall those results from \cite{charlie} that we require.
We begin with the following definition.
\begin{definition}\label{toricmanifold}
A \emph{toric manifold} is an $n$-dimensional complex manifold $M$ endowed
with an effective holomorphic action of the algebraic torus $\Cstarn$ such that the following hold true.
\begin{itemize}
  \item The fixed point set of the $\Cstarn$-action is compact.
  \item There exists a point $p\in M$ with the property that the orbit $\Cstarn \cdot p \subset M$ forms a dense open subset of $M$.
\end{itemize}
\end{definition}
We will often denote the dense orbit simply by $\Cstarn \subset M$ in what follows.
The $\Cstarn$-action of course determines the action of the real torus $T^n \subset \Cstarn$.

\subsubsection{Divisors on toric varieties and fans}

Let $T^n \subset \Cstarn$ be the real torus with Lie algebra $\t$ and denote the dual pairing between $\t$ and the dual space $\mathfrak{t}^{*}$ by $\langle \cdot\,,\cdot\rangle$. There is a natural integer lattice $\Gamma \simeq \Z^n \subset \t$ comprising all $\lambda \in \t$ such that $\operatorname{exp}(\lambda) \in T^n$ is the identity. This then induces a dual lattice $\Gamma^* \subset \t^*$. We have the following combinatorial definition.

\begin{definition}
 A \emph{fan} $\Sigma$ in $\t$ is a finite set of rational polyhedral cones $\sigma$ satisfying:

	\begin{enumerate}
		\item For every $\sigma \in \Sigma$, each face of $\sigma$ also lies in $\Sigma$.
		\item For every pair $\sigma_1, \sigma_2 \in \Sigma$, $\sigma_1 \cap \sigma_2$ is a face of each.
	\end{enumerate}
\end{definition}

To each fan $\Sigma$ in $\t$, one can associate a toric variety $X_\Sigma$. Heuristically, $\Sigma$ contains all the data necessary to produce a partial equivariant compactification of $\Cstarn$, resulting in $X_\Sigma$. More concretely, one obtains $X_\Sigma$ from $\Sigma$ as follows. For each $n$-dimensional cone $\sigma \in \Sigma$, one constructs an affine toric variety $U_\sigma$ which we first explain. We have the dual cone $\sigma^{\vee}$ of $\sigma$. Denote by $S_\sigma$ the semigroup of those lattice points which lie in $\sigma^{\vee}$ under addition. Then one defines the semigroup ring, as a set, as all finite sums of the form
	\begin{equation*}
		\C[S_\sigma] = \left\{ \left.\sum \lambda_s s \, \right| \, s \in S_\sigma \right\},
	\end{equation*}
with the ring structure defined on monomials by $\lambda_{s_1}s_1\cdot \lambda_{s_2}s_2  = (\lambda_{s_1}\lambda_{s_2})(s_1+ s_2)$ and extended in the natural way. The affine variety $U_\sigma$ is then defined to be $\text{Spec}(\C[S_\sigma])$. This automatically comes endowed with a $\Cstarn$-action with a dense open orbit. This construction can also be applied to the lower dimensional cones $\tau \in \Sigma$. If $\sigma_1 \cap \sigma_2 = \tau$, then there is a natural way to map $U_\tau$ into $U_{\sigma_1}$ and $U_{\sigma_2}$ isomorphically. One constructs $X_\Sigma$ by declaring the collection of all $U_\sigma$ to be an open affine cover of $X_{\Sigma}$ with transition functions determined by $U_\tau$. This identification is also reversible.

\begin{prop}[{\cite[Corollary 3.1.8]{cox}}]\label{fann}
Let $M$ be a smooth toric manifold. Then there exists a fan $\Sigma$ such that $M \simeq X_\Sigma$.
\end{prop}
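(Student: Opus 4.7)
The plan is to reconstruct the fan $\Sigma$ from the geometry of the $(\C^*)^n$-action on $M$ by examining the fixed points and the equivariant affine neighbourhoods around them, and then to verify that the combinatorial data assembled in this way satisfies the axioms of a fan.

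First I would analyse the fixed point set of the $(\C^*)^n$-action. Since the action is effective with compact fixed point set and has a dense open orbit isomorphic to $(\C^*)^n$, the fixed points are isolated: at any fixed point $p$ the tangent representation of $T^n$ decomposes into $n$ non-trivial weight spaces (one for each $\C^*$-factor meeting the dense orbit), and these weights must span $\Gamma^*$ over $\Z$ since the action is effective and $M$ is smooth of the right dimension. By a standard equivariant linearisation (applying the Cartan/Bochner linearisation theorem for the compact torus $T^n$, together with algebraicity of the $(\C^*)^n$-action by Sumihiro's theorem since $M$ is implicitly assumed to be a normal algebraic variety in this setup), one obtains for each fixed point $p$ a $(\C^*)^n$-equivariant open neighbourhood $U_p \subset M$ isomorphic to $\C^n$ equipped with a linear $(\C^*)^n$-action through a basis $\{\varepsilon_1^p,\dots,\varepsilon_n^p\}$ of $\Gamma^*$. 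Dually, this singles out a smooth strongly convex rational polyhedral cone $\sigma_p \subset \t$ generated by the basis of $\Gamma$ dual to $\{\varepsilon_i^p\}$, and the identification of $U_p$ with $\C^n$ matches the description $U_p \simeq U_{\sigma_p} = \mathrm{Spec}(\C[S_{\sigma_p}])$ from Section \ref{toric-geom}.

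Next I would define $\Sigma$ to be the collection of all $\sigma_p$ together with all of their faces, and check the two fan axioms. Axiom (i) is automatic. For axiom (ii), the intersection $\sigma_{p_1} \cap \sigma_{p_2}$ should correspond to a $(\C^*)^n$-invariant open subset $U_{p_1} \cap U_{p_2} \subset M$ which is again an affine toric variety. The key point is to show that this intersection is of the form $U_\tau$ for a common face $\tau$ of $\sigma_{p_1}$ and $\sigma_{p_2}$. I would establish this using one-parameter subgroups: for $\lambda \in \Gamma$ and $q$ in the dense orbit, the limit $\lim_{t \to 0}\lambda(t)\cdot q$ exists in $M$ (and lies in a given $T^n$-orbit closure) precisely when $\lambda$ lies in the corresponding cone of $\Sigma$. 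This orbit-limit criterion intrinsically characterises the cones $\sigma_p$ independently of the chosen chart, and from it the common-face property on intersections follows directly. Thereby I obtain a fan $\Sigma$.

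Finally, to conclude $M \simeq X_\Sigma$, I would exhibit the isomorphism chart by chart. The equivariant open pieces $\{U_p\}_{p \in M^{(\C^*)^n}}$ form an open cover of $M$: any point $q \in M$ lies in the closure of a $T^n$-orbit whose closure (being compact $T^n$-invariant with a proper Hamiltonian moment map restricted to a suitable slice, or by standard orbit-closure arguments) must contain a fixed point, hence lies in some $U_p$. On each $U_p$ the identification with $U_{\sigma_p}$ is $(\C^*)^n$-equivariant, and on overlaps $U_{p_1}\cap U_{p_2} \simeq U_{\sigma_{p_1}\cap \sigma_{p_2}}$ both gluings agree with the canonical gluing in $X_\Sigma$ because both are determined on the dense torus by the identity. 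Patching yields a $(\C^*)^n$-equivariant biholomorphism $M \simeq X_\Sigma$. The main obstacle I expect is the linearisation step near each fixed point: assembling a genuinely $(\C^*)^n$-equivariant (not just $T^n$-equivariant) affine chart requires either Sumihiro's theorem on equivariant affine covers of normal toric varieties or a direct extension-from-$T^n$ argument, and everything downstream rests on this local normal form.
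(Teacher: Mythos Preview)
The paper does not prove this proposition; it simply cites it as \cite[Corollary 3.1.8]{cox}, so there is no proof in the paper to compare against. That said, your proof plan has a genuine gap worth flagging.

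Your construction builds the fan entirely out of $n$-dimensional cones $\sigma_p$ attached to fixed points $p$, and your covering argument asserts that every $T^n$-orbit closure contains a fixed point. This fails in general: the definition in the paper only requires the fixed point set to be compact, not non-empty, and many toric manifolds have no fixed points at all. The simplest example is $M=(\C^*)^n$ itself, whose fan is $\{0\}$; another is $\C^*\times\C$, where the closed orbit $\C^*\times\{0\}$ contains no fixed point and the fan consists of a single ray. In these cases your cover $\{U_p\}$ is empty or incomplete and you never produce the lower-dimensional cones needed.

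The standard route (which is what \cite{cox} does) is to invoke Sumihiro's theorem directly to obtain a cover of $M$ by $(\C^*)^n$-invariant \emph{affine} open sets---not tied to fixed points---and then use the classification of affine toric varieties to identify each such open with some $U_\sigma$. The cones $\sigma$ arising this way can have any dimension, and the one-parameter-subgroup limit criterion you mention is then used to show they assemble into a fan. You already have the right tool (Sumihiro) and the right glue (limits of one-parameter subgroups); the fix is to apply Sumihiro to get the affine cover first, rather than trying to manufacture the cover from fixed points.
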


 \begin{prop}[{\cite[Theorem 3.2.6]{cox}, Orbit-Cone Correspondence}]\label{orbitcone} The $k$-dimensional cones $\sigma \in \Sigma$ are in a natural one-to-one correspondence with the $(n-k)$-dimensional orbits $O_\sigma$ of the $\Cstarn$-action on $X_\Sigma$.
 \end{prop}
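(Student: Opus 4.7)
The plan is to establish the bijection by explicitly constructing, for each cone $\sigma \in \Sigma$, a distinguished point $x_\sigma \in X_\Sigma$ whose $(\mathbb{C}^{*})^{n}$-orbit has complex dimension $n-\dim\sigma$, and then showing that every point of $X_\Sigma$ lies in exactly one such orbit. I would first reduce to the affine case. Since $X_\Sigma$ is covered by the $(\mathbb{C}^{*})^{n}$-invariant opens $U_\sigma$, and since for a face $\tau\preceq\sigma$ one has $U_\tau\subseteq U_\sigma$ as torus-equivariant open subsets (because $S_\tau = S_\sigma + \mathbb{Z}_{\geq 0}(-m_{0})$ for any $m_{0}\in S_\sigma$ cutting out the face $\tau$), it suffices to describe the orbit structure of each affine piece and then verify compatibility on overlaps.

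Next, in $U_\sigma=\operatorname{Spec}(\mathbb{C}[S_\sigma])$, where $S_\sigma=\sigma^\vee\cap\Gamma^{*}$, closed points correspond to semigroup homomorphisms $S_\sigma\to(\mathbb{C},\cdot)$. I would define the distinguished point $x_\sigma$ to be the homomorphism
$$
\chi_\sigma(m) = \begin{cases} 1 & \text{if } m\in\sigma^{\perp}\cap\Gamma^{*},\\ 0 & \text{otherwise},\end{cases}
$$
and verify well-definedness using the identity $S_\sigma\cap(-S_\sigma)=\sigma^{\perp}\cap\Gamma^{*}$. For $t\in(\mathbb{C}^{*})^{n}$, the translate $t\cdot x_\sigma$ is the homomorphism $m\mapsto t^{m}\chi_\sigma(m)$. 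Consequently the stabiliser of $x_\sigma$ is the subgroup $\{t\,:\,t^{m}=1\text{ for all }m\in\sigma^{\perp}\cap\Gamma^{*}\}$, which is precisely the subtorus of $(\mathbb{C}^{*})^{n}$ whose Lie algebra is $\sigma\cdot\mathbb{C}\subseteq\mathfrak{t}\otimes\mathbb{C}$. Hence this stabiliser has dimension $\dim\sigma=k$, and the orbit $O_\sigma:=(\mathbb{C}^{*})^{n}\cdot x_\sigma$ has dimension $n-k$ as required.

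Third, I would show that the orbits $\{O_\sigma\}_{\sigma\in\Sigma}$ partition $X_\Sigma$. Given any $x\in X_\Sigma$, pick the (unique) smallest cone $\sigma$ with $x\in U_\sigma$; then $x$ is represented by a homomorphism $\varphi:S_\sigma\to\mathbb{C}$, and the subset $\{m\in S_\sigma:\varphi(m)\neq 0\}$ is of the form $F\cap\Gamma^{*}$ for a face $F\preceq\sigma^{\vee}$, which by the face correspondence $F=\tau^{\perp}\cap\sigma^{\vee}$ picks out a unique face $\tau\preceq\sigma$. One then checks that $(\mathbb{C}^{*})^{n}\cdot x$ coincides with $O_\tau$ by exhibiting an explicit $t\in(\mathbb{C}^{*})^{n}$ carrying $x$ to $x_\tau$ (using the surjectivity of the character pairing on $\Gamma^{*}/(\tau^{\perp}\cap\Gamma^{*})$). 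Disjointness of distinct $O_\sigma$ follows from the fact that different cones produce distinct stabiliser subgroups, so $x_\sigma$ and $x_{\sigma'}$ can never lie in the same orbit when $\sigma\neq\sigma'$.

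The main point requiring care is the compatibility of this local orbit description with the gluing of the $U_\sigma$ into $X_\Sigma$: one must check that if $\tau\preceq\sigma$, then the distinguished point $x_\tau$ defined in $U_\tau$ agrees, via the open embedding $U_\tau\hookrightarrow U_\sigma$, with the orbit in $U_\sigma$ corresponding to the face $\tau$ by the construction of the previous paragraph. This reduces to a direct computation on semigroup homomorphisms: the restriction of $\chi_\tau$ (viewed in $U_\sigma$ via $S_\sigma\subseteq S_\tau$) sends $m\in S_\sigma$ to $1$ if $m\in\tau^{\perp}$ and to $0$ otherwise, which is exactly the homomorphism specifying the point in $U_\sigma$ corresponding to the face $\tau$. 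Once this compatibility is in place, the maps $\sigma\mapsto O_\sigma$ and $O\mapsto \sigma_{O}:=$ the unique cone whose distinguished point lies in $O$ are mutually inverse, yielding the asserted bijection.
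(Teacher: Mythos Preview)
The paper does not supply a proof of this proposition; it is quoted directly from \cite[Theorem 3.2.6]{cox} and used as a black box. Your proposal is the standard argument (essentially the one in Cox--Little--Schenck or Fulton) and is correct: defining the distinguished point $x_\sigma$ as the semigroup homomorphism $\chi_\sigma$, computing its stabiliser as the subtorus with Lie algebra the span of $\sigma$, and then stratifying each $U_\sigma$ by the faces of $\sigma^\vee$ is exactly how the result is proved in the cited reference. One minor quibble: the phrase ``pick the (unique) smallest cone $\sigma$ with $x\in U_\sigma$'' is slightly off---there is no need for a smallest cone, only for \emph{some} cone with $x\in U_\sigma$, after which the face $\tau\preceq\sigma$ determined by the support of $\varphi$ is independent of the choice of $\sigma$ by your compatibility check; but this does not affect the validity of the argument.
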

\noindent In particular, each ray $\sigma \in \Sigma$ determines a unique torus-invariant divisor $D_\sigma$. As a consequence, a torus-invariant Weil divisor $D$ on $X_\Sigma$ naturally determines a polyhedron $P_D \subset \mathfrak{t}^{*}$. Indeed, we can decompose $D$ uniquely as $D = \sum_{i=1}^N a_i D_{\sigma_i}$, where $\{\sigma_i\}_{i}\subset\Sigma$ is the collection of rays. Then by assumption, there exists a unique minimal lattice element $\nu_i \in \sigma_i \cap \Gamma$. The polyhedron $P_{D}$ is then given by

 \begin{equation} \label{eqnB2}
 	P_D = \left\{ x \in \mathfrak{t}^{*} \: | \: \langle \nu_i, x \rangle \geq - a_i \right\} = \bigcap_{i = 1}^N K(\nu_i, -a_i).
 \end{equation}

\subsubsection{K\"ahler metrics on toric varieties}\label{finito}

For a given toric manifold $M$ endowed with a Riemannian metric $g$ invariant under the action of the real torus $T^n \subset \Cstarn$ and K\"ahler with respect to the underlying complex structure
of $M$, the K\"ahler form $\omega$ of $g$ is also invariant under the $T^n$-action. We call such a manifold a \emph{toric K\"ahler manifold}.
In what follows, we always work with a fixed complex structure on $M$.

Hamiltonian K\"ahler metrics have a useful characterisation due to Guillemin.

\begin{prop}[{\cite[Theorem 4.1]{Guil}}]\label{propB6}
	Let $\omega$ be any $T^n$-invariant K\"ahler form on $M$. Then the $T^{n}$-action is Hamiltonian with respect to $\omega$ if and only if the restriction of $\omega$ to the dense orbit $\Cstarn \subset M$ is exact, i.e., there exists a $T^{n}$-invariant potential $\phi$ such that
	\begin{equation*}
		\omega = 2i\p\bp \phi.
	\end{equation*}
\end{prop}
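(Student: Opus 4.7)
Both directions are most conveniently analyzed on the dense orbit $\Cstarn$ in logarithmic polar coordinates $w_j = e^{x_j + i\theta_j}$, so that $T^n$ acts by translating $\theta$ and $J\p_{x_j} = \p_{\theta_j}$. For the ``if'' direction, suppose $\omega = 2i\p\bp\phi$ on $\Cstarn$ with $\phi$ a $T^n$-invariant smooth function, and fix $\zeta\in\t$ with fundamental vector field $X_\zeta$. Since $X_\zeta$ is real-holomorphic, $\mathcal{L}_{X_\zeta}$ commutes with $d^c := i(\bp - \p)$, while $T^n$-invariance of $\phi$ yields $\mathcal{L}_{X_\zeta}\phi = 0$. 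Together with $\omega = dd^c\phi$, Cartan's formula gives
\begin{equation*}
-\iota_{X_\zeta}\omega = d(\iota_{X_\zeta}d^c\phi) - \mathcal{L}_{X_\zeta}(d^c\phi) = d(\iota_{X_\zeta}d^c\phi),
\end{equation*}
so $u_\zeta := \iota_{X_\zeta}d^c\phi$ is a Hamiltonian for $X_\zeta$ on the dense orbit. Because $-\iota_{X_\zeta}\omega$ is a smooth closed 1-form on all of $M$ and $M$ is simply connected, $u_\zeta$ admits a unique $T^n$-invariant smooth extension to $M$ (up to an additive constant); assembling these across a basis of $\t$ produces the moment map $\mu_\omega$.

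For the ``only if'' direction, $T^n$-invariance of $\omega$ forces the decomposition on $\Cstarn$
\begin{equation*}
\omega = A_{jk}(x)\,dx_j\wedge dx_k + B_{jk}(x)\,dx_j\wedge d\theta_k + C_{jk}(x)\,d\theta_j\wedge d\theta_k,
\end{equation*}
with $A, C$ antisymmetric and all coefficients depending only on $x$. Compatibility $\omega(J\cdot, J\cdot) = \omega(\cdot, \cdot)$ forces $A = C$ and $B$ symmetric. Separating $d\omega = 0$ by the number of $d\theta$-factors, the ``one $dx$ and two $d\theta$'' component gives $\p_{x_l}A_{jk} = 0$ (so $A$ is a constant skew matrix), while the ``two $dx$ and one $d\theta$'' component gives $\p_{x_l}B_{jk} = \p_{x_j}B_{lk}$. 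Combined with the symmetry of $B$, this forces $B_{jk} = \p_{x_j}\p_{x_k}\phi$ for some smooth $T^n$-invariant function $\phi(x)$ on $\R^n$, obtained by integrating twice on the simply connected base.

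The main obstacle is showing that the residual constant skew matrix $A$ actually vanishes; this is precisely where the Hamiltonian hypothesis is decisive. Let $u_l$ be a Hamiltonian for $\p_{\theta_l}$; averaging over $T^n$ makes it $T^n$-invariant, hence depending only on $x$ on the dense orbit, so $du_l$ has no $d\theta$-component. A direct contraction gives
\begin{equation*}
-\iota_{\p_{\theta_l}}\omega = B_{jl}\,dx_j - 2A_{lk}\,d\theta_k,
\end{equation*}
and matching with $du_l$ forces $A = 0$. Without the Hamiltonian hypothesis, this residual $A$-piece would represent a nontrivial class in $H^2(\Cstarn) \cong H^2(T^n)$ and genuinely obstruct the existence of a global potential. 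Once $A = 0$, a direct computation using $\p x_j = \tfrac{1}{2}(dx_j + id\theta_j)$ yields $2i\p\bp\phi = \p_{x_j}\p_{x_k}\phi\,dx_j\wedge d\theta_k$, which matches the remaining $B$-piece of $\omega$ on $\Cstarn$, completing the proof.
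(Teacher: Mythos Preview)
The paper does not prove this proposition; it is quoted from Guillemin \cite[Theorem 4.1]{Guil} without argument. Your proof is correct and is essentially the standard one.

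One small caveat: in the ``if'' direction you invoke simple connectedness of $M$ to extend $u_\zeta$ globally, but this is not part of the hypothesis. In the paper's particular setting it happens to hold (the torus action has a fixed point, cf.~\cite[Theorem 12.1.10]{cox}), but the more robust argument is that the complement of the dense orbit has real codimension at least $2$, so every loop in $M$ can be homotoped into $\Cstarn$, where the closed $1$-form $-\iota_{X_\zeta}\omega$ is already exact; hence it is exact on all of $M$. Apart from this, your computations in logarithmic coordinates---reading off $A=C$ and the symmetry of $B$ from $J$-compatibility, extracting $B_{jk}=\partial_{x_j}\partial_{x_k}\phi$ from $d\omega=0$, and killing the constant skew matrix $A$ by observing that the $T^n$-averaged Hamiltonian $u_l$ has no $d\theta$-component---are correct and constitute the substance of the argument.
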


Fix once and for all a $\Z$-basis $(X_1,\ldots,X_n)$ of $\Gamma \subset \t$. This in particular induces a background coordinate system $\xi=(\xi^1, \dots, \xi^n)$ on $\t$.
Using the natural inner product on $\t$ to identify $\t \cong \t^*$, we can also identify $\t^* \cong \R^n$.
For clarity, we will denote the induced coordinates on $\t^*$ by $x=(x^1,\ldots, x^n)$. Let $(z_1, \dots, z_n)$ be the natural
coordinates on $\Cstarn$ as an open subset of $\C^n$. There is a natural diffeomorphism $\text{Log}:
\Cstarn \to \t \times T^n$ which provides a one-to-one correspondence between $T^n$-invariant smooth functions on
 $\Cstarn$ and smooth functions on $\t$. Explicitly,
\begin{equation}\label{diffeoo}
(z_1, \dots, z_n)\xmapsto{\operatorname{Log}}(\log(r_1), \dots, \log(r_n), \theta_1, \dots, \theta_n)=(\xi_{1},\ldots,\xi_{n},\,\theta_{1},\ldots,\theta_{n}),
\end{equation}
where $z_j = r_j e^{i \theta_j}$,\,$r_{j}>0$. Given a function $H(\xi)$ on $\t$, we can extend $H$ trivially to $\t \times T^n$ and pull back by Log to
obtain a $T^n$-invariant function on $\Cstarn$. Clearly, any $T^n$-invariant function on $\Cstarn$ can be written in this form.

Choose any branch of $\log$ and write $w = \log(z)$. Then clearly $w = \xi + i \theta$, where $\xi=(\xi^1,\ldots,\xi^n)$ are real coordinates on $\t$
(or, more precisely, there is a corresponding lift of $\theta$ to the universal cover with respect to which this equality holds),
and so if $\phi$ is $T^n$-invariant and $\omega = 2i \p \bp \phi$, then we have that
\begin{equation}\label{e:T5}
	\omega = 2i\frac{\p^2 \phi}{ \p w^i \p\bar{w}^j} dw_i \wedge d\bar{w}_j = \frac{\p^2 \phi}{ \p \xi^i \p\xi^j} d\xi^i \wedge d\theta^j.
\end{equation}
In this setting, the metric $g$ corresponding to $\omega$ is given on $\t \times T^n$ by
\begin{equation*}
	g = \phi_{ij}(\xi)d\xi^i d\xi^j + \phi_{ij}(\xi)d\theta^i d\theta^j,
\end{equation*}
and the moment map $\mu$ as a map $\mu: \t \times T^n \to \t^*$ is defined by the relation
		\begin{equation*}
		\langle \mu(\xi, \theta), b \rangle = \langle \nabla \phi(\xi), b \rangle
	\end{equation*}
for all $b \in \t$, where $\nabla \phi$ is the Euclidean gradient of $\phi$.
The $T^n$-invariance of $\phi$ implies that it depends only on $\xi$ when considered as a function on $\t \times T^n$
via \eqref{diffeoo}. Since $\omega$ is K\"ahler, we see from \eqref{e:T5} that the Hessian of $\phi$ is positive-definite so that $\phi$ itself is strictly convex.
In particular, $\nabla \phi$ is a diffeomorphism onto its image.
Using the identifications mentioned above, we view $\nabla \phi$ as a map from $\t$ into an open subset of $\t^*$.

\subsubsection{K\"ahler-Ricci solitons on toric manifolds}

Next we define what we mean by a shrinking K\"ahler-Ricci soliton in the toric category.
\begin{definition}
A complex $n$-dimensional shrinking K\"ahler-Ricci soliton $(M,\,g,\,X)$ with complex structure $J$ and K\"ahler form $\omega$ is \emph{toric} if $(M,\,\omega)$ is a toric K\"ahler
manifold as in Definition \ref{toricmanifold} and $JX$ lies in the Lie algebra $\t$ of the underlying real torus $T^{n}$ that acts on $M$. In particular, the zero set of $X$ is compact.
\end{definition}

It follows from \cite{carlos, sun2024} that $\pi_{1}(M)=0$, hence the induced real $T^n$-action is automatically Hamiltonian with respect to $\omega$.
Working on the dense orbit $\Cstarn \subset M$, the condition that a vector field $JY$ lies in $\t$ is equivalent to saying that in the coordinate system $(\xi^1,\ldots,\xi^n,\,\theta_{1},\ldots,\theta_{n})$
from \eqref{diffeoo}, there is a constant $b_Y=(b_{Y}^{1},\ldots,b_{Y}^{n})\in \R^n$ such that
\begin{equation}\label{eqnY4}
	JY =  b_Y^i \frac{\p}{\p\theta^i}\qquad\textrm{or equivalently,}\qquad Y =   b_Y^i \frac{\p}{\p\xi^i}.
\end{equation}
From Proposition \ref{propB6}, we know that $\mathcal{L}_{X}\omega=2i\p\bp(X\cdot\phi)$. In addition,
the function $X\cdot\phi$ on $\Cstarn$ can be written as $\langle b_X, \nabla \phi \rangle = b_X^j \frac{\p\phi}{\p\xi^j}$,
where $b_{X}\in\mathbb{R}^{n}$ corresponds to the soliton vector field $X$ via \eqref{eqnY4}.
These observations allow us to write the shrinking soliton equation \eqref{krseqn} as a real Monge-Amp\`ere equation for $\phi$ on $\R^n$.

\begin{prop}[{\cite[Proposition 2.6]{charlie}}]
Let $(M,\,g,\,X)$ be a toric shrinking gradient K\"ahler-Ricci soliton with K\"ahler form $\omega$. Then
there exists a unique smooth convex real-valued function $\phi$ defined on the dense orbit $\Cstarn\subset M$ such that $\omega=2i\partial\bar{\partial}\phi$
and
\begin{equation} \label{realMA}
	\det(\phi_{ij})=e^{-2\phi+\langle b_{X},\,\nabla\phi\rangle}.
\end{equation}
\end{prop}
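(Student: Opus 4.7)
The plan is to pass to the dense orbit $\Cstarn \subset M$, express the soliton equation \eqref{krseqn} in the coordinates of \eqref{diffeoo}, and recover \eqref{realMA} as a real Monge-Amp\`ere equation for the $T^n$-invariant K\"ahler potential supplied by Proposition \ref{propB6}.

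First I would note that $M$ is simply connected (by the discussion following Theorem \ref{mainthm}), so the $T^n$-action on $M$ is automatically Hamiltonian with respect to $\omega$. Proposition \ref{propB6} then produces a $T^n$-invariant potential $\phi = \phi(\xi)$ on $\Cstarn$ with $\omega = 2i\partial\bar\partial\phi$, and this $\phi$ is strictly convex because by \eqref{e:T5} its Hessian $(\phi_{ij})$ is the positive-definite matrix of the K\"ahler metric in the real coordinates $(\xi,\theta)$.

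Next I would rewrite each piece of the soliton equation $\rho_\omega + i\partial\bar\partial f = \omega$ in these coordinates. On $\Cstarn$, the Ricci form is $\rho_\omega = -i\partial\bar\partial \log\det(\phi_{ij})$, because two expressions of the form $-i\partial\bar\partial \log\det(g_{i\bar j})$ in different holomorphic frames differ by $i\partial\bar\partial$ of a pluriharmonic function. For the soliton potential, \eqref{eqnY4} gives $JX = b_X^i\,\partial/\partial\theta^i$, and a direct computation using $\omega = \phi_{ij}\, d\xi^i \wedge d\theta^j$ yields $\iota_{JX}\omega = -\phi_{ij} b_X^j\, d\xi^i = -d\langle b_X, \nabla\phi\rangle$. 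Combined with the convention $-\omega\lrcorner JX = df$ of Theorem \ref{mainthm}(iv), this gives $f = \langle b_X, \nabla\phi\rangle + c$ for some constant $c$. Substituting into the soliton equation produces
\begin{equation*}
i\partial\bar\partial \bigl( \langle b_X, \nabla\phi\rangle + c - \log\det(\phi_{ij}) - 2\phi \bigr) = 0 \qquad\text{on }\Cstarn.
\end{equation*}
The argument of $i\partial\bar\partial$ is $T^n$-invariant and pluriharmonic, and any such function on $\Cstarn$ is affine in $\xi$ (since $\partial_{w^i}\partial_{\bar w^j} h = \tfrac{1}{4} h_{\xi^i\xi^j}$ for $T^n$-invariant $h$). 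So this expression equals $\langle a,\xi\rangle + c'$ for some $a \in \R^n$ and $c' \in \R$.

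The last step is the normalisation. Replacing $\phi$ by $\phi + \langle\alpha,\xi\rangle + \beta$ preserves the identity $\omega = 2i\partial\bar\partial\phi$ and transforms the affine remainder into $\langle a - 2\alpha,\xi\rangle + (c' - 2\beta)$. The unique choice $\alpha = a/2$, $\beta = c'/2$ kills this remainder and yields \eqref{realMA}. For uniqueness, any convex $\phi$ solving \eqref{realMA} and representing $\omega$ would differ from this normalised one by an affine shift, and running the argument in reverse forces that shift to be trivial. The computation is essentially mechanical; the only conceptual point is the bookkeeping in the normalisation step, where the $n+1$ affine parameters available in $\phi$ are exactly what is needed to absorb the $(n+1)$-dimensional pluriharmonic remainder.
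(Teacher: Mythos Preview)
The paper does not prove this proposition; it is quoted from \cite{charlie}, and the paragraph immediately preceding it already sketches exactly the argument you give (express $\rho_\omega$, $\mathcal{L}_X\omega$, and $\omega$ in the $\xi$-coordinates and read off the real Monge--Amp\`ere equation). Your proof is correct and fills in this sketch.

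One small computational slip in your normalisation step: when you replace $\phi$ by $\phi + \langle\alpha,\xi\rangle + \beta$, the term $\langle b_X,\nabla\phi\rangle$ becomes $\langle b_X,\nabla\phi\rangle + \langle b_X,\alpha\rangle$, so the constant part of the remainder becomes $c' + \langle b_X,\alpha\rangle - 2\beta$, not $c' - 2\beta$. This does not affect the argument---you still solve uniquely for $\alpha$ and then for $\beta$---and indeed the paper makes exactly this correction later (see the paragraph before Lemma~\ref{expression}, where the normalised potential is taken to be $2\phi_0 + a + \langle\nabla a, b_X\rangle$).
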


A priori, the function $\phi$ is defined only up to addition of a linear function.
However, \eqref{realMA} provides a normalisation for $\phi$ which in turn provides a normalisation for $\nabla\phi$, the moment map of the action.
The next lemma shows that this normalisation coincides with that for the moment map as defined in \cite[Definition 5.16]{cds}.

\begin{lemma}\label{normal}
Let $(M,\,g,\,X)$ be a toric complete shrinking gradient K\"ahler-Ricci soliton with complex structure $J$ and K\"ahler form $\omega$
with soliton vector field $X=\nabla^{g}f$ for a smooth real-valued function $f:M\to\mathbb{R}$.
Let $\phi$ be given by Proposition \ref{propB6} and normalised by \eqref{realMA}, let $JY\in\mathfrak{t}$, and let $u_{Y}=\langle\nabla\phi,\,b_{Y}\rangle$ be the Hamiltonian potential of $JY$
with $b_{Y}$ as in \eqref{eqnY4} so that $\nabla^{g}u_{Y}=Y$. Then $\mathcal{L}_{JX}u_{Y}=0$ and $\Delta_{\omega}u_{Y}+u_{Y}-\frac{1}{2}Y\cdot f=0$.
\end{lemma}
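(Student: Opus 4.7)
The plan is to reduce both statements to explicit computations on the dense orbit $(\mathbb{C}^*)^n \subset M$ in the real coordinates $(\xi, \theta)$ of Section \ref{finito}, and then extend the resulting pointwise identities to $M$ by continuity. Under the identification \eqref{diffeoo}, $u_Y = \langle \nabla \phi, b_Y \rangle = b_Y^i \phi_i$ depends only on $\xi$, whereas \eqref{eqnY4} exhibits $JX$ as the constant-coefficient vector field $b_X^j \, \partial/\partial\theta^j$; hence $\mathcal{L}_{JX} u_Y = JX \cdot u_Y = 0$ on the orbit, and therefore on all of $M$.

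For the second identity, I would first rewrite \eqref{e:T5} as $\omega = (i/2) \phi_{ij}\, dw^i \wedge d\bar{w}^j$ (with $w^i = \xi^i + i\theta^i$), read off $g_{j\bar{k}} = \tfrac{1}{2}\phi_{jk}$, and derive the Laplacian formula
\begin{equation*}
 \Delta_\omega h \,=\, \operatorname{tr}_\omega(i\partial\bar{\partial} h) \,=\, \tfrac{1}{2}\, \phi^{jk} h_{jk}
\end{equation*}
for any $T^n$-invariant function $h(\xi)$. Applied to $h = u_Y$, whose $\xi$-Hessian is $(u_Y)_{jk} = b_Y^i \phi_{ijk}$, this gives $\Delta_\omega u_Y = \tfrac{1}{2}\, b_Y^i\, \phi^{jk} \phi_{ijk}$.

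The key input is \eqref{realMA}. Differentiating $\log\det(\phi_{jk}) = -2\phi + \langle b_X, \nabla\phi\rangle$ with respect to $\xi^i$ and using the Jacobi formula $\partial_i \log\det\phi_{jk} = \phi^{jk}\phi_{ijk}$ yields
\begin{equation*}
 \phi^{jk}\phi_{ijk} \,=\, -2\phi_i + b_X^m \phi_{mi},
\end{equation*}
so $\Delta_\omega u_Y = -b_Y^i \phi_i + \tfrac{1}{2}\, b_Y^i b_X^m \phi_{mi} = -u_Y + \tfrac{1}{2}\, Y\cdot \langle b_X, \nabla\phi\rangle$. To finish, observe that $u_X := \langle b_X, \nabla\phi\rangle$ is by construction a Hamiltonian potential of $JX$, so $\nabla^g u_X = X = \nabla^g f$; hence $u_X - f$ is constant, $Y\cdot \langle b_X, \nabla\phi\rangle = Y\cdot f$, and the desired identity $\Delta_\omega u_Y + u_Y - \tfrac{1}{2}\, Y\cdot f = 0$ holds on the dense orbit, hence on $M$ by continuity.

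The main subtlety is bookkeeping the factor of $2$ in $\omega = 2i\partial\bar{\partial}\phi$: translating this into the relation $g_{j\bar{k}} = \tfrac{1}{2}\phi_{jk}$ is precisely what produces the coefficient $\tfrac{1}{2}$ in the Laplacian formula, and in turn the coefficient $\tfrac{1}{2}$ of $Y\cdot f$ in the final identity.
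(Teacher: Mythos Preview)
Your proof is correct. The paper does not give its own proof of this lemma (it is stated and then followed only by a remark about the equivalence with \cite[Definition 5.16]{cds}), so there is nothing to compare against; your direct computation on the dense orbit, differentiating the real Monge--Amp\`ere equation \eqref{realMA} and tracking the factor of $2$ coming from $\omega = 2i\partial\bar{\partial}\phi$, is exactly the natural argument.
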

\noindent To see the equivalence with \cite[Definition 5.16]{cds},
simply replace $Y$ with $JY$ in this latter definition as here we assume that $JY\in\mathfrak{t}$, contrary to the convention in \cite[Definition 5.16]{cds} where it is assumed that
$Y\in\mathfrak{t}$.

Given the normalisation \eqref{realMA}, the next lemma identifies the image of the moment map $\mu=\nabla\phi$.

\begin{lemma}[{\cite[Lemmas 4.4 and 4.5]{charlie}}]\label{note}
Let $(M,\,g,\,X)$ be a complete toric shrinking gradient K\"ahler-Ricci soliton, let $\{D_i\}$ be the prime $\Cstarn$-invariant divisors in $M$, and let $\Sigma \subset \t$ be the fan determined by
Proposition \ref{fann}. Let $\sigma_i\in\Sigma$ be the ray corresponding to $D_i$ with minimal generator $\nu_i \in \Gamma$.
\begin{enumerate}
  \item There is a distinguished Weil divisor representing the anticanonical class $-K_{M}$ given by
  \begin{equation*}
		-K_M = \sum_i D_i
	\end{equation*}
	whose associated polyhedron (cf.~\eqref{eqnB2}) is given by
 	\begin{equation}\label{xmas}
		P_{-K_M} = \left\{ x \: | \: \langle \nu_i, x \rangle \geq -1 \right\}
	\end{equation}
which is strongly convex and has full dimension in $\mathfrak{t}^{*}$. In particular, the origin lies in the interior of $P_{-K_{M}}$.
  \item If $\mu$ is the moment map for the induced real $T^n$-action normalised by \eqref{realMA}, then the image of $\mu$ is precisely $P_{-K_{M}}$.
\end{enumerate}
	\end{lemma}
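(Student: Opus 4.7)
For (i), my plan is to invoke the standard toric description of the anticanonical class on a smooth toric variety, namely $-K_M = \sum_\rho D_\rho$ where the sum runs over all rays $\rho$ of the defining fan (see, e.g., \cite[Theorem 8.2.3]{cox}). Substituting $a_i = 1$ into \eqref{eqnB2} gives $P_{-K_M} = \{x \in \mathfrak{t}^* : \langle \nu_i, x\rangle \geq -1\}$. Strong convexity and full-dimensionality then reduce, via Lemma \ref{alpha}, to the statement that the minimal generators $\{\nu_i\}$ span $\mathfrak{t}$ over $\R$. This spanning property follows from our hypothesis that $M$ is a resolution of a toric K\"ahler cone $C_0$: the fan of $C_0$ is a strongly convex rational polyhedral cone of maximal dimension $n$, and the refining fan $\Sigma$ of $M$ includes the extremal rays of this cone. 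Since $\langle \nu_i, 0\rangle = 0 > -1$ for every $i$, the origin lies in the interior of $P_{-K_M}$.

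For (ii), I would prove the two inclusions separately. The containment $\mu(M) \subseteq P_{-K_M}$ I would obtain by analysing the behaviour of the symplectic potential $\phi$ near the toric boundary divisors: for each $i$, using the orbit-cone correspondence (Proposition \ref{orbitcone}) to introduce local toric $\C^n$-patches transverse to $D_i$, the soliton Monge-Amp\`ere equation \eqref{realMA} forces $\phi(\xi)$ to admit an asymptotic expansion along the ray in $\mathfrak{t}$ dual to $\nu_i$ in which $\langle \nu_i, \nabla\phi(\xi)\rangle \to -1$. The coefficient $-1$ arises precisely from combining the $-2\phi$ term in the exponent of \eqref{realMA} (the $\lambda = 1$ shrinking normalisation) with the multiplicity-one appearance of each invariant divisor in $-K_M = \sum_i D_i$.

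For the reverse inclusion $\mu(M) \supseteq P_{-K_M}$, I would combine the strict convexity of $\phi$ (by \eqref{e:T5} the Hessian $(\phi_{ij})$ is positive-definite, so $\nabla\phi$ is a diffeomorphism onto an open convex subset of $\mathfrak{t}^*$) with the completeness of the soliton metric $g$. Completeness translates into a properness statement for $\phi$ on $\mathfrak{t}$ which forces $\nabla\phi(\mathfrak{t}) = \operatorname{int}(P_{-K_M})$; extending the moment map by continuity to $M$ so as to cover the toric divisors then yields $\mu(M) = P_{-K_M}$. As a consistency check, Proposition \ref{identify} gives $\Lambda_\omega = \operatorname{int}(\mathcal{C}(\mu(M))^\vee)$ and by Lemma \ref{alpha} $\mathcal{C}(P_{-K_M}) = \bigcap_i K(\nu_i, 0)$, so the soliton vector field $JX$, which sits in $\Lambda_\omega$ as the minimiser of the weighted volume functional, lives in the correct open cone.

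I expect the main technical obstacle to be the precise asymptotic analysis of $\phi$ near each invariant divisor $D_i$ that pins down the value $-1$ on the corresponding facet of $P_{-K_M}$. The decisive ingredient is the specific normalisation of the real Monge-Amp\`ere equation \eqref{realMA}: without it, one could only identify the image of $\mu$ up to translation in $\mathfrak{t}^*$. Ensuring that the asymptotics are uniform across all rays $\sigma_i \in \Sigma$ so that every facet is treated simultaneously is where most of the work lies.
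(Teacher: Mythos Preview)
The paper does not give its own proof of this lemma: it is quoted directly from \cite[Lemmas 4.4 and 4.5]{charlie} and stated without argument. There is therefore nothing in the paper to compare your proposal against.

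Your sketch is broadly the right shape for how such a statement is established. A few remarks. In part (i), you justify that the $\{\nu_i\}$ span $\mathfrak{t}$ by invoking the hypothesis that $M$ resolves a toric K\"ahler cone; note that this is the ambient setup of the paper but is \emph{not} a hypothesis of the lemma as stated, which concerns an arbitrary complete toric shrinking gradient K\"ahler--Ricci soliton. The cleaner justification is that a complete shrinking soliton has a compact (non-empty) zero set for $X$, and in the toric setting this forces the existence of a torus fixed point, hence a top-dimensional cone in $\Sigma$ whose edge generators already span $\mathfrak{t}$. In part (ii), your plan is correct in spirit: the normalisation \eqref{realMA} is exactly what fixes the facet levels at $-1$, and the argument proceeds by combining the boundary behaviour of the symplectic potential (Lemma \ref{boundaryy}) with convexity and completeness. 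The ``consistency check'' you add at the end via Proposition \ref{identify} is not needed for the proof itself.
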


\subsubsection{The weighted volume functional}\label{weighted}

As a result of Lemma \ref{normal}, we can now define the weighted volume functional.

\begin{definition}[{Weighted volume functional, \cite[Definition 5.16]{cds}}]\label{weightedvol}
Let $(M,\,g,\,X)$ be a complex $n$-dimensional toric shrinking gradient K\"ahler-Ricci soliton with K\"ahler form $\omega=2i\partial\bar{\partial}\phi$
 on the dense orbit with $\phi$ strictly convex with moment map $\mu=\nabla\phi$ normalised by \eqref{realMA}. Assume that the fixed point set of the torus is compact and define the open convex cone $$\Lambda_{\omega}:=\{Y\in\mathfrak{t}\:|\:\textrm{$\langle\mu,\,Y\rangle$ is proper and bounded below}\}\subseteq\mathfrak{t}.$$ Then the \emph{weighted volume functional} $\mathcal{F}_{\omega}:\Lambda_{\omega}\to\mathbb{R}$ is defined by
	\begin{equation*}
		\mathcal{F}_{\omega}(v) = \int_M e^{-\langle \mu,\,v \rangle} \omega^n.
	\end{equation*}	
\end{definition}

As the fixed point set of the torus is compact by definition, $\mathcal{F}_{\omega}$ is well-defined by the non-compact version of the Duistermaat-Heckman formula \cite{wu}
(see also \cite[Theorem A.3]{cds}). It is moreover strictly convex on $\Lambda_{\omega}$ \cite[Lemma 5.17(i)]{cds}, hence has at most one critical point in this set.
This leads to two important lemmas concerning the weighted volume functional in the toric category, the independence of $\Lambda_{\omega}$ and $\mathcal{F}_{\omega}$ from the choice of shrinking soliton $\omega$.

\begin{lemma}[{\cite[Lemma 2.26]{ccd}}]\label{one}
The open convex cone $\Lambda_{\omega}$ is independent of the choice of toric shrinking K\"ahler-Ricci soliton $\omega$ in Definition \ref{weightedvol}.
\end{lemma}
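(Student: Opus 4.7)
The plan is to reduce the independence claim to a purely combinatorial statement about the fan $\Sigma$ of $M$. The hypotheses of Definition \ref{weightedvol} guarantee that the fixed point set of $T^{n}$ is compact, and $\Lambda_{\omega}$ is nonempty since it contains the soliton vector field $JX$ (whose Hamiltonian potential $f$ is proper and bounded below by Theorem \ref{mainthm}(iv), or alternatively directly from \cite{charlie}). Consequently, Proposition \ref{identify} applies and yields
\begin{equation*}
\Lambda_{\omega} = \operatorname{int}\bigl(\mathcal{C}(\mu_{\omega}(M))^{\vee}\bigr).
\end{equation*}
It therefore suffices to show that the image $\mu_{\omega}(M) \subset \mathfrak{t}^{*}$, or at least its asymptotic cone, is independent of the choice of toric shrinking K\"ahler-Ricci soliton $\omega$.

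This is exactly what Lemma \ref{note}(ii) provides: for any toric complete shrinking gradient K\"ahler-Ricci soliton $(M,g,X)$, once the moment map $\mu_{\omega}=\nabla\phi$ is normalised as in \eqref{realMA}, its image is the polyhedron
\begin{equation*}
P_{-K_{M}} = \{x \in \mathfrak{t}^{*} : \langle \nu_{i}, x\rangle \geq -1 \text{ for every ray } \sigma_{i} \in \Sigma\},
\end{equation*}
whose defining data $\{\nu_{i}\}$ consists of the minimal integer generators of the rays of $\Sigma$. Since $\Sigma$ is intrinsic to $M$, the polyhedron $P_{-K_{M}}$ is the same for every such $\omega$. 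Applying Lemma \ref{alpha} to \eqref{xmas} then gives
\begin{equation*}
\mathcal{C}(P_{-K_{M}}) = \bigcap_{i} K(\nu_{i},\,0),
\end{equation*}
which is manifestly $\omega$-independent, and hence so is $\Lambda_{\omega} = \operatorname{int}\bigl(\mathcal{C}(P_{-K_{M}})^{\vee}\bigr)$, a cone determined entirely by the fan.

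The only substantive input is Lemma \ref{note}(ii), which locks the image of the moment map to a single combinatorial polyhedron; after that, the argument is a direct assembly of Proposition \ref{identify} and Lemma \ref{alpha}. I do not anticipate a genuine obstacle here, but one should verify that the normalisation \eqref{realMA} is used consistently when comparing different solitons. Even if one allowed a different additive normalisation of $\phi$ for each $\omega$, this would only translate $\mu_{\omega}(M)$ in $\mathfrak{t}^{*}$, and translations leave the asymptotic cone $\mathcal{C}(\,\cdot\,)$ unchanged; thus $\Lambda_{\omega}$ is insensitive even to such ambiguity. This reduction of analytic data to the combinatorics of $\Sigma$ is in fact the principal reason that full-dimensional torus symmetry makes the weighted volume functional intrinsic.
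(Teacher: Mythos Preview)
Your argument is correct and is precisely the intended one: reduce to Proposition~\ref{identify}, then invoke Lemma~\ref{note}(ii) to identify the moment image with the fan-dependent polyhedron $P_{-K_M}$, and finish with Lemma~\ref{alpha}. The paper itself does not supply a proof of this lemma but simply cites \cite[Lemma~2.26]{ccd}; your reconstruction from the surrounding material is exactly how that cited proof proceeds, so there is nothing further to compare.
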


\begin{lemma}[{\cite[Lemma 2.27]{ccd}}]\label{two}
The weighted volume functional $\mathcal{F}_{\omega}$ is independent of the choice of toric shrinking K\"ahler-Ricci soliton $\omega$ in Definition \ref{weightedvol}. Moreover,
after identifying $\Lambda_{\omega}$ with a subset of $\mathbb{R}^{n}$ via \eqref{eqnY4}, $\mathcal{F}_{\omega}$
is given by $\mathcal{F}_{\omega}(v)=(2\pi)^n \int_{P_{-K_M}} e^{-\langle v,\,x \rangle }\,dx$, where $x=(x^1,\ldots,x^n)$ denotes coordinates on $\mathfrak{t}^{*}$ dual to
the coordinates $(\xi^{1},\ldots,\xi^{n})$ on $\t$ introduced in Section \ref{finito}.
\end{lemma}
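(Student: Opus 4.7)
The plan is to reduce the integral $\int_M e^{-\langle\mu, v\rangle}\omega^n$ to an integral over the polyhedron $P_{-K_M}$ via a change of variables using the moment map. Since $P_{-K_M}$ is a combinatorial invariant of the fan $\Sigma$ of $M$ (cf.~Lemma \ref{note}(i)), the resulting formula will manifestly be independent of the choice of soliton metric $\omega$.

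First, I would restrict the integral to the dense orbit $(\mathbb{C}^*)^n \subset M$; its complement is a finite union of lower-dimensional torus-invariant subvarieties, hence of measure zero. Using the diffeomorphism $\mathrm{Log}$ from \eqref{diffeoo} to identify this orbit with $\mathfrak{t} \times T^n$ and the K\"ahler potential $\phi$ on the orbit supplied by Proposition \ref{propB6} (normalised by \eqref{realMA}), the expression \eqref{e:T5} gives $\omega^n \propto \det(\phi_{ij})(\xi)\, d\xi \wedge d\theta$, while the integrand $e^{-\langle \mu, v\rangle} = e^{-\langle \nabla\phi(\xi), v\rangle}$ is $T^n$-invariant by construction. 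Fubini then contributes the factor $(2\pi)^n$ from integrating out $T^n$, leaving an integral over $\mathfrak{t}$ alone.

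The key move is the change of variables $x = \nabla\phi(\xi)$: strict convexity of $\phi$ (as noted below \eqref{e:T5}) makes $\nabla\phi$ a diffeomorphism from $\mathfrak{t}$ onto its image, and by Lemma \ref{note}(ii) this image is precisely the interior of $P_{-K_M}$. The Jacobian of the map is exactly $\det(\phi_{ij})$, which cancels the factor of $\det(\phi_{ij})$ appearing in $\omega^n$. Collecting constants yields
\[
\mathcal{F}_\omega(v) = (2\pi)^n \int_{P_{-K_M}} e^{-\langle v, x \rangle}\, dx,
\]
and independence from $\omega$ is immediate since the right-hand side depends only on $P_{-K_M}$ and $v$, giving Lemma \ref{one} as a by-product.

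The main conceptual obstacle is the normalisation of $\mu$: without \eqref{realMA}, the moment map is determined only up to translation by an element of $\mathfrak{t}^*$, and its image would be some translate of $P_{-K_M}$, producing a spurious multiplicative factor $e^{\langle v, x_0 \rangle}$ in $\mathcal{F}_\omega$. It is precisely the soliton normalisation \eqref{realMA}, via Lemma \ref{note}(ii), that pins down the image to be $P_{-K_M}$ itself. Finiteness of the integral over $P_{-K_M}$ is equivalent to $v$ lying in the interior of $\mathcal{C}(P_{-K_M})^\vee$, which by Proposition \ref{identify} is exactly $\Lambda_\omega$, so $\mathcal{F}_\omega$ is well-defined on its stated domain.
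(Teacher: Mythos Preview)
The paper does not prove this lemma at all; it is simply cited from \cite[Lemma 2.27]{ccd}. Your argument is the standard one and is correct: restrict to the dense orbit, write $\omega^n$ via \eqref{e:T5}, integrate out $T^n$, and change variables by the moment map $\nabla\phi$, whose Jacobian $\det(\phi_{ij})$ cancels against the volume density and whose image is $P_{-K_M}$ by Lemma \ref{note}(ii). Your remarks on the role of the normalisation \eqref{realMA} and on finiteness via Proposition \ref{identify} are also to the point. The only thing you glossed over with ``$\propto$'' and ``collecting constants'' is the combinatorial factor coming from $\omega^n$ versus the Liouville form $\omega^n/n!$; be sure your conventions match those of the paper so that the prefactor really is $(2\pi)^n$ and not $n!(2\pi)^n$.
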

\noindent Thus, we henceforth drop the subscript $\omega$ from $\mathcal{F}_{\omega}$ and $\Lambda_{\omega}$ when working in the toric category. The
functional $\mathcal{F}:\Lambda\to\mathbb{R}$ is in addition proper in this category \cite[Proof of Proposition 3.1]{charlie}, hence attains a unique critical point in $\Lambda$.
This critical point characterises the soliton vector field of a complete toric shrinking gradient K\"ahler-Ricci soliton.
\begin{theorem}[{\cite[Theorem 4.6]{charlie}, \cite[Theorem 1.1]{caoo}}]\label{thmB13}
Let $(M,\,g,\,X)$ be a complete toric shrinking gradient K\"ahler-Ricci soliton with complex structure $J$.
Then $JX\in\Lambda$ and $JX$ is the unique critical point of $\mathcal{F}$ in $\Lambda$.
\end{theorem}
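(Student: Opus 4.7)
The plan is to verify the three assertions in sequence: that $JX \in \Lambda$, that $JX$ is a critical point of $\mathcal{F}$, and that it is the unique such critical point.

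First I identify the soliton potential $f$ with a Hamiltonian for $JX$. Since $JX \in \mathfrak{t}$ by the toric hypothesis, the computation $\omega(JX,\,\cdot) = g(J^2X,\,\cdot) = -g(X,\,\cdot) = -df$ gives $-\omega\lrcorner JX = df$, so $f$ is a Hamiltonian for $JX$. Applying Lemma \ref{normal} with the normalization \eqref{realMA} of $\phi$, this Hamiltonian is exactly $\langle\mu,\,b_X\rangle$, so $f = \langle\mu,\,b_X\rangle$. Properness and boundedness below of $f$ then follow from the well-known quadratic growth estimate for the potential of a complete shrinking gradient Ricci soliton with compact critical set. Identifying $\mathfrak{t}\cong\mathbb{R}^n$ via \eqref{eqnY4} so that $JX\leftrightarrow b_X$, this shows $JX\in\Lambda$ (and, in particular, $\Lambda\neq\emptyset$).

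Next, I show that $JX$ is critical. Using the formula $\mathcal{F}(v)=\int_M e^{-\langle\mu,\,v\rangle}\omega^n$ from Definition \ref{weightedvol} and differentiating at $v=JX$,
\begin{equation*}
d\mathcal{F}|_{JX}(w) \;=\; -\int_M \langle\mu,\,w\rangle\,e^{-f}\,\omega^n\qquad\text{for all }w\in\mathfrak{t}.
\end{equation*}
Given $w\in\mathfrak{t}$, let $Y$ be the unique real vector field with $JY = w$. Then by Lemma \ref{normal}, $\langle\mu,\,w\rangle = u_Y$ and
\begin{equation*}
\Delta_\omega u_Y + u_Y - \tfrac{1}{2}Y\cdot f \;=\; 0.
\end{equation*}
Multiplying this identity by $e^{-f}\,\omega^n$ and integrating, the self-adjointness of the drift Laplacian on the smooth metric measure space $(M,\,g,\,e^{-f}dV_g)$, together with $\nabla^g u_Y = Y$ and $\nabla^g f = X$, yields
\begin{equation*}
\int_M \Delta_\omega u_Y\,e^{-f}\,\omega^n \;=\; \tfrac{1}{2}\int_M g(Y,\,X)\,e^{-f}\,\omega^n \;=\; \tfrac{1}{2}\int_M Y\cdot f\,e^{-f}\,\omega^n,
\end{equation*}
so the two contributions to $\int_M u_Y\,e^{-f}\,\omega^n$ cancel. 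Hence $d\mathcal{F}|_{JX}\equiv 0$. Uniqueness then follows immediately from the strict convexity of $\mathcal{F}$ on $\Lambda$ recorded just after Definition \ref{weightedvol}.

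The main obstacle in this outline is the justification of the integration by parts at infinity. One must control boundary terms using the quadratic growth of $f$ (so $e^{-f}$ decays super-exponentially) against the at most linear growth of $u_Y = \langle\mu,\,w\rangle$ in the moment map, whose image lies in the polytope $P_{-K_M}$ together with its recession cone (cf.~Lemmas \ref{alpha} and \ref{note}). A standard cutoff argument, exploiting that $JX\in\Lambda$ makes $e^{-f}$ integrable against polynomial weights, then handles the cancellation in the limit.
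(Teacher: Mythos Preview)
The paper does not prove this theorem; it is cited without argument from \cite[Theorem 4.6]{charlie} and \cite[Theorem 1.1]{caoo}. Your proposal is therefore not comparable to any proof in the paper, but it is a correct and natural argument. The identification of the soliton potential $f$ with the normalized Hamiltonian $\langle\mu,\,b_X\rangle$ (up to an irrelevant additive constant) together with the standard quadratic growth estimate for shrinking soliton potentials yields $JX\in\Lambda$; integrating the identity of Lemma~\ref{normal} against $e^{-f}\omega^n$ and using that the drift Laplacian is symmetric for this measure gives $\int_M u_Y\,e^{-f}\omega^n = 0$, hence criticality; and strict convexity of $\mathcal{F}$, recorded after Definition~\ref{weightedvol}, gives uniqueness.

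The only point requiring care is the integration by parts, which you flag. It is easily justified: by Lemma~\ref{normpolytope-tof} one has $|\mu|\leq C\langle\mu,\,b_X\rangle = Cf$ outside a compact set, so $|u_Y|\leq |b_Y|\,|\mu|$ grows at most linearly in $f$, while $e^{-f}$ decays like a Gaussian in the radial variable; all boundary terms therefore vanish in the limit.
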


Having established in Lemmas \ref{one} and \ref{two} that in the toric category the weighted volume functional $F$ and its domain $\Lambda$ are determined solely by the polytope $P_{-K_{M}}$ which itself, by Lemma \ref{note}, depends only on the torus action on $M$ (i.e., is independent of the choice of shrinking soliton), and having an explicit expression for $\mathcal{F}$ given by Lemma \ref{two}, after using the torus action to identify $P_{-K_{M}}$ via \eqref{xmas}, we can determine explicitly the soliton vector field of a hypothetical toric shrinking gradient K\"ahler-Ricci soliton on $M$. Indeed, in light of Lemma \ref{two}, the unique minimiser $b_{X}\in\mathfrak{t}\simeq\mathbb{R}^{n}$ is characterised by the fact that
\begin{equation*}
0=d_{b_{X}}\mathcal{F}(v)=\int_{P_{-K_{M}}}\langle x,\,v\rangle\,e^{-\langle b_{X},\,x\rangle}dx\qquad\textrm{for all $v\in\mathbb{R}^{n}$.}
\end{equation*}
The fact that $JX \in \Lambda$ is reflected in the following property of the constant $b_X \in \t$.

\begin{lemma}\label{normpolytope-tof}
Let $JX\in\Lambda$. Then there exists a compact subset $K\subset P_{-K_{M}}$ and a constant $C> 0$ such that
	\begin{equation*}
		 C^{-1}\langle x ,\, b_X \rangle \leq |x| \leq  C \langle x ,\, b_X \rangle
	\end{equation*}
for all $x \in P_{-K_{M}}\backslash K$.
\end{lemma}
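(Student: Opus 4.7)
The plan is to combine Proposition \ref{identify}, Lemma \ref{note}(ii) and Lemma \ref{alpha} to describe the hypothesis $JX \in \Lambda$ concretely in terms of the defining half-spaces of $P_{-K_M}$, and then to run a compactness/contradiction argument to transfer a linear lower bound from the asymptotic cone to the polyhedron itself.

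First I would unpack what $JX \in \Lambda$ means. By Proposition \ref{identify} and Lemma \ref{note}(ii),
\[
\Lambda \;=\; \operatorname{int}\bigl(\mathcal{C}(P_{-K_M})^{\vee}\bigr),
\]
and by Lemma \ref{alpha}, with $P_{-K_M} = \bigcap_i K(\nu_i,-1)$ as in \eqref{xmas}, the asymptotic cone is
\[
\mathcal{C}(P_{-K_M}) \;=\; \bigcap_{i} K(\nu_i,0) \;=\; \{x \in \mathfrak{t}^{*} \,:\, \langle \nu_i, x\rangle \geq 0 \text{ for all } i\}.
\]
Writing $b_X \in \mathfrak{t}$ for the vector corresponding to $JX$ via \eqref{eqnY4}, the assumption $JX \in \Lambda$ therefore translates to $\langle y, b_X\rangle > 0$ for every nonzero $y \in \mathcal{C}(P_{-K_M})$. (If $P_{-K_M}$ is bounded then $\mathcal{C}(P_{-K_M}) = \{0\}$ and one simply takes $K = P_{-K_M}$, so I henceforth assume $P_{-K_M}$ is unbounded, in which case $b_X \neq 0$.)

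The easy half of the asserted inequality is immediate: by Cauchy--Schwarz,
\[
\langle x, b_X\rangle \;\leq\; |b_X|\,|x| \qquad\text{for all } x \in \mathfrak{t}^{*},
\]
which gives the left inequality with $C \geq |b_X|$. For the other half, I would first establish the estimate on the asymptotic cone: the set $S := \mathcal{C}(P_{-K_M}) \cap \{|x|=1\}$ is compact, the continuous function $x \mapsto \langle x, b_X\rangle$ is strictly positive on $S$ by the previous paragraph, and hence attains a minimum $c > 0$ there; by homogeneity,
\[
\langle x, b_X\rangle \;\geq\; c\,|x| \qquad\text{for all } x \in \mathcal{C}(P_{-K_M}).
\]

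The main step, and the one I expect to be the crux, is transferring this estimate from $\mathcal{C}(P_{-K_M})$ to $P_{-K_M}$ itself. I would argue by contradiction: suppose no such $K$ and $C$ exist, so that there is a sequence $x_n \in P_{-K_M}$ with $|x_n| \to \infty$ and $\langle x_n, b_X\rangle / |x_n| \to 0$. Set $y_n := x_n/|x_n|$; after passing to a subsequence, $y_n \to y$ with $|y|=1$ and $\langle y, b_X\rangle = 0$. Using the defining inequalities $\langle \nu_i, x_n\rangle \geq -1$, I get
\[
\langle \nu_i, y_n\rangle \;\geq\; -\tfrac{1}{|x_n|} \;\xrightarrow[n\to\infty]{}\; 0,
\]
so $\langle \nu_i, y\rangle \geq 0$ for every $i$, i.e.\ $y \in \mathcal{C}(P_{-K_M}) \setminus\{0\}$. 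Combined with $\langle y, b_X\rangle = 0$, this contradicts $b_X \in \operatorname{int}(\mathcal{C}(P_{-K_M})^{\vee})$. Consequently there exists $R > 0$ such that $\langle x, b_X\rangle \geq \tfrac{c}{2}\,|x|$ for every $x \in P_{-K_M}$ with $|x| \geq R$; taking $K := P_{-K_M} \cap \{|x| \leq R\}$ (which is compact) and $C := \max\{|b_X|, 2/c\}$ completes the proof.
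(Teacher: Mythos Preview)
Your proof is correct and reaches the same conclusion as the paper. The paper's argument differs in the ``transfer'' step: rather than a contradiction/limiting argument, it invokes the Minkowski decomposition $P_{-K_M} = \mathcal{C}(P_{-K_M}) + Q$ for a compact polytope $Q$ (citing \cite[Theorem 1.2]{ziegler}), writes each $x \in P_{-K_M}$ as $x = y + z$ with $y \in \mathcal{C}(P_{-K_M})$ and $z \in Q$, and estimates $\langle b_X, x\rangle \geq \beta|y| - |b_X|\max_{Q}|\cdot|$ directly, where $\beta := \min_{S^{n-1}\cap\mathcal{C}(P_{-K_M})}\langle b_X,\cdot\rangle$. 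Your sequential compactness argument avoids this structural input at the cost of being less explicit about the constants; both routes exploit the same key fact, namely $b_X \in \operatorname{int}(\mathcal{C}(P_{-K_M})^\vee)$, and your observation that rescalings of points in $P_{-K_M}$ accumulate in $\mathcal{C}(P_{-K_M})$ is exactly the content of Lemma \ref{alpha}. One small quibble: the specific constant $c/2$ in your final line is not what your contradiction hypothesis ``$\langle x_n,b_X\rangle/|x_n|\to 0$'' delivers --- that argument only gives \emph{some} $\delta>0$ --- though $c/2$ can be recovered by running the same limit with the weaker hypothesis $\langle x_n,b_X\rangle/|x_n| < c/2$ and using the bound $\langle y,b_X\rangle \geq c$ you already established on the cone.
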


\begin{proof}
We clearly have that $\langle x ,\, b_X \rangle \leq |b_X||x|$. As for the other inequality,
notice that by Proposition \ref{identify}, since $b_X$ lies in the interior of the dual cone $$\mathcal{C}(P_{-K_{M}})^{\vee}= \{ b \in \t \: | \: \langle x , \, b \rangle \geq 0 \textnormal{ for all } x \in \mathcal{C}(P_{-K_{M}}) \}$$ with $\mathcal{C}(P_{-K_{M}})$ the asymptotic cone of $P_{-K_{M}}$, it holds that $\langle x, \, b_X \rangle>0$ for all $x \in \mathcal{C}(P_{-K_{M}})$.
Now, $P_{-K_M}$ is obtained as a Minkowski sum of $\mathcal{C}(P_{-K_{M}})$ and a compact polytope $Q$
\cite[Theorem 1.2]{ziegler}. This means that every $x\in P_{-K_M}$ can be written as $x=y+z$ for some $y\in\mathcal{C}(P_{-K_{M}})$
and $z\in Q$. Let $\alpha:=\max_{q\,\in\,Q}|q|$. Then for $x\in P_{-K_{M}}$ with $|x|>2\alpha$, we have that
$$|y|=|x-z|\geq||x|-|z||=|x|-|z|\geq\frac{1}{2}|x|.$$ Consequently, with $\beta:=\min_{w\in S^{n-1}\,\cap\,\mathcal{C}(P_{-K_{M}})}\langle b_{X},\,w\rangle>0$,
we find that for all $x\in P_{-K_{M}}$ with $|x|>2\max\left\{\alpha,\,\frac{2\alpha|b_{X}|}{\beta}\right\}$,
\begin{equation*}
\begin{split}
\langle b_{X},\,x\rangle&=\langle b_{X},\,y\rangle+\langle b_{X},\,z\rangle
\geq\frac{\beta}{2}|x|-|b_{X}||z|\geq\frac{\beta}{2}|x|-|b_{X}|\alpha\geq\frac{\beta|x|}{4},
\end{split}
\end{equation*}
as desired.
\end{proof}

\subsubsection{The Legendre transform}

Let $M$ be a toric manifold of complex dimension $n$ endowed with a complete K\"ahler form $\omega$ invariant under the induced real $T^{n}$-action and with respect to which
this action is Hamiltonian. Write $\omega=2i\partial\bar{\partial}\phi$ on the dense orbit for $\phi$ strictly convex as in Proposition \ref{propB6}. Then
$\nabla\phi(\mathbb{R}^{n})$ is a Delzant polytope $P$. Recall that we have coordinates $\xi$ on $\mathbb{R}^{n}\simeq\mathfrak{t}$, $x$ on $P$, and $\theta$ on $T^{n}$.
Given any smooth and strictly convex function $\psi$ on $\R^n$ such that $\nabla\psi(\R^n)=P$, there exists a unique smooth and strictly convex function $u_\psi(x)$ on $P$ defined by
	\begin{equation*}
		\psi(\xi) + u_\psi(\nabla \psi) = \langle\nabla\psi,\,\xi \rangle.
	\end{equation*}
This process is reversible; that is to say, $\psi$ is the unique function satisfying
		\begin{equation*}
		\psi(\nabla u_\psi) + u_\psi(x) = \langle x ,\,\nabla u_\psi \rangle,
	\end{equation*}
where $\nabla$ now denotes the Euclidean gradient with respect to $x$. The function $u_{\psi}$ is called the \emph{Legendre transform of $\psi$} and is sometimes denoted by $L(\psi)(x)$.
Clearly $L(L(\psi))(\xi)=\psi(\xi)$. The Legendre transform $u$ of $\phi$ is called the \emph{symplectic potential} of $\omega$, as the metric $g$ associated to $\omega$ is given by
$$g=u_{ij}(x)dx^{i}dx^{j}+u^{ij}(x)d\theta^{i}d\theta^{j}.$$

The following will prove useful.

\begin{lemma}[{\cite[Lemma 2.10]{charlie}}]\label{growth}
Let $\phi$ be any smooth and strictly convex function on an open convex domain $\Omega'\subset\mathbb{R}^{n}$
and let $u=L(\phi)$ be the Legendre transform of $\phi$ defined on $(\nabla\phi)(\Omega')=:\Omega$. If $0\in\Omega$, then there exists a constant $C > 0$ such that
$$\phi(\xi)\geq C^{-1}|\xi|-C.$$ In particular, $\phi$ is proper and bounded from below.
\end{lemma}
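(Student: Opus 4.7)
The strategy is to read off the linear lower bound from the Young--Fenchel (conjugate) inequality for the Legendre transform and to exploit the hypothesis that $0$ lies in the interior of the effective domain $\Omega$ of $u$.

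First, I would establish the Young--Fenchel inequality
$$\phi(\xi) \;\geq\; \langle x,\,\xi \rangle - u(x) \qquad \text{for every } \xi \in \Omega' \text{ and every } x \in \Omega,$$
with equality precisely when $x = \nabla\phi(\xi)$. This follows at once from the defining relation recalled just before the lemma: fix $x\in\Omega$ and consider the function $\xi \mapsto \langle x,\,\xi\rangle - \phi(\xi)$. It is concave since $\phi$ is strictly convex, its gradient $x - \nabla\phi(\xi)$ vanishes uniquely at $\xi^{\ast} := (\nabla\phi)^{-1}(x) \in \Omega'$, and the value there equals $u(x)$ by the definition of the Legendre transform. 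Concavity then upgrades this local maximum to a global maximum on $\Omega'$, yielding the inequality.

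Next, since $0 \in \Omega$ and $\Omega$ is open, fix $\delta > 0$ with $\overline{B_{\delta}(0)} \subset \Omega$, and set $C_{0} := \sup_{|x|\leq\delta} u(x) < \infty$, which is finite by smoothness of $u$ on the compact set $\overline{B_{\delta}(0)}$. For any $\xi \in \Omega' \setminus \{0\}$, apply the Young--Fenchel inequality with $x := \delta\,\xi/|\xi| \in \overline{B_{\delta}(0)} \subset \Omega$ to get
$$\phi(\xi) \;\geq\; \Bigl\langle \delta\tfrac{\xi}{|\xi|},\, \xi\Bigr\rangle - u\!\left(\delta\tfrac{\xi}{|\xi|}\right) \;\geq\; \delta\,|\xi| - C_{0}.$$
Taking $C := \max\{\delta^{-1},\, C_{0},\, -\phi(0)+1\}$ gives $\phi(\xi) \geq C^{-1}|\xi| - C$ for every $\xi \in \Omega'$. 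Properness and the lower bound follow at once: the sublevel set $\{\phi \leq L\}$ is contained in the bounded set $\{|\xi| \leq C(L+C)\}$, and $\phi \geq -C$ on $\Omega'$.

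The whole argument is essentially routine. The only delicate point is that the Young--Fenchel inequality holds on $\Omega'$ (rather than on all of $\mathbb{R}^{n}$), but this is handled cleanly because $x \in \Omega = \nabla\phi(\Omega')$ forces the critical point $\xi^{\ast}$ to lie in $\Omega'$, after which strict convexity of $\phi$ makes the inequality automatic. Accordingly, I do not anticipate any substantive obstacle in carrying out this plan.
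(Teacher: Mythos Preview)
Your proof is correct and is the standard argument via the Young--Fenchel inequality. Note that the paper does not supply its own proof of this lemma; it is quoted directly from \cite[Lemma 2.10]{charlie}, so there is nothing to compare against in the present paper. One minor point worth making explicit in your write-up is that $\Omega=(\nabla\phi)(\Omega')$ is open (since $\phi$ strictly convex and smooth forces $\nabla\phi$ to be a local diffeomorphism), which is what justifies choosing the closed ball $\overline{B_\delta(0)}\subset\Omega$.
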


If $\phi \in C^\infty(\R^n)$ solves \eqref{realMA}, then the Legendre transform $u=L(\phi)$ satisfies
\begin{equation}\label{realMA2}
		2 \left( \langle \nabla u, x \rangle - u(x) \right) - \log\det(u_{ij}(x)) = \langle b_X, x \rangle\qquad\textrm{on $\Pol$}.
	\end{equation}
To study K\"ahler-Ricci solitons on $M$ via \eqref{realMA2} on $\Pol$, we need to understand when a strictly convex function on a Delzant polytope
defines a symplectic potential, i.e., is induced from a K\"ahler metric on $M$ via the Legendre transform.
To this end, consider a Delzant polytope $P$ obtained as the image of the moment map of a toric K\"ahler manifold.
Let $F_i$, $i = 1, \dots, d$ denote the $(n-1)$-dimensional facets of $P$ with inward-pointing normal vector $\nu_i \in \Gamma$, normalised so that $\nu_i$
is the minimal generator of $\sigma_i = \R_+ \cdot \nu_i$ in $\Gamma$, and let $\ell_i(x) = \langle \nu_i, x \rangle$ so that $\overline{P}$ is defined by the system of
inequalities $\ell_i(x) \geq - a_i$, $i = 1, \dots, N$, $a_i\in \R$. Then there exists a canonical metric $\omega_P$ on $M$ \cite[Proposition 2.7]{charlie}, the symplectic potential
$u_{P}$ of which is given explicitly by the formula \cite{BGL, Guil}
\begin{equation*}
	u_P(x) = \frac{1}{2}\sum_{i=1}^d (\ell_i(x) + a_i) \log\left( \ell_i(x) + a_i \right).
\end{equation*}
In particular, the Legendre transform $\phi_{P}$ of $u_{P}$ will define the K\"ahler potential on the dense orbit of a globally defined K\"ahler metric $\omega_{P}$ on $M$ \cite{BGL, Guil}.
In general, we have the following necessary condition for a convex function on the polytope $P$ to be induced by a toric K\"ahler metric on $M$.

\begin{lemma}[{\cite{Ab1}, \cite{ACGT2}}]\label{boundaryy}
A convex function $u$ on $P$ defines a K\"ahler metric $\omega_u$ on $M$ only if $u$ has the form
\begin{equation*}
u = u_{P} + v,
\end{equation*}
where $v \in C^\infty(\overline{P})$ extends past $\partial P$ to all orders.
\end{lemma}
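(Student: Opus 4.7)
The plan is to reduce the statement to a local analysis near each stratum of $\partial P$, exploit the Delzant condition to normalise coordinates, and then transfer the smoothness of $\omega_u$ across the boundary via the Legendre transform. Since only the behaviour near $\partial P$ is at issue, and any two symplectic potentials differ by a function $v$ on $P$, the question is purely local: given $x_0 \in \partial P$, show that $u - u_P$ extends smoothly to a neighbourhood of $x_0$ in $\t^*$.

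First, I would fix $x_0 \in \partial P$ and let $I_0 \subset \{1,\ldots,d\}$ index the facets meeting at $x_0$, so that $x_0$ lies in the relative interior of $F_{I_0} = \bigcap_{i \in I_0} F_i$. The Delzant condition guarantees that $\{\nu_i\}_{i \in I_0}$ extends to a $\Z$-basis of $\Gamma$. Applying the corresponding $GL(n,\Z)$-transformation (and the dual on $\t^*$) together with a translation, I can arrange that $x_0$ sits at the origin of the first $|I_0|$ coordinates and that the affine functions $\ell_i + a_i$ for $i \in I_0$ become the coordinate functions $x^1,\ldots,x^{|I_0|}$. In these coordinates, the canonical symplectic potential takes the normal form
\begin{equation*}
u_P(x) = \tfrac{1}{2}\sum_{i=1}^{|I_0|} x^i \log x^i + (\text{smooth on a neighbourhood of } x_0),
\end{equation*}
so the nontrivial boundary singularity is entirely explicit.

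Second, I would invoke the standard equivariant local model for the $T^n$-action on $M$ near the orbit corresponding to $F_{I_0}$: a $T^n$-invariant neighbourhood of $\mu_u^{-1}(x_0)$ in $M$ is equivariantly biholomorphic to a neighbourhood of $\{0\} \times \Cstar^{\,n-|I_0|}$ in $\C^{|I_0|} \times \Cstar^{\,n-|I_0|}$ with its diagonal torus action. The Legendre transform $\phi_u$ of $u$ on $\Cstarn \subset M$ is then a $T^n$-invariant K\"ahler potential for $\omega_u$ that, by the smoothness of $\omega_u$ across the degeneration locus $\{z_1\cdots z_{|I_0|}=0\}$, must extend to a $T^n$-invariant smooth function of the variables $(|z_1|^2,\ldots,|z_{|I_0|}|^2, \log |z_{|I_0|+1}|, \ldots, \log |z_n|)$. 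A direct computation shows that the Legendre transform of the normal form $u_P$ above produces exactly the flat/Euclidean-angle model potential $\phi_P = \tfrac{1}{2}\sum_{i=1}^{|I_0|} |z_i|^2 + (\text{smooth in } \log|z_j|)$. Hence $\phi_u - \phi_P$ is a smooth $T^n$-invariant function across $\{z_1\cdots z_{|I_0|}=0\}$.

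Finally, I would transfer this back to the polytope side. Away from $\partial P$, the Legendre transform is a diffeomorphism, so $u - u_P$ is automatically smooth in the interior; the content of the lemma is the extension to $\partial P$. I expect the main obstacle to be precisely this last step: the change of variables between symplectic coordinates $(x,\theta)$ and complex coordinates $(z_1,\ldots,z_n)$ is itself singular along the degeneration locus of the torus action, so one must verify that the singular parts of $u$ and $\phi_u$ are captured \emph{entirely} by $u_P$ and $\phi_P$ respectively, and that after subtracting them the residual Legendre correspondence is effected by a smooth local diffeomorphism. Once this is verified at each vertex and then propagated to higher-dimensional faces by restricting to faces containing $x_0$, the extension of $v := u - u_P$ to all orders across $\p P$ follows, completing the proof.
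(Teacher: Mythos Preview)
The paper does not prove this lemma; it is stated with attribution to \cite{Ab1} and \cite{ACGT2} and used as a black box. Your outlined approach---Delzant normalisation near a face, passage to the local equivariant model $\C^{|I_0|}\times(\C^*)^{n-|I_0|}$, and comparison of the singular parts of $u$ and $u_P$ via the Legendre transform---is exactly the argument in those references (see in particular Abreu's boundary analysis), so your plan is correct and aligned with the cited literature.

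One remark on the step you flag as the obstacle: the cleanest way to handle it is not to subtract potentials and hope the residual change of variables is smooth, but to work directly with the inverse Hessian $(u^{ij})$. The condition that $\omega_u$ extends smoothly across the divisor $\{z_i=0\}$ translates, in the symplectic frame, into the requirement that $(u^{ij})$ extends smoothly to $\overline{P}$ with a specific degeneration along each facet (namely $u^{ij}\nu_i = 0$ on $F_i$ with a prescribed normal derivative). Since $(u_P^{ij})$ satisfies the same boundary conditions by direct computation, the difference $u-u_P$ has bounded Hessian up to the boundary, and an inductive argument on strata upgrades this to $C^\infty$. This is how \cite{ACGT2} packages the argument and avoids the coordinate-change subtlety you anticipate.
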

\noindent In the case that $P=\Pol$, we read from Lemma \ref{note}(ii) that $a_{i}=-1$ for all $i$. Thus, in this case, the canonical metric on $\Pol$ has symplectic potential
		\begin{equation*}	
		u_{\Pol} = \frac{1}{2} \sum_i (\ell_i(x) + 1) \log(\ell_i(x) + 1).
	\end{equation*}

\subsubsection{The $\hat{F}$-functional}

We next define the $\hat{F}$-functional on toric K\"ahler manifolds.

\begin{definition}\label{fhat}
Let $(M,\,\omega)$ be a (possibly non-compact) toric K\"ahler manifold with complex structure $J$ endowed with a real holomorphic vector field $X$ such that
$JX\in\Lambda_{\omega}$. Write $T^{n}$ for the torus acting on $M$, identify the dense orbit with $\mathbb{R}^{n}$, let $\xi=(\xi_{1},\ldots,\xi_{n})$ denote coordinates on $\mathbb{R}^{n}$,
let $b_{X}$ be as in \eqref{eqnY4}, and write $\omega=2i\partial\bar{\partial}\phi_{0}$
on the dense orbit as in Proposition \ref{propB6}. Let $P:=(\nabla\phi_{0})(\mathbb{R}^{n})$ denote the image of the moment map associated to $\omega$ and let
$x=(x_{1},\ldots,x_{n})$ denote coordinates on $P$. Let $\varphi\in C^{\infty}(M)$ be a smooth function on $M$ invariant under
the action of $T^{n}$ such that $\omega+i\partial\bar{\partial}\varphi>0$ and assume that:
\begin{enumerate}[label=\textnormal{(\alph*)}]
  \item There exists a $C^{1}$-path of smooth functions $(\varphi_{s})_{s\in[0,\,1]}\subset C^{\infty}(M)$ invariant under the action of $T^{n}$ such that
$\varphi_{0}=0$, $\varphi_{1}=\varphi$, $\omega+i\partial\bar{\partial}\varphi_{s}>0$,
and $(\nabla\phi_{s})(\mathbb{R}^{n})=P$ for all $s\in[0,\,1]$, where $\phi_{s}:=\phi_{0}+\frac{\varphi_{s}}{2}$.
\item $\int_{0}^{1}\int_{\mathbb{R}^{n}}|\dot{\phi}_{s}|\,e^{-\langle b_{X},\,\nabla\phi_{s}\rangle}\det(\phi_{s,\,ij})\,d\xi\,ds<+\infty$.
\end{enumerate}
Then we define
\begin{equation*}
\begin{split}
\hat{F}(\varphi):=2\int_{P}(L(\phi_{1})-L(\phi_{0}))\,e^{-\langle b_{X},\,x\rangle}dx.
\end{split}
\end{equation*}
\end{definition}

The existence of the path $(\varphi_{s})_{s\in[0,\,1]}$ satisfying conditions (a) and (b) is required so that $\hat{F}(\varphi)$ is well-defined. To see this, first note:
\begin{lemma}[\protect{\textnormal{\cite[Lemma 2.28]{ccd2}}}]\label{converge1}
Under the assumptions of Definition \ref{fhat}, let $u_{s}:=L(\phi_{s})$, $\omega_{s}=\omega+i\partial\bar{\partial}\varphi_{s}$, and
write $f_{s}:=f+\frac{X}{2}\cdot \varphi_{s}$ for the Hamiltonian potential of $JX$ with respect to $\omega_{s}$, where $f$ is the Hamiltonian potential of $JX$ with respect to
$\omega$. Then the following are equivalent.
\begin{enumerate}
\item $\int_{0}^{1}\int_{\mathbb{R}^{n}}|\dot{\phi}_{s}|\,e^{-\langle b_{X},\,\nabla\phi_{s}\rangle}\det(\phi_{s,\,ij})\,d\xi\,ds<+\infty$.
\item $\int_{0}^{1}\int_{P}|\dot{u}_{s}|\,e^{-\langle b_{X},\,x\rangle}\,dx\,ds<+\infty$.
\item $\int_{0}^{1}\int_{M}|\dot{\varphi}_{s}|\,e^{-f_{s}}\omega^{n}_{s}\,ds<+\infty$.
\end{enumerate}
In particular when this is the case, $|\hat{F}(\varphi)|<+\infty$.
\end{lemma}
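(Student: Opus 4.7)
The plan is to pass from $M$ to the dense orbit $(\C^*)^n \simeq \mathfrak{t}\times T^n$ via the diffeomorphism \eqref{diffeoo}, noting that its complement has real codimension at least two and so is invisible to the integrations on $M$. On the dense orbit, write $\omega_s = \omega + i\partial\bar\partial\varphi_s = 2i\partial\bar\partial\phi_s$ with $\phi_s = \phi_0 + \varphi_s/2$; then \eqref{e:T5} gives $\omega_s = \phi_{s,ij}\,d\xi^i\wedge d\theta^j$, so
\begin{equation*}
\omega_s^n = n!\det(\phi_{s,ij})\,d\xi\wedge d\theta,
\end{equation*}
and integrating out $T^n$ produces a factor of $(2\pi)^n$. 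Next, a direct contraction in $(\xi,\theta)$ with $JX = b_X^i\partial_{\theta^i}$ shows $-\omega_s\lrcorner JX = d\langle b_X,\nabla\phi_s\rangle$, so the Hamiltonian potential of $JX$ with respect to $\omega_s$ equals $\langle b_X,\nabla\phi_s\rangle + c_s$ for some $c_s\in\R$. Since $f_s = f + \tfrac{1}{2}X\cdot\varphi_s$ with $\tfrac{1}{2}X\cdot\varphi_s = \langle b_X,\nabla\phi_s - \nabla\phi_0\rangle$ on the dense orbit, the constant $c_s = c$ is in fact independent of $s$.

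With these identifications in place, the equivalence (i)~$\iff$~(iii) follows from the direct computation
\begin{equation*}
\int_M|\dot\varphi_s|\,e^{-f_s}\omega_s^n = 2\,n!\,(2\pi)^n e^{-c}\int_{\R^n}|\dot\phi_s|\,e^{-\langle b_X,\nabla\phi_s\rangle}\det(\phi_{s,ij})\,d\xi,
\end{equation*}
together with Fubini in $s\in[0,1]$. For (i)~$\iff$~(ii), hypothesis (a) guarantees that $\nabla\phi_s:\R^n\to P$ is a diffeomorphism with Jacobian $\det(\phi_{s,ij})$ for every $s$. Differentiating the Legendre identity $\phi_s(\xi) + u_s(\nabla\phi_s(\xi)) = \langle\nabla\phi_s(\xi),\xi\rangle$ in $s$ at fixed $\xi$, and invoking $\nabla u_s\circ\nabla\phi_s = \mathrm{id}$ on $\R^n$, the two chain-rule cross-terms cancel and we are left with $\dot u_s(x) = -\dot\phi_s(\xi)$ whenever $x = \nabla\phi_s(\xi)$. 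The change of variables $x = \nabla\phi_s(\xi)$ then turns the integrand of (i) into that of (ii).

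For the final assertion, $u_1 - u_0 = \int_0^1\dot u_s\,ds$ pointwise on $P$, so Fubini and the triangle inequality give
\begin{equation*}
|\hat F(\varphi)| = \left|2\int_0^1\!\!\int_P \dot u_s\,e^{-\langle b_X,x\rangle}\,dx\,ds\right|\leq 2\int_0^1\!\!\int_P |\dot u_s|\,e^{-\langle b_X,x\rangle}\,dx\,ds < +\infty
\end{equation*}
by (ii). The only genuinely delicate point is verifying the $s$-independence of the constant $c$ normalising the Hamiltonian potentials $f_s$, which is what allows the weight $e^{-f_s}$ to track $e^{-\langle b_X,\nabla\phi_s\rangle}$ uniformly along the path; beyond that, the proof is a bookkeeping exercise combining Guillemin's formula for $\omega_s^n$ with the infinitesimal Legendre identity.
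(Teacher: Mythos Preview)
The paper does not supply its own proof of this lemma; it is quoted verbatim from \cite[Lemma 2.28]{ccd2}. Your argument is correct and is exactly the standard proof: the equivalences amount to changing variables via $\operatorname{Log}:(\C^*)^n\to\mathfrak{t}\times T^n$ and via the Legendre transform $x=\nabla\phi_s(\xi)$, together with the infinitesimal Legendre identity $\dot u_s\circ\nabla\phi_s=-\dot\phi_s$. Your check that the normalising constant in $f_s=\langle b_X,\nabla\phi_s\rangle+c$ is independent of $s$ (because $f_s=f+\tfrac{1}{2}X\cdot\varphi_s$ and $\tfrac{1}{2}X\cdot\varphi_s=\langle b_X,\nabla\phi_s-\nabla\phi_0\rangle$ on the dense orbit) is the one place requiring care, and you handle it correctly.
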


Under an additional assumption on the path $(\varphi_{s})_{s\in[0,\,1]}$, we recover the well-known expression for the
$\hat{F}$-functional given in \cite[p.702]{ctz}.

\begin{lemma}[\protect{\textnormal{\cite[Lemma 2.29]{ccd2}}}]\label{converge2}
If one (and hence all) of the conditions of Lemma \ref{converge1} hold true and if in addition it holds true that
$\int_{0}^{1}\int_{M}|\dot{\varphi}_{s}|\,e^{-f}\omega^{n}\,ds<+\infty$, then
\begin{equation*}
\hat{F}(\varphi)=\int_0^1\int_M\dot{\varphi}_{s}\left(e^{-f}\omega^n-e^{-f_{s}}\omega_{s}^n\right)\wedge ds
-\int_M\varphi\,e^{-f}\omega^n.
\end{equation*}
\end{lemma}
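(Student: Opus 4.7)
My plan is to compute $\hat{F}(\varphi)$ directly from Definition \ref{fhat} by applying the fundamental theorem of calculus to the path $s\mapsto u_s := L(\phi_s)$, pushing the resulting integral over $P$ back to $M$ via the moment maps $\mu_s = \nabla\phi_s$, and rearranging using the extra integrability hypothesis. All of this takes place on the dense orbit of $T^n$, which has full measure.

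The first key ingredient is the derivative of the Legendre transform along the path. Differentiating the identity $\phi_s(\xi) + u_s(\nabla\phi_s(\xi)) = \langle\nabla\phi_s(\xi),\,\xi\rangle$ in $s$ and using the involutive relation $\nabla u_s\circ\nabla\phi_s = \operatorname{Id}$, I obtain $\dot u_s\circ\nabla\phi_s = -\dot\phi_s = -\tfrac12\dot\varphi_s$, hence $\dot u_s\circ\mu_s = -\tfrac12\dot\varphi_s$ on $M$. Writing $L(\phi_1)-L(\phi_0) = \int_0^1\dot u_s\,ds$ and applying Fubini (justified by Lemma \ref{converge1}(ii), which is one of the equivalent forms of the standing hypothesis), I get
\begin{equation*}
\hat F(\varphi) = 2\int_0^1\int_P \dot u_s(x)\,e^{-\langle b_X,\,x\rangle}\,dx\,ds.
\end{equation*}
For each fixed $s$, the change of variables $x=\mu_s(\xi)$ combined with the local expression $\omega_s^n = n!\det((\phi_s)_{ij})\,d\xi\wedge d\theta$ on the dense orbit (from \eqref{e:T5}) converts $\int_P h(x)e^{-\langle b_X,\,x\rangle}\,dx$ into $\tfrac{1}{n!(2\pi)^n}\int_M h(\mu_s)e^{-\langle b_X,\,\mu_s\rangle}\omega_s^n$. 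Since $\langle b_X,\,\mu_s\rangle$ is a Hamiltonian potential for $JX$ with respect to $\omega_s$ which, under the normalisation chosen in Theorem \ref{mainthm}(iv), agrees pointwise with $f_s = f + \tfrac12 X\cdot\varphi_s$ (up to a constant absorbed into the definition of $\hat F$), the computation collapses to
\begin{equation*}
\hat F(\varphi) = -\int_0^1\int_M \dot\varphi_s\,e^{-f_s}\omega_s^n\,ds.
\end{equation*}

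The final step is the rearrangement to match the target formula. The extra hypothesis $\int_0^1\int_M|\dot\varphi_s|\,e^{-f}\omega^n\,ds<+\infty$ allows a second application of Fubini to the $s$-independent measure $e^{-f}\omega^n$, yielding $\int_0^1\int_M\dot\varphi_s\,e^{-f}\omega^n\,ds = \int_M\varphi\,e^{-f}\omega^n$ via $\varphi_0=0$ and $\varphi_1=\varphi$. Adding and subtracting this quantity to the expression for $\hat F(\varphi)$ produces exactly
\begin{equation*}
\hat F(\varphi) = \int_0^1\int_M\dot\varphi_s(e^{-f}\omega^n - e^{-f_s}\omega_s^n)\,ds - \int_M\varphi\,e^{-f}\omega^n.
\end{equation*}
I expect the main obstacle to lie not in the analytic content (which is a straightforward combination of Legendre duality, the toric change of variables, and two uses of Fubini, each justified by an explicit integrability hypothesis) but in the careful bookkeeping of normalisation constants: tracking the factor of $2$ in $\omega_s = 2i\partial\bar\partial\phi_s$, the factor $(2\pi)^n n!$ from the toric pushforward, and the additive normalisation constants in the Hamiltonian potentials so as to verify the crucial pointwise identification $\langle b_X,\,\mu_s\rangle = f_s$ (as opposed to merely up to a constant, which would only give equality up to an additive term and would destroy the precise formula).
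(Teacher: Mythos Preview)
The paper does not supply its own proof of this lemma; it is quoted directly from \cite[Lemma 2.29]{ccd2} and used as a black box thereafter (e.g.\ in the discussion after Lemma~\ref{boundedd}). There is therefore nothing in the present paper to compare your argument against.

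That said, your outline is exactly the standard route and is correct in substance: the Legendre-transform identity $\dot u_s\circ\nabla\phi_s=-\dot\phi_s=-\tfrac12\dot\varphi_s$, the pushforward of the weighted $P$-integral back to $M$ via $\mu_s$, and the two applications of Fubini (one justified by Lemma~\ref{converge1}, the other by the additional hypothesis) are precisely the steps one finds in the cited source. Your self-diagnosed ``main obstacle'' is accurate: the content is elementary once one has pinned down (a) that the Hamiltonian $\langle b_X,\mu_s\rangle$ agrees with $f_s$ \emph{exactly} rather than up to an $s$-dependent constant under the normalisations in force, and (b) the $(2\pi)^n$ factor from the torus fibre. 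The equivalence (i)$\Leftrightarrow$(iii) in Lemma~\ref{converge1} already encodes the correct identification of measures, so once that lemma is granted, the bookkeeping is forced and your argument goes through verbatim.
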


\subsubsection{Integrability and independence of the path}

In light of conditions (a) and (b) of Definition \ref{fhat} required to define the $\hat{F}$-functional, it remains to identify sufficient
conditions for the moment polytope to remain unchanged under a path of K\"ahler metrics and for each
summand in the integral of $\hat{F}$ to be finite. This will be important for achieving an a priori $C^{0}$-bound along our continuity path.

To this end, suppose that $(M,\,\omega)$ is a toric K\"ahler manifold,
i.e., $(M,\,\omega)$ is K\"ahler with K\"ahler form $\omega$ with respect to a complex structure $J$, endowed with
the holomorphic action of a complex torus of the same complex dimension as $(M, \, J)$ whose underlying real torus $T$ induces a Hamiltonian action,
and let $JX\in\mathfrak{t}$. Via \eqref{eqnY4}, we can identify $X$ with an element $b_{X}\in\mathbb{R}^{n}\simeq\mathfrak{t}$.
Using Proposition \ref{propB6}, we can also write $\omega=2i\partial\bar{\partial}\phi_{0}$ on the dense orbit for some strictly convex function
$\phi_{0}:\mathbb{R}^{n}\to\mathbb{R}$. Assume that:
\begin{itemize}
\item $JX\in\Lambda_{\omega}$ so that the Hamiltonian potential $f$ of $JX$ is proper and bounded from below.
  \item There exists a smooth bounded real-valued function $F$ on $M$ so that the Ricci form $\rho_{\omega}$ of $\omega$ satisfies
$\rho_\omega + \frac{1}{2}\mathcal{L}_X\omega - \omega = i \p\bp F$.
\end{itemize}
The equation in the second bullet point reads as
\begin{equation*}
\left(F+\log\det(\phi_{0,\,ij}) - \langle \nabla \phi_0, b_X \rangle + 2\phi_0\right)_{ij}=0\qquad\textrm{on $\mathfrak{t}\simeq\mathbb{R}^{n}$}
\end{equation*}
so that
$$F=-\log\det(\phi_{0,\,ij}) + \langle \nabla \phi_0, b_X \rangle - 2\phi_0+a(\xi)\qquad\textrm{on $\mathbb{R}^{n}$}$$
for some {affine} function $a(\xi)$ defined on $\mathbb{R}^{n}$. By considering
$2\phi_0+a+\langle\nabla a,\,b_X \rangle$, we can therefore assume that
\begin{equation*}
F=-\log\det(\phi_{0,\,ij})+\langle\nabla\phi_0,\,b_X \rangle-2\phi_0\qquad\textrm{on $\mathbb{R}^{n}$}.
\end{equation*}
We then have the following lemma.

\begin{lemma}[\protect{\textnormal{\cite[Lemma 2.30]{ccd2}}}]\label{expression}
Under the above assumptions, let $\varphi \in C^\infty(M)$ be a torus-invariant smooth real-valued function on $M$ such that
$\omega_\varphi:=\omega +  i \p \bp \varphi > 0$ and {$\sup_M|X\cdot\varphi| < \infty$}.
Define $\phi:=\phi_{0}+\frac{1}{2}\varphi$ so that $\omega + i \p\bp \varphi = 2i\p\bp \phi$ on the dense orbit. Then:
\begin{enumerate}[label=\textnormal{(\roman*)}]
\item The image of the moment map $\mu_{\omega_{\varphi}}:M \to \t^*$ with respect to $\omega_\varphi$ defined by the Euclidean gradient $\nabla\phi: \R^n \to \R^n$ is equal to $P_{-K_{M}}$.
In particular, $0\in\operatorname{int}\left(\mu_{\omega_{\varphi}}(M)\right)$.
\item $\int_{P}|L(\phi_0)|\,e^{-\langle b_{X},\,x\rangle}dx<+\infty$.
\end{enumerate}
\end{lemma}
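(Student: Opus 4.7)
The plan is to deduce (i) from the fact that the Hamiltonians of the compact-torus generators, computed with respect to the new metric $\omega_\varphi$, still equal $-1$ on the corresponding invariant divisors, and to prove (ii) by combining the boundary structure of the symplectic potential given by Lemma \ref{boundaryy} with the exponential decay of the weight $e^{-\langle b_X,x\rangle}$ on $P_{-K_M}$ provided by Lemma \ref{normpolytope-tof}.

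For part (i), fix a ray $\sigma_i \in \Sigma$ with primitive generator $\nu_i \in \Gamma$ and let $D_i \subset M$ be the corresponding torus-invariant divisor, along which the holomorphic vector field on $M$ induced by $\nu_i$ vanishes identically. With the normalisation $\mu_{\omega_\varphi} = \nabla\phi$, the Hamiltonian of $J\nu_i$ with respect to $\omega_\varphi$ equals $h_i^{(\varphi)} := \ell_i(\nabla\phi)$ on the dense orbit and extends smoothly to all of $M$. Writing $\omega_\varphi = \omega + i\partial\bar\partial\varphi$ with $\varphi$ torus-invariant gives
\begin{equation*}
h_i^{(\varphi)} = h_i^{(0)} + \tfrac{1}{2}\,\nu_i\cdot\varphi \qquad \text{on } M,
\end{equation*}
where $h_i^{(0)} := \ell_i(\nabla\phi_0)$ is the Hamiltonian of $J\nu_i$ with respect to $\omega$. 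Since $\nu_i$ vanishes on $D_i$ as a vector field on $M$, the directional derivative $\nu_i\cdot\varphi$ vanishes on $D_i$; since $h_i^{(0)}|_{D_i} = -1$ by \eqref{xmas} combined with the normalisation of $\phi_0$ fixed by the bounded-$F$ Ricci equation (which forces $\nabla\phi_0(\mathbb{R}^n) = P_{-K_M}$), we obtain $h_i^{(\varphi)}|_{D_i} = -1$ for every $i$. Strict convexity of $\phi$ then forces $\nabla\phi(\mathbb{R}^n)$ to be an open convex subset of $\mathbb{R}^n$ whose boundary facets have equations $\ell_i(x) = -1$ with the same normals $\nu_i$ as $P_{-K_M}$, yielding $\nabla\phi(\mathbb{R}^n) = P_{-K_M}$. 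The final assertion $0 \in \operatorname{int}(\mu_{\omega_\varphi}(M))$ is then immediate from Lemma \ref{note}(i).

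For part (ii), Lemma \ref{boundaryy} decomposes the symplectic potential as $L(\phi_0) = u_{P_{-K_M}} + v$, where
\begin{equation*}
u_{P_{-K_M}}(x) = \tfrac{1}{2}\sum_{i}(\ell_i(x)+1)\log(\ell_i(x)+1)
\end{equation*}
and $v$ extends smoothly to $\overline{P_{-K_M}}$. Legendre-transforming the identity $F = -\log\det(\phi_{0,ij}) + \langle\nabla\phi_0,b_X\rangle - 2\phi_0$ yields, for $u := L(\phi_0)$, the equation
\begin{equation*}
2\bigl(\langle x, \nabla u \rangle - u(x)\bigr) - \log\det(u_{ij}(x)) = \langle b_X, x\rangle - F \qquad \text{on } P_{-K_M}.
\end{equation*}
Using this equation together with properness of $\phi_0$ from Lemma \ref{growth} and the boundedness of $F$, one obtains a growth bound of the form $|L(\phi_0)(x)| \leq C(1+|x|)\log(2+|x|)$ on $P_{-K_M}$. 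Lemma \ref{normpolytope-tof} then provides a constant $C_0 > 0$ such that $\langle b_X,x\rangle \geq C_0^{-1}|x| - C_0$ outside a compact subset of $P_{-K_M}$, so the exponential weight $e^{-\langle b_X,x\rangle}$ dominates the polynomial-times-log growth of $|L(\phi_0)|$ and the integral in (ii) is finite.

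The main obstacle is establishing the quantitative growth bound on $L(\phi_0)$ at infinity in $P_{-K_M}$, since Lemma \ref{boundaryy} only guarantees smoothness of $v$ up to $\overline{P_{-K_M}}$ without controlling its growth in the non-compact directions. This is where the Legendre-transformed Monge-Amp\`ere equation above has to be invoked, together with Lemma \ref{growth} applied to both $\phi_0$ and its Legendre transform, to transfer the boundedness of $F$ into a uniform asymptotic estimate on $u$ matching the principal growth of $u_{P_{-K_M}}$.
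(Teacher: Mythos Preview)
The paper does not supply its own proof of this lemma; it is quoted directly from \cite[Lemma~2.30]{ccd2}. So there is no in-paper argument to compare against, and I can only evaluate your proposal on its own terms.

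For part (i), your divisor argument---that $\nu_i\cdot\varphi$ vanishes on $D_i$ so the Hamiltonian values on the invariant divisors are unchanged---is the right mechanism for showing that the finite facets of the moment image are unaltered. But you never invoke the hypothesis $\sup_M|X\cdot\varphi|<\infty$, and in the non-compact setting this is exactly what is needed to close the argument. Knowing that each $h_i^{(\varphi)}$ equals $-1$ on $D_i$ and that $\nabla\phi$ is a diffeomorphism onto an open convex set does not by itself force $\nabla\phi(\R^n)$ to fill out \emph{all} of $P_{-K_M}$: a priori the image could be a proper convex subset with the same finite facets but a smaller asymptotic cone. The boundedness of $X\cdot\varphi$ guarantees that $\langle b_X,\nabla\phi\rangle = \langle b_X,\nabla\phi_0\rangle + \tfrac12 X\cdot\varphi$ remains proper and comparable to $\langle b_X,\nabla\phi_0\rangle$, which (via Lemma~\ref{alpha} and Proposition~\ref{identify}, or a direct comparison) pins down the asymptotic cone of the image. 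Your step ``strict convexity of $\phi$ then forces\ldots'' elides precisely this point.

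For part (ii), the strategy is reasonable and you correctly identify the obstacle: Lemma~\ref{boundaryy} gives no growth control on the remainder $v$ in non-compact directions, and the growth bound $|L(\phi_0)(x)|\le C(1+|x|)\log(2+|x|)$ must come from elsewhere. Your final paragraph asserts that the Legendre-transformed equation together with Lemma~\ref{growth} and boundedness of $F$ yields this, but you do not indicate how the $\log\det(u_{ij})$ term is controlled, and without that the argument is incomplete. A cleaner alternative is to pull the integral back to $\xi$-coordinates via $x=\nabla\phi_0(\xi)$, obtaining $\int_{\R^n}|\langle\xi,\nabla\phi_0\rangle-\phi_0|\,e^{-F-2\phi_0}\,d\xi$; then Lemma~\ref{growth} gives $e^{-2\phi_0}\le Ce^{-c|\xi|}$, and one only needs to bound $|\nabla\phi_0|$ polynomially in $|\xi|$ (or by $\phi_0$ itself), which follows from Lemma~\ref{normpolytope-tof} once $\langle b_X,\nabla\phi_0\rangle=f$ is tied back to $\phi_0$ via the equation for $F$. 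Either route requires one more explicit estimate than you have supplied.
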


\section{Proof of Theorem \ref{mainthm}\MakeLowercase{(ii)--(v)}: Setup of the complex Monge-Amp\`ere equation}\label{sec-construction-back-metric}

In this section, we set up the complex Monge-Amp\`ere equation whose solution will give us the shrinking soliton we desire. Our setup is as in Theorem \ref{mainthm}, namely the pair $(C_0,\,g_{0})$ is a toric K\"ahler cone with K\"ahler cone metric $g_{0}$, complex structure $J_{0}$, radial function $r$, and K\"ahler form $\omega_{0}=\frac{i}{2}\partial\bar{\partial}r^{2}$. We have a quasi-projective equivariant resolution of $C_0$ (with respect to the holomorphic torus action) $\pi:M\to C_0$ with exceptional set $E$ such that $-K_{M}$ is $\pi$-ample. We write $J$ for the complex structure on $M$ and $\mathfrak{t}$ for the Lie algebra of the real torus $T^{n}$ acting equivariantly on $\pi:M\to C_{0}$. We first identify the soliton vector field of any hypothetical toric shrinking K\"ahler-Ricci soliton on $M$.

\begin{lemma}\label{vector-field}
There exists a unique complete real holomorphic vector field $JX\in\mathfrak{t}$ such that $X$ is the soliton vector field of any complete toric shrinking gradient K\"ahler-Ricci soliton $g$ on $M$.
Moreover, if $g$ has quadratic curvature decay, then $g_{0}(d\pi(X),\,r\partial_{r})>0$.
\end{lemma}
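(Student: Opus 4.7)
The existence and uniqueness of $JX$ is a direct consequence of the toric weighted-volume formalism reviewed in Section \ref{weighted}. By Lemmas \ref{one} and \ref{two}, the cone $\Lambda \subseteq \mathfrak{t}$ and the functional $\mathcal{F}: \Lambda \to \mathbb{R}_{>0}$ depend only on the torus action on $M$, equivalently only on the polytope $P_{-K_M}$, and not on the choice of any K\"ahler form. Since $\mathcal{F}$ is strictly convex and proper on $\Lambda$ in the toric setting, it attains its minimum at a unique point $b_X \in \Lambda$, which via \eqref{eqnY4} corresponds to a unique real holomorphic vector field $JX \in \mathfrak{t}$. Theorem \ref{thmB13} then forces any complete toric shrinking gradient K\"ahler-Ricci soliton $(M, g, X')$ on $M$ to satisfy $JX' = JX$, and hence $X' = X$. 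Completeness of $JX$ is automatic since it lies in the Lie algebra of the compact torus $T^n$ acting globally on $M$, and completeness of $X = -J(JX)$ follows from the fact that $X$ generates a one-parameter subgroup of the holomorphic $(\C^*)^n$-action on $M$.

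For the positivity under quadratic curvature decay, I would first use the holomorphicity of $\pi$ together with the $J_0$-invariance of $g_0$ to rewrite
\[
g_0(d\pi(X),\, r\partial_r) \;=\; g_0(d\pi(JX),\, \xi),
\]
where $\xi := J_0 r\partial_r$ is the Reeb vector field of $g_0$. Both $d\pi(JX)$ and $\xi$ are Killing vector fields in $\mathfrak{t}$ on the toric K\"ahler cone $(C_0, g_0)$. Letting $u^0_v$ denote the Hamiltonian of $v \in \mathfrak{t}$ with respect to $\omega_0$ normalized to vanish at the apex, hence to be degree-two homogeneous along the radial direction, the identity $g_0(\xi, v) = du^0_v(r\partial_r)$ combined with Euler's relation gives
\[
g_0(d\pi(JX),\, \xi) \;=\; r\partial_r\, u^0_{d\pi(JX)} \;=\; 2\,u^0_{d\pi(JX)}.
\]
The standard moment-map description of toric K\"ahler cones yields $u^0_{d\pi(JX)}(p) = \langle \mu_0(p), b_X \rangle$, where $\mu_0: C_0 \to \mathfrak{t}^*$ is the moment map of $\omega_0$, whose image is the polyhedral cone $\mathcal{C}(P_{-K_M})$. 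Since the Reeb vector field has Hamiltonian $r^2/2 > 0$ off the apex, $\mu_0(p) \in \mathcal{C}(P_{-K_M}) \setminus \{0\}$ for every $p \in C_0 \setminus \{0\}$.

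The positivity thus reduces to $b_X \in \operatorname{int} \mathcal{C}(P_{-K_M})^\vee$, which by Proposition \ref{identify} is precisely the statement $JX \in \Lambda$ already established. The role of the quadratic curvature decay hypothesis is to guarantee that the reference cone $(C_0, \omega_0)$ has Reeb cone matching the one determined by the polytope of $M$ (and hence with $\Lambda$); under this hypothesis one invokes established asymptotically conical structure results for shrinking gradient K\"ahler-Ricci solitons to identify the tangent cone of $g$ with a toric K\"ahler cone on $C_0$ sharing both the complex structure and the torus action with $g_0$, and the combinatorial independence encoded in Lemma \ref{one} then pins down both Reeb cones. The main obstacle I anticipate is cleanly formalizing the identification $\mu_0(C_0) = \mathcal{C}(P_{-K_M})$ together with the membership $b_X \in \operatorname{int}\mathcal{C}(P_{-K_M})^\vee$ so that both statements refer to the same polyhedral data on $\mathfrak{t}^*$; once that is in place, the strict positivity is immediate from the cone-dual pairing.
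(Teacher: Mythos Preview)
Your argument for existence and uniqueness of $JX$ is correct and essentially the same as the paper's, just with more detail filled in; the paper simply cites \cite[Theorem A]{charlie}.

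For the positivity statement you take a genuinely different route. The paper uses the quadratic curvature decay to assert that $(M,g)$ has a tangent cone at infinity, and then invokes \cite[Theorem A.10]{cds} as a black box to identify $\Lambda$ directly with $\{Y\in\mathfrak{t}: g_0(r\partial_r,\,-d\pi(JY))>0\}$; since $JX\in\Lambda$, the result follows. You instead compute $g_0(d\pi(X),r\partial_r)=2\langle\mu_0(\cdot),b_X\rangle$ via the cone moment map and reduce positivity to $b_X\in\operatorname{int}(\mu_0(C_0)^\vee)$, which you want to match with $\Lambda=\operatorname{int}(\mathcal{C}(P_{-K_M})^\vee)$. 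This is valid, and the identification $\mu_0(C_0)=\mathcal{C}(P_{-K_M})$ you flag as the main obstacle is in fact purely combinatorial: the moment cone of the affine toric variety $C_0=U_{\sigma_0}$ equipped with any toric K\"ahler cone metric is $\sigma_0^\vee$, while $\mathcal{C}(P_{-K_M})=\bigcap_i K(\nu_i,0)$ by Lemma \ref{alpha}, and since the fan $\Sigma_M$ is a subdivision of $\sigma_0$ its rays $\nu_i$ generate exactly $\sigma_0$, so both cones coincide.

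The one confused point in your write-up is your explanation of where quadratic curvature decay enters. Your argument, once the combinatorial identification above is made precise, does not use it at all: everything comes from Proposition \ref{identify}, Lemma \ref{note}, and the toric resolution structure. The paper needs the hypothesis only because it chooses to route through the asymptotically-conical machinery of \cite{cds}. So your approach is more elementary and in fact suggests the hypothesis could be dropped; the paper's approach has the advantage of citing a single external result rather than unwinding the toric moment-cone dictionary.
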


\begin{proof}
The existence and uniqueness of $JX$ follows from \cite[Theorem A]{charlie}. Regarding the last assertion, we know
that $JX$ lies in the domain $\Lambda\subseteq\mathfrak{t}$ of the weighted volume functional of $g$. This is a non-empty open convex cone in $\mathfrak{t}$ that comprises
those vector fields in $\mathfrak{t}$ that admit a Hamiltonian potential with respect to $g$ that is proper and bounded from below. By \cite[Lemma 2.26]{ccd}, toricity of $g$ implies
that $\Lambda$ is independent of the choice of toric shrinking soliton. Assuming that $g$ has quadratic curvature decay, the soliton
$(M,\,g)$ has a tangent cone at infinity; cf.~\cite[Theorem 3.8]{cds} for the analogous statement in the expanding case. \cite[Theorem A.10]{cds} then stipulates that not only is $\Lambda$ independent of the choice of toric shrinking K\"ahler-Ricci soliton metric, it is furthermore independent of the choice of toric K\"ahler metric on $M$ asymptotic to a K\"ahler cone at infinity in the $C^{0}$-sense with Reeb vector field lying in $\mathfrak{t}$; cf.~\cite[(A-6)]{cds}. The proof of this theorem then allows us to identify $\Lambda$ as $\Lambda=\{Y\in\mathfrak{t}\,|\,g_{0}(r\partial_{r},\,-d\pi(JY))>0\}$.
Since $JX\in\Lambda$, the last assertion of the lemma follows.
\end{proof}

Given the above lemma, we can now perform a Type I deformation of $(C_{0},\,g_{0})$ \cite[Theorem A]{taka} (see also \cite[Appendix II]{conlon999}) to obtain a toric K\"ahler cone metric
on $C_{0}$ with radial coordinate $r$ satisfying $d\pi(X)=r\partial_{r}$ \cite[Lemma 2.2]{frank}. By abuse of notation, we still denote this K\"ahler cone metric by $g_{0}$
and the corresponding K\"ahler form by $\omega_{0}$. This proves Theorem \ref{mainthm}(ii).

With the model metric at infinity now determined, we construct a suitable background metric as demonstrated in the following proposition. This gives Theorem \ref{mainthm}(iii)--(v).
\begin{prop}\label{mainprop}
\begin{enumerate}
\item  There exists a complete K\"ahler metric $\omega$ on $M$ invariant under the action of $T^{n}$
such that outside a compact subset of $M$ containing $E$,
$\omega=\pi^{*}(\omega_{0}+\rho_{\omega_{0}})$. In particular, $\pi_{*}\omega-\omega_{0}=O(r^{-2})$ with $g_{0}$-derivatives.
In addition, there exists a smooth real-valued torus-invariant function $F=c_{0}-\frac{s_{\omega_{0}}}{2}+O(r^{-4})$ with $g_{0}$-derivatives such that
\begin{equation}\label{hutchins}
i\partial\bar{\partial}F=\rho_{\omega}+\frac{1}{2}\mathcal{L}_{X}\omega-\omega.
\end{equation}
Here, $c_{0}\in\mathbb{R}$, $\rho_{\omega_{0}}$ (respectively $\rho_{\omega}$) denotes the Ricci form of $\omega_{0}$ (resp.~$\omega$), and $s_{\omega_{0}}$ denotes the scalar curvature of $\omega_{0}$.
\item There exists a unique torus-invariant real-valued function $f\in C^{\infty}(M)$ with
$-\omega\lrcorner JX=df$ such that outside a compact subset of $M$ containing $E$,
$f=\pi^{*}\left(\frac{r^{2}}{2}-n\right)$ and
\begin{equation}\label{normal12}
\Delta_{\omega}f+f-\frac{X}{2}\cdot f=-\frac{X}{2}\cdot F.
\end{equation}
In particular,
\begin{equation}\label{normal2}
\Delta_{\omega}f+f-\frac{X}{2}\cdot f=O(r^{-2})\quad\textrm{with $g_{0}$-derivatives,}
\end{equation}
and $f\to+\infty$ as $r\to+\infty$, hence is proper. Moreover, there exists $C>0$ such that
\begin{equation}\label{bds-cov-der-f}
|X\cdot f-2f|\leq C\qquad\textrm{and}\qquad|\nabla^{g,\,k}(\nabla^{g,\,2}f-g)|_{g}\leq \frac{C_{k}}{(f+C)^{1+\frac{k}{2}}}\qquad\textrm{for all $k\geq 0$}.
\end{equation}
  \item Every shrinking gradient K\"ahler-Ricci soliton on $M$ with soliton vector field $X$ is of the form
  $\omega+i\partial\bar{\partial}\varphi$ for some smooth real-valued function $\varphi\in C^{\infty}(M)$ with $\omega+i\partial\bar{\partial}\varphi>0$
  satisfying the complex Monge-Amp\`ere equation
\begin{equation}\label{cmaa}
(\omega+i\partial\bar{\partial}\varphi)^{n}=e^{F+\frac{X}{2}\cdot\varphi-\varphi}\omega^{n},
\end{equation}
where $F$ is as in part (i). Moreover, if the shrinking soliton is $T^{n}$-invariant, then so is $\varphi$.
\end{enumerate}
\end{prop}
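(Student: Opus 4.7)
For part (i), the plan is to first produce a $T^n$-invariant K\"ahler metric $\tilde\omega$ on $M$ that is asymptotically conical to $(C_0,g_0)$: such a $\tilde\omega$ is obtained from a polarisation via the quasi-projectivity of the resolution and $\pi$-ampleness of $-K_M$, then averaged over $T^n$. Both $\tilde\omega$ and $\pi^*(\omega_0+\rho_{\omega_0})$ are closed $T^n$-invariant $(1,1)$-forms on the cone end whose difference decays in $g_0$-norm, so after using Lemma~\ref{nice} to rule out nontrivial $T^n$-invariant pluriharmonic obstructions one writes the difference as $i\partial\bar\partial u$ for a $T^n$-invariant function $u$ with controlled $g_0$-decay. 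Applying the gluing Lemma~\ref{glue} to $u$ yields the desired $\omega$, with positivity preserved at infinity because $\rho_{\omega_0}=O(r^{-2})$ in $g_0$-norm is dominated by $\omega_0$. To identify $F$, the cone identities $\mathcal{L}_{r\partial_r}\omega_0=2\omega_0$ and $\mathcal{L}_{r\partial_r}\rho_{\omega_0}=0$ give $\tfrac{1}{2}\mathcal{L}_X\omega-\omega=-\pi^*\rho_{\omega_0}$ on the end, so $\rho_\omega+\tfrac{1}{2}\mathcal{L}_X\omega-\omega=\rho_\omega-\pi^*\rho_{\omega_0}=-i\partial\bar\partial\log(\omega^n/(\pi^*\omega_0)^n)$; expanding $(\omega_0+\rho_{\omega_0})^n/\omega_0^n=1+\operatorname{tr}_{\omega_0}\rho_{\omega_0}+O(r^{-4})=1+s_{\omega_0}/2+O(r^{-4})$ produces $F=c_0-s_{\omega_0}/2+O(r^{-4})$, with $c_0$ fixed by the integral normalisation.

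For part (ii), since $JX$ is real holomorphic and Killing for $g$, the one-form $-\omega\lrcorner JX$ is closed; simple connectedness of $M$ makes it exact, giving a primitive $f$ unique up to an additive constant. On the cone end, the basic property $\iota_{\xi}\rho_{\omega_0}=0$ of the Ricci form of a K\"ahler cone (where $\xi=J_0r\partial_r$) yields $-\pi^*(\omega_0+\rho_{\omega_0})\lrcorner JX=d(\pi^*(r^2/2))$, so $f=\pi^*(r^2/2)+\text{const}$ on the end; the value $-n$ is fixed by demanding the constant in \eqref{normal12} vanish on the end, using $\Delta_{\omega_0}(r^2/2)=n$, $r\partial_r(r^2/2)=r^2$, and $\operatorname{tr}_{\omega_0}\rho_{\omega_0}=s_{\omega_0}/2$. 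To derive \eqref{normal12} globally on $M$, I would contract \eqref{hutchins} with $X$ using the K\"ahler identities $\iota_X(i\partial\bar\partial h)=\tfrac{1}{2}d^c(X\cdot h)$ (valid for $X$ real holomorphic and $h$ real), $\mathcal{L}_X\omega=2i\partial\bar\partial f$, and the standard formula for $\iota_X\rho_\omega$ when $X=\nabla^g f$; these combine to show that $d^c(\Delta_\omega f+f-\tfrac{X}{2}\cdot f+\tfrac{X}{2}\cdot F)=0$, hence the bracketed quantity is constant on $M$, and vanishes by the cone-end computation. The bound $|X\cdot f-2f|\leq C$ is immediate from $X\cdot f=r^2=2(f+n)$ on the end and continuity on the compact part, while the Hessian estimate follows from the Riemannian form of \eqref{hutchins}, which produces $\operatorname{Hess}_g f-g=\operatorname{Hess}_g F-\operatorname{Ric}_g$ on $J$-invariant tensors, combined with the AC structure of $(M,g)$ and the $O(r^{-2})$-decay of both summands.

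For part (iii), I first argue that any shrinking gradient K\"ahler-Ricci soliton $\omega'$ on $M$ with soliton vector field $X$ is of the form $\omega+i\partial\bar\partial\varphi$: by \cite{munteanu} the soliton has one end, its tangent cone at infinity is forced to be $(C_0,g_0)$ because $X$ determines the Reeb direction, and so $\omega'-\omega$ decays in $g_0$-norm; combined with the simple connectedness of $M$ and a $\partial\bar\partial$-lemma in the AC K\"ahler setting, this yields $\omega'=\omega+i\partial\bar\partial\varphi$ for a smooth $\varphi$. Substituting into the soliton equation and using $\rho_{\omega'}=\rho_\omega-i\partial\bar\partial\log((\omega')^n/\omega^n)$, $\mathcal{L}_X\omega'=\mathcal{L}_X\omega+i\partial\bar\partial(X\cdot\varphi)$, and \eqref{hutchins} collapses the equation to $i\partial\bar\partial(F-\log((\omega')^n/\omega^n)+\tfrac{X}{2}\cdot\varphi-\varphi)=0$. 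The bracketed function is invariant under the flow of $JX$, so Lemma~\ref{nice} forces it to be constant on the cone end, and the identity principle for pluriharmonic functions on connected $M$ extends this to global constancy; the shift $\varphi\mapsto\varphi+c$ (which alters the constant in the Monge-Amp\`ere equation by $-c$) absorbs this constant and produces \eqref{cmaa}. If $\omega'$ is $T^n$-invariant, the $T^n$-average $\bar\varphi$ of $\varphi$ is also a potential for $\omega'$, and after a further constant shift it satisfies \eqref{cmaa}, providing a $T^n$-invariant $\varphi$.

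The most delicate step is the $\partial\bar\partial$-lemma in part (iii) that produces the global potential $\varphi$ from the AC decay of $\omega'-\omega$, which requires both the topological input (simple connectedness of $M$) and careful cohomological decay estimates in the non-compact K\"ahler setting. The construction of $\omega$ in part (i) with all the requisite properties holding simultaneously (positivity, $T^n$-invariance, the prescribed asymptotic form, and the precise decay rate of $F$) is also technically involved, and is where the bulk of the gluing-and-cut-off work of the proof is concentrated.
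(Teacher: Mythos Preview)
Your outlines of parts (i) and (ii) align closely with the paper's argument. The paper is terse on (i), deferring to \cite{cds} and \cite{con-der} for the actual construction, so your description there is a reasonable fleshing-out; the role of Lemma~\ref{nice} is a slight mischaracterisation (it forces pluriharmonic $JX$-invariant functions to be constant, it does not itself produce potentials), but this does not affect the logic. Part (ii) is essentially identical: the paper contracts \eqref{hutchins} with $X^{1,0}$ and uses the Bochner formula where you use $d^c$, and both conclude that $\Delta_\omega f + f - \tfrac{X}{2}\cdot f + \tfrac{X}{2}\cdot F$ is a real-valued holomorphic (hence constant) function whose value is read off on the cone end.

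Part (iii), however, contains a genuine gap and misses the much simpler argument the paper uses. Your route requires showing that any shrinking gradient K\"ahler--Ricci soliton $\omega'$ on $M$ with soliton vector field $X$ is asymptotically conical with tangent cone $(C_0,g_0)$, and then invoking an AC $\partial\bar\partial$-lemma. The first assertion, ``its tangent cone at infinity is forced to be $(C_0,g_0)$ because $X$ determines the Reeb direction,'' is unjustified: a priori $\omega'$ need not even be AC (that would require curvature decay you have not assumed), and even if a tangent cone exists the Reeb vector field alone does not determine the cone metric. The paper avoids this entirely. Since $\omega'$ satisfies $\rho_{\omega'}+i\partial\bar\partial f'=\omega'$ and \eqref{hutchins} rewrites as $\rho_\omega+i\partial\bar\partial(f-F)=\omega$, subtracting and using $\rho_\omega-\rho_{\omega'}=i\partial\bar\partial\log\bigl((\omega')^n/\omega^n\bigr)$ gives
\[
\omega'-\omega=i\partial\bar\partial\!\Bigl(f'-f+F-\log\tfrac{(\omega')^n}{\omega^n}\Bigr),
\]
so the potential $\varphi$ is written down explicitly, with no asymptotic analysis and no $\partial\bar\partial$-lemma. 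The Monge--Amp\`ere equation then follows (after the appropriate additive shift) as in your second paragraph on (iii), with Lemma~\ref{nice} used to kill the pluriharmonic remainder. What you flagged as ``the most delicate step'' is therefore not needed at all.
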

Note that since the torus action on $M$ contains a fixed point, $M$ is simply connected \cite[Theorem 12.1.10]{cox},
a necessary condition for the existence of a shrinking gradient K\"ahler-Ricci soliton \cite{carlos, sun2024}.

\begin{proof}[Proof of Proposition \ref{mainprop}]
\begin{enumerate}
\item As $M$ is quasi-projective and $\pi$ is $-K_{M}$-ample by assumption, the construction of the background metric follows by arguing first as in \cite[p.307]{cds}
with $-K_{M}$ in place of $K_{M}$ to show that \cite[(4-4)]{cds} holds true, and then as in \cite[Proposition 3.1]{con-der}, with $T^{n}$ in place of $T^{k}$, $c=1$, and again
with $-K_{M}$ in place of $K_{M}$. The existence of $F$ follows from the computations on \cite[p.319]{con-der} (see in particular \cite[(3.5)--(3.8)]{con-der}), together with an application of Lemma \ref{nice}.
  \item As we have already remarked, $M$ is simply connected, and so there exists a smooth real-valued function $f\in C^{\infty}(M)$, defined up to a constant, with $-\omega\lrcorner JX=df$.
Any such choice of $f$ is invariant under the action of $T^{n}$ by virtue of the fact that $\omega\lrcorner JX$ is invariant under this action.
Next, notice that $-\omega_{0}\lrcorner J_{0}r\partial_{r}=d\left(\frac{r^{2}}{2}\right)$, where recall $J_{0}$ is the complex structure on $C_{0}$,
and $\rho_{\omega_{0}}\lrcorner J_{0}r\partial_{r}=0$ because, as is well-known, the Ricci form of a K\"ahler cone metric is a basic $(1,\,1)$-form \cite[Section 2.4]{Boyer}.
Henceforth suppressing the pullback by $\pi$, we therefore see from the form of $\omega$ given in part (i) that on the complement of a compact subset $K\subseteq M$ containing $E$,
$$df=-\omega\lrcorner JX=-\left(\omega_{0}+\rho_{\omega_{0}}\right)\lrcorner JX=d\left(\frac{r^{2}}{2}\right),$$
so that $f$ differs from $\frac{r^{2}}{2}$ by a constant on this set, meaning that $f=\frac{r^{2}}{2}+\operatorname{const.}$
on $M\setminus K$. Normalise $f$ so that this constant is equal to $-n$. Then $f=\pi^{*}\left(\frac{r^{2}}{2}-n\right)$ outside a compact set.
What remains to show is that with respect to this normalisation, \eqref{normal2} holds true.

To this end, let $F$ be the smooth function from part (i) satisfying
$$ i \p \bp F = \rho_{\omega} + \frac{1}{2}\mathcal{L}_X \omega - \omega.$$
Using the $JX$-invariance of $F$ and $f$,
contract this equation with $X^{1,\,0}:=\frac{1}{2}(X-iJX)$ and use the Bochner formula to derive that
$$i\bar{\partial}\left(\Delta_{\omega}f-\frac{X}{2}\cdot f+f+\frac{X}{2}\cdot F\right)=0.$$
As a real-valued holomorphic function, we must have that $\Delta_{\omega}f-\frac{X}{2}\cdot f+f+\frac{X}{2}\cdot F$ is
constant on $M$. In light of the fact that on $M\setminus K$,
\begin{equation}\label{bonjour}
\begin{split}
\Delta_{\omega}f-\frac{X}{2}\cdot f+f&=(\Delta_{\omega}-\Delta_{\omega_{0}})f+\Delta_{\omega_{0}}f-\frac{X}{2}\cdot f+f\\
&=O(r^{-2})+\underbrace{\Delta_{\omega_{0}}\left(\frac{r^{2}}{2}-n\right)-\frac{r}{2}\frac{\partial}{\partial r}\left(\frac{r^{2}}{2}-n\right)+\left(\frac{r^{2}}{2}-n\right)}_{=\,0}\\
&=O(r^{-2})
\end{split}
\end{equation}
and $\frac{X}{2}\cdot F=O(r^{-2})=O(f^{-1})$, this constant must be zero so that globally on $M$, so that
\begin{equation*}
\Delta_{\omega}f+f-\frac{X}{2}\cdot f=-\frac{X}{2}\cdot F=O(f^{-1}).
 \end{equation*}
This proves \eqref{normal12} and \eqref{normal2}. Finally, as $f=\pi^{*}\left(\frac{r^{2}}{2}-n\right)$ outside a compact subset of $M$, the first estimate of
\eqref{bds-cov-der-f} is clear. As for the second, part (i) gives us that
$$|\nabla^{g,\,k}\left(\Ric(g)+\nabla^{g,2}f-g\right)|_g=O\left(f^{-2-\frac{k}{2}}\right)\qquad\textrm{for all $k\geq 0$}$$
outside a compact subset of $M$. But then $g$ is asymptotically conical at rate $-2$ by part (i) again, and so $|\nabla^{g,\,k}\Ric(g)|_g=O\left(f^{-1-\frac{k}{2}}\right)$ for all $k\geq 0$.

\item The fact that the shrinking soliton takes the form as stated follows from \eqref{hutchins} and the defining equation of a shrinking gradient K\"ahler-Ricci soliton.
One then derives the complex Monge-Amp\`ere equation as in the proof of \cite[Proposition 3.2]{con-der} using Lemma \ref{nice}. Finally, one can guarantee the $T^{n}$-invariance of
$\varphi$ by averaging over the corresponding action. 
\end{enumerate}
\end{proof}

In summary, we want to solve the complex Monge-Amp\`ere equation
\begin{equation*}
(\omega+i\partial\bar{\partial}\varphi)^{n}=e^{F+\frac{X}{2}\cdot\varphi-\varphi}\omega^{n},\quad\textrm{$\varphi\in C^{\infty}(M)$ torus-invariant},\quad\omega+i\partial\bar{\partial}\varphi>0.\\
\end{equation*}
A strategy to solve this equation is given by considering the Aubin continuity path:
\begin{equation}\label{ast-t-bis}
(\omega+i\partial\bar{\partial}\varphi_{t})^{n}=e^{F+\frac{X}{2}\cdot\varphi_{t}-t\varphi_{t}}\omega^{n},\quad\textrm{$\varphi_{t}\in C^{\infty}(M)$ torus-invariant},\quad\omega+i\partial\bar{\partial}\varphi_{t}>0,\quad t\in[0,\,1].\tag{$\ast_{t}$}
\end{equation}
The equation corresponding to $t=0$ we consider is given by
\begin{equation}\label{ast-0}
\left\{
\begin{array}{rl}
(\omega+i\partial\bar{\partial}\psi)^{n}=e^{F+\frac{X}{2}\cdot\psi}\omega^{n},&\quad\textrm{$\psi\in C^{\infty}(M)$ torus-invariant},\quad\omega+i\partial\bar{\partial}\psi>0,\\
\int_{M}e^{F-f}\omega^{n}=\int_{M}e^{-f}\omega^{n}. &
\end{array} \right.\tag{$\ast_{0}$}
\end{equation}
Notice that we introduce the integral condition to determine $c_{0}$ uniquely.
This equation we will solve by the continuity method, the particular
path of which will be introduced in Section \ref{continuitie}. This will yield Theorem \ref{mainthm2}.
Beforehand however, we prove some analytic results regarding the metric $\omega$ and those metrics that are asymptotic to it, beginning with a weighted Poincar\'e inequality.

\section{Poincar\'e inequality}\label{sec-poin-inequ}

In this section, we prove a Poincar\'e inequality
(in a general setting) that will be used in Proposition \ref{prop-a-priori-ene-est} to establish an a priori weighted $L^{p}$-estimate along the
continuity path that we are considering.

Our set-up for our Poincar\'e inequality is as follows. We consider a non-compact connected Riemannian manifold $(M,\,g)$ endowed with a smooth real-valued function $f:M\to\mathbb{R}$.
Recall our notation for the drift Laplacian $\Delta_{f}(\,\cdot\,):=\Delta_{g}(\,\cdot\,)-\nabla^g_{\nabla^{g}f}(\,\cdot\,)$ and its corresponding properties from Section \ref{metricmeasure}.
We write $d\mu$ for the volume form of $g$ and $d\mu_{f}$ for the weighted measure $e^{-f}d\mu$.
We assume that the triple $(M,\,g,\,f)$ has the following properties.
\begin{enumerate}
  \item $f$ is proper and bounded from below.
  \item The zero set of the vector field $X:=\nabla^{g}f$ is compact.
  \item There exists constants $a,\,b>0$ such that $|\nabla^{g}f|^2_{g}\leq af+b$ everywhere on $M$.
  \item $\Delta_{f}f=-2f-G$ for a smooth bounded real-valued function $G:M\to\mathbb{R}$.
  \item The weighted volume $\int_{M}d\mu_{f}$ is finite.
  \item The sublevel sets of $f$ are connected.
\end{enumerate}
By Proposition \ref{mainprop}(i)--(ii), these properties clearly hold in our setting.

Given this set-up, we work with the Lebesgue and Sobolev spaces $L^{p}(d\mu_{f})$ and $W^{1,\,p}(d\mu_{f})$ on $M$ respectively, defined in the obvious way for $p\geq1$. We denote
by $$\fint_{M}u\,d\mu_{f}:=\frac{1}{\int_{M}d\mu_{f}}\int_{M}u\,d\mu_{f}\qquad\textrm{for all $u\in L^{p}(d\mu_{f})$.}$$
By H\"older's inequality and property (v), the integral $\fint_{M}u\,d\mu_{f}$ is finite.

\begin{prop}[Poincar\'e inequality]\label{poincare}
Under assumptions (i)--(v) above, for all $p\geq 1$, there exists a constant $C(p)>0$ such that
$$\left\|u-\fint_{M}u\,d\mu_{f}\right\|_{L^{p}(d\mu_{f})}\leq C(p)\|\nabla^{g}u\|_{L^{p}(d\mu_{f})}\qquad\textrm{for all $u\in W^{1,\,p}(d\mu_{f})$}.$$
\end{prop}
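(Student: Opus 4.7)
The plan is to apply the Lyapunov function technique of Bakry--Barthe--Cattiaux--Guillin [Bak-Bar-Cat-Gui]. Properties (iii) and (iv) are precisely the ingredients needed to construct a Lyapunov function for the drift Laplacian $\Delta_{f}$, while (i) and (vi) ensure that a local Poincar\'e inequality holds on a connected compact sublevel set of $f$, and (v) allows one to pass between local and global means. Combining these ingredients yields the global weighted $L^{p}$-Poincar\'e inequality.

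\textbf{Lyapunov function.} I would set $W := e^{\alpha f}$ for a small parameter $\alpha \in (0, 2/a)$, where $a$ is the constant from (iii). Using the identity $\Delta_{f}e^{\alpha f} = \alpha e^{\alpha f}\Delta_{f}f + \alpha^{2} e^{\alpha f}|\nabla^{g}f|_{g}^{2}$, one computes
$$\frac{\Delta_{f}W}{W} = \alpha\Delta_{f}f + \alpha^{2}|\nabla^{g}f|_{g}^{2} \leq \alpha(-2f - G) + \alpha^{2}(af + b) = \alpha(a\alpha - 2)f + C_{\alpha},$$
using (iii), (iv) and the boundedness of $G$. Since $a\alpha - 2 < 0$ and $f$ is proper and bounded below by (i), there exist $R_{0} > 0$, $\theta > 0$, and $b_{0} > 0$ such that $\Delta_{f}W \leq -\theta W + b_{0}\mathbf{1}_{K}$ on $M$, where $K := \{f \leq R_{0}\}$ is compact. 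By (vi), after enlarging $R_{0}$ slightly, I may assume $K$ is a connected, smooth, relatively compact domain.

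\textbf{Local Poincar\'e and assembly.} Since $e^{-f}$ is bounded above and below on $K$, the classical $L^{p}$-Poincar\'e inequality on $(K,g)$ gives
$$\int_{K}|u - u_{K}|^{p}\,d\mu_{f} \leq C_{K}\int_{K}|\nabla^{g}u|_{g}^{p}\,d\mu_{f},\qquad u_{K} := \fint_{K} u\,d\mu_{f}.$$
For $u \in C_{c}^{\infty}(M)$, multiplying the Lyapunov inequality by $|u - u_{K}|^{p}/W$ and integrating against $d\mu_{f}$ yields
$$\theta\int_{M}|u - u_{K}|^{p}\,d\mu_{f} \leq -\int_{M}\frac{\Delta_{f}W}{W}|u - u_{K}|^{p}\,d\mu_{f} + b_{0}\int_{K}|u - u_{K}|^{p}\,d\mu_{f}.$$
The local Poincar\'e inequality absorbs the second term. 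Integrating by parts (using that $\Delta_{f}$ is self-adjoint with respect to $d\mu_{f}$) rewrites the first term as $\int_{M}\nabla W \cdot \nabla(|u - u_{K}|^{p}/W)\,d\mu_{f}$; expanding and applying Young's inequality, together with the bound $|\nabla W|_{g}^{2}/W^{2} = \alpha^{2}|\nabla^{g}f|_{g}^{2} \leq \alpha^{2}(af + b)$ provided by (iii), produces a bound by $C_{p}\int_{M}|\nabla^{g}u|_{g}^{p}\,d\mu_{f}$ after absorbing the residual $f$-growth using the sharper Lyapunov estimate $\Delta_{f}W \leq -\theta_{1}fW + C$ on $M \setminus K$. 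Finally, replacing $u_{K}$ by $\fint_{M}u\,d\mu_{f}$ costs only a constant depending on $\mu_{f}(M)/\mu_{f}(K)$, which is finite by (v), and density of $C_{c}^{\infty}(M)$ in $W^{1,p}(d\mu_{f})$ (via a standard cutoff argument using (i)) extends the inequality.

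\textbf{Main obstacle.} The main technical difficulty is the $L^{p}$ adaptation of the integration-by-parts step for $p \neq 2$, where Young's inequality must be balanced carefully so as to absorb the cross term $\int_{M}p|u - u_{K}|^{p-1}|\nabla W|_{g}|\nabla^{g}u|_{g}/W\,d\mu_{f}$ into $\int_{M}|\nabla^{g}u|_{g}^{p}\,d\mu_{f}$, while keeping the leftover $|u - u_{K}|^{p}$ contribution small enough to be absorbed on the left. Property (iii), which controls $|\nabla^{g}f|_{g}^{2}$ linearly in $f$, is precisely what makes this balance possible. The borderline case $p = 1$ is degenerate for Young's inequality and would presumably require separate treatment via a weighted Cheeger-type isoperimetric inequality (itself derivable from the same Lyapunov condition) combined with the coarea formula.
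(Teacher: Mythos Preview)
Your overall strategy --- the Lyapunov method of Bakry--Barthe--Cattiaux--Guillin --- is exactly the one the paper uses. The difference lies in the choice of Lyapunov function and in how the range $p \geq 1$ is covered, and this difference introduces a genuine gap in your argument for $1 \leq p < 2$.

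The paper does not use the exponential $W = e^{\alpha f}$. Instead it takes a function $\rho_R$ equal to $f - \tfrac{1}{2}\max_M|G|$ outside a compact set (so essentially linear in $f$, with $\rho_R \geq 1$). The key advantage is that property (iii) then gives $|\nabla^g\rho_R|^2_g \leq C\rho_R$, so that $|\nabla^g\rho_R|/\rho_R \leq \sqrt{C}$ is \emph{bounded}. This is what makes the $L^1$ case work directly: after integrating by parts one obtains $\int_M |\nabla^g\psi|_g\,(|\nabla^g\rho_R|_g/\rho_R)\, d\mu_f$, which is immediately $\leq \sqrt{C}\int_M |\nabla^g\psi|_g\,d\mu_f$ via Cauchy--Schwarz and Kato's inequality, with no Young step at all. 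Having established the $L^1$-Poincar\'e inequality, the paper then invokes \cite[Proposition~2.5]{milman} as a black box to upgrade to $L^p$ for all $p > 1$.

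With your choice $W = e^{\alpha f}$, one has $|\nabla W|/W = \alpha|\nabla^g f|$, which is \emph{unbounded} (of order $f^{1/2}$ by (iii)). Your Young step therefore leaves a residual term of the form $\int_M |u-u_K|^p\,(|\nabla W|/W)^{p'}\,d\mu_f$ with $p' = p/(p-1)$, and $(|\nabla W|/W)^{p'} \lesssim f^{p'/2}$. Your ``sharper Lyapunov estimate'' supplies only a linear-in-$f$ weight on the left-hand side, so absorption works when $p'/2 \leq 1$, i.e.\ $p \geq 2$, but fails for $1 < p < 2$. You flag the degeneracy at $p=1$ but not the gap for $1 < p < 2$, and neither case is resolved. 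The fix is either to switch to the linear Lyapunov function $\rho_R$ (with which the $L^1$ argument is clean, and for $p>1$ the Young leftover is harmless since $|\nabla\rho_R|/\rho_R$ is bounded), or to prove $L^1$ first and then invoke Milman's upgrade as the paper does.
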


\begin{proof}
We first prove an $L^{1}$-Poincar\'e inequality by invoking \cite{Bak-Bar-Cat-Gui}. Since the setting of \cite{Bak-Bar-Cat-Gui} is that of Euclidean space, for clarity we reproduce their proof here. 

Let $R\geq 1+\frac{1}{2}\max_M|G|$ be a positive constant such that the zero set of $X$ is compactly contained in the sublevel set $\{f<R\}$, so that $\{f=s\}$ is a smooth compact hypersurface for all $s\geq R$. Choosing such an $R$ is possible because of properties (i) and (ii) above. Next, let $\chi_R$ be a smooth cut-off function with $\chi_R=1$ on $\{f\leq R\}$ and $\chi_R=0$ on $\{f\geq 2R\}$. Then properties (iii) and (iv) above, together with the choice of $R$ ensures that the function $\rho_R:=(1-\chi_R)(f-\frac{1}{2}\max_M|G|)+\chi_R$ satisfies
\begin{equation}\label{prop-ad-hoc-rho}
\Delta_f\rho_R\leq -2\rho_R+b\chi_{\{f\,\leq\, 2R\}},\qquad \rho_R\geq 1,\qquad |\nabla^g\rho_R|^2_g\leq C\rho_R,
\end{equation}
for some constants $b,\,C>0$.

Let $\psi$ be a smooth compactly supported function on $M$. Then following the proof of \cite[Theorem $1.5$]{Bak-Bar-Cat-Gui} and invoking \eqref{prop-ad-hoc-rho}, we see that for all $c\in\R$,
\begin{equation*}
\begin{split}
\int_M|\psi-c|\,d\mu_f&\leq \int_M|\psi-c|\frac{\left(-\Delta_f\rho_R\right)}{2\rho_R}\,d\mu_f+\frac{b}{2}\int_M|\psi-c|\chi_{\{f\,\leq \,2R\}}\,d\mu_f\\
&= \frac{1}{2}\int_M\left\langle\nabla^g\left(\frac{|\psi-c|}{\rho_R}\right),\nabla^g\rho_R\right\rangle_g\,d\mu_f+\frac{b}{2}\int_M|\psi-c|\chi_{\{f\,\leq\, 2R\}}\,d\mu_f\\
&= \frac{1}{2}\int_M\left\langle\nabla^g|\psi-c|,\,\frac{\nabla^g\rho_R}{\rho_{R}}\right\rangle_g\,d\mu_f
-\frac{1}{2}\int_M |\psi-c|\frac{|\nabla^g\rho_R|^{2}_{g}}{\rho_{R}^{2}}\,d\mu_f\\
&\qquad+\frac{b}{2}\int_M|\psi-c|\chi_{\{f\,\leq\, 2R\}}\,d\mu_f\\
&\leq \frac{1}{2}\int_M|\nabla^g(\psi-c)|_g\frac{|\nabla^g\rho_R|_g}{\rho_R}\,d\mu_f+\frac{b}{2}\int_M|\psi-c|\chi_{\{f\,\leq\, 2R\}}\,d\mu_f\\
&\leq \frac{\sqrt{C}}{2}\int_M|\nabla^g(\psi-c)|_g\,d\mu_f+\frac{b}{2}\int_{\{f\,\leq\, 2R\}}|\psi-c|\,d\mu_f.
\end{split}
\end{equation*}
Here we have used the Cauchy-Schwarz inequality followed by Kato's inequality in the fifth line.

Since the domain $\{f\leq 2R\}$ is connected by (iv) and has smooth boundary,
it satisfies an $L^1$-Poincar\'e inequality with Poincar\'e constant $C_{P}(R)>0$. Thus, there exists a constant $C_P(R)>0$ 
such that for
$$c:=\frac{1}{\int_{\{f\,\leq\,2R\}}d\mu}\int_{\{f\,\leq\,2R\}} \psi\,d\mu,$$
we have
 \begin{equation*}
\begin{split}
\int_M|\psi-c|\,d\mu_f&\leq \frac{\sqrt{C}}{2}\int_M|\nabla^g(\psi-c)|_g\,d\mu_f+\frac{b\,C_P(R)}{2}\int_{\{f\,\leq\, 2R\}}|\nabla^g\psi|\,d\mu_f\\
&\leq \left(\frac{\sqrt{C}}{2}+\frac{b\,C_P(R)}{2}\right)\int_M|\nabla^g\psi|_g\,d\mu_f.
\end{split}
\end{equation*}
In particular, there exists $C_P>0$ such that for all compactly supported functions $\psi$,
\begin{equation*}
\int_M|\psi-M_{\mu}\psi|\,d\mu_f=\inf_{c\,\in\,\R}\int_M|\psi-c|\,d\mu_f\leq C_P\int_M|\nabla^g\psi|_g\,d\mu_f,
\end{equation*}
where $M_{\mu}\psi$ denotes the median of $\psi$, i.e., a number $a\in\mathbb{R}$ such that
$$\frac{1}{\int_{M}d\mu_{f}}\int_{\{\psi\,\geq\,a\}}d\mu_{f}\geq\frac{1}{2}
\qquad\textrm{and}\qquad\frac{1}{\int_{M}d\mu_{f}}\int_{\{\psi\,\leq\,a\}}d\mu_{f}\geq\frac{1}{2}.$$
Note that in our situation, a median exists and is unique \cite[Exercise 4.26]{balestro}, and we have used the fact that such a number minimises the $L^{1}$-norm.
From \cite[Lemma $2.1$]{milman}, we conclude that
\begin{equation*}
\int_M\left|\psi-\fint_M\psi\,d\mu_f\right|\,d\mu_f\leq 2\int_M\left|\psi-M_{\mu}f\right|\,d\mu_f\leq 2C_P\int_M|\nabla^g\psi|_g\,d\mu_f,
\end{equation*}
giving the desired $L^{1}$-Poincar\'e inequality. One can now invoke \cite[Proposition 2.5]{milman} to assert that the $L^{p}$-Poincar\'e inequality holds true for all $p>1$.
\end{proof}

\section{Linear theory}\label{linear-theory-section}

Recall the setup of Theorem \ref{mainthm}: $\pi:(M,\,\omega)\to (C_{0},\,\omega_{0})$ is a torus-equivariant resolution with exceptional set $E$ of the K\"ahler cone $(C_{0},\,\omega_{0})$ with radial function $r$, and
$\omega$ is as in Theorem \ref{mainthm}(iii). We write $g$ for the K\"ahler metric associated to $\omega$. Then in particular, $\pi_{*}g-g_{0}=O(r^{-2})$ with $g_{0}$-derivatives.
Recall that $X$ is the soliton vector field on $M$ with $d\pi(X)=r\partial_{r}$ and $X=\nabla^{g}f$ for a smooth real-valued function $f:M\to\mathbb{R}$ satisfying the conditions
of Proposition \ref{mainprop}(ii), and $J$ is the complex structure on $M$. The vector field $JX$ is then holomorphic and $g$-Killing on $M$.
Throughout, we identify $C_{0}$ with $M\setminus E$ via $\pi$.

In this section, we introduce the function spaces and set up the linear theory for K\"ahler metrics on $M$ asymptotic to $g_{0}$.
Openness along the continuity path \eqref{ast-0}
will then follow from the invertibility of the drift Laplacian between our function spaces. Although Theorem \ref{mainthm2} holds true for torus-invariant functions, in order
to remain as broad as possible, we present the linear theory under minimal assumptions,
namely for $JX$-invariant functions.

\subsection{Main setting and basic properties}\label{setup}
Let $(\varphi^{X}_{\tau})_{\tau\,<\,0}$ denote the flow of $\frac{X}{2(-\tau)}$ such that\linebreak $\varphi^{X}_{\tau}\big|_{\tau\,=\,-1}=\Id_M$. Let $\tilde{g}$ be any $JX$-invariant K\"ahler metric on $M$ with K\"ahler form $\tilde{\omega}$. For $\tau<0$, define $\tilde{\omega}(\tau):=(-\tau)(\varphi^{X}_{\tau})^*\tilde{\omega}$, let $\tilde{g}(\tau)$ denote the corresponding K\"ahler metric, and
set $g(\tau):=(-\tau)(\varphi^{X}_{\tau})^*g$. For $(x,\,\tau)\in M\times(-\infty,0)$, define the parabolic neighborhood $P_r(x,\tau)$ of $(x,\,\tau)$ of radius $r>0$ by
\begin{equation*}
P_r(x,\tau):=B_{\tilde{g}}(x,r)\times(\tau-r^2,\tau].
\end{equation*}
Let $\rho$ be any strictly positive, $JX$-invariant smooth function on $M$ equal to $f$ outside a compact subset and bounded below by $1$, and for
$x\in M$, define $r_x^2:=\iota_0^2\rho$, where $\iota_{0}>0$ is chosen sufficiently small so that for every $x\in M$,
$0<r_{x}\leq\frac{1}{2}\operatorname{inj}_{x}(g)$. This is possibly because $g$ is asymptotically conical. Our assumptions on $\tilde{\omega}$ (or equivalently on $\tilde{g}$) are then that
\begin{equation}\label{hyp-basic-ass}
\sup_{x\,\in\,M}\left[r_x^{i+2j}\cdot\sup_{P_{r_x}(x)}\left|\nabla^{g,\,i}\left(\partial_{\tau}^{(j)}\tilde{\omega}(\tau)\right)\right|_{g}\right]<\infty\quad\textrm{for all $i,\,j\geq 0$}\qquad\textrm{and}\qquad \lim_{f(x)\,\rightarrow\,+\infty}|\tilde{g}-g|_g(x)=0.
\end{equation}
This latter assumption implies that $\tilde{\omega}(\tau)\to\pi^{*}\omega_{0}$ pointwise on $M\setminus E$ as $\tau\to0^{-}$ because $g$ is asymptotically conical.
We have the following useful properties.
\begin{lemma}\label{lemma-basic-equiv-ass}
\begin{enumerate}
\item The K\"ahler form $\omega$ satisfies \eqref{hyp-basic-ass}.
\item Condition \eqref{hyp-basic-ass} is equivalent to the following:
\begin{equation}\label{hyp-basic-ass-II}
\sup_{x\,\in\,M}\left[r_x^{i+2j}\cdot\sup_{P_{r_x}(x)}\left|\nabla^{g(\tau),\,i}\left(\partial_{\tau}^{(j)}\tilde{\omega}\right)\right|_{g(\tau)}\right]<\infty
\quad\textrm{for all $i,\,j\geq 0$}\qquad\textrm{and}\qquad \lim_{f(x)\rightarrow +\infty}|\tilde{g}-g|_g(x)=0.
\end{equation}
\item Condition \eqref{hyp-basic-ass} implies that:
\begin{equation}\label{asy-cond-diff-spatial}
\sup_M\rho^{\frac{i}{2}+1}\left|\nabla^{g,\,i}\left(\tilde{\omega}-\omega\right)\right|_{g}<\infty\qquad\textrm{for all $i\geq 0$.}
\end{equation}
\end{enumerate}
\end{lemma}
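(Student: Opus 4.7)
The plan is to prove the three parts in sequence, each exploiting the self-similar asymptotic structure of $g$ via the explicit form of the flow $\varphi^{X}_{\tau}$ on the conical end. For (i), outside a compact subset $\omega = \omega_{0} + \rho_{\omega_{0}}$ by Proposition \ref{mainprop}(i), and $\varphi^{X}_{\tau}$ restricts to the scaling $r \mapsto (-\tau)^{-1/2}r$ on the cone (as one checks using the change of variables $s = -\log(-\tau)$ and the flow of $X/2 = r\partial_{r}/2$ on the asymptotic end). A short calculation then gives $(\varphi^{X}_{\tau})^{*}\omega_{0} = (-\tau)^{-1}\omega_{0}$, while $\rho_{\omega_{0}}$ is basic on the cone and so $(\varphi^{X}_{\tau})^{*}\rho_{\omega_{0}} = \rho_{\omega_{0}}$. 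Hence
$$\omega(\tau) = \omega_{0} + (-\tau)\rho_{\omega_{0}}\qquad\textrm{outside a compact subset.}$$
All $\tau$-derivatives of order $\geq 2$ vanish, and the spatial derivatives carry the conical weight $|\nabla^{g,i}\rho_{\omega_{0}}|_{g} = O(r^{-2-i})$. Combined with $|-\tau| \leq 1 + r_{x}^{2} \sim r^{2}$ on $P_{r_{x}}(x)$ and compactness in the interior, this yields (i).

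For (ii), the point is that on $P_{r_{x}}(x)$ the metrics $g(\tau)$ and $g$ are uniformly equivalent. Indeed, $g(\tau) - g_{0} = (-\tau)(\varphi^{X}_{\tau})^{*}(g - g_{0})$, whose $g$-norm reduces by the same scaling to $|g - g_{0}|_{g}$ evaluated at $\varphi^{X}_{\tau}(\,\cdot\,)$, a point whose radial coordinate is bounded below by a constant of order $\iota_{0}^{-1}$. Thus $g - g_{0}$ and $g(\tau) - g_{0}$ are both $O(\iota_{0}^{2})$ on $P_{r_{x}}(x)$, and an analogous computation controls their derivatives. Choosing $\iota_{0}$ sufficiently small, the standard inductive formula relating the Levi-Civita connections of two nearby metrics then converts $\nabla^{g(\tau),i}$ into $\nabla^{g,i}$ with bounded error, giving the equivalence of \eqref{hyp-basic-ass} and \eqref{hyp-basic-ass-II}.

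Part (iii) is the core of the lemma. A direct computation gives $\partial_{\tau}\tilde{\omega}(\tau) = (\varphi^{X}_{\tau})^{*}\bigl(\tfrac{1}{2}\mathcal{L}_{X}\tilde{\omega} - \tilde{\omega}\bigr)$, so taking $\tau = -1$ in the $j=1$ case of \eqref{hyp-basic-ass} together with its analogue for $\omega$ from (i) yields the pointwise estimate
$$\left|\nabla^{g,i}\bigl(\tfrac{1}{2}\mathcal{L}_{X}(\tilde{\omega} - \omega) - (\tilde{\omega} - \omega)\bigr)\right|_{g} \leq C\rho^{-i/2 - 1}\qquad\textrm{on } M.$$
Using the tensorial scaling $|(\varphi^{X}_{\tau})^{*}\alpha|_{g}(x) = (-\tau)^{-k/2}|\alpha|_{g}(\varphi^{X}_{\tau}(x))$, valid up to conical corrections for a covariant $k$-tensor $\alpha$, together with $\rho \circ \varphi^{X}_{\tau} \sim (-\tau)^{-1}\rho$, this extends to the uniform-in-$\tau$ bound $|\nabla^{g,i}\partial_{\tau}(\tilde{\omega} - \omega)(\tau)|_{g}(x) \leq C\rho(x)^{-i/2 - 1}$ for $\tau \in (-1, 0)$. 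The same pullback analysis combined with $\lim_{f \to \infty}|\tilde{g} - g|_{g} = 0$ shows that $\nabla^{g,i}(\tilde{\omega}(\tau) - \omega(\tau))$ vanishes at each fixed $x$ (in the asymptotic region) as $\tau \to 0^{-}$, so integration from $\tau = -1$ to $\tau = 0^{-}$ produces \eqref{asy-cond-diff-spatial}.

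The main obstacle is to make the scaling and convergence arguments uniform as $\tau \to 0^{-}$, where the flow sends the fixed base-point $x$ to radial infinity along the $X$-orbit. The tensorial factor $(-\tau)^{-k/2}$ cancels exactly the $(-\tau)^{-1}$ growth appearing in $\partial_{\tau}\tilde{\omega}(\tau)$, but one must control the conical corrections to pure scaling uniformly over $s = -\log(-\tau) \in (0, \infty)$; this is possible precisely because of the rate $g - g_{0} = O(r^{-2})$ in Proposition \ref{mainprop}(i) and the assumed decay of $\tilde{g} - g$ at infinity in \eqref{hyp-basic-ass}.
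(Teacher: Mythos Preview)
Your approach is essentially the same as the paper's, with one presentational difference and one small gap worth noting.

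For parts (i) and (ii) you proceed exactly as the paper does: for (i), the explicit form $\omega(\tau)=\omega_{0}+(-\tau)\rho_{\omega_{0}}$ on the end; for (ii), uniform comparison of $g(\tau)$ and $g$ on $P_{r_{x}}(x)$ (the paper packages this as Claim~\ref{claim-cov-der-g}) followed by the standard inductive formula relating $\nabla^{g(\tau),i}$ and $\nabla^{g,i}$.

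For part (iii), you integrate $\partial_{\tau}(\tilde{\omega}(\tau)-\omega(\tau))$ over $\tau\in(-1,0)$, whereas the paper isolates $\nabla^{g}_{X}(\tilde{g}-g)$ via the identity $\mathcal{L}_{X}T=\nabla^{g}_{X}T+T\ast\nabla^{g}X$ and integrates along radial lines. These are the same computation: the flow $\varphi^{X}_{\tau}$ carries points radially, so integration in $\tau$ from $-1$ to $0^{-}$ is precisely integration along the $X$-orbit from the base-point to infinity. Your tensorial scaling formula plays the role of the paper's commutator $[\nabla^{g}_{X},\nabla^{g,i}]$.

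The gap is in your convergence step for $i\geq 1$. You write that the vanishing of $\nabla^{g,i}(\tilde{\omega}(\tau)-\omega(\tau))$ as $\tau\to 0^{-}$ follows from $\lim_{f\to\infty}|\tilde{g}-g|_{g}=0$ via the same pullback analysis, but unwinding the scaling gives
\[
|\nabla^{g,i}(\tilde{\omega}(\tau)-\omega(\tau))|_{g}(x)\;\sim\;\rho(x)^{-i/2}\bigl[\rho^{i/2}|\nabla^{g,i}(\tilde{\omega}-\omega)|_{g}\bigr](\varphi^{X}_{\tau}(x)),
\]
so you need $\rho^{i/2}|\nabla^{g,i}(\tilde{\omega}-\omega)|_{g}\to 0$ at infinity, not just the $i=0$ statement. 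The hypothesis \eqref{hyp-basic-ass} only gives boundedness of this quantity. The paper closes this by an explicit interpolation (between the $i=0$ decay and the higher-order boundedness from \eqref{hyp-basic-ass} with $j=0$), and you should do the same, or alternatively proceed by induction on $i$ using the already-established $i=0$ case.
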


\begin{remark}
Part (iii) of this lemma asserts that \eqref{hyp-basic-ass} gives a convergence rate of $-2$ with derivatives for the difference $\tilde{\omega}-\omega$ of the two K\"ahler forms.
\end{remark}

\begin{proof}[Proof of Lemma \ref{lemma-basic-equiv-ass}]
We henceforth drop the dependency of the flow $(\varphi_{\tau}^X)_{\tau\,<\,0}$ on $X$. Recall from Proposition \ref{mainprop}(i) that $\omega=\omega_0+\rho_{\omega_0}$ outside a fixed compact set $K$ of $M$ containing $E$.
\begin{enumerate}
\item  Since
  $(-\tau)\varphi_{\tau}^*\omega_0=\omega_0$ on $C_{0}$ for all $\tau<0$ so that $\varphi_{\tau}^*\rho_{\omega_0}=\rho_{\varphi_{\tau}^*\omega_0}=\rho_{(-\tau)^{-1}\omega_0}=\rho_{\omega_0}$ on
  $C_{0}$ for all $\tau<0$, we see that $\omega(\tau):=(-\tau)\varphi_{\tau}^*\omega=\omega_0+(-\tau)\rho_{\omega_0}$ on $P_{r_x}(x)$ for all $x\in M$ with $r(x)$ sufficiently large.
  Next notice that for all such $x$,
  $\partial_{\tau}^{(j)}\omega(\tau)=0$ for $j\geq 2$ and $\partial_{\tau}\omega(\tau)=-\rho_{\omega_0}$ on $P_{r_x}(x)$, the latter being time-independent and decaying quadratically with derivatives at (spatial) infinity. From these observations, (i) follows.

\item We begin with the following claim.
\begin{claim}\label{claim-cov-der-g}
For all $i\geq 0$, there exists $C_i>0$ such that for all $x\in M$,
\begin{equation*}
r_x^{i}\cdot\sup_{P_{r_x}(x)}|(\nabla^{g})^i(g(\tau)-g)|_{g}(x)\leq C_{i}.
\end{equation*}
\end{claim}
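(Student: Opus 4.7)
The plan is to integrate the evolution equation for $g(s)$ and exploit the explicit form of both $X$ and $\omega$ near infinity. Differentiating $g(s) = (-s)(\varphi^X_s)^*g$ in $s$ and using $\mathcal{L}_Xg = 2\nabla^{g,2}f$ (which holds since $X = \nabla^gf$), I obtain
\begin{equation*}
\partial_s g(s) = (\varphi^X_s)^*B, \qquad B := \tfrac{1}{2}\mathcal{L}_Xg - g = \nabla^{g,2}f - g.
\end{equation*}
The second bound in \eqref{bds-cov-der-f} then provides the global decay $|\nabla^{g,k}B|_g \leq C_k(f+C)^{-1-k/2}$ for every $k\geq 0$, and integration in $s$ from $-1$ gives $g(\tau)-g = \int_{-1}^\tau (\varphi^X_s)^*B\,ds$.

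The heart of the argument is to compute $(\varphi^X_s)^*B$ explicitly on the end. By Proposition \ref{mainprop}(i), outside a compact set $K\subset M$ containing $E$, one has $\omega = \pi^*(\omega_0 + \rho_{\omega_0})$; as symmetric $2$-tensors this reads $g = g_0 + h_0$ with $h_0 := \Ric(g_0)$. Because $d\pi(X) = r\partial_r$, the flow of $X$ on $C_0$ is the radial dilation $(r,\theta)\mapsto(e^tr,\theta)$, so that $(\varphi^X_s)^*g_0 = (-s)^{-1}g_0$, while $\mathcal{L}_Xh_0 = 0$ (scale-invariance of the Ricci tensor of a cone metric) gives $(\varphi^X_s)^*h_0 = h_0$. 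Hence, whenever the orbit of $\varphi^X_s$ issuing from the point in question stays on the end,
\begin{equation*}
g(s) - g = ((-s)-1)\,h_0.
\end{equation*}
Combining the conical bound $|\nabla^{g,i}h_0|_g \leq C_ir^{-2-i}$ with the time estimate $|(-s)-1|\leq r_x^2 \approx f \approx r^2/2$ and the spatial bound $r \gtrsim r_x$ on $B_{\tilde g}(x,r_x)$ (valid for $\iota_0$ small enough), I conclude $|\nabla^{g,i}(g(s)-g)|_g \leq C_ir_x^2\cdot r_x^{-2-i} = C_ir_x^{-i}$, which is the claim on the end.

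For $x$ in a fixed compact region of $M$, both $r_x$ and the parabolic flow time $r_x^2$ are uniformly bounded, so $P_{r_x}(x)$ is contained in a fixed compact subset of $M$ on which $(\nabla^g)^i(g(s)-g)$ is smoothly uniformly bounded by compactness, and the factor $r_x^{-i}$ is absorbed by the lower bound on $r_x$. The main technicality is to verify, for $x$ on the end, that both $B_{\tilde g}(x,r_x)$ and its image under $\varphi^X_s$ for $s\in(-1-r_x^2,-1]$ remain on the end, so that the closed form $g(s)-g = ((-s)-1)h_0$ actually applies: the first inclusion follows from $r_x \leq \tfrac{1}{2}r(x)$ once $\iota_0$ is chosen small enough, while the second uses the explicit identity $r(\varphi^X_s(y)) = r(y)/\sqrt{-s}$ together with $\sqrt{-s}\leq \sqrt{1+r_x^2}\lesssim r(x)$ to keep $r\circ\varphi^X_s$ bounded away from zero along the parabolic interval. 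The virtue of this closed form is that no higher-order commutators between $\nabla^g$ and $(\varphi^X_s)^*$ are needed; the derivative estimates follow mechanically from the conical asymptotics of $h_0$.
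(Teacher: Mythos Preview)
Your proof is correct and takes essentially the same approach as the paper: both exploit the explicit form $g = g_0 + \Ric(g_0)$ on the end together with the scaling behaviour of $g_0$ and $\Ric(g_0)$ under $\varphi^X_\tau$ to obtain the closed form $g(\tau) - g = ((-\tau)-1)\Ric(g_0)$, from which the estimate follows by the conical decay of $\Ric(g_0)$ and the bound $|(-\tau)-1|\leq r_x^2$. The paper phrases this slightly differently---it bounds $g(\tau) - g_0$ first (citing \cite[p.~301]{cds}) and then passes to $g(\tau)-g$ using the asymptotic equivalence of $g$ and $g_0$---but the content is the same, and your direct computation is arguably cleaner.
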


\begin{proof}[Proof of Claim \ref{claim-cov-der-g}]
Since $g-g_{0}=O(r^{-2})$ with $g_{0}$-derivatives by Proposition \ref{mainprop}(i), the computations on \cite[p.301]{cds}
tell us that for all $i\geq0$, there exists a constant $C_{i}>0$ such that for all $x\in M\setminus E$,
$$\textrm{$|(\nabla^{g_{0}})^i(g(\tau)-g_{0})|_{g_{0}}(x)\leq C_{i}(-\tau)r(x)^{-2-i}$ for all $\tau<0$}.$$
Because $g$ and $g_{0}$ are asymptotic with derivatives, this is equivalent to
the fact that for all $i\geq0$, there exist constants $C_{i}>0$ such that for all $x\in M\setminus E$,
$$\textrm{$r(x)^{i}|(\nabla^{g})^i(g(\tau)-g)|_{g}(x)\leq C_{i}(-\tau)r(x)^{-2}$ for all $\tau<0$.}$$
Finally, as we are working in $P_{r_{x}}(x)$ where $-1-r_{x}^{2}<\tau<-1$ so that
$-\tau<1+r_{x}^{2}$, and since $r^{2}_{x}=\iota_{0}\rho(x)=\iota_{0}f(x)=\iota_{0}\left(\frac{r(x)^{2}}{2}-n\right)$ outside a compact subset of $M$, the claim follows.
\end{proof}

For a given tensor $T$ on $M$, recall that $\nabla^{g(\tau)}T=\nabla^gT+g(\tau)^{-1}\ast\nabla^g(g(\tau)-g)\ast T$, where all contractions are with respect to $g$. By induction on $i\geq 1$
using Claim \ref{claim-cov-der-g}, one can then prove that for all $i\geq1$, there exists a constant $C_i>0$ such that
\begin{equation}\label{abstract-switch-cov-der}
\begin{split}
r_x^i\cdot\sup_{P_{r_x}(x)}&|\nabla^{g(\tau),\,i}T-\nabla^{g,\,i}T|_g\leq \\
&C_i\sum_{k\,=\,0}^{i-1}r_x^{i-k}\cdot\left[\sup_{P_{r_x}(x)}\sum_{ i_1+...+i_p=i-k}\prod_{j\,=\,1}^{p}|\nabla^{g,\,i_j}g(\tau)|_g\right] r_x^k\cdot\sup_{P_{r_x}(x)}|\nabla^{g,\,k}T|_g.
\end{split}
\end{equation}
If one applies the previous inequality to $T:=\partial_{\tau}^{(j)}\tilde{g}$ for $j\geq 0$, then one can see that \eqref{hyp-basic-ass} implies \eqref{hyp-basic-ass-II} by invoking Claim \ref{claim-cov-der-g}. On the other hand, by reversing the roles of $g(\tau)$ and $g$ in \eqref{abstract-switch-cov-der}, we see that \eqref{hyp-basic-ass-II} implies \eqref{hyp-basic-ass}.
\item Observe that
 \begin{equation}
 \begin{split}\label{id-time-der}
 \partial_{\tau}\tilde{g}(\tau)\big|_{\tau=-1}&=-\tilde{g}+\mathcal{L}_{\frac{X}{2}}\tilde{g}\\
 &=\left(\mathcal{L}_{\frac{X}{2}}g-g\right)+\mathcal{L}_{\frac{X}{2}}(\tilde{g}-g)-\left(\tilde{g}-g\right)\\
 &=\left(\mathcal{L}_{\frac{X}{2}}g-g\right)+\nabla^g_{\frac{X}{2}}(\tilde{g}-g)+\frac{1}{2}(\tilde{g}-g)(\nabla^g_{\cdot}X,\,\cdot)+\frac{1}{2}(\tilde{g}-g)(\cdot\,,\nabla^g_{\cdot}X)-\left(\tilde{g}-g\right)\\
 &=\left(\mathcal{L}_{\frac{X}{2}}g-g\right)+\nabla^g_{\frac{X}{2}}(\tilde{g}-g)+(\tilde{g}-g)\ast(\mathcal{L}_{\frac{X}{2}}g-g),
 \end{split}
 \end{equation}
 where we have used the fact that $\mathcal{L}_{X}T=\nabla^g_XT+T(\nabla^g_{\cdot}X,\,\cdot)+T(\cdot\,,\nabla^g_{\cdot}X)$ for any $2$-tensor $T$ on $M$ in the third line, together with the fact that $X$ is gradient (so that $2g(\nabla^g_{\cdot}X,\,\cdot)=\mathcal{L}_Xg$) in the final line. By \eqref{hyp-basic-ass}, the left-hand side decays at rate $O(r^{-2})$ with respect to $g$, and so by Proposition \ref{mainprop}(i) and \eqref{hyp-basic-ass} again, we deduce that $|\nabla^g_X(\tilde{g}-g)|_g=O(r^{-2})$. Recalling the asymptotic property $\lim_{f\rightarrow +\infty}|\tilde{g}-g|_g=0$, one arrives at the fact that $|\tilde{g}-g|_g=O(r^{-2})$ by integrating to $+\infty$ along radial lines the previous estimate.

To prove the corresponding quadratic decay on the rescaled covariant derivatives\linebreak $\rho^{\frac{i}{2}}\nabla^{g,\,i}(\tilde{g}-g)$, one first invokes \eqref{hyp-basic-ass} with $j=1$ and an arbitrary $i\geq 0$ to deduce that
\begin{equation*}
 \begin{split}
\left |\rho^{\frac{i}{2}}\nabla^{g,\,i}\left( \partial_{\tau}\tilde{g}(\tau)\big|_{\tau=-1}\right)\right|_g=O(r^{-2}).
 \end{split}
 \end{equation*}
From this, one can then see from Proposition \ref{mainprop}(i) and \eqref{id-time-der} and that
 \begin{equation*}
 \begin{split}\label{info-1-i0-j1-bis}
\left |\rho^{\frac{i}{2}}\nabla^{g,\,i}\left(\nabla^{g}_{X}(\tilde{g}-g)\right)\right|_g=O(r^{-2}).
 \end{split}
 \end{equation*}
Next, recall the following commutation formula for an arbitrary tensor $T$ that can be proved by induction on $i\geq 0$:
 \begin{equation*}
 \begin{split}
 \left[\nabla^{g}_{X},\nabla^{g,\,i}\right]T&=-i\nabla^{g,\,i}T+\sum_{k\,=\,0}^i\nabla^{g,\,i-k}\Rm(g)\ast_{g} \nabla^{g,\,k}T+\sum_{k\,=\,0}^i\nabla^{g,\,i-k}\left(\mathcal{L}_Xg-g\right)\ast_{g} \nabla^{g,\,k}T.
\end{split}
\end{equation*}
Plugging in $T:=\tilde{g}-g$ and using Proposition \ref{mainprop}(ii), it follows from \eqref{hyp-basic-ass} that
\begin{equation}
 \begin{split}\label{info-1-i0-j1-bis}
\left |\nabla^{g}_{X}\left(\rho^{\frac{i}{2}}\nabla^{g,\,i}(\tilde{g}-g)\right)\right|_g=O(r^{-2}).
 \end{split}
 \end{equation}
A standard interpolation inequality now implies that $\lim_{f\rightarrow +\infty}|\rho^{\frac{i}{2}}\nabla^{g,\,i}(\tilde{g}-g)|_g=0$ so that $|\rho^{\frac{i}{2}}\nabla^{g,\,i}(\tilde{g}-g)|_g=O(\rho^{-1})$ by integrating \eqref{info-1-i0-j1-bis} along radial lines to $+\infty$.
\end{enumerate}
\end{proof}

We write $\nabla^{\tilde{g}}$ for the Levi-Civita connection of $\tilde{g}$ and
$X=\nabla^{\tilde{g}}\tilde{f}$ for some smooth real-valued function $\tilde{f}:M\rightarrow \R$, a function defined up to an additive constant
that is guaranteed to exist because, as already remarked, $M$ is simply connected. As we are identifying $M\setminus E$ with $C_{0}$ via $\pi$, we can write
 $X=r\partial_{r}$ on $M\setminus E$. Moreover, because $\nabla^{g}f=X=\nabla^{\tilde{g}}\tilde{f}$, it follows from \eqref{hyp-basic-ass} that $|f-\tilde{f}|=O(\log r)$ as $r\to+\infty$. In particular, there exists a positive constant $C$ such that outside a compact subset of $M$, $C^{-1}f\leq \tilde{f}\leq Cf.$
Henceforth, we denote $\Delta_{\tilde{g},\,X}:=\Delta_{\tilde{g}}-X$, and for a tensor $\alpha$ on $M$
we shall write ``$\alpha=O(f^{\lambda})$ with $g$-derivatives'' whenever $|(\nabla^{g})^{k}\alpha|_{g}=O\left(f^{\lambda-\frac{k}{2}}\right)$ for every $k\in\mathbb{N}_{0}$.

We now identify a good barrier function for our geometric situation.
\begin{lemma}\label{lemma-sub-sol-barrier}
For all $\delta\in(0,\,1)$, there exists $R(\delta)>0$ such that the function $e^{\delta f}$ is a sub-solution of the following equation:
\begin{equation*}
\begin{split}
\Delta_{\tilde{g},\,X}e^{\delta f}&\leq 0\qquad\text{on $f\geq R(\delta)$}.
\end{split}
\end{equation*}
Moreover, the polynomial powers of $f:=\frac{r^2}{2}-n$ satisfy for all $\delta \neq0$,
\begin{equation*}
\begin{split}
\Delta_{\tilde{g},\,X}f^{-\delta}-2\delta f^{-\delta}=O(f^{-\delta-1})
\qquad\textrm{ with $g$-derivatives.}
\end{split}
\end{equation*}
\end{lemma}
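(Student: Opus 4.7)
The plan is to run a direct asymptotic computation of $\Delta_{\tilde{g},\,X}$ applied to $e^{\delta f}$ and to $f^{-\delta}$, relying on the basic identities for $f$ from Proposition \ref{mainprop}(ii) together with the $O(f^{-1})$-closeness of $\tilde{g}$ to $g$ with $g$-derivatives provided by Lemma \ref{lemma-basic-equiv-ass}(iii).

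First I would record the key asymptotic inputs. Outside a compact subset of $M$ one has $f=\pi^{*}(r^{2}/2-n)$, so that $X\cdot f=|X|_{g}^{2}=2f+2n$ exactly, and from Proposition \ref{mainprop}(ii) together with the computation in \eqref{bonjour} one obtains $\Delta_{g}f=n+O(f^{-1})$ with $g$-derivatives. Next, since $\tilde{g}-g=O(f^{-1})$ with $g$-derivatives, one has $\tilde{g}^{ij}-g^{ij}=O(f^{-1})$ and the Christoffel difference $\Gamma(\tilde{g})-\Gamma(g)=O(f^{-3/2})$ (with $g$-derivatives), and therefore
\begin{equation*}
\Delta_{\tilde{g}}f=n+O(f^{-1}),\qquad |\nabla^{\tilde{g}}f|_{\tilde{g}}^{2}=2f+O(1),
\end{equation*}
both with $g$-derivatives, since $\nabla^{g}f=X$ grows like $f^{1/2}$ in norm.

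For the first assertion I would apply the chain rule:
\begin{equation*}
\Delta_{\tilde{g},\,X}e^{\delta f}=\delta e^{\delta f}\Bigl(\Delta_{\tilde{g}}f+\delta|\nabla^{\tilde{g}}f|_{\tilde{g}}^{2}-X\cdot f\Bigr).
\end{equation*}
Substituting the asymptotics above and using $X\cdot f=2f+2n$ gives
\begin{equation*}
\Delta_{\tilde{g},\,X}e^{\delta f}=\delta e^{\delta f}\Bigl(-2(1-\delta)f+O(1)\Bigr),
\end{equation*}
and since $\delta\in(0,\,1)$, the coefficient $-2(1-\delta)\delta<0$ multiplied against $f\to+\infty$ dominates the $O(1)$ term. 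Choosing $R(\delta)$ large enough so that the bracket is non-positive on $\{f\geq R(\delta)\}$ yields the desired sub-solution property.

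For the second assertion I would compute, again via the chain rule,
\begin{equation*}
\Delta_{\tilde{g},\,X}f^{-\delta}=-\delta f^{-\delta-1}\Delta_{\tilde{g}}f+\delta(\delta+1)f^{-\delta-2}|\nabla^{\tilde{g}}f|_{\tilde{g}}^{2}+\delta f^{-\delta-1}X\cdot f.
\end{equation*}
The crucial identity is that $\delta f^{-\delta-1}\,X\cdot f=2\delta f^{-\delta}+2n\delta f^{-\delta-1}$, so that after subtracting $2\delta f^{-\delta}$ the leading term cancels and one is left with
\begin{equation*}
\Delta_{\tilde{g},\,X}f^{-\delta}-2\delta f^{-\delta}=-\delta f^{-\delta-1}\Delta_{\tilde{g}}f+\delta(\delta+1)f^{-\delta-2}|\nabla^{\tilde{g}}f|_{\tilde{g}}^{2}+2n\delta f^{-\delta-1}.
\end{equation*}
Inserting $\Delta_{\tilde{g}}f=n+O(f^{-1})$ and $|\nabla^{\tilde{g}}f|_{\tilde{g}}^{2}=2f+O(1)$ gives a right-hand side of the form $c_{\delta}f^{-\delta-1}+O(f^{-\delta-2})$, which is $O(f^{-\delta-1})$. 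The same argument, applied to each derivative $\nabla^{g,\,k}$ of the expression and using that the errors for $\Delta_{\tilde{g}}f-n$ and $|\nabla^{\tilde{g}}f|_{\tilde{g}}^{2}-2f$ have $g$-derivative estimates of the appropriate orders (from Proposition \ref{mainprop}(ii) and Lemma \ref{lemma-basic-equiv-ass}(iii)), upgrades this to $O(f^{-\delta-1})$ with $g$-derivatives.

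The only mildly technical point is keeping careful track of the $g$-derivatives of the difference operators $\Delta_{\tilde{g}}-\Delta_{g}$ and $|\nabla^{\tilde{g}}\cdot|_{\tilde{g}}^{2}-|\nabla^{g}\cdot|_{g}^{2}$ when applied to $f$, which decay like $f^{-1}$ and $1$ respectively with one extra power of $f^{-1/2}$ per covariant derivative; this is immediate from Lemma \ref{lemma-basic-equiv-ass}(iii) and involves no genuine obstacle.
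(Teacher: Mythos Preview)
Your proof is correct and follows essentially the same approach as the paper: a direct chain-rule computation of $\Delta_{\tilde{g},\,X}$ on $e^{\delta f}$ and $f^{-\delta}$, using the asymptotics $\Delta_{\tilde{g}}f=n+O(f^{-1})$, $X\cdot f=2f+2n$, and $|\nabla^{\tilde{g}}f|_{\tilde{g}}^{2}=2f+O(1)$ coming from Proposition~\ref{mainprop}(ii) and the $O(f^{-1})$-closeness of $\tilde{g}$ to $g$ from Lemma~\ref{lemma-basic-equiv-ass}(iii). The paper organises the first computation slightly differently by grouping $\Delta_{\tilde{g}}f-X\cdot f$ as $\Delta_{g,\,X}f+(\Delta_{\tilde{g},\,X}-\Delta_{g,\,X})f$ rather than treating the Laplacian and drift terms separately, but this is purely presentational.
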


\begin{proof}
Using \eqref{asy-cond-diff-spatial}, we compute that
\begin{equation*}
\begin{split}
\Delta_{\tilde{g},\,X} e^{\delta f}&=\delta\left(\Delta_{\tilde{g},\,X}f+\delta|\nabla^{\tilde{g}}f|^2_{\tilde{g}}\right) e^{\delta f}\\
&=\delta\left( \Delta_{g,\,X}f+(\Delta_{\tilde{g},\,X}-\Delta_{g,\,X})f+\delta|\nabla^{\tilde{g}}f|^2_{\tilde{g}}\right)e^{\delta f}\\
&=\delta\left(-2f+O(f^{-1})+O(|\tilde{g}-g|_{\tilde{g}})+\delta|\nabla^{\tilde{g}}f|^2_{\tilde{g}}\right)e^{\delta f}\\
&=\delta\left(-2f+O(f^{-1})+\delta \left(|X|^2_g(1+O(f^{-1}))+O(f^{-\frac{1}{2}})\right)\right)e^{\delta f}\\
&=\delta\left(-2f+O(f^{-1})+\delta \left((2f+O(1))(1+O(f^{-1}))+O(f^{-\frac{1}{2}})\right)\right)e^{\delta f}\\
&\leq 0
\end{split}
\end{equation*}
outside a sufficiently large compact subset $K$ of $M$ depending on $\delta$. Here we have also used the fact that $|X|^2_g=2f+n+O(f^{-1})$ from Proposition \ref{mainprop}(ii) and $\delta \in(0,\,1)$ in the last line. We also use the fact that
$$|\nabla^{\tilde{g}}f|_{\tilde{g}}=|X|_{g}(1+O(r^{-2}))+O(r^{-1}).$$
A similar computation based on the asymptotics of $\tilde{g}$ given by \eqref{hyp-basic-ass} shows that
\begin{equation*}
\begin{split}
\Delta_{\tilde{g},\,X}f^{-\delta}&=(\Delta_{\tilde{g}}-X)(f^{-\delta})\\
&=-\delta f^{-\delta-1}(\Delta_{\tilde{g}}f-X\cdot f)+\delta(\delta+1)f^{-\delta-2}|\nabla^{\tilde{g}}f|_{\tilde{g}}^{2}\\
&=-\delta f^{-\delta-1}(\Delta_{g}f-X\cdot f)-\delta f^{-\delta-1}(\Delta_{\tilde{g}}f-\Delta_{g}f)+\delta(\delta+1)f^{-\delta-2}|\nabla^{\tilde{g}}f|_{\tilde{g}}^{2}\\
&=2\delta f^{-\delta}+O(f^{-\delta-2})-\delta f^{-\delta-1}\underbrace{(\Delta_{\tilde{g}}f-\Delta_{g}f)}_{=\,O(|\tilde{g}-g|_{g})}+\delta(\delta+1)f^{-\delta-2}\underbrace{|\nabla^{\tilde{g}}f|_{\tilde{g}}^{2}}_{=\,O(|X|^{2}_{g})\,=\,O(f)}\\
&=2\delta f^{-\delta}+O(f^{-\delta-1}).
\end{split}
\end{equation*}
\end{proof}

\subsection{Function spaces}\label{function-spaces-subsection}

We next define the function spaces within which we will work.
\begin{itemize}
\item
Let $(\varphi^{X}_{\tau})_{\tau<0}$ denote the flow of $\frac{X}{2(-\tau)}$ such that $\varphi^{X}_{\tau}\big|_{\tau=-1}=\Id_M$. Define for a real-valued
function $u:M\rightarrow\R$ the following time-dependent function:
\begin{equation}\label{ronan2}
\widetilde{u}(x,\tau):=(\varphi^X_{\tau})^*u(x),\qquad x\in M,\qquad\tau<0.
\end{equation}
\item For $k\geq 0$ and $\alpha\in\left(0,\frac{1}{2}\right)$, denote the standard $C^{2k,\,2\alpha}$-norm of tensors on $P_{r_x}(x,-1)$ by\linebreak $\|\cdot\|_{C^{2k,\,2\alpha}(P_{r_x}(x))}$:
\begin{equation*}
\|T\|_{C^{2k,\,2\alpha}(P_{r_x}(x))}:=\sum_{i+2j\,\leq\, 2k}r_x^{i+2j}\cdot\sup_{P_{r_x}(x)}|\nabla^{\tilde{g},\,i}(\partial_{\tau}^{(j)})T|_{\tilde{g}}+\sum_{i+2j\,=\,2k}r_x^{2k+2\alpha}\cdot\left[\nabla^{\tilde{g},\,i}(\partial_{\tau}^{(j)})T\right]
_{C^{0,\,2\alpha}(P_{r_x}(x))},
\end{equation*}
where $[\,\cdot\,]_{C^{0,2\alpha}}$ denotes the $2\alpha$-semi-norm on tensors on $M$ induced by $\tilde{g}$.

\begin{remark}\label{trump}
Thanks to \eqref{asy-cond-diff-spatial}, this norm is equivalent to that defined with respect to the background metric $g$, hence we may
use either $\tilde{g}$ or $g$ with our particular choice depending on the situation in question. Moreover, in light of Lemma \ref{lemma-basic-equiv-ass}(ii),
this norm is equivalent to that defined by $g(\tau)$.

Similarly, as $f$ and $\tilde{f}$ are equivalent at infinity, i.e.,
there exists $C>0$ such that $C^{-1}f\leq \tilde{f}\leq Cf$ on $M$, these function
spaces can be defined in terms of either of these two potential functions, i.e., one can either use the weight $r_x^2:=
\iota_0^2\rho$ or $r_x^2:=\iota_{0}^{2}\tilde{\rho}$ by decreasing $\iota_0$ if necessary, where
$\tilde{\rho}$ is a $JX$-invariant function on $M$ equal to $\tilde{f}$ outside a compact subset and bounded below by $1$.
\end{remark}

Notice in the definition of the above norm that the number of spatial derivatives appearing
in each summand is no more than twice the number of time derivatives. This
is because, when solving the Poisson equation for the drift Laplacian,
the drift Laplacian can be treated as a second order parabolic operator with the time derivative
corresponding to the $X$-derivative. These heuristics play out in the proof of Theorem \ref{iso-sch-Laplacian-pol} below.
\item  For $\beta\in\R$, $k$ a non-negative integer, and $\alpha\in\left(0,\,\frac{1}{2}\right)$, define the H\"older space $C_{X,\,\beta}^{2k,\,2\alpha}(M)$ with polynomial weight $f^{\frac{\beta}{2}}$ (or equivalently $\tilde{f}$ or even $\rho$) to be the set of $JX$-invariant functions $u$ such that
\begin{equation*}
\norm{u}_{C^{2k,\,2\alpha}_{X,\,\beta}(M)}:=\sup_{x\in M}\rho(x)^{\frac{\beta}{2}}\|(-\tau)^{-\frac{\beta}{2}}\tilde{u}\|_{C^{2k,\,2\alpha}(P_{r_x}(x))} < \infty.
\end{equation*}
It is straightforward to check that the space $C^{2k,\,2\alpha}_{X,\,\beta}(M)$ is a Banach space.
The intersection
$\bigcap_{k\,\geq\,0}C^{2k}_{X,\,\beta}(M)$ we denote by $C^{\infty}_{X,\,\beta}(M)$.
\item We now consider a smooth cut-off function $\chi:M\rightarrow[0,\,1]$ which equals $1$ outside a compact set.
The source function space $\mathcal{D}^{2k+2,\,2\alpha}_{X}(M)$ is defined as
\begin{equation*}
\mathcal{D}^{2k+2,\,2\alpha}_{X}(M):=\left(\R\chi\cdot\log r\oplus C^{2k+2,\,2\alpha}_{X,\,0}(M)\right),
\end{equation*}
endowed with the norm
\begin{equation*}
\begin{split}
\norm{u}_{\mathcal{D}_{X}^{2k+2,\,2\alpha}(M)}&:=|c|+\|v\|_{C_{X,\,0}^{2k+2,\,2\alpha}(M)},\\
u&:=c\chi \log r+v.
\end{split}
\end{equation*}
We define
\begin{equation*}
C^{\infty}_{X}(M):=\bigcap_{k\,\geq\,0} C^{2k,\,2\alpha}_{X,\,0}(M).
\end{equation*}

\item The target function space is defined as
 \begin{equation*}
\mathcal{C}^{2k,\,2\alpha}_{X}(M):=\left(\R\oplus C^{2k,\,2\alpha}_{X,\,2}(M)\right),
\end{equation*}
endowed with a norm defined in a similar manner as above. We define
\begin{equation*}
\mathcal{C}^{\infty}_{X}(M):=\bigcap_{k\,\geq\,0} \mathcal{C}^{2k,\,2\alpha}_{X}(M).
\end{equation*}
\item Finally, we define the spaces
\begin{equation*}
\begin{split}
\mathcal{M}^{2k+2,\,2\alpha}_{X}(M)&:=\left\{\varphi\in C^2_{\operatorname{\operatorname{loc}}}(M)\,|\,\tilde{\omega}+i\partial\bar{\partial}\varphi>0\right\}\bigcap \mathcal{D}^{2k+2,\,2\alpha}_{X}(M),
\end{split}
\end{equation*}
and we will work with the following convex set of K\"ahler potentials:
\begin{equation*}
\mathcal{M}^{\infty}_{X}(M)=\bigcap_{k\,\geq\,0}\,\mathcal{M}^{2k+2,\,2\alpha}_{X}(M).
\end{equation*}
Notice that for each $k\geq 0$, the spaces $\mathcal{M}^{2k+2,\,2\alpha}_{X}(M)$ depend on the choice of a background metric $\tilde{\omega}$.
However, these spaces are all equivalent as soon as $\tilde{\omega}$ satisfies \eqref{hyp-basic-ass}.
\end{itemize}

In light of Lemma \ref{lemma-basic-equiv-ass}(i), the following stability lemma demonstrates that solving \eqref{ast-0} in the above function spaces is well-posed.
\begin{lemma}
Suppose that $\tilde{g}$ satisfies \eqref{hyp-basic-ass} (or equivalently \eqref{hyp-basic-ass-II}) and let $\psi \in \mathcal{M}^{\infty}_{X}(M)$. Then $\tilde{\omega}+i\partial\bar{\partial}\psi$ satisfies \eqref{hyp-basic-ass} (or equivalently \eqref{hyp-basic-ass-II}). In particular, $\omega+i\partial\bar{\partial}\psi$ satisfies \eqref{hyp-basic-ass}.
\end{lemma}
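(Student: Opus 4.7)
The plan is to verify both conditions of \eqref{hyp-basic-ass} for $\tilde{\omega}':=\tilde{\omega}+i\partial\bar{\partial}\psi$ by reducing everything to the parabolic estimates built into the norm on $\mathcal{D}^{\infty}_{X}(M)$. Since $X$ is real holomorphic (Theorem \ref{mainthm}(i) gives this for $JX$, and hence for $X$ by integrability of $J$), the flow $\varphi^{X}_{\tau}$ consists of biholomorphisms, so $(\varphi^{X}_{\tau})^{*}(i\partial\bar{\partial}\psi)=i\partial\bar{\partial}\widetilde{\psi}(\tau)$. Setting $\Psi(\tau):=(-\tau)\widetilde{\psi}(\tau)$, the time-dependent K\"ahler form associated to $\tilde{\omega}'$ is
\begin{equation*}
(-\tau)(\varphi^{X}_{\tau})^{*}\tilde{\omega}'=\tilde{\omega}(\tau)+i\partial\bar{\partial}\Psi(\tau).
\end{equation*}
Since $\tilde{\omega}(\tau)$ satisfies \eqref{hyp-basic-ass} by hypothesis, the task reduces to bounding the parabolic derivatives of $i\partial\bar{\partial}\Psi(\tau)$ and to checking that $|i\partial\bar{\partial}\psi|_{g}\to 0$ as $f\to+\infty$.

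Decompose $\psi=c\chi\log r+v$ with $v\in C^{\infty}_{X,0}(M)$ according to the definition of $\mathcal{D}^{\infty}_{X}(M)$. For the $v$-piece, $v\in C^{\infty}_{X,0}(M)$ supplies $r_{x}^{m+2\ell}\sup_{P_{r_{x}}(x)}|\nabla^{g,m}\partial_{\tau}^{(\ell)}\widetilde{v}|_{g}\leq C_{m,\ell}$ uniformly in $x\in M$ for all $m,\ell\geq 0$. Using $1\leq -\tau\leq 1+r_{x}^{2}$ on $P_{r_{x}}(x)$ and Leibniz' rule in $\tau$,
\begin{equation*}
r_{x}^{i+2j}\bigl|\nabla^{g,i}\partial_{\tau}^{(j)}\bigl((-\tau)\,i\partial\bar{\partial}\widetilde{v}\bigr)\bigr|_{g}\leq C\,r_{x}^{(i+2)+2j}\,|\nabla^{g,i+2}\partial_{\tau}^{(j)}\widetilde{v}|_{g}+Cj\,r_{x}^{(i+2)+2(j-1)}\,|\nabla^{g,i+2}\partial_{\tau}^{(j-1)}\widetilde{v}|_{g},
\end{equation*}
and both summands are controlled by $\|v\|_{C^{i+2j+2,\,2\alpha}_{X,0}(M)}$. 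Hence the $v$-contribution meets the parabolic bound in \eqref{hyp-basic-ass}.

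For the $c\chi\log r$-piece the key observation is that $X\cdot\log r=1$ outside a compact subset of $M$, since $d\pi(X)=r\partial_{r}$ by Theorem \ref{mainthm}(ii). Consequently the ODE $\partial_{\tau}\widetilde{\log r}=\tfrac{1}{2(-\tau)}X\cdot\widetilde{\log r}=\tfrac{1}{2(-\tau)}$ integrates to $\widetilde{\log r}(x,\tau)=\log r(x)+c(\tau)$ with $c$ a function of $\tau$ alone, provided $x$ and $\varphi^{X}_{\tau}(x)$ both lie in the region where $\chi\equiv 1$ and $X=r\partial_{r}$. Therefore $i\partial\bar{\partial}\widetilde{\chi\log r}(\tau)=i\partial\bar{\partial}\log r$ there, and is $\tau$-independent. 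Now $i\partial\bar{\partial}\log r$ is a transverse $(1,1)$-form on $C_{0}$ homogeneous of degree $-2$ with respect to the scaling $\nu_{t}$, so Lemma \ref{simple321} gives $|\nabla^{g_{0},m}\,i\partial\bar{\partial}\log r|_{g_{0}}=O(r^{-2-m})$, and by asymptotic conicality the same decay holds with respect to $g$. Multiplication by $(-\tau)=O(r_{x}^{2})$ together with the fact that only one $\partial_{\tau}$ survives on $(-\tau)\,i\partial\bar{\partial}\log r$ off compact then yields the required parabolic estimate; on the compact region the bounds are trivial.

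The second condition of \eqref{hyp-basic-ass} for $\tilde{\omega}'$ reads $|\tilde{g}'-g|_{g}\to 0$ at infinity, and follows by combining $|\tilde{g}-g|_{g}\to 0$ (hypothesis) with the $i=2,\,j=0$ case above, which gives $|i\partial\bar{\partial}v|_{g}=O(f^{-1})$, and with $|i\partial\bar{\partial}\log r|_{g}=O(f^{-1})$. The final sentence of the lemma is the special case $\tilde{\omega}=\omega$, which is valid by Lemma \ref{lemma-basic-equiv-ass}(i). The main obstacle in this plan is the combinatorial matching between the parabolic weights $r_{x}^{i+2j}$ and the additional factor $(-\tau)$ occurring in $\Psi$; the $\log r$-term is handled cleanly by the identity $X\cdot\log r=1$ off compact, which renders $i\partial\bar{\partial}\widetilde{\log r}(\tau)$ independent of $\tau$ there.
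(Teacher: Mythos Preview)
Your proof is correct and follows essentially the same route as the paper's: both reduce to bounding $r_{x}^{i+2j}\,|\nabla^{g,i}\partial_{\tau}^{(j)}[(-\tau)\,i\partial\bar{\partial}\tilde{\psi}]|_{g}$ on $P_{r_{x}}(x)$ via the Leibniz rule in $\tau$, passing two spatial derivatives through $i\partial\bar{\partial}$ and absorbing the factor $(-\tau)\leq 1+r_{x}^{2}\leq C r_{x}^{2}$. The paper carries this out for the full $\psi\in\mathcal{D}^{\infty}_{X}(M)$ in one stroke (implicitly using that derivatives of order $\geq 2$ of $\chi\log r$ already decay at the right rate), whereas you split $\psi=c\chi\log r+v$ and treat the logarithmic piece separately via the identity $X\cdot\log r=1$; this is a cosmetic difference, not a different argument.
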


\begin{proof}
Since $\psi\in \mathcal{M}^{\infty}_{X}(M)$, we know that $|i\partial\bar{\partial}\psi|_g=O(r^{-2})$. From the triangle inequality and the fact that $\tilde{g}$ satisfies \eqref{hyp-basic-ass}, we then see that $\lim_{f\rightarrow+\infty}|\tilde{\omega}+i\partial\bar{\partial}\psi-\omega|_g=0$. Now, if $j=0$ and $k\geq 0$ is arbitrary, then
\begin{equation*}
r_x^k\cdot\sup_{P_{r_x}(x)}|\nabla^{g,\,k}\left[(-\tau)\varphi_{\tau}^*\left(i\partial\bar{\partial}\psi\right)\right]|_g\leq r_x^{k+2}\cdot\sup_{P_{r_x}(x)}|\nabla^{g,\,k+2}\tilde{\psi}|_g\leq  C_k
\end{equation*}
for some positive constant $C_k$ independent of $x\in M$. This leads to \eqref{hyp-basic-ass} for $\tilde{\omega}+i\partial\bar{\partial}\psi$ when $j=0$, thanks to the triangle inequality and the fact that $\tilde{g}$ satisfies \eqref{hyp-basic-ass}.

Now, for $j\geq 1$, observe that
\begin{equation*}
\partial_{\tau}^{(j)}\left((-\tau)\varphi_{\tau}^*
\left(i\partial\bar{\partial}\psi\right)\right)=i\partial\bar{\partial}\left((-\tau)\partial_{\tau}^{(j)}\tilde{\psi}-\partial_{\tau}^{(j-1)}\tilde{\psi}\right).
\end{equation*}
We therefore have that
\begin{equation*}
\begin{split}
r_x^{k+2j}\cdot\sup_{P_{r_x}(x)}\left|\nabla^{g,\,k}\left(\partial_{\tau}^{(j)}\left((-\tau)\varphi_{\tau}^*
\left(i\partial\bar{\partial}\psi\right)\right)\right)\right|_g&\leq r_x^{(k+2)+2j}\cdot\sup_{P_{r_x}(x)}\left|\nabla^{g,\,k+2}\left(\partial_{\tau}^{(j)}\tilde{\psi}\right)\right|_g\\
&\quad+r_x^{(k+2)+2(j-1)}\cdot\sup_{P_{r_x}(x)}\left|\nabla^{g,\,k+2}\left(\partial_{\tau}^{(j-1)}\tilde{\psi}\right)\right|_g\\
&\leq  C_k,
\end{split}
\end{equation*}
which completes the proof of the lemma.
\end{proof}

\subsection{Preliminaries and Fredholm properties of the linearised operator}\label{linear}
We proceed with the same set-up as in Section \ref{setup}, beginning with the following useful observation.

\begin{lemma}\label{lemma-preserved-int}
Let $(\varphi_t)_{t\in[0,\,1]}$ be a $C^1$-path of smooth functions in $\mathcal{M}^{\infty}_{X}(M)$ and write
$\tilde{\omega}_{t}:=\tilde{\omega}+i\partial\bar{\partial}\varphi_{t}>0$ and $\tilde{f}_{t}:=\tilde{f}+\frac{X}{2}\cdot\varphi_{t}$, so that $-d\tilde{\omega}_{t}\lrcorner JX=d\tilde{f}_{t}$.
\begin{enumerate}
  \item
 Let $G:\R\rightarrow \R$ be a $C^1$-function such that for some $-\infty<\alpha<1$, $|G(x)|+|G'(x)|\leq e^{\alpha x}$, $x\geq -C$.
Then
\begin{equation*}
\int_MG(\tilde{f}_{t})\,e^{-\tilde{f}_{t}}\tilde{\omega}_{t}^n=\int_MG(\tilde{f}_{0})\,e^{-\tilde{f}_{0}}\tilde{\omega}_{0}^n,\qquad t\in[0,\,1].
\end{equation*}
  \item $\int_{0}^{1}\int_{M}|\dot{\varphi}_{t}|\,e^{-\tilde{f}_{t}}\tilde{\omega}^{n}_{t}\,dt<+\infty$ and $\int_{0}^{1}\int_{M}|\dot{\varphi}_{t}|\,e^{-\tilde{f}}\tilde{\omega}^{n}\,dt<+\infty$.
\end{enumerate}
\end{lemma}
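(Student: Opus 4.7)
My plan for (i) is to differentiate $H(t):=\int_{M}G(\tilde{f}_{t})e^{-\tilde{f}_{t}}\tilde{\omega}_{t}^{n}$ in $t$ and exhibit cancellation via a single integration by parts, using the Hamiltonian identity $\nabla^{\tilde{g}_{t}}\tilde{f}_{t}=X$ that follows from $-\tilde{\omega}_{t}\lrcorner JX=d\tilde{f}_{t}$ via $\tilde{\omega}_{t}(\cdot,\cdot)=\tilde{g}_{t}(J\cdot,\cdot)$. Part (ii) will then be a direct integrability check once Gaussian decay of the measure has been established.

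I first record the uniform-in-$t$ asymptotics that make all the integrals and differentiations rigorous. Since $\varphi_{t}\in\mathcal{M}^{\infty}_{X}(M)$, we may decompose $\varphi_{t}=c_{t}\chi\log r+v_{t}$ with $v_{t}\in C^{\infty}_{X,\,0}(M)$. Applying $\tfrac{X}{2}$ and using $\tfrac{X}{2}\log r=\tfrac{1}{2}$ outside a compact set, together with the bound $X\cdot v_{t}=O(f^{-1})$ that is built into the parabolic norm (case $j=1,\,i=0$), gives $\tilde{f}_{t}=\tilde{f}+O(1)$ uniformly in $t$, so that $\tilde{f}_{t}\sim r^{2}/2$ at infinity. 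The hypothesis $|G(x)|+|G'(x)|\leq e^{\alpha x}$ with $\alpha<1$ then forces $G(\tilde{f}_{t})e^{-\tilde{f}_{t}}$ and $G'(\tilde{f}_{t})e^{-\tilde{f}_{t}}$ to decay at rate $e^{-(1-\alpha)r^{2}/2}$ uniformly in $t$; meanwhile the $C^{1}$-path assumption places $\dot{\varphi}_{t}$ in $\mathcal{D}^{\infty}_{X}(M)$ with uniformly bounded norms, so $|\dot{\varphi}_{t}|=O(\log r)$ uniformly. These bounds guarantee uniform integrability and allow differentiation under the integral by dominated convergence.

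Setting $F(x):=G(x)e^{-x}$ so that $F'=(G'-G)e^{-x}$, and using $\partial_{t}\tilde{f}_{t}=\tfrac{X}{2}\dot{\varphi}_{t}$ (immediate from the hypothesis) together with $\partial_{t}\tilde{\omega}_{t}^{n}=(\Delta_{\tilde{\omega}_{t}}\dot{\varphi}_{t})\tilde{\omega}_{t}^{n}$ (from $n\,i\partial\bar{\partial}u\wedge\omega^{n-1}=\Delta_{\omega}u\cdot\omega^{n}$), I obtain
\begin{equation*}
H'(t)=\int_{M}F'(\tilde{f}_{t})\cdot\tfrac{X}{2}\dot{\varphi}_{t}\cdot\tilde{\omega}_{t}^{n}+\int_{M}F(\tilde{f}_{t})\,\Delta_{\tilde{\omega}_{t}}\dot{\varphi}_{t}\cdot\tilde{\omega}_{t}^{n}.
\end{equation*}
The standard K\"ahler integration-by-parts identity $\int u\,\Delta_{\tilde{\omega}_{t}}v\,\tilde{\omega}_{t}^{n}=-\tfrac{1}{2}\int\langle du,dv\rangle_{\tilde{g}_{t}}\tilde{\omega}_{t}^{n}$ (the factor $\tfrac{1}{2}$ coming from the paper's convention $\Delta_{\tilde{\omega}_{t}}=\tfrac{1}{2}\Delta_{\tilde{g}_{t}}$), combined with $\langle d\tilde{f}_{t},d\dot{\varphi}_{t}\rangle_{\tilde{g}_{t}}=X\cdot\dot{\varphi}_{t}$, rewrites the second integral as $-\tfrac{1}{2}\int_{M}F'(\tilde{f}_{t})\,X\cdot\dot{\varphi}_{t}\,\tilde{\omega}_{t}^{n}$, which exactly cancels the first. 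A standard cutoff-and-exhaustion argument (the boundary term is bounded by $|d\chi_{R}|_{\tilde g_t}\cdot|d\dot{\varphi}_{t}|_{\tilde g_t}\cdot|F(\tilde{f}_{t})|\cdot\operatorname{vol}\{R\leq r\leq 2R\}=O(R^{-2}\cdot R^{2n}\cdot e^{-(1-\alpha)R^{2}/2})\to 0$) legitimises the global IBP, so $H'\equiv 0$ and (i) follows. Part (ii) is immediate: the same pair of bounds makes both $|\dot{\varphi}_{t}|\,e^{-\tilde{f}_{t}}\tilde{\omega}_{t}^{n}$ and $|\dot{\varphi}_{t}|\,e^{-\tilde{f}}\tilde{\omega}^{n}$ log-against-Gaussian, hence integrable on $M$ with bounds uniform in $t\in[0,1]$; Fubini concludes both assertions. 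The genuine technical point is entirely the uniform Gaussian decay established in the second paragraph; once in hand, the cancellation above is formal.
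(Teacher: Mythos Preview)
Your proof is correct and follows the standard approach: differentiate under the integral, then integrate by parts using $\nabla^{\tilde g_t}\tilde f_t=X$ to produce the cancellation. The paper itself does not give an independent argument but refers verbatim to \cite[Lemma 6.2]{ccd2}, which uses exactly this computation; your write-up recovers it, together with the necessary uniform Gaussian bounds coming from membership in $\mathcal{M}^\infty_X(M)$.
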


\begin{proof}
The proof follows verbatim \cite[Lemma 6.2]{ccd2}.
\end{proof}

Next, define the following map as in \cite{siepmann}:
\begin{equation*}
\begin{split}
MA_{\tilde{\omega}}:\psi\in&\left\{\varphi\in C^2_{\operatorname{\operatorname{loc}}}(M)\,|\,\tilde{\omega}_{\varphi}:=\tilde{\omega}+i\partial\bar{\partial}\varphi>0\right\}\mapsto\log\left(\frac{\tilde{\omega}_{\psi}^n}{\tilde{\omega}^n}\right)
-\frac{X}{2}\cdot\psi\in\mathbb{R}.
\end{split}
\end{equation*}
For any $\psi\in C_{\operatorname{\operatorname{loc}}}^{2}(M)$, let
$\tilde{g}_{\psi}$ (respectively $\tilde{g}_{t\psi}$) denote the K\"ahler metric associated to the K\"ahler form $\tilde{\omega}_{\psi}$
(resp.~$\tilde{\omega}_{t\psi}$ for any $t\in[0,\,1]$). Brute force computations show that
\begin{equation*}
\begin{split}
MA_{\tilde{\omega}}(0)&=0,\\
D_{\psi}MA_{\tilde{\omega}}(u)&=\Delta_{\tilde{\omega}_{\psi}}u-\frac{X}{2}\cdot u,\quad u\in C^2_{\operatorname{\operatorname{loc}}}(M),\\
\frac{d^2}{dt^2}\left(MA_{\tilde{\omega}}(t\psi)\right)&=\frac{d}{dt}(\Delta_{\tilde{\omega}_{t\psi}}\psi)=-\arrowvert\partial\bar{\partial}\psi\arrowvert^2_{\tilde{g}_{t\psi}}\quad\textrm{for $t\in[0,\,1]$},
 \end{split}
 \end{equation*}
\begin{equation}\label{equ:taylor-exp}
 \begin{split}
 MA_{\tilde{\omega}}(\psi)&=MA_{\tilde{\omega}}(0)+\left.\frac{d}{dt}\right|_{t\,=\,0}MA_{\tilde{\omega}}(t\psi)+\int_0^1\int_0^{u}\frac{d^2}{dt^2}(MA_{\tilde{\omega}}(t\psi))\,dt\,du\\
 &=\Delta_{\tilde{\omega}}\psi-\frac{X}{2}\cdot\psi-\int_0^1\int_0^{u}\arrowvert \partial\bar{\partial}\psi\arrowvert^2_{\tilde{g}_{t\psi}}\,dt\,du.
 \end{split}
 \end{equation}

The main result of this section is that the drift Laplacian of $\tilde{g}$ is an isomorphism between polynomially weighted function spaces with zero mean value.

\begin{theorem}\label{iso-sch-Laplacian-pol}
Let $\alpha\in\left(0,\,\frac{1}{2}\right)$, $k\in\mathbb{N}$. Then the drift Laplacian
\begin{equation*}
\begin{split}
\Delta_{\tilde{g},\,X}:\mathcal{D}^{2k+2,\,2\alpha}_{X}(M)
\bigcap\left\{\int_{M}u\,e^{-\tilde{f}}\tilde{\omega}^{n}=0\right\}\rightarrow \mathcal{ C}^{2k,\,2\alpha}_{X}(M)\bigcap\left\{\int_{M}v\,e^{-\tilde{f}}\tilde{\omega}^{n}=0\right\}
\end{split}
\end{equation*}
is an isomorphism of Banach spaces. In particular, there exists a positive constant $C=C(k,\alpha,\tilde{g})$ such that for all $u\in \mathcal{D}^{2k+2,\,2\alpha}_{X}(M)
\,\cap\,\left\{\int_{M}u\,e^{-\tilde{f}}\tilde{\omega}^{n}=0\right\}$,
\begin{equation*}
\|u\|_{\mathcal{D}^{2k+2,\,2\alpha}_{X}(M)}\leq C \|\Delta_{\tilde{g},\,X}u\|_{\mathcal{ C}^{2k,\,2\alpha}_{X}(M)}.
\end{equation*}
\end{theorem}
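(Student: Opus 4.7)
The plan is to establish the isomorphism in three stages: (a) injectivity of $\Delta_{\tilde{g},X}$ via its self-adjointness on $L^{2}(d\mu_{\tilde{f}})$ combined with the weighted Poincar\'e inequality of Proposition~\ref{poincare}; (b) weak solvability in $L^{2}(d\mu_{\tilde{f}})$ through Lax--Milgram; and (c) local and asymptotic regularity, which upgrade the weak solution to an element of $\mathcal{D}_{X}^{2k+2,2\alpha}(M)$ and yield the a priori estimate. Combined, (a)--(c) give both bijectivity and the continuous inverse.

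For (a), suppose $u\in\mathcal{D}_{X}^{2k+2,2\alpha}(M)$ has vanishing weighted mean and satisfies $\Delta_{\tilde{g},X}u=0$. Since $u$ grows at most logarithmically and the weighted measure has finite Gaussian-type total mass by Proposition~\ref{mainprop}(ii), both $u$ and $\nabla^{\tilde{g}}u$ lie in $L^{2}(d\mu_{\tilde{f}})$. Multiplying by $u\chi_{R}e^{-\tilde{f}}$ with $\chi_{R}$ cutting off $\{f\le R\}$ and integrating by parts, the boundary contribution tends to zero as $R\to\infty$ by Cauchy--Schwarz and the Gaussian decay, yielding $\int_{M}|\nabla^{\tilde{g}}u|^{2}_{\tilde{g}}\,d\mu_{\tilde{f}}=0$; thus $u$ is constant and the mean-value normalisation forces $u=0$. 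For (b), the Dirichlet form $\mathcal{E}(u,v):=\int_{M}\langle\nabla^{\tilde{g}}u,\nabla^{\tilde{g}}v\rangle_{\tilde{g}}\,d\mu_{\tilde{f}}$ is coercive on the zero-mean subspace of $W^{1,2}(d\mu_{\tilde{f}})$ by Proposition~\ref{poincare} with $p=2$, so Lax--Milgram yields a unique zero-mean weak solution $\hat{u}\in W^{1,2}(d\mu_{\tilde{f}})$ to $\Delta_{\tilde{g},X}\hat{u}=v$ for every zero-mean $v\in L^{2}(d\mu_{\tilde{f}})$, and in particular for every such $v\in\mathcal{C}_{X}^{2k,2\alpha}(M)$.

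The main technical step, and principal obstacle, is (c). Interior regularity follows by applying parabolic Schauder estimates to the lifted function $\widetilde{u}(x,\tau):=(\varphi_{\tau}^{X})^{*}\hat{u}(x)$ on the neighbourhoods $P_{r_{x}}(x)$, using \eqref{hyp-basic-ass-II} and the parabolic scaling $r_{x}\sim\sqrt{f(x)}$ highlighted in Remark~\ref{trump}; this places $\hat{u}$ in $C_{\mathrm{loc}}^{2k+2,2\alpha}$ with quantitative local bounds driven by the local $L^{2}$ and target norms. To obtain the asymptotic decomposition, decompose $v=v_{0}+v_{\infty}$ with $v_{0}\in\R$ and $v_{\infty}\in C_{X,2}^{2k,2\alpha}(M)$ and exploit the indicial computation
$$\Delta_{\tilde{g},X}(\chi\log r)=-1+O(r^{-2})\quad\text{with $g$-derivatives,}$$
which follows from $\Delta_{g_{0}}(\log r)=(2n-2)r^{-2}$, $X\cdot\log r=1$, and the decay $\tilde{g}-g=O(r^{-2})$ provided by \eqref{asy-cond-diff-spatial}. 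Setting $c:=-v_{0}$ and $w:=\hat{u}-c\chi\log r$, the corrected remainder satisfies $\Delta_{\tilde{g},X}w\in C_{X,2}^{2k,2\alpha}(M)$. A maximum principle at infinity against the barrier $Af^{-\delta}$ from Lemma~\ref{lemma-sub-sol-barrier} (for any small $\delta\in(0,1)$) then controls $|w|$ outside a compact set, with $A$ driven by $\|v\|_{\mathcal{C}_{X}^{2k,2\alpha}}$ and the interior $L^{\infty}$ bound on $w$ obtained from the $W^{1,2}$-bound via Moser iteration. Bootstrapping through parabolic Schauder in the $P_{r_{x}}(x)$ neighbourhoods then promotes $w$ into $C_{X,0}^{2k+2,2\alpha}(M)$ and produces the a priori estimate. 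The delicate point is ensuring that the constants in the barrier comparison depend linearly on $\|\Delta_{\tilde{g},X}u\|_{\mathcal{C}_{X}^{2k,2\alpha}}$ uniformly in $x$; this I would handle by a blow-up/rescaling argument using the injectivity of step (a) to rule out a non-trivial limit should the putative uniform estimate fail along a sequence.
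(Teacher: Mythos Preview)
Your overall architecture---injectivity via integration by parts, weak solvability via Lax--Milgram/Poincar\'e, subtraction of $c\chi\log r$ to kill the constant part of the target, and parabolic Schauder on the self-similar lift $\widetilde{u}$---matches the paper's approach and is correct in outline. The paper also obtains the weak solution from self-adjointness and a weighted Poincar\'e inequality, and the identification of $c$ via $\Delta_{\tilde g,X}(\chi\log r)=-1+O(r^{-2})$ is exactly what is done there.

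The gap is in your barrier step. Applying a maximum principle to $w\pm Af^{-\delta}$ on the noncompact set $\{f\ge R\}$ presupposes that the comparison function attains its extremum, but at this stage you only know (via Moser iteration on the conjugated equation $\Delta_{\tilde g}(e^{-\tilde f/2}u)=\ldots$) that $|u|\lesssim e^{\tilde f/2}$, which is far too weak: $w-Af^{-\delta}$ may run off to $+\infty$. The paper fixes this by a two-step cascade of barriers, each time adding a small multiple of the exponential subsolution from Lemma~\ref{lemma-sub-sol-barrier} to force properness. First one compares $u$ with $A\log(f+n)+\varepsilon e^{\delta f}$ for $\delta\in(\tfrac12,1)$; the exponential bound just obtained makes $u-\varepsilon e^{\delta f}\to-\infty$, so the maximum is interior, and letting $\varepsilon\to0$ gives $|u|\lesssim\log\tilde f$. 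Only then does one introduce $u_0=u+c\chi\log r$ and compare $u_0$ with $A\|F_0\|_{C^0}f^{-1}+\varepsilon e^{f/2}$; the logarithmic growth now guarantees properness of $u_0-\varepsilon e^{f/2}$, and one concludes $\|u_0\|_{C^0}\le A\|F\|_{C^0}$ directly, with linear dependence on the data. This cascade is what you are missing, and it also obviates your proposed blow-up argument for the linear constants: the paper gets those constants constructively from the barrier inequalities, whereas a contradiction/blow-up route would require a Liouville theorem on the asymptotic cone (or on $M$ itself) for bounded solutions of the drift equation, which you have not set up.
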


\begin{proof}
The linear map $\Delta_{\tilde{g},\,X}:\mathcal{D}^{2k+2,\,2\alpha}_{X}(M)
\rightarrow \mathcal{ C}^{2k,\,2\alpha}_{X}(M)$ is well-defined and continuous by the very definition of the function spaces involved. Indeed, with notation as in \eqref{ronan2},
it suffices to observe that
\begin{equation*}
\left(\widetilde{\Delta_{\tilde{g},\,X}u}\right)(x,\,\tau)=(-\tau)\left(\Delta_{\tilde{g}(\tau)}-2\partial_{\tau}\right)\tilde{u}(x,\,\tau),\qquad x\in M,\qquad \tau<0,
\end{equation*}
where $u$ is an arbitrary smooth function defined on $M$. One then invokes Lemma \ref{lemma-basic-equiv-ass}(ii) (cf.~Remark \ref{trump}).
Furthermore, the image of the drift Laplacian has zero mean value with respect to the weighted measure $e^{-\tilde{f}}\tilde{\omega}^{n}$.
One can see this by integrating by parts (cf.~Section \ref{metricmeasure}), which in this case is justified because of the decay properties of the source function space $\mathcal{D}^{2k+2,\,2\alpha}_{X}(M)$.

Next, observe that the drift Laplacian $\Delta_{\tilde{g},\,X}$ is symmetric with respect to the
weighted measure $e^{-\tilde{f}}\tilde{\omega}^n$ (cf.~Section \ref{metricmeasure}), a measure with finite volume. Set
\begin{equation*}
\begin{split}
H^1_{\tilde{f}}(M)&:=\left\{u\in H^1_{\operatorname{\operatorname{loc}}}(M)\,\left.\right|\,\textnormal{$u$ is $JX$-invariant, $u\in L^2(e^{-\tilde{f}}\tilde{\omega}^n)$, and $\nabla^{\tilde{g}}u\in L^2(e^{-\tilde{f}}\tilde{\omega}^n)$}\right\},\\
W^2_{\tilde{f}}(M)&:=\left\{u\in H^1_{\tilde{f}}(M)\,\left.\right|\, \Delta_{\tilde{\omega},\,X}u\in L^2(e^{-\tilde{f}}\tilde{\omega}^n)\right\},
\end{split}
\end{equation*}
endowed with the obvious norms induced by that of $L^2(e^{-\tilde{f}}\tilde{\omega}^n)$.
It can be shown that the operator $\Delta_{\tilde{g},\,X}$, when restricted to compactly supported smooth $JX$-invariant functions, admits a unique self-adjoint extension to $W^2_{\tilde{f}}(M)$
with domain contained in $H_{\tilde{f}}^1(M)$, and it can further be shown that it has a discrete $L^2(e^{-\tilde{f}}\tilde{\omega}^n)$-spectrum; see \cite[Proposition $6.13$]{Der-Com-Egs}  in the context of expanding gradient Ricci solitons, but the proof of which can be adapted to the current setup. See also \cite[Theorem $4.6$]{Gri-Boo} for a more general setting.
Observe also that the kernel (and hence the cokernel) of this operator is the constant functions. By considering any function $F$ in the codomain as an element of the weighted $L^2$-space $L^2(e^{-\tilde{f}}\tilde{\omega}^n)$, we can therefore find a unique weak solution $u\in H^1(e^{-\tilde{f}}\tilde{\omega}^n)$ with zero weighted mean value of the equation
\begin{equation}\label{love-drift-lap}
\Delta_{\tilde{g},\,X} u=F.
\end{equation}
In addition, we have the estimate
\begin{equation}
\begin{split}\label{weak-apriori-bd-lin-th}
\|u\|_{L^2(e^{-\tilde{f}}\tilde{\omega}^n)}+\|\nabla^{\tilde{g}} u\|_{L^2(e^{-\tilde{f}}\tilde{\omega}^n)}&\leq C\|F\|_{L^2(e^{-\tilde{f}}\tilde{\omega}^n)}\leq C\|F\|_{C^{0}(M)}
\end{split}
\end{equation}
for some positive constant $C>0$ independent of $u$ and $F$ that may vary from line to line. The first inequality here essentially follows from the weighted $L^2$-Poincar\'e inequality (Proposition \ref{poincare}) with respect to the drift Laplacian $\Delta_{\tilde{g}}-X\cdot$.

We improve on the regularity of $u$ through a series of claims, beginning with:
\begin{claim}\label{claim-first-rough-growth}
There exists a positive constant $C=C(\tilde{\omega},n)$ such that $$|u(x)|\leq Ce^{ \frac{\tilde{f}(x)}{2}}\|F\|_{C^{0}(M)}\qquad\textrm{for all $x\in M$.}$$
\end{claim}

\begin{proof}[Proof of Claim \ref{claim-first-rough-growth}]
By conjugating \eqref{love-drift-lap} with a suitable weight, notice that the function $v:=e^{-\frac{\tilde{f}}{2}}u$ satisfies
\begin{equation*}
\begin{split}
\Delta_{\tilde{g}}v&=e^{-\frac{\tilde{f}}{2}}F+\left(\frac{1}{4}|X|^2_{\tilde{g}}-\frac{1}{2}\Delta_{\tilde{g}}\tilde{f}\right)v.
\end{split}
\end{equation*}
This implies that $|v|$ satisfies the following differential inequality in the weak sense:
\begin{equation}
\Delta_{\tilde{g}}|v|\geq -C|v|-C\|F\|_{C^{0}(M)}.\label{love-drift-lap-conj}
\end{equation}
Here we have made use of the non-negativity of $|X|^2_{\tilde{g}}$ together with the boundedness of $\Delta_{\tilde{g}}\tilde{f}$ given by \eqref{hyp-basic-ass}.

We perform a local Nash-Moser iteration on \eqref{love-drift-lap-conj} in $B_{\tilde{g}}(x,\,r)$.
More precisely, since $(M^{2n},\,\tilde{g})$ is a Riemannian manifold with Ricci curvature bounded from below,
the results of \cite{Sal-Cos-Uni-Ell} yield the following local Sobolev inequality:
\begin{equation}
\begin{split}\label{sob-inequ-loc}
\left(\frac{1}{\operatorname{vol}_{\tilde{g}} (B_{\tilde{g}}(x,\,r))}\int_{B_{\tilde{g}}(x,\,r)}|\varphi|^{\frac{2n}{n-1}}\,\tilde{\omega}^{n}\right)^{\frac{n-1}{n}}\leq \left(\frac{C(r_0)r^2}{\operatorname{vol}_{\tilde{g}}(B_{\tilde{g}}(x,\,r))}\int_{B_{\tilde{g}}(x,\,r)}|\widetilde{\nabla}\varphi|_{\tilde{g}}^2\,\tilde{\omega}^{n}\right)
\end{split}
\end{equation}
for any $\varphi\in H^1_0(B_{\tilde{g}}(x,\,r))$ and for all $x\in M$ and $0<r<r_0$, where $r_0>0$ is some fixed positive radius.

A Nash-Moser iteration proceeds in several steps. First, one multiplies \eqref{love-drift-lap}
across by\linebreak  $\eta_{s,\,s'}^2v|v|^{2(p-1)}$ with $p\geq 1$, where $\eta_{s,\,s'}$, with $0<s+s'<r$ and $s,s'>0$,
is a Lipschitz cut-off function with compact support in $B_{\tilde{g}}(x,\,s+s')$ equal to $1$ on $B_{\tilde{g}}(x,\,s)$ and with
$|\widetilde{\nabla}\eta_{s,\,s'}|_{\tilde{g}}\leq\frac{1}{s'}$ almost everywhere. One then integrates by parts and uses
the Sobolev inequality of \eqref{sob-inequ-loc} to obtain a so-called ``reversed H\" older inequality'' which, after iteration, leads to the bound
\begin{equation*}
\begin{split}
\sup_{B_{\tilde{g}}(x,\,\frac{r}{2})}|v|\leq&\, C\left(\|v\|_{L^2(B_{\tilde{g}}(x,\,r))}+\|F\|_{L^{\infty}(B_{\tilde{g}}(x,\,r))}\right)\\
\leq&\,C\left(\|u\|_{L^2(e^{-\tilde{f}}\tilde{\omega}^{n})}+\|F\|_{C^{0}(M)}\right)\\
\leq&\,C\|F\|_{C^{0}(M)}\\
\end{split}
\end{equation*}
for $r\leq r_0$, where $C=C(r_0,\,\tilde{\omega},n)$. Here we have made use of \eqref{weak-apriori-bd-lin-th} in the last line.
This estimate yields an a priori local $C^0$-estimate which is uniform on the center of the ball $B_{\tilde{g}}(x,\,\frac{r}{2})$. In particular,
unravelling the definition of the function $v$, one obtains the expected a priori uniform exponential growth, namely
\begin{equation*}
|u(x)|\leq Ce^{\frac{\tilde{f}(x)}{2}}\|F\|_{C^{0}(M)}\qquad \textrm{for all $x\in M$}.
\end{equation*}
\end{proof}

Thanks to Claim \ref{claim-first-rough-growth}, by local Schauder elliptic estimates we actually see that $u$ lies in $C^{2k+2,\,2\alpha}_{\operatorname{loc}}(M)$ and that we have the estimates
\begin{equation}\label{first-loc-a-priori-est-lin-th}
\|u\|_{C^{2k+2,\,2\alpha}(\{\tilde{f}\,<\,R\})}\leq C\|F\|_{C^{2k,\,2\alpha}(\{\tilde{f}\,<\,2R\})}\leq C\|F\|_{\mathcal{C}^{2k,\,2\alpha}_{X}(M)}
\end{equation}
for some positive constant $C=C(R,\,\tilde{\omega},n)$. We now proceed to prove the expected a priori weighted estimates on $u$ and on its derivatives, beginning with the
logarithmic growth of $u$.
\begin{claim}\label{claim-sec-rough-growth}
There exists a positive constant $A$ such that
$$|u(x)|\leq A\log\tilde{f}(x)\|F\|_{C^{0}(M)}\qquad\textrm{for all $x\in M$ with $\tilde{f}(x)\geq 2$.}$$
\end{claim}

\begin{proof}[Proof of Claim \ref{claim-sec-rough-growth}]
Let $\varepsilon>0$ and let $\delta \in\left(\frac{1}{2},\,1\right)$ be
such that $\lim_{\tilde{f}\rightarrow+\infty}\left(u-\varepsilon e^{\delta \tilde{f}}\right)=-\infty$, parameters that we can choose by
Claim \ref{claim-first-rough-growth}. For $A>0$ a constant to be determined later, we have outside a compact set $\{\tilde{f}\geq R(\delta)\}$ the inequality
\begin{equation*}
\begin{split}
\Delta_{\tilde{g},\,X}\left(u-A\log (f+n)-\varepsilon e^{\delta f}\right)\geq -\|F\|_{C^{0}(M)}+A\left(2-\frac{C}{f}\right)
\end{split}
\end{equation*}
for some constant $C>0$ that may change from line to line. Here, both \eqref{bds-cov-der-f} and Lemma \ref{lemma-sub-sol-barrier} have been applied. Set $R':=\max\{R(\delta),\,2C\}$. Then for $f\geq R'$, we have
$$\Delta_{\tilde{g},\,X}\left(u-A\log (f+n)-\varepsilon e^{\delta f}\right)\geq -\|F\|_{C^{0}(M)}+\frac{3A}{2}.$$
In particular, for all $A>\frac{3}{2}\|F\|_{C^{0}(M)}$, the right-hand side is strictly positive. The maximum principle therefore yields the bound
$$\max_{\{f\,\geq\,R'\}}\left(u-A\log (f+n)-\varepsilon e^{\delta \tilde{f}}\right)=\max_{\{f\,=\,R'\}}\left(u-A\log (f+n)-\varepsilon e^{\delta \tilde{f}}\right).$$
Letting $\varepsilon\to0$, we then see that for all $x\in\{f\geq R'\}$,
\begin{equation*}
u-A\log (f+n)\leq \max_{\{f\,=\,R'\}}\left(u-A\log(R'+n)\right)\leq 0
\end{equation*}
after choosing $A>\max\left\{\frac{3}{2}\|F\|_{C^{0}(M)},\,\frac{C\|F\|_{C^{0}(M)}}{\log(R'+n)}\right\}\geq\frac{\max_{\{f= R'\}}u}{\log(R'+n)}$ for some $C>0$ that may change from line to line.
Choosing such an $A$ is possible by virtue of \eqref{first-loc-a-priori-est-lin-th}. Applying the same argument to $-u$ concludes the proof of the claim.
\end{proof}

Observe that $u_0:=u+c\chi\cdot\log r$, where $F-c\in C^{2k,\,2\alpha}_{X,\,2}(M)$, satisfies the equation
\begin{equation*}\label{heat-eqn-tilde-u}
\Delta_{\tilde{g},\,X}u_0=F+c\Delta_{\tilde{g},\,X}(\chi\cdot\log r)=(F-c)+\underbrace{c+c\Delta_{\tilde{g},\,X}(\chi\cdot\log r)}_{\text{$=\,O(r^{-2})$ with $\tilde{g}$-derivatives}}:=F_0\in C^{2k,\,2\alpha}_{X,\,2}(M).
\end{equation*}
We next give a $C^{0}$-bound on $u_{0}$.
\begin{claim}\label{claim-sec-rough-growth2}
There exists a positive constant $A>0$ such that
\begin{equation*}
\|u_0\|_{C^{0}(M)}\leq A\|F\|_{C^{0}(M)}.
\end{equation*}
\end{claim}

\begin{proof}[Proof of Claim \ref{claim-sec-rough-growth2}]
Thanks to Claim \ref{claim-first-rough-growth}, it suffices to prove Claim \ref{claim-sec-rough-growth2} on a set $\{f\geq R\}$ with $R$ arbitrarily large where both $\rho$ and $f$ coincide. We will prove one half of the asserted estimate, namely that $u_{0}\leq A \|F_0\|_{C^{0}(M)}$. The other half will follow by applying the same argument to $-u_0$.

Using the differential inequality $\Delta_{\tilde{g},\,X}u_0\geq -\frac{C}{f}\|F_0\|_{C^{0}(M)}$ for some $C>0$ outside a sufficiently large compact subset independent of $u_0$, together with Lemma \ref{lemma-sub-sol-barrier}, we compute that for $A>0$,
\begin{equation}\label{ireland}
\begin{split}
\Delta_{\tilde{g},\,X}\left(u_0+A\|F_0\|_{C^{0}(M)}f^{-1}\right)&\geq \frac{(A-C)\|F_0\|_{C^{0}(M)}}{f}>0,
\end{split}
\end{equation}
after setting $A:=C+1$. Now, if $\varepsilon>0$ is arbitrary, then $\lim_{f\rightarrow+\infty}\left(u_0-\varepsilon e^{\frac{f}{2}}\right)=-\infty$ by virtue of Claim \ref{claim-first-rough-growth}. Moreover, by \eqref{ireland} and Lemma \ref{lemma-sub-sol-barrier}, if $A:=C+1$ and $\varepsilon>0$, then 
on a subset of $M$ of the form $\{f\,\geq\,R\}$ for some $R>0$ independent of $u_{0}$, we have that
\begin{equation*}
\begin{split}
\Delta_{\tilde{g},\,X}\left( u_0+A\|F_0\|_{C^{0}(M)}f^{-1}-\varepsilon e^{\frac{f}{2}}\right)&\geq \|F_0\|_{C^{0}(M)}f^{-1}-\varepsilon\Delta_{\tilde{g},\,X}e^{\frac{f}{2}}>0.
\end{split}
\end{equation*}
Applying the maximum principle then yields the bound
\begin{equation*}
\max_{\{f\,\geq\,R\}}\left(u_0+A\|F_0\|_{C^{0}(M)}f^{-1}-\varepsilon e^{\frac{f}{2}}\right)=\max_{\{f\,=\,R\}}\left( u_0+A\|F_0\|_{C^{0}(M)}f^{-1}-\varepsilon e^{\frac{f}{2}}\right).
\end{equation*}
Letting $\varepsilon\to0$ and unravelling the definitions, we then find, thanks to Claim \ref{claim-sec-rough-growth}, that on $\{f\geq R\}$,
\begin{equation*}
u_0\leq C(R)\|F_0\|_{C^{0}(M)}+\max_{\{f\,=\,R\}}u_0\leq C(R)\|F\|_{C^{0}(M)},
\end{equation*}
as required.
\end{proof}

The following claim establishes the appropriate interior Schauder parabolic estimates that we need to complete the proof of Theorem \ref{iso-sch-Laplacian-pol}.
\begin{claim}\label{claim-a-priori-rough-bd-hih-der}
There exists a positive constant $C>0$ such that for all $x\in M$,
\begin{equation}\label{est-sch-loc-para}
\|\widetilde{u_0}\|_{C^{2k+2,\,2\alpha}\left(P_{\frac{r_x}{2}}(x)\right)}\leq C\left(\|\widetilde{u_0}\|_{C^{0}\left(P_{r_x}(x)\right)}+r_x^2\left\|(-\tau)^{-1}\widetilde{F_0}\right\|_{C^{2k,\,2\alpha}\left(P_{r_x}(x)\right)}\right).
\end{equation}
\end{claim}

\begin{proof}[Proof of Claim \ref{claim-a-priori-rough-bd-hih-der}]
By Lemma \ref{lemma-basic-equiv-ass}, for $x\in M$ and $\tau\in\left(-1-r_x^2,\,-1\right]$, the metrics $\tilde{g}(\tau)$
are uniformly equivalent and their covariant derivatives (with respect to $g$) and time derivatives are bounded on $P_{r_x}(x)$, uniformly with respect to $x\in M$. Since $\widetilde{u_0}(\tau)=(\varphi^X_{\tau})^*u_0$ satisfies
\begin{equation}\label{heat-eqn-disguise}
\partial_{\tau}\widetilde{u_0}=\frac{1}{2}\Delta_{\tilde{g}(\tau)}\widetilde{u_0}+(2\tau)^{-1}\widetilde{F_0},
\end{equation}
standard interior parabolic Schauder estimates \cite[Theorem 8.12.1]{Kry-Boo} applied to \eqref{heat-eqn-disguise} on $P_{r_x}(x)$ therefore ensures the existence of a uniform positive constant $C>0$ such that \eqref{est-sch-loc-para} holds uniformly with respect to $x\in M$.
\end{proof}

Combining Claims \ref{claim-sec-rough-growth} and \ref{claim-a-priori-rough-bd-hih-der} leads to the desired continuity estimate of the inverse of the drift Laplacian between $\mathcal{D}^{2k+2,\,2\alpha}_{X}(M)$ and $\mathcal{C}^{2k,\,2\alpha}_{X}(M)$, that is, the inequality in the statement of the theorem. This concludes the proof of Theorem \ref{iso-sch-Laplacian-pol}.
\end{proof}

\subsection{Small perturbations along the continuity path}\label{invert-poly}

In this section we show, using the implicit function theorem, that the
invertibility of the drift Laplacian given by Theorem \ref{iso-sch-Laplacian-pol}
allows for small perturbations in polynomially weighted function spaces
of solutions to the complex Monge-Amp\`ere equation that we wish to solve.
This forms the openness part of the continuity method, as will be explained in Section \ref{continuitie}.

In notation reminiscent of that of \cite[Chapter $5$]{Tian-Can-Met-Boo}, we consider
the space $\left(\mathcal{C}^{2,\,2\alpha}_{X}(M)\right)_{\tilde{\omega},\,0}$ of functions $F\in\mathcal{C}^{2,\,2\alpha}_{X}(M)$ satisfying
\begin{equation*}
\int_M\left(e^{F}-1\right)\,e^{-\tilde{f}}\tilde{\omega}^n=0.
\end{equation*}
This function space is a hypersurface in the Banach space $\mathcal{C}^{2,\,2\alpha}_{X}(M)$. Notice that the tangent space at a function $F_0\in\left(\mathcal{C}^{2,\,2\alpha}_{X}(M)\right)_{\tilde{\omega},\,0}$ is the set of functions $u\in \mathcal{C}^{2,\,2\alpha}_{X}(M)$ with
\begin{equation*}
\int_Mu\,e^{F_0-\tilde{f}}\tilde{\omega}^n=0.
\end{equation*}
We have:
\begin{theorem}\label{Imp-Def-Kah-Ste}
Let $F_0\in\left(\mathcal{C}^{2,\,2\alpha}_{X}(M)\right)_{\tilde{\omega},\,0}\bigcap\mathcal{C}^{\infty}_{X}(M)$ and let $\psi_0\in\mathcal{M}^{\infty}_{X}(M)$ be a solution of the complex Monge-Amp\`ere equation
\begin{equation*}
\log\left(\frac{\tilde{\omega}^n_{\psi_0}}{\tilde{\omega}^n}\right)-\frac{X}{2}\cdot\psi_0=F_0.
\end{equation*}
Then for any $\alpha\in\left(0,\,\frac{1}{2}\right)$, there exists a neighbourhood $U_{F_0}\subset\left(C^{2,\,2\alpha}_{X}(M)\right)_{\tilde{\omega},\,0}$ of $F_0$ such that for all $F\in U_{F_0}$, there exists a unique function $\psi\in\mathcal{M}^{4,\,2\alpha}_{X}(M)$ such that
\begin{equation}
\log\left(\frac{\tilde{\omega}^n_{\psi}}{\tilde{\omega}^n}\right)-\frac{X}{2}\cdot\psi=F.\label{MA-neigh-small-per-pol}
\end{equation}
Moreover, if $F\in U_{F_0}$ lies in $\mathcal{C}^{\infty}_{X}(M)$, then the unique solution $\psi\in\mathcal{M}^{4,\,2\alpha}_{X}(M)$ to \eqref{MA-neigh-small-per-pol} lies in $\mathcal{M}^{\infty}_{X}(M).$
\end{theorem}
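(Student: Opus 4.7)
The plan is to apply the Banach-space implicit function theorem to the nonlinear map
\begin{equation*}
\Phi:\mathcal{M}^{4,\,2\alpha}_{X}(M)\longrightarrow \bigl(\mathcal{C}^{2,\,2\alpha}_{X}(M)\bigr)_{\tilde{\omega},\,0},\qquad \psi\longmapsto MA_{\tilde{\omega}}(\psi)=\log\!\left(\frac{\tilde{\omega}^n_\psi}{\tilde{\omega}^n}\right)-\tfrac{X}{2}\cdot\psi.
\end{equation*}
First, I will verify that $\Phi$ is a well-defined, smooth map between these Banach manifolds. Writing an arbitrary potential $\psi = c\chi\log r + v$ with $v\in C^{4,\,2\alpha}_{X,\,0}(M)$, the term $\tfrac{X}{2}\cdot\psi$ contributes the constant $c/2$ at infinity plus a remainder in $C^{2,\,2\alpha}_{X,\,2}(M)$ (the weight $(-\tau)^{-\beta/2}$ with $\beta=2$ in the norm absorbs the extra factor coming from the identity $\tfrac{X}{2}\cdot v = \partial_\tau\widetilde{v}\vert_{\tau=-1}$), while $\log(\tilde{\omega}^n_\psi/\tilde{\omega}^n)$ lies in $C^{2,\,2\alpha}_{X,\,2}(M)$ because $i\p\bp\psi = O(\rho^{-1})$ with $g$-derivatives. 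The hypersurface constraint is automatic: the linear path $t\mapsto t\psi$ lies entirely in $\mathcal{M}^{\infty}_{X}(M)$ by convexity, so Lemma \ref{lemma-preserved-int}(i) applied with $G\equiv 1$ yields $\int_M e^{-\tilde{f}_\psi}\tilde{\omega}_\psi^n = \int_M e^{-\tilde{f}}\tilde{\omega}^n$, which, using $\tilde{\omega}^n_\psi = e^{\Phi(\psi)+\frac{X}{2}\cdot\psi}\tilde{\omega}^n$ together with $\tilde{f}_\psi=\tilde{f}+\tfrac{X}{2}\cdot\psi$, rewrites as $\int_M(e^{\Phi(\psi)}-1)e^{-\tilde{f}}\tilde{\omega}^n=0$. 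Smoothness of $\Phi$ as a map of Banach spaces is standard and follows from the Taylor expansion \eqref{equ:taylor-exp}.

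Next I will compute the linearization at $\psi_0$ and invoke Theorem \ref{iso-sch-Laplacian-pol}. From \eqref{equ:taylor-exp},
\begin{equation*}
D_{\psi_0}\Phi(u)=\Delta_{\tilde{g}_{\psi_0}}u-\tfrac{X}{2}\cdot u = \Delta_{\tilde{g}_{\psi_0},\,X}\,u.
\end{equation*}
Since $\psi_0\in \mathcal{M}^{\infty}_{X}(M)$, the stability lemma at the end of Section \ref{function-spaces-subsection} guarantees that $\tilde{\omega}_{\psi_0}$ satisfies the asymptotic condition \eqref{hyp-basic-ass}. Theorem \ref{iso-sch-Laplacian-pol}, applied with $\tilde{g}$ replaced by $\tilde{g}_{\psi_0}$, therefore produces an isomorphism between the subspaces of $\mathcal{D}^{4,\,2\alpha}_X(M)$ and $\mathcal{C}^{2,\,2\alpha}_X(M)$ with zero mean value relative to the measure $e^{-\tilde{f}_{\psi_0}}\tilde{\omega}_{\psi_0}^n$. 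The key identities $\tilde{\omega}_{\psi_0}^n = e^{F_0+\frac{X}{2}\cdot\psi_0}\tilde{\omega}^n$ and $\tilde{f}_{\psi_0}=\tilde{f}+\tfrac{X}{2}\cdot\psi_0$ give $e^{-\tilde{f}_{\psi_0}}\tilde{\omega}_{\psi_0}^n = e^{F_0-\tilde{f}}\tilde{\omega}^n$, so the target zero-mean slice coincides precisely with the tangent space at $F_0$ of the hypersurface $(\mathcal{C}^{2,\,2\alpha}_X(M))_{\tilde{\omega},\,0}$.

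Because $\Phi$ is manifestly invariant under the translation $\psi\mapsto\psi+c$, $c\in\R$, I will apply the implicit function theorem on the codimension-one slice
\begin{equation*}
\mathcal{S}:=\left\{\psi\in \mathcal{M}^{4,\,2\alpha}_X(M)\,:\,\int_M(\psi-\psi_0)\,e^{-\tilde{f}_{\psi_0}}\tilde{\omega}_{\psi_0}^n = 0\right\},
\end{equation*}
whose tangent space at $\psi_0$ is exactly the source zero-mean subspace of the isomorphism above. The IFT then produces a neighborhood $U_{F_0}\subset(\mathcal{C}^{2,\,2\alpha}_X(M))_{\tilde{\omega},\,0}$ of $F_0$ and, for each $F\in U_{F_0}$, a unique $\psi\in\mathcal{S}$---equivalently, a unique $\psi\in\mathcal{M}^{4,\,2\alpha}_X(M)$ modulo the additive-constant gauge---solving \eqref{MA-neigh-small-per-pol} and depending smoothly on $F$.

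For the final regularity assertion, I will repeat the argument in the scale $(\mathcal{D}^{2k+4,\,2\alpha}_X(M),\mathcal{C}^{2k+2,\,2\alpha}_X(M))$ for every $k\geq 1$, since Theorem \ref{iso-sch-Laplacian-pol} provides the invertibility at each regularity level. Local uniqueness at the base level $k=0$ then forces these higher-regularity solutions to coincide with $\psi$, whence $\psi\in \mathcal{M}^{\infty}_X(M)$. The principal technical point throughout is ensuring that the linearised operator acts bijectively between the \emph{correct} tangent hypersurfaces at each level; this is exactly where the matching of weighted measures $e^{-\tilde{f}_{\psi_0}}\tilde{\omega}_{\psi_0}^n = e^{F_0-\tilde{f}}\tilde{\omega}^n$ enters in an essential way.
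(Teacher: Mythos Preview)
Your argument is correct and follows essentially the same route as the paper: set up the Monge--Amp\`ere map $MA_{\tilde{\omega}}$, verify it lands in the hypersurface $(\mathcal{C}^{2,2\alpha}_X(M))_{\tilde{\omega},0}$ via Lemma~\ref{lemma-preserved-int}(i), linearise to obtain $\Delta_{\tilde{g}_{\psi_0},X}$, invoke Theorem~\ref{iso-sch-Laplacian-pol} with the background $\tilde{\omega}_{\psi_0}$ (which satisfies \eqref{hyp-basic-ass} by the stability lemma), identify the zero-mean slices via $e^{-\tilde{f}_{\psi_0}}\tilde{\omega}_{\psi_0}^n=e^{F_0-\tilde{f}}\tilde{\omega}^n$, and apply the implicit function theorem. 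Your treatment of the additive-constant gauge via the slice $\mathcal{S}$ is in fact slightly more explicit than the paper's.

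The one point to flag is your regularity argument. Re-running the implicit function theorem at each scale $k$ yields neighbourhoods $U_{F_0}^{(k)}$ in the successively finer $\mathcal{C}^{2k+2,2\alpha}_X$-topologies, and there is no a priori reason why a fixed $F\in U_{F_0}=U_{F_0}^{(0)}$ should lie in all of them; the neighbourhoods may shrink with $k$. The paper avoids this by appealing instead to a direct bootstrapping of the equation (deferred to Corollary~\ref{coro-bds-Ck-weight}): once $\psi\in\mathcal{M}^{4,2\alpha}_X(M)$, one uses the parabolic reformulation $\partial_\tau\tilde{\psi}=\tfrac12\Delta_{\tilde{g}(\tau)}\tilde{\psi}+(2\tau)^{-1}\tilde{F}$ and interior Schauder estimates on the parabolic cylinders $P_{r_x}(x)$ to gain two derivatives at a time. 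Your approach can be repaired in this way, but as written it has a gap.
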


\begin{remark}
{Consideration of only finite regularity of the difference $\omega-\tilde{\omega}$
(which lowers the assumptions on the regularity of the coefficients of the drift Laplacian $\Delta_{\tilde{g},\,X}$)
and of the data $(\psi_0,\,F_0)$ would lead to a more refined version of Theorem \ref{Imp-Def-Kah-Ste}.}
\end{remark}

\begin{proof}[Proof of Theorem \ref{Imp-Def-Kah-Ste}]
In order to apply the implicit function theorem for Banach spaces, we must reformulate
the statement of Theorem \ref{Imp-Def-Kah-Ste} in terms of
the map $MA_{\tilde{\omega}}$ introduced formally at the beginning of Section \ref{linear}. To this end, consider the mapping
\begin{equation*}
\begin{split}
MA_{\tilde{\omega}}:\psi\in\mathcal{M}^{4,\,2\alpha}_{X}(M)\mapsto \log\left(\frac{\tilde{\omega}_{\psi}^n}{\tilde{\omega}^n}\right)-\frac{X}{2}\cdot\psi\in\left(\mathcal{C}^{2,\,2\alpha}_{X}(M)\right)_{\tilde{\omega},\,0},\qquad \alpha\in\left(0,\,\frac{1}{2}\right).
\end{split}
\end{equation*}
Notice that the function spaces above can be defined by either using the metric $\tilde{g}$ or $\tilde{g}_{t\psi_0}$ for any $t\in[0,\,1]$.
To see that $MA_{\tilde{\omega}}$ is well-defined, apply the Taylor expansion \eqref{equ:taylor-exp} to the background metric $\tilde{\omega}$
to obtain
\begin{equation}\label{reform-MA-op}
\begin{split}
MA_{\tilde{\omega}}(\psi)&=\log\left(\frac{\tilde{\omega}_{\psi}^n}{\tilde{\omega}^n}\right)-\frac{X}{2}\cdot\psi\\
&=\Delta_{\tilde{\omega}}\psi-\frac{X}{2}\cdot\psi-\int_0^1\int_0^{u}\arrowvert \partial\bar{\partial}\psi\arrowvert^2_{\tilde{g}_{t\psi}}\,dt\,du.
\end{split}
\end{equation}
Then by the very definition of $\mathcal{D}^{4,\,2\alpha}_{X}(M)$,
the first two terms of the last line of \eqref{reform-MA-op} lie in $\mathcal{C}^{2,\,2\alpha}_{X}(M)$.

Now, if $S$ and $T$ are tensors in $C^{2k,\,2\alpha}_{X,\,2}(M)$ and $C^{2k,\,2\alpha}_{X,\,2}(M)$ respectively,
then observe that $S\ast T$ lies in $C^{2k,\,2\alpha}_{X,\,4}(M)$,
where $\ast$ denotes any linear combination of contractions of tensors with respect to the metric $\tilde{g}$. Moreover,
\begin{equation}\label{mult-inequ-holder}
\|S\ast T\|_{C^{2k,\,2\alpha}_{X,\,4}(M)}\leq C(k,\,\alpha)\|S\|_{C^{2k,\,2\alpha}_{X,\,2}(M)}\cdot \|T\|_{C^{2k,\,2\alpha}_{X,\,2}(M)}.
\end{equation}
Next, notice that $$\arrowvert i\partial \bar{\partial}\psi\arrowvert^2_{\tilde{g}_{t\psi}}=\tilde{g}_{t\psi}^{-1}\ast \tilde{g}_{t\psi}^{-1}
\ast (\nabla^{\tilde{g}})^{\,2}\psi\ast(\nabla^{\tilde{g}})^{\,2}\psi$$ and that $$\tilde{g}_{t\psi}^{-1}
-\tilde{g}^{-1}\in C^{2,\,2\alpha}_{X,\,2}(M).$$
Thus, applying \eqref{mult-inequ-holder} twice to $S=T=(\nabla^{\tilde{g}})^{2}\psi$ and to the inverse $\tilde{g}_{t\psi}^{-1}$ with $k=1$, one finds that $\arrowvert i\partial\bar{\partial}\psi\arrowvert^2_{\tilde{g}_{t\psi}}\in C^{2,\,2\alpha}_{X,\,4}(M)\subseteq C^{2,\,2\alpha}_{X,\,2}(M)$ for each
$t\in[0,\,1]$ and that
\begin{equation*}
\left\|\int_0^1\int_0^{u}\arrowvert i\partial\bar{\partial}\psi\arrowvert^2_{\tilde{g}_{t\psi}}\,dt\,du
\right\|_{C^{2,\,2\alpha}_{X,\,\beta}(M)}\leq C\left(k,\alpha,\tilde{g}\right)\|\psi\|_{\mathcal{D}^{4,\,2\alpha}_{X,\,\beta}(M)},\\
\end{equation*}
so long as $\|\psi\|_{\mathcal{D}^{4,\,2\alpha}_{X,\,\beta}(M)}\leq 1$. Finally, the $JX$-invariance of the right-hand side of \eqref{reform-MA-op} is clear and Lemma \ref{lemma-preserved-int}(i) ensures that the function $$\exp MA_{\tilde{\omega}}(\psi)-1$$ has zero mean value with respect to the weighted measure $e^{-\tilde{f}}\tilde{\omega}^n$. Indeed, Lemma \ref{lemma-preserved-int}(i) applied to the linear path $\tilde{\omega}_{\tau}:=\tilde{\omega}+i\partial\bar{\partial}(\tau\psi)$ for $\tau\in[0,1]$ gives us that
\begin{equation*}
\int_M\left(\exp MA_{\tilde{\omega}}(\psi)-1\right)\,e^{-\tilde{f}}\tilde{\omega}^n=\int_Me^{-\tilde{f}_{\psi}}\tilde{\omega}_{\psi}^n-\int_Me^{-\tilde{f}}\tilde{\omega}^n=0.
\end{equation*}
By \eqref{equ:taylor-exp}, we have that
\begin{equation*}
\begin{split}
D_{\psi_0}MA_{\tilde{\omega}}:\psi\in &\,\mathcal{M}^{4,\,2\alpha}_{X}(M)\bigcap\left\{\int_{M}u\,e^{-\tilde{f}_{\psi_0}}\tilde{\omega}_{\psi_0}^{n}=0\right\}\mapsto \Delta_{\tilde{\omega}_{\psi_0}}\psi-\frac{X}{2}\cdot\psi\in T_{F_0}\left(\mathcal{C}^{2,\,2\alpha}_{X}(M)\right)_{\tilde{\omega},\,0},
\end{split}
\end{equation*}
where the tangent space of $\left(\mathcal{C}^{2,\,2\alpha}_{X}(M)\right)_{\tilde{\omega},\,0}$ at $F_0$ is equal to the set of functions $u\in \mathcal{C}^{2,\,2\alpha}_{X}(M)$ with $0$ mean value with respect to the weighted measure $e^{-\tilde{f}_{\psi_0}}\tilde{\omega}_{\psi_0}^n$.
Therefore, after applying Theorem \ref{iso-sch-Laplacian-pol} to the background metric $\tilde{\omega}_{\psi_0}$ in place of $\tilde{\omega}$, we conclude that
$D_{\psi_0}MA_{\tilde{\omega}}$ is an isomorphism of Banach spaces. The result now follows by applying the implicit function theorem
to the map $MA_{\tilde{\omega}}$ in a neighbourhood of $\psi_0\in\mathcal{M}^{4,\,2\alpha}_{X}(M)\cap\left\{\int_{M}u\,e^{-\tilde{f}_{\psi_0}}\tilde{\omega}_{\psi_0}^{n}=0\right\}$.\\

The proof of the regularity at infinity of the solution $\psi$ when the data $F$ lies in $\mathcal{C}^{\infty}_{X}(M)$ follows from a standard bootstrapping and will therefore be omitted; see Corollary \ref{coro-bds-Ck-weight} for the non-linear setting.
\end{proof}

\section{A priori estimates}\label{sec-a-priori-est}

\subsection{The continuity path}\label{continuitie}

In what follows, we identify $M\setminus E$ with $C_{0}\setminus\{o\}$ via $\pi$,
where $E\subset M$ and $o\in C_{0}$ are the exceptional set of the resolution and
the apex of the cone respectively. We now wish to solve
the complex Monge-Amp\`ere equation \eqref{ast-0} in the function space $\mathcal{M}_{X}^{\infty}(M)$ introduced in
Section \ref{function-spaces-subsection}. Recall that this is the equation
\begin{equation*}
\left\{
\begin{array}{rl}
(\omega+i\partial\bar{\partial}\psi)^{n}=e^{F+\frac{X}{2}\cdot\psi}\omega^{n},&\quad\textrm{$\psi\in\mathcal{M}_{X}^{\infty}(M)$ torus-invariant},\\
\int_{M}e^{F-f}\omega^{n}=\int_{M}e^{-f}\omega^{n}, &
\end{array} \right.
\end{equation*}
where $F:M\to\mathbb{R}$ is a torus-invariant smooth real-valued function equal with $F=c_{0}-\frac{s_{\omega_{0}}}{2}+O(r^{-4})$ with $g_{0}$-derivatives
outside a compact subset of $M$ and $f:M\to\mathbb{R}$ is the Hamiltonian potential of $X$ with respect to $\omega$, i.e., $-\omega\lrcorner JX=df$, normalised so that
\begin{equation*}
\Delta_{\omega}f-f+\frac{X}{2}\cdot f=-\frac{X}{2}\cdot F=O(r^{-2}).
\end{equation*}
From Lemma \ref{lemma-preserved-int}(i), it is clear that the integral condition, which determines $c_{0}$, is in fact a necessary condition for solving \eqref{ast-0}
in $\mathcal{M}_{X}^{\infty}(M)$. Define $F_{s}:=\log(1+s(e^{F}-1))$.

In this section, we prove Theorem \ref{mainthm2} by providing a solution to \eqref{ast-0} by implementing the continuity path
\begin{equation}\label{star-s}
\left\{
\begin{array}{rl}
(\omega+i\partial\bar{\partial}\psi_{s})^{n}=e^{F_{s}+\frac{X}{2}\cdot\psi_{s}}\omega^{n},&\quad\textrm{$\psi_{s}\in\mathcal{M}^{\infty}_{X}(M)$ torus-invariant},\qquad s\in[0,\,1],\\
\int_{M}e^{F-f}\omega^{n}=\int_{M}e^{-f}\omega^{n}, &\quad\int_{M}\psi_{s}\,e^{-f}\omega^{n}=0.
\end{array} \right.\tag{$\star_{s}$}
\end{equation}
When $s=0$, $(\star_{0})$ admits the trivial solution, namely $\psi_{0}\equiv0$. When $s=1$, $(\star_{1})$
corresponds to \eqref{ast-0}, that is, the equation that we wish to solve. Via the a priori estimates to follow, we will
show that the set $s\in[0,\,1]$ for which \eqref{star-s} has a solution is closed. As we have just seen, this set is
non-empty. Openness of this set follows from the isomorphism properties of the drift Laplacian given
by Theorem \ref{Imp-Def-Kah-Ste}. Connectedness of $[0,\,1]$ then implies that \eqref{star-s} has a solution for $s=1$, resulting
in the desired solution of \eqref{ast-0}.

\subsection{The continuity path re-parametrised}\label{sec-path-reparam}

To obtain certain localisation results and in turn, a priori estimates for \eqref{star-s}, we need to consider a reformulation of \eqref{star-s} in the following way.
Identify $M\setminus E$ and $C_0\setminus\{o\}$ via $\pi$, and define $F_{s}:=\log(1+s(e^{F}-1))$. Then there exists a compact subset $K\subset M$ containing $E$ such that
for all $s\in[0,\,1]$, $F_{s}$ is asymptotic to a constant $c_{s}$ on $M\setminus K$. Indeed, $c_{s}=\log(1+s(e^{c_{0}}-1))$, and in light of Proposition \ref{mainprop} we have that
\begin{equation}\label{second}
F_{s}-c_{s}=\log\left(1+\frac{se^{c_{0}}}{1+s(e^{c_{0}}-1)}(e^{F-c_{0}}-1)\right)=O(r^{-2})\quad\textrm{with $g_{0}$-derivatives.}
\end{equation}
Note that $c_{s}$ varies continuously as a function of $s$ and that \eqref{star-s} takes the form
$$(\omega+i\partial\bar{\partial}\psi_{s})^{n}=e^{F_{s}+\frac{X}{2}\cdot\psi_{s}}\omega^{n}.$$

Let $\eta_{s}:=-2c_{s}\log(r)$, a real-valued function defined on $M\setminus K$. Then, with $g$ denoting the K\"ahler metric associated to $\omega$, it is clear that
$$\|(\log(r))^{-1}\cdot\eta_{s}\|_{C^{0}(M\setminus K)}
+\|r\cdot d\eta_{s}\|_{C^{0}(M\setminus K,\,g_{0})}+\|r^{2}\cdot i\partial\bar{\partial}\eta_{s}\|_{C^{0}(M\setminus K,\,g_{0})}\leq C,$$
where $C$ is a positive constant uniform in $s\in[0,1]$ and so Lemma \ref{glue} infers the existence of a bump function $\chi:M\to\mathbb{R}$ supported on $M\setminus K$ and a compact subset $V\supset K$, both independent of $s$, such that $\chi=1$ on $M\setminus V$ and such that for all $s\in[0,\,1]$,
$\omega_{s}:=\omega+i\partial\bar{\partial}\left(\chi\cdot\eta_{s}\right)>0$ on $M$.
Define $\Phi_{s}:=\chi\cdot\eta_{s}$. Then $\omega_{s}=\omega+i\partial\bar{\partial}\Phi_{s}$
and since $\Phi_{s}=-2c_{s}\log r$ on $M\setminus V$, we find that outside a larger compact subset,
\begin{equation}\label{first}
\begin{split}
\omega_{s}&=\omega+i\partial\bar{\partial}\Phi_{s}\\
&=\omega-2c_{s} i\partial\bar{\partial}\log r\\
&=\omega_{0}+\rho_{\omega_{0}}-2c_{s}\omega^{T},
\end{split}
\end{equation}
where recall from Section \ref{conekahler} that $\omega^{T}$ is the transverse K\"ahler form. 
In particular, $\omega_{s}-\omega_{0}=O(r^{-2})$ with $g_{0}$-derivatives. Furthermore, we have that
\begin{equation*}
\begin{split}
\log\left(\frac{(\omega_{s}+i\partial\bar{\partial}(\psi_{s}-\Phi_{s}))^{n}}{\omega_{s}^{n}}\right)&-\frac{X}{2}\cdot(\psi_{s}-\Phi_{s})=
\log\left(\frac{(\omega+i\partial\bar{\partial}\psi_{s})^{n}}{\omega_{s}^{n}}\right)-\frac{X}{2}\cdot(\psi_{s}-\Phi_{s})\\
&=\log\left(\frac{(\omega+i\partial\bar{\partial}\psi_{s})^{n}}{\omega^{n}}\right)-\frac{X}{2}\cdot\psi_{s}
-\log\left(\frac{\omega_{s}^{n}}{\omega^{n}}\right)+\frac{X}{2}\cdot\Phi_{s}\\
&=F_s-\left(\log\left(\frac{\omega_{s}^{n}}{\omega^{n}}\right)-\frac{X}{2}\cdot\Phi_{s}\right)=:G_{s},
\end{split}
\end{equation*}
with
\begin{equation}\label{austin}
G_{s}=(F_{s}-c_{s})-\log\left(\frac{\omega_{s}^{n}}{\omega^{n}}\right)=O(r^{-2})\quad\textrm{with $g_{0}$-derivatives},
\end{equation}
using \eqref{second} and \eqref{first}. Set $\vartheta_{s}:=\psi_{s}-\Phi_{s}$. Then $\vartheta_{s}\in C^{\infty}_{X}(M)$ and
we can rewrite \eqref{star-s} in terms of $\vartheta_{s}$ as the equation
\begin{equation}\label{starstar-s}
\log\left(\frac{(\omega_{s}+i\partial\bar{\partial}\vartheta_{s})^{n}}{\omega_{s}^{n}}\right)-\frac{X}{2}\cdot\vartheta_{s}=G_{s},
\quad\vartheta_{s}\in C^{\infty}_{X}(M)\quad\textrm{torus-invariant},\quad\omega_{s}+i\partial\bar{\partial}\vartheta_{s}>0,\, s\in[0,\,1],
\tag{$\star\star_{s}$}
\end{equation}
with $G_{s}=O(r^{-2})$ and $\omega_{s}-\omega_{0}=O(r^{-2})$, both with $g_{0}$-derivatives.

Define $\sigma_{s}:=\omega_{s}+i\partial\bar{\partial}\vartheta_{s}$. Then in terms of the Ricci forms $\rho_{\sigma_{s}}$ and $\rho_{\omega_{s}}$ of
$\sigma_{s}$ and $\omega_{s}$ respectively, \eqref{starstar-s} yields
\begin{equation*}\label{wtf}
\rho_{\sigma_{s}}+\frac{1}{2}\mathcal{L}_{X}\sigma_{s}=\rho_{\omega_{s}}+\frac{1}{2}\mathcal{L}_{X}\omega_{s}-i\partial\bar{\partial}G_{s}.
\end{equation*}
We write $h_{s}$ for the K\"ahler metric associated to $\sigma_{s}$.

We will need the following lemma regarding the Hamiltonian potential $f_{\omega_{s}}$ of $X$ with respect to $\omega_{s}$.
\begin{lemma}\label{normal-fss}
Let $f_{\omega_{s}}:=f+\frac{X}{2}\cdot\Phi_{s}$. Then $-\omega_{s}\lrcorner JX=df_{\omega_{s}}$ and
there exists a torus-invariant function $H_{s}\in C^{\infty}(M)$ varying smoothly in $s$ and with $H_{s}=-c_{s}+O(r^{-2})$ with $g_{0}$-derivatives such that
\begin{equation}\label{normal-fs}
\Delta_{\omega_{s}}f_{\omega_{s}}-\frac{X}{2}\cdot f_{\omega_{s}}+f_{\omega_{s}}=H_{s}.
\end{equation}
\end{lemma}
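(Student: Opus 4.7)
The plan is to adapt the derivation of \eqref{normal12} in Proposition \ref{mainprop}(ii) to the perturbed background $\omega_s = \omega + i\partial\bar{\partial}\Phi_s$ and its Hamiltonian candidate $f_{\omega_s} = f + \frac{X}{2}\cdot\Phi_s$. Both claims rest on the same ingredients as that proof: a standard K\"ahler identity, a Bochner-type argument producing a real-valued holomorphic function, and an asymptotic bookkeeping at infinity.

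For the identity $-\omega_s\lrcorner JX = df_{\omega_s}$, the key observation is that for any smooth $JX$-invariant real-valued function $u$ on $M$, one has $i\partial\bar{\partial}u\lrcorner JX = -\tfrac{1}{2}\,d(X\cdot u)$. This follows from a direct local computation using $JX = i(X^{0,1} - X^{1,0})$ with $X^{1,0} = \frac{1}{2}(X-iJX)$ and the fact that $JX$-invariance of $u$ forces $X^{1,0}\cdot u = X^{0,1}\cdot u = \frac{1}{2}X\cdot u$; the sign is consistent with the model identity $-\omega_{0}\lrcorner JX = d(\tfrac{r^2}{2})$ on the cone. Since both $\chi$ and $\eta_s = -2c_s\log r$ are $T^n$-invariant by construction, so is $\Phi_s$, and applying the identity to $u=\Phi_s$ combined with $-\omega\lrcorner JX = df$ yields $-\omega_s\lrcorner JX = df + d(\tfrac{X}{2}\cdot\Phi_s) = df_{\omega_s}$.

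For \eqref{normal-fs}, I first derive the analogue of \eqref{hutchins} for the perturbed background. Using $\rho_{\omega_s} = \rho_\omega - i\partial\bar{\partial}\log(\omega_s^n/\omega^n)$ together with $\mathcal{L}_X\omega_s = \mathcal{L}_X\omega + i\partial\bar{\partial}(X\cdot\Phi_s)$ and \eqref{hutchins}, I obtain
\[
\rho_{\omega_s} + \tfrac{1}{2}\mathcal{L}_X\omega_s - \omega_s = i\partial\bar{\partial}\mathcal{F}_s,\qquad \mathcal{F}_s := F + \tfrac{X}{2}\cdot\Phi_s - \Phi_s - \log(\omega_s^n/\omega^n).
\]
Contracting this equation with $X^{1,0}$ and invoking the Bochner formula exactly as in the proof of Proposition \ref{mainprop}(ii), using the $JX$-invariance of $f_{\omega_s}$ and $\mathcal{F}_s$, I get
\[
i\bar{\partial}\bigl(\Delta_{\omega_s} f_{\omega_s} - \tfrac{X}{2}\cdot f_{\omega_s} + f_{\omega_s} + \tfrac{X}{2}\cdot\mathcal{F}_s\bigr) = 0,
\]
so the parenthesised expression is a real-valued holomorphic function on $M$, hence constant.

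To pin down this constant and verify the claimed asymptotics of $H_s$, I work on $M\setminus K$, where everything is explicit: $f_{\omega_s} = \tfrac{r^2}{2} - n - c_s$, $\omega_s - \omega_0 = O(r^{-2})$ with $g_0$-derivatives, and $\Delta_{\omega_0}(\tfrac{r^2}{2}) = n$. A direct computation gives
\[
\Delta_{\omega_s} f_{\omega_s} - \tfrac{X}{2}\cdot f_{\omega_s} + f_{\omega_s} = -c_s + O(r^{-2})
\]
with $g_0$-derivatives, while using $F - c_0 = O(r^{-2})$, $\tfrac{X}{2}\cdot\Phi_s = -c_s$, $-\Phi_s = 2c_s\log r$, and $\log(\omega_s^n/\omega^n) = O(r^{-2})$ with $g_0$-derivatives yields $\tfrac{X}{2}\cdot\mathcal{F}_s = c_s + O(r^{-2})$ with $g_0$-derivatives. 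The two asymptotic expansions cancel at infinity, so the holomorphic constant must be zero. Thus \eqref{normal-fs} holds with $H_s := -\tfrac{X}{2}\cdot\mathcal{F}_s$; torus-invariance and smooth $s$-dependence of $H_s$ follow from the corresponding properties of $F$, $\Phi_s$, $c_s$, and $\omega_s$, and the asymptotic expansion just computed gives $H_s = -c_s + O(r^{-2})$ with $g_0$-derivatives. The only subtle step is the asymptotic bookkeeping that forces the holomorphic constant to vanish; this hinges on the precise choice $\Phi_s = -2c_s\log r$ outside a compact set made in Section \ref{sec-path-reparam}, which is designed exactly so that the constant shifts in $f_{\omega_s}$ and $\mathcal{F}_s$ cancel.
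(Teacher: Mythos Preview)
Your proof is correct and follows essentially the same approach as the paper. Your function $\mathcal{F}_s$ coincides with the paper's $Q_s := F + G_s - F_s - \Phi_s$ once one unwinds the definition of $G_s$, and both arguments contract the identity $\rho_{\omega_s} + \tfrac{1}{2}\mathcal{L}_X\omega_s - \omega_s = i\partial\bar{\partial}Q_s$ with $X^{1,0}$, invoke the Bochner formula to obtain a constant, and verify via the explicit asymptotics on $M\setminus K$ that this constant vanishes; you simply spell out the first assertion and the asymptotic bookkeeping in slightly more detail than the paper does.
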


\begin{proof}
The first assertion is clear. Regarding the normalisation condition \eqref{normal-fs}, a computation shows that for the Ricci forms $\rho_{\omega}$ and $\rho_{\omega_{s}}$ of $\omega$ and $\omega_{s}$ respectively,
\begin{equation*}
\begin{split}
\rho_{\omega_{s}}+\frac{1}{2}\mathcal{L}_{X}\omega_{s}-\omega_{s}&=\rho_{\omega}+\frac{1}{2}\mathcal{L}_{X}\omega-\omega-i\partial\bar{\partial}
\left(\log\left(\frac{\omega_{s}^{n}}{\omega^{n}}\right)-\frac{X}{2}\cdot\Phi_{s}+ \Phi_{s}\right)\\
&=i\partial\bar{\partial}(F+G_{s}-F_s-\Phi_{s}),
\end{split}
\end{equation*}
where we have used Proposition \ref{mainprop}(i). Write $Q_{s}:=F+G_{s}-F_s-\Phi_{s}$. Then $Q_{s}$ is torus-invariant and
it is easy to see that $$Q_{s}-(2c_{s}\log(r)-c_{s}+c_{0})=O(r^{-2})\quad\textrm{with $g_{0}$-derivatives},$$ where the constant is independent of $s$. Contracting the identity
\begin{equation}\label{macron}
\rho_{\omega_{s}}+\frac{1}{2}\mathcal{L}_{X}\omega_{s}-\omega_{s}=i\partial\bar{\partial}Q_{s}
\end{equation}
with $X^{1,\,0}:=\frac{1}{2}(X-iJX)$ and arguing as in Proposition \ref{mainprop}(ii) using the $JX$-invariance of the functions involved, we find that
$$\Delta_{\omega_{s}}f_{\omega_{s}}-\frac{X}{2}\cdot f_{\omega_{s}}+f_{\omega_{s}}+\frac{X}{2}\cdot Q_{s}$$
is constant on $M$. But since outside a large compact set,
\begin{equation*}
\begin{split}
\Delta_{\omega_{s}}f_{\omega_{s}}-\frac{X}{2}\cdot f_{\omega_{s}}+f_{\omega_{s}}+\frac{X}{2}\cdot Q_{s}&=\Delta_{\omega_{s}}f-\frac{X}{2}\cdot f+f=O(r^{-2})\\
\end{split}
\end{equation*}
as in \eqref{bonjour}, this constant must be zero. Hence the result follows with $H_{s}:=-\frac{X}{2}\cdot Q_{s}$.
\end{proof}

This allows for a normalisation for the Hamiltonian potential $f_{\sigma_{s}}:=f_{\omega_{s}}+\frac{X}{2}\cdot\vartheta_{s}$
of $X$ with respect to $\sigma_{s}$.
\begin{lemma}\label{lemma-tr-star-star}
Let $f_{\sigma_{s}}:=f_{\omega_{s}}+\frac{X}{2}\cdot\vartheta_{s}$. Then $-\sigma_{s}\lrcorner JX=df_{\sigma_{s}}$ and
for all $s\in[0,\,1]$, there exists a torus-invariant function $P_{s}\in C^{\infty}(M)$ varying smoothly in $s$ with
$P_{s}=O(r^{-2})$ with $g_{0}$-derivatives such that
\begin{equation}\label{normal-fsigmas}
\Delta_{\sigma_{s}}f_{\sigma_{s}}-\frac{X}{2}\cdot f_{\sigma_{s}}=-f+P_{s}.
\end{equation}
\end{lemma}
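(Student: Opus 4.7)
The plan is to prove the two assertions separately. For the Hamiltonian identity, a direct computation in local holomorphic coordinates (using $JX=i(X^{1,0}-X^{0,1})$ together with $X^{1,0}\!\cdot\vartheta_{s}=X^{0,1}\!\cdot\vartheta_{s}=\tfrac{1}{2}X\cdot\vartheta_{s}$ for the $JX$-invariant function $\vartheta_{s}$) shows that $-i\partial\bar{\partial}\vartheta_{s}\lrcorner JX = d(\tfrac{X}{2}\cdot\vartheta_{s})$. Combining this with $-\omega_{s}\lrcorner JX = df_{\omega_{s}}$ from Lemma \ref{normal-fss} gives $-\sigma_{s}\lrcorner JX = df_{\sigma_{s}}$.

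For the normalisation, I would apply $i\partial\bar{\partial}$ to \eqref{starstar-s}, use $\mathcal{L}_{X}(i\partial\bar\partial \vartheta_{s}) = i\partial\bar\partial(X\cdot\vartheta_{s})$, and subtract from \eqref{macron} to produce the shrinking-soliton-like identity
\begin{equation*}
\rho_{\sigma_{s}} + \tfrac{1}{2}\mathcal{L}_{X}\sigma_{s} - \sigma_{s} = i\partial\bar{\partial} R_{s},\qquad R_{s} := -\psi_{s} + F - F_{s}.
\end{equation*}
Contracting with $X^{1,0}$ and invoking the Bochner formula exactly as in the proof of Proposition \ref{mainprop}(ii) (all functions being $JX$-invariant) produces a real-valued holomorphic, hence constant, function $C_{s}$ on $M$ satisfying
\begin{equation*}
\Delta_{\sigma_{s}}f_{\sigma_{s}} - \tfrac{X}{2}\cdot f_{\sigma_{s}} + f_{\sigma_{s}} + \tfrac{X}{2}\cdot R_{s} \equiv C_{s}.
\end{equation*}

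The crux is to show $C_{s}=0$ by evaluating the left-hand side at infinity. Outside a compact set, $\Phi_{s}=-2c_{s}\log r$ gives $\tfrac{X}{2}\cdot\Phi_{s}=-c_{s}$, while $X\cdot\vartheta_{s}=O(r^{-2})$ and $i\partial\bar\partial(X\cdot\vartheta_{s})=O(r^{-4})$ follow from the $r_{x}^{2j}$-scaling of time derivatives in the $C^{\infty}_{X,0}(M)$-norm (applied via $\partial_{\tau}\tilde\vartheta_{s}|_{\tau=-1}=\tfrac{1}{2}X\cdot\vartheta_{s}$). Together with \eqref{second} and \eqref{first}, these give
\begin{equation*}
f_{\sigma_{s}} = f - c_{s} + O(r^{-2}),\qquad \tfrac{X}{2}\cdot R_{s} = c_{s} + O(r^{-2}).
\end{equation*}
The background identity $\Delta_{\omega_{0}}f - \tfrac{X}{2}\cdot f + f = 0$, valid outside a compact set since $i\partial\bar\partial f = \omega_{0}$ and $X\cdot f = 2f + 2n$ there, combined with $\sigma_{s}-\omega_{0}=O(r^{-2})$ with $g_{0}$-derivatives, then forces $\Delta_{\sigma_{s}}f_{\sigma_{s}} - \tfrac{X}{2}\cdot f_{\sigma_{s}} + f_{\sigma_{s}} = -c_{s} + O(r^{-2})$ at infinity. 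The two contributions of $c_{s}$ cancel, $C_{s}=0$, and rearranging yields \eqref{normal-fsigmas} with $P_{s}:=-\tfrac{X}{2}\cdot\vartheta_{s}-\tfrac{X}{2}\cdot(F-F_{s})$, which is manifestly $O(r^{-2})$ with $g_{0}$-derivatives and varies smoothly in $s$.

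The hard part is the delicate cancellation of the constants $c_{s}$ between $f_{\sigma_{s}}$ and $\tfrac{X}{2}\cdot R_{s}$: this is precisely what makes the right-hand side $-f$ depend only on the background data and not on the unknown $\vartheta_{s}$, the surprising feature highlighted in the discussion preceding the lemma and the key property that permits the localisation of the infimum argument in the continuity method.
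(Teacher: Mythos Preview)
Your argument is correct and complete, but it takes a different and longer route than the paper. The paper avoids the Bochner-and-evaluate-at-infinity strategy entirely: it simply applies $\tfrac{X}{2}$ to \eqref{starstar-s} and uses the elementary identity
\[
\tfrac{X}{2}\cdot\log\left(\frac{\sigma_{s}^{n}}{\omega_{s}^{n}}\right)=\tfrac{1}{2}\tr_{\sigma_{s}}\mathcal{L}_{X}\sigma_{s}-\tfrac{1}{2}\tr_{\omega_{s}}\mathcal{L}_{X}\omega_{s}=\Delta_{\sigma_{s}}f_{\sigma_{s}}-\Delta_{\omega_{s}}f_{\omega_{s}}
\]
to read off $\Delta_{\sigma_{s}}f_{\sigma_{s}}-\tfrac{X}{2}\cdot f_{\sigma_{s}}=\Delta_{\omega_{s}}f_{\omega_{s}}-\tfrac{X}{2}\cdot f_{\omega_{s}}+\tfrac{X}{2}\cdot G_{s}$. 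Then it substitutes the already-proved normalisation \eqref{normal-fs} from Lemma~\ref{normal-fss} and is done, with $P_{s}=H_{s}+\tfrac{X}{2}\cdot G_{s}-\tfrac{X}{2}\cdot\Phi_{s}$. This is shorter because the evaluate-at-infinity work and the Bochner contraction have already been absorbed into Lemma~\ref{normal-fss}; you are in effect redoing that lemma for $\sigma_{s}$ from scratch rather than bootstrapping off it. Your route has the compensating virtue of making the soliton-like identity for $\sigma_{s}$ explicit.

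One arithmetic slip to flag: when you rearrange, the $-\tfrac{X}{2}\cdot\vartheta_{s}$ term in your stated $P_{s}$ should cancel. Since $f-f_{\sigma_{s}}=-\tfrac{X}{2}\cdot\psi_{s}$ and $-\tfrac{X}{2}\cdot R_{s}=\tfrac{X}{2}\cdot\psi_{s}-\tfrac{X}{2}\cdot(F-F_{s})$, the correct expression is $P_{s}=-\tfrac{X}{2}\cdot(F-F_{s})$, which (as you emphasise is the whole point) depends only on the data and not on $\vartheta_{s}$. Your written formula for $P_{s}$, containing $\vartheta_{s}$, actually contradicts the very feature you highlight in your closing paragraph. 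Unwinding the paper's $P_{s}$ using $H_{s}=-\tfrac{X}{2}\cdot Q_{s}$ gives the same expression $-\tfrac{X}{2}\cdot(F-F_{s})$, so the two approaches agree.
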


\begin{proof}
Again, the first assertion is clear. As for \eqref{normal-fsigmas}, we have that
\begin{equation*}
\begin{split}
\frac{X}{2}\cdot\log\left(\frac{\sigma_{s}^{n}}{\omega_{s}^{n}}\right)&=\frac{1}{2}\tr_{\sigma_{s}}
\mathcal{L}_{X}\sigma_{s}-\frac{1}{2}\tr_{\omega_{s}}\mathcal{L}_{X}\omega_{s}\\
&=\tr_{\sigma_{s}}(i\partial\bar{\partial}f_{\sigma_{s}})-\tr_{\omega_s}(i\partial\bar{\partial}f_{\omega_{s}})\\
&=\Delta_{\sigma_{s}}f_{\sigma_{s}}-\Delta_{\omega_{s}}f_{\omega_{s}}.
\end{split}
\end{equation*}
Thus, contracting both sides of \eqref{starstar-s} with $\frac{X}{2}$, we obtain
$$\Delta_{\sigma_{s}}f_{\sigma_{s}}-\Delta_{\omega_{s}}f_{\omega_{s}}=\frac{X}{2}\cdot G_{s}+\frac{X}{2}\cdot\left(f_{\omega_{s}}+\frac{X}{2}\cdot\vartheta_{s}\right)-\frac{X}{2}\cdot f_{\omega_{s}},$$
i.e.,
$$\Delta_{\sigma_{s}}f_{\sigma_{s}}-\frac{X}{2}\cdot f_{\sigma_{s}}=\Delta_{\omega_{s}}f_{\omega_{s}}-\frac{X}{2}\cdot f_{\omega_{s}}+\frac{X}{2}\cdot G_{s}.$$
Hence we derive from \eqref{normal-fs} that
$$\Delta_{\sigma_{s}}f_{\sigma_{s}}-\frac{X}{2}\cdot f_{\sigma_{s}}=H_{s}+\frac{X}{2}\cdot G_{s}-f_{\omega_{s}}.$$
With $P_{s}:=H_{s}+\frac{X}{2}\cdot G_{s}-\frac{X}{2}\cdot\Phi_{s}$, the result is now clear.
\end{proof}

\subsection{Summary of notation}

For convenience, we provide in this section a summary of our notation regarding the various K\"ahler forms in play.
\begin{itemize}
\item $(C_{0},\,\omega_{0})$ is a given toric K\"ahler cone with radial function $r$ and K\"ahler form $\omega_{0}$.
\item $\pi:M\to(C_{0},\,\omega_{0})$ is an equivariant resolution of $C_{0}$, and we have that $d\pi(X)=r\partial_{r}$, where $X$ is the hypothetical soliton vector field.
 \item $F$ is the data in \eqref{ast-0} equal to $c_{0}+O(r^{-2})$ with $g_{0}$-derivatives at infinity, with $c_{0}\in\mathbb{R}$ a constant.
  \item $\omega$ is the background K\"ahler form given in \eqref{ast-0} isometric to $\omega_{0}+\rho_{\omega_{0}}$ outside a fixed compact subset of $M$. Here, $\rho_{\omega_{0}}$ is the Ricci form of $\omega_{0}$.
  \item $g$ is the K\"ahler metric associated to $\omega$.
  \item $f$ is the Hamiltonian potential of $JX$ with respect to $\omega$ given in Proposition \ref{mainprop}(ii). It is equal to $\frac{r^{2}}{2}-n$ outside the compact subset and normalised so that
  $$\Delta_{\omega}f+f-\frac{X}{2}\cdot f=O(r^{-2})\quad\textrm{with $g_{0}$-derivatives}.$$
 \item $c_{s}:=\log(1+s(e^{c_{0}}-1))$.
  \item $F_{s}$ is the data in \eqref{star-s} equal to $c_{s}+O(r^{-2})$ outside a fixed compact subset of $M$.
\item $\psi_{s}$ is the solution to the original continuity path \eqref{star-s}.
\item $\Phi_{s}=-2\chi\cdot c_{s}\log r$, where $0\leq\chi\leq1$ is a bump function identically equal to $1$ outside a fixed compact subset of $M$. In particular, notice that
$\Phi_{s}=-c_{s}\log(2(f+n))$ outside a compact subset of $M$.
   \item $\omega_{s}:=\omega+i\partial\bar{\partial}\Phi_{s}$ is the one-parameter family of background metrics equal to $\omega_{0}+O(r^{-2})$ with $g_{0}$-derivatives at infinity
   appearing in \eqref{starstar-s}.
\item $g_{s}$ is the K\"ahler metric associated to $\omega_{s}$.
    \item $f_{\omega_{s}}:=f+\frac{X}{2}\cdot\Phi_{s}$ is the Hamiltonian potential of $JX$ with respect to $\omega_{s}$.
\item $\vartheta_{s}=\psi_s - \Phi_s$ is the solution of the re-parametrised continuity path \eqref{starstar-s}.
    \item $\sigma_{s}:=\omega_{s}+i\partial\bar{\partial}\vartheta_{s}=\omega+i\partial\bar{\partial}\left(\Phi_{s}+\vartheta_{s}\right)$ is the associated K\"ahler form.
    \item $h_{s}$ is the K\"ahler metric associated to $\sigma_{s}$.
        \item $f_{\sigma_{s}}:=f_{\omega_{s}}+\frac{X}{2}\cdot\vartheta_{s}=f+\frac{X}{2}\cdot\left(\Phi_{s}+\vartheta_{s}\right)$ is the Hamiltonian potential of $JX$ with respect to $\sigma_{s}$. It is normalised by the equation
    $$\Delta_{\sigma_{s}}f_{\sigma_{s}}-\frac{X}{2}\cdot f_{\sigma_{s}}=-f+P_{s},$$
where $P_{s}=O(r^{-2})$ with $g_{0}$-derivatives.

\end{itemize}

\subsection{A priori lower bound on the radial derivative}\label{sec-low-bd-rad-der}

The fact that the data $G_{s}$ of \eqref{starstar-s} is bounded allows us to localise the infimum of $X\cdot\vartheta_{s}$ using the minimum principle. This
leads to a uniform lower bound on $X\cdot\vartheta_{s}$, and in particular on $X\cdot\psi_{s}$.

\begin{lemma}[Localising the infimum of the radial derivative]\label{lemma-loc-crit-pts-rad-der-inf}
Let $(\vartheta_s)_{0\,\leq\, s\,\leq\, 1}$  be a path of solutions in $C^{\infty}_{X}(M)$ to \eqref{starstar-s}.
Then there exists a compact subset $K\subset M$ and $C>0$ such that for all $s\in[0,\,1]$,
$\inf_M X\cdot\vartheta_{s}\geq\min\{0\,,\,\min_{K}X\cdot\vartheta_{s}\}-C$.
\end{lemma}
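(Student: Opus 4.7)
The plan is to write down a PDE for $X\cdot\vartheta_s$ on the complement of a large compact subset $K\subset M$ (chosen once and for all so that the asymptotic expansions of Section \ref{sec-path-reparam} hold on $M\setminus K$) and then apply the minimum principle to localise $\inf_M X\cdot\vartheta_s$. Combining the normalisation of the Hamiltonian potential from Lemma \ref{lemma-tr-star-star}, the identity $f_{\sigma_s} = f - c_s + \tfrac{1}{2}X\cdot\vartheta_s$ on $M\setminus K$ (which uses $\tfrac{X}{2}\cdot\Phi_s = -c_s$ there), and the simple computation
\begin{equation*}
\Delta_{\sigma_s}f - \tfrac{X}{2}\cdot f = \tr_{\sigma_s}\omega_0 - (f+n)\qquad\textrm{on $M\setminus K$}
\end{equation*}
(valid since $f = \tfrac{r^2}{2} - n$ and $i\partial\bar{\partial}f = \omega_0$ outside $K$), I would obtain
\begin{equation*}
\Delta_{\sigma_s}(X\cdot\vartheta_s) - \tfrac{X}{2}\cdot(X\cdot\vartheta_s) = 2n - 2\tr_{\sigma_s}\omega_0 + 2P_s\qquad\textrm{on $M\setminus K$},
\end{equation*}
with $P_s = O(r^{-2})$ \emph{uniformly in $s\in[0,1]$}.

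Since $\vartheta_s\in C^{\infty}_{X}(M)$, the defining norm of the weighted H\"older space forces $|X\cdot\vartheta_s|\leq C_s\,f^{-1}$ on $M$, so in particular $X\cdot\vartheta_s\to 0$ at spatial infinity. Hence, for each fixed $s$, either $\inf_M X\cdot\vartheta_s\geq 0$ (in which case the assertion is trivial), or else this infimum is attained at some point $x_0\in M$. If $x_0\in K$ then $\inf_M X\cdot\vartheta_s = \min_K X\cdot\vartheta_s$ and the conclusion holds with $C = 0$, so it remains only to treat the case $x_0\in M\setminus K$.

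In this case the minimum principle gives $\bigl(\Delta_{\sigma_s} - \tfrac{X}{2}\cdot\bigr)(X\cdot\vartheta_s)(x_0)\geq 0$, so that the displayed PDE yields
\begin{equation*}
\tr_{\sigma_s}\omega_0(x_0)\leq n + P_s(x_0)\leq n + C_1\,r(x_0)^{-2}
\end{equation*}
for some uniform constant $C_1>0$. On the other hand, \eqref{starstar-s} together with the asymptotics $\omega_s = \omega_0 + O(r^{-2})$ and $G_s = O(r^{-2})$ gives, on $M\setminus K$,
\begin{equation*}
\frac{\omega_0^n}{\sigma_s^n} = e^{-G_s-\frac{1}{2}X\cdot\vartheta_s}\bigl(1+O(r^{-2})\bigr),
\end{equation*}
and the arithmetic-geometric mean inequality then yields
\begin{equation*}
\tr_{\sigma_s}\omega_0\geq n\Bigl(\tfrac{\omega_0^n}{\sigma_s^n}\Bigr)^{1/n}\geq n\,e^{-(G_s+\frac{1}{2}X\cdot\vartheta_s)/n}\bigl(1+O(r^{-2})\bigr).
\end{equation*}
Combining these two bounds at $x_0$ and taking logarithms produces $X\cdot\vartheta_s(x_0)\geq -C_2\,r(x_0)^{-2}\geq -C$ for a uniform constant $C>0$, which is the desired localisation.

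The conceptual heart of the argument, and what distinguishes it from \cite[Lemma 7.6]{ccd2}, is Lemma \ref{lemma-tr-star-star}: the right-hand side $-f + P_s$ of the normalisation of $f_{\sigma_s}$ depends only on the initial data $\omega$, $\Phi_s$, $F_s$, and never on the unknown $\vartheta_s$. This is exactly what prevents the $O(r^{-2})$ remainder in the PDE for $X\cdot\vartheta_s$ above from being infected by $\vartheta_s$, and thus makes the minimum principle argument free of circularity; it is also the only place where the uniformity in $s\in[0,1]$ is non-trivial to obtain.
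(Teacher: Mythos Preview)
Your proof is correct and follows essentially the same route as the paper: derive a PDE for $X\cdot\vartheta_s$ on $M\setminus K$ with a $-\tr_{\sigma_s}\omega_0$ term on the right, use $X\cdot\vartheta_s\to0$ at infinity to locate a potential interior minimum, and combine the minimum principle with the arithmetic--geometric mean bound $\tr_{\sigma_s}\omega_0\geq n(\omega_0^n/\sigma_s^n)^{1/n}$ together with \eqref{starstar-s}.

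The only difference is in how the PDE is obtained. The paper differentiates \eqref{starstar-s} directly along $X$, writing $\tfrac{X}{2}\cdot\log(\sigma_s^n/\omega_s^n)=\Delta_{\sigma_s}f_{\sigma_s}-\Delta_{\omega_s}f_{\omega_s}$ and using $\mathcal{L}_{X/2}\omega_s=\omega_0$ outside $K$ to arrive at $\Delta_{\sigma_s,X}(\tfrac{X}{2}\cdot\vartheta_s)=-\tr_{\sigma_s}\omega_0+\tr_{\omega_s}\omega_0+\tfrac{X}{2}\cdot G_s$. You instead route through Lemma~\ref{lemma-tr-star-star}; since that lemma is itself proved by the same $X$-differentiation, the two derivations are equivalent and yield the same equation up to the identification $\tr_{\omega_s}\omega_0+\tfrac{X}{2}\cdot G_s=n+P_s+O(r^{-2})$. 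Your closing remark that Lemma~\ref{lemma-tr-star-star} is the ``conceptual heart'' distinguishing this from \cite[Lemma~7.6]{ccd2} is, however, better placed at Lemma~\ref{lemma-loc-crit-pts} (localising $\inf\vartheta_s$), which is where the paper actually invokes it in an essential way; for the present lemma the direct differentiation suffices without any appeal to the normalisation of $f_{\sigma_s}$.
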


\begin{proof}
We start by differentiating \eqref{starstar-s} along the vector field $X$. This gives
\begin{equation}
\begin{split}\label{dont-forget-me-0}
\Delta_{\sigma_{s},\,X}\left(\frac{X}{2}\cdot\vartheta_{s}\right)&=-\operatorname{tr}_{\sigma_{s}}\left(\mathcal{L}_{\frac{X}{2}}\omega_s\right)+\operatorname{tr}_{\omega_{s}}\mathcal{L}_{\frac{X}{2}}\omega_s+\frac{X}{2}\cdot G_{s}.
\end{split}
\end{equation}
As $\mathcal{L}_{\frac{X}{2}}\omega_s=\mathcal{L}_{\frac{X}{2}}\omega=\omega_0$ outside a fixed compact subset $K\subset M$ thanks to Proposition \ref{mainprop}(i), the arithmetic-geometric inequality applied to the first term on the right-hand side of \eqref{dont-forget-me-0} yields the inequality
\begin{equation}
\begin{split}\label{dont-forget-me}
\Delta_{\sigma_{s},\,X}\left(\frac{X}{2}\cdot\vartheta_{s}\right)&=-\operatorname{tr}_{\sigma_{s}}\omega_{0}+\operatorname{tr}_{\omega_{s}}\omega_{0}+\frac{X}{2}\cdot G_{s}\\
&\leq -n\left(\frac{\omega_0^n}{\sigma_s^n}\right)^{\frac{1}{n}}+\operatorname{tr}_{\omega_{s}}\omega_{0}+\frac{X}{2}\cdot G_{s}\\
&=-n\left(\frac{\omega_s^n}{\sigma_s^n}\right)^{\frac{1}{n}}\cdot\left(\frac{\omega_0^n}{\omega_s^n}\right)^{\frac{1}{n}}+\operatorname{tr}_{\omega_{s}}\omega_{0}+\frac{X}{2}\cdot G_{s}\\
&=-ne^{-\frac{G_s}{n}-\frac{X\cdot \vartheta_{s}}{2n}}\cdot\left(\frac{\omega_0^n}{\omega_s^n}\right)^{\frac{1}{n}}+\operatorname{tr}_{\omega_{s}}\omega_{0}+\frac{X}{2}\cdot G_{s}\\
&\leq -C^{-1}e^{-\frac{X\cdot \vartheta_{s}}{2n}}+C
\end{split}
\end{equation}
for some uniform positive constant $C$ independent of $s\in[0,\,1]$. Here we have used \eqref{starstar-s} in the fourth line. Since $X\cdot \vartheta_{s}\to0$ at infinity, either
$\inf_M X\cdot\vartheta_{s}\geq0$ or $X\cdot\vartheta_{s}$ attains a global minimum at some point $p\in M$. If $p\in M\setminus W$, then from the above inequality we find after applying the minimum principle that
$$0\leq-C^{-1}e^{-\frac{X\cdot \vartheta_{s}(p)}{2n}}+C,$$
i.e., $X\cdot \vartheta_{s}>-C$ for some $C>0$ independent of $s$. The result is now clear.
\end{proof}

Next, we have:
\begin{prop}\label{lowerbound}
There exists a positive constant $C$ such that for all $s\in[0,\,1]$, $X\cdot\vartheta_s\geq-C-2f$. In particular, $X\cdot\psi_{s}>-C-2f$ for all $s\in[0,\,1]$.
\end{prop}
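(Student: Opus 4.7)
The strategy is to observe that the claimed inequality $X\cdot\vartheta_s\geq -C-2f$ is equivalent, up to an additive constant absorbing the uniformly bounded quantity $X\cdot\Phi_s$, to a uniform lower bound $f_{\sigma_s}\geq -C_0$ on the Hamiltonian potential $f_{\sigma_s}=f+\frac{X}{2}\cdot(\Phi_s+\vartheta_s)$ of $X$ with respect to the solution metric $\sigma_s$. Indeed, since $\frac{X}{2}\cdot\Phi_s$ equals the constant $-c_s$ outside a fixed compact set and $c_s$ ranges continuously over $[0,\,c_0]$ as $s\in[0,1]$, any pointwise lower bound $f_{\sigma_s}\geq -C_0$ yields $X\cdot\vartheta_s\geq -C-2f$ for a suitable uniform $C$ and, after adding $X\cdot\Phi_s$, the analogous estimate for $X\cdot\psi_s=X\cdot\vartheta_s+X\cdot\Phi_s$.

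To prove $f_{\sigma_s}\geq -C_0$ uniformly in $s$, the plan is to apply the minimum principle to $f_{\sigma_s}$ itself and show that its critical set lies in a fixed compact subset. First, since $\vartheta_s\in C^{\infty}_X(M)$, the structure of the parabolic weighted norm (in which $\partial_\tau\widetilde{\vartheta}_s|_{\tau=-1}=\frac{1}{2}X\cdot\vartheta_s$ appears with weight $r_x^{2}\sim f$) gives $|X\cdot\vartheta_s|(x)=O(f(x)^{-1})$ as $f(x)\to+\infty$. Combined with the boundedness of $\frac{X}{2}\cdot\Phi_s$, this yields $f_{\sigma_s}(x)=f(x)+O(1)\to+\infty$ at infinity, so $f_{\sigma_s}$ is proper and attains its infimum at some point $p_s\in M$. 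Although the implicit constant may depend on $s$, this step is used only qualitatively to secure the existence of a minimiser.

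Next, the Hamiltonian relation $-\sigma_s\lrcorner JX=df_{\sigma_s}$ translates, via the K\"ahler identity $\sigma_s(JV,W)=-h_s(V,W)$, into $X=\nabla^{h_s}f_{\sigma_s}$ globally on $M$. Hence $\nabla^{h_s}f_{\sigma_s}(p_s)=0$ forces $X(p_s)=0$. Since $X$ agrees with $r\partial_r$ on $M\setminus E\simeq C_0\setminus\{o\}$ and so has no zeros there, the zero set of $X$ is contained in the compact exceptional set $E$, a fixed compact subset of $M$ independent of $s$. Moreover, at any point $p$ with $X(p)=0$ the expression $f_{\sigma_s}(p)=f(p)+\frac{X(p)}{2}\cdot(\Phi_s+\vartheta_s)$ reduces to $f_{\sigma_s}(p)=f(p)$; consequently $f_{\sigma_s}(p_s)=f(p_s)\geq\min_{\{X=0\}}f=:-C_0$ is a lower bound that is uniform in $s$. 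This establishes $f_{\sigma_s}\geq -C_0$ on all of $M$ for every $s\in[0,1]$, from which the proposition follows by the reduction of the first paragraph. The only delicate step is the properness of $f_{\sigma_s}$ used to secure the minimiser $p_s$; the uniformity in $s$ of the final estimate comes entirely for free from the fact that the critical set of $f_{\sigma_s}$ lies in a fixed compact subset on which the fixed function $f$ is bounded below.
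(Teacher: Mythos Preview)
Your proof is correct and follows essentially the same approach as the paper: both arguments reduce to a uniform lower bound on $f_{\sigma_s}$, use the decay of $X\cdot\vartheta_s$ (from $\vartheta_s\in C^{\infty}_X(M)$) together with the boundedness of $X\cdot\Phi_s$ to ensure that $f_{\sigma_s}$ is proper, then observe via $X=\nabla^{h_s}f_{\sigma_s}$ that the minimum is attained on the zero set of $X$, where $f_{\sigma_s}=f$, giving $f_{\sigma_s}\geq\min_{\{X=0\}}f$ uniformly in $s$. Your write-up is slightly more explicit in a few places (the reduction to the lower bound on $f_{\sigma_s}$, the compactness of $\{X=0\}\subset E$, and the remark that the properness step is only qualitative), but the idea is identical.
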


\begin{proof}
In order to prove that $X\cdot\vartheta_{s}$ is uniformly bounded from below, first note that since $X\cdot\Phi_{s}$ is bounded and
$X\cdot\vartheta_{s}$ tends to zero at infinity, $f_{\sigma_{s}}:=f+\frac{X}{2}\cdot\Phi_{s}+\frac{X}{2}\cdot\vartheta_{s}$ is a proper function bounded from below
by virtue of the fact that $f$ is by Proposition \ref{mainprop}(ii). Then since $X=\nabla^{h_{s}}f_{\sigma_{s}}$, $f_{\sigma_{s}}$ must attain its global minimum at a point lying in the
zero set of $X$ and hence must coincide with the global minimum of $f$ on this set; that is to say,
\begin{equation}\label{vladimir}
f_{\sigma_{s}}\geq\min_{\{X\,=\,0\}}f_{\sigma_{s}}=\min_{\{X\,=\,0\}}f\geq -C.
\end{equation}
The lower bound on $X\cdot\vartheta_{s}$ now follows.
\end{proof}

These last two results now allow us to derive a lower bound on $X\cdot\vartheta_{s}$, hence on $X\cdot\psi_{s}$.
\begin{corollary}\label{lowerbound2}
There exists a positive constant $C$ such that for all $s\in[0,\,1]$, $X\cdot\vartheta_s\geq -C$. In particular, $X\cdot\psi_{s}>-C$ for all $s\in[0,\,1]$.
\end{corollary}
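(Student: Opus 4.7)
The plan is to combine the two preceding results in a direct way. Lemma~\ref{lemma-loc-crit-pts-rad-der-inf} localises the infimum of $X\cdot\vartheta_s$ to a fixed compact subset $K\subset M$ (with an additive constant), while Proposition~\ref{lowerbound} gives a pointwise lower bound of the form $X\cdot\vartheta_s \geq -C-2f$. Since $f$ is bounded on the compact set $K$, the localisation converts the $f$-dependent bound into a genuinely uniform lower bound.

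More precisely, fix the compact set $K$ and the constant $C_1>0$ from Lemma~\ref{lemma-loc-crit-pts-rad-der-inf}. If $\inf_M X\cdot\vartheta_s \geq 0$, there is nothing to prove. Otherwise, the lemma yields
\begin{equation*}
\inf_M X\cdot\vartheta_s \;\geq\; \min_{K} X\cdot\vartheta_s - C_1.
\end{equation*}
By Proposition~\ref{lowerbound}, there is a constant $C_2>0$, independent of $s\in[0,1]$, such that $X\cdot\vartheta_s \geq -C_2 - 2f$ everywhere on $M$. Setting $C_3 := 2\sup_K f$ (which is finite since $K$ is compact and $f$ is smooth), we obtain
\begin{equation*}
\min_{K} X\cdot\vartheta_s \;\geq\; -C_2 - C_3.
\end{equation*}
Combining the two estimates gives $\inf_M X\cdot\vartheta_s \geq -(C_1+C_2+C_3)$, which is the desired uniform lower bound.

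The final assertion about $\psi_s$ follows immediately from the relation $\psi_s = \vartheta_s + \Phi_s$ and the fact that $X\cdot\Phi_s$ is uniformly bounded in $s\in[0,1]$: indeed, outside a fixed compact set $\Phi_s = -c_s\log(2(f+n))$ with $c_s$ continuous in $s$, so $X\cdot\Phi_s = -2c_s\cdot\frac{X\cdot f}{2(f+n)}$ is uniformly bounded by the asymptotics of $f$ recorded in Proposition~\ref{mainprop}(ii). There is no real obstacle here; the work has already been done in Lemma~\ref{lemma-loc-crit-pts-rad-der-inf} (the minimum-principle argument using the arithmetic-geometric inequality) and in Proposition~\ref{lowerbound} (the observation that $f_{\sigma_s}$ attains its minimum on the compact zero set of $X$). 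The role of this corollary is simply to package those two facts into the clean uniform bound that will feed into the subsequent $C^0$-estimates along the continuity path.
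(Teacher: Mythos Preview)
Your proof is correct and follows exactly the paper's approach: combine the localisation of the infimum from Lemma~\ref{lemma-loc-crit-pts-rad-der-inf} with the pointwise bound $X\cdot\vartheta_s\geq -C-2f$ from Proposition~\ref{lowerbound}, using that $f$ is bounded on the fixed compact set $K$. One tiny imprecision: the lemma gives $\inf_M X\cdot\vartheta_s\geq \min\{0,\min_K X\cdot\vartheta_s\}-C_1$, not $\min_K X\cdot\vartheta_s-C_1$ directly, but since you ultimately bound $\min_K X\cdot\vartheta_s$ from below by a negative constant, the conclusion $\inf_M X\cdot\vartheta_s\geq -(C_1+C_2+C_3)$ follows either way.
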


\begin{proof}
This follows immediately from the localisation of the infimum of $X\cdot\vartheta_{s}$ given by Lemma \ref{lemma-loc-crit-pts-rad-der-inf}, together with
the lower bound on $X\cdot\vartheta_{s}$ given by Proposition \ref{lowerbound}.
\end{proof}

\subsection{A priori $C^{0}$-estimate}\label{sec-a-priori-energy}

We proceed with the a priori estimate on the $C^{0}$-norm of $(\vartheta_{s})_{0\,\leq \,s\,\leq\, 1}$
which is uniform in $s\in[0,\,1]$. We begin with two crucial observations, both localisation results for the global extrema of $\vartheta_{s}$.
We first localise the supremum of $\vartheta_{s}$.

\begin{lemma}[Localising the supremum of a solution of \eqref{starstar-s}]\label{lemma-loc-crit-pts-max}
Let $(\vartheta_s)_{0\,\leq\, s\,\leq\, 1}$  be a path of solutions in $C^{\infty}_{X}(M)$ to \eqref{starstar-s}. Then
there exists a compact subset $K\subset M$ such that for all $s\in[0,\,1]$, $\sup_M\vartheta_{s}=\max_{K}\vartheta_{s}$.
\end{lemma}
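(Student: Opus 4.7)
I would prove the lemma by applying the maximum principle to a slowly-growing barrier perturbation of $\vartheta_s$. Since $\vartheta_s \in C^\infty_X(M)$ is bounded but not assumed to decay at infinity, set
\[
w_\epsilon := \vartheta_s - \epsilon \log(1+f),\qquad \epsilon > 0.
\]
Because $f$ is proper by Proposition \ref{mainprop}(ii), $w_\epsilon \to -\infty$ at infinity, so $w_\epsilon$ attains its supremum at some interior point $p_\epsilon \in M$.

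At $p_\epsilon$, the first-order condition $d\vartheta_s(p_\epsilon) = \epsilon\, d\log(1+f)(p_\epsilon)$ combined with $X \cdot f = 2f + O(1)$ from Proposition \ref{mainprop}(ii) yields
\[
\frac{X}{2}\cdot \vartheta_s(p_\epsilon) = \epsilon + O\!\left(\frac{\epsilon}{f(p_\epsilon)}\right).
\]
The second-order condition $i\p\bp \vartheta_s(p_\epsilon) \leq \epsilon\, i\p\bp \log(1+f)(p_\epsilon)$, together with the pointwise bound $|i\p\bp \log(1+f)|_{g} = O(1/f)$ (following from $i\p\bp f = \omega_0$ outside a compact set and a direct expansion), shows that the eigenvalues of $\omega_s^{-1}(i\p\bp \vartheta_s)(p_\epsilon)$ are bounded above by $C\epsilon/f(p_\epsilon)$ for a uniform constant $C$.

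Inserting these bounds into \eqref{starstar-s} and using the asymptotic decay $G_s = O(1/f)$ from \eqref{austin}, the right-hand side is controlled by $n\log(1 + C\epsilon/f(p_\epsilon)) \leq Cn\epsilon/f(p_\epsilon)$, while the left-hand side equals $G_s(p_\epsilon) + \tfrac{X}{2}\cdot\vartheta_s(p_\epsilon) = \epsilon + O((1+\epsilon)/f(p_\epsilon))$. Rearranging gives $f(p_\epsilon) \leq C'(1 + 1/\epsilon)$ for a universal constant $C'$ independent of $s \in [0,1]$ and $\epsilon$. Fixing, for instance, $\epsilon = 1$, this localizes $p_1$ inside the compact set $K_0 := \{f \leq 2C'\}$ depending only on the background data.

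To convert the localization of $\sup_M w_1$ into the stated localization of $\sup_M \vartheta_s$, I would combine the inequality $w_1(x) \leq w_1(p_1)$ with the a priori lower bound $X\cdot\vartheta_s \geq -C$ from Corollary \ref{lowerbound2} and a refinement of the above argument along a sequence $\epsilon \to 0^+$, using the asymptotic regularity encoded by $C^\infty_X(M)$ to conclude that the supremum of $\vartheta_s$ itself is attained in a universal compact $K \supset K_0$ independent of $s$. The principal technical difficulty lies in this final conversion: the slowly-growing barrier $\log(1+f)$ only controls the perturbed supremum and does not immediately force the unperturbed supremum to be attained in a uniformly bounded region, so either a better-tailored barrier or a limiting/compactness argument exploiting the $C^\infty_X$-structure of $\vartheta_s$ is required to complete the proof.
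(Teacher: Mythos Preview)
Your diagnosis of the difficulty is correct, and the gap you flag is real: the single barrier $\epsilon\log(1+f)$ produces a localisation region $\{f\leq C'(1+1/\epsilon)\}$ that blows up as $\epsilon\to 0$, so there is no direct way to pass to the limit and recover a uniform compact set for $\vartheta_s$ itself. Neither the lower bound on $X\cdot\vartheta_s$ from Corollary \ref{lowerbound2} nor the $C^\infty_X$-regularity supplies the missing ingredient, since membership in $C^\infty_X(M)$ only gives boundedness of $\vartheta_s$, not decay or the existence of a limit at infinity.

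The paper closes this gap by introducing a \emph{second}, bounded barrier. One first passes to the linear subsolution inequality $\Delta_{\omega_s,X}\vartheta_s\geq G_s$ (from $\log(1+x)\leq x$), and then considers
\[
\vartheta_s + A(f_{\omega_s}+C)^{-1} - \varepsilon\log(f_{\omega_s}+C).
\]
The point is that $\Delta_{\omega_s,X}(f_{\omega_s}+C)^{-1}=(f_{\omega_s}+C)^{-1}+O(f_{\omega_s}^{-2})$ by Lemma \ref{normal-fss}, and this is of the \emph{same} order as $G_s=O(f^{-1})$. Choosing $A$ large enough therefore makes $\Delta_{\omega_s,X}\bigl(\vartheta_s+A(f_{\omega_s}+C)^{-1}\bigr)>0$ outside a compact set $K$ that depends only on $A$ and the background data, \emph{not} on $\varepsilon$. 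The $\varepsilon\log$-term then serves solely to force properness so that a maximum exists; since $K$ is fixed, its contribution on $K$ tends to zero as $\varepsilon\to 0$, and one obtains $\vartheta_s(x)\leq \max_K\vartheta_s + A\max_K(f_{\omega_s}+C)^{-1}$, which is the required localisation up to a uniform additive constant. Your approach with a single $\epsilon$-barrier conflates the two roles (absorbing $G_s$ and forcing properness), and that is why the compact set degenerates.
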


\begin{proof}
First observe from \eqref{starstar-s} {and the basic inequality $\log(1+x)\leq x$ for all $x>-1$} that $\vartheta_{s}$ is a subsolution of the following differential inequality:
\begin{equation*}
\Delta_{\omega_{s}}\vartheta_{s}-\frac{X}{2}\cdot\vartheta_{s}\geq G_{s}.
\end{equation*}
Next, recall from Lemma \ref{normal-fss} that $$\Delta_{\omega_{s}}f_{\omega_{s}}-\frac{X}{2}\cdot f_{\omega_{s}}+f_{\omega_{s}}=-c_{s}+O(r^{-2}).$$
Choose $C>0$ sufficiently large so that $f_{\omega_{s}}+C>0$ on $M$. Then
$$\Delta_{\omega_{s},\,X}((f_{\omega_{s}}+C)^{-1})=(f_{\omega_{s}}+C)^{-1}+O(f_{\omega_{s}}^{-2}).$$
Let $A>0$ be a constant to be fixed later. Then one has on $M$ that
\begin{equation*}
\begin{split}
\Delta_{\omega_{s},\,X}\left(\vartheta_{s}+A(f_{\omega_{s}}+C)^{-1}\right)&\geq
A(f_{\omega_{s}}+C)^{-1}+G_{s}+A\cdot O(f_{\omega_{s}}^{-2})\\
&>0
\end{split}
\end{equation*}
outside a fixed compact subset $K\subset M$, for $A$ chosen sufficient large.
One can check that
$$\Delta_{\omega_{s},\,X}(\log (f_{\omega_{s}}+C))=-1+O(f_{\omega_{s}}^{-1}).$$
Then on $M\setminus K$ one has that
\begin{equation}\label{great}
\begin{split}
\Delta_{\omega_{s},\,X}\left(\vartheta_{s}+A(f_{\omega_{s}}+C)^{-1}-\varepsilon\log (f_{\omega_{s}}+C)\right)&\geq \varepsilon(1+O(f^{-1}_{\omega_{s}}))>\frac{\varepsilon}{2}>0,
\end{split}
\end{equation}
after enlarging $K$ if necessary. Now, $\vartheta_{s}$ being bounded on $M$ and $f_{\sigma_{s}}$ being proper and bounded from below
implies that for each $s\in[0,\,1]$, the function
$$\vartheta_{s}+A(f_{\omega_{s}}+C)^{-1}-\varepsilon\log (f_{\omega_{s}}+C)$$ is bounded from above and tends to $-\infty$ as $r\to\infty$. In particular, the latter function must attain a global maximum on $M$. The maximum principle applied to \eqref{great} then ensures that it must be attained in $K$, i.e.,
$$\max_{M}(\vartheta_{s}+A(f_{\omega_{s}}+C)^{-1}-\varepsilon\log (f_{\omega_{s}}+C))=\max_{K}(\vartheta_{s}+A(f_{\omega_{s}}+C)^{-1}-\varepsilon\log (f_{\omega_{s}}+C)).$$
In conclusion, we have that
$$\vartheta_{s}(x)\leq -A(f_{\omega_{s}}(x)+C)^{-1}+\varepsilon\log (f_{\omega_{s}}(x)+C)+\max_{K}(\vartheta_{s}+A(f_{\omega_{s}}+C)^{-1}-\varepsilon\log (f_{\omega_{s}}+C))\qquad\textrm{for all $x\in M$},$$
which leads to the upper bound
$$\vartheta_{s}(x)\leq\max_{K}(\vartheta_{s}+A(f_{\omega_{s}}+C)^{-1})-A(f_{\omega_{s}}(x)+C)^{-1}\qquad\textrm{for all $x\in M$}$$
by letting $\varepsilon\to0$ and making use of the fact that $\varepsilon\log (f_{\omega_{s}}+C)$, being locally bounded, converges to zero on compact subsets of $M$ as $\varepsilon\to0$. Since this holds true for any $x\in M$, the desired estimate follows.
\end{proof}

We next localise the infimum of $\vartheta_{s}$.
\begin{lemma}[Localising the infimum of a solution of \eqref{starstar-s}]\label{lemma-loc-crit-pts}
Let $(\vartheta_s)_{0\,\leq\, s\,\leq\, 1}$  be a path of solutions in $C^{\infty}_{X}(M)$ to \eqref{starstar-s}. Then
there exists a compact subset $K\subset M$ and a positive constant $C$ such that for all $s\in[0,\,1]$, $\inf_M\vartheta_{s}\geq \min_{K}\vartheta_{s}-C$.
\end{lemma}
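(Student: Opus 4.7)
The strategy is to dualise the proof of Lemma \ref{lemma-loc-crit-pts-max} by a barrier-and-minimum-principle argument, but crucially carried out on the solution metric $\sigma_s$ using the normalisation from Lemma \ref{lemma-tr-star-star} rather than on $\omega_s$ using Lemma \ref{normal-fss}. The essential feature, as emphasised in the introduction, is that the right-hand side $-f + P_s$ of $\Delta_{\sigma_s,\,X} f_{\sigma_s} = -f + P_s$ depends only on the initial data (not on the unknown $f_{\sigma_s}$ itself), which is what permits all barrier constants to be chosen uniformly in $s$. First, from the Monge-Amp\`ere equation \eqref{starstar-s}, the arithmetic-geometric inequality $\operatorname{tr}_{\sigma_s}\omega_s \geq n(\omega_s^n/\sigma_s^n)^{1/n}$, and the universal bound $n(1-e^{-y/n}) \leq y$, I would derive the pointwise upper bound
$$\Delta_{\sigma_s,\,X}\vartheta_s \leq G_s,$$
where $|G_s| \leq C_1/f$ uniformly in $s$ by \eqref{austin}.

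Next I would choose $C_0$ so that $f_{\sigma_s}+C_0 \geq 1$ on $M$ uniformly in $s$ (possible via \eqref{vladimir} and Corollary \ref{lowerbound2}) and compute from Lemma \ref{lemma-tr-star-star} the identities
$$\Delta_{\sigma_s,\,X}(f_{\sigma_s}+C_0)^{-1} = \frac{f - P_s}{(f_{\sigma_s}+C_0)^2} + \frac{2\,|X|^2_{h_s}}{(f_{\sigma_s}+C_0)^3}, \qquad \Delta_{\sigma_s,\,X}\log(f_{\sigma_s}+C_0) = \frac{-f + P_s}{f_{\sigma_s}+C_0} - \frac{|X|^2_{h_s}}{(f_{\sigma_s}+C_0)^2}.$$
Using the uniform asymptotics $f_{\sigma_s} = f + O(1)$ (from Corollary \ref{lowerbound2}) and $|X|^2_{h_s} = 2f + O(1)$ (via Lemma \ref{lemma-basic-equiv-ass}(iii)), the first expression is bounded below by $\tfrac{1}{2f}$ and the second above by $-\tfrac{1}{2}$ on $\{f \geq R_0\}$ for some $R_0$ independent of $s$. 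Setting $A := 4C_1$ and defining
$$v_\epsilon := \vartheta_s - A(f_{\sigma_s}+C_0)^{-1} + \epsilon \log(f_{\sigma_s}+C_0), \qquad \epsilon > 0,$$
combining the three inequalities yields $\Delta_{\sigma_s,\,X}v_\epsilon < 0$ on $\{f \geq R_0\}$, with $R_0$ independent of both $s$ and $\epsilon$.

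Since $v_\epsilon \to +\infty$ at infinity, its infimum is attained at some $p_\epsilon \in K' := \{f \leq R_0\}$ by the minimum principle. Using the uniform bounds $(f_{\sigma_s}+C_0)^{-1} \leq 1$ and $\log(f_{\sigma_s}+C_0) \geq 0$, the resulting pointwise inequality reads
$$\vartheta_s(x) \geq \min_{K'}\vartheta_s - A - \epsilon \log(f_{\sigma_s}(x)+C_0) \qquad \text{for all } x \in M,$$
and sending $\epsilon \downarrow 0$ at fixed $x$ closes the argument with $K := K'$ and $C := A$. The principal obstacle, and the reason the normalisation in Lemma \ref{lemma-tr-star-star} is essential, is securing the uniform-in-$s$ asymptotics $f_{\sigma_s} = f + O(1)$ and $|X|^2_{h_s} = 2f + O(1)$: the lower bound on $f_{\sigma_s}$ comes from Corollary \ref{lowerbound2}, while the matching upper control must be extracted from the Monge-Amp\`ere equation together with Lemma \ref{lemma-basic-equiv-ass}(iii). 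Had Lemma \ref{lemma-tr-star-star} contained $-f_{\sigma_s}$ in place of $-f$, the leading barrier term $(f_{\sigma_s}+C_0)^{-2}(f_{\sigma_s}-P_s) \sim (f_{\sigma_s}+C_0)^{-1}$ would be tied to the a priori unknown size of $f_{\sigma_s}$ and the uniform construction would collapse.
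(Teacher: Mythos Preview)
Your argument has a genuine circularity. The barrier estimates
\[
\Delta_{\sigma_s,\,X}(f_{\sigma_s}+C_0)^{-1}\geq\frac{1}{2f}\quad\text{and}\quad\Delta_{\sigma_s,\,X}\log(f_{\sigma_s}+C_0)\leq-\frac12
\]
on $\{f\geq R_0\}$ both require an \emph{upper} bound $f_{\sigma_s}\leq f+O(1)$, i.e., $X\cdot\vartheta_s\leq C$ uniformly in $s$. At this point in the paper only the \emph{lower} bound $X\cdot\vartheta_s\geq -C$ (Corollary \ref{lowerbound2}) is available; the upper bound is Proposition \ref{prop-bd-uni-X-psi}, which is proved in Section \ref{sec-upp-bd-rad-der} \emph{after} and \emph{using} the $C^0$-estimate, hence after Lemma \ref{lemma-loc-crit-pts} itself. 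Your appeal to Lemma \ref{lemma-basic-equiv-ass}(iii) for $|X|^2_{h_s}=2f+O(1)$ is likewise circular: that lemma presupposes condition \eqref{hyp-basic-ass} for $h_s$, which encodes a priori derivative bounds on $h_s$ not yet established.

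The paper avoids this trap by never estimating $f_{\sigma_s}$ or $|X|^2_{h_s}$ from above. First, in computing $\Delta_{\sigma_s,\,X}(f_{\sigma_s}+C)^{-1}$, the term $\frac{X\cdot f_{\sigma_s}}{(f_{\sigma_s}+C)^3}=\frac{|X|^2_{h_s}}{(f_{\sigma_s}+C)^3}$ is simply \emph{dropped} because it has the favourable sign $\geq 0$. Second, the auxiliary barrier is $\varepsilon f_{\sigma_s}$ rather than $\varepsilon\log(f_{\sigma_s}+C_0)$, so that $\Delta_{\sigma_s,\,X}(\varepsilon f_{\sigma_s})=\varepsilon(P_s-f)$ is controlled by the data $f$, not by $f_{\sigma_s}$. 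Third, and crucially, at an interior minimum of $\vartheta_s-A(f_{\sigma_s}+C)^{-1}+\varepsilon f_{\sigma_s}$ the first-order condition forces $X\cdot\vartheta_s\leq 0$ \emph{at that point}, hence $f_{\sigma_s}\leq f_{\omega_s}$ there; this pointwise inequality is what lets the paper replace $(f_{\sigma_s}+C)^2$ by $(f_{\omega_s}+C)^2$ in \eqref{ohmymy} and close the argument with constants depending only on the known background. Your approach, by contrast, needs the comparison $f_{\sigma_s}\sim f$ globally on $\{f\geq R_0\}$, which is unavailable.
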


\begin{proof}
Let $C>0$ be such that $f_{\sigma_s}+C\geq 1$ on $M$ for all $s\in[0,\,1]$. This is possible thanks to \eqref{vladimir}. Consider
the function $\vartheta_s-A(f_{\sigma_s}+C)^{-1}$ on $M$ for $A>0$ to be chosen later. Thanks to the basic inequality $\log(1+x)\leq x$ for all $x>-1$ applied to \eqref{starstar-s}, we
know that $$\Delta_{\sigma_s,\,X}\vartheta_{s}:=\Delta_{\sigma_s}\vartheta_{s}-\frac{X}{2}\cdot\vartheta_{s}\leq G_{s}.$$ We therefore find that
\begin{equation}
\begin{split}\label{ohmymy}
\Delta_{\sigma_s,\,X}\left(\vartheta_s-A(f_{\sigma_s}+C)^{-1}\right)&\leq G_s -A\Delta_{\sigma_s,\,X}[(f_{\sigma_s}+C)^{-1}]\\
&=G_s-A\Biggl(-\frac{\Delta_{\sigma_s,\,X}f_{\sigma_s}}{(f_{\sigma_s}+C)^2}+\underbrace{\frac{X\cdot f_{\sigma_s}}{(f_{\sigma_s}+C)^{3}}}_{\geq\,0}\Biggr)\\
&\leq \frac{C'}{f+C}+A\frac{\Delta_{\sigma_s,\,X}f_{\sigma_s}}{(f_{\sigma_s}+C)^2}\\
&= \frac{C'}{f+C}-A\frac{(f-P_s)}{(f_{\sigma_s}+C)^2}
\end{split}
\end{equation}
for some $C'>0$, where we have used the asymptotics of $G_s$ from \eqref{starstar-s} in the third line, together with Lemma \ref{lemma-tr-star-star} in the last line.

Next, let $\varepsilon>0$ and consider the function $\vartheta_s-A(f_{\sigma_s}+C)^{-1}+\varepsilon f_{\sigma_s}$. By definition of the relevant function spaces, for each $s\in[0,\,1]$,
the function $\vartheta_s-A(f_{\sigma_s}+C)^{-1}+\varepsilon f_{\sigma_s}$ is proper and bounded from below. In particular, this function attains a global minimum on $M$. At such a point, observe that
\begin{equation}\label{ohmymybis}
0=X\cdot\left(\vartheta_s-A(f_{\sigma_s}+C)^{-1}+\varepsilon f_{\sigma_s}\right)=X\cdot\vartheta_s
+\left(A(f_{\sigma_s}+C)^{-2}+\varepsilon\right)\underbrace{X\cdot f_{\sigma_s}}_{\geq\, 0}\geq X\cdot\vartheta_s.
\end{equation}
Now, thanks to \eqref{ohmymy} and Lemma \ref{lemma-tr-star-star}, we know that
\begin{equation*}
\begin{split}
\Delta_{\sigma_s,\,X}\left(\vartheta_s-A(f_{\sigma_s}+C)^{-1}+\varepsilon f_{\sigma_s}\right)&\leq \frac{C'}{f_{\omega_s}+C}-A\frac{(f-P_s)}{(f_{\sigma_s}+C)^2}+\varepsilon (P_s-f)\\
&\leq \frac{C'}{f_{\omega_s}+C}-A\frac{(f-\sup_{M\times[0,\,1]}P_s)}{(f_{\sigma_s}+C)^2}+\varepsilon\left(\sup_{M\times[0,\,1]} P_s-f\right)\\
&\leq \frac{C'}{f_{\omega_s}+C}-A\frac{(f-\sup_{M\times[0,\,1]}P_s)}{(f_{\sigma_s}+C)^2},
\end{split}
\end{equation*}
provided that $f\geq \sup_{M\times[0,1]}P_s$. If a global minimum of the aforementioned function lies in the set $\{f\geq \sup_{M\times[0,1]}P_s\}$, then
the minimum principle applied to \eqref{ohmymy} tells us that at this point,
\begin{equation*}
\begin{split}
A\left(f-\sup_{M\times[0,1]}P_s\right)(f_{\omega_s}+C)\leq  C'(f_{\sigma_s}+C)^2\leq C'(f_{\omega_s}+C)^2
\end{split}
\end{equation*}
by virtue of \eqref{ohmymybis}. In particular, after setting $A:=2C'$, we find that $f\leq 2\sup_{M\times[0,1]}P_s+C$ at such a point for a possibly larger uniform positive constant $C$. We subsequently deduce that
if a global minimum of the function in question lies in the set $\{f\geq \sup_{M\times[0,1]}P_s\}$, then for all $\varepsilon>0$,
\begin{equation*}
\min_M\left(\vartheta_s-2C'(f_{\sigma_s}+C)^{-1}+\varepsilon f_{\sigma_s}\right)=\min_{\{f\,\leq\, 2\sup_{M\times[0,1]}P_s+C\}}\left(\vartheta_s-2C'(f_{\sigma_s}+C)^{-1}+\varepsilon f_{\sigma_s}\right).
\end{equation*}
Since $\varepsilon>0$ is arbitrary, letting $\varepsilon\to0$ gives us that
\begin{equation*}
\begin{split}
\vartheta_s&\geq \min_{\{f\,\leq\, 2\sup_{M\times[0,1]}P_s+C\}}\left(\vartheta_s-2C'(f_{\sigma_s}+C)^{-1}\right)+\underbrace{2C'(f_{\sigma_s}+C)^{-1}}_{\geq\,0}\\
&\geq \min_{\{f\,\leq\, 2\sup_{M\times[0,1]}P_s+C\}}\vartheta_s+\min_{\{f\,\leq\, 2\sup_{M\times[0,1]}P_s+C\}}\left(-2C'(f_{\sigma_s}+C)^{-1}\right)\\
&\geq \min_{\{f\,\leq\, 2\sup_{M\times[0,1]}P_s+C\}}\vartheta_s-2C',
\end{split}
\end{equation*}
because $f_{\sigma_s}+C\geq 1$ on $M$ for all $s\in[0,1]$. From this observation, the desired localisation follows.
\end{proof}

\subsubsection{Aubin-Tian-Zhu's functionals}

We now introduce two functionals that have been defined and used by
Aubin \cite{Aub-red-Cas-Pos}, Bando and Mabuchi \cite{Ban-Mab-Uni}, and Tian \cite[Chapter $6$]{Tian-Can-Met-Boo}
in the study of Fano manifolds, and by Tian and Zhu \cite{Tian-Zhu-I} in the study of shrinking gradient K\"ahler-Ricci solitons
on compact K\"ahler manifolds.
\begin{definition}\label{IJ}
Let $(\varphi_t)_{0\,\leq\, t\,\leq\, 1}$ be a $C^1$-path in $\mathcal{M}^{\infty}_{X}(M)$ from $\varphi_{0}=0$ to $\varphi_{1}=\varphi$.
We define the following two generalised weighted energies:
\begin{equation*}
\begin{split}
I_{\omega,\,X}(\varphi)&:=\int_M\varphi\left(e^{-f}\omega^n-e^{-f-\frac{X}{2}\cdot\varphi}\omega_{\varphi}^n\right),\\
J_{\omega,\,X}(\varphi)&:=\int_0^1\int_M\dot{\varphi_s}\left(e^{-f}\omega^n-e^{-f-\frac{X}{2}\cdot\varphi_s}\omega_{\varphi_s}^n\right)\wedge ds.
\end{split}
\end{equation*}
\end{definition}

At first sight, these two functionals resemble relative weighted mean values of a potential $\varphi$ in
$\mathcal{M}^{\infty}_{X}(M)$ or of
a path $(\varphi_t)_{0\,\leq\, t\,\leq\, 1}$ in $\mathcal{M}^{\infty}_{X}(M)$ respectively. When $X\equiv 0$ and $(M,\,\omega)$ is a compact K\"ahler manifold,
an integration by parts together with some algebraic manipulations (see Aubin's seminal paper \cite{Aub-red-Cas-Pos} or Tian's book \cite[Chapter $6$]{Tian-Can-Met-Boo}) show that
\begin{equation}
\begin{split}\label{formulae--fct-I-J-ein}
I_{\omega,\,0}(\varphi)&=\sum_{k\,=\,0}^{n-1}\int_{M}i\partial\varphi\wedge\bar{\partial}\varphi\wedge\omega^k\wedge\omega_{\varphi}^{n-1-k},\\
J_{\omega,\,0}(\varphi)&=\sum_{k\,=\,0}^{n-1}\frac{k+1}{n+1}\int_{M}i\partial\varphi\wedge\bar{\partial}\varphi\wedge\omega^{k}\wedge\omega_{\varphi}^{n-1-k}.
\end{split}
\end{equation}
This justifies the description of $I_{\omega,\,0}(\varphi)$ and $J_{\omega,\,0}(\varphi)$ as modified energies.
Moreover, it demonstrates that on a compact K\"ahler manifold $J_{\omega,\,0}$ is a true functional, that is to say, it does not depend on the choice of path.

Such formulae \eqref{formulae--fct-I-J-ein} for $I_{\omega,\,X}$ and $J_{\omega,\,X}$
for a non-vanishing vector field $X$ and a non-compact K\"ahler manifold $(M,\,\omega)$
do not seem to be readily available for a good reason: the exponential function is not algebraic.
However, following Tian and Zhu's work \cite{Tian-Zhu-I}, one can
prove that the essential properties shared by both
$I_{\omega,\,0}$ and $J_{\omega,\,0}$ hold true for a non-vanishing vector field $X$ in a non-compact setting. The proof follows exactly as in \cite[Theorem 7.5]{conlon33}.

\begin{theorem}\label{main-thm-I-J}
$I_{\omega,\,X}(\varphi)$
and $J_{\omega,\,X}(\varphi)$ are well-defined for $\varphi\in\mathcal{M}^{\infty}_{X}(M)$.
Moreover, $J_{\omega,\,X}$ does not depend on the choice of a $C^{1}$-path $(\varphi_t)_{0\,\leq\, t\,\leq\, 1}$ in
$\mathcal{M}^{\infty}_{X}(M)$ from $\varphi_{0}=0$ to $\varphi_{1}=\varphi$, hence defines
a functional on $\mathcal{M}^{\infty}_{X}(M)$. Finally, the first variation of the difference $(I_{\omega,\,X}-J_{\omega,\,X})$ is
given by
\begin{equation*}
\frac{d}{dt}\left(I_{\omega,\,X}-J_{\omega,\,X}\right)(\varphi_t)=-\int_M\varphi_t\left(\Delta_{\omega_{\varphi_t}}\dot{\varphi_t}-
\frac{X}{2}\cdot\dot{\varphi_t}\right)\,e^{-f_{\varphi_t}}\omega_{\varphi_t}^n,
\end{equation*}
where $f_{\varphi_t}:=f+\frac{X}{2}\cdot\varphi_t$ satisfies
$X=\nabla^{\omega_{\varphi_t}}f_{\varphi_t}$
and where $(\varphi_t)_{0\,\leq\, t\,\leq\, 1}$ is any $C^1$-path in $\mathcal{M}^{\infty}_{X}(M)$ from $\varphi_{0}=0$ to $\varphi_{1}=\varphi$.
\end{theorem}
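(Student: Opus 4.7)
The plan is to proceed in three stages, following the template of \cite[Theorem 7.5]{conlon33} but adapted to the present function spaces. First, I would verify well-definedness. Since $\varphi\in\mathcal{M}^{\infty}_{X}(M)\subset\mathcal{D}^{\infty}_X(M)$, I may write $\varphi=c\chi\log r + v$ with $v\in C^{\infty}_{X,0}(M)$ bounded, while $f\sim r^{2}/2$ outside a compact set by Proposition \ref{mainprop}(ii), so the weighted measure $e^{-f}\omega^{n}$ is finite and integrates $\log r$ easily. The term $\frac{X}{2}\cdot\varphi$ is uniformly bounded (it tends to zero at infinity, being in $C^{\infty}_{X,0}(M)$ up to boundary terms controlled via the $C^{\infty}_X$ spaces), hence $e^{-f-\frac{X}{2}\cdot\varphi}\omega_{\varphi}^{n}$ also has finite weighted mass, giving $|I_{\omega,X}(\varphi)|<\infty$. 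For $J_{\omega,X}$, the integrand involves $\dot{\varphi}_{s}$ against the two weighted volume forms, and Lemma \ref{lemma-preserved-int}(ii) applied to the path $(\varphi_{t})$ yields precisely both $\int_{0}^{1}\int_{M}|\dot{\varphi}_{t}|\,e^{-f_{t}}\omega_{t}^{n}\,dt<\infty$ and $\int_{0}^{1}\int_{M}|\dot{\varphi}_{t}|\,e^{-f}\omega^{n}\,dt<\infty$, establishing finiteness.

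Next, I would establish the first variation formula, which is the key identity and also powers the path-independence argument. The crucial computation is the pointwise identity
\begin{equation*}
\frac{d}{dt}\bigl(e^{-f_{\varphi_{t}}}\omega_{\varphi_{t}}^{n}\bigr)=\Bigl(\operatorname{tr}_{\omega_{\varphi_{t}}}(i\partial\bar{\partial}\dot{\varphi}_{t})-\tfrac{X}{2}\cdot\dot{\varphi}_{t}\Bigr)\,e^{-f_{\varphi_{t}}}\omega_{\varphi_{t}}^{n}=\bigl(\Delta_{\omega_{\varphi_{t}}}\dot{\varphi}_{t}-\tfrac{X}{2}\cdot\dot{\varphi}_{t}\bigr)\,e^{-f_{\varphi_{t}}}\omega_{\varphi_{t}}^{n},
\end{equation*}
which follows from $f_{\varphi_{t}}=f+\tfrac{X}{2}\cdot\varphi_{t}$ and the usual variation of the volume form. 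Differentiating $I_{\omega,X}(\varphi_{t})$ under the integral sign gives, after using this identity,
\begin{equation*}
\tfrac{d}{dt}I_{\omega,X}(\varphi_{t})=\int_{M}\dot{\varphi}_{t}\bigl(e^{-f}\omega^{n}-e^{-f_{\varphi_{t}}}\omega_{\varphi_{t}}^{n}\bigr)-\int_{M}\varphi_{t}\bigl(\Delta_{\omega_{\varphi_{t}}}\dot{\varphi}_{t}-\tfrac{X}{2}\cdot\dot{\varphi}_{t}\bigr)\,e^{-f_{\varphi_{t}}}\omega_{\varphi_{t}}^{n},
\end{equation*}
while the definition of $J_{\omega,X}$ directly yields $\tfrac{d}{dt}J_{\omega,X}(\varphi_{t})=\int_{M}\dot{\varphi}_{t}\bigl(e^{-f}\omega^{n}-e^{-f_{\varphi_{t}}}\omega_{\varphi_{t}}^{n}\bigr)$. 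Subtracting gives precisely the stated formula for $\tfrac{d}{dt}(I_{\omega,X}-J_{\omega,X})(\varphi_{t})$.

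Finally, path-independence of $J_{\omega,X}$ follows: given two $C^{1}$-paths $(\varphi_{t}^{(0)})$ and $(\varphi_{t}^{(1)})$ from $0$ to $\varphi$, form a two-parameter family $(\varphi_{t,u})_{t,u\in[0,1]}$ interpolating between them and compute $\tfrac{d}{du}J_{\omega,X}(\varphi_{\cdot,u})$. Using the symmetry of the drift Laplacian $\Delta_{\omega_{\varphi_{t,u}}}-\tfrac{X}{2}\cdot$ with respect to $e^{-f_{\varphi_{t,u}}}\omega_{\varphi_{t,u}}^{n}$ together with the variation identity above, one shows this $u$-derivative equals the $t$-integral of a total $t$-derivative whose endpoints ($t=0$ and $t=1$) are independent of $u$, so it vanishes. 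The main obstacle, as in \cite{conlon33}, is the justification of differentiating under the integral sign and of the integrations by parts on this non-compact manifold; here the decay built into $\mathcal{M}^{\infty}_{X}(M)$, together with Lemma \ref{lemma-preserved-int} and the Gaussian-type decay of $e^{-f}$, provide uniform-in-$t$ integrable majorants so that dominated convergence and Stokes' theorem apply on sublevel sets $\{f\leq R\}$, with boundary contributions vanishing as $R\to\infty$ thanks to the rapid decay of the weighted measure against the polynomial weights controlling $\varphi_{t}$, $\dot{\varphi}_{t}$, and $\tfrac{X}{2}\cdot\varphi_{t}$.
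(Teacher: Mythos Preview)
Your proposal is correct and follows exactly the approach the paper indicates, namely that of \cite[Theorem 7.5]{conlon33}: the paper's own ``proof'' consists solely of the sentence ``The proof follows exactly as in \cite[Theorem 7.5]{conlon33},'' and your outline reproduces that argument faithfully, including the variation identity for $e^{-f_{\varphi_t}}\omega_{\varphi_t}^n$, the resulting first-variation formula, and the two-parameter path-independence argument via symmetry of the drift Laplacian. One small inaccuracy: for $\varphi=c\chi\log r+v$ you have $\tfrac{X}{2}\cdot\varphi\to \tfrac{c}{2}$ rather than $0$ at infinity (since $X\cdot\log r=1$), but this is still bounded and so does not affect your integrability justifications.
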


Recall that the equation we wish to solve is \eqref{star-s}, that is,
\begin{equation*}
e^{-f_{\psi_{s}}}\omega^{n}_{\psi_{s}}=e^{F_s-f}\omega^{n},
\end{equation*}
where $\omega_{\psi_{s}}:=\omega+i\partial\bar{\partial}\psi_{s}>0$
and $f_{\psi_{s}}:=f+\frac{X}{2}\cdot\psi_{s}$. This pair satisfies $-\omega_{\psi_{s}}\lrcorner X=df_{\psi_{s}}$.
We work under the assumption that $\int_{M}\psi_{s}\,e^{-f}\omega^{n}=0$. We have the following uniform energy bound.
\begin{prop}[A priori energy estimates]\label{prop-a-priori-ene-est}
Let $(\psi_s)_{0\,\leq\, s\,\leq\, 1}$ be a path of solutions in $\mathcal{M}^{\infty}_{X}(M)$ to \eqref{star-s}. Then for all $p\in(1,2)$, there exists a positive constant $C=C\left(n,p,\omega,\sup_{s\in[0,1]}\|F_s\|_{C^{0}(M)}\right)$ such that
\begin{equation*}
\sup_{0\,\leq\, s\,\leq\, 1}\int_M|\psi_s-\overline{\psi}_{s}|^p\,e^{-f}\omega^n\leq C,
\end{equation*}
where $$\overline{\psi}_{s}:=\frac{1}{\int_{M}e^{-f}\omega^{n}}\int_{M}\psi_{s}\,e^{-f}\omega^{n}=\fint_{M}\psi_{s}\,e^{-f}\omega^{n}.$$
In particular, if $\overline{\psi}_{s}=0$, then
\begin{equation*}
\sup_{0\,\leq\, s\,\leq\, 1}\int_M|\vartheta_{s}|^p\,e^{-f}\omega^n\leq C.
\end{equation*}
\end{prop}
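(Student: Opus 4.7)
Since the continuity path \eqref{star-s} imposes $\int_M \psi_s\,e^{-f}\omega^n = 0$, we have $\overline{\psi}_s = 0$, so the two assertions of the proposition are equivalent once one observes that $\vartheta_s = \psi_s - \Phi_s$ with $\Phi_s = -2c_s\chi\cdot\log r$ satisfying $\sup_{s\in[0,1]}\int_M|\Phi_s|^p e^{-f}\omega^n < \infty$ because $e^{-f}\omega^n$ has finite volume and rapid decay. It therefore suffices to bound $\|\psi_s\|_{L^p(e^{-f}\omega^n)}$ uniformly for $s\in[0,1]$ and $p\in(1,2)$. The plan has four steps.

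\textbf{Step 1 (Evolution of $I-J$ along the path).} Differentiate $(\star_s)$ in $s$: since $\log(\omega_{\psi_s}^n/\omega^n) = F_s + \frac{X}{2}\cdot\psi_s$, we obtain
$\Delta_{\omega_{\psi_s}}\dot{\psi}_s - \frac{X}{2}\cdot\dot{\psi}_s = \dot{F}_s$,
with $\dot{F}_s = (e^F-1)/(1+s(e^F-1))$ uniformly bounded in $s\in[0,1]$ by the hypothesis on $\|F\|_{C^0(M)}$. Plugging this into the first variation formula of Theorem \ref{main-thm-I-J} and using the identity $e^{-f_{\psi_s}}\omega_{\psi_s}^n = e^{F_s-f}\omega^n$ that follows from $(\star_s)$, we get
\begin{equation*}
\frac{d}{ds}(I_{\omega,X}-J_{\omega,X})(\psi_s) = -\int_M \psi_s\,\dot{F}_s\,e^{F_s-f}\omega^n.
\end{equation*}
Using the uniform bound on $|\dot F_s e^{F_s}|$ and the initial value $(I-J)(0) = 0$, integration in $s$ combined with H\"older's inequality (noting that $e^{-f}\omega^n$ has finite total mass) yields, for all $p>1$,
\begin{equation*}
|(I_{\omega,X}-J_{\omega,X})(\psi_s)| \leq C\sup_{\tau\in[0,s]}\|\psi_\tau\|_{L^1(e^{-f}\omega^n)} \leq C'\sup_{\tau\in[0,s]}\|\psi_\tau\|_{L^p(e^{-f}\omega^n)}.
\end{equation*}

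\textbf{Step 2 (Weighted Aubin-type lower bound).} Using the linear path $\psi_{s,t} := t\psi_s$ for $t\in[0,1]$ from $0$ to $\psi_s$ and Theorem \ref{main-thm-I-J}, one expresses
\begin{equation*}
(I_{\omega,X}-J_{\omega,X})(\psi_s) = -\int_0^1 t\int_M \psi_s\Bigl(\Delta_{\omega_{t\psi_s}}\psi_s - \tfrac{X}{2}\cdot\psi_s\Bigr) e^{-f_{t\psi_s}}\omega_{t\psi_s}^n\,dt.
\end{equation*}
The decay properties built into $\mathcal{M}^\infty_X(M)$ allow integration by parts against the measure $e^{-f_{t\psi_s}}\omega_{t\psi_s}^n$, with respect to which the drift Laplacian $\Delta_{\omega_{t\psi_s}} - X$ is self-adjoint. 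After accounting for the half-drift correction via a second integration by parts, one obtains a non-negative mixed gradient expression of the schematic form
\begin{equation*}
(I_{\omega,X}-J_{\omega,X})(\psi_s) = \int_0^1 t\int_M i\partial\psi_s\wedge\bar{\partial}\psi_s\wedge\Bigl(\sum_{k=0}^{n-1}\omega^k\wedge\omega_{t\psi_s}^{n-1-k}\Bigr) e^{-f_{t\psi_s}} dt + \text{(controlled remainders)}.
\end{equation*}
The uniform lower bound $X\cdot\psi_s \geq -C$ from Corollary \ref{lowerbound2}, together with the uniform bound on $F_s$, ensures that $\omega_{\psi_s}^n = e^{F_s + X\cdot\psi_s/2}\omega^n$ is uniformly bounded below by a positive multiple of $\omega^n$, and hence $\omega_{t\psi_s}^n$ is comparable to $\omega^n$ along the whole linear path. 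This allows us to conclude a weighted Aubin-type lower bound
\begin{equation*}
(I_{\omega,X}-J_{\omega,X})(\psi_s) \geq c\int_M |\partial\psi_s|^p_{\omega}\,e^{-f}\omega^n
\end{equation*}
for any $p\in(1,2)$, with $c = c(p,\omega,n,\sup_s\|F_s\|_{C^0})>0$; the restriction $p<2$ is imposed so that H\"older's inequality absorbs the mixed factors of $\psi_s$ in the remainder terms into the gradient integral.

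\textbf{Step 3 (Poincar\'e inequality).} Apply Proposition \ref{poincare} to the smooth metric measure space $(M,g,e^{-f}dV_g)$, whose hypotheses are verified by Proposition \ref{mainprop}. Since $\overline{\psi}_s = 0$, for every $p\in(1,2)$ we have
\begin{equation*}
\|\psi_s\|_{L^p(e^{-f}\omega^n)} \leq C(p)\,\|\nabla^g\psi_s\|_{L^p(e^{-f}\omega^n)}.
\end{equation*}

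\textbf{Step 4 (Closing the estimate).} Chain the three bounds: set $M(s) := \sup_{\tau\in[0,s]}\|\psi_\tau\|_{L^p(e^{-f}\omega^n)}$. Then Steps 3, 2 and 1 in that order give
\begin{equation*}
\|\psi_s\|_{L^p}^p \leq C\|\nabla\psi_s\|_{L^p}^p \leq C(I_{\omega,X}-J_{\omega,X})(\psi_s) \leq C\,M(s).
\end{equation*}
Taking the supremum over $\tau\in[0,s]$ yields $M(s)^p \leq C\,M(s)$, hence $M(s)^{p-1}\leq C$. Since $p>1$, this forces $M(1) \leq C^{1/(p-1)}$, completing the proof.

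\textbf{Main obstacle.} The decisive step is Step 2: producing the weighted Aubin-type lower bound on $(I_{\omega,X}-J_{\omega,X})(\psi_s)$ in the non-compact, drift setting. Integration by parts against the measure $e^{-f_{t\psi_s}}\omega_{t\psi_s}^n$ introduces extra terms coming from derivatives of the weight, which couple $\nabla\psi_s$ back to $\psi_s$; these are the terms that must be absorbed via H\"older (with conjugate exponents determined by $p<2$) and the uniform comparability of $\omega_{\psi_s}$ with $\omega$ provided by Corollary \ref{lowerbound2}. In addition, the vanishing of boundary contributions at infinity must be justified through the logarithmic-plus-$C^{\infty}_{X,0}$ decay encoded in $\mathcal{M}^\infty_X(M)$.
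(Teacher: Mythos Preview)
Your four–step architecture matches the paper's listed ingredients (Theorem \ref{main-thm-I-J}, Corollary \ref{lowerbound2}, Proposition \ref{poincare}), and Steps 1, 3, 4 are sound. The genuine gap is Step 2.

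First, the claimed inequality $(I_{\omega,X}-J_{\omega,X})(\psi_s)\ge c\int_M |\partial\psi_s|^p_\omega\,e^{-f}\omega^n$ cannot hold with a constant $c$ independent of $\psi_s$: the left side is (to leading order) quadratic in $\psi_s$ while the right side is homogeneous of degree $p<2$, so the inequality fails for small $\psi_s$. What one actually needs (and what makes Step 4 close just as well) is the Hölder–interpolated form $\|\nabla\psi_s\|_{L^p}^p \le C\bigl(\int_M|\nabla\psi_s|^2\,d\mu\bigr)^{p/2}$ together with an $L^2$–gradient bound of the shape $\int_M|\nabla\psi_s|^2\,d\mu \le C\bigl((I-J)(\psi_s)+1\bigr)$ against a suitable weighted measure; this yields $M(s)^p \le C\,M(s)^{p/2}$ and hence $M(s)\le C^{2/p}$.

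Second, your justification of the gradient lower bound is flawed. From $X\cdot\psi_s\ge -C$ and the equation you correctly deduce $\omega_{\psi_s}^n\ge c\,\omega^n$, and by log–concavity of the determinant even $\omega_{t\psi_s}^n\ge c\,\omega^n$ along the linear path. But a \emph{determinant} lower bound does not give a pointwise \emph{metric} lower bound $\omega_{t\psi_s}\ge c'\omega$, which is what you would need to compare $|\nabla\psi_s|^2_{g_{t\psi_s}}$ with $|\nabla\psi_s|^2_\omega$. Moreover, with only $X\cdot\psi_s\ge -C$ you obtain $e^{-f_{t\psi_s}}\le Ce^{-f}$, i.e.\ the weight comparison goes the \emph{wrong way} for a lower bound on the integrand. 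The upper bound on $X\cdot\psi_s$ (Proposition \ref{prop-bd-uni-X-psi}) is proved only \emph{after} the present proposition, so you cannot invoke two–sided comparability here without circularity.

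Third, as the paper itself emphasises (after \eqref{formulae--fct-I-J-ein}), the clean expansion of $I-J$ into a sum of positive terms $\int i\partial\varphi\wedge\bar\partial\varphi\wedge\omega^k\wedge\omega_\varphi^{n-1-k}$ does \emph{not} carry over to the weighted ($X\neq 0$) case, because the exponential weight is not algebraic. So your "schematic form" in Step 2 is not available, and the cross terms coming from the $\tfrac{X}{2}$ in the drift cannot simply be treated as ``controlled remainders.'' You have correctly flagged this as the main obstacle, but you have not supplied the mechanism that actually handles it. Finally, note that the paper lists Lemma \ref{lemma-preserved-int}(i) (invariance of weighted integrals along the path) as an ingredient; this is what underwrites the integrations by parts and the finiteness needed to make the $I$–$J$ computations legitimate in the non-compact setting, and it should appear explicitly in your argument.
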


\begin{proof}
The proof is verbatim that of \cite[Proposition 7.9]{ccd2}, using the Poincar\'e inequality of Proposition \ref{poincare},
Lemma \ref{lemma-preserved-int}(i), the lower bound given by Corollary \ref{lowerbound2}, and Theorem \ref{main-thm-I-J}.
\end{proof}

\subsubsection{A priori estimate on $\sup_M\vartheta_{s}$}
Let $\vartheta_s$ be a solution to \eqref{starstar-s} for some fixed value of the parameter $s\in[0,\,1]$.
We next give an upper bound for $\sup_M\vartheta_s$ that is uniform in $s\in[0,\,1]$.

\begin{prop}[A priori upper bound on $\sup_M\vartheta$]\label{prop-bd-abo-uni-psi}
Let $(\vartheta_s)_{0\,\leq\, s\,\leq\, 1}$ be a path of solutions in $C^{\infty}_{X}(M)$ to \eqref{starstar-s}. Then there exists a positive constant $C=C\left(n,\omega,\sup_{s\in[0,1]}\|G_s\|_{C^{0}(M)}\right)$ such that
\begin{equation*}
\sup_{0\,\leq\, s\,\leq\, 1}\sup_{M}\vartheta_s\leq C.
\end{equation*}
\end{prop}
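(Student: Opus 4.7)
The plan is to combine the localisation of the supremum given by Lemma \ref{lemma-loc-crit-pts-max} with the weighted $L^p$ energy bound of Proposition \ref{prop-a-priori-ene-est}, via a standard local mean value inequality for subsolutions of linear second-order elliptic operators. First, I observe that any solution $\vartheta_s$ of \eqref{starstar-s} satisfies the pointwise differential inequality
\begin{equation*}
\Delta_{\omega_s}\vartheta_s-\frac{X}{2}\cdot\vartheta_s\geq G_s\qquad\textrm{on $M$},
\end{equation*}
which follows from the elementary bound $\log\det(I+A)\leq \operatorname{tr}(A)$ applied to the endomorphism $A=\omega_s^{-1}\cdot i\partial\bar{\partial}\vartheta_s$, combined with \eqref{starstar-s}. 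Together with Lemma \ref{lemma-loc-crit-pts-max}, which provides a fixed compact subset $K\subset M$, independent of $s$, with $\sup_M\vartheta_s=\max_K\vartheta_s$ for every $s\in[0,1]$, it therefore suffices to bound $\max_K\vartheta_s$ uniformly in $s\in[0,1]$.

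Enlarging $K$ slightly to a compact neighbourhood $K'\supset K$, I would apply a standard De Giorgi-Nash-Moser mean value inequality to the subsolution $\vartheta_s$ of the linear second-order elliptic operator $L_s:=\Delta_{\omega_s}-\tfrac{X}{2}\cdot$ on $K'$. By the construction in Section \ref{sec-path-reparam}, $\omega_s$ depends continuously on $s\in[0,1]$ and differs from $\omega$ by a smoothly $s$-dependent, compactly supported K\"ahler potential, while $X$ is a fixed smooth vector field. The coefficients of $L_s$ are therefore uniformly bounded and uniformly elliptic on $K'$, so that for any fixed $p>1$ there exists a constant $C=C(n,\omega,K,K',p)>0$ with
\begin{equation*}
\max_K\vartheta_s^{+}\leq C\left(\|\vartheta_s^{+}\|_{L^p(K',\,\omega^n)}+\sup_M|G_s|\right)
\end{equation*}
uniformly in $s\in[0,1]$.

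Since the weight $e^{-f}$ is bounded below by a positive constant on the compact set $K'$, Proposition \ref{prop-a-priori-ene-est}, together with the normalisation $\int_M\psi_s\,e^{-f}\omega^n=0$ built into the path \eqref{star-s}, yields for any $p\in(1,2)$ a uniform upper bound on $\|\vartheta_s\|_{L^p(K',\,\omega^n)}$. Combined with the uniform boundedness of $\sup_{s\,\in\,[0,1]}\|G_s\|_{C^0(M)}$ (which follows from \eqref{austin}), this establishes the desired uniform estimate $\sup_M\vartheta_s\leq C$ with $C$ depending only on the data listed in the statement of the proposition.

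The principal technical point is ensuring uniformity in $s\in[0,1]$ of the Moser constant. This hinges on the uniform ellipticity and coefficient bounds for the family of operators $L_s$ on the fixed compact set $K'$, which in turn rely on the smooth and uniformly controlled dependence of $\omega_s$ on $s$ arising from the explicit construction in Section \ref{sec-path-reparam}; no additional delicate geometry (such as toricity or the lower bound on $X\cdot\vartheta_s$ from Corollary \ref{lowerbound2}) is required here, which is consistent with the constant $C$ depending only on $n$, $\omega$ and $\sup_{s}\|G_s\|_{C^0}$.
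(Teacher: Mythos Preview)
Your approach is correct and matches the paper's own (which cites \cite[Proposition 7.11]{ccd2} verbatim): localise the supremum via Lemma \ref{lemma-loc-crit-pts-max}, then combine the weighted $L^p$ bound from Proposition \ref{prop-a-priori-ene-est} with a local Moser-type mean value inequality for the subsolution $\vartheta_s$ of $\Delta_{\omega_s,X}$ on a fixed compact set. One minor factual slip: $\Phi_s$ is \emph{not} compactly supported (it equals $-2c_s\log r$ outside a compact set), so $\omega_s-\omega$ is not compactly supported either; but this is irrelevant to your argument, since you only need uniform ellipticity and uniformly bounded coefficients of $L_s$ on the fixed compact $K'$, which follows from the smooth $s$-dependence of $\omega_s$.
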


\begin{proof}
This is verbatim the proof of \cite[Proposition 7.11]{ccd2}, using the localisation Lemma \ref{lemma-loc-crit-pts-max}
and the energy bound Proposition \ref{prop-a-priori-ene-est}.
\end{proof}

\subsubsection{A priori estimate on $\inf_M\vartheta_{s}$}\label{infmyballs}
We now derive a lower bound for $\inf_M\vartheta_s$ that is uniform in $s\in[0,\,1]$. This exploits the toric geometry of the situation.

\subsubsection*{An upper bound on the $I_{\omega,\,X}$-functional}
We first show that the $I_{\omega,\,X}$-functional is bounded along the continuity path.
\begin{lemma}\label{I-bounded}
$\sup_{s\,\in\,[0,\,1]}I_{\omega,\,X}(\psi_{s})\leq C(\sup_M(\vartheta_{s})_+).$
\end{lemma}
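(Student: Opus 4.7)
The plan is to turn $I_{\omega,X}(\psi_s)$ into an integral against a nonnegative weight times $\psi_s$, which we can then control by its positive part alone. First, I would substitute the Monge--Amp\`ere equation $(\star_s)$, namely $\omega_{\psi_s}^n = e^{F_s+\frac{X}{2}\cdot\psi_s}\omega^n$, into the definition of $I_{\omega,X}$. Using the fact that $f_{\psi_s}:=f+\frac{X}{2}\cdot\psi_s$ is the Hamiltonian potential of $JX$ with respect to $\omega_{\psi_s}$ (so that $e^{-f-\frac{X}{2}\cdot\psi_s}\omega_{\psi_s}^n = e^{F_s-f}\omega^n$), one obtains
\begin{equation*}
I_{\omega,X}(\psi_s) = \int_M \psi_s\bigl(1-e^{F_s}\bigr)\,e^{-f}\omega^n.
\end{equation*}

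Next, I would exploit the normalisation $\int_M \psi_s\,e^{-f}\omega^n = 0$ built into $(\star_s)$. Set $M_s := \sup_M F_s$, which is uniformly bounded in $s\in[0,1]$ since $F_s = c_s+O(r^{-2})$ by \eqref{second} with $c_s$ bounded. Subtracting the constant $(1-e^{M_s})\cdot\psi_s$ from the integrand leaves the integral unchanged:
\begin{equation*}
I_{\omega,X}(\psi_s) = \int_M \psi_s\bigl(e^{M_s}-e^{F_s}\bigr)\,e^{-f}\omega^n.
\end{equation*}
The crucial gain is that the weight $e^{M_s}-e^{F_s}$ is now pointwise nonnegative. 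Consequently
\begin{equation*}
I_{\omega,X}(\psi_s) \leq \int_M (\psi_s)_+\bigl(e^{M_s}-e^{F_s}\bigr)\,e^{-f}\omega^n \leq e^{M_s}\int_M (\psi_s)_+\,e^{-f}\omega^n.
\end{equation*}

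Finally, I would translate the positive part of $\psi_s$ into the positive part of $\vartheta_s$ using the decomposition $\psi_s = \vartheta_s + \Phi_s$. Since $\Phi_s = -2c_s\chi\log r$ is globally defined and grows only logarithmically, one has $(\psi_s)_+ \leq (\vartheta_s)_+ + (\Phi_s)_+$ with $(\Phi_s)_+ \leq C(1+\log(1+r))$ uniformly in $s$. The weighted volume $\int_M e^{-f}\omega^n$ is finite (Proposition \ref{mainprop}(ii) forces $f\sim r^2/2$ at infinity), and the same rapid decay of $e^{-f}\omega^n$ makes $\int_M (\Phi_s)_+\,e^{-f}\omega^n$ uniformly bounded. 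Combining these ingredients yields
\begin{equation*}
I_{\omega,X}(\psi_s)\leq e^{M_s}\Bigl(\sup_M(\vartheta_s)_+\cdot\!\int_M e^{-f}\omega^n + \int_M (\Phi_s)_+\,e^{-f}\omega^n\Bigr)\leq C\bigl(1+\sup_M(\vartheta_s)_+\bigr),
\end{equation*}
which is the desired estimate. The main subtlety is the choice of the reference constant $M_s$ ensuring the weight has a sign; everything else reduces to the decay properties of $F_s$, $\Phi_s$ and $e^{-f}\omega^n$ already established in Proposition \ref{mainprop} and Section \ref{sec-path-reparam}. No use of the toric hypothesis is made here; toricity enters later when $\sup_M(\vartheta_s)_+$ itself is controlled.
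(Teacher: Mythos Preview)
Your proof is correct. The paper's own proof simply cites \cite[Lemma 7.12]{ccd2} verbatim, so there is no explicit argument to compare against in this paper; your direct computation---substituting $(\star_s)$ to get $I_{\omega,X}(\psi_s)=\int_M\psi_s(1-e^{F_s})e^{-f}\omega^n$, then using the zero-mean normalisation of $\psi_s$ to shift the weight to $e^{M_s}-e^{F_s}\geq 0$---is the natural approach and almost certainly what the cited lemma does as well.
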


\begin{proof}
This is verbatim the proof of \cite[Lemma 7.12]{ccd2}.
\end{proof}

\subsubsection*{An upper bound on the weighted $L^{p}$-norm of the gradient of the Legendre transform}

Recall the continuity path \eqref{star-s}:
\begin{equation*}
(\omega+i\partial\bar{\partial}\psi_{s})^{n}=e^{F_{s}+\frac{X}{2}\cdot\psi_{s}}\omega^{n},\qquad s\in[0,\,1],
\end{equation*}
where $$F_{s}:=\log\left(se^{F}+(1-s)\right)\qquad\textrm{and}\qquad i\partial\bar{\partial}F=\rho_{\omega}+\frac{1}{2}\mathcal{L}_{X}\omega-\omega.$$
Here, $\rho_{\omega}$ denotes the Ricci form of $\omega$ and $F\in C^{\infty}(M)$ is bounded.
On $\mathfrak{t}\simeq\mathbb{R}^{n}$ we have coordinates $\xi:=(\xi_{1},\ldots,\xi_{n})$, induced coordinates $x=(x_{1},\ldots,x_{n})$ on $\mathfrak{t}^{*}$ which contains the image
of the moment map, and we can write $\omega=2i\partial\bar{\partial}\phi_{0}$ for a convex function $\phi_{0}$ on $\mathbb{R}^{n}\simeq\mathfrak{t}$
up to the addition of a linear function (cf.~Section \ref{toric-geom}). Let $b_{X}\in\mathbb{R}^{n}$ denote the vector field $JX\in\mathfrak{t}$ as in \eqref{eqnY4},
write $\nabla$ for the Levi-Civita connection of the flat metric on $\mathbb{R}^{n}$, and $\langle\cdot\,,\,\cdot\rangle$ for the corresponding inner product.

As in \cite[equation (2.12)]{ccd2}, we normalise $\phi_{0}$ so that
\begin{equation}\label{bbq}
F=-\log\det(\phi_{0,\,ij})+\langle\nabla\phi_0,\,b_X \rangle-2\phi_0.
\end{equation}
Set $\phi_s:=\phi_{0}+\frac{1}{2}\psi_{s}$. Then in the coordinates $\xi$ on $\mathbb{R}^{n}$, equation \eqref{star-s} reads as
$$\det(\phi_{s,\,ij})=\left(se^{F} + (1-s)\right)e^{\langle \nabla \phi_s,\,b_X \rangle - \langle \nabla \phi_0,\,b_X \rangle}\det(\phi_{0,\,ij}),\qquad s\in[0,\,1].$$
Plugging in the definition of $F$, this becomes
\begin{equation*}
\begin{split}
\det(\phi_{s,\,ij})&=\left(se^{-2\phi_0 - \log\det(\phi_{0,\,ij})} + (1-s)e^{-\langle\nabla\phi_0, b_X\rangle}\right)e^{\langle \nabla \phi_s,\,b_X \rangle}\det(\phi_{0,\,ij}) \\
				&= \left(se^{-2\phi_0} + (1-s)e^{-\langle\nabla \phi_0,\,b_X \rangle}\det(\phi_{0,\,ij}) \right)e^{\langle\nabla\phi_s,\,b_X \rangle},\qquad s\in[0,\,1],
\end{split}
\end{equation*}
or equivalently,
\begin{equation}\label{caddo}
e^{-\langle \nabla \phi_s, \,b_X \rangle}\det(\phi_{s,\,ij}) = se^{-2\phi_0}+(1-s)e^{-\langle\nabla \phi_0,\,b_X \rangle}\det(\phi_{0,\,ij}),\qquad s\in[0,\,1].
\end{equation}
Let $u_{s}=L(\phi_{s})$. Then we have the following uniform integral bound on $|\nabla u_{s}|^{p},\,p\geq1$.

\begin{lemma}\label{Lpnorm}
For all $p\geq1$,
$$\sup_{s\,\in\,[0,\,1]}\int_{P_{-K_{M}}}|\nabla u_{s}|^{p}e^{-\langle b_{X},\,x\rangle}\,dx\leq C.$$
\end{lemma}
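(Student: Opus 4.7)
The plan is to exploit the key algebraic identity \eqref{caddo} derived from the normalisation \eqref{bbq}, which decouples the $s$-dependence in a remarkably clean way after passing to the symplectic (Legendre) side. Since $u_s = L(\phi_s)$, the Legendre duality gives $\nabla u_s(x) = \xi$ whenever $x = \nabla\phi_s(\xi)$, and the diffeomorphism $\nabla\phi_s:\mathbb{R}^n\to P_{-K_M}$ has Jacobian $\det(\phi_{s,ij})$. Performing this change of variables, one obtains
\begin{equation*}
\int_{P_{-K_M}}|\nabla u_s|^p\,e^{-\langle b_X,\,x\rangle}\,dx = \int_{\mathbb{R}^n}|\xi|^p\,e^{-\langle b_X,\,\nabla\phi_s\rangle}\det(\phi_{s,ij})\,d\xi.
\end{equation*}

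Inserting \eqref{caddo} directly into this expression yields
\begin{equation*}
\int_{P_{-K_M}}|\nabla u_s|^p\,e^{-\langle b_X,\,x\rangle}\,dx \leq \int_{\mathbb{R}^n}|\xi|^p\,e^{-2\phi_0}\,d\xi + \int_{\mathbb{R}^n}|\xi|^p\,e^{-\langle b_X,\,\nabla\phi_0\rangle}\det(\phi_{0,ij})\,d\xi,
\end{equation*}
so the bound becomes manifestly independent of $s$. It remains only to verify that the two $s$-independent integrals on the right-hand side are finite.

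For the first integral, Lemma \ref{growth} applied to $\phi_0$ (whose image under $\nabla\phi_0$ is $P_{-K_M}$, which contains the origin by Lemma \ref{note}(i)) gives $\phi_0(\xi)\geq C^{-1}|\xi|-C$, so the integrand is bounded by a constant multiple of $|\xi|^p e^{-2C^{-1}|\xi|}$, which is integrable. For the second integral, reversing the change of variables back to $P_{-K_M}$ along $\nabla\phi_0$ produces $\int_{P_{-K_M}}|\nabla u_0|^p\,e^{-\langle b_X,\,x\rangle}\,dx$. By Lemma \ref{boundaryy} together with the explicit formula $u_{P_{-K_M}} = \tfrac{1}{2}\sum_i(\ell_i+1)\log(\ell_i+1)$, the gradient $|\nabla u_0|$ exhibits only logarithmic singularities as $x\to\partial P_{-K_M}$ and grows at most like $\log|x|$ as $|x|\to\infty$ inside $P_{-K_M}$; meanwhile, Lemma \ref{normpolytope-tof} ensures $\langle b_X,\,x\rangle\geq C^{-1}|x|$ outside a compact subset of $P_{-K_M}$, so the weight $e^{-\langle b_X,\,x\rangle}$ decays exponentially. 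The combined integrand is therefore integrable, completing the estimate.

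The main (and essentially only) conceptual step is recognising that \eqref{caddo} converts a family of $s$-dependent integrals into a convex combination of two fixed integrals; no analytic difficulty beyond that arises, as both remaining integrals are controlled purely by the fixed background data $\phi_0$.
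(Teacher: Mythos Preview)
Your overall strategy is identical to the paper's: change variables via $x=\nabla\phi_s(\xi)$, apply \eqref{caddo} to split the integral as a convex combination of two $s$-independent terms, and bound each. The first integral you treat exactly as the paper does, via Lemma \ref{growth}.

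The only issue is your treatment of the second integral $\int_{\mathbb{R}^n}|\xi|^p\,e^{-\langle b_X,\,\nabla\phi_0\rangle}\det(\phi_{0,ij})\,d\xi$. You push this back to the polytope side and then try to control $|\nabla u_0|$ at infinity using Lemma \ref{boundaryy}. But that lemma only tells you that $u_0 - u_{P_{-K_M}}$ is smooth across the finite boundary $\partial P_{-K_M}$; it says nothing about the behaviour of this difference, or its gradient, as $|x|\to\infty$ in the unbounded polytope. So the claimed $\log|x|$ growth of $|\nabla u_0|$ is not justified by what you cite. The paper avoids this detour entirely: it stays on the $\xi$-side and simply invokes the normalisation \eqref{bbq}, which gives $e^{-\langle\nabla\phi_0,\,b_X\rangle}\det(\phi_{0,ij})=e^{-2\phi_0-F}$. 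Since $F$ is bounded, the second integral reduces to the same type as the first, namely $\int_{\mathbb{R}^n}|\xi|^p e^{-2\phi_0}\,d\xi$, and Lemma \ref{growth} finishes the job. This is both simpler and rigorous.
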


\begin{proof}
First note that
$$\int_{\mathbb{R}^{n}}|\xi|^{p}e^{-\langle \nabla\phi_{0},\,b_{X}\rangle}\,\det(\phi_{0,\,ij})d\xi
=\int_{\mathbb{R}^{n}}|\xi|^{p}e^{-2\phi_{0}(\xi)-F}\,d\xi\leq C\int_{\mathbb{R}^{n}}|\xi|^{p}e^{-C|\xi|}\,d\xi<\infty,$$
where we have used \eqref{bbq}, together with Lemma \ref{growth}. Then using Lemma \ref{growth} once again and \eqref{caddo}, we find that
\begin{equation*}
\begin{split}
\int_{P_{-K_{M}}}|\nabla u_{s}|^{p}e^{-\langle b_{X},\,x\rangle}\,dx&=\int_{\mathbb{R}^{n}}|\xi|^{p}e^{-\langle b_{X},\,\nabla\phi_{s}\rangle}\det(\phi_{s,ij})\,d\xi \\
 &= s\int_{\mathbb{R}^{n}}|\xi|^{p}e^{-2\phi_{0}}\,d\xi   + (1-s) \int_{\mathbb{R}^{n}}|\xi|^{p}e^{-\langle b_{X},\,\nabla\phi_{0}\rangle}\det(\phi_{0,ij})\,d\xi  \\
 &\leq C,
\end{split}
\end{equation*}
as desired.
\end{proof}

\subsubsection*{An upper bound on the $\hat{F}$-functional}
Now, our background metric $\omega$ satisfies the two bullet points above Lemma \ref{expression}, as follows from the already proven Theorem \ref{mainthm}(i), (iv), and (v).
As a consequence, it is clear from Lemma \ref{expression}(i) that Definition \ref{fhat}(a) holds true.
The hypothesis of Lemma \ref{converge2} as well as condition (b) of Definition \ref{fhat} via Lemma \ref{converge1} also
hold true thanks to Lemma \ref{lemma-preserved-int}(ii). Thus, the $\hat{F}$-functional from Definition \ref{fhat} is finite and is therefore well-defined along the continuity path \eqref{star-s}
and moreover, by Lemma \ref{converge2}, may be expressed along in terms of the $J_{\omega,\,X}$-functional as $$\hat{F}(\psi_{s})=J_{\omega,\,X}(\psi_{s})-\int_{M}\psi_{s}\,e^{-f}\omega^{n}.$$

We next show that $\hat{F}$ is bounded above along the continuity path \eqref{star-s} using Lemma \ref{I-bounded}. This will in turn provide
an a priori estimate on the weighted integral of the Legendre transform $u_{s}:=L(\phi_{s})$ of $\phi_{s}$. From this,
we derive an a priori estimate on the weighted $L^{1}$-norm of $u_{s}$. Via the Sobolev inequality, we then obtain local
control on $u_{s}$, and as a result, on $\psi_{s}$. This eventually leads to the desired uniform lower bound on $\inf_{M}\vartheta_{s}$.

\begin{lemma}\label{boundedd}
$\hat{F}(\psi_{s})\leq C(\sup_M(\vartheta_{s})_+).$
\end{lemma}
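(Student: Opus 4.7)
The plan is to first use Lemma \ref{converge2}, which expresses $\hat{F}(\psi_{s})=J_{\omega,\,X}(\psi_{s})-\int_{M}\psi_{s}\,e^{-f}\omega^{n}$. By the normalisation $\overline{\psi}_{s}=0$ imposed in the continuity path \eqref{star-s}, the last integral vanishes, so that $\hat{F}(\psi_{s})=J_{\omega,\,X}(\psi_{s})$. It therefore suffices to bound $J_{\omega,\,X}(\psi_{s})$ from above in terms of $I_{\omega,\,X}(\psi_{s})$ and then invoke Lemma \ref{I-bounded}.

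To compare $J$ and $I$, I would apply Theorem \ref{main-thm-I-J} along the linear path $\varphi_{t}:=t\psi_{s}$, $t\in[0,\,1]$, which by convexity of $\mathcal{M}^{\infty}_{X}(M)$ stays in $\mathcal{M}^{\infty}_{X}(M)$ (up to possibly shortening the interval, but the K\"ahler positivity is preserved along the path by the concavity-in-$t$ argument standard in Kähler geometry; this follows since $\omega_{\psi_{s}}>0$). Integrating the variation formula
\[
\frac{d}{dt}(I_{\omega,\,X}-J_{\omega,\,X})(\varphi_{t})=-\int_{M}\varphi_{t}\left(\Delta_{\omega_{\varphi_{t}}}\dot{\varphi}_{t}-\tfrac{X}{2}\cdot\dot{\varphi}_{t}\right)e^{-f_{\varphi_{t}}}\omega_{\varphi_{t}}^{n}
\]
from $t=0$ to $t=1$ with $\dot{\varphi}_{t}=\psi_{s}$ yields
\[
(I_{\omega,\,X}-J_{\omega,\,X})(\psi_{s})=-\int_{0}^{1}\int_{M}t\psi_{s}\left(\Delta_{\omega_{t\psi_{s}}}\psi_{s}-\tfrac{X}{2}\cdot\psi_{s}\right)e^{-f_{t\psi_{s}}}\omega_{t\psi_{s}}^{n}\,dt.
\]
Integration by parts, using the fact that $\Delta_{\omega_{t\psi_{s}}}-\tfrac{X}{2}\cdot$ is symmetric with respect to the weighted measure $e^{-f_{t\psi_{s}}}\omega_{t\psi_{s}}^{n}$ (cf.~Section \ref{metricmeasure}), then converts the right-hand side into
\[
\int_{0}^{1}t\int_{M}|\nabla^{g_{t\psi_{s}}}\psi_{s}|^{2}_{g_{t\psi_{s}}}\,e^{-f_{t\psi_{s}}}\omega_{t\psi_{s}}^{n}\,dt\;\geq\;0,
\]
so that $J_{\omega,\,X}(\psi_{s})\leq I_{\omega,\,X}(\psi_{s})$. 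Combined with Lemma \ref{I-bounded}, this gives $\hat{F}(\psi_{s})=J_{\omega,\,X}(\psi_{s})\leq I_{\omega,\,X}(\psi_{s})\leq C(\sup_{M}(\vartheta_{s})_{+})$ as desired.

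The main obstacle will be justifying the integration by parts rigorously in this non-compact weighted setting, since $\psi_{s}$ does not decay at infinity (it only grows at most logarithmically in $\mathcal{M}^{\infty}_{X}(M)$). This should be handled by an exhaustion argument together with the finiteness of the weighted volume $\int_{M}e^{-f_{t\psi_{s}}}\omega_{t\psi_{s}}^{n}$ from Lemma \ref{lemma-preserved-int}(i), the logarithmic bound on $\psi_{s}$, and the quadratic lower bound on $f$ from Proposition \ref{mainprop}(ii), all of which ensure rapid decay of the boundary terms against the Gaussian-type weight. The same arguments also justify the interchanging of the $t$- and spatial integrations.
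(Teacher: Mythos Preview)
Your proposal is correct and follows essentially the same route as the paper. The paper's proof is a one-liner: it invokes the inequality $(I_{\omega,\,X}-J_{\omega,\,X})(\psi_{s})\geq 0$, which it cites from the proof of Proposition \ref{prop-a-priori-ene-est} (and \cite[equation (7.10)]{ccd2}), and then combines this with Lemma \ref{I-bounded} and the identity $\hat{F}(\psi_{s})=J_{\omega,\,X}(\psi_{s})$ coming from Lemma \ref{converge2} together with the normalisation $\int_{M}\psi_{s}\,e^{-f}\omega^{n}=0$. You have simply unpacked the derivation of $(I-J)\geq 0$ via the linear path and integration by parts, which is exactly the standard argument underlying the cited references; your remark on the justification of the integration by parts in the non-compact weighted setting is apt and is handled in the paper by the ingredients you list (cf.~Theorem \ref{main-thm-I-J} and Lemma \ref{lemma-preserved-int}). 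One small clean-up: the parenthetical about ``possibly shortening the interval'' is unnecessary, since $\omega+ti\partial\bar{\partial}\psi_{s}=(1-t)\omega+t\omega_{\psi_{s}}>0$ for all $t\in[0,1]$ by convexity.
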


\begin{proof}
This follows from Lemma \ref{I-bounded} as in the proof of \cite[Lemma 7.14]{ccd2}, using the fact that $(I_{\omega,\,X}-J_{\omega,\,X})(\psi_{s})\geq0$ which one derives in
the proof of Proposition \ref{prop-a-priori-ene-est}; cf.~\cite[equation (7.10)]{ccd2} for the precise equation.
\end{proof}

\subsubsection*{An upper bound on the weighted integral of the Legendre transform}
We know that
\begin{equation*}
\begin{split}
\int_{P_{-K_{M}}}|u_{s}|\,e^{-\langle b_{X},\,x\rangle}dx&\leq
\int_{P_{-K_{M}}}|u_{s}-u_{0}|\,e^{-\langle b_{X},\,x\rangle}dx+\int_{P_{-K_{M}}}|u_{0}|\,e^{-\langle b_{X},\,x\rangle}dx\\
&\leq\int_{P_{-K_{M}}}\left(\int_{0}^{1}|\dot{u}_{st}|\,dt\right)\,e^{-\langle b_{X},\,x\rangle}dx+\int_{P_{-K_{M}}}|u_{0}|\,e^{-\langle b_{X},\,x\rangle}dx,
\end{split}
\end{equation*}
and these last two integrals are finite by Lemma \ref{lemma-preserved-int}(ii) via Lemma \ref{converge1}, and
Lemma \ref{expression}(ii), respectively.
By definition, the $\hat{F}$-functional along \eqref{star-s} is given by
\begin{equation}\label{fhat2}
\hat{F}(\psi_s)=2\int_{P_{-K_{M}}}(u_{s}-u_{0})\,e^{-\langle b_{X},\,x\rangle}dx.
\end{equation}
Therefore with $\int_{P_{-K_{M}}}|u_{0}|\,e^{-\langle b_{X},\,x\rangle}dx$
and $\int_{P_{-K_{M}}}|u_{1}|\,e^{-\langle b_{X},\,x\rangle}dx$ convergent, we can split
the integral in \eqref{fhat2}. Together with the integral bound given in
Lemma \ref{expression}(ii), this leads to the following consequence of Lemma \ref{boundedd}.

\begin{corollary}\label{boundd}
$$\sup_{s\,\in\,[0,\,1]}\int_{P_{-K_{M}}}u_{s}\,e^{-\langle b_{X},\,x\rangle}\,dx\leq C.$$
\end{corollary}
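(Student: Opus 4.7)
My plan is that Corollary \ref{boundd} follows as a direct bookkeeping consequence of the uniform $\hat{F}$-bound from Lemma \ref{boundedd} combined with the uniform $C^{0}$-bound on $\sup_{M}\vartheta_{s}$ already established, plus the integrability of $u_{0}$ against the weighted measure.

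First, by Proposition \ref{prop-bd-abo-uni-psi}, the quantity $\sup_{M}(\vartheta_{s})_{+}$ is bounded uniformly in $s\in[0,\,1]$. Feeding this into Lemma \ref{boundedd} yields
\begin{equation*}
\sup_{s\,\in\,[0,\,1]}\hat{F}(\psi_{s})\leq C.
\end{equation*}
Using the expression \eqref{fhat2} for $\hat{F}$ along the path, this translates into
\begin{equation*}
\sup_{s\,\in\,[0,\,1]}\int_{P_{-K_{M}}}(u_{s}-u_{0})\,e^{-\langle b_{X},\,x\rangle}\,dx\leq \tfrac{C}{2}.
\end{equation*}

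To extract a bound on $\int_{P_{-K_{M}}}u_{s}\,e^{-\langle b_{X},\,x\rangle}\,dx$ itself, I need to split the integrand as $u_{s}=(u_{s}-u_{0})+u_{0}$ and to justify separating the two terms. The term $\int_{P_{-K_{M}}}|u_{0}|\,e^{-\langle b_{X},\,x\rangle}\,dx$ is finite by Lemma \ref{expression}(ii), while the finiteness of $\int_{P_{-K_{M}}}|u_{s}-u_{0}|\,e^{-\langle b_{X},\,x\rangle}\,dx$ for each fixed $s$ follows from Lemma \ref{lemma-preserved-int}(ii) via the equivalences in Lemma \ref{converge1} (applied to the obvious linear path $t\mapsto t\psi_{s}$, which lies in $\mathcal{M}^{\infty}_{X}(M)$). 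Consequently both $|u_{s}|$ and $|u_{0}|$ are integrable against $e^{-\langle b_{X},\,x\rangle}dx$, so the splitting is legitimate and yields
\begin{equation*}
\int_{P_{-K_{M}}}u_{s}\,e^{-\langle b_{X},\,x\rangle}\,dx\leq \tfrac{C}{2}+\int_{P_{-K_{M}}}|u_{0}|\,e^{-\langle b_{X},\,x\rangle}\,dx \leq C',
\end{equation*}
uniformly in $s\in[0,\,1]$, which is exactly the conclusion of the corollary.

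There is no real obstacle here since all of the hard work has been done in the previous lemmas: the only conceptual point worth being explicit about is verifying the integrability conditions that legitimise the splitting, which reduces, via Lemmas \ref{converge1} and \ref{lemma-preserved-int}(ii), to the well-definedness of the $\hat{F}$-functional along the continuity path (this was already noted in the discussion preceding the corollary).
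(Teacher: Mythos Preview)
Your proof is correct and follows essentially the same approach as the paper: the paper also derives the corollary by combining the uniform $\hat{F}$-bound from Lemma \ref{boundedd} with the splitting of the integral in \eqref{fhat2}, justified via Lemma \ref{expression}(ii) and Lemma \ref{lemma-preserved-int}(ii) (through Lemma \ref{converge1}), exactly as you do. Your explicit invocation of Proposition \ref{prop-bd-abo-uni-psi} to make the constant in Lemma \ref{boundedd} uniform in $s$ is a detail the paper leaves implicit but which is indeed needed.
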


\subsubsection*{An upper bound on the weighted $L^{1}$-norm of the Legendre transform}

We now use Corollary \ref{boundd} to derive a uniform weighted $L^{1}$-norm on $u_{s}$.
Notice that this uses the already obtained uniform upper bound on $\vartheta_{s}$.
\begin{lemma}\label{L1norm}
$$\sup_{s\,\in\,[0,\,1]}\int_{P_{-K_{M}}}|u_s|\,e^{-\langle b_X,x\rangle}\,dx\leq C.$$
\end{lemma}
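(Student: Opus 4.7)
The plan is to reduce the estimate to a control of the negative part $(u_s)_-:=\max(-u_s,0)$ of the Legendre transform and then to exploit the convexity of $u_s$ together with the uniform upper bound on $\sup_M\vartheta_s$ from Proposition \ref{prop-bd-abo-uni-psi}. First observe that if the nonnegative quantity $\int_{P_{-K_M}}(u_s)_-\,e^{-\langle b_X,x\rangle}\,dx$ is uniformly bounded, then so is $\int_{P_{-K_M}}(u_s)_+\,e^{-\langle b_X,x\rangle}\,dx$ in view of Corollary \ref{boundd} and the decomposition $u_s=(u_s)_+-(u_s)_-$, and the conclusion follows by summation. Hence it suffices to produce a uniform pointwise lower bound on $u_s$, modulo a linear function, on the whole of $P_{-K_M}$.

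To do so, fix once and for all a point $\xi_0\in\R^n$ whose image in $(\C^*)^n\subset M$ lies in the compact set $K$ chosen in Section \ref{sec-path-reparam}, enlarging $K$ (and $V$) if necessary so that $K$ contains a neighborhood of such a point in the dense orbit. Since $\chi\equiv 0$ on $K$, we have $\Phi_s(\xi_0)=0$ and consequently
\[
\psi_s(\xi_0)=\vartheta_s(\xi_0)\le \sup_M\vartheta_s\le C
\]
uniformly in $s\in[0,1]$, thanks to Proposition \ref{prop-bd-abo-uni-psi}. Setting $y_0:=\nabla\phi_s(\xi_0)\in\mathrm{int}(P_{-K_M})$, duality of the Legendre transform gives $\nabla u_s(y_0)=\xi_0$ and $u_s(y_0)=\langle y_0,\xi_0\rangle-\phi_s(\xi_0)$. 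By convexity of $u_s$ on $P_{-K_M}$,
\[
u_s(x)\ge u_s(y_0)+\langle\nabla u_s(y_0),x-y_0\rangle=\langle x,\xi_0\rangle-\phi_s(\xi_0).
\]
Using $\phi_s=\phi_0+\tfrac{1}{2}\psi_s$ and the just-obtained bound on $\psi_s(\xi_0)$, we deduce that $\phi_s(\xi_0)\le \phi_0(\xi_0)+\tfrac{1}{2}C=:C'$, where $\phi_0(\xi_0)$ is a fixed number. Consequently
\[
(u_s)_-(x)\le |\xi_0|\,|x|+C'\qquad\text{on }P_{-K_M},
\]
uniformly in $s\in[0,1]$.

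To conclude, we integrate this pointwise bound against the weighted measure $e^{-\langle b_X,x\rangle}\,dx$ on $P_{-K_M}$. By Lemma \ref{normpolytope-tof}, there exist a compact set $K'\subset P_{-K_M}$ and a constant $C>0$ such that $|x|\le C\langle b_X,x\rangle$ on $P_{-K_M}\setminus K'$, which together with the finiteness of the weighted volume $\int_{P_{-K_M}}e^{-\langle b_X,x\rangle}\,dx$ (equivalently, $JX\in\Lambda$ from Lemma \ref{vector-field} and Definition \ref{weightedvol}) guarantees that $|x|\,e^{-\langle b_X,x\rangle}$ is integrable on $P_{-K_M}$. Hence
\[
\sup_{s\in[0,1]}\int_{P_{-K_M}}(u_s)_-\,e^{-\langle b_X,x\rangle}\,dx\le |\xi_0|\int_{P_{-K_M}}|x|\,e^{-\langle b_X,x\rangle}\,dx+C'\int_{P_{-K_M}}e^{-\langle b_X,x\rangle}\,dx<+\infty,
\]
and combining with Corollary \ref{boundd} gives the desired $L^1$-bound.

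The only subtle point in the argument is the choice of reference point $\xi_0$: the uniform upper bound on $\psi_s(\xi_0)$ reduces to the one on $\vartheta_s(\xi_0)$ precisely because $\Phi_s$ vanishes identically on $K$. Since $\Phi_s$ may be unbounded above when $c_s<0$, one cannot hope for a uniform upper bound on $\psi_s$ globally on $M$; it is therefore essential to locate $\xi_0$ inside the region where the re-parametrisation does not introduce the logarithmic correction. This is the geometric content of splitting $\psi_s=\vartheta_s+\Phi_s$ in Section \ref{sec-path-reparam} and is what makes the argument work.
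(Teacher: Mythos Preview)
Your proof is correct and follows the same overall strategy as the paper: split $|u_s|=(u_s)_++(u_s)_-$, control $(u_s)_+$ via Corollary~\ref{boundd}, and control $(u_s)_-$ using convexity of $u_s$ together with the uniform upper bound on $\vartheta_s$ from Proposition~\ref{prop-bd-abo-uni-psi}.

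There is a mild difference in implementation. The paper (via \cite[Lemma~7.16]{ccd2}) invokes Lemma~\ref{expression}(ii), which suggests the lower bound on $u_s$ is obtained by comparing directly with $u_0$: from a global upper bound $\phi_s\le\phi_0+C$ and the order-reversing property of the Legendre transform one gets $u_s\ge u_0-C$, whence $(u_s)_-\le|u_0|+C$ and integrability follows from $\int_{P_{-K_M}}|u_0|\,e^{-\langle b_X,x\rangle}\,dx<\infty$. Your route instead anchors at a single point $\xi_0$ in the region where $\Phi_s\equiv 0$ and uses the supporting hyperplane there, yielding the linear lower bound $u_s(x)\ge\langle\xi_0,x\rangle-C'$; integrability then comes from Lemma~\ref{normpolytope-tof} rather than Lemma~\ref{expression}(ii). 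Your argument is slightly more elementary in that it only needs the upper bound on $\psi_s$ at one point, which neatly sidesteps the potential unboundedness of $\Phi_s$ that you correctly flag at the end.
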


\begin{proof}
This follows verbatim as in the proof of \cite[Lemma 7.16]{ccd2} using the a priori upper bound on $\vartheta_s$ given by Proposition \ref{prop-bd-abo-uni-psi}, together with
Corollary \ref{boundd}, Lemma \ref{expression}(ii), and the fact that
$$\int_{P_{-K_{M}}}e^{-\langle b_X,\,x\rangle}dx=(2\pi)^{n}\int_{M}e^{-f}\omega^{n}<\infty.$$
\end{proof}

\subsubsection*{Local control on $u_{s}$}

Lemmas \ref{Lpnorm} and \ref{L1norm}, combined with an application of the Sobolev inequality, now give us local control on $u_{s}$.
\begin{prop}\label{u-bound}
There exists $C>0$ such that for all $x\in P_{-K_{M}}$ and $s\in[0,\,1]$, $$|u_{s}(x)-u_{0}(x)|\leq Ce^{\langle b_{X},\,x\rangle}.$$
\end{prop}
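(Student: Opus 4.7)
The strategy is to upgrade the weighted integral bounds of Lemmas \ref{Lpnorm} and \ref{L1norm} into a pointwise estimate on $u_s - u_0$ via the Morrey embedding, by localising to balls of unit size and exploiting that on such a ball the exponential weight $e^{-\langle b_X, y\rangle}$ is uniformly comparable to its value $e^{-\langle b_X, x\rangle}$ at the centre.

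First, I would fix $p > n$ and, given $x \in \Pol$, consider the convex set $U_x := B(x,1) \cap \Pol$. Since $|\langle b_X, y - x\rangle| \leq |b_X|$ for all $y \in U_x$, the weight $e^{-\langle b_X, y\rangle}$ lies between $e^{-|b_X|}e^{-\langle b_X, x\rangle}$ and $e^{|b_X|}e^{-\langle b_X, x\rangle}$. Applying Lemmas \ref{Lpnorm} and \ref{L1norm} with this comparison to both $u_s$ and $u_0$, and subtracting, one obtains the unweighted local bounds
\begin{equation*}
\|u_s - u_0\|_{L^1(U_x)} + \|\nabla(u_s - u_0)\|_{L^p(U_x)} \leq C\,e^{\langle b_X, x\rangle},
\end{equation*}
uniformly in $s \in [0,1]$. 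By Poincar\'e-Wirtinger on the convex set $U_x$, the $L^p$-norm of $u_s - u_0$ relative to its mean $\overline{(u_s - u_0)}_{U_x}$ is controlled by $\|\nabla(u_s - u_0)\|_{L^p(U_x)}$, and the mean itself by the $L^1$-bound, so $\|u_s - u_0\|_{L^p(U_x)} \leq C\,e^{\langle b_X, x\rangle}$. Since $p > n$, the Morrey embedding $W^{1,p}(U_x) \hookrightarrow L^\infty(U_x)$ then yields
\begin{equation*}
|u_s(x) - u_0(x)| \leq \|u_s - u_0\|_{L^\infty(U_x)} \leq C\,e^{\langle b_X, x\rangle},
\end{equation*}
which is the desired estimate.

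The main obstacle is ensuring that the Poincar\'e and Morrey constants of $U_x$ are uniformly bounded as $x$ ranges over $\Pol$, as the intersection $B(x,1) \cap \Pol$ can become small or thin when $x$ approaches a low-dimensional face of the polytope. This is resolved by the Delzant structure of $\Pol$ from Lemma \ref{note}: the polytope has a uniformly bounded Lipschitz structure and satisfies a uniform inner-cone condition at every point, so the convex sets $U_x$ have Sobolev constants bounded uniformly in $x$. As an alternative route, Lemma \ref{boundaryy} ensures that $u_s - u_0 = v_s - v_0$ extends smoothly across $\partial \Pol$, allowing one to work instead on genuine Euclidean unit balls and to apply the standard Morrey embedding there, thereby bypassing the polytope's boundary geometry altogether.
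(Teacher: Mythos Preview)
Your approach is correct and mirrors the paper's: localise to a region where the weight $e^{-\langle b_X,\cdot\rangle}$ is essentially constant, strip it, and upgrade the $L^1$ and $W^{1,p}$ bounds from Lemmas~\ref{Lpnorm} and~\ref{L1norm} to an $L^\infty$ bound via a Sobolev/Morrey inequality. The only variation is the choice of localising domain---the paper uses the slabs $\Omega_R=\{R<\langle b_X,x\rangle<R+1\}$ together with a polytope Sobolev inequality from \cite{polysob} and a volume lower bound (their Claim~\ref{knife}), whereas you use unit balls $B(x,1)\cap\Pol$ with the standard Morrey embedding---and both routes invoke Lemma~\ref{boundaryy} to handle regularity up to $\partial\Pol$.
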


\begin{proof}
Write $P$ for $P_{-K_{M}}$. Recall from Lemma \ref{normpolytope-tof} that there exists $C>0$ such that for all $x$ outside a compact subset $K\subset P$,
\begin{equation}\label{steak}
C^{-1}|x|\leq\langle b_{X},\,x \rangle\leq C|x|.
\end{equation}
In particular, $\langle b_{X},\,x \rangle$ is strictly positive and is equivalent to $|x|$ on $P\setminus K$. Consider the set
$\Omega_{R}:=\{x\in P\,|\,R<\langle b_{X},\,x \rangle<R+1\}$. Then there exists $R_{0}\gg0$ such that $\Omega_{R}\neq\emptyset$
and $\Omega_{R}\cap K=\emptyset$ for all $R>R_{0}$. As an intersection of half-planes, $\Omega_{R}$ is clearly a bounded convex subset of $\mathbb{R}^{n}$.
We require the following claim.

\begin{claim}\label{knife}
There exists $c>0$ such that for all $R>R_{0}$, $|\Omega_{R}|>c$.
\end{claim}

\begin{proof}[Proof of Claim \ref{knife}]
Since $P$ is non-compact, there exists a sequence of points $x_{i}\in\operatorname{int}(P)$ with $x_{i}\to\infty$.
By Lemma \ref{note}(i), we know that $0\in\operatorname{int}(P)$. Let $L_{i}$ be the line segment connecting $0$ to $x_{i}$.
By convexity, $L_{i}$ lies in $\operatorname{int}(P)$. By compactness, we can take a limit of these lines to obtain a half-ray $L$
contained in $P$ with end point based at the origin. Being open, there exists $\varepsilon>0$ such that $B_{\varepsilon}(0)\subset\operatorname{int}(P)$.
Let $D_{\varepsilon}$ denote the intersection of this ball with the plane perpendicular to $L$. Translating the centre of $D_{\varepsilon}$ along $L$, we
obtain a cylinder $D_{\varepsilon}\times L$ which, by convexity, is also contained in $P$. By \eqref{steak}, 
$\langle b_{X},\,x \rangle$ defines a proper function on the closure $\overline{D_{\varepsilon}\times L}$, hence the planes
$\langle b_{X},\,x \rangle=\operatorname{const.}$ are transverse to $\overline{D_{\varepsilon}\times L}$. In particular, 
$|\Omega_{R}\cap\overline{D_{\varepsilon}\times L}|>0$ is independent of $R>R_{0}$ and serves as a lower bound $c>0$ for $|\Omega_{R}|$.
\end{proof}

Let $x\in P$ be such that
$\langle b_{X},\,x\rangle>R_{0}$. Then there exists $R>R_{0}$ such that
$x\in\Omega_{R}$. For brevity, set $U_{t}:=u_{t}-u_{0}$. Then since $U_{t}$ is smooth up to $\partial P$ by Lemma
\ref{boundaryy}, we can apply
\cite[Theorem 3.4]{polysob} and the first paragraph of the proof thereof
to show that for any $q>n$, there exists $\varepsilon(q):=\frac{q-n}{q-1}>0$ such that
$$\norm{U_{t}}_{C^{0}(\Omega_{R})}\leq \frac{C}{|\Omega_{R}|}\left(\int_{\Omega_{R}}|U_{t}|\,dx+\left(\sup_{y\,\in\,\Omega_{R}}
|y|^{\varepsilon+n}\right)\norm{\nabla U_{t}}_{L^{q}(\Omega_{R})}\right).$$
Fix such a $q$. Then Lemmas \ref{Lpnorm} and \ref{L1norm}, Claim \ref{knife}, and the equivalence of $\langle b_{X},\,x\rangle$ and $|x|$
give us that
\begin{equation*}
\begin{split}
|U_{t}(x)|&\leq\norm{U_{t}}_{C^{0}(\Omega_{R})}\\
&\leq\frac{C}{|\Omega_{R}|}\left(\int_{\Omega_{R}}|U_{t}|\,dx+\left(\sup_{y\,\in\,\Omega_{R}}
|y|^{\varepsilon+n}\right)\norm{\nabla U_{t}}_{L^{q}(\Omega_{R})}\right)\\
&\leq C\left(\sup_{y\,\in\,\Omega_{R}} e^{\langle b_{X},\,y\rangle}+
\left(\sup_{y\,\in\,\Omega_{R}}|y|^{\varepsilon+n}\right)\left(\sup_{y\,\in\,\Omega_{R}} e^{\langle b_{X},\,y\rangle}\right)^{\frac{1}{q}}\right)\\
&\leq C\left(e^{R+1}+(R+1)^{\varepsilon+n}e^{\frac{R+1}{q}}\right)\\
&\leq C\left(e^{\langle b_{X},\,x \rangle}+(\langle b_{X},\,x \rangle+1)^{\varepsilon+n}e^{\frac{\langle b_{X},\,x \rangle}{q}}\right).\\
\end{split}
\end{equation*}
Since $\langle b_{X},\,x \rangle>0$ and $\frac{1}{q}<1$, we find that
$$|U_{t}(x)|\leq C(q)e^{\langle b_{X},\,x\rangle}.$$
A modification of this argument also gives that
$|U_{t}(x)|\leq Ce^{\langle b_{X},\,x\rangle}$ for all $x$ in the remaining compact part of $P$, and so altogether we arrive at the bound
$$|U_{t}(x)|\leq Ce^{\langle b_{X},\,x\rangle}$$
for all $x\in P$, as required.
\end{proof}

\subsubsection*{Local control on $\psi_{s}$}

The previous proposition can be reformulated in the following way to give local control on $\psi_{s}$.

\begin{prop}\label{lowerr}
There exists $C>0$ such that for all $x\in M$ and $s\in[0,\,1]$,
$$\psi_{s}(x)\geq-Ce^{f(x)}.$$
\end{prop}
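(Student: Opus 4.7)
The plan is to translate the polytope-side bound of Proposition \ref{u-bound} back to the K\"ahler potential side using the elementary duality inequality for the Legendre transform, and then to identify the exponential weight $e^{\langle b_X,\nabla\phi_0\rangle}$ with $e^{f}$ up to a multiplicative constant.

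\medskip

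\textbf{Step 1 (Legendre duality to compare $\phi_s$ and $\phi_0$).} Recall that on the dense orbit one has $\omega=2i\partial\bar\partial\phi_0$ and $\omega+i\partial\bar\partial\psi_s=2i\partial\bar\partial\phi_s$, where $\phi_s=\phi_0+\tfrac12\psi_s$, and that $u_s=L(\phi_s)$ is the Legendre transform. The fundamental inequality of the Legendre transform states that for every $\xi\in\mathbb{R}^n\simeq\mathfrak{t}$ and every $x\in \Pol$,
\begin{equation*}
\phi_s(\xi)+u_s(x)\geq \langle\xi,x\rangle,
\end{equation*}
with equality exactly when $x=\nabla\phi_s(\xi)$. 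Applying this with $x=\nabla\phi_0(\xi)$ and using that equality does hold for $s=0$ at this point, I would subtract the two resulting statements to obtain
\begin{equation*}
\phi_s(\xi)-\phi_0(\xi)\geq u_0(\nabla\phi_0(\xi))-u_s(\nabla\phi_0(\xi)).
\end{equation*}
Multiplying by $2$ and inserting the bound of Proposition \ref{u-bound}, this yields
\begin{equation*}
\psi_s(\xi)=2(\phi_s-\phi_0)(\xi)\geq -2\,|u_s-u_0|(\nabla\phi_0(\xi))\geq -2C\,e^{\langle b_X,\,\nabla\phi_0(\xi)\rangle}.
\end{equation*}

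\medskip

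\textbf{Step 2 (Identifying the weight with $e^f$).} By Lemma \ref{normal}, on the dense orbit the function $\langle\nabla\phi_0,b_X\rangle$ is a Hamiltonian potential for $JX$ with respect to $\omega$. Since $f$ is also such a Hamiltonian potential (cf.\ Proposition \ref{mainprop}(ii)) and since any two Hamiltonians of $JX$ on the simply connected manifold $M$ differ by an additive constant, there exists a constant $c\in\mathbb{R}$ with
\begin{equation*}
\langle \nabla\phi_0(\xi),b_X\rangle=f(\xi)+c
\end{equation*}
on the dense orbit. Substituting this into the bound from Step 1 gives $\psi_s(\xi)\geq -2Ce^{c}\,e^{f(\xi)}$ on the dense orbit, uniformly in $s\in[0,1]$.

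\medskip

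\textbf{Step 3 (Extension to $M$).} The dense orbit is open and dense in $M$, and both $\psi_s$ and $f$ are continuous on $M$, so the inequality $\psi_s\geq -C'e^{f}$ persists on all of $M$ by taking limits. Alternatively, because $\psi_s\in\mathcal{M}^{\infty}_{X}(M)$ is bounded on any compact subset by the already established a priori upper bound (Proposition \ref{prop-bd-abo-uni-psi}) and the fact that one can combine it with local Schauder control, one can absorb any behaviour of $\psi_s$ on the exceptional set $E$ into the constant $C$, using that $f$ is bounded below on $M$.

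\medskip

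\textbf{Expected obstacle.} The whole proof is essentially two lines once the correct Legendre duality trick and the identification $\langle \nabla\phi_0,b_X\rangle=f+\mathrm{const.}$ are in place, so the only genuine point to be careful about is verifying that the particular normalisations of $\phi_0$ (fixed by \eqref{bbq}) and of $f$ (fixed by \eqref{normal12}) indeed lead to a constant $c$ that is bounded (and in particular independent of $s$), which is immediate from uniqueness of Hamiltonians up to constants on a simply connected manifold.
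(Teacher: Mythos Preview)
Your argument is correct and is exactly the approach the paper intends: the paper's own proof is a one-line reference to \cite[Proposition 7.18]{ccd2}, and what you have written is precisely the Legendre-duality computation carried out there (Fenchel's inequality $\phi_s(\xi)+u_s(x)\geq\langle\xi,x\rangle$ applied at $x=\nabla\phi_0(\xi)$, combined with the identification of $\langle\nabla\phi_0,b_X\rangle$ with $f$ up to an additive constant). One cosmetic remark: your citation of Lemma \ref{normal} is slightly off, since that lemma is stated for a genuine shrinking soliton metric whereas here $\omega$ is only the background metric; the fact you need---that $\langle\nabla\phi_0,b_X\rangle$ is a Hamiltonian for $JX$ with respect to $\omega=2i\partial\bar\partial\phi_0$---is the general moment-map identity from Section \ref{finito} and does not require the soliton equation.
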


\begin{proof}
This follows from Proposition \ref{u-bound} as in the proof of \cite[Proposition 7.18]{ccd2}.
\end{proof}

\subsubsection*{A priori lower bound on $\inf_M\vartheta_{s}$}
This brings us to the concluding bound of this section. Proposition \ref{lowerr} yields a uniform lower bound on $\min_{K}\psi_{s}$. By Lemma \ref{lemma-loc-crit-pts}, this results in a uniform lower bound on $\inf_{M}\vartheta_{s}$. This is demonstrated as follows.

\begin{prop}[A priori lower bound on $\inf_M\vartheta_{s}$]\label{prop-bd-bel-uni-psi}
Let $(\vartheta_s)_{0\,\leq\, s\,\leq\, 1}$ be a path of solutions in $C^{\infty}_{X}(M)$ to \eqref{starstar-s}. Then there exists a uniform constant $C>0$ such that
\begin{equation*}
\inf_{0\,\leq\, s\,\leq\, 1}\inf_{M}\vartheta_s\geq-C.
\end{equation*}
\end{prop}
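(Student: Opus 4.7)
The plan is to combine the three ingredients already in place: the localisation of the infimum of $\vartheta_s$ given by Lemma \ref{lemma-loc-crit-pts}, the pointwise lower bound $\psi_s(x) \geq -Ce^{f(x)}$ from Proposition \ref{lowerr}, and the boundedness of the auxiliary potential $\Phi_s$ on compact sets.

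First, Lemma \ref{lemma-loc-crit-pts} furnishes a compact subset $K \subset M$ and a constant $C_1 > 0$, both independent of $s \in [0,1]$, such that
\begin{equation*}
\inf_M \vartheta_s \;\geq\; \min_K \vartheta_s - C_1.
\end{equation*}
Thus it suffices to bound $\min_K \vartheta_s$ from below uniformly in $s$. Since $\vartheta_s = \psi_s - \Phi_s$ by definition, I would estimate $\psi_s$ and $\Phi_s$ separately on $K$.

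For $\psi_s$, Proposition \ref{lowerr} gives $\psi_s(x) \geq -C_2 e^{f(x)}$ pointwise for all $x \in M$ and $s \in [0,1]$. As $f$ is smooth and proper, it is bounded on the compact set $K$, so $\min_K \psi_s \geq -C_3$ for a constant $C_3 > 0$ independent of $s$. For $\Phi_s = \chi \cdot (-2 c_s \log r)$, recall from Section \ref{sec-path-reparam} that $\chi$ is a fixed smooth cut-off function, $\log r$ is smooth on the support of $\chi$, and the constants $c_s = \log(1+s(e^{c_0}-1))$ depend continuously on $s \in [0,1]$, hence are uniformly bounded. Consequently $\sup_K |\Phi_s| \leq C_4$ uniformly in $s$. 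Combining these yields
\begin{equation*}
\min_K \vartheta_s \;=\; \min_K (\psi_s - \Phi_s) \;\geq\; -C_3 - C_4,
\end{equation*}
and feeding this back into the localisation inequality gives the required uniform lower bound $\inf_{s \in [0,1]} \inf_M \vartheta_s \geq -C$.

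I do not anticipate any genuine obstacle here: the entire section has been structured precisely so that this concluding bound falls out by combining the localisation of Lemma \ref{lemma-loc-crit-pts} with the global (but exponentially growing) lower bound of Proposition \ref{lowerr}. The essential difficulty — controlling the infimum away from infinity, which is where the toric hypothesis truly enters through the Legendre transform estimates, the $\hat{F}$-functional bound, and the polytope Sobolev inequality — has already been absorbed into Proposition \ref{lowerr}. The only care required is to verify that $\Phi_s$ is uniformly bounded on the (fixed) compact set $K$ provided by the localisation lemma, which is immediate from its explicit form.
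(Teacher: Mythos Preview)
Your proposal is correct and takes essentially the same approach as the paper: the paper's proof simply cites Lemma \ref{lemma-loc-crit-pts} and Proposition \ref{lowerr} (referring to \cite[Proposition 7.19]{ccd2} for the routine unpacking), and your write-up supplies exactly those details, including the observation that $\Phi_s$ is uniformly bounded on the fixed compact set $K$.
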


\begin{proof}
This follows from Lemma \ref{lemma-loc-crit-pts} and Proposition \ref{lowerr} as in the proof of \cite[Proposition 7.19]{ccd2}.
\end{proof}

\subsection{A priori upper bound on the radial derivative}\label{sec-upp-bd-rad-der}

The fact that the data $G_{s}$ of \eqref{starstar-s} decays at infinity
allows us to localise the supremum of $X\cdot\vartheta_{s}$ using the maximum principle.
Using the $C^{0}$-bound on $\vartheta_{s}$, this then allows us to derive an a priori upper bound on $X\cdot\vartheta_{s}$.

\begin{lemma}[Localising the supremum of the radial derivative]\label{lemma-loc-crit-pts-rad-der-sup}
Let $(\vartheta_s)_{0\,\leq\, s\,\leq\, 1}$  be a path of solutions in $C^{\infty}_{X}(M)$ to \eqref{starstar-s}.
Then there exists a compact subset $W\subset M$ and a (uniform) positive constant $C$ such that for all $s\in[0,\,1]$,
$\sup_M X\cdot\vartheta_{s}\leq\max_{W}X\cdot\vartheta_{s}+C$.
\end{lemma}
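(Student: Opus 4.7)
The plan is to mirror the argument of Lemma~\ref{lemma-loc-crit-pts-rad-der-inf}, with the roles of infimum and supremum exchanged. The starting point will again be the identity obtained by differentiating $(\star\star_s)$ along $X$, namely
$$\Delta_{\sigma_s,X}\Bigl(\tfrac{X}{2}\cdot\vartheta_s\Bigr) = -\operatorname{tr}_{\sigma_s}\omega_0 + \operatorname{tr}_{\omega_s}\omega_0 + \tfrac{X}{2}\cdot G_s$$
holding on $M\setminus K$, where $K\subset M$ is the fixed compact subset on whose complement $\mathcal{L}_{X/2}\omega_s=\omega_0$. First I would observe that because $\vartheta_s\in C^\infty_X(M)$, the radial derivative $X\cdot\vartheta_s$ is $O(r^{-2})$ and in particular tends to $0$ at infinity, so either $\sup_M X\cdot\vartheta_s\le 0$, in which case the statement holds trivially, or the supremum is attained at some interior point $p\in M$. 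The goal is then to bound $X\cdot\vartheta_s(p)$ uniformly whenever $p$ lies in $M\setminus W$ for a suitable compact set $W\supset K$.

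The main difficulty, in contrast to the infimum case, is that the AM-GM bound on $\operatorname{tr}_{\sigma_s}\omega_0$ yields only a lower bound, which has the wrong sign to be combined with the maximum-principle inequality $\Delta_{\sigma_s,X}(X\cdot\vartheta_s/2)(p)\le 0$ to produce an upper bound on $X\cdot\vartheta_s(p)$. To get around this, I plan to apply the maximum principle not to $X\cdot\vartheta_s$ directly but to an auxiliary function of the form
$$w:=\tfrac{X}{2}\cdot\vartheta_s + A\vartheta_s - Bf^{-\delta},$$
for suitably chosen constants $A,B,\delta>0$. Combining the identity above with the elementary formula $\Delta_{\sigma_s,X}\vartheta_s = n - \operatorname{tr}_{\sigma_s}\omega_s - \tfrac{X}{2}\cdot\vartheta_s$ and with the sub-barrier estimates for $f^{-\delta}$ from Lemma~\ref{lemma-sub-sol-barrier} produces an expression for $\Delta_{\sigma_s,X}w$ in which the term $-\operatorname{tr}_{\sigma_s}(\omega_0+A\omega_s)$ can be controlled via AM-GM together with the Monge--Amp\`ere equation $(\star\star_s)$, while the $-\tfrac{A}{2}X\cdot\vartheta_s$ term arising from $A\Delta_{\sigma_s,X}\vartheta_s$ provides the negative linear feedback in $X\cdot\vartheta_s$ that forces a quantitative upper bound at any supremum point of $w$.

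To close the argument, I would exploit the uniform $C^0$-bounds on $\vartheta_s$ from Propositions~\ref{prop-bd-abo-uni-psi} and~\ref{prop-bd-bel-uni-psi}, the uniform lower bound on $X\cdot\vartheta_s$ from Corollary~\ref{lowerbound2}, and the asymptotics $\omega_s-\omega_0=O(r^{-2})$ and $G_s-c_s=O(r^{-2})$ recorded in \eqref{first}--\eqref{austin}; these ensure that all the auxiliary terms appearing in $\Delta_{\sigma_s,X}w$, other than those involving $X\cdot\vartheta_s$ itself, are uniformly bounded on $M\setminus W$ for $W$ large enough. At a supremum $p$ of $w$ in $M\setminus W$, the conditions $\nabla w(p)=0$ and $\Delta_{\sigma_s,X}w(p)\le 0$ then combine with the above differential inequality to yield $X\cdot\vartheta_s(p)\le C$ for a constant $C$ depending only on $A$, $B$, $\delta$, and the fixed data of the continuity path. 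Since $w$ differs from $\tfrac{X}{2}\cdot\vartheta_s$ by a uniformly bounded quantity, this localisation of $\sup_M w$ translates into the desired localisation $\sup_M X\cdot\vartheta_s \le \max_W X\cdot\vartheta_s + C$. The hardest part will be the fine-tuning of the parameters $A,B,\delta$ together with the size of $W$ so that the AM-GM application to $\operatorname{tr}_{\sigma_s}(\omega_0+A\omega_s)$ and the linear feedback $-\tfrac{A}{2}X\cdot\vartheta_s$ combine correctly at infinity, producing a genuinely useful upper bound despite the absence of any a priori control on $\operatorname{tr}_{\sigma_s}\omega_0$ from above.
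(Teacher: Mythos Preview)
Your overall strategy---apply the maximum principle to an auxiliary function built from $\tfrac{X}{2}\cdot\vartheta_s$ plus a multiple of $\vartheta_s$---is exactly what the paper does, but you have the \emph{sign} of that multiple wrong, and this is fatal. With your choice $w=\tfrac{X}{2}\cdot\vartheta_s+A\vartheta_s$ (for $A>0$), adding $A\Delta_{\sigma_s,X}\vartheta_s=A\bigl(n-\operatorname{tr}_{\sigma_s}\omega_s-\tfrac{X}{2}\cdot\vartheta_s\bigr)$ to the identity for $\Delta_{\sigma_s,X}\bigl(\tfrac{X}{2}\cdot\vartheta_s\bigr)$ gives
\[
\Delta_{\sigma_s,X}w=-\operatorname{tr}_{\sigma_s}(\omega_0+A\omega_s)+(\text{bounded})-\tfrac{A}{2}\,X\cdot\vartheta_s.
\]
At a supremum of $w$ you know $\Delta_{\sigma_s,X}w(p)\le 0$, hence $\tfrac{A}{2}X\cdot\vartheta_s(p)\ge(\text{bounded})-\operatorname{tr}_{\sigma_s}(\omega_0+A\omega_s)(p)$. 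The right-hand side can be arbitrarily negative (you have no upper bound on $\operatorname{tr}_{\sigma_s}\omega_s$ at this stage), and AM--GM only gives a \emph{lower} bound on the trace, which translates into yet another lower bound for $X\cdot\vartheta_s(p)$. So the ``negative linear feedback'' you describe points the wrong way: it yields a lower bound, not the upper bound you need.

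The paper instead takes the coefficient of $\vartheta_s$ to be \emph{negative}: it works with $\tfrac{X}{2}\cdot\vartheta_s-(C+1)\vartheta_s$. Subtracting $\vartheta_s$ produces $+\tfrac{X}{2}\cdot\vartheta_s$ on the right, and (crucially) replaces the uncontrolled $-\operatorname{tr}_{\sigma_s}\omega_0$ by $-\operatorname{tr}_{\sigma_s}\sigma_s+\operatorname{tr}_{\sigma_s}(\rho_{\omega_0}+i\partial\bar\partial\Phi_s)=-n+O(\operatorname{tr}_{\sigma_s}\omega_s)$; the residual $O(\operatorname{tr}_{\sigma_s}\omega_s)$ term is then absorbed by subtracting a further $C\vartheta_s$. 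One obtains $\Delta_{\sigma_s,X}\bigl(\tfrac{X}{2}\cdot\vartheta_s-(C+1)\vartheta_s\bigr)\ge (C+1)\tfrac{X}{2}\cdot\vartheta_s-C_1$, and the \emph{positive} coefficient in front of $X\cdot\vartheta_s$ now forces $X\cdot\vartheta_s(p)\le C$ at any maximum lying outside $W$. Two smaller points: the paper uses $-\varepsilon f_{\sigma_s}$ (not $-Bf^{-\delta}$) so that the auxiliary function is proper and its maximum is actually attained, and then lets $\varepsilon\to0$; and the passage from the localisation of $w$ back to that of $X\cdot\vartheta_s$ relies on the $C^0$-bounds on $\vartheta_s$ from Propositions~\ref{prop-bd-abo-uni-psi} and~\ref{prop-bd-bel-uni-psi}, exactly as you anticipated.
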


\begin{proof}
From \eqref{starstar-s}, we have the following equation outside a fixed compact subset $W\subset M$:
\begin{equation*}
\begin{split}
\frac{X}{2}\cdot\left(G_{s}+\frac{X}{2}\cdot\vartheta_{s}\right)=\frac{X}{2}\cdot\left(\log\left(\frac{\sigma_{s}^{n}}{\omega_{s}^{n}}\right)\right)&=\operatorname{tr}_{\sigma_{s}}\mathcal{L}_{\frac{X}{2}}\sigma_{s}-\operatorname{tr}_{\omega_{s}}\mathcal{L}_{\frac{X}{2}}\omega_{s}\\
&=\operatorname{tr}_{\sigma_{s}}\mathcal{L}_{\frac{X}{2}}(\omega_{s}+i\partial\bar{\partial}\vartheta_{s})-\operatorname{tr}_{\omega_{s}}\mathcal{L}_{\frac{X}{2}}\omega_{s}\\
&=\operatorname{tr}_{\sigma_{s}}\omega_{0}+\frac{1}{2}\Delta_{\sigma_{s}}(X\cdot\vartheta_{s})-\operatorname{tr}_{\omega_{s}}\omega_{0}.\\
\end{split}
\end{equation*}
Thus, upon differentiating \eqref{starstar-s} along $X$, we obtain on $M\setminus W$ the equation
\begin{equation}\label{banana1}
\begin{split}
\Delta_{\sigma_{s},\,X}\left(\frac{X}{2}\cdot\vartheta_{s}\right):=
\Delta_{\sigma_{s}}\left(\frac{X}{2}\cdot\vartheta_{s}\right)-\frac{X}{2}\cdot\left(\frac{X}{2}\cdot\vartheta_{s}\right)
&=\operatorname{tr}_{\omega_{s}}\omega_{0}-\operatorname{tr}_{\sigma_{s}}\omega_{0}+\frac{X}{2}\cdot G_{s}.
\end{split}
\end{equation}
Now, recall that outside a fixed compact subset, we have that
\begin{equation*}
\begin{split}
\omega_{s}&=\omega+i\partial\bar{\partial}\Phi_{s}=\omega-2c_{s} i\partial\bar{\partial}\log r=\omega_{0}+\rho_{\omega_{0}}-2c_{s}\omega^{T},
\end{split}
\end{equation*}
so that there exists a positive constant $C$ such that for all $s\in[0,\,1]$, $C^{-1}\omega_0\leq \omega_s\leq C\omega_0$ on $M$. In particular, since the norm of $\rho_{\omega_0}+i\partial\bar{\partial}\Phi_s$ with respect to the cone metric $\omega_0$ is bounded outside a fixed compact subset contained the exceptional set of the resolution, one sees that there is a positive constant $C$ (that may differ from line to another) such that for all $s\in[0,1]$,
\begin{equation}\label{hello-savior}
|\tr_{\sigma_s}\left(\rho_{\omega_0}+i\partial\bar{\partial}\Phi_s\right)|\leq C\tr_{\sigma_s}\omega_s.
\end{equation}
Combining \eqref{banana1} and \eqref{hello-savior}, one now sees that outside a sufficiently large compact subset of $M$,
\begin{equation*}
\begin{split}
\Delta_{\sigma_{s},\,X}\left(\frac{X}{2}\cdot\vartheta_{s}-\vartheta_s\right)
&=\operatorname{tr}_{\omega_{s}}\omega_{0}-\operatorname{tr}_{\sigma_{s}}\omega_{0}+\frac{X}{2}\cdot G_{s}-\Delta_{\sigma_{s}}\vartheta_{s}+\frac{X}{2}\cdot\vartheta_{s}\\
&=\frac{X}{2}\cdot\vartheta_{s}+\operatorname{tr}_{\omega_{s}}\omega_{0}-\operatorname{tr}_{\sigma_{s}}\bigl(
\underbrace{\omega_{0}+\rho_{\omega_0}+i\partial\bar{\partial}\left(\Phi_s+\vartheta_s\right)}_{=\,\sigma_{s}}\bigr)\\
&\quad+\operatorname{tr}_{\sigma_{s}}\left(\rho_{\omega_0}+i\partial\bar{\partial}\Phi_s\right)+\frac{X}{2}\cdot G_{s}\\
&\geq \frac{X}{2}\cdot\vartheta_{s}+\operatorname{tr}_{\omega_{s}}\omega_{0}-n+\frac{X}{2}\cdot G_{s}-C\tr_{\sigma_s}\omega_s,
\end{split}
\end{equation*}
so that by using the relation $\tr_{\sigma_s}\omega_s=n-\Delta_{\sigma_s}\vartheta_s$ again, we arrive at the bound
\begin{equation}
\begin{split}\label{getting-better}
\Delta_{\sigma_{s},\,X}\left(\frac{X}{2}\cdot\vartheta_{s}-(C+1)\vartheta_s\right)
&\geq (C+1)\frac{X}{2}\cdot\vartheta_{s}+\operatorname{tr}_{\omega_{s}}\omega_{0}-(C+1)n+\frac{X}{2}\cdot G_{s}
\end{split}
\end{equation}
outside a sufficiently large compact subset of $M$. Observe from Lemma \ref{lemma-tr-star-star} and the previous estimate \eqref{getting-better}
that for $\varepsilon>0$, 
\begin{equation}
\begin{split}\label{olala}
\Delta_{\sigma_{s},\,X}\left(\frac{X}{2}\cdot\vartheta_{s}-(C+1)\vartheta_s-\varepsilon f_{\sigma_s}\right)
&\geq (C+1)\frac{X}{2}\cdot\vartheta_{s}+\operatorname{tr}_{\omega_{s}}\omega_{0}-(C+1)n+\frac{X}{2}\cdot G_{s}+\varepsilon(f-P_s)
\\
&\geq (C+1)\frac{X}{2}\cdot\vartheta_{s}+\operatorname{tr}_{\omega_{s}}\omega_{0}-(C+1)n+\frac{X}{2}\cdot G_{s}\\
&\geq  (C+1)\frac{X}{2}\cdot\vartheta_{s}-C_{1}
\end{split}
\end{equation}
outside a large compact subset $W$ of $M$ (independent of $\varepsilon$) chosen such that $f\geq\sup_{s\in[0,\,1]}P_s$ on $M\setminus W$. 
Here, $C_{1}$ denotes a positive constant that is independent of $s\in[0,1]$.

Next, for $\varepsilon>0$ consider the function $\frac{X}{2}\cdot\vartheta_{s}-(C+1)\vartheta_s-\varepsilon f_{\sigma_s}$. By definition of the relevant function spaces, for each $s\in[0,\,1]$, the function $\frac{X}{2}\cdot\vartheta_{s}-(C+1)\vartheta_s-\varepsilon f_{\sigma_s}$ is proper and bounded from above. In particular, it attains a global maximum on $M$. If such a maximum lies in $M\setminus W$, then \eqref{olala} implies that $(C+1)X\cdot \vartheta_s\leq 2C_{1}$. In particular, equation \eqref{vladimir} and Proposition \ref{prop-bd-bel-uni-psi} allow us to estimate pointwise on $M$ that 
\begin{equation*}
\frac{X}{2}\cdot\vartheta_{s}-(C+1)\vartheta_s-\varepsilon f_{\sigma_s}\leq \max\left\{\max_{W}\left(\frac{X}{2}\cdot\vartheta_{s}-(C+1)\vartheta_s-\varepsilon f_{\sigma_s}\right),\,C_{2}\right\}
\end{equation*}
for another uniform positive constant $C_{2}$. Since this estimate holds for all $\varepsilon>0$, we may let $\varepsilon\to0$ to obtain the upper bound 
\begin{equation*}
\begin{split}
\frac{X}{2}\cdot\vartheta_{s}&\leq \max\left\{\max_{W}\left(\frac{X}{2}\cdot\vartheta_{s}-(C+1)\vartheta_s\right),\,C_{2}\right\}+(C+1)\vartheta_s\\
&\leq \max_{W}\left(\frac{X}{2}\cdot\vartheta_{s}\right)+C_{3},
\end{split}
\end{equation*}
for yet another uniform positive constant $C_{3}$, thanks to Propositions \ref{prop-bd-abo-uni-psi} and \ref{prop-bd-bel-uni-psi}.
\end{proof}

We now conclude with:
\begin{prop}\label{prop-bd-uni-X-psi}
Let $(\vartheta_s)_{0\,\leq\, s\,\leq\, 1}$ be a path of solutions in $C^{\infty}_{X}(M)$ to \eqref{starstar-s}. Then there exists a positive constant $C=C\left(n,\omega,\sup_{s\in[0,1]}\|G_s\|_{C^{0}(M)}\right)$ such that
\begin{equation*}
\sup_{0\,\leq\, s\,\leq\, 1}\sup_M X\cdot\vartheta_s\leq C.
\end{equation*}
In particular, $X\cdot\vartheta_{s}<C$ for all $s\in[0,\,1]$.
\end{prop}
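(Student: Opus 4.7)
My plan is to prove Proposition \ref{prop-bd-uni-X-psi} by a direct maximum principle argument on a suitable auxiliary function defined globally on $M$, in the same spirit as Lemma \ref{lemma-loc-crit-pts-rad-der-sup}, but with a different choice of constant coefficients that turns the localisation into an actual a priori bound. For constants $A>0$ and $\varepsilon\in(0,1]$ to be chosen later, I will consider
\begin{equation*}
v_{\varepsilon}:=\frac{X}{2}\cdot\vartheta_{s}-A\vartheta_{s}-\varepsilon f_{\sigma_{s}}.
\end{equation*}
Since $\vartheta_{s}\in\mathcal{M}^{\infty}_{X}(M)\subseteq C^{\infty}_{X}(M)$, both $\vartheta_{s}$ and $\tfrac{X}{2}\vartheta_{s}$ are bounded on $M$, while $f_{\sigma_{s}}$ is proper and bounded below by \eqref{vladimir}. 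Hence $v_{\varepsilon}$ is bounded above and tends to $-\infty$ at infinity, so attains its global maximum at some $x_{0,\varepsilon}\in M$.

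Writing $\beta_{s}:=\mathcal{L}_{X/2}\omega_{s}$, I then combine \eqref{dont-forget-me-0} (valid on all of $M$), the identity $\Delta_{\sigma_{s},X}\vartheta_{s}=n-\operatorname{tr}_{\sigma_{s}}\omega_{s}-\tfrac{X}{2}\vartheta_{s}$, and Lemma \ref{lemma-tr-star-star} to compute
\begin{equation*}
\Delta_{\sigma_{s},X}v_{\varepsilon}=\bigl(A\operatorname{tr}_{\sigma_{s}}\omega_{s}-\operatorname{tr}_{\sigma_{s}}\beta_{s}\bigr)+\tfrac{A}{2}X\cdot\vartheta_{s}-An+\operatorname{tr}_{\omega_{s}}\beta_{s}+\tfrac{X}{2}G_{s}-\varepsilon P_{s}+\varepsilon f.
\end{equation*}
The key observation is that $\beta_{s}\leq A\omega_{s}$ globally on $M$, uniformly in $s\in[0,1]$, for $A$ sufficiently large. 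Indeed, outside a compact subset, scaling considerations on the cone give $\beta_{s}=\omega_{0}$, and $\omega_{s}/\omega_{0}\to 1$ as $r\to+\infty$ by \eqref{first}; on the complementary compact set, both $\beta_{s}$ and $\omega_{s}$ vary smoothly in the compact parameter $s\in[0,1]$, with $\omega_{s}$ uniformly positive. Fixing such an $A$, the first parenthesis above is nonnegative, and the uniform bounds on $\operatorname{tr}_{\omega_{s}}\beta_{s}$, $X\cdot G_{s}$ (by \eqref{austin}), and $P_{s}$ collapse the remaining bounded terms into a uniform constant $C_{1}>0$, yielding
\begin{equation*}
\Delta_{\sigma_{s},X}v_{\varepsilon}\geq\frac{A}{2}X\cdot\vartheta_{s}-An-C_{1}+\varepsilon f.
\end{equation*}
Applying the maximum principle at $x_{0,\varepsilon}$ and using $f\geq -C'$ on $M$ from Proposition \ref{mainprop}(ii), we get
\begin{equation*}
X\cdot\vartheta_{s}(x_{0,\varepsilon})\leq 2n+\frac{2(C_{1}+C')}{A}=:C_{2}
\end{equation*}
uniformly in $s\in[0,1]$ and $\varepsilon\in(0,1]$.

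The conclusion follows by propagation: for any $y\in M$, the inequality $v_{\varepsilon}(y)\leq v_{\varepsilon}(x_{0,\varepsilon})$ combined with the previous pointwise estimate, the uniform $C^{0}$-bound $|\vartheta_{s}|\leq K$ from Propositions \ref{prop-bd-abo-uni-psi} and \ref{prop-bd-bel-uni-psi}, and the uniform lower bound $f_{\sigma_{s}}\geq -C''$ from \eqref{vladimir}, yields
\begin{equation*}
X\cdot\vartheta_{s}(y)\leq C_{2}+4AK+2\varepsilon\bigl(f_{\sigma_{s}}(y)+C''\bigr).
\end{equation*}
Letting $\varepsilon\to 0$ with $y$ fixed gives the asserted uniform upper bound on $X\cdot\vartheta_{s}$. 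The only delicate step is the uniform comparison $\beta_{s}\leq A\omega_{s}$; this however reduces to the scaling identities $\mathcal{L}_{X/2}\omega_{0}=\omega_{0}$, $\mathcal{L}_{X/2}\rho_{\omega_{0}}=0$, and $\mathcal{L}_{X/2}\omega^{T}=0$ on the cone together with the decomposition \eqref{first} of $\omega_{s}$ outside a compact subset, and the smooth dependence of $\omega_{s}$ on $s\in[0,1]$ over compact subsets.
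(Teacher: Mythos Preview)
Your proof is correct and follows essentially the same route as the paper, which combines Lemma \ref{lemma-loc-crit-pts-rad-der-sup} with the uniform $C^{0}$-bound on $\vartheta_{s}$ (the paper refers out to \cite[Proposition 7.20]{ccd2} for the remaining details). Your version is in fact a clean, self-contained streamlining of that argument: by observing that the inequality $\beta_{s}=\mathcal{L}_{X/2}\omega_{s}\leq A\omega_{s}$ holds \emph{globally} on $M$ (not just outside a compact set, as used in the proof of Lemma \ref{lemma-loc-crit-pts-rad-der-sup}), you avoid splitting the argument into ``localise at infinity'' plus ``bound on the compact part'' and obtain the upper bound directly from a single application of the maximum principle.

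Two minor remarks. First, you write $\vartheta_{s}\in\mathcal{M}^{\infty}_{X}(M)$, but in fact $\vartheta_{s}=\psi_{s}-\Phi_{s}\in C^{\infty}_{X}(M)$ while it is $\psi_{s}$ that lies in $\mathcal{M}^{\infty}_{X}(M)$; this is inconsequential, since $C^{\infty}_{X}(M)$ already guarantees that $\vartheta_{s}$ and $X\cdot\vartheta_{s}$ are bounded for each fixed $s$. Second, your justification that $\beta_{s}\leq A\omega_{s}$ on the compact part uses smooth dependence on $s\in[0,1]$; this is indeed available, since $\omega_{s}=\omega+i\partial\bar{\partial}\Phi_{s}$ with $\Phi_{s}=\chi\cdot\eta_{s}$ depending smoothly on $s$ through $c_{s}$.
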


\begin{proof}
The proof is verbatim the proof of \cite[Proposition 7.20]{ccd2}, using the localisation given by Lemma \ref{lemma-loc-crit-pts-rad-der-sup}
and the uniform upper bound on $\|\vartheta_{s}\|_{C^{0}(M)}$.
\end{proof}

\subsection{A priori estimates on higher derivatives}\label{sec-high-der}
We next derive a priori global bounds on higher derivatives of solutions to the complex Monge-Amp\`ere equation \eqref{starstar-s}, beginning with
the $C^{2}$-estimate. The a priori bounds we derive hold everywhere on the manifold $M$, not just on a given fixed compact subset.
We get around this problem by working with the drift Laplacian. More precisely, by
following Yau's original $C^{2}$-estimate, we show that the Laplacian of $\vartheta_{s}$
is a subsolution of a certain elliptic PDE with respect to the drift Laplacian of the unknown metric.

\subsubsection{$C^2$ a priori estimate}

\begin{prop}[A priori $C^2$-estimate]\label{prop-C^2-est}
Let $(\vartheta_s)_{0\,\leq\, s\,\leq\, 1}$ be a path of solutions in $C^{\infty}_{X}(M)$ to \eqref{starstar-s}. Then there exists a positive constant $C=C\left(n,\omega,\sup_{s\in[0,1]}\|G_s\|_{C^2}\right)$ such that the following $C^2$ a priori estimate holds true:
\begin{equation*}
\sup_{0\,\leq\, s\,\leq\, 1}\|i\partial\bar{\partial}\vartheta_s\|_{C^{0}(M)}\leq C.
\end{equation*}
In particular,
\begin{equation*}
\sup_{0\,\leq\, s\,\leq\, 1}\|i\partial\bar{\partial}\psi_s\|_{C^{0}(M)}\leq C.
\end{equation*}
\end{prop}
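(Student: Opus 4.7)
The plan is to adapt Yau's classical $C^2$-estimate for the complex Monge-Amp\`ere equation to the drift-Laplacian setting, following the strategy of \cite[Proposition 7.21]{ccd2}. Set $T := \operatorname{tr}_{\omega_s}\sigma_s = n + \Delta_{\omega_s}\vartheta_s$. The arithmetic-geometric inequality applied to the eigenvalues of $\sigma_s$ with respect to $\omega_s$, combined with \eqref{starstar-s}, yields the uniform lower bound
\begin{equation*}
T \geq n\left(\frac{\sigma_s^n}{\omega_s^n}\right)^{1/n} = n\exp\!\left(\frac{G_s + \tfrac{X}{2}\cdot\vartheta_s}{n}\right) \geq c_0>0,
\end{equation*}
by \eqref{austin}, Corollary~\ref{lowerbound2}, and Proposition~\ref{prop-bd-uni-X-psi}. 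The test function will be $H_\varepsilon := \log T - A\vartheta_s - \varepsilon f_{\sigma_s}$, with $A>0$ large and $\varepsilon>0$ small, both to be adjusted.

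The central computation applies Yau's inequality
\begin{equation*}
\Delta_{\sigma_s}\log T \geq \frac{\Delta_{\omega_s}\log(\sigma_s^n/\omega_s^n)}{T} - B\operatorname{tr}_{\sigma_s}\omega_s,
\end{equation*}
where $B>0$ controls the negative part of the bisectional curvature of $\omega_s$ uniformly in $s$, uniformity being a consequence of the identity $\omega_s = \omega_0 + \rho_{\omega_0} - 2c_s\omega^T$ outside a fixed compact set and the boundedness of $c_s$. Differentiating \eqref{starstar-s} along $X$ and using the Hamiltonian identities $\mathcal{L}_{X/2}\omega_s = i\partial\bar\partial f_{\omega_s}$ and $\mathcal{L}_{X/2}\sigma_s = i\partial\bar\partial f_{\sigma_s}$ yields
\begin{equation*}
\Delta_{\omega_s}\log\frac{\sigma_s^n}{\omega_s^n} = \Delta_{\omega_s}G_s + \tfrac{1}{2}X\cdot T + \tfrac{1}{2}[\Delta_{\omega_s},X\cdot]\vartheta_s.
\end{equation*}
The crucial structural cancellation is that the term $(X\cdot T)/(2T)$ in Yau's inequality exactly offsets the drift $-\tfrac{X}{2}\cdot\log T$ in $\Delta_{\sigma_s,X}\log T$. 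After absorbing the commutator term via an interpolation bound of the form $|\nabla\vartheta_s|_{g_s}^2\leq C\,|\vartheta_s|_{C^0(M)}\,T$, this produces
\begin{equation*}
\Delta_{\sigma_s,X}\log T \geq -B\operatorname{tr}_{\sigma_s}\omega_s - C.
\end{equation*}
Combining this with $\Delta_{\sigma_s,X}\vartheta_s = n-\operatorname{tr}_{\sigma_s}\omega_s - \tfrac{X}{2}\cdot\vartheta_s$ and Lemma~\ref{lemma-tr-star-star} and choosing $A>B+1$ gives
\begin{equation*}
\Delta_{\sigma_s,X}H_\varepsilon \geq (A-B-1)\operatorname{tr}_{\sigma_s}\omega_s + \varepsilon f - C'
\end{equation*}
for a uniform constant $C'$.

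Since $f_{\sigma_s}$ is proper and bounded from below by \eqref{vladimir} while $T$ and $\vartheta_s$ are bounded for each fixed $s$, $H_\varepsilon$ decays to $-\infty$ at infinity and attains a global maximum at a point $p_\varepsilon\in M$. The inequality $\Delta_{\sigma_s,X}H_\varepsilon(p_\varepsilon)\leq 0$ together with $f\geq-n$ then yields $\operatorname{tr}_{\sigma_s}\omega_s(p_\varepsilon)\leq C''$ uniformly in $\varepsilon\in(0,1]$ and $s\in[0,1]$. The elementary inequality $\max_i\lambda_i\leq (\sigma_s^n/\omega_s^n)(\operatorname{tr}_{\sigma_s}\omega_s/(n-1))^{n-1}$, combined with the uniform bound on $\sigma_s^n/\omega_s^n$ coming from the MA equation, then gives $T(p_\varepsilon)\leq C'''$ uniformly. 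Using the $C^0$-bounds of Propositions~\ref{prop-bd-abo-uni-psi} and~\ref{prop-bd-bel-uni-psi} together with \eqref{vladimir}, $H_\varepsilon\leq H_\varepsilon(p_\varepsilon)$ is uniformly bounded above. Letting $\varepsilon\to 0^+$ yields $\log T\leq C+A\|\vartheta_s\|_{C^0(M)}$ uniformly, and combined with the lower bound on $T$ this gives uniform two-sided bounds on the eigenvalues of $\sigma_s$ relative to $\omega_s$. The asserted bound on $\|i\partial\bar\partial\vartheta_s\|_{C^0(M)}$ follows, and the bound for $\psi_s = \vartheta_s + \Phi_s$ is then immediate from the uniform bound on $\|i\partial\bar\partial\Phi_s\|_{C^0(M)}$. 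The principal technical obstacle is the treatment of the commutator $[\Delta_{\omega_s},X\cdot]\vartheta_s$, which a priori involves $|\nabla\vartheta_s|_{g_s}$ and the holomorphic $(2,0)$-Hessian of $\vartheta_s$ rather than just its $(1,1)$-part; absorbing these contributions into the favorable positive term $(A-B-1)\operatorname{tr}_{\sigma_s}\omega_s$ uniformly in $s$ is the delicate part of the argument.
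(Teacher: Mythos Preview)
Your approach is essentially the same as the paper's: both adapt Yau's second-order estimate to the drift Laplacian $\Delta_{\sigma_s,X}$. The paper uses the multiplicative test function $u_s=e^{-\lambda\vartheta_s}(n+\Delta_{\omega_s}\vartheta_s)$ and derives a differential inequality of the form $\Delta_{\sigma_s,X}u_s\geq -C_1(1+u_s)+C_2 u_s^{n/(n-1)}$, then applies the maximum principle using that $u_s\to n$ at infinity (since $\vartheta_s\in C^\infty_X(M)$); you use the additive/logarithmic version $\log T-A\vartheta_s$ together with an $\varepsilon f_{\sigma_s}$ barrier. These are standard equivalent formulations.

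Where your write-up is off is in the treatment of the commutator $[\Delta_{\omega_s},X\cdot]\vartheta_s$, which you flag as the ``principal technical obstacle'' involving $|\nabla\vartheta_s|_{g_s}$ and the $(2,0)$-Hessian. This is a misdiagnosis. Because $X$ is real holomorphic (so $\bar\partial X^{1,0}=0$) and $\vartheta_s$ is $JX$-invariant, one has $i\partial\bar\partial(X\cdot\vartheta_s)=\mathcal{L}_X(i\partial\bar\partial\vartheta_s)$, and a direct coordinate computation shows that only the $(1,1)$-Hessian of $\vartheta_s$ enters, contracted against $\nabla^{g_s}X$ and $\mathcal{L}_X g_s^{-1}$. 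Both of these are uniformly bounded (since $X=\nabla^{g_s}f_{\omega_s}$ with $\nabla^{g_s,2}f_{\omega_s}$ bounded), so the commutator is simply bounded by $C\|\nabla^{g_s}X\|_{C^0}(1+T)$. This is exactly how the paper handles it: the relevant error term is $g_s(\mathcal{L}_{X/2}\omega_s,i\partial\bar\partial\vartheta_s)$, which is controlled by $C\|\nabla^{g_s}X\|_{C^0(M)}(1+u_s)$ and absorbed directly. No interpolation inequality on $|\nabla\vartheta_s|^2$ is needed, and no $(2,0)$-Hessian appears.
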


\begin{proof}
Following closely the proof of \cite[Proposition 6.6]{con-der} where the
approach taken is based on standard computations performed in Yau's seminal paper \cite[pp.347--351]{Calabiconj} (see also \cite[Lemma 5.4.16]{siepmann}
and \cite[pp.52--55]{Tian-Can-Met-Boo}), let $\Delta_{\sigma_{s}}$ denote the Laplacian with respect to $\sigma_{s}$ and let $u_{s}:=e^{-\lambda\vartheta_{s}}(n+\Delta_{\sigma_{s}}\vartheta_{s})$, where $\lambda>0$ will be specified later. One first estimates the drift Laplacian $\Delta_{\sigma_{s}}-\frac{X}{2}\cdot$ of $u_{s}$ with respect to $\sigma_{s}$ in the following way, using the fact that $\vartheta_{s}$ satisfies \eqref{starstar-s}:
 \begin{equation}\label{C2-est-yau}
 \begin{split}
\left(\Delta_{\sigma_{s}}-\frac{X}{2}\cdot\right) u_{s}
&\geq e^{-\lambda\vartheta_{s}}\Delta_{\sigma_{s}}G_{s}+e^{-\lambda\vartheta_{s}}g_{s}\left(\mathcal{L}_{\frac{X}{2}}\omega_s,\,i\partial\bar\partial\vartheta_{s}\right)
-C_{s}n^{2}e^{-\lambda\vartheta_{s}}+\lambda\left(\frac{X}{2}\cdot\vartheta_{s}\right)u_{s}
-\lambda n u_{s}\\
&\quad+(\lambda+C_{s})e^{\frac{\lambda\vartheta_{s}-G_{s}-\frac{X}{2}\cdot\vartheta_{s}}{n-1}}u_{s}^{\frac{n}{n-1}},
\end{split}
\end{equation}
where $C_{s}:=\inf_{i\,\neq\,k}\operatorname{Rm}^{s}_{i\bar{\imath}k\bar{k}}$, $\operatorname{Rm}^{s}$ here denoting the complex linear extension of the curvature operator of the metric $g_{s}$. Since $C_{s}$ is uniformly bounded below in $s$ by a constant $A$, we may choose $\lambda>0$ so that $\lambda+A=1$. Moreover, as
$$\left|g_{s}\left(\mathcal{L}_{\frac{X}{2}}\omega_s,\,i\partial\bar\partial\vartheta_{s}\right)\right|\leq C\|\nabla^{s}X\|_{C^{0}(M)}(1+u_{s})$$
for some generic constant $C>0$, we deduce that $u_{s}$ satisfies the following differential inequality:
\begin{equation*}
\left(\Delta_{\sigma_{s}}-\frac{X}{2}\cdot\right) u_{s}\geq-C_{1}(1+u_{s})+C_{2}u_{s}^{\frac{n}{n-1}},
\end{equation*}
where $C_{1}$ and $C_{2}$ depend only on $n$, $A$, $\sup_{s\,\in\,[0,\,1]}\|\vartheta_{s}\|_{C^{0}(M)}$, $\sup_{s\,\in\,[0,\,1]}\|X\cdot\vartheta_{s}\|_{C^{0}(M)}$,\linebreak
$\sup_{s\,\in\,[0,\,1]}\|G_{s}\|_{C^{2}}$, and $\sup_{s\,\in\,[0,\,1]}\|\nabla^{s}X\|_{C^{0}(M)}$. The combination of Corollary \ref{lowerbound2} and Propositions \ref{prop-bd-abo-uni-psi}, \ref{prop-bd-bel-uni-psi}, and \ref{prop-bd-uni-X-psi} shows that $C_{1}$ and $C_{2}$ actually depend only on $n$, $A$, and $\sup_{s\,\in\,[0,\,1]}\|G_{s}\|_{C^{2}}$.

Since $u_{s}$ is non-negative and converges to $n$ at infinity because $\vartheta_{s}\in C^{\infty}_{X}(M)$,
an application of the maximum principle to an exhausting sequence of domains of $M$ yields an upper bound for $n+\Delta_{\sigma_{s}}\vartheta_{s}$
and in turn, the desired bound on $i\partial\bar{\partial}\vartheta_{s}$.
\end{proof}

A useful consequence of Proposition \ref{prop-C^2-est} is that the K\"ahler metrics
induced by $\sigma_{s}$ and $\omega_{s}$ are uniformly equivalent.
\begin{corollary}\label{coro-equiv-metrics-0}
Let $(\vartheta_s)_{0\,\leq\, s\,\leq\, 1}$ be a path of solutions in $C^{\infty}_{X}(M)$ to \eqref{starstar-s} and
for $s\in[0,\,1]$, let $g_{s},\,h_{s}$ denote the K\"ahler metrics induced by $\omega_{s},\,\sigma_{s}$ respectively. Then the tensors
$g_{s}^{-1}h_{s}$ and $h_{s}^{-1}g_{s}$ satisfy the following uniform estimate:
\begin{equation*}
\sup_{0\,\leq\, t\,\leq\, 1}\|g_{s}^{-1}h_{s}\|_{C^{0}(M)}+\sup_{0\,\leq\, t\,\leq\, 1}\|h_{s}^{-1}g_{s}\|_{C^{0}(M)}\leq C
\end{equation*}
for some positive constant $C=C\left(n,\omega,\sup_{s\in[0,1]}\|G_s\|_{C^2}\right)$.
In particular, the metrics $g_{s}$ and $(h_{s})_{0\,\leq\, s\,\leq\, 1}$ are uniformly equivalent.
\end{corollary}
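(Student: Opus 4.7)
The plan is to combine the $C^{2}$-estimate of Proposition \ref{prop-C^2-est} with the determinant identity built into the complex Monge-Amp\`ere equation \eqref{starstar-s} to obtain two-sided bounds on the endomorphism $g_{s}^{-1}h_{s}$, and then to conclude by linear algebra.

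First, I would observe that the proof of Proposition \ref{prop-C^2-est} actually produces a uniform upper bound on $n+\Delta_{\omega_{s}}\vartheta_{s}=\tr_{\omega_{s}}\sigma_{s}$. Since both $\omega_{s}$ and $\sigma_{s}$ are positive $(1,1)$-forms, this immediately yields a uniform upper bound on the eigenvalues $\lambda_{1}(x,s),\ldots,\lambda_{n}(x,s)$ of $g_{s}^{-1}h_{s}$, that is,
\[
\max_{1\,\leq\, i\,\leq\, n}\lambda_{i}(x,s)\leq\tr_{\omega_{s}}\sigma_{s}\leq C\qquad\textrm{for all $(x,s)\in M\times[0,1]$},
\]
with $C$ depending only on the quantities listed in the statement of the corollary. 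In particular, $\|g_{s}^{-1}h_{s}\|_{C^{0}(M)}\leq C$, which is half of the desired bound.

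For the other half, I would rewrite \eqref{starstar-s} as the determinant identity
\[
\frac{\sigma_{s}^{n}}{\omega_{s}^{n}}=e^{G_{s}+\frac{X}{2}\cdot\vartheta_{s}}=\det\left(g_{s}^{-1}h_{s}\right).
\]
Since $G_{s}=O(r^{-2})$ with $g_{0}$-derivatives (cf.~\eqref{austin}) is uniformly bounded in $s\in[0,1]$, and since the radial derivative $X\cdot\vartheta_{s}$ is uniformly bounded on $M$, uniformly in $s\in[0,1]$, by Corollary \ref{lowerbound2} and Proposition \ref{prop-bd-uni-X-psi}, the right-hand side satisfies
\[
0<c\leq\det\left(g_{s}^{-1}h_{s}\right)\leq C\qquad\textrm{on $M\times[0,1]$}
\]
for uniform positive constants $c,C$.

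Combining the trace upper bound with the determinant lower bound, I can solve for each eigenvalue: for every $i$ and every $(x,s)$,
\[
\lambda_{i}(x,s)=\frac{\det(g_{s}^{-1}h_{s})}{\prod_{j\,\neq\,i}\lambda_{j}(x,s)}\geq\frac{c}{C^{n-1}}.
\]
Hence all eigenvalues of $g_{s}^{-1}h_{s}$ are bounded between two uniform positive constants, which gives the uniform two-sided estimate $\|g_{s}^{-1}h_{s}\|_{C^{0}(M)}+\|h_{s}^{-1}g_{s}\|_{C^{0}(M)}\leq C$ asserted in the corollary. There is no serious obstacle here, as all the required a priori bounds have been established in the preceding subsections; the only point of care is to confirm that the constants produced by Propositions \ref{prop-bd-abo-uni-psi}, \ref{prop-bd-bel-uni-psi}, \ref{prop-bd-uni-X-psi}, and \ref{prop-C^2-est} depend at most on the quantities $n$, $\omega$, and $\sup_{s\in[0,1]}\|G_{s}\|_{C^{2}}$, which they do.
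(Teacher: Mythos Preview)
Your proof is correct and follows essentially the same approach as the paper, which simply cites the analogous argument in \cite{conlon33} and lists the same ingredients: the $C^{2}$-bound of Proposition \ref{prop-C^2-est} for the trace, and Corollary \ref{lowerbound2} together with Proposition \ref{prop-bd-uni-X-psi} for the determinant via the Monge--Amp\`ere equation. The eigenvalue argument you spell out is the standard mechanism behind the cited reference.
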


\begin{proof}
The estimate follows as in the proof of \cite[Corollary 7.15]{conlon33} using Corollary \ref{lowerbound2} and Propositions \ref{prop-bd-uni-X-psi} and \ref{prop-C^2-est}. The fact that $\omega_{s}$ and $\sigma_{s}$ differ by a $(1,\,1)$-form whose norm is controlled uniformly in $s$ yields the last claim of the corollary.
\end{proof}

\subsubsection{$C^3$ a priori estimate}
We now present the $C^{3}$-estimate.
\begin{prop}[A priori $C^3$-estimate]\label{prop-C^3-est}
Let $(\vartheta_s)_{0\,\leq\, s\,\leq\, 1}$ be a path of solutions in $C^{\infty}_{X}(M)$ to \eqref{starstar-s} and let
$g_{s}$ be the K\"ahler metric induced by $\omega_{s}$ with Levi-Civita connection $\nabla^{g_{s}}$. Then
\begin{equation*}
\sup_{0\,\leq\, s\,\leq\, 1}\|\nabla^{g_{s}}\partial\bar{\partial}\vartheta_s\|_{C^{0}(M)}\leq C\left(n,\omega,\sup_{s\in[0,1]}\| G_s\|_{C^3}\right).
\end{equation*}
In particular,
\begin{equation}\label{a-priori-nabla-rad-der}
\sup_{0\,\leq\, s\,\leq\, 1}\|\nabla^{g_{s}}\left(X\cdot\vartheta_s\right)\|_{C^{0}(M)}\leq C\left(n,\omega,\sup_{s\in[0,1]}\| G_s\|_{C^3}\right).
\end{equation}
\end{prop}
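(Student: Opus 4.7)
The plan is to follow the classical Calabi--Yau third-order estimate, adapted to the drift Laplacian and the non-compact setting as was done for steady solitons in \cite[Proposition~6.7]{con-der}. Following Calabi, I will define
\begin{equation*}
S_s \;:=\; h_s^{i\bar q}\, h_s^{p\bar{\jmath}}\, h_s^{k\bar l}\, \nabla^{g_s}_i\nabla^{g_s}_{\bar{\jmath}}\nabla^{g_s}_k \vartheta_s \;\cdot\; \overline{\nabla^{g_s}_q\nabla^{g_s}_{\bar p}\nabla^{g_s}_l \vartheta_s},
\end{equation*}
which is pointwise equivalent to $|\nabla^{g_s}\partial\bar\partial\vartheta_s|^{2}$ uniformly in $s\in[0,1]$ by Corollary \ref{coro-equiv-metrics-0}. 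Differentiating \eqref{starstar-s} twice covariantly with respect to $g_s$ and performing Yau's standard Bochner-type computation, combined with the uniform bounds supplied by Corollary \ref{lowerbound2} and Propositions \ref{prop-bd-abo-uni-psi}, \ref{prop-bd-bel-uni-psi}, \ref{prop-bd-uni-X-psi} and \ref{prop-C^2-est}, yields a differential inequality of the schematic form
\begin{equation*}
\left(\Delta_{h_s}-\tfrac{X}{2}\cdot\right)S_s \;\geq\; -C_1\bigl(1+S_s\bigr) + C_2\,\bigl|(\nabla^{g_s})^{3}\vartheta_s\bigr|^{2}_{h_s},
\end{equation*}
where $C_1,C_2>0$ depend only on $n$, the uniform background geometry of $(\omega_s)_{s\,\in\,[0,\,1]}$, and $\sup_{s\,\in\,[0,\,1]}\|G_s\|_{C^3}$. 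A parallel computation, this time keeping the positive third-order term on the right-hand side of \eqref{C2-est-yau} rather than discarding it as was done in the proof of Proposition \ref{prop-C^2-est}, gives
\begin{equation*}
\left(\Delta_{h_s}-\tfrac{X}{2}\cdot\right)(n+\Delta_{\omega_s}\vartheta_s) \;\geq\; -C + c\,\bigl|(\nabla^{g_s})^{3}\vartheta_s\bigr|^{2}_{h_s}
\end{equation*}
for some $c>0$ independent of $s$.

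I will then apply Calabi's classical trick by considering $\widetilde S_s := S_s + A(n+\Delta_{\omega_s}\vartheta_s)$ for $A>0$ sufficiently large. Choosing $A$ with $cA\geq 2C_2$ and using the pointwise equivalence $S_s \asymp |(\nabla^{g_s})^{3}\vartheta_s|^{2}_{h_s}$ to absorb the bad term $C_1S_s$ into the good cubic one, we arrive at the inequality
\begin{equation*}
\left(\Delta_{h_s}-\tfrac{X}{2}\cdot\right)\widetilde S_s \;\geq\; c'\widetilde S_s - C'.
\end{equation*}
Because $\vartheta_s\in C^\infty_X(M)$, the quantity $\widetilde S_s$ tends to $nA$ as $f\to+\infty$. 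Applying the maximum principle on the exhaustion $\{f\leq R\}_{R\,\to\,+\infty}$ with a barrier built from the properness of $f_{\sigma_s}$ exactly as at the end of the proof of Proposition \ref{prop-C^2-est} then yields a uniform upper bound on $\widetilde S_s$, and hence a uniform bound on $\nabla^{g_s}\partial\bar\partial\vartheta_s$.

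Finally, the estimate \eqref{a-priori-nabla-rad-der} follows from the identity
\begin{equation*}
\nabla^{g_s}_Y(X\cdot\vartheta_s) \;=\; (\nabla^{g_s}d\vartheta_s)(X,\,Y) + d\vartheta_s\bigl(\nabla^{g_s}_YX\bigr),
\end{equation*}
together with the just-established bound on $\nabla^{g_s}\partial\bar\partial\vartheta_s$, Proposition \ref{prop-C^2-est}, and the uniform boundedness of $\nabla^{g_s}X$, the latter being a consequence of the identity $\mathcal{L}_Xg = 2g + O(r^{-2})$ outside a compact subset (Proposition \ref{mainprop}(i)). The main technical obstacle will be the careful bookkeeping of the commutator terms $[\,X,\,\nabla^{g_s,k}\,]$ arising when differentiating the Monge-Amp\`ere equation twice; these produce contractions of $\nabla^{g_s,j}(\mathcal{L}_X g_s - 2g_s)$ and of the curvature tensor of $g_s$ with derivatives of $\vartheta_s$, and both families of tensors are uniformly controlled in $s$ by the decay $\omega_s - \omega_0 = O(r^{-2})$ with $g_0$-derivatives (cf.~\eqref{first} and Proposition \ref{mainprop}(i)), so they can be absorbed into $C_1$ at the cost of a routine but lengthy calculation.
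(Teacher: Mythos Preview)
Your main argument is correct and, at the level of the quantities involved, coincides with the paper's: the Calabi tensor $S_s$ equals the paper's $S=|\Psi|^2_{h_s}$ with $\Psi=\Gamma(h_s)-\Gamma(g_s)$, the barrier $n+\Delta_{\omega_s}\vartheta_s=\tr_{\omega_s}\sigma_s$ is the same, and the maximum principle step is identical (the paper uses $\varepsilon S+\tr_{\omega_s}\sigma_s$ with small $\varepsilon$ rather than $S+A\tr_{\omega_s}\sigma_s$ with large $A$, but these are equivalent). The derivation of the differential inequality differs: instead of a direct Calabi computation, the paper reformulates the equation as a perturbed K\"ahler--Ricci flow via $h_s(\tau)=(-\tau)(\varphi^X_\tau)^*h_s$ and computes the evolution of $\Psi$ following Phong--Sesum--Sturm, recorded separately as Lemma~\ref{lemma-S}. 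This packaging has the advantage of yielding the sharp coefficient $3$ in $\Delta_{\sigma_s,X}S\ge 3S+\ldots$, which is reused in the subsequent weighted $C^3$ estimate (Proposition~\ref{prop-C3-eswei}); your direct computation would need to be redone more carefully for that purpose. A minor point: your first displayed inequality for $S_s$ is oddly stated, since $S_s\asymp|(\nabla^{g_s})^3\vartheta_s|^2_{h_s}$ makes the putative good term $+C_2|(\nabla^{g_s})^3\vartheta_s|^2$ redundant; the standard Calabi inequality is simply $\Delta S_s\ge -C_1(1+S_s)$, and the good third-order term comes only from the barrier.

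There is however a genuine gap in your derivation of \eqref{a-priori-nabla-rad-der}. Your identity expresses $\nabla^{g_s}(X\cdot\vartheta_s)$ via the full Riemannian Hessian $(\nabla^{g_s}d\vartheta_s)(X,\,\cdot\,)$ and the gradient $d\vartheta_s$, but neither is controlled at this stage: Proposition~\ref{prop-C^2-est} only bounds the $(1,1)$-part $i\partial\bar\partial\vartheta_s$, not the $(2,0)$-part of the Hessian; the vector field $X$ satisfies $|X|_{g_s}\sim r$, so even a bounded Hessian contracted with $X$ would be unbounded; and no uniform a priori $C^1$ bound on $\vartheta_s$ has been established. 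The paper bypasses this by differentiating \eqref{starstar-s} itself: from $\log(\sigma_s^n/\omega_s^n)=G_s+\tfrac12 X\cdot\vartheta_s$ one gets $\tfrac12\nabla^{g_s}(X\cdot\vartheta_s)=h_s^{i\bar\jmath}\,\nabla^{g_s}(i\partial\bar\partial\vartheta_s)_{i\bar\jmath}-\nabla^{g_s}G_s$, and the right-hand side is bounded directly by the $C^3$ estimate and Corollary~\ref{coro-equiv-metrics-0}.
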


Before proving Proposition \ref{prop-C^3-est}, we need a lemma dictating the evolution equation of the norm squared of the covariant derivative of the difference $h_s-g_s$.
To this end, set
\begin{equation*}\label{def-S}
S:=S(h_s,\,g_s):=\arrowvert\nabla^{g_s}h_s\arrowvert^2_{h_s}.
\end{equation*}
Then from the definition of $S$, we see that
\begin{equation*}
\begin{split}
S=&h_s^{i\bar{\jmath}}h_s^{k\bar{\ell}}h_s^{p\bar{q}}\nabla^{g_s}_i(h_s)_{k\bar{q}}\overline{\nabla^{g_s}_{j}(h_s)_{l\bar{p}}}=|\Psi|_{h_s}^2,
\end{split}
\end{equation*}
where
\begin{equation}
\begin{split}\label{def-Psi}
\Psi_{ij}^k=\Psi_{ij}^k(h_s,\,g_s)&:=\Gamma(h_s)_{ij}^k-\Gamma(g_s)_{ij}^k=h_s^{k\bar{\ell}}\nabla^{g_s}_i(h_s)_{j\bar{\ell}}.
\end{split}
\end{equation}
Define the ``half'' Laplacian with respect to the K\"ahler metric $\sigma_s$ in the following way:
\begin{equation*}
\Delta_{\sigma_{s},\,1/2}:=h_s^{i\bar{\jmath}}\nabla^{h_s}_i\nabla^{h_s}_{\bar{\jmath}}.\label{def-lap-half}
\end{equation*}
Then we have:
\begin{lemma}\label{lemma-S}
Recall $Q_s:=F+G_{s}-F_s-\Phi_{s}$ from the proof of Lemma \ref{normal-fss} so that
\begin{equation*}
\begin{split}
\nabla^{g_s}\overline{\nabla}^{g_s}Q_s&=\Ric(g_s)+\mathcal{L}_{\frac{X}{2}}g_s-g_s,\\
Q_s&=2c_{s}\log(r)-c_{s}+c_{0}+O(r^{-2})\quad\text{with $g_0$-derivatives},\\
\Lambda_s&:=\Ric(g_s)+\mathcal{L}_{\frac{X}{2}}g_s-g_s-\nabla^{g_s}\overline{\nabla}^{g_s}G_s=\nabla^{g_s}\overline{\nabla}^{g_s}(Q_s-G_s).
\end{split}
\end{equation*}
Then tensor $\Psi$ schematically satisfies
\begin{equation*}
\begin{split}
\left(\Delta_{\sigma_{s},\,1/2}-\frac{1}{2}\nabla^{h_s}_X\right)\Psi_{ip}^k&=\frac{3}{2}\Psi_{ip}^k+T_{ip}^k,\\
T&:=\Psi\ast \left(\Ric(h_s)+(h_s-g_s)+\Rm(g_s)\right)+\nabla^{g_s}\left(\Rm(g_s)+\Lambda_s+\nabla^{g_s}\overline{\nabla}^{g_s}Q_s\right).
\end{split}
\end{equation*}
Here the dependence of contractions with respect to $h_s$ is omitted. Moreover, the function $S$ satisfies
\begin{equation*}
\begin{split}
\left(\Delta_{\sigma_{s}}-\frac{X}{2}\cdot\right)S&\geq |\nabla^{h_s} \Psi|^2_{h_s}+|\overline{\nabla}^{h_s}\Psi|_{h_s}^2+3S\\
&\quad-C(n,\tr_{g_s}h_s,\tr_{h_s}g_s))\left(|\Rm(g_s)|_{g_s}+|\Lambda_s|_{g_s}+|h_s-g_s|_{g_s}\right)S\\
&\quad-C(n,\tr_{g_s}h_s,\tr_{h_s}g_s))\left(|\nabla^{g_s}\Rm(g_s)|_{g_s}+|\nabla^{g_s}\Lambda_s|_{g_s}+|\nabla^{g_s}\nabla^{g_s}\overline{\nabla}^{g_s}Q_s|_{g_s}\right)\sqrt{S}.
\end{split}
\end{equation*}
Here, $C(\cdot,\,\cdot\,,\,\cdot)$ denotes a function that is increasing in each of its arguments
and $\overline{\nabla}^g=\nabla^{0,\,1}$. In particular, $\sqrt{S}$ weakly satisfies the inequality
\begin{equation}
\begin{split}\label{weak-subsol-sqrt-S}
\left(\Delta_{\sigma_{s}}-\frac{X}{2}\cdot\right)\sqrt{S}&\geq \frac{3}{2}\sqrt{S}-C(n,\tr_{g_s}h_s,\tr_{h_s}g_s))\left(|\Rm(g_s)|_{g_s}+|\Lambda_s|_{g_s}+|h_s-g_s|_{g_s}\right)\sqrt{S}\\
&\quad-C(n,\tr_{g_s}h_s,\tr_{h_s}g_s))\left(|\nabla^{g_s}\Rm(g_s)|_{g_s}+|\nabla^{g_s}\Lambda_s|_{g_s}+|\nabla^{g_s}\nabla^{g_s}\overline{\nabla}^{g_s}Q_s|_{g_s}\right).
\end{split}
\end{equation}
\end{lemma}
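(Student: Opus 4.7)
The plan is to imitate Yau's classical third-order computation from \cite{Calabiconj} (as presented e.g.\ in \cite[Chapter~2]{Tian-Can-Met-Boo}), modified by the presence of the drift vector field $X$ and of the (non-vanishing, but explicitly controlled) Ricci curvature of the background. The identity $\Psi_{ij}^{k}=h_{s}^{k\bar{\ell}}\nabla^{g_{s}}_{i}(h_{s})_{j\bar{\ell}}$ follows by writing $\Gamma(h_{s})_{ij}^{k}$ and $\Gamma(g_{s})_{ij}^{k}$ in holomorphic coordinates and using the fact that the difference of two torsion-free connections is tensorial, and since $g_{s}$ is K\"ahler, only the $(1,0)$ components enter. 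The three identities listed at the top of the statement (for $Q_{s}$ and $\Lambda_{s}$) follow by taking $i\bar\partial$-derivatives of the relations established in Lemma \ref{normal-fss} and of the Monge--Amp\`ere equation \eqref{starstar-s}: contracting \eqref{macron} with the identity from the proof of Lemma \ref{normal-fss} and using that $-\log(\sigma_{s}^n/\omega_{s}^n) + \tfrac{X}{2}\cdot\vartheta_{s}= -G_{s}$ yields $\Lambda_{s}=\nabla^{g_{s}}\bar\nabla^{g_{s}}(Q_{s}-G_{s})$.

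The main computation is the evolution of $\Psi$. First I would compute $\nabla^{h_{s}}_{\bar m}\Psi_{ij}^{k}$ by plugging in the formula for $\Psi$ and commuting the Levi-Civita connections of $h_{s}$ and $g_{s}$. Then I would apply $h_{s}^{p\bar m}\nabla^{h_{s}}_{p}$ and carry out the standard commutator computations, producing schematically
\[
\Delta_{\sigma_{s},1/2}\Psi_{ip}^{k} \;=\; h_{s}^{k\bar\ell}\nabla^{g_{s}}_{i}\nabla^{g_{s}}_{p}\bigl(h_{s}^{q\bar m}\nabla^{g_{s}}_{\bar m}(h_{s})_{q\bar\ell}\bigr) \;+\; \Psi\ast\bigl(\Ric(h_{s})+\Rm(g_{s})\bigr).
\]
The inner expression is essentially $\tr_{h_{s}}\!\bar\partial\partial \log\det h_{s}$, and the key input now is the twice-differentiated Monge--Amp\`ere equation \eqref{starstar-s}: differentiating $\log(\sigma_{s}^{n}/\omega_{s}^{n})-\tfrac{X}{2}\cdot\vartheta_{s}=G_{s}$ twice and contracting with $h_{s}$ expresses $\nabla\bar\nabla\log\det h_{s}$ in terms of $\nabla\bar\nabla\log\det g_{s}$, the lifted drift term $\tfrac{1}{2}\nabla^{h_{s}}_{X}\Psi$ (this is where the drift $-\tfrac{1}{2}\nabla^{h_{s}}_{X}$ emerges on the left-hand side), derivatives of $G_{s}$, and a zeroth-order $\tfrac{3}{2}\Psi$ term coming from the weight $-\vartheta_{s}$ through the shrinker equation; the ``$\tfrac{3}{2}$'' is the combination arising from differentiating the explicit $\vartheta_{s}$ and noting that $\mathcal{L}_{X/2}\omega_{s}-\omega_{s}$ appears via the identity $\Lambda_{s}=\Ric(g_{s})+\mathcal{L}_{X/2}g_{s}-g_{s}-\nabla^{g_{s}}\bar\nabla^{g_{s}}G_{s}$. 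The remaining error terms precisely assemble into the tensor $T$ displayed in the statement.

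Second, for $S=|\Psi|_{h_{s}}^{2}$ I would use the Bochner-type identity
\[
\Bigl(\Delta_{\sigma_{s}}-\tfrac{X}{2}\cdot\Bigr)S \;=\; 2\,\operatorname{Re}\bigl\langle (\Delta_{\sigma_{s},1/2}-\tfrac{1}{2}\nabla^{h_{s}}_{X})\Psi,\,\Psi\bigr\rangle_{h_{s}} \;+\;|\nabla^{h_{s}}\Psi|_{h_{s}}^{2}+|\bar\nabla^{h_{s}}\Psi|_{h_{s}}^{2},
\]
combined with the formula for $(\Delta_{\sigma_{s},1/2}-\tfrac{1}{2}\nabla^{h_{s}}_{X})\Psi$ just obtained. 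Cauchy--Schwarz on the $T$-terms (with constants depending monotonically on $\tr_{g_{s}}h_{s}$ and $\tr_{h_{s}}g_{s}$ because the contractions are taken with $h_{s}^{-1}$ whereas the curvature norms are with respect to $g_{s}$) produces the stated differential inequality for $S$.

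Finally, the weak inequality \eqref{weak-subsol-sqrt-S} for $\sqrt{S}$ follows from the Kato-type identity
\[
\Bigl(\Delta_{\sigma_{s}}-\tfrac{X}{2}\cdot\Bigr)\sqrt{S} \;=\; \tfrac{1}{2\sqrt{S}}\Bigl(\Delta_{\sigma_{s}}-\tfrac{X}{2}\cdot\Bigr)S \;-\; \tfrac{|\nabla^{h_{s}}S|_{h_{s}}^{2}}{4S^{3/2}}
\]
valid on $\{S>0\}$, together with the refined Kato inequality $|\nabla^{h_{s}}S|_{h_{s}}^{2}\leq 4S\bigl(|\nabla^{h_{s}}\Psi|_{h_{s}}^{2}+|\bar\nabla^{h_{s}}\Psi|_{h_{s}}^{2}\bigr)$. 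The gradient-of-$\Psi$ terms on the right-hand side of the inequality for $S$ exactly absorb the negative Kato correction, and the remaining terms, after dividing by $\sqrt{S}$, yield \eqref{weak-subsol-sqrt-S} on $\{S>0\}$; the extension to a weak inequality on all of $M$ is standard via the Kato trick of replacing $\sqrt{S}$ by $\sqrt{S+\varepsilon}$ and letting $\varepsilon\to 0^{+}$. The main obstacle will be the bookkeeping in the schematic $T$-term: one has to check that every curvature contribution produced by the commutators can in fact be rewritten in terms of only $\Ric(h_{s})$, $h_{s}-g_{s}$, $\Rm(g_{s})$, $\Lambda_{s}$, and $\nabla^{g_{s}}\nabla^{g_{s}}\bar\nabla^{g_{s}}Q_{s}$ (rather than in terms of $\Ric(g_{s})$ or mixed objects), which is precisely what the twisted identity $\Lambda_{s}=\nabla^{g_{s}}\bar\nabla^{g_{s}}(Q_{s}-G_{s})$ from the statement is designed to accomplish.
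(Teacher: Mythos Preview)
Your approach differs from the paper's in a way that matters. You propose a direct static computation \`a la Yau's third-order estimate, whereas the paper reinterprets the drift Laplacian parabolically by setting $h_s(\tau):=(-\tau)(\varphi^X_\tau)^*h_s$, $g_s(\tau):=(-\tau)(\varphi^X_\tau)^*g_s$, and computing $\partial_\tau\Psi|_{\tau=-1}$ and $\partial_\tau S|_{\tau=-1}$ in two different ways (once via the scaling identity $\partial_\tau S|_{\tau=-1}=S+\tfrac{X}{2}\cdot S$, once via the evolution equation $\partial_\tau h_s|_{\tau=-1}=-\Ric(h_s)+\Lambda_s+(g_s-h_s)$). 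The self-similar flow trick is not cosmetic here; it is precisely what resolves the two gaps in your sketch.

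First, your account of the coefficient $\tfrac{3}{2}$ is wrong: there is no ``weight $-\vartheta_s$'' in \eqref{starstar-s}; that equation has no zeroth-order term in $\vartheta_s$. In the paper the $\tfrac{3}{2}=1+\tfrac{1}{2}$ arises from two separate sources: the $1$ from rewriting $\nabla^{h_s}_i(g_s)_p^{\,k}=-\Psi_{ip}^{\,k}+(\nabla^{h_s}_i(g_s)_p^{\,k}+\nabla^{g_s}_i(h_s)_p^{\,k})$ in the time-derivative of the Christoffel symbols, and the $\tfrac{1}{2}$ from the tensorial identity $\mathcal{L}_{X/2}\Psi=\tfrac{1}{2}\nabla^{h_s}_X\Psi+\tfrac{1}{2}\Psi+\Psi\ast(\tfrac{1}{2}\mathcal{L}_Xh_s-h_s)$. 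Second, and more seriously, your Bochner identity for $S$ is incomplete: the correct formula (the paper's \eqref{brute-force-S}) carries additional terms of the form $\Ric(h_s)^{i\bar\jmath}h_s^{p\bar q}(h_s)_{k\bar\ell}\Psi_{ip}^k\overline{\Psi_{jq}^\ell}$ (and two siblings) coming from the commutator $[\nabla^{h_s}_{\bar b},\nabla^{h_s}_a]\Psi$ when one passes from $h_s^{a\bar b}\nabla^{h_s}_{\bar b}\nabla^{h_s}_a\Psi$ to $\Delta_{\sigma_s,1/2}\Psi$. These $\Ric(h_s)$-terms are dangerous because $\Ric(h_s)$ is not a priori bounded at this stage of the argument; the whole point of the final inequality is that it contains \emph{no} $\Ric(h_s)$. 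The paper's flow trick produces matching $(\tfrac{1}{2}\mathcal{L}_X h_s-h_s)\ast\Psi\ast\Psi$ terms in \eqref{time-der-S-II}, and since $\tfrac{1}{2}\mathcal{L}_X h_s-h_s=-\Ric(h_s)+\Lambda_s+(g_s-h_s)$, the $\Ric(h_s)$ pieces cancel exactly, leaving only the controlled $(\Lambda_s+g_s-h_s)\ast\Psi\ast\Psi$ remainder. Your sketch provides no mechanism for this cancellation; absorbing $\Ric(h_s)$ into your schematic $T$ would leave an uncontrolled $\Ric(h_s)\ast S$ term on the right-hand side. The Kato step at the end is fine and matches the paper.
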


\begin{remark}
Notice the coefficient $\frac{3}{2}$ appearing in front of the leading term $\sqrt{S}$ on the right-hand side of \eqref{weak-subsol-sqrt-S}.
This reflects the expected optimal decay rate $f^{-\frac{3}{2}}$ to be established in Corollary \ref{coro-bds-Ck-weight}.
\end{remark}

\begin{proof}[Proof of Lemma \ref{lemma-S}]
The first assertions concerning $Q_s$ and $\Lambda_s$ are obvious the proof of Lemma \ref{normal-fss}. For the remainder of the proof,
we follow closely the proof of \cite[Proposition 6.9]{con-der}, which itself is based on \cite{Pho-Ses-Stu}.

Since $\vartheta_s$ solves \eqref{starstar-s}, $(M,\,h_s,\,X)$
is an ``approximate'' shrinking gradient K\"ahler-Ricci soliton in the following precise sense:
if $h_s(\tau):=(-\tau)(\varphi^{X}_{\tau})^*h_s$ and $g_s(\tau):=(-\tau)(\varphi^{X}_{\tau})^*g$,
where $(\varphi^{X}_{\tau})_{\tau\,<\,0}$ is the
one-parameter family of diffeomorphisms generated by $\frac{X}{2(-\tau)}$ such that $\varphi^{X}_{\tau}\big|_{\tau\,=\,-1}=\Id_M$,
then $(h_s(\tau))_{\tau\,<\,0}$
is a solution of the following perturbed K\"ahler-Ricci flow with ``initial condition'' $h_s$:
\begin{equation}
\begin{split}\label{hello-ugly-eqn}
\partial_{\tau}h_s(\tau)\big|_{\tau\,=\,-1}&=-h_s+\mathcal{L}_{\frac{X}{2}}h_s\\
&=-\Ric(h_s)+\underbrace{\mathcal{L}_{\frac{X}{2}}g_s+\Ric(g_s)-g_s-\nabla^{g_s}\overline{\nabla}^{g_s}G_s}_{=:\,\Lambda_s}+(g_s-h_s),\qquad h_s(\tau)\big|_{\tau\,=\,-1}=h_s.
 \end{split}
\end{equation}
Similarly, by definition of the datum $Q_s$, we have that
\begin{equation*}
\begin{split}
\partial_{\tau}g_s(\tau)\big|_{\tau\,=\,-1}&=-g_s+\mathcal{L}_{\frac{X}{2}}g_s\\
&=-\Ric(g_s)+\nabla^{g_s}\overline{\nabla}^{g_s}Q_s,\qquad g_s(\tau)\big|_{\tau\,=\,-1}=g_s.
 \end{split}
\end{equation*}

Define $S(\tau):=S(h_s(\tau),\,g_s(\tau))$, and correspondingly set $\Psi(\tau):=\Psi(h_s(\tau),\,g_s(\tau))$. We adapt the proof of \cite[Proposition 3.2.8]{Bou-Eys-Gue} to our setting. By a
brute force computation, we have that
\begin{equation}
\begin{split}\label{brute-force-S}
\Delta_{\sigma_{s}}S&=2\Re\left(h_s^{i\bar{\jmath}}h_s^{p\bar{q}}(h_s)_{k\bar{\ell}}\left(\Delta_{\sigma_{s},\,1/2}\Psi_{ip}^k\right)
\overline{\Psi_{jq}^\ell}\right)+|\nabla^{h_s} \Psi|^2_{h_s}+|\overline{\nabla}^{h_s}\Psi|_{h_s}^2\\
&\quad+\Ric(h_s)^{i\bar{\jmath}}h_s^{p\bar{q}}(h_s)_{k\bar{\ell}}\Psi_{ip}^k\overline{\Psi_{jq}^\ell}
+h_s^{i\bar{\jmath}}\Ric(h_s)^{p\bar{q}}(h_s)_{k\bar{\ell}}\Psi_{ip}^k\overline{\Psi_{jq}^\ell}-h_s^{i\bar{\jmath}}h_s^{p\bar{q}}\Ric(h_s)_{k\bar{\ell}}\Psi_{ip}^k\overline{\Psi_{jq}^\ell},
\end{split}
\end{equation}
where
\begin{equation*}
\begin{split}
&T^{i\bar{\jmath}}:=h_s^{i\bar{k}}h_s^{l\bar{\jmath}}T_{k\bar{\ell}}
\end{split}
\end{equation*}
for $T_{k\bar{\ell}}\in\Lambda^{1,\,0}M\otimes\Lambda^{0,\,1}M$. We also have that
\begin{equation}
\begin{split}\label{time-der-Psi-I}
\partial_{\tau}\Psi(\tau)_{ip}^k|_{\tau\,=\,-1}&=\partial_{\tau}|_{\tau\,=\,-1}(\Gamma(h_s(\tau))-\Gamma(g_s(\tau)))_{ip}^k\\
&=\nabla^{h_s}_i(-\Ric(h_s)_p^k+(\Lambda_s)_p^k+(g_s-h_s)_p^k)-\nabla^{g_s}_i(-\Ric(g_s)_p^k+\nabla^{g_s}\overline{\nabla}^{g_s}(Q_s)_p^k)\\
&=-\nabla^{h_s}_i\Ric(h_s)_p^k+\nabla^{h_s}_i (g_s)_p^k+\nabla^{h_s}_i(\Lambda_s)_p^k+\nabla^{g_s}_i(\Ric(g_s)_p^k-\nabla^{g_s}\overline{\nabla}^{g_s}(Q_s)_p^k)\\
&=-\nabla^{h_s}_i\Ric(h_s)_p^k-\Psi_{ip}^k+\left(\nabla^{h_s}_i (g_s)_p^k+\nabla^{g_s}_i (h_s)_p^k\right)+\nabla^{h_s}_i(\Lambda_s)_p^k\\
&\quad\qquad+\nabla^{g_s}_i(\Ric(g_s)_p^k-\nabla^{g_s}\overline{\nabla}^{g_s}(Q_s)_p^k).
\end{split}
\end{equation}
Here we have used the definition of $\Psi$ from \eqref{def-Psi} in the last line. Since $h_s$ (respectively $g_s$) is parallel with respect to the Levi-Civita connection $\nabla^{h_s}$ (resp.~$\nabla^{g_s}$), observe that
\begin{equation*}
h_s^{k\bar{\ell}}\nabla^{h_s}_i (g_s)_{p\bar{\ell}}+h_s^{k\bar{\ell}}\nabla^{g_s}_i (h_s)_{p\bar{\ell}}=h_s^{k\bar{\ell}}\left(\nabla^{g_s}_i-\nabla^{h_s}_i\right)(h_s-g_s)_{p\bar{\ell}},
\end{equation*}
so that schematically,
\begin{equation*}\label{scheme}
\begin{split}
|\nabla^{h_s}g_s+\nabla^{g_s} (h_s)|_{h_s}&\leq C(n,\tr_{g_s}h_s,\tr_{h_s}g_s)|h_s-g_s|_{g_s}| \Psi|_{h_s}\\
&\leq C(n,\tr_{g_s}h_s,\tr_{h_s}g_s)|h_s-g_s|_{g_s}\sqrt{S}.
\end{split}
\end{equation*}
Moreover, by the very definition of $\Psi$ and its scaling properties, we see that
\begin{equation}
\begin{split}\label{time-der-Psi-II}
\partial_{\tau}\Psi(\tau)_{ip}^k|_{\tau\,=\,-1}&=\partial_{\tau}(\varphi^{X}_{\tau})^*\Psi_{ip}^k|_{\tau\,=\,-1}=\frac{1}{2}\mathcal{L}_{X}\Psi_{ip}^k\\
&=\frac{1}{2}\nabla^{h_s}_X\Psi_{ip}^k+\frac{1}{2}\Psi_{ip}^k+\Psi\ast \left(\frac{1}{2}\mathcal{L}_{X}h_s-h_s\right)_{ip}^k\\
&=\frac{1}{2}\nabla^{h_s}_X\Psi_{ip}^k+\frac{1}{2}\Psi_{ip}^k+\Psi\ast \left(-\Ric(h_s)-(h_s-g_s)+\Lambda_s\right)_{ip}^k,
\end{split}
\end{equation}
where \eqref{hello-ugly-eqn} has been used in the last line.

Finally, using the second Bianchi identity (the traced version to be precise), we compute that
\begin{equation}\label{ronan}
\Delta_{\sigma_{s},\,1/2}\Psi_{ip}^k=h_s^{a\bar{b}}\nabla_a^{h_s}\Rm(g_s)_{i\bar{b}p}^k-\nabla^{h_s}_i\Ric(h_s)_p^k.
\end{equation}
This in turn implies that $\Psi$ satisfies the following schematic evolution equation obtained by combining equations \eqref{time-der-Psi-I}--\eqref{ronan}:
\begin{equation*}
\begin{split}
\left(\Delta_{\sigma_{s},\,1/2}-\frac{1}{2}\nabla^{h_s}_X\right)\Psi_{ip}^k&=\frac{3}{2}\Psi_{ip}^k+T_{ip}^k,\\
T&:=\Psi\ast \left(\Ric(h_s)+(h_s-g_s)+\Lambda_s+\Rm(g_s)\right)\\
&\quad+\nabla^{g_s}\left(\Rm(g_s)+\Lambda_s+\nabla^{g_s}\overline{\nabla}^{g_s}Q_s\right).
\end{split}
\end{equation*}
Here we have used that for a tensor $R$, $\nabla^{h_s}R=\nabla^{g_s}R+R\ast \Psi$ in the last line, all contractions being with respect to $h_s$.

Notice the simplification here regarding the ``bad'' term $-\nabla^{h_s}\Ric(h_s)$. Since this flow is evolving only by pullback by diffeomorphisms and scalings, we know that
\begin{equation}
\begin{split}\label{time-der-S-I}
S(\tau)&=(-\tau)^{-1}(\varphi^{X}_{\tau})^*S(h_s,\,g_s),\\
\partial_{\tau}S|_{\tau\,=\,-1}&=S+\frac{X}{2}\cdot S.
\end{split}
\end{equation}
By definition of $S$ in terms of the tensor $\Psi$, we furthermore know that
\begin{equation}
\begin{split}\label{time-der-S-II}
\partial_{\tau}S|_{\tau\,=\,-1}&=-\left(\frac{1}{2}\mathcal{L}_{X}h_s-h_s\right)^{i\bar{\jmath}}h_s^{p\bar{q}}(h_s)_{k\bar{\ell}}\Psi_{ip}^k\overline{\Psi_{jq}^\ell}
-h_s^{i\bar{\jmath}}\left(\frac{1}{2}\mathcal{L}_{X}h_s-h_s\right)^{p\bar{q}}(h_s)_{k\bar{\ell}}\Psi_{ip}^k\overline{\Psi_{jq}^\ell}\\
&\quad+h_s^{i\bar{\jmath}}h_s^{p\bar{q}}\left(\frac{1}{2}\mathcal{L}_{X}h_s-h_s\right)_{k\bar{\ell}}\Psi_{ip}^k\overline{\Psi_{jq}^\ell}+h_s^{i\bar{\jmath}}h_s^{p\bar{q}}(h_s)_{k\bar{\ell}}\partial_{\tau}\Psi(\tau)_{ip}^k|_{\tau\,=\,-1}\overline{\Psi(\tau)_{jq}^\ell}\\
&\quad+h_s^{i\bar{\jmath}}h_s^{p\bar{q}}(h_s)_{k\bar{\ell}}\Psi(\tau)_{ip}^k\overline{\partial_{\tau}\Psi(\tau)_{jq}^\ell}|_{\tau\,=\,-1}.
\end{split}
\end{equation}
By combining \eqref{hello-ugly-eqn}, \eqref{brute-force-S}, \eqref{ronan}--\eqref{time-der-S-II}, we arrive at the second estimate of the lemma, namely that
\begin{equation}\label{dream-come-true}
\begin{split}
\Delta_{\sigma_{s}}S-\frac{X}{2}\cdot S&\geq |\nabla^{h_s} \Psi|^2_{h_s}+|\overline{\nabla}^{h_s}\Psi|_{h_s}^2+3S\\
&\quad-C(n,\tr_{g_s}h_s,\tr_{h_s}g_s))\left(|\Rm(g_s)|_{g_s}+|h_s-g_s|_{g_s}+|\Lambda_s|_{g_s}\right)S\\
&\quad-C(n,\tr_{g_s}h_s,\tr_{h_s}g_s))\left(|\nabla^{g_s}\Rm(g_s)|_{g_s}+|\nabla^{g_s}\Lambda_s|_{g_s}+|\nabla^{g_s}\nabla^{g_s}\overline{\nabla}^{g_s}Q_s|_{g_s}\right)\sqrt{S}\\
\end{split}
\end{equation}
for some positive uniform constant $C$. Notice that, thanks to \eqref{hello-ugly-eqn}, the last three terms of the right-hand side of \eqref{brute-force-S} cancel with the first three terms of the right-hand side of \eqref{time-der-S-II}, up to terms of the form $\left(\Lambda_s+(g_s-h_s)\right)\ast \Psi\ast\Psi$, where again all contractions are with respect to $h_{s}$.

The third estimate is a straightforward application of Kato's inequality to \eqref{dream-come-true}.
\end{proof}

We are now in a position to prove Proposition \ref{prop-C^3-est}.
\begin{proof}[Proof of Proposition \ref{prop-C^3-est}]
Lemma \ref{lemma-S} together with
the boundedness of $\|h_{s}^{-1}g_{s}\|_{C^{0}(M)}$ and\linebreak $\|h_{s}g_{s}^{-1}\|_{C^{0}(M)}$ given by Corollary \ref{coro-equiv-metrics-0} give us that
\begin{equation*}
\left(\Delta_{\sigma_{s}}-\frac{X}{2}\cdot\right) S\geq-C(S+1)
\end{equation*}
for some uniform positive constant $C$. Here we have used the boundedness of (the covariant derivatives of) the tensors $\Rm(g_{s})$, $\Lambda_s$, and $\nabla^{g_s}\overline{\nabla}^{g_s}Q_s$.
We use as a barrier function $\tr_{\omega_{s}}\sigma_{s}$ which, by an adaptation of the proof of \eqref{C2-est-yau} and the uniform equivalence of the metrics $g_{s}$ and $h_{s}$ given by Corollary \ref{coro-equiv-metrics-0}, satisfies
\begin{equation*}
\left(\Delta_{\sigma_{s}}-\frac{X}{2}\cdot\right)\tr_{\omega_{s}}\sigma_{s}\geq C^{-1}S-C,
\end{equation*}
where $C$ is a uniform positive constant that may vary from line to line. By applying the maximum principle to $\varepsilon S+\tr_{\omega_{s}}\sigma_{s}$ for some sufficiently small $\varepsilon>0$, one arrives at the desired a priori estimate. The proof of \eqref{a-priori-nabla-rad-der} follows from the previously proved a priori bound on $\nabla^{g_s}\partial\bar{\partial}\vartheta_s$ after differentiating \eqref{starstar-s}.
\end{proof}

We next establish H\"older regularity of $g_{s}^{-1}h_{s}$ and $h^{-1}_{s}g_{s}$, which is an improvement on Corollary \ref{coro-equiv-metrics-0}.
\begin{corollary}\label{coro-equiv-metrics}
Let $(\vartheta_{s})_{0\,\leq\, s\,\leq\, 1}$ be a path of solutions in $C^{\infty}_{X}(M)$ to \eqref{starstar-s}
and for $s\in[0,\,1]$, let $h_{s}$ be the K\"ahler metric induced by $\sigma_{s}$.
Then for any $\alpha\in\left(0,\,\frac{1}{2}\right)$, the tensors $g_{s}^{-1}h_{s}$ and $h_{s}^{-1}g_{s}$ satisfy the following uniform estimate:
 \begin{equation*}
\sup_{0\,\leq\, s\,\leq\, 1}\left(\|g_{s}^{-1}h_{s}\|_{C_{\operatorname{loc}}^{0,\,2\alpha}(M)}+\|h_{s}^{-1}g_{s}
\|_{C_{\operatorname{loc}}^{0,\,2\alpha}(M)}\right)\leq C\left(n,\alpha,\omega,\sup_{s\in[0,1]}\| G_s\|_{C^{3}(M)}\right).
\end{equation*}
\end{corollary}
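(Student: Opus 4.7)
The plan is to upgrade the uniform $C^0$-equivalence of $g_s$ and $h_s$ from Corollary \ref{coro-equiv-metrics-0} to a uniform $C^{0,1}$-bound (measured with respect to the fixed metric $g$) on both $g_s^{-1}h_s$ and $h_s^{-1}g_s$. Since any Lipschitz function on a compact set is automatically $2\alpha$-H\"older for every $2\alpha \le 1$, with constant controlled by the Lipschitz constant and the diameter, this will immediately give the desired estimate for every $\alpha < \tfrac{1}{2}$.

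The two key inputs are the $s$-uniform smoothness of the reference family $(g_s)$ and the $C^3$-bound on the solution. For the first, $\omega_s - \omega = -2c_s\,i\partial\bar\partial(\chi\log r)$ with $c_s = \log(1 + s(e^{c_0}-1))$ depends smoothly on $s$, so all $\nabla^g$-derivatives of $g_s$ are bounded uniformly in $s\in[0,1]$. For the second, Proposition \ref{prop-C^3-est} yields the uniform bound $\sup_s\|\nabla^{g_s}(h_s-g_s)\|_{C^0(M)} \le C$, since $h_s - g_s$ is the Hessian-type tensor associated to $i\partial\bar\partial\vartheta_s$. Switching between $\nabla^g$ and $\nabla^{g_s}$ via $\nabla^g = \nabla^{g_s} + (\Gamma(g) - \Gamma(g_s))\ast(\,\cdot\,)$ and using the uniform equivalence of $g$ and $g_s$, I obtain $\sup_s\|\nabla^g h_s\|_{C^0(M)} \le C$. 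Combining this with the identity $\nabla^g h_s^{-1} = -h_s^{-1}(\nabla^g h_s)h_s^{-1}$, the uniform $C^0$-bound on $h_s^{-1}$ from Corollary \ref{coro-equiv-metrics-0}, and the analogous trivial statement for $g_s^{-1}$, the Leibniz rule then yields
\begin{equation*}
\sup_{s\in[0,1]}\left(\|\nabla^g(g_s^{-1}h_s)\|_{C^0(M)} + \|\nabla^g(h_s^{-1}g_s)\|_{C^0(M)}\right) \le C.
\end{equation*}

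Together with the $C^0$-bound from Corollary \ref{coro-equiv-metrics-0}, this is a uniform Lipschitz estimate on these tensors, and the claimed local H\"older bound follows by the interpolation noted above. The only obstacle, insofar as there is one, is tracking $s$-uniformity through the various switches of reference metric and connection; this is guaranteed because Corollary \ref{coro-equiv-metrics-0} controls all the relevant comparison factors uniformly in $s$, so the bookkeeping goes through routinely once the underlying $C^3$-bound of Proposition \ref{prop-C^3-est} is in hand.
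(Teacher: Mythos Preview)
Your proof is correct and follows essentially the same line as the paper's. Both arguments use the $C^3$-estimate from Proposition~\ref{prop-C^3-est} to obtain a first-derivative (Lipschitz) bound on $h_s$, and then pass to the inverse; the paper phrases the first step as ``local interpolation'' between Propositions~\ref{prop-C^2-est} and~\ref{prop-C^3-est} and handles $h_s^{-1}g_s$ via the H\"older-norm identity $[u^{-1}]_{2\alpha}\le [u]_{2\alpha}(\inf u)^{-2}$, whereas you work directly with $\nabla^g h_s^{-1}=-h_s^{-1}(\nabla^g h_s)h_s^{-1}$ and then invoke $C^{0,1}\hookrightarrow C^{0,2\alpha}$, which is the same content in slightly different packaging.
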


 \begin{proof}
By standard local interpolation inequalities applied to Propositions \ref{prop-C^2-est} and \ref{prop-C^3-est}, we see that
\begin{equation*}
\|g_{s}^{-1}h_{s}\|_{C_{\operatorname{loc}}^{0,\,2\alpha}(M)}\leq C\left(n,\alpha,\omega,\sup_{s\,\in\,[0,1]}\| G_s\|_{C^{3}(M)}\right).
\end{equation*}
Combining this estimate with Corollary \ref{coro-equiv-metrics-0}, it suffices to prove a uniform bound on the local $2\alpha$-H\"older norm of $h_{s}^{-1}g_{s}$.

To this end, observe the following: if $u$ is a positive function on $M$ lying in $C_{\operatorname{loc}}^{0,\,2\alpha}(M)$ which is uniformly bounded from below by a positive constant, then $[u^{-1}]_{2\alpha}\leq [u]_{2\alpha}(\inf_Mu)^{-2}$. By invoking Corollary \ref{coro-equiv-metrics-0} once again, this remark applied to $h_{s}^{-1}g_{s}$ gives us that
\begin{equation*}
\|h_{s}^{-1}g_{s}\|_{C_{\operatorname{loc}}^{0,\,2\alpha}(M)}\leq C\left(n,\alpha,\omega,\sup_{s\in[0,1]}\| G_s\|_{C^{3}(M)}\right),
\end{equation*}
as required.
\end{proof}

\subsubsection{Local bootstrapping}
We now improve the local regularity of our continuity path of solutions to \eqref{starstar-s}. The next estimate will be used
in deriving the subsequent weighted a priori estimates.

\begin{prop}\label{prop-loc-holder-C-3}
Let $(\vartheta_s)_{0\,\leq\, s\,\leq\, 1}$ be a path of solutions in $C^{\infty}_{X}(M)$
to \eqref{starstar-s}. Then for any $\alpha\in\left(0,\,\frac{1}{2}\right)$ and for any compact subset $K\subset M$,
\begin{equation*}
\sup_{0\,\leq\, s\,\leq\, 1}\|\vartheta_{s}\|_{C^{3,\,2\alpha}(K)}\leq C\left(n,\alpha,\omega, \sup_{s\in[0,\,1]}\| G_s\|_{C^{3}(M)},K\right).
\end{equation*}
\end{prop}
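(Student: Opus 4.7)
The plan is to differentiate the equation \eqref{starstar-s} once in order to convert it into a uniformly elliptic linear second-order equation with Hölder continuous coefficients and Hölder continuous right-hand side (with bounds uniform in $s$), and then to apply standard interior Schauder estimates. All of the hard work has already been done in Propositions \ref{prop-C^2-est} and \ref{prop-C^3-est} and in Corollaries \ref{coro-equiv-metrics-0} and \ref{coro-equiv-metrics}.

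Concretely, combining Propositions \ref{prop-C^2-est}--\ref{prop-C^3-est} with standard interior interpolation gives a uniform (in $s$) bound on $\|\vartheta_s\|_{C^3(M)}$, and in particular on $\|\partial_a\partial_{\bar b}\vartheta_s\|_{C^{0,\,2\alpha}_{\operatorname{loc}}(M)}$. Corollary \ref{coro-equiv-metrics} then yields uniform $C^{0,\,2\alpha}_{\operatorname{loc}}(M)$-bounds on the coefficients $h_s^{a\bar b}$ of the linear second-order operator
\[
L_s u := h_s^{a\bar b}\partial_a\partial_{\bar b}u - \frac{X}{2}\cdot u,
\]
while Corollary \ref{coro-equiv-metrics-0} ensures that this operator is uniformly elliptic on any fixed compact set, uniformly in $s\in[0,\,1]$.

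The differentiation step is then carried out as follows. Fix a point $p\in K$, a slightly larger compact set $K'\Subset M$ containing $K$, a neighbourhood $U$ of $p$ on which we have holomorphic coordinates, and let $Y$ be an arbitrary smooth real vector field on $U$ (e.g.\ $\partial/\partial x^k$ or $\partial/\partial y^k$). Applying $Y$ to \eqref{starstar-s} and using the identity $Y\log\det(A)=\operatorname{tr}(A^{-1}Y\!A)$ for a positive Hermitian matrix $A$, we obtain
\[
L_s(Y\cdot\vartheta_s)=Y\cdot G_s+\bigl(h_s^{a\bar b}-g_s^{a\bar b}\bigr)Y(g_{s,\,a\bar b})+E_s,
\]
where $E_s$ is an ``error'' term arising from the commutators $[Y,\,\partial_a\partial_{\bar b}]\vartheta_s$ and $[Y,\,X]\cdot\vartheta_s$, both of which involve at most second derivatives of $\vartheta_s$ contracted against smooth tensors. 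By the $C^3$-bound on $\vartheta_s$ and smoothness of $g_s,\,X,\,Y$, together with interpolation, $E_s$ is bounded in $C^{0,\,2\alpha}_{\operatorname{loc}}(M)$ uniformly in $s$; the term $(h_s^{a\bar b}-g_s^{a\bar b})Y(g_{s,\,a\bar b})$ is bounded in the same norm by Corollary \ref{coro-equiv-metrics}; and $Y\cdot G_s$ is bounded in $C^{2,\,2\alpha}_{\operatorname{loc}}(M)$ uniformly in $s$ by the hypothesis on $G_s$. Interior Schauder estimates applied to the uniformly elliptic equation $L_s(Y\cdot\vartheta_s)=(\text{RHS})$ on $U$ therefore give a uniform bound
\[
\|Y\cdot\vartheta_s\|_{C^{2,\,2\alpha}(U')}\leq C,
\]
for a slightly smaller neighbourhood $U'\Subset U$ of $p$, where $C$ depends only on $n$, $\alpha$, $\omega$, $\sup_{s\in[0,1]}\|G_s\|_{C^3(M)}$, and $U'$. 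Since $Y$ was arbitrary among real smooth vector fields and $K$ is compact, a standard finite-covering argument by such local neighbourhoods gives the desired uniform bound $\|\vartheta_s\|_{C^{3,\,2\alpha}(K)}\leq C(n,\alpha,\omega,\sup_{s\in[0,1]}\|G_s\|_{C^3(M)},\,K)$.

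There is no substantive obstacle: the only point requiring care is to verify that each term appearing after differentiation of the complex Monge-Ampère equation lies uniformly in $C^{0,\,2\alpha}_{\operatorname{loc}}$, which is precisely what Corollary \ref{coro-equiv-metrics} and the $C^3$-bound of Proposition \ref{prop-C^3-est} are designed to give. Any bootstrapping to higher $C^{k,\,2\alpha}$-regularity beyond $C^{3,\,2\alpha}$, which will be needed later to pass to smooth limits along the continuity path, then follows by iterating this argument and differentiating the equation additional times, since at each stage the coefficients of the linear elliptic equation satisfied by the next derivative have gained one additional order of Hölder regularity.
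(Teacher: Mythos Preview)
Your approach is correct and is the standard ``differentiate once and apply Schauder'' bootstrapping argument. The paper takes a different route: rather than differentiating in a coordinate direction, it derives the Yau-type identity \eqref{weloveele} for $\Delta_{\sigma_s}\bigl(\Delta_{\omega_s}\vartheta_s-\frac{X}{2}\cdot\vartheta_s\bigr)$, observes that the right-hand side is bounded merely in $C^0$ (using Proposition~\ref{prop-C^3-est} to control the quadratic third-order terms and \eqref{easy-obs-diff-lap} for the drift), and then applies the Morrey--Schauder $C^{1,2\alpha}$-estimate to conclude $\Delta_{\omega_s,X}\vartheta_s\in C^{1,2\alpha}_{\operatorname{loc}}$. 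Since $\Delta_{\omega_s,X}$ has smooth coefficients, one more application of standard Schauder gives $\vartheta_s\in C^{3,2\alpha}_{\operatorname{loc}}$. Your direct approach bypasses the Yau identity and the Morrey step, at the price of needing the right-hand side of the differentiated equation in $C^{0,2\alpha}_{\operatorname{loc}}$ rather than just $C^0$---which is available precisely because of Corollary~\ref{coro-equiv-metrics}.

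One small remark: your preliminary claim of a full uniform $\|\vartheta_s\|_{C^3(M)}$-bound from Propositions~\ref{prop-C^2-est}--\ref{prop-C^3-est} plus interpolation is slightly overstated, since those results control only the mixed derivatives $\nabla^{g_s}\partial\bar\partial\vartheta_s$, not purely holomorphic ones such as $\partial_i\partial_j\partial_k\vartheta_s$. What you actually use is the weaker fact that $\vartheta_s\in C^{2,2\alpha}_{\operatorname{loc}}$ uniformly, which follows by applying standard Schauder to the smooth-coefficient equation $\Delta_{\omega_s}\vartheta_s=\operatorname{tr}_{\omega_s}(i\partial\bar\partial\vartheta_s)\in C^{0,2\alpha}_{\operatorname{loc}}$; this already gives the required H\"older control on first derivatives for the $[Y,X]\cdot\vartheta_s$ term. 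With this adjustment (and taking $Y$ to be a coordinate vector field so that $[Y,\partial_a\partial_{\bar b}]=0$), your argument goes through cleanly.
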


\begin{proof}
From the standard computations involved in the proof of the a priori $C^2$-estimate, we derive that
\begin{equation}\label{weloveele}
\begin{split}
\Delta_{\sigma_{s}}\left(\Delta_{\omega_{s}}\vartheta_{s}-\frac{X}{2}\cdot\vartheta_{s}\right) =& \Delta_{\omega_{s}}G_{s}+h_{s}^{-1}\ast g_{s}^{-1}\ast\Rm(g_{s})+\Rm(g_{s})\ast\nabla^{h_{s}}\bar{\nabla}^{h_{s}}\vartheta_{s}\ast h_{s}^{-1}\\
&\quad+g_{s}^{-1}\ast g_{s}^{-1}\ast\Rm(g_{s})+g_{s}^{-1}\ast h_{s}^{-1}\ast h_{s}^{-1}\ast \bar{\nabla}^{h_{s}}\nabla^{h_{s}}\bar{\nabla}^{h_{s}}\vartheta_{s}\ast\nabla^{h_{s}}\bar{\nabla}^{h_{s}}\nabla^{h_{s}} \vartheta_{s}\\
&\quad-\left(\Delta_{\sigma_{s}}-\Delta_{\omega_{s}}\right)\left(\frac{X\cdot \vartheta_{s}}{2}\right),
\end{split}
\end{equation}
where $\ast$ denotes the ordinary contraction of two tensors. Now, since $X$ is real holomorphic and $\vartheta_s$ is $JX$-invariant,
we see that
\begin{equation*}\label{lie-der-cov-der-X}
i\partial\bar{\partial}(X\cdot\vartheta_s)=\mathcal{L}_X(i\partial\bar{\partial}\vartheta_s)=\nabla^{g_s}_X(i\partial\bar{\partial}\vartheta_s)+i\partial\bar{\partial}\vartheta_s\ast\nabla^{g_s}X.
\end{equation*}
This leads to the following pointwise estimate:
\begin{equation}
\begin{split}\label{easy-obs-diff-lap}
\left|\left(\Delta_{\sigma_{s}}-\Delta_{\omega_{s}}\right)(X\cdot\vartheta_{s})\right|&
=\left|g_{s}^{-1}\ast h_{s}^{-1}\ast i\partial\bar{\partial}\vartheta_{s}\ast i\partial\bar{\partial}(X\cdot\vartheta_{s})\right|_{g_{s}}\\
&\leq C|h_{s}^{-1}|_{g_{s}}\cdot|i\partial\bar{\partial}\vartheta_{s}|_{g_{s}}\cdot \left(
|i\partial\bar{\partial}\vartheta_{s}|_{g_{s}}|\nabla^{g_{s}}X|_{g_{s}}+|\nabla^{g_s}i\partial\bar{\partial}\vartheta_{s}|_{g_{s}}|X|_{g_{s}}\right).
\end{split}
\end{equation}
By Propositions \ref{prop-C^2-est} and \ref{prop-C^3-est} together with \eqref{easy-obs-diff-lap}, we now see that
the $C^0$-norm of the right-hand side of \eqref{weloveele} is uniformly bounded on compact subsets and, thanks to Corollary \ref{coro-equiv-metrics}, so too are the coefficients of $\Delta_{\sigma_{s}}$ in the $C^{0,\,2\alpha}_{\operatorname{loc}}$-sense. As a result, by applying the Morrey-Schauder $C^{1,\,2\alpha}$-estimates, we see that for any $x\in M$ and for $\delta<\inj_{g_{s}}(M)$, we have that
\begin{equation*}
\left\|\Delta_{\omega_{s}}\vartheta_{s}-\frac{X}{2}\cdot\vartheta_{s}\right\|_{C^{1,\,2\alpha}(B_{g_{s}}(x,\,\delta))}\leq C(x,\,\delta,\,\alpha).
\end{equation*}
Finally, applying standard interior Schauder estimates for elliptic equations once again with respect to $\Delta_{\omega_{s},\,X}$, we deduce that
\begin{equation*}
\begin{split}
\|\vartheta_{s}\|_{C^{3,\,2\alpha}(B_{g_{s}}(x,\,\frac{\delta}{2}))}&\leq C(x,\,\delta,\,\alpha)\left(\left\|\Delta_{\omega_{s}}\vartheta_{s}-\frac{X}{2}\cdot\vartheta_{s}\right\|_{C^{1,\,2\alpha}(B_{g_{s}}(x,\,\delta))}
+\|\vartheta_{s}\|_{C^{1,\,2\alpha}(B_{g_{s}}(x,\,\delta))}\right)\\
&\leq C(x,\,\delta,\,\alpha).
\end{split}
\end{equation*}
\end{proof}

We now establish the following well-known local regularity result for solutions to \eqref{starstar-s}.
\begin{prop}\label{prop-loc-reg}
Let $G_{s}\in C^{k,\,\alpha}_{\operatorname{loc}}(M)$ for some $k\geq1$ and $\alpha\in(0,\,1)$, and let $\vartheta_{s}\in C^{3,\,\alpha}_{\operatorname{loc}}(M)$ be a solution to \eqref{starstar-s} with data $G_{s}$. Then $\vartheta_{s}\in C^{k+2,\alpha}_{\operatorname{loc}}(M)$. Moreover, for all $k\geq 1$, $\alpha\in(0,\,1)$, and compact subsets $K\subset M$,
\begin{equation*}
\begin{split}
\|\vartheta_{s}\|_{C^{k+2,\alpha}(K)}\leq C\left(n,\alpha,\omega, \sup_{s\in[0,\,1]}\| G_s\|_{C^{\max\{k,3\},\alpha}},K\right).
\end{split}
\end{equation*}
\end{prop}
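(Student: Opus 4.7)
The plan is a standard Schauder bootstrap for \eqref{starstar-s}, using the linear elliptic operator
\begin{equation*}
L_s := \Delta_{\sigma_s} - \tfrac{X}{2}\cdot = h_s^{i\bar j}\partial_i\partial_{\bar j} - \tfrac{1}{2}X\cdot
\end{equation*}
obtained by linearising the complex Monge-Amp\`ere equation at $\vartheta_s$. By Corollary \ref{coro-equiv-metrics-0}, the K\"ahler metric $h_s$ associated to $\sigma_s = \omega_s + i\partial\bar\partial\vartheta_s$ is uniformly equivalent to $g_s$, so $L_s$ is uniformly elliptic on compact subsets of $M$; its leading coefficients $h_s^{-1}$ inherit the regularity of $\vartheta_s$ shifted down by two orders, and by hypothesis they initially lie in $C^{1,\alpha}_{\operatorname{loc}}(M)$.

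To initiate the bootstrap, I would differentiate \eqref{starstar-s}, written locally as $\log\det h_{s,i\bar j} - \log\det g_{s,i\bar j} - \tfrac{X}{2}\cdot\vartheta_s = G_s$, in a real coordinate direction $\partial_l$. Using the identity $\partial_l\log\det A = \operatorname{tr}(A^{-1}\partial_l A)$ applied to $A = h_{s,i\bar j}$ and to $A = g_{s,i\bar j}$ and rearranging yields the linear elliptic equation
\begin{equation*}
L_s(\partial_l\vartheta_s) = \partial_l G_s + (g_s^{i\bar j} - h_s^{i\bar j})\partial_l g_{s,i\bar j} + \tfrac{1}{2}[\partial_l,X]\cdot\vartheta_s,
\end{equation*}
whose right-hand side depends on $G_s$, on the smooth data $\omega_s$ and $X$, and on $\vartheta_s$ only through $h_s^{-1}$ and first derivatives.

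I would then argue by induction on $m$, the base case $m=3$ being the hypothesis. Assume $\vartheta_s\in C^{m,\alpha}_{\operatorname{loc}}(M)$ for some $3\leq m\leq k+1$. Then $h_s^{-1}\in C^{m-2,\alpha}_{\operatorname{loc}}$, so the coefficients of $L_s$ lie in $C^{m-2,\alpha}_{\operatorname{loc}}$. Since $\partial_l G_s\in C^{k-1,\alpha}_{\operatorname{loc}}$ and $m-2\leq k-1$, and since the two remaining error terms lie in $C^{m-2,\alpha}_{\operatorname{loc}}$, the right-hand side above is also in $C^{m-2,\alpha}_{\operatorname{loc}}$. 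Interior Schauder estimates for $L_s$ applied to each $\partial_l\vartheta_s$ on a finite cover of $K$ by balls contained in a slightly enlarged compact $K'\supset K$ then upgrade $\partial_l\vartheta_s$ to $C^{m,\alpha}_{\operatorname{loc}}(M)$, i.e.~$\vartheta_s$ to $C^{m+1,\alpha}_{\operatorname{loc}}(M)$, together with a quantitative estimate on $K$ in terms of $\|G_s\|_{C^{m-1,\alpha}(K')}$ and the previous-step norm of $\vartheta_s$. Iterating $k-1$ times starting from $m=3$ produces the asserted regularity $\vartheta_s\in C^{k+2,\alpha}_{\operatorname{loc}}(M)$ and the stated quantitative bound.

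The factor $\max\{k,3\}$ in the dependence of the constant on $G_s$ arises from the need for a base-case estimate $\|\vartheta_s\|_{C^{3,\alpha}(K')}\leq C(\omega,\|G_s\|_{C^{3,\alpha}},K')$ to feed into the first iterate, which is supplied uniformly in $s$ by Proposition \ref{prop-loc-holder-C-3} together with the a priori bounds of Propositions \ref{prop-C^2-est} and \ref{prop-C^3-est}. There is no genuine nonlinear obstacle in the present proposition: the essential input is the uniform ellipticity of $L_s$ from Corollary \ref{coro-equiv-metrics-0}, after which the remainder is bookkeeping to verify that each Schauder application's constants are themselves controlled by $\|G_s\|_{C^{\max\{k,3\},\alpha}}$ and geometric data of $\omega$.
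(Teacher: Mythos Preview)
Your proof is correct and follows essentially the same route as the paper: differentiate \eqref{starstar-s} in a coordinate direction, obtain a linear elliptic equation for $\partial_l\vartheta_s$ with coefficients $h_s^{-1}$, and bootstrap via interior Schauder estimates, with the base case supplied by Proposition \ref{prop-loc-holder-C-3}. The only cosmetic differences are that the paper uses $\Delta_{\sigma_s}$ as the operator (moving the drift term $\tfrac{X}{2}\cdot\vartheta_s$ to the right-hand side as data) rather than your $L_s$, and the paper inducts on the regularity $k$ of $G_s$ rather than on the regularity level $m$ of $\vartheta_s$; neither choice affects the argument.
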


\begin{proof}
We prove this proposition by induction on $k\geq 1$. The case $k=1$ is true by Proposition \ref{prop-loc-holder-C-3}, so let $G_{s}\in C^{k+1,\,\alpha}_{\operatorname{loc}}(M)$ and let $\vartheta_{s}\in C^{3,\,\alpha}_{\operatorname{loc}}(M)$ be a solution of \eqref{starstar-s}. Then by our induction hypothesis, $\vartheta_{s}\in C^{k+2,\alpha}_{\operatorname{loc}}(M)$.
Let $x\in M$ and choose local holomorphic coordinates defined on $B_{g_{s}}(x,\,\delta)$ for some $0<\delta<\inj_{g_{s}}(M)$. Then since $\vartheta_{s}$ satisfies
\begin{equation*}
G_{s}=\log\left(\frac{\sigma_{s}^n}{\omega_{s}^n}\right)-\frac{X}{2}\cdot\vartheta_{s},
\end{equation*}
we know that for $j=1,...,2n$, the derivative $\partial_j\vartheta_{s}$ satisfies
\begin{eqnarray*}
\Delta_{\sigma_{s}}\left(\partial_j\vartheta_{s}\right)=\partial_j\left(G_{s}+\frac{X}{2}\cdot\vartheta_{s}\right)\in C^{k,\,\alpha}_{\operatorname{loc}}(M).
\end{eqnarray*}
Since the coefficients of $\Delta_{\sigma_{s}}$ are in $C^{k,\,\alpha}_{\operatorname{loc}}$,
an application of the standard interior Schauder estimates for elliptic equations now gives us the
desired local regularity result, namely that $\partial_j\vartheta_{s}\in C^{k+2,\alpha}_{\operatorname{loc}}(M)$ for all $j=1,...,2n$, or equivalently, that $\vartheta_{s}\in C^{k+3,\alpha}_{\operatorname{loc}}(M)$ together with the expected estimate.
\end{proof}

\subsection{Weighted a priori estimates}\label{sec-wei-bd}

Thanks to the bounds on the $X$-derivative of solutions to \eqref{starstar-s} obtained in Corollary \ref{lowerbound2} and Proposition \ref{prop-bd-uni-X-psi}, together with the $C^2$-bound from Corollary \ref{coro-equiv-metrics-0}, the background potential function $f$ serves as a good barrier in the following precise sense.
\begin{lemma}\label{lemma-barrier-fct}
Let $(\vartheta_s)_{0\,\leq\, s\,\leq\, 1}$ be a path of solutions in $ C^{\infty}_{X}(M)$ to \eqref{starstar-s}. Then there exists a positive constant $C$ such that  for all $s\in[0,\,1]$,
\begin{equation*}
\begin{split}
\left|f_{\sigma_s}-f\right|\leq C\qquad\textrm{and}\qquad\left|\Delta_{\sigma_s,\,X}f+f\right|\leq C.
\end{split}
\end{equation*}
In particular, for all $\delta\neq 0$, there exists $C_{\delta}>0$ and a compact subset $K\subset M$ such that on $M\setminus K$,
\begin{equation*}
\begin{split}
\left|\Delta_{\sigma_s,\,X}f^{-\delta}-\delta f^{-\delta}\right|\leq C_{\delta}f^{-\delta-1}.
\end{split}
\end{equation*}
\end{lemma}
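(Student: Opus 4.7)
The plan is to attack the three estimates in turn, exploiting the a priori bounds on $\vartheta_s$ and $X\cdot\vartheta_s$ already established along the continuity path, together with the explicit geometric asymptotics of the background data $(\omega,\omega_s,f,\Phi_s)$.

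For the first estimate, the point is simply that $f_{\sigma_s}-f=\tfrac{X}{2}\cdot(\Phi_s+\vartheta_s)$. Outside the fixed compact subset where $\Phi_s=-2c_s\log r$ and $d\pi(X)=r\partial_r$, we have $\tfrac{X}{2}\cdot\Phi_s=-c_s$, which is uniformly bounded in $s\in[0,1]$; inside, continuity and compactness suffice. The term $\tfrac{X}{2}\cdot\vartheta_s$ is uniformly bounded by combining Corollary \ref{lowerbound2} (lower bound) and Proposition \ref{prop-bd-uni-X-psi} (upper bound).

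For the second, I would decompose
\[
\Delta_{\sigma_s, X}f+f=\Bigl(\Delta_\omega f-\tfrac{X}{2}\cdot f+f\Bigr)+(\Delta_{\sigma_s}-\Delta_\omega)f.
\]
The first summand is $O(r^{-2})$ by the normalisation of $f$ in Proposition \ref{mainprop}(ii). For the second, note that $i\partial\bar\partial f=\omega_0$ outside a compact subset (since $f=\pi^*(r^2/2-n)$ there), so $|i\partial\bar\partial f|_\omega$ is uniformly bounded on $M$. The metrics $\sigma_s$ and $\omega$ are uniformly equivalent in $s$: by Corollary \ref{coro-equiv-metrics-0}, $h_s$ is uniformly equivalent to $g_s$, while the explicit expression $\omega_s=\omega_0+\rho_{\omega_0}-2c_s\omega^T$ outside a compact set (see \eqref{first}) together with $\omega=\omega_0+\rho_{\omega_0}$ there forces $g_s$ to be uniformly equivalent to $g$. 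Hence
\[
\bigl|(\Delta_{\sigma_s}-\Delta_\omega)f\bigr|=\bigl|\tr_{\sigma_s}(i\partial\bar\partial f)-\tr_\omega(i\partial\bar\partial f)\bigr|\leq C\sup_M|i\partial\bar\partial f|_\omega<\infty.
\]

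For the third estimate, I would apply the chain rule for the drift Laplacian to $h=f^{-\delta}$:
\[
\Delta_{\sigma_s,X}f^{-\delta}=-\delta f^{-\delta-1}\,\Delta_{\sigma_s,X}f+\delta(\delta+1)\,f^{-\delta-2}\,|\nabla^{\sigma_s}f|^2_{\sigma_s}.
\]
Substituting $\Delta_{\sigma_s,X}f=-f+O(1)$ from the second estimate yields $-\delta f^{-\delta-1}\Delta_{\sigma_s,X}f=\delta f^{-\delta}+O(f^{-\delta-1})$. The gradient term is $O(f^{-\delta-1})$ because $|\nabla^{\sigma_s}f|^2_{\sigma_s}\leq C|\nabla^\omega f|^2_\omega=C\,X\cdot f=O(f)$, using $X=\nabla^g f$ and $X\cdot f=2f+O(1)$ from Proposition \ref{mainprop}(ii). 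This gives the claim on $M\setminus K$ for any sufficiently large compact $K$.

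The only point requiring care is the uniform-in-$s$ equivalence $\sigma_s\sim\omega$ used in parts (ii) and (iii); all remaining steps are pointwise algebraic manipulations of already-proven asymptotics. That equivalence is however immediate from Corollary \ref{coro-equiv-metrics-0} together with the $s$-independent decay of $\omega_s-\omega$ recorded in Section \ref{sec-path-reparam}, so no genuine obstacle arises.
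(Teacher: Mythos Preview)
Your proof is correct and follows exactly the approach the paper intends (the paper omits the proof, noting only that it is analogous to Lemma~\ref{lemma-sub-sol-barrier}): the first estimate comes from the a priori bounds on $X\cdot\vartheta_s$ (Corollary~\ref{lowerbound2} and Proposition~\ref{prop-bd-uni-X-psi}) together with the explicit form of $\Phi_s$, the second from the uniform equivalence $\sigma_s\sim\omega$ (Corollary~\ref{coro-equiv-metrics-0} plus $\omega_s\sim\omega$) and the boundedness of $i\partial\bar\partial f$, and the third from the chain rule exactly as in the proof of Lemma~\ref{lemma-sub-sol-barrier}. One cosmetic remark: since $\Delta_{\sigma_s,X}=\Delta_{\sigma_s}-\tfrac{X}{2}\cdot$ uses the K\"ahler Laplacian here, the chain-rule identity for $f^{-\delta}$ should carry a factor $\tfrac{1}{2}$ in front of $|\nabla^{h_s}f|^2_{h_s}$, but this does not affect the $O(f^{-\delta-1})$ conclusion.
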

\noindent
This lemma is reminiscent of Lemma \ref{lemma-sub-sol-barrier} and so we omit its proof.

The next lemma establishes a general decay estimate for supersolutions to the drift Laplacian $\Delta_{\sigma_s,\,X}$.
\begin{lemma}\label{lemma-dec-sub-fct}
Let $\alpha,\,\lambda>0$, let $(\vartheta_s)_{0\,\leq\, s\,\leq\, 1}$ be a path of solutions in $C^{\infty}_{X}(M)$ to \eqref{starstar-s}, and let $u:M\rightarrow\R$ be a $C^2_{\operatorname{loc}}(M)$ function such that outside a sufficiently large compact set $K$ of $M$, for some constants $C_0,\,C_1\geq0$,
\begin{equation*}
\begin{split}
\Delta_{\sigma_s,\,X}u\leq \left(\lambda-\frac{C_0}{f^{\alpha}}\right)u+\frac{C_1}{f^{\lambda}}\qquad\textrm{whenever $u<0$}.
\end{split}
\end{equation*}
If $u$ is bounded on $M$, then there exists $C>0$ depending only on $\sup_{K}|u|$ such that
\begin{equation*}
\begin{split}
u\geq -C\left(\frac{\log f}{f^{\lambda}}\right).
\end{split}
\end{equation*}
\end{lemma}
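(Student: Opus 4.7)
The plan is a maximum-principle argument using $A f^{-\lambda}\log f$ as a barrier, together with a vanishing regulariser $\varepsilon f^\mu$ to ensure the supremum is attained. Setting $w:=-u$, the hypothesis translates, on the set $\{w>0\}\cap (M\setminus K)$, into the reversed inequality
\[
\Delta_{\sigma_s,X}w\ge \Bigl(\lambda-\frac{C_0}{f^\alpha}\Bigr)w-\frac{C_1}{f^\lambda}.
\]
The barrier $f^{-\lambda}\log f$ is chosen because its drift Laplacian produces both the resonant eigenvalue $\lambda$ (which will cancel the leading term of the inequality for $w$) and an extra $-f^{-\lambda}$ term, whose sign is exactly what is needed to absorb the obstruction $-C_1/f^\lambda$ on the right-hand side above.

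First, I would record the key barrier estimate. Using the product rule $\Delta_{\sigma_s,X}(\phi\psi)=\phi\Delta_{\sigma_s,X}\psi+\psi\Delta_{\sigma_s,X}\phi+2\langle\nabla\phi,\nabla\psi\rangle_{\sigma_s}$ (valid since the drift is a first-order operator) combined with Lemma \ref{lemma-barrier-fct}, and the identity $\Delta_{\sigma_s,X}\log f=(\Delta_{\sigma_s,X}f)/f-|\nabla f|^2_{\sigma_s}/f^2=-1+O(f^{-1})$ (in which $|\nabla f|^2_{\sigma_s}=O(f)$ follows from $|X|^2_g\approx 2f$ and the uniform equivalence of $\sigma_s$ and $g$ given by Corollary \ref{coro-equiv-metrics-0}), I would obtain outside a sufficiently large compact subset
\[
\Delta_{\sigma_s,X}(f^{-\lambda}\log f)=\lambda f^{-\lambda}\log f-f^{-\lambda}+O(f^{-\lambda-1}\log f).
\]
Lemma \ref{lemma-barrier-fct} applied with $\delta=-\mu$ also gives $\Delta_{\sigma_s,X}f^\mu=-\mu f^\mu+O(f^{\mu-1})$ for any $\mu>0$.

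Now fix $\mu>0$ small and, for parameters $A,\varepsilon>0$ to be chosen, consider
\[
\Phi_\varepsilon:=w-Af^{-\lambda}\log f-\varepsilon f^\mu.
\]
Since $w$ is bounded and $\varepsilon f^\mu\to+\infty$ at infinity, $\Phi_\varepsilon$ attains its supremum at some point $p_\varepsilon\in M$. If $\sup\Phi_\varepsilon\le 0$ or $p_\varepsilon$ lies in $K$, we are done after letting $\varepsilon\to 0$; otherwise $w(p_\varepsilon)>0$, the hypothesis applies, and the maximum property gives $\Delta_{\sigma_s,X}\Phi_\varepsilon(p_\varepsilon)\le 0$. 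Combining this with the barrier estimate, the lower bound $\Delta_{\sigma_s,X}w\ge (\lambda-C_0/f^\alpha)w-C_1/f^\lambda$, and the inequality $w(p_\varepsilon)>Af^{-\lambda}\log f(p_\varepsilon)+\varepsilon f^\mu(p_\varepsilon)$ at a positive maximum, the resonant terms $A\lambda f^{-\lambda}\log f$ cancel, leaving, after rearrangement, an inequality of the shape
\[
(\lambda+\mu)\varepsilon f^\mu+Af^{-\lambda}\;\le\; C_1f^{-\lambda}+O(Af^{-\lambda-\alpha}\log f)+O(\varepsilon f^{\mu-\alpha})+O(Af^{-\lambda-1}\log f)+O(\varepsilon f^{\mu-1}).
\]
Choosing $A:=2C_1+1$ and $f(p_\varepsilon)\ge R_2$ for some threshold $R_2$ depending only on $C_0,C_1,\alpha,\lambda,\mu$ (and not on $\varepsilon$), each $O$-term on the right is absorbed into at most a quarter of its corresponding leading term on the left, yielding $\tfrac{3(\lambda+\mu)}{4}\varepsilon f^\mu+(A/2-C_1)f^{-\lambda}\le 0$, a contradiction.

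It follows that $p_\varepsilon\in K':=K\cup\{f\le R_2\}$, so $\Phi_\varepsilon\le\sup_{K'}|u|$ on $M$. Letting $\varepsilon\to 0$ then gives $w\le A f^{-\lambda}\log f+\sup_{K'}|u|$, and on the region $\{f\ge R_3\}$ (where $\log f\ge 1$) one may enlarge the constant to absorb $\sup_{K'}|u|$ into a multiple of $f^{-\lambda}\log f$ — by increasing $K$ to contain $\{f\le R_3\}$ and noting $\sup_{K}|u|$ bounds $\sup_{K'}|u|$, the conclusion $u\ge -C\log f/f^\lambda$ holds with $C$ depending only on $\sup_K|u|$ and the structural constants. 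The main obstacle is establishing the precise asymptotic for $\Delta_{\sigma_s,X}(f^{-\lambda}\log f)$ — in particular identifying the crucial $-f^{-\lambda}$ term with its correct sign — and ensuring that the threshold $R_2$ beyond which contradictions arise is independent of $\varepsilon$, so that the limit $\varepsilon\to 0$ is legitimate; both require the sharp form of Lemma \ref{lemma-barrier-fct}, in turn relying on Corollary \ref{coro-equiv-metrics-0} and the a priori estimates of Proposition \ref{prop-bd-uni-X-psi}.
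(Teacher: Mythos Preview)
Your approach is sound and genuinely simpler than the paper's: you use the barrier $Af^{-\lambda}\log f$ directly, whereas the paper first conjugates $u$ by $f^{\lambda}e^{Bf^{-\alpha_0}}$ (introducing a modified drift with an extra $\nabla^{h_s}\log f$ term) and then uses $A\log f$ as the barrier. Your barrier computation $\Delta_{\sigma_s,X}(f^{-\lambda}\log f)=\lambda f^{-\lambda}\log f - f^{-\lambda}+O(f^{-\lambda-1}\log f)$ is correct, as is the localisation of the maximum to $K'=K\cup\{f\le R_2\}$ with $R_2$ independent of $\varepsilon$. The paper's extra $e^{Bf^{-\alpha_0}}$ factor is there to absorb the $C_0/f^{\alpha}$ perturbation; you instead carry it as a lower-order term in the contradiction, which also works.

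However, your final step contains a genuine gap. You obtain $w\le Af^{-\lambda}\log f+\sup_{K'}|u|$ and then assert that the additive constant $\sup_{K'}|u|$ can be ``absorbed into a multiple of $f^{-\lambda}\log f$'' by enlarging the constant. This is false: $f^{-\lambda}\log f\to 0$ as $f\to\infty$, so no fixed multiple of it dominates a positive constant, and enlarging $K$ does not help. The paper avoids this because after multiplying by $f^{\lambda}$, the boundary contribution becomes $C/f^{\lambda}$ upon dividing back, which \emph{is} absorbable into $\log f/f^{\lambda}$.

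The fix within your framework is to choose $A$ large enough from the outset, depending on $\sup_{K}|u|$ rather than only on $C_1$. Take $K$ to be a superlevel set $\{f\le R\}$ with $R>e$ large (permitted since $K$ is only required ``sufficiently large''), and set
\[
A:=\max\Bigl\{2C_1+1,\;\frac{\sup_{\{f=R\}}|u|}{R^{-\lambda}\log R}\Bigr\}.
\]
Then $\Phi_\varepsilon\le 0$ on $\{f=R\}$, so applying the maximum principle on $\{f\ge R\}$ (where $\Phi_\varepsilon\to-\infty$ at infinity) forces any positive interior maximum to satisfy $f\le R_2$; taking $R\ge R_2$, this is impossible. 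Hence $\sup_{\{f\ge R\}}\Phi_\varepsilon\le 0$, and letting $\varepsilon\to 0$ yields $u\ge -Af^{-\lambda}\log f$ on $\{f\ge R\}$ directly, with $A$ depending on $\sup_K|u|$ as claimed.
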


\begin{proof}
Consider the Riemannian drift Laplacian $\Delta_{h_s,\,X}$ associated to $\sigma_s$. Using Lemma \ref{lemma-barrier-fct} applied to $\delta=-\lambda$,
we compute, on the complement of a compact subset of $M$ chosen so that $f>0$, that at a point where $u<0$, 
\begin{equation*}
\begin{split}
\Delta_{h_s,\,X}\left(f^{\lambda}\cdot u\right)&\leq \left(\Delta_{h_s,\,X}f^{\lambda}+2\left(\lambda-\frac{C_0}{f^{\alpha}}\right) f^{\lambda}\right)u+2C_1+2\lambda h_s(\nabla^{h_s}\log f,\nabla^{h_s}(f^{\lambda}\cdot u))\\
&\quad-2\lambda^2|\nabla^{h_s}\log f|^2_{h_s}(f^{\lambda}\cdot u)\\
&\leq -2\left(\frac{C_{-\lambda}}{f}+\frac{C_{0}}{f^{\alpha}}\right)(f^{\lambda}\cdot u)
-2\lambda^2|\nabla^{h_s}\log f|^2_{h_s}(f^{\lambda}\cdot u)\\
&\quad+2\lambda h_s(\nabla^{h_s}\log f,\nabla^{h_s}(f^{\lambda}\cdot u))+2C_1\\
&\leq -2\left(C_{0}f^{-\alpha_0}+\lambda^2|\nabla^{h_s}\log f|^2_{h_s}\right)\left(f^{\lambda}\cdot u\right)+2\lambda h_s(\nabla^{h_s}\log f,\nabla^{h_s}(f^{\lambda}\cdot u))+2C_1\\
&\leq -2C_{0}f^{-\alpha_0} \left(f^{\lambda}\cdot u\right)+2\lambda h_s(\nabla^{h_s}\log f,\nabla^{h_s}(f^{\lambda}\cdot u))+2C_1,
\end{split}
\end{equation*}
where $\alpha_0:=\min\{\alpha,1\}$ and where $C_{0}$ is a uniform positive constant (that may depend on $\lambda$) that may vary from line to line. Here we have used the fact that $u<0$ in the penultimate line and invoked Corollary \ref{coro-equiv-metrics-0} in the last line.

In order to absorb the term $-2C_0f^{-\alpha_0}$, we consider a function of the form $e^{Bf^{-\alpha_0}}$ for some $B>0$ to be chosen later.
To this end, we define $v:=f^{\lambda}\cdot u$ and compute using Lemma \ref{lemma-barrier-fct} as above
at a point where $u<0$ the following:
\begin{equation*}\begin{split}
\Delta_{h_s,\,X}\left(e^{Bf^{-\alpha_0}}\cdot v\right)&\leq \left(-2C_0f^{-\alpha_0}e^{Bf^{-\alpha_0}}+\Delta_{h_s,\,X}\left(e^{Bf^{-\alpha_0}} \right) \right)v+2\lambda e^{Bf^{-\alpha_0}}h_s(\nabla^{h_s}\log f,\nabla^{h_s}v)\\
&\quad+2C_1e^{Bf^{-\alpha_0}}+2h_s\left(\nabla^{h_s}e^{Bf^{-\alpha_0}},\nabla^{h_s}v\right)\\
&=\left(-2C_0f^{-\alpha_0}+e^{-Bf^{-\alpha_0}}\Delta_{h_s,\,X}\left(e^{Bf^{-\alpha_0}}\right)\right)\left(e^{Bf^{-\alpha_0}}\cdot v\right)\\
&\quad-2v\cdot h_s\left(\nabla^{h_s}\left(\lambda\log f+Bf^{-\alpha_0}\right),\nabla^{h_s}\left(e^{Bf^{-\alpha_0}}\right)\right)\\
&\quad+2 h_s\left(\nabla^{h_s}\left(\lambda\log f+Bf^{-\alpha_0}\right),\nabla^{h_s}\left(e^{Bf^{-\alpha_0}}\cdot v\right)\right)+2C_1e^{Bf^{-\alpha_0}}\\
&\leq \left(-2C_0f^{-\alpha_0}+e^{-Bf^{-\alpha_0}}\Delta_{h_s,\,X}\left(e^{Bf^{-\alpha_0}} \right) +2\alpha_0 B\lambda\frac{|\nabla^{h_s}f|^2_{h_s}}{f^{\alpha_0+2}}-2\alpha_{0}^{2}B^{2}\frac{|\nabla^{h_s}f|^2_{h_s}}{f^{2\alpha_0+2}}\right)\cdot\\
&\quad\cdot\left(e^{Bf^{-\alpha_0}}\cdot v\right)+2 h_s\left(\nabla^{h_s}\left(\lambda\log f+Bf^{-\alpha_0}\right),\nabla^{h_s}\left(e^{Bf^{-\alpha_0}}\cdot v\right)\right)+2C_1e^{Bf^{-\alpha_0}}.
\end{split}
\end{equation*}
Now, thanks to Lemma \ref{lemma-barrier-fct} yet again, we see, provided that $B:=2C_0\alpha_0^{-1}$, that
\begin{equation}\label{lindsay}
\begin{split}
V&:=-2C_0f^{-\alpha_0}+e^{-Bf^{-\alpha_0}}\Delta_{h_s,\,X}\left(e^{Bf^{-\alpha_0}} \right) +2\alpha_0 B\lambda\left(\frac{|\nabla^{h_s}f|^2_{h_s}}{f^{\alpha_0+2}}\right)-2\alpha_{0}^{2}B^2\left(\frac{|\nabla^{h_s}f|^2_{h_s}}{f^{2\alpha_0+2}}\right)\\
&=\underbrace{-2C_0f^{-\alpha_0} +B\Delta_{h_s,\,X}f^{-\alpha_0}}_{\geq\,2C_{0}f^{-\alpha_{0}}-2BC_{\alpha_{0}}f^{-1-\alpha_{0}}}+2\alpha_{0}B\lambda\left(\frac{|\nabla^{h_s}f|^2_{h_s}}{f^{\alpha_0+2}}\right)-\alpha_{0}^{2}B^2\left(\frac{|\nabla^{h_s}f|^2_{h_s}}{f^{2\alpha_0+2}}\right)\\
&\geq \left(2C_{0}+B\cdot\left(\left(2\alpha_{0}\lambda-\frac{\alpha_{0}^{2}B}{f^{\alpha_{0}}}\right)\left(\frac{|\nabla^{h_s}f|^2_{h_s}}{f^2}\right)-
\frac{2C_{\alpha_0}}{f}\right)\right)f^{-\alpha_0}\\
&\geq C_0f^{-\alpha_0}
\end{split}
\end{equation}
for $f\geq R_0$ is large enough.

Now, consider the function $w:=e^{Bf^{-\alpha_0}}\cdot v+A\log f=e^{Bf^{-\alpha_0}}\cdot f^{\lambda}\cdot u+A\log f$ for some $A>0$, and observe that
at a point where $u<0$, if $f\geq R_0$ is large enough, then thanks to \eqref{lindsay}, we have that
\begin{equation*}
\begin{split}
\left(\Delta_{h_s,\,X}-2 h_s(\nabla^{h_s}\left(\lambda\log f+Bf^{-\alpha_0}\right),\nabla^{h_s}(\,\cdot\,))\right) w&\leq V\cdot\left(v\cdot e^{Bf^{-\alpha_{0}}}\right)+2C_{1}e^{Bf^{-\alpha_{0}}}
+A\Delta_{h_s,\,X}\log f\\
&\quad-2 h_s(\nabla^{h_s}\left(\lambda\log f+Bf^{-\alpha_0}\right),\nabla^{h_s}(A\log f))\\
&\leq V\cdot w+2C_1e^{Bf^{-\alpha_0}}+A\Delta_{h_s,\,X}\log f\\
&\leq V\cdot w+2C_1e^{Bf^{-\alpha_0}}+A\left(\frac{\Delta_{h_s,\,X}f}{f}\right)\\
&\leq V\cdot w,
\end{split}
\end{equation*}
provided that $A>0$ is chosen sufficiently large for $B$ fixed.

In order to conclude the argument, we invoke the minimum principle. Indeed, consider the function $$w_{\varepsilon}:=w+\varepsilon f^{\lambda+\delta}=e^{Bf^{-\alpha_0}}\cdot f^{\lambda}\cdot u+A\log f+\varepsilon f^{\lambda+\delta}$$ with $\delta>0$ fixed. For each $\varepsilon>0$, this function is proper and bounded from below because $u$ is bounded by assumption. For each $\varepsilon>0$ therefore, it attains a minimum on $\{f\,\geq\,R_0\}$. First choose $R_{0}>0$, and then
$A$ and $B$, so that
\begin{equation}\label{diff-inequ-w}
\begin{split}
\left(\Delta_{h_s,\,X}-2 h_s(\nabla^{h_s}\left(\lambda\log f+Bf^{-\alpha_0}\right),\nabla^{h_s}(\,\cdot\,))\right) w_{\varepsilon}&\leq V\cdot w_{\varepsilon}\quad\textrm{holds on $\{f\,\geq\,R_0\}\cap\{u<0\}$.}
\end{split}
\end{equation}
These choices can be made independent of the choice of $\varepsilon>0$. If $w_{\varepsilon}$ attains its minimum on $\{f\,>\,R_0\}\cap\{u<0\}$, then the minimum principle
applied to \eqref{diff-inequ-w} tells us that $w_{\varepsilon}\geq 0$, i.e., $$\min_{\{f\,\geq\,R_0\}}w_{\varepsilon}\geq \min\left\{\min_{\{f\,= \,R_0\}}w_{\varepsilon},\,0\right\}\geq -C(R_0)\left(1+\max_{\{f\,= \,R_0\}}|u|\right).$$
Letting $\varepsilon\to0$ leads to the desired lower bound on $u$.
\end{proof}

We now use this lemma to establish an initial a priori decay estimate on the $X$-derivative of solutions to \eqref{starstar-s}.
\begin{prop}\label{decay-first-der-rad-der}
Let $(\vartheta_s)_{0\,\leq\, s\,\leq\, 1}$ be a path of solutions in $ C^{\infty}_{X}(M)$ to \eqref{starstar-s}.
Then there exist positive constants $C$ and $R_0$ such that for all $s\in[0,\,1]$,
\begin{equation*}
X\cdot \vartheta_s\geq -C\left(\frac{\log f}{f}\right)\qquad\textrm{on $\{f\geq R_0\}$}.
\end{equation*}
\end{prop}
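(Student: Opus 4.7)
The plan is to verify that $u:=X\cdot\vartheta_s$ satisfies the hypotheses of Lemma \ref{lemma-dec-sub-fct} with parameter $\lambda=1$; the bound $X\cdot\vartheta_s\geq -C(\log f)/f$ on $\{f\geq R_0\}$ then follows at once, with $C$ independent of $s\in[0,1]$. The global boundedness hypothesis on $u$ is supplied uniformly in $s$ by Corollary \ref{lowerbound2} together with Proposition \ref{prop-bd-uni-X-psi}, so only a suitable differential inequality for $u$ remains to be established.

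First I would revisit the computation behind \eqref{dont-forget-me-0}. Since $\omega_s=\omega_0+\rho_{\omega_0}-2c_s\omega^T$ outside a compact subset of $M$, and since $\mathcal{L}_{X/2}\omega_0=\omega_0$, $\mathcal{L}_X\rho_{\omega_0}=\mathcal{L}_X\omega^T=0$, one has $\mathcal{L}_{X/2}\omega_s=\omega_0$ there, so
\[
\Delta_{\sigma_s,X}\!\left(\tfrac{X}{2}\cdot\vartheta_s\right)=\operatorname{tr}_{\omega_s}\omega_0-\operatorname{tr}_{\sigma_s}\omega_0+\tfrac{X}{2}\cdot G_s.
\]
The arithmetic–geometric inequality gives $\operatorname{tr}_{\sigma_s}\omega_0\geq n(\omega_0^n/\sigma_s^n)^{1/n}$, and rewriting the volume ratio via the Monge–Ampère equation \eqref{starstar-s} produces
\[
\Delta_{\sigma_s,X}\!\left(\tfrac{X}{2}\cdot\vartheta_s\right)\leq -n e^{-G_s/n-(X\cdot\vartheta_s)/(2n)}\!\left(\tfrac{\omega_0^n}{\omega_s^n}\right)^{\!1/n}+\operatorname{tr}_{\omega_s}\omega_0+\tfrac{X}{2}\cdot G_s.
\]
The asymptotics from Section \ref{sec-path-reparam}, namely $\omega_s-\omega_0=O(f^{-1})$, $G_s=O(f^{-1})$, and $X\cdot G_s=O(f^{-1})$ (all with $g_0$-derivatives), together with the already-established $C^0$-bound on $u$ (which renders $e^{-u/(2n)}$ bounded), then simplify the right-hand side outside a larger compact set to $n\bigl(1-e^{-(X\cdot\vartheta_s)/(2n)}\bigr)+O(f^{-1})$.

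The decisive step is the elementary tangent inequality $1-e^{-y}\leq y$, valid for every $y\in\mathbb{R}$ by convexity of $y\mapsto e^{-y}$ at the origin. Applying it with $y=(X\cdot\vartheta_s)/(2n)$ and doubling yields
\[
\Delta_{\sigma_s,X}(X\cdot\vartheta_s)\leq X\cdot\vartheta_s+\tfrac{C}{f}
\]
on the complement of some compact subset of $M$, uniformly in $s$. This is in fact stronger than the hypothesis of Lemma \ref{lemma-dec-sub-fct} with $\lambda=1$, $C_0=0$, any $\alpha>0$, and $C_1=C$; the lemma therefore delivers $X\cdot\vartheta_s\geq -C(\log f)/f$ on a set $\{f\geq R_0\}$ with constants independent of $s\in[0,1]$.

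The one point that requires care is the sharpness of the coefficient $1$ in front of $X\cdot\vartheta_s$ in the linearised inequality, because this value matches the parameter $\lambda$ in Lemma \ref{lemma-dec-sub-fct} and therefore dictates the exponent in the final rate $1/f^\lambda$. This sharpness hinges on the two cancellations $\mathcal{L}_{X/2}\omega_s=\omega_0$ and $\operatorname{tr}_{\omega_s}\omega_0=n+O(f^{-1})$ at infinity, both of which are consequences of the explicit form of the reference metrics $\omega_s$ constructed in Section \ref{sec-path-reparam}. A weaker asymptotic $\omega_s-\omega_0=O(f^{-\varepsilon})$ with $\varepsilon<1$ would degrade the error term in the differential inequality and correspondingly weaken the conclusion to $X\cdot\vartheta_s\geq -C(\log f)/f^\varepsilon$, so the precise $O(f^{-1})$ asymptotics coming from $\omega=\pi^*(\omega_0+\rho_{\omega_0})$ at infinity are exactly what is being exploited.
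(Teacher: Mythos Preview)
Your proof is correct and follows essentially the same route as the paper's: both start from \eqref{dont-forget-me-0}, apply the arithmetic--geometric inequality to $\operatorname{tr}_{\sigma_s}\omega_0$, linearise the resulting exponential via $1+x\leq e^x$ (equivalently $1-e^{-y}\leq y$), and then invoke Lemma \ref{lemma-dec-sub-fct} with $\lambda=1$ and $C_0=0$. The only cosmetic difference is that you first absorb the $O(f^{-1})$ factors $(\omega_0^n/\omega_s^n)^{1/n}$, $e^{-G_s/n}$, and $\operatorname{tr}_{\omega_s}\omega_0-n$ using the boundedness of $e^{-(X\cdot\vartheta_s)/(2n)}$ and then linearise, whereas the paper linearises first and then simplifies; the resulting differential inequality $\Delta_{\sigma_s,X}(X\cdot\vartheta_s)\leq X\cdot\vartheta_s+C/f$ is identical.
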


\begin{remark}
The a priori lower bound on the $X$-derivative of solutions to \eqref{starstar-s} obtained in this proposition is not optimal. An optimal a priori lower bound will be given in Corollary \ref{coro-bds-Ck-weight}.
\end{remark}

\begin{proof}[Proof of Proposition \ref{decay-first-der-rad-der}]
From \eqref{dont-forget-me} we read that outside a sufficiently large uniform compact set of $M$,
\begin{equation*}
\begin{split}\label{I-am-back-0}
\Delta_{\sigma_{s},\,X}\left(\frac{X}{2}\cdot\vartheta_{s}\right)
&\leq -ne^{-\frac{G_s}{n}-\frac{X\cdot \vartheta_{s}}{2n}}\cdot\left(\frac{\omega_0^n}{\omega_s^n}\right)^{\frac{1}{n}}+\operatorname{tr}_{\omega_{s}}\omega_{0}+\frac{X}{2}\cdot G_{s}\\
&\leq n\left(\frac{G_s}{n}+\frac{X\cdot \vartheta_{s}}{2n}-1\right)\left(\frac{\omega_0^n}{\omega_s^n}\right)^{\frac{1}{n}}+\operatorname{tr}_{\omega_{s}}\omega_{0}+\frac{X}{2}\cdot G_{s}\\
&=\left(\frac{\omega_0^n}{\omega_s^n}\right)^{\frac{1}{n}}\left(\frac{X\cdot \vartheta_{s}}{2}\right)+G_{s}\cdot\left(\frac{\omega_0^n}{\omega_s^n}\right)^{\frac{1}{n}}+\frac{X}{2}\cdot G_{s}
-\left(n-\operatorname{tr}_{\omega_{s}}\omega_{0}\cdot \left(\frac{\omega_s^n}{\omega_0^n}\right)^{\frac{1}{n}}\right)\left(\frac{\omega_0^n}{\omega_s^n}\right)^{\frac{1}{n}}\\
&\leq \frac{X\cdot \vartheta_{s}}{2}+\frac{C}{f},
\end{split}
\end{equation*}
where $C$ is a positive constant that may vary from line to line. Here we have used the elementary inequality $1+x\leq e^x$ for all $x\in \R$ in the second line, and
the fact that $G_s=O(r^{-2})$ with $g_{0}$-derivatives by \eqref{austin}, $\omega_s=\omega_0+O(r^{-2})$, and Proposition \ref{prop-bd-uni-X-psi} in the last line.
Appealing to Lemma \ref{lemma-dec-sub-fct} (with $C_{0}=0$ and $\lambda=1$) now yields the result.
\end{proof}

We can now improve Corollary \ref{coro-equiv-metrics-0} to the following.
\begin{prop}\label{decay-sec-der}
Let $(\vartheta_s)_{0\,\leq\, s\,\leq\, 1}$ be a path of solutions in $ C^{\infty}_{X}(M)$ to \eqref{starstar-s}.
Then there exist positive constants $C$ and $R_0$ such that for all $s\in[0,\,1]$,
\begin{equation*}
\tr_{\omega_s}\sigma_s\leq n+C\left(\frac{\log f}{f}\right)\qquad\textrm{on $\{f\geq R_0\}$.}
\end{equation*}
\end{prop}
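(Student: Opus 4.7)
The proof runs parallel to that of Proposition \ref{decay-first-der-rad-der}. Let $w_s := \tr_{\omega_s}\sigma_s = n + \Delta_{\omega_s}\vartheta_s$, which is uniformly bounded in $s$ by Proposition \ref{prop-C^2-est}. The plan is to establish, on a set of the form $\{f\geq R_0\}$ and wherever $u_s<0$, a differential inequality of the form
\begin{equation*}
\Delta_{\sigma_s,X}u_s\leq u_s+\frac{C}{f},
\end{equation*}
for a function of the shape $u_s:=A\vartheta_s-(w_s-n)$ with $A>0$ suitably chosen (uniformly bounded by Propositions \ref{prop-C^2-est} and \ref{prop-bd-abo-uni-psi}), and then apply Lemma \ref{lemma-dec-sub-fct} with $\lambda=1$ to conclude $u_s\geq -C\log f/f$. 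Together with the $C^0$-bound on $\vartheta_s$ from Proposition \ref{prop-bd-abo-uni-psi}, this immediately gives the desired estimate $w_s\leq n+C'\log f/f$.

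To derive the differential inequality, I apply the classical Aubin-Yau second-order computation to $\log w_s$, paralleling the derivation of \eqref{C2-est-yau} in the proof of Proposition \ref{prop-C^2-est}. Coupled with the complex Monge-Amp\`ere equation \eqref{starstar-s} in the form $\log(\sigma_s^n/\omega_s^n)=G_s+\tfrac{X}{2}\cdot\vartheta_s$, this yields
\begin{equation*}
\Delta_{\sigma_s}\log w_s\geq \frac{\Delta_{\omega_s}G_s+\tfrac{1}{2}\Delta_{\omega_s}(X\cdot\vartheta_s)}{w_s}-B\,\tr_{\sigma_s}\omega_s,
\end{equation*}
where $-B<0$ is a (uniform in $s\in[0,1]$) lower bound for the holomorphic bisectional curvature of $\omega_s$, finite because $\omega_s$ is asymptotic to the cone metric $\omega_0$ with uniformly controlled geometry via Corollary \ref{coro-equiv-metrics-0}. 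Setting $A:=B+1$ and using $\Delta_{\sigma_s}\vartheta_s=n-\tr_{\sigma_s}\omega_s$ absorbs the $-B\tr_{\sigma_s}\omega_s$ term upon subtracting $A\vartheta_s$. The $X$-derivative part is handled via the commutator identity $\Delta_{\omega_s}(X\cdot\vartheta_s)=X\cdot\Delta_{\omega_s}\vartheta_s+[\Delta_{\omega_s},X]\vartheta_s$: the first summand combines with the drift modifier $-\tfrac{X\cdot w_s}{2w_s}$ produced by passing from $\Delta_{\sigma_s}$ to $\Delta_{\sigma_s,X}$ and cancels to leading order, while the commutator, using the asymptotic identity $\mathcal{L}_{X/2}\omega_s=\omega_0=\omega_s+O(1/f)$ outside a compact subset (from Proposition \ref{mainprop}(i) and the construction of $\omega_s$) together with the $C^2$-bound of Proposition \ref{prop-C^2-est}, equals $2\Delta_{\omega_s}\vartheta_s+O(1/f)=2(w_s-n)+O(1/f)$. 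The contribution $\Delta_{\omega_s}G_s=O(1/f^2)$ from \eqref{austin}, the bounds $|X\cdot\vartheta_s|\leq C$ (Proposition \ref{prop-bd-uni-X-psi}) and $X\cdot\vartheta_s\geq -C\log f/f$ (Proposition \ref{decay-first-der-rad-der}), together with the uniform equivalence $g_s\asymp h_s$ from Corollary \ref{coro-equiv-metrics-0}, reduce the assembled inequality to the desired form with leading coefficient $\lambda=1$ and remainder $O(1/f)$.

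The principal technical obstacle is the control of the third-order term $\Delta_{\omega_s}(X\cdot\vartheta_s)$, which is not a priori bounded by the $C^2$-estimate alone. Its decomposition via the commutator and the asymptotic soliton relation $\mathcal{L}_Xg_s=2g_s+O(1/f)$ at infinity is precisely what reproduces the coefficient $1$ of $u_s$ on the right-hand side matching $\lambda=1$ in Lemma \ref{lemma-dec-sub-fct}; any deviation from this soliton identity contributes only $O(1/f)$, which feeds into the constant $C_1$ of the lemma. Once the differential inequality is assembled on $\{f\geq R_0\}$ (possibly after enlarging $R_0$ to ensure the asymptotic estimates are valid in the region and that $f\geq \sup_{s}\|P_s\|_{C^0}$ for the potential normalisations of Lemma \ref{lemma-tr-star-star}), the lemma closes the argument.
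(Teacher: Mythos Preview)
Your approach has a genuine gap that makes the argument fail to yield the claimed decay. The issue is the use of a \emph{constant} bisectional curvature bound $B$ together with the barrier $A\vartheta_s$.

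Running the Aubin--Yau inequality with constant $B$ and subtracting $A\vartheta_s$ is the standard move for the global $C^2$-estimate (Proposition~\ref{prop-C^2-est}), but it cannot give decay. After absorbing $-B\,\tr_{\sigma_s}\omega_s$ via $A\Delta_{\sigma_s}\vartheta_s=A(n-\tr_{\sigma_s}\omega_s)$, you are left with residuals $An - A\tfrac{X}{2}\cdot\vartheta_s - A\vartheta_s$, which are only $O(1)$ at this stage of the paper (only boundedness of $\vartheta_s$ and of $X\cdot\vartheta_s$ is available). Thus the best differential inequality you can obtain is $\Delta_{\sigma_s,X}u_s\le u_s+C$ with constant remainder, and Lemma~\ref{lemma-dec-sub-fct} gives nothing useful. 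Even if your inequality with remainder $C/f$ did hold, the conclusion $u_s=A\vartheta_s-(w_s-n)\ge -C\log f/f$ together with merely $\vartheta_s\le C'$ yields $w_s-n\le AC'+C\log f/f$, which is not $O(\log f/f)$.

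The paper's proof exploits a point you overlooked: the background metrics $\omega_s$ are asymptotically conical with $\omega_s-\omega_0=O(r^{-2})$, so $|\Rm(\omega_s)|_{\omega_s}=O(f^{-1})$. Hence in Yau's inequality the curvature term is $-\tfrac{C}{f}(\tr_{\omega_s}\sigma_s)(\tr_{\sigma_s}\omega_s)=O(f^{-1})$ directly, and \emph{no barrier subtraction is needed}. One then computes $\Delta_{\omega_s}\widehat{G_s}$ (with $\widehat{G_s}=G_s+\tfrac{X}{2}\cdot\vartheta_s$) using $i\partial\bar\partial(X\cdot\vartheta_s)=\mathcal{L}_X(i\partial\bar\partial\vartheta_s)$ and the asymptotic soliton identity $\mathcal{L}_{X/2}\omega_s-\omega_s=i\partial\bar\partial Q_s-\rho_{\omega_s}=O_{\omega_s}(f^{-1})$ from \eqref{macron}, which your commutator computation essentially captured. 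This yields
\[
\Delta_{\sigma_s,X}\bigl(\tr_{\omega_s}\sigma_s-n\bigr)\ \ge\ \bigl(\tr_{\omega_s}\sigma_s-n\bigr)-\frac{C}{f},
\]
and Lemma~\ref{lemma-dec-sub-fct} applied to $u=-(\tr_{\omega_s}\sigma_s-n)$ with $\lambda=1$ closes the proof. A minor secondary point: you derive Yau's inequality for $\log w_s$ but then state the differential inequality for $w_s-n$; the passage requires $\Delta_{\sigma_s}w_s=w_s\Delta_{\sigma_s}\log w_s+|\nabla^{h_s}w_s|^2_{h_s}/w_s$, which should be made explicit.
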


\begin{proof}
Applying Yau's computation \cite[equations (3.67) and (3.75)]{Bou-Eys-Gue} to the equation $\log\left(\frac{\sigma_{s}^n}{\omega_s^n}\right)=\frac{X}{2}\cdot\vartheta_s+G_s=:\widehat{G_s}$, we see that outside a sufficiently large compact set of $M$,
\begin{equation*}
\Delta_{\sigma_s}\tr_{\omega_{s}}\sigma_s\geq \frac{|\nabla^{h_{s}}\tr_{\omega_{s}}\sigma_s|^2_{h_{s}}}{\tr_{\omega_{s}}\sigma_s}+\Delta_{\omega_{s}}\widehat{G_s}-\frac{C}{f}\left(\tr_{\omega_{s}}\sigma_s\right)\left(\tr_{\sigma_s}\omega_{s}\right),
\end{equation*}
where we have used the quadratic curvature decay of $\omega_{s}$. Recall \eqref{macron} and the definition of $Q_{s}$ just before. Then we have that
\begin{equation*}
\begin{split}
\Delta_{\omega_{s}}\widehat{G_s}&=\Delta_{\omega_{s}}G_s+\tr_{\omega_{s}}i\partial\bar{\partial}\left(\frac{X}{2}\cdot \vartheta_s\right)\\
&=\Delta_{\omega_s}G_s+\tr_{\omega_s}\left(\mathcal{L}_{\frac{X}{2}}\sigma_s\right)-\tr_{\omega_s}\left(\mathcal{L}_{\frac{X}{2}}\omega_s\right)\\
&=\Delta_{\omega_s}G_s+\frac{X}{2}\cdot\tr_{\omega_s}\sigma_s+\left\langle\mathcal{L}_{\frac{X}{2}}\omega_s,\sigma_s\right\rangle_{\omega_s}-n
-\Delta_{\omega_{s}}Q_s+s_{\omega_s}\\
&=\underbrace{\Delta_{\omega_s}(F_s-F+\Phi_{s})}_{=\,O(f^{-1})}+\frac{X}{2}\cdot\tr_{\omega_s}\sigma_s+\langle\underbrace{\mathcal{L}_{\frac{X}{2}}\omega_{s}-
\omega_{s}}_{\substack{=\,i\partial\bar{\partial}Q_s-\rho_{\omega_{s}} \\ =\,O_{\omega_{s}}(f^{-1})}},\sigma_s\rangle_{\omega_{s}}
+\left(\left\langle\omega_{s},\sigma_s\right\rangle_{\omega_{s}}-n\right)+s_{\omega_s}\\
&\geq \frac{X}{2}\cdot\tr_{\omega_{s}}\sigma_s+\left(\tr_{\omega_{s}}\sigma_s-n\right)-\frac{C}{f}\cdot\tr_{\omega_{s}}\sigma_s-\frac{C}{f}.
\end{split}
\end{equation*}
Here, recall that $s_{\omega_s}$ denotes the scalar curvature of $\omega_s$ and in the last line
we have used the (pointwise) Cauchy-Schwarz inequality together with the fact that $\rho_{\omega_{s}}=O(f^{-1})$ uniformly in $s$.
Combining the previous two inequalities therefore leads to the lower bound
\begin{equation*}
\begin{split}\label{diff-ineq-c2}
\Delta_{\sigma_{s},\,X}\left(\tr_{\omega_s}\sigma_{s}-n\right)&=\Delta_{\sigma_{s}}\tr_{\omega_s}\sigma_{s}-\frac{X}{2}\cdot\tr_{\omega_s}\sigma_{s}\\
&\geq \frac{|\nabla^{h_s}\tr_{\omega_s}\sigma_{s}|^2_{h_{s}}}{\tr_{\omega_s}\sigma_{s}}-\frac{C}{f}+\left(\tr_{\omega_s}\sigma_{s}-n\right)-\frac{C}{f}\cdot\tr_{\omega_s}\sigma_{s}
-\frac{C}{f}\left(\tr_{\omega_s}\sigma_{s}\right)\left(\tr_{\sigma_{s}}\omega_s\right)\\
&\geq \left(\tr_{\omega_s}\sigma_{s}-n\right)-\frac{C}{f},
\end{split}
\end{equation*}
where we have used the upper bounds on $\tr_{\sigma_{s}}\omega_s$ and  $\tr_{\omega_{s}}\sigma_s$ given by Corollary \ref{coro-equiv-metrics-0} in the last line. Applying Lemma \ref{lemma-dec-sub-fct} now gives us the desired result.
\end{proof}

We can now derive the weighted $C^{2}$-estimate.
\begin{corollary}[Weighted $C^{2}$-bounds]\label{coro-weak-wei-C2-est}
Let $(\vartheta_s)_{0\,\leq\, s\,\leq\, 1}$ be a path of solutions in $ C^{\infty}_{X}(M)$ to \eqref{starstar-s}.
Then there exist positive constants $C$ and $R_0$ such that for all $s\in[0,\,1]$,
\begin{equation*}
|\Delta_{\omega_s}\vartheta_s|+|X\cdot\vartheta_s|\leq C\left(\frac{\log f}{f}\right)\qquad\textrm{on $\{f\geq R_0\}$.}
\end{equation*}
Moreover, there exist positive constants $C$ and $R_0$ such that for all $s\in[0,\,1]$,
\begin{equation*}
|\partial\bar{\partial}\vartheta_s|_{g_s}\leq C\left(\frac{\log f}{f}\right)^{\frac{1}{2}}\qquad\textrm{on $\{f\geq R_0\}$.}
\end{equation*}
\end{corollary}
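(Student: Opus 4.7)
The plan is to deduce both asserted estimates algebraically from the Monge-Amp\`ere equation \eqref{starstar-s}, combining the one-sided bounds of Propositions \ref{decay-first-der-rad-der} and \ref{decay-sec-der} with the uniform two-sided eigenvalue bounds furnished by Corollary \ref{coro-equiv-metrics-0}. Let $\lambda_1,\ldots,\lambda_n$ denote the pointwise eigenvalues of $\sigma_s$ with respect to $\omega_s$, so that $\Delta_{\omega_s}\vartheta_s=\sum_i(\lambda_i-1)$, $|i\partial\bar{\partial}\vartheta_s|_{g_s}^2=\sum_i(\lambda_i-1)^2$, and \eqref{starstar-s} reads as $\sum_i\log\lambda_i=\log(\sigma_s^n/\omega_s^n)=\frac{X}{2}\cdot\vartheta_s+G_s$. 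Corollary \ref{coro-equiv-metrics-0} furnishes constants $0<c_1\leq C_1$, independent of $s\in[0,1]$ and of $x\in M$, such that $\lambda_i(x)\in[c_1,C_1]$ for all $i$.

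To obtain the first estimate, I will apply the pointwise inequality $\log\lambda\leq\lambda-1$ termwise and combine it with \eqref{starstar-s} to get $\tfrac{X}{2}\cdot\vartheta_s+G_s\leq\Delta_{\omega_s}\vartheta_s$. Since Proposition \ref{decay-sec-der} gives $\Delta_{\omega_s}\vartheta_s\leq C\log f/f$ on $\{f\geq R_0\}$ and $G_s=O(f^{-1})$ by \eqref{austin}, this yields the upper bound $X\cdot\vartheta_s\leq C\log f/f$, which combined with Proposition \ref{decay-first-der-rad-der} gives $|X\cdot\vartheta_s|\leq C\log f/f$. Reading the same inequality the other way around, $\Delta_{\omega_s}\vartheta_s\geq\tfrac{X}{2}\cdot\vartheta_s+G_s\geq-C\log f/f$, and together with the upper bound from Proposition \ref{decay-sec-der} this gives the first assertion.

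The second estimate is the substantive step. The key observation is that, because the eigenvalues are confined to the compact interval $[c_1,C_1]\subset(0,\infty)$, the strict concavity of $\log$ upgrades the scalar inequality $\log\lambda\leq\lambda-1$ to a quantitative version: by Taylor's theorem at $\lambda=1$ there exists $c_0=c_0(c_1,C_1)>0$ such that
\begin{equation*}
\log\lambda\leq(\lambda-1)-c_0(\lambda-1)^2\qquad\text{for all }\lambda\in[c_1,C_1].
\end{equation*}
Summing over $i$ and substituting \eqref{starstar-s} yields
\begin{equation*}
c_0|i\partial\bar{\partial}\vartheta_s|_{g_s}^2\leq\Delta_{\omega_s}\vartheta_s-\frac{X}{2}\cdot\vartheta_s-G_s,
\end{equation*}
and each of the three terms on the right is $O(\log f/f)$ on $\{f\geq R_0\}$ by the first part of the corollary and the asymptotics of $G_s$. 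Taking square roots delivers the claimed $(\log f/f)^{1/2}$-decay. I do not expect a serious obstacle here: the whole argument is algebraic once the uniform ellipticity of Corollary \ref{coro-equiv-metrics-0} is available. Without that two-sided eigenvalue control the constant $c_0$ would degenerate and the one-sided bounds on $\Delta_{\omega_s}\vartheta_s$ would be insufficient to control the full Hessian, which is the only subtlety to highlight in the writeup.
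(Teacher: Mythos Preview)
Your proof is correct and follows essentially the same strategy as the paper. The first part is identical; for the Hessian bound, the paper invokes the integral Taylor expansion \eqref{equ:taylor-exp} (which at the eigenvalue level is exactly $\log\lambda_i=(\lambda_i-1)-\int_0^1(1-\tau)\,(\lambda_i-1)^2/(1+\tau(\lambda_i-1))^2\,d\tau$) together with the metric equivalence of Corollary \ref{coro-equiv-metrics-0}, which is simply the integral-remainder form of your Lagrange-remainder inequality $\log\lambda\leq(\lambda-1)-c_0(\lambda-1)^2$.
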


\begin{proof}
The upper bound on $\Delta_{\omega_s}\vartheta_s$ follows from Proposition \ref{decay-sec-der} after taking the trace with respect to $\omega_{s}$ of the inequality
$\sigma_{s}=\omega_{s}+i\partial\bar{\partial}\vartheta_{s}>0$ . The lower bound follows from Corollary \ref{decay-first-der-rad-der} after noting that
from \eqref{starstar-s} and the basic inequality $\log(1+x)\leq x$ for all $x>-1$ that
\begin{equation*}
\Delta_{\omega_s}\vartheta_s\geq\frac{X}{2}\cdot\vartheta_s+G_{s}.
\end{equation*}

As for the bound on $|X\cdot\vartheta_s|$, thanks to Corollary \ref{decay-first-der-rad-der}, all we are left to do is to prove the corresponding upper bound
on $X\cdot\vartheta_s$. To this end, from the previous inequality we derive that
\begin{equation*}
\frac{X}{2}\cdot\vartheta_s\leq \Delta_{\omega_s}\vartheta_s-G_s\leq \frac{C\log f}{f}+\frac{C}{f},
\end{equation*}
which leads to the desired decay on $X\cdot\vartheta_s$.

Regarding the resulting decay on $\partial\bar{\partial}\vartheta_s$, setting $\tilde{\omega}:=\omega_s$ and $\psi:=\vartheta_s$ in
\eqref{equ:taylor-exp}, we see from \eqref{starstar-s} that
\begin{equation*}
 \begin{split}
 \Delta_{\omega_s}\vartheta_s-\frac{X}{2}\cdot\vartheta_s-\int_0^1\int_0^{u}\arrowvert \partial\bar{\partial}\vartheta_s\arrowvert^2_{(g_s)_{\tau\vartheta_s}}\,d\tau\,du=G_s,
 \end{split}
 \end{equation*}
where $(g_s)_{\tau\vartheta_s}$ is the Riemannian metric induced by the K\"ahler form $\omega_s+i\tau\partial\bar{\partial}\vartheta_s$ for $\tau\in [0,1]$. Since there exists $C>0$ such that $C^{-1}g_s\leq (g_s)_{\tau\vartheta_s}\leq Cg_s$ uniformly for all $s,\,\tau\in [0,1]$ by Corollary \ref{coro-equiv-metrics-0}, the desired decay on $\partial\bar{\partial}\vartheta_s$ follows immediately from the above.
\end{proof}

Next, we give the weighted $C^{3}$-estimate.
\begin{prop}[Weighted $C^{3}$-bounds]\label{prop-C3-eswei}
Let $(\vartheta_s)_{0\,\leq\, s\,\leq\, 1}$ be a path of solutions in $ C^{\infty}_{X}(M)$ to \eqref{starstar-s}.
Then there exist positive constants $C$ and $R_0$ such that for all $s\in[0,\,1]$,
\begin{equation*}
|\nabla^{g_s}\partial\bar{\partial}\vartheta_s|\leq C\left(\frac{\log f}{f^{\frac{3}{2}}}\right)\qquad\textrm{on $\{f\geq R_0\}$.}
\end{equation*}
Moreover, there exist positive constants $C$ and $R_0$ such that for all $s\in[0,\,1]$,
\begin{equation*}
|\partial\bar{\partial}\vartheta_s|_{h_s}\leq C\left(\frac{\log f}{f}\right)\qquad\textrm{on $\{f\geq R_0\}$.}
\end{equation*}
\end{prop}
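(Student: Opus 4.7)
The plan is to apply Lemma \ref{lemma-dec-sub-fct} to the function $u := -\sqrt{S}$, where $S := |\nabla^{g_s}h_s|^2_{h_s}$ as in Lemma \ref{lemma-S}, leveraging the weak differential inequality \eqref{weak-subsol-sqrt-S}. Negating that inequality yields
\begin{equation*}
\Delta_{\sigma_s,X}(-\sqrt{S}) \leq \frac{3}{2}(-\sqrt{S}) + A(f)\sqrt{S} + B(f),
\end{equation*}
where $A(f) := C(|\Rm(g_s)|_{g_s} + |\Lambda_s|_{g_s} + |h_s-g_s|_{g_s})$ and $B(f) := C(|\nabla^{g_s}\Rm(g_s)|_{g_s} + |\nabla^{g_s}\Lambda_s|_{g_s} + |\nabla^{g_s}\nabla^{g_s}\overline{\nabla}^{g_s}Q_s|_{g_s})$. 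The prefactor $C(n,\tr_{g_s}h_s,\tr_{h_s}g_s)$ is uniformly bounded by Corollary \ref{coro-equiv-metrics-0}. Because $g_s$ is asymptotically conical with derivatives (Proposition \ref{mainprop}(i)) and $Q_s$ grows only logarithmically at infinity (cf.\ the proof of Lemma \ref{normal-fss}), every term of $B(f)$ is $O(f^{-3/2})$. The background components of $A(f)$ decay as $O(f^{-1})$, while $|h_s-g_s|_{g_s} = O((\log f/f)^{1/2})$ by Corollary \ref{coro-weak-wei-C2-est}, so for any fixed $\alpha \in (0,\frac{1}{2})$ we have $A(f) \leq C_0 f^{-\alpha}$ for $f$ large.

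When $-\sqrt{S} < 0$, one can write $A(f)\sqrt{S} = -A(f)(-\sqrt{S}) \leq -C_0 f^{-\alpha}(-\sqrt{S})$, putting the differential inequality in the form
\begin{equation*}
\Delta_{\sigma_s,X}(-\sqrt{S}) \leq \left(\frac{3}{2} - \frac{C_0}{f^\alpha}\right)(-\sqrt{S}) + \frac{C_1}{f^{3/2}}
\end{equation*}
required by Lemma \ref{lemma-dec-sub-fct} with $\lambda = \frac{3}{2}$. Since $-\sqrt{S}$ is bounded on $M$ by the global $C^3$-bound of Proposition \ref{prop-C^3-est}, that lemma yields $\sqrt{S} \leq C\log f / f^{3/2}$. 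Because $g_s$ is $\nabla^{g_s}$-parallel, $\nabla^{g_s}h_s = \nabla^{g_s}(h_s - g_s)$; and since $h_s - g_s$ differs from the Hermitian form of $i\partial\bar{\partial}\vartheta_s$ only by $i\partial\bar{\partial}\Phi_s$, whose $k$th covariant derivative is $O(r^{-2-k})$, we obtain, using the uniform equivalence of $g_s$ and $h_s$, the first estimate
\begin{equation*}
|\nabla^{g_s}\partial\bar{\partial}\vartheta_s|_{g_s} \leq C\frac{\log f}{f^{3/2}}.
\end{equation*}

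For the second estimate, I would integrate this gradient bound along radial rays. Corollary \ref{coro-weak-wei-C2-est} guarantees $|\partial\bar{\partial}\vartheta_s|_{g_s} \to 0$ at infinity, so Kato's inequality along a unit $g_s$-speed radial curve $\gamma$ emanating from $p$ yields
\begin{equation*}
|\partial\bar{\partial}\vartheta_s|_{g_s}(p) \leq \int_0^{\infty}|\nabla^{g_s}\partial\bar{\partial}\vartheta_s|_{g_s}(\gamma(t))\,dt \leq C\int_{r(p)}^{\infty}\frac{\log(t^2/2)}{(t^2/2)^{3/2}}\,dt \leq C\frac{\log f(p)}{f(p)},
\end{equation*}
after the change of variables $f \sim r^2/2$. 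The bound in $|\cdot|_{h_s}$ is then immediate from the uniform equivalence of $g_s$ and $h_s$ on $(1,1)$-tensors, itself a consequence of Corollary \ref{coro-equiv-metrics-0}.

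The main obstacle is to verify that $A(f)$ actually decays: the naive $C^2$-bound from Corollary \ref{coro-equiv-metrics-0} only gives $|h_s - g_s|_{g_s} = O(1)$, which would prevent Lemma \ref{lemma-dec-sub-fct} from applying. Here the nontrivial input is precisely the improved weighted $C^2$-estimate $|h_s-g_s|_{g_s} = O((\log f/f)^{1/2})$ already secured in Corollary \ref{coro-weak-wei-C2-est}, which is why the ordering of estimates in this section matters. A minor subtlety is the failure of $\sqrt{S}$ to be $C^2$ at its zero set; this is handled, exactly as anticipated in the ``weak'' formulation of Lemma \ref{lemma-S}, by replacing $\sqrt{S}$ with $\sqrt{S+\varepsilon}$ and passing to the limit $\varepsilon \to 0$.
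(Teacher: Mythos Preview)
Your proof is correct and follows essentially the same approach as the paper: you feed the weak differential inequality \eqref{weak-subsol-sqrt-S} for $\sqrt{S}$ into Lemma \ref{lemma-dec-sub-fct} with $\lambda=\frac{3}{2}$, using Corollary \ref{coro-weak-wei-C2-est} to secure the decay of the coefficient $A(f)$, and then integrate along radial lines for the second estimate. One minor slip: since $\sigma_s=\omega_s+i\partial\bar{\partial}\vartheta_s$, the difference $h_s-g_s$ is already the Hermitian form of $i\partial\bar{\partial}\vartheta_s$ with no $\Phi_s$-correction needed, but this does not affect your argument.
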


\begin{proof}
From equation \eqref{weak-subsol-sqrt-S} of Lemma \ref{lemma-S}, we see, after using the bounds just derived in Corollary \ref{coro-weak-wei-C2-est}, that for all $\varepsilon\in\left(0,\frac{1}{2}\right)$, there exists positive constants $C_{\varepsilon}$, $C$, and $R_0$ such that
\begin{equation*}
\begin{split}
\left(\Delta_{\sigma_{s}}-\frac{X}{2}\cdot\right)\sqrt{S}&\geq \left(\frac{3}{2}-\frac{C_{\varepsilon}}{f^{\frac{1}{2}-\varepsilon}}\right)\sqrt{S}-\frac{C}{f^{\frac{3}{2}}}\quad\textrm{on $\{f\geq R_{0}\}$}.
\end{split}
\end{equation*}
Applying Lemma \ref{lemma-dec-sub-fct} with $\lambda=\frac{3}{2}$, $\varepsilon=\frac{1}{4}$, and $\alpha=\frac{1}{4}$
to $u:=-\sqrt{S}$ gives first inequality of the proposition. The second inequality
follows by integrating the previously achieved bound on $\sqrt{S}$ along radial lines.
\end{proof}

This leads on to the weighted $C^{k}$-estimate.
\begin{prop}[Weighted $C^k$-bounds]\label{prop-bds-Ck-weight}
Let $(\vartheta_s)_{0\,\leq\, s\,\leq\, 1}$ be a path of solutions in $ C^{\infty}_{X}(M)$ to \eqref{starstar-s}.
Then there exists a compact subset $K\subset M$ such that for all $k\geq 0$ and $\varepsilon>0$, there exists a positive constant $C_{k,\,\varepsilon}$ such that for all $s\in[0,\,1]$,
\begin{equation}\label{first-prop-bds-Ck-weight}
\begin{split}
f^{\frac{k}{2}+1-\varepsilon}\left(|\nabla^{h_s,\,k}\Rm(h_s)|_{h_{s}}+|\nabla^{g_{s},\,k}(h_s-g_s)|_{h_s}\right)\leq C_{k,\,\varepsilon}\qquad\textrm{on $M\setminus K$.}
\end{split}
\end{equation}
\end{prop}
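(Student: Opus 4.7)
The plan is to establish the estimate by a rescaling argument that reduces the problem at each point $x_0$ in the asymptotic region to a uniform interior Schauder bootstrap for the complex Monge-Amp\`ere equation \eqref{starstar-s}. The key structural input is that the background data $g$, $\omega_s$ and $G_s$ converge at rate $O(r^{-2})$ with $g_0$-derivatives of every order to their conical counterparts (Proposition \ref{mainprop}(i), the construction at the beginning of Section \ref{sec-path-reparam}, and equation \eqref{austin}), and that the cone metric $g_0$ is self-similar under the radial dilation $\sigma_\lambda(r, z) := (\lambda r, z)$, which moreover preserves the soliton vector field $X = r\partial_r$.

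For $x_0 \in M$ with $R := f(x_0)$ sufficiently large, I will identify a $g_0$-ball of radius comparable to $\sqrt{R}$ about $x_0$ with a fixed-size neighborhood $U \subset C_0$ via $\sigma_{\sqrt{R}}$ (which satisfies $\sigma_{\sqrt{R}}^* g_0 = R g_0$ and $\sigma_{\sqrt{R}}^* X = X$), and set
\begin{equation*}
\tilde g := R^{-1}\sigma_{\sqrt{R}}^* g, \quad \tilde\omega_s := R^{-1}\sigma_{\sqrt{R}}^*\omega_s, \quad \tilde G_s := \sigma_{\sqrt{R}}^* G_s, \quad \tilde\vartheta_s := R^{-1}\sigma_{\sqrt{R}}^*\vartheta_s.
\end{equation*}
The asymptotic hypotheses on $g$, $\omega_s$ and $G_s$ will ensure that $\tilde g$ and $\tilde\omega_s$ converge smoothly on $U$ to a fixed cone metric as $R \to \infty$, while $\tilde G_s = O(R^{-1})$ in $C^\infty_{\operatorname{loc}}(U)$. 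Under this rescaling, equation \eqref{starstar-s} transforms into
\begin{equation*}
\log\frac{(\tilde\omega_s + i\partial\bar\partial \tilde\vartheta_s)^n}{\tilde\omega_s^n} - \tfrac{R}{2}X\cdot\tilde\vartheta_s = \tilde G_s.
\end{equation*}
The factor $R$ on the $X$-derivative term is exactly compensated by the pointwise identity $R(X\cdot\tilde\vartheta_s) = \sigma_{\sqrt{R}}^*(X\cdot\vartheta_s)$ together with the weighted estimate $|X\cdot\vartheta_s| = O(\log f / f)$ from Corollary \ref{coro-weak-wei-C2-est}, so this first-order term can be absorbed into the right-hand side, which then tends to zero in $C^\infty_{\operatorname{loc}}(U)$ as $R \to \infty$. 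Similarly, $\|\tilde\vartheta_s\|_{C^0(U)} \leq CR^{-1}$ from Propositions \ref{prop-bd-abo-uni-psi} and \ref{prop-bd-bel-uni-psi}, and the rescaled K\"ahler form $\tilde\omega_s + i\partial\bar\partial \tilde\vartheta_s$ is uniformly equivalent to $\tilde g$ in view of Corollaries \ref{coro-equiv-metrics-0} and \ref{coro-weak-wei-C2-est}.

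The Evans-Krylov $C^{2,\alpha}$-estimate for the uniformly elliptic complex Monge-Amp\`ere operator, followed by the Schauder bootstrap of Proposition \ref{prop-loc-reg} adapted to the rescaled equation, will then produce, for every $k \in \N_0$ and every $\varepsilon > 0$, the uniform-in-$(R,s)$ bound $\|\tilde\vartheta_s\|_{C^{k+2,\alpha}(U')} \leq C_{k,\varepsilon}R^{-1+\varepsilon}$ on any strictly interior subdomain $U' \Subset U$. Unscaling via the identity $|\nabla^{g,m}\vartheta_s|_g(x_0) = R^{1-m/2}|\nabla^{\tilde g, m}\tilde\vartheta_s|_{\tilde g}$ will convert this into the pointwise weighted bound $|\nabla^{g,m}\vartheta_s|_g(x_0) \leq C_{m,\varepsilon}f(x_0)^{-m/2+\varepsilon}$ for all $m \in \N_0$. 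Since $h_s - g_s = i\partial\bar\partial\vartheta_s$ and the metrics $g, g_s, h_s$ are uniformly equivalent with $C^\infty$-controlled transition tensors (Corollaries \ref{coro-equiv-metrics-0} and \ref{coro-equiv-metrics}), this specialises to the asserted bound on $|\nabla^{g_s,k}(h_s - g_s)|_{h_s}$. The estimate for $|\nabla^{h_s,k}\Rm(h_s)|_{h_s}$ will then follow by expressing $\Rm(h_s)$ as a polynomial in the Christoffel symbols of $h_s$ and combining the bounds just obtained with the elementary decay $|\nabla^{g_s,\ell}\Rm(g_s)|_{g_s} = O(f^{-1-\ell/2})$, itself a consequence of $g - g_0 = O(r^{-2})$ with $g_0$-derivatives and the self-similarity of $g_0$.

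The main obstacle will be verifying that the Schauder bootstrap in the rescaled picture yields constants genuinely uniform in the large rescaling parameter $R$ and in $s \in [0,1]$. This reduces to establishing uniform $C^{k,\alpha}$-control on the coefficients of the linearised Monge-Amp\`ere operator $\Delta_{\tilde\omega_s + i\partial\bar\partial\tilde\vartheta_s}$, which is precisely the role of Corollaries \ref{coro-equiv-metrics-0}, \ref{coro-equiv-metrics} and \ref{coro-weak-wei-C2-est}, all of which survive the rescaling without loss of uniformity thanks to the self-similar nature of the underlying conical geometry.
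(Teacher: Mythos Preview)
Your rescaling approach has a genuine gap at the bootstrap step. The soliton vector field $X=r\partial_r$ is invariant under the dilation $\sigma_{\sqrt{R}}$, so after normalising to unit scale the drift term in the equation becomes $\tfrac{R}{2}\,X\cdot\tilde\vartheta_s$ with the large coefficient $R$. You absorb this into the source $\tilde H_s$, and at the $C^0$-level this is harmless since $\tfrac{R}{2}\,X\cdot\tilde\vartheta_s=\tfrac{1}{2}\sigma_{\sqrt{R}}^*(X\cdot\vartheta_s)=O(\log R/R)$ by Corollary \ref{coro-weak-wei-C2-est}. But to bootstrap to $C^{m+2,\alpha}$ via Schauder you need $\|\tilde H_s\|_{C^{m,\alpha}}$, and $\nabla^{\tilde g,m}\bigl(\tfrac{R}{2}X\cdot\tilde\vartheta_s\bigr)$ is only controlled by $R\,\|\tilde\vartheta_s\|_{C^{m+1,\alpha}}$ (since $X$ has unit-size coefficients on $U$). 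The resulting recursion
\[
\|\tilde\vartheta_s\|_{C^{m+2,\alpha}(U')}\;\leq\; CR\,\|\tilde\vartheta_s\|_{C^{m+1,\alpha}(U)}+O(R^{-1})
\]
loses a full factor of $R$ at every step: starting from $\|\tilde\vartheta_s\|_{C^3}=O(\log R/R)$ (Proposition \ref{prop-C3-eswei} after rescaling) one obtains only $\|\tilde\vartheta_s\|_{C^{4,\alpha}}=O(\log R)$, which unscales to $|\nabla^{g,4}\vartheta_s|_g=O(f^{-1}\log f)$ instead of the claimed $O(f^{-2+\varepsilon})$. Keeping the drift as an operator coefficient is no better, since the Schauder constant then depends on the $C^{0,\alpha}$-norm of the first-order coefficient, which is $\sim R$. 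So the claimed uniform bound $\|\tilde\vartheta_s\|_{C^{k+2,\alpha}(U')}\leq C_{k,\varepsilon}R^{-1+\varepsilon}$ is not self-consistent with the recursion it must satisfy.

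The paper's argument is structurally different and avoids this. It derives drift-Laplacian equations of the form $\tfrac{1}{2}(\Delta_{h_s,X}-\lambda)T=\text{lower order}+\text{data}$ with $\lambda>0$ for the tensors $\Psi=\Gamma(h_s)-\Gamma(g_s)$ and $\Rm(h_s)$ (Lemma \ref{lemma-S} and an analogous computation for the curvature), and then applies the barrier Lemma \ref{lemma-dec-sub-fct}. The key is that $\Delta_{h_s,X}f^{-\lambda}\approx\lambda f^{-\lambda}$ (Lemma \ref{lemma-barrier-fct}), so the drift term is precisely what \emph{produces} the polynomial decay rather than obstructing it; the induction then couples the equations for $\nabla^{h_s,k}\Psi$ and $\nabla^{h_s,k}\Rm(h_s)$. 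Your rescaling idea could in principle be salvaged by passing to the parabolic formulation used in Section \ref{linear-theory-section}, where $\Delta_{\tilde g,X}$ becomes $(-\tau)(\Delta_{\tilde g(\tau)}-2\partial_\tau)$ and the spatial scale $r_x\sim\sqrt{R}$ is matched by a time interval of length $\sim R$, but that is not what your proposal does.
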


\begin{remark}
Notice that Proposition \ref{prop-bds-Ck-weight} does not establish sharp bounds on the covariant derivatives of the curvature of $h_s$ and of the difference $h_s-g_s$ because of the exponent $\varepsilon$ appearing on the left-hand side of \eqref{first-prop-bds-Ck-weight}. One can obtain sharp bounds once \eqref{first-prop-bds-Ck-weight} has been established by bootstrapping the proof of this proposition and adapting the arguments wherever necessary. However, we have postponed this to Corollary \ref{coro-bds-Ck-weight}.
\end{remark}

\begin{proof}[Proof of Proposition \ref{prop-bds-Ck-weight}]
Observe that the estimates on $|\nabla^{g_{s},\,k}(h_s-g_s)|_{h_{s}}$ for $k=0$ and $k=1$ have already been established in Proposition \ref{prop-C3-eswei}.

We begin with the following claim that identifies the elliptic equation satisfied by the curvature $\Rm(h_s)$.
\begin{claim}\label{claim-Rm-bd}
Write $\frac{1}{2}\Delta_{\R}$ for $\Delta_{\sigma_s}+\overline{\Delta_{\sigma_s}}$ acting on tensors. Then
\begin{equation*}
\begin{split}
\frac{1}{2}\left(\Delta_{\R}-\nabla^{h_s}_{X}\right)\Rm(h_s)&= \Rm(h_s)+\left(\Rm(h_s)-\Rm(g_s)\right)\\
&\quad+\left(\Rm(h_s)+(h_s-g_s)+\Lambda_s\right)\ast \Rm(h_s)+Q(h_s,g_s),\\
Q(h_s,g_s)&:=\overline{\nabla}^{h_s}\Psi\ast \Lambda_s+\Psi\ast\Psi+\Psi\ast\Psi\ast \Lambda_s+\nabla^{g_s}\Lambda_s\ast \Psi+\nabla^{g_s,\,2}\Lambda_s.
\end{split}
\end{equation*}
Here we omit the dependence of the contractions on $g_s$ and $h_s$.
\end{claim}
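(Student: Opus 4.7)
The plan is to view $(h_s(\tau))_{\tau<0}$ as a perturbed K\"ahler-Ricci flow via \eqref{hello-ugly-eqn} and derive the elliptic identity by equating two different expressions for $\partial_{\tau}\Rm(h_s(\tau))|_{\tau=-1}$. On one hand, the Riemann $(1,3)$-curvature tensor is invariant under scaling by a positive constant, and so $\Rm(h_s(\tau))=(\varphi^{X}_{\tau})^{*}\Rm(h_s)$. Differentiating at $\tau=-1$, the definition of the one-parameter family $(\varphi^{X}_{\tau})$ yields
\begin{equation*}
\left.\partial_{\tau}\Rm(h_s(\tau))\right|_{\tau=-1}=\mathcal{L}_{\frac{X}{2}}\Rm(h_s)=\tfrac{1}{2}\nabla^{h_s}_{X}\Rm(h_s)+\Rm(h_s)\ast\nabla^{h_s}\tfrac{X}{2},
\end{equation*}
where the last term is to be interpreted through the tensorial form of the Lie derivative. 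Invoking once more that $\mathcal{L}_{X/2}h_s-h_s=\partial_\tau h_s(\tau)|_{\tau=-1}$ from \eqref{hello-ugly-eqn}, one can absorb the $\nabla^{h_s}X$-term into a $\Rm(h_s)$-term and contributions of the schematic form $\Rm(h_s)\ast\left(\Ric(h_s)+(h_s-g_s)+\Lambda_s\right)$. This accounts for the leading $\Rm(h_s)$ on the right-hand side of the claim and half of the quadratic curvature term.

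On the other hand, I would compute $\partial_{\tau}\Rm(h_s(\tau))|_{\tau=-1}$ directly from \eqref{hello-ugly-eqn}, using the standard Hamilton-type formula for the variation of Riemann curvature under a general symmetric perturbation $\partial_{\tau}h=V$. In K\"ahler coordinates this reads schematically
\begin{equation*}
\partial_{\tau}R_{i\bar{\jmath}k\bar{\ell}}=-\nabla^{h_s}_{i}\nabla^{h_s}_{\bar{\jmath}}V_{k\bar{\ell}}+\Rm(h_s)\ast V,
\end{equation*}
and for the Ricci part $V=-\Ric(h_s)$, the second Bianchi identity collapses the second derivative into $\tfrac{1}{2}\Delta_{\mathbb{R}}\Rm(h_s)+\Rm(h_s)\ast\Rm(h_s)$, which is precisely the K\"ahler version of Hamilton's evolution equation. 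For the perturbation $V=\Lambda_s+(g_s-h_s)$, one obtains additional terms $\nabla^{h_s,2}\Lambda_s$ and $\nabla^{h_s,2}(g_s-h_s)$, together with zeroth-order contributions $\Rm(h_s)\ast(\Lambda_s+(h_s-g_s))$.

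The remaining task is to re-express the piece $\nabla^{h_s,2}(g_s-h_s)$ so that only $g_s$-derivatives appear, at the cost of producing the curvature difference $\Rm(h_s)-\Rm(g_s)$ and remainder terms gathered into $Q(h_s,g_s)$. This uses the identity $\nabla^{h_s}=\nabla^{g_s}+\Psi\ast\,\cdot\,$ from the definition \eqref{def-Psi} of $\Psi$, so that differentiating twice introduces $\nabla^{g_s,2}(g_s-h_s)$ (which, via the second Bianchi identity applied to $g_s$, is comparable to $\Rm(h_s)-\Rm(g_s)$ modulo curvature-$\Psi$ terms) together with exactly the listed remainders $\overline{\nabla}^{h_s}\Psi\ast\Lambda_s$, $\Psi\ast\Psi$, $\Psi\ast\Psi\ast\Lambda_s$, $\nabla^{g_s}\Lambda_s\ast\Psi$, and $\nabla^{g_s,2}\Lambda_s$. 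Similarly, I would use $\nabla^{h_s}\Lambda_s=\nabla^{g_s}\Lambda_s+\Psi\ast\Lambda_s$ to convert $h_s$-derivatives of $\Lambda_s$ into $g_s$-derivatives plus lower-order admissible junk.

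The main obstacle will be the careful bookkeeping in the commutation of $\nabla^{h_s}$ and $\nabla^{g_s}$ when passing second derivatives of $(g_s-h_s)$ to an expression involving the difference of curvatures $\Rm(h_s)-\Rm(g_s)$: each commutator produces terms of the form $\Psi\ast\nabla\Psi$ and $\Psi\ast\Psi\ast(\Lambda_s,\,h_s-g_s,\,\text{etc.})$, and one must check that all such by-products are absorbed into the schematic form of $Q(h_s,g_s)$ as stated. Equating the two computations of $\partial_{\tau}\Rm(h_s(\tau))|_{\tau=-1}$ and solving for $\tfrac{1}{2}(\Delta_{\mathbb{R}}-\nabla^{h_s}_{X})\Rm(h_s)$ then yields the claimed identity.
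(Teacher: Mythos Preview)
Your proposal is correct and follows essentially the same strategy as the paper: equate the Lie-derivative computation of $\partial_\tau\Rm(h_s(\tau))|_{\tau=-1}$ with the variation-of-curvature computation coming from \eqref{hello-ugly-eqn}, then use the K\"ahler second Bianchi identity to produce the $\tfrac12\Delta_{\R}\Rm(h_s)$ term and rewrite the remaining pieces in terms of $\Rm(h_s)-\Rm(g_s)$, $\Psi$, and $\Lambda_s$. The one tactical difference is that where you propose to handle $\nabla^{h_s,2}(g_s-h_s)$ by iterating $\nabla^{h_s}=\nabla^{g_s}+\Psi\ast$, the paper instead works in normal holomorphic coordinates for $g_s$ and computes $\nabla^{h_s}_{\bar\ell}\nabla^{h_s}_i(g_s)_{k\bar\jmath}$ directly; this gives $\Rm(h_s)-\Rm(g_s)+\Rm(h_s)\ast(g_s-h_s)+\Psi\ast\Psi$ in one step and avoids the $\Psi\ast\nabla\Psi$ bookkeeping you flag as the main obstacle.
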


\begin{proof}[Proof of Claim \ref{claim-Rm-bd}]
We follow closely and adapt the proof of \cite[Lemma $3.2.10$]{Bou-Eys-Gue} to our setting. As in the proof of Lemma \ref{lemma-S}, we consider the one-parameter families of metrics defined by $h_s(\tau):=(-\tau)(\varphi^{X}_{\tau})^*h_s$ and $g_s(\tau):=(-\tau)(\varphi^{X}_{\tau})^*g_s$,
where $(\varphi^{X}_{\tau})_{\tau\,<\,0}$ is the one-parameter family of diffeomorphisms generated by $\frac{X}{2(-\tau)}$ with $\varphi^{X}_{\tau}\big|_{\tau\,=\,-1}=\Id_M$.

On one hand, we see that
\begin{equation}
\begin{split}\label{one-hand-Rm}
\frac{\partial}{\partial \tau}\bigg\vert_{\tau=-1} \Rm(h_s(\tau))&=\frac{\partial}{\partial \tau}\bigg\vert_{\tau=-1} (-\tau)(\varphi^{X}_{\tau})^{\ast}\Rm(h_s)=-\Rm(h_s)+\mathcal{L}_{\frac{X}{2}}\Rm(h_s)\\
&=\nabla^{h_s}_{\frac{X}{2}}\Rm(h_s)+\Rm(h_s)+\Rm(h_s)\ast \Ric(h_s)+\Rm(h_s)\ast (\Lambda_s+h_s-g_s).
\end{split}
\end{equation}
Here, we have used the fact that for a $(4,\,0)$-tensor $T$,
\begin{equation*}
\begin{split}
\mathcal{L}_XT(\cdot\,,\cdot\,,\cdot\,,\cdot\,)=\nabla^{h_s}_XT(\cdot\,,\cdot\,,\cdot\,,\cdot\,)+T(\nabla^{h_s}_{\cdot}X,\cdot\,,\cdot\,,\cdot\,)+T(\cdot\,,\nabla^{h_s}_{\cdot}X,\cdot\,,\cdot\,)+T(\cdot\,,\cdot\,,\nabla^{h_s}_{\cdot}X,\cdot\,)+T(\cdot\,,\cdot\,,\cdot\,,\nabla^{h_s}_{\cdot}X)
\end{split}
\end{equation*}
and the fact that $\nabla^{h_s}X=\frac{1}{2}\mathcal{L}_{X}h_s=h_s-\Ric(h_s)+\Lambda_s-(h_s-g_s)$ by virtue of \eqref{starstar-s}.
On the other hand, by the properties of the curvature tensor of a K\"ahler metric (namely $h_s$ here), one has that
\begin{equation}\label{snd-hand-Rm}
\begin{split}
\frac{\partial}{\partial \tau}\bigg\vert_{\tau=-1} \Rm(h_s(\tau))_{i\bar{\jmath}k\bar{\ell}}&=-\frac{\partial}{\partial \tau}\bigg\vert_{\tau=-1}h_s(\tau)_{p\bar{\jmath}}\partial_{\bar{\ell}}\Gamma(h_s)_{ik}^p-(h_s)_{p\bar{\jmath}}\partial_{\bar{\ell}}\left(\frac{\partial}{\partial \tau}\bigg\vert_{\tau=-1}\Gamma(h_s(\tau))_{ik}^p\right)\\
&=-(\Ric(h_s)-\Lambda_s+(h_s-g_s))_{p\bar{\jmath}} \Rm(h_s)_{i\bar{\ell}k}^p\\
&\quad+\nabla^{h_s}_{\bar{\ell}}\nabla^{h_s}_i(\Ric(h_s)-\Lambda_s+(h_s-g_s))_{k\bar{\jmath}}.
\end{split}
\end{equation}
Now, by choosing normal holomorphic coordinates with respect to $g_s$ at a fixed point, we see that
\begin{equation*}
\begin{split}\label{thd-hand-Rm}
\nabla^{h_s}_{\bar{\ell}}\nabla^{h_s}_i(g_s)_{k\bar{\jmath}}&=\partial_{\bar{\ell}}\partial_{i}(g_s)_{k\bar{\jmath}}-\partial_{\bar{\ell}}\Gamma(h_s)_{ik}^p(g_s)_{p\bar{\jmath}}+\Gamma(h_s)_{\bar{\ell}\bar{\jmath}}^{\bar{q}}\Gamma(h_s)_{ik}^{p}(g_s)_{p\bar{q}}\\
&=-\Rm(g_s)_{i\bar{\ell}k\bar{\jmath}}-\partial_{\bar{\ell}}\Gamma(h_s)_{ik}^p(g_s-h_s)_{p\bar{\jmath}}+\Rm(h_s)_{i\bar{\ell}k\bar{\jmath}}+\Psi_{\bar{\ell}\bar{\jmath}}^{\bar{q}}\Psi_{ik}^{p}(g_s)_{p\bar{q}}\\
&=\Rm(h_s)_{i\bar{\jmath}k\bar{\ell}}-\Rm(g_s)_{i\bar{\jmath}k\bar{\ell}}+\Rm(h_s)_{i\bar{\ell}k}^p(g_s-h_s)_{p\bar{\jmath}}+\Psi_{\bar{\ell}\bar{\jmath}}^{\bar{q}}\Psi_{ik}^{p}(g_s)_{p\bar{q}},
\end{split}
\end{equation*}
where we have used the Bianchi identity for the curvature tensor of a K\"ahler manifold \cite[Proposition $3.1.2$]{Bou-Eys-Gue} in the last line.
In addition, we have that
\begin{equation*}
\begin{split}\label{fth-hand-Rm}
\nabla^{h_s}_{\bar{\ell}}\nabla^{h_s}_i(\Lambda_s)_{k\bar{\jmath}}&=\nabla^{g_s}_{\bar{\ell}}\nabla^{g_s}_i(\Lambda_s)_{k\bar{\jmath}}+\left(\nabla^{g_s}\Lambda_s\ast \Psi+\Lambda_s\ast(\underbrace{\Rm(h_s)-\Rm(g_s)}_{=-\overline{\nabla}^{h_s}\Psi})+\Lambda_s\ast \Psi\ast \Psi\right)_{\bar{\ell} ik\bar{\jmath}}.
\end{split}
\end{equation*}

In order to complete the proof, we recall a computation made in the proof of \cite[Lemma 3.2.10]{Bou-Eys-Gue}, namely that
\begin{equation}\label{fith-hand-Rm}
\frac{1}{2}\Delta_{\R}\Rm(h_s)_{i\bar{\jmath}k\bar{\ell}}=\nabla^{h_s}_{\bar{\ell}}\nabla^{h_s}_k\Ric(h_s)_{i\bar{\jmath}}+\Rm(h_s)\ast \Rm(h_s)_{i\bar{\jmath}k\bar{\ell}}=\nabla^{h_s}_{\bar{\ell}}\nabla^{h_s}_i\Ric(h_s)_{k\bar{\jmath}}+\Rm(h_s)\ast \Rm(h_s)_{i\bar{\jmath}k\bar{\ell}},
\end{equation}
where we have used the Bianchi identity in the last equality. Combining \eqref{one-hand-Rm}--\eqref{fith-hand-Rm}, we arrive at the desired assertion.
\end{proof}

From this, we can now derive a $C^{0}$-bound for the curvature of $h_{s}$, as presented in the next claim.
\begin{claim}\label{claim-curv}
There exists a positive constant $C$ such that for all $s\in[0,\,1]$,
\begin{equation*}
\sup_M|\Rm(h_s)|_{h_s}\leq C.
\end{equation*}
\end{claim}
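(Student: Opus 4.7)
The plan is to bypass the Bochner-type inequality for $|\Rm(h_s)|_{h_s}$ — whose cubic self-interaction $\Rm(h_s)\ast\Rm(h_s)$ makes a direct maximum-principle argument delicate — and instead express $\Rm(h_s)$ algebraically in terms of $\Rm(g_s)$ and one covariant derivative of $\Psi$, via the Bianchi-type computation already carried out inside the proof of Claim \ref{claim-Rm-bd}. All resulting terms can then be bounded uniformly using estimates already at our disposal: $|\Rm(g_s)|_{g_s}$ is bounded by Proposition \ref{mainprop}(i), $|\Psi|_{h_s}=\sqrt{S}$ is bounded by Proposition \ref{prop-C3-eswei}, and $|\bar\nabla^{h_s}\Psi|_{h_s}$ will follow from interior Schauder estimates applied to the elliptic equation for $\Psi$ derived in Lemma \ref{lemma-S}.

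First, I would recall the identity established inside the proof of Claim \ref{claim-Rm-bd}, which in local holomorphic coordinates reads
\begin{equation*}
\Rm(h_s)_{i\bar\jmath k\bar\ell} = \Rm(g_s)_{i\bar\jmath k\bar\ell} + \nabla^{h_s}_{\bar\ell}\nabla^{h_s}_i (g_s)_{k\bar\jmath} + \Rm(h_s)_{i\bar\ell k}^{p}(g_s - h_s)_{p\bar\jmath} + \Psi_{\bar\ell\bar\jmath}^{\bar q}\Psi_{ik}^{p}(g_s)_{p\bar q}.
\end{equation*}
Since $\nabla^{h_s}g_s$ is a contraction of $\Psi$ with $g_s$, the second term on the right is linear in $\bar\nabla^{h_s}\Psi$ up to $\Psi\ast\Psi$-terms. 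Rearranging and using $|h_s-g_s|_{g_s}\to 0$ at infinity (Corollary \ref{coro-weak-wei-C2-est}), for $x$ outside a sufficiently large compact set $K\subset M$ we obtain
\begin{equation*}
|\Rm(h_s)|_{h_s}(x) \;\leq\; C\bigl(|\Rm(g_s)|_{g_s}(x) + |\bar\nabla^{h_s}\Psi|_{h_s}(x) + |\Psi|_{h_s}^{2}(x)\bigr).
\end{equation*}
Proposition \ref{prop-loc-reg} already yields a uniform $C^{0}$-bound on $|\Rm(h_s)|_{h_s}$ on $K$, so it suffices to bound each of the three terms on the right uniformly for $x\in M\setminus K$. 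The first is bounded (in fact, decays) by Proposition \ref{mainprop}(i); the third is bounded by Proposition \ref{prop-C3-eswei}. For the middle term, I would apply interior Schauder estimates to the elliptic equation $(\Delta_{\sigma_s,1/2}-\tfrac{1}{2}\nabla^{h_s}_{X})\Psi=\tfrac{3}{2}\Psi+T$ from Lemma \ref{lemma-S}, on the parabolic neighborhood $P_{r_x}(x)$ defined in Section \ref{setup}: because $|\Psi|_{h_s}$ and $|T|_{h_s}$ are uniformly bounded by Proposition \ref{prop-C3-eswei} and Proposition \ref{mainprop}(i), and because the coefficients of the leading operator have uniform $C^{0,2\alpha}_{\operatorname{loc}}$-control by Corollary \ref{coro-equiv-metrics}, standard interior parabolic Schauder estimates produce $|\bar\nabla^{h_s}\Psi|_{h_s}(x)\leq C$ uniformly in $x\in M$ and $s\in[0,1]$.

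The main obstacle will be ensuring that the Schauder estimate for $\Psi$ is genuinely uniform as $f(x)\to\infty$. The constants in the Schauder estimate depend on the $C^{0,2\alpha}$-norms of the coefficients of the elliptic operator in the chosen neighborhood, and these involve $h_s$, whose H\"older norm is controlled uniformly at each fixed scale by Corollary \ref{coro-equiv-metrics}. The choice of parabolic neighborhood scale $r_x\propto\sqrt{f(x)}$ matches the scale-invariance of the background K\"ahler cone metric at infinity, which is precisely what makes the Schauder constants uniform as $x\to\infty$; this is the same philosophy underlying the definition of the function spaces in Section \ref{function-spaces-subsection}. A secondary subtlety is that the identity in Lemma \ref{lemma-S} is schematic rather than indicial, so one must verify that its component-wise version is compatible with the standard elliptic Schauder theory applied to tensor-valued equations; this is routine but must be done explicitly.
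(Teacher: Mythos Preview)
Your proposed route has a genuine circularity that does not seem to close. The identity $-\overline{\nabla}^{h_s}\Psi=\Rm(h_s)-\Rm(g_s)$ (which is exactly the content of the formula you extract from Claim~\ref{claim-Rm-bd}) shows that bounding $|\overline{\nabla}^{h_s}\Psi|_{h_s}$ is literally equivalent to bounding $|\Rm(h_s)-\Rm(g_s)|_{h_s}$; your algebraic reduction is therefore tautological, and everything rests on the Schauder step. But the Schauder estimate you propose for $\Psi$ on the parabolic neighbourhoods $P_{r_x}(x)$ requires H\"older control of the coefficients of the operator in space \emph{and time}. The time derivative of the principal coefficient is $\partial_\tau h_s(\tau)\big|_{\tau=-1}=-\Ric(h_s)+\Lambda_s-(h_s-g_s)$ by \eqref{hello-ugly-eqn}, so uniform parabolic H\"older control of $h_s(\tau)$ on $P_{r_x}(x)$ already presupposes a bound on $\Ric(h_s)$ --- precisely what you are trying to establish. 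Corollary~\ref{coro-equiv-metrics} only provides \emph{spatial} H\"older control and says nothing about the time direction. The elliptic alternative on balls of fixed radius fails for the complementary reason: the first-order drift coefficient $X$ has $|X|_{g_s}\sim\sqrt{f}$, so the Schauder constant blows up as $f\to\infty$; rescaling to balls of radius $r_x$ does not help because after rescaling the drift coefficient scales like $r_x|X|\sim f$.

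The paper circumvents this by a maximum-principle argument rather than Schauder theory. From Claim~\ref{claim-Rm-bd} one obtains a differential inequality for $|\Rm(h_s)|_{h_s}$ in which the dangerous cubic term $\Rm(h_s)\ast\Rm(h_s)$ becomes $-C|\Rm(h_s)|_{h_s}^2 - C|\overline{\nabla}^{h_s}\Psi|_{h_s}^2$ after using $-\overline{\nabla}^{h_s}\Psi=\Rm(h_s)-\Rm(g_s)$; the bad $|\overline{\nabla}^{h_s}\Psi|_{h_s}^2$ term is then absorbed by coupling with the inequality \eqref{beauty-S-wei} for $S$, which produces a \emph{good} term $+|\overline{\nabla}^{h_s}\Psi|_{h_s}^2$. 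Applying the maximum principle to $AS+|\Rm(h_s)|_{h_s}$ for $A$ large enough gives the bound. This requires only the $C^0$ equivalence of $g_s$ and $h_s$ from Corollary~\ref{coro-equiv-metrics-0}, not any H\"older information, and so avoids the circularity.
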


\begin{proof}[Proof of Claim \ref{claim-curv}]
By Claim \ref{claim-Rm-bd}, the function $|\Rm(h_s)|_{h_s}$ weakly satisfies on the domain\linebreak $\{f\geq R_0\}$ with $R_0$ chosen large enough the differential inequality
\begin{equation}
\begin{split}\label{oh-my-curv-wei}
\frac{1}{2}\Delta_{h_s,\,X}|\Rm(h_s)|_{h_s}&\geq \left(1-C\left(|\Lambda_s|_{h_s}+|h_s-g_s|_{h_s}\right)\right)|\Rm(h_s)|_{h_s}-|\overline{\nabla}^{h_s}\Psi|_{h_s}\\
&\quad-C|\Rm(h_s)|_{h_s}^2-|Q(h_s,g_s)|_{h_s}\\
&\geq \left(1-\frac{C\log f}{f}\right)|\Rm(h_s)|_{h_s}-|\overline{\nabla}^{h_s}\Psi|_{h_s}-C|\overline{\nabla}^{h_s}\Psi|_{h_s}^2-\frac{C}{f^2}\\
&\quad-C\left(S+|\overline{\nabla}^{h_s}\Psi|_{h_s}|\Lambda_s|_{h_s}\right)-C|\nabla^{g_s}\Lambda_s|_{h_s}\sqrt{S}-C|\nabla^{g_s,\,2}\Lambda_s|_{h_s}\\
&\geq \left(1-\frac{C\log f}{f}\right)|\Rm(h_s)|_{h_s}-\left(1+\frac{C}{f}\right)|\overline{\nabla}^{h_s}\Psi|_{h_s}-C|\overline{\nabla}^{h_s}\Psi|_{h_s}^2-CS-\frac{C}{f^2},
\end{split}
\end{equation}
where we have used Proposition \ref{prop-C3-eswei} in the second inequality together with the asymptotic properties of the covariant derivatives of $g_s$ and $\Lambda_s$ (the latter being defined in Lemma \ref{lemma-S}). As always, $C$ denotes a positive constant that may vary from line to line. Also observe that Lemma \ref{lemma-S} asserts that $S$ satisfies the inequality
\begin{equation}
\begin{split}\label{beauty-S-wei}
\frac{1}{2}\Delta_{h_{s},\,X}S&\geq |\nabla^{h_s} \Psi|^2_{h_s}+|\overline{\nabla}^{h_s}\Psi|_{h_s}^2+3S-C\left(\frac{\log f}{f}\right)S-\left(\frac{C}{f^{\frac{3}{2}}}\right)\sqrt{S}\\
&\geq |\overline{\nabla}^{h_s}\Psi|_{h_s}^2+\left(3-C\left(\frac{\log f}{f}\right)\right)S-\frac{C}{f^{2}}
\end{split}
\end{equation}
on $\{f\geq R_0\}$ with $R_0$ large enough (but uniform in $s\in[0,\,1]$). Here, we have used the asymptotic properties of the covariant derivatives of $g_s$ and $\Lambda_s$ and Lemma \ref{lemma-S} in the first inequality, together with Young's inequality (after writing $f^{\frac{3}{2}}=f^{1+\frac{1}{2}}$) in the second.

Consider the sum $AS+|\Rm(h_s)|_{h_s}$ on $\{f>R_0\}$ for $R_0$ and $A>0$ to be determined later. Combining \eqref{oh-my-curv-wei} and \eqref{beauty-S-wei} gives for a positive constant $C$ that may vary from line to line, the inequality
\begin{equation*}
\begin{split}\label{never-stop-subsol-curv}
\frac{1}{2}\Delta_{h_{s},\,X}\left(AS+|\Rm(h_s)|_{h_s}\right)&\geq (A-C) |\overline{\nabla}^{h_s} \Psi|^2_{h_s}-\left(1+\frac{C}{f}\right)|\overline{\nabla}^{h_s} \Psi|_{h_s}-\frac{C}{f^{2}}\\
&\quad+\left(3A-C-AC\left(\frac{\log f}{f}\right)\right)S-A\left(\frac{C}{f^{2}}\right)+\left(1-\frac{C\log f}{f}\right)|\Rm(h_s)|_{h_s}\\
&\geq \frac{1}{2}\left(AS+|\Rm(h_s)|_{h_s}\right)-(1+A)\frac{C}{f^2}-C,
\end{split}
\end{equation*}
provided that $A:=C+1$ and $R_0$ is chosen uniformly large enough. Here we have used Young's inequality in the second inequality. Henceforth fix such constants $A$ and $R_0$.
Then the maximum principle implies that
\begin{equation*}
\sup_{\{f\,\geq\, R_0\}}\left(AS+|\Rm(h_s)|_{h_s}\right)=\max\left\{2(1+A)\frac{C}{R_0^2}+2C,\,\max_{\{f\,=\,R_0\}}\left(AS+|\Rm(h_s)|_{h_s}\right)\right\}.
\end{equation*}
The right-hand side is uniformly bounded from above thanks to Proposition \ref{prop-loc-reg}, which proves the boundedness of the curvature.
\end{proof}

Next, multiplying \eqref{oh-my-curv-wei} across by $|\Rm(h_s)|_{h_s}$, we derive that on $\{f\geq R_0\}$ with $R_0$ large enough, the lower bound
\begin{equation}
 \begin{split}\label{oh-my-curv-wei-bis}
\frac{1}{2}\Delta_{h_s,\,X}|\Rm(h_s)|^2_{h_s}&= |\nabla^{h_s}\Rm(h_s)|^2_{h_s}+|\Rm(h_s)|_{h_s}\Delta_{h_s,\,X}|\Rm(h_s)|_{h_s}\\
&\geq
 |\nabla^{h_s}\Rm(h_s)|^2_{h_s}+\left(2-\frac{C\log f}{f}\right)|\Rm(h_s)|^2_{h_s}\\
 &\quad-\left(2+\frac{C}{f}\right)|\overline{\nabla}^{h_s}\Psi|_{h_s}|\Rm(h_s)|_{h_s}-C|\overline{\nabla}^{h_s}\Psi|_{h_s}^2|\Rm(h_s)|_{h_s}\\
 &\quad-CS|\Rm(h_s)|_{h_s}-\frac{C}{f^2}|\Rm(h_s)|_{h_s}\\
 &\geq \left(2-\frac{C\log f}{f}\right)|\Rm(h_s)|^2_{h_s}-C|\overline{\nabla}^{h_s}\Psi|^2_{h_s}-\frac{C}{f^2},
\end{split}
\end{equation}
where we have used the fact that $-\overline{\nabla}^{h_s}\Psi=\Rm(h_s)-\Rm(g_s)$ and Young's inequality together with Claim \ref{claim-curv} in the last inequality. We have also invoked Proposition \ref{prop-C3-eswei} in the last line. Again, $C$ denotes a uniform positive constant that may vary from line to line. Combining \eqref{beauty-S-wei} and \eqref{oh-my-curv-wei-bis} now gives that
\begin{equation*}
\begin{split}\label{never-stop-subsol-curv}
\frac{1}{2}\Delta_{h_{s},\,X}\left(AS+|\Rm(h_s)|^2_{h_s}\right)&\geq (A-C) |\overline{\nabla}^{h_s} \Psi|^2_{h_s}+\left(2-\frac{C\log f}{f}\right)|\Rm(h_s)|^2_{h_s}-(1+A)\frac{C}{f^2}\\
&\quad +A\left(3-C\left(\frac{\log f}{f}\right)\right)S\\
&\geq \left(2-\frac{C\log f}{f}\right)\left(AS+|\Rm(h_s)|^2_{h_s}\right)-(1+A)\frac{C}{f^2},
\end{split}
\end{equation*}
provided that $A:=C$ and $R_0$ is chosen uniformly large enough. Applying Lemma \ref{lemma-dec-sub-fct} to $u:=-AS-|\Rm(h_s)|^2_{h_s}$ with $\lambda:=2$ and $\alpha:=\frac{1}{2}$ now gives us
the desired decay on $|\Rm(h_s)|_{h_s}$.

Regarding the higher covariant derivative bounds, one can couple the equations satisfied by $\Psi$ and $\Rm(h_s)$ from Lemma \ref{lemma-S} and Claim \ref{claim-Rm-bd} respectively to prove by induction that \eqref{first-prop-bds-Ck-weight} holds. Observe that Claim \ref{claim-Rm-bd} only contains zeroth order terms depending on $\Psi$, since $\overline{\nabla}^{h_s}\Psi=\linebreak-\Rm(h_s)+\Rm(g_s)$. In particular, upon differentiating the equation satisfied by $\Rm(h_s)$, the non-homogeneous terms arising from $\Psi$ (and its covariant derivatives) can be controlled
using the function $S$ from Lemma \ref{lemma-S}. This allows one to apply a maximum principle-type argument to obtain a bound on $\nabla^{h_s}\Rm(h_s)$. Differentiating the equation satisfied by $\Psi$ from Lemma \ref{lemma-S} then yields an equation for $\nabla^{h_s}\Psi$, whose linear coefficients in front of the zeroth order terms in $\Psi$ and $\nabla^{h_s}\Psi$ now decay appropriately, and so on.
\end{proof}

To close this series of a priori weighted estimate, we first need to show that the data $G_{s}$ lies in the correct function space, namely $\mathcal{C}^{\infty}_{X}(M)$.
\begin{lemma}\label{lemma-check-data-F-s-fct-space}
For each $k\geq 0$ and $\alpha\in\left(0,\frac{1}{2}\right)$, there exists $C(k,\,\alpha)>0$ such that for all $s\in[0,\,1]$,
\begin{equation*}
\|G_s\|_{C^{2k,\,2\alpha}_{X,\,2}(M)}\leq C(k,\alpha).
\end{equation*}
In particular, $\sup_{s\in[0,\,1]}\|F_s\|_{\mathcal{C}^{2k,\,2\alpha}_{X}(M)}<\infty.$
\end{lemma}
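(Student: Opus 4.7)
The plan is twofold: first, establish uniform asymptotic decay of $G_s$ (and of the constants $c_s$); second, translate this decay into the weighted parabolic norm $C^{2k,\,2\alpha}_{X,\,2}(M)$. I do not expect any genuinely analytical difficulty here, as both $F_s$ and $\omega_s$ are given by explicit formulas outside a fixed compact set.

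For the first step, I would use \eqref{second} to write
\begin{equation*}
F_s - c_s = \log\!\left(1 + \frac{s\,e^{c_0}}{1+s(e^{c_0}-1)}\,\bigl(e^{F-c_0}-1\bigr)\right).
\end{equation*}
The prefactor is a smooth function of $s\in[0,1]$ bounded in $[0,\,e^{|c_0|}]$, and $e^{F-c_0}-1 = O(r^{-2})$ with $g_0$-derivatives by Proposition \ref{mainprop}(i); Taylor-expanding $\log(1+t)$ (in the variable $t$ having the claimed decay with $g_0$-derivatives) then yields that $F_s - c_s = O(r^{-2})$ with $g_0$-derivatives, uniformly in $s\in[0,1]$. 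Next, \eqref{first} gives $\omega_s - \omega = -2c_s\,\omega^T$ outside a compact subset of $M$, and since $|\omega^T|_{g_0}=O(r^{-2})$ with $g_0$-derivatives and $c_s$ is uniformly bounded, we obtain $\log(\omega_s^n/\omega^n)=O(r^{-2})$ with $g_0$-derivatives uniformly in $s$. Combining, $G_s = (F_s-c_s) - \log(\omega_s^n/\omega^n) = O(r^{-2})$ with $g_0$-derivatives uniformly in $s\in[0,1]$, and on the compact part (where the asymptotic formulas are vacuous) both $G_s$ and its derivatives depend continuously on $s$ over the compact parameter range and are hence uniformly bounded.

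For the second step, I would establish the following general fact: for a $JX$-invariant smooth function $u:M\rightarrow\mathbb{R}$ with $u=O(r^{-\beta})$ with $g_0$-derivatives, one has $u\in C^{2k,\,2\alpha}_{X,\,\beta}(M)$ with norm bounded in terms of the constants in the asymptotic expansion. The flow $\varphi^X_\tau$ of $\frac{X}{2(-\tau)}$ satisfies $r\circ\varphi^X_\tau = r\,(-\tau)^{-1/2}$ outside a compact subset since $d\pi(X) = r\partial_r$, so
\begin{equation*}
\tilde{u}(x,\tau) = u(\varphi^X_\tau(x)) = O\!\left(r(x)^{-\beta}(-\tau)^{\beta/2}\right),
\end{equation*}
yielding $(-\tau)^{-\beta/2}\tilde{u}(x,\tau) = O(\rho(x)^{-\beta/2})$ on $P_{r_x}(x)$. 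Spatial covariant derivatives of $\tilde{u}$ with respect to $g$ correspond to those of $u$ pulled back by $\varphi^X_\tau$, while time derivatives satisfy $\partial_\tau^{(j)}\tilde{u} = (2(-\tau))^{-j}(X^j\cdot u)\circ \varphi^X_\tau + \text{lower order}$; since $X=r\partial_r$ outside a compact set, iterated $X$-derivatives preserve the decay rate. The parabolic weights $r_x^{i+2j} \simeq \rho(x)^{(i+2j)/2}$ exactly cancel the loss of decay per (spatial or time) derivative, and the H\"older seminorms are controlled by the next-order $g_0$-derivative bound. Lemma \ref{lemma-basic-equiv-ass} and Remark \ref{trump} allow us to pass freely between $\tilde{g}$, $g$, $g(\tau)$, and $g_0$ in these estimates. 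Applying this with $u=G_s$ and $\beta=2$ gives $\|G_s\|_{C^{2k,\,2\alpha}_{X,\,2}(M)}\leq C(k,\alpha)$ uniformly in $s\in[0,1]$.

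The final assertion about $F_s$ then follows immediately: writing $F_s = c_s + (F_s - c_s)$, we have $F_s - c_s = G_s + \log(\omega_s^n/\omega^n)$, where both summands lie in $C^{2k,\,2\alpha}_{X,\,2}(M)$ with uniform bounds (the second by the same translation argument applied to $\log(\omega_s^n/\omega^n)$), while $|c_s|\leq |c_0|$, so
\begin{equation*}
\|F_s\|_{\mathcal{C}^{2k,\,2\alpha}_X(M)} = |c_s| + \|F_s - c_s\|_{C^{2k,\,2\alpha}_{X,\,2}(M)} \leq C(k,\alpha)
\end{equation*}
uniformly in $s\in[0,1]$. The main bookkeeping obstacle is verifying the translation in step two rigorously—the commutation of covariant derivatives with the pullback under $\varphi^X_\tau$ and the interpretation of time derivatives as weighted $X$-derivatives—but this is entirely mechanical given the explicit form of the flow outside the compact region.
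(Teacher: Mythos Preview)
Your general claim in step two—that $u = O(r^{-\beta})$ with $g_0$-derivatives implies $u \in C^{2k,2\alpha}_{X,\beta}(M)$—is false, and this is the gap. Take $u = r^{-2}\phi(\log r)$ on the cone for a smooth bounded $\phi$ with bounded derivatives of all orders and $\phi'\not\equiv0$; this is $O(r^{-2})$ with $g_0$-derivatives. But $(-\tau)^{-1}\tilde{u}(y,\tau) = r(y)^{-2}\phi\bigl(\log r(y) - \tfrac{1}{2}\log(-\tau)\bigr)$, so $\partial_\tau\bigl[(-\tau)^{-1}\tilde{u}\bigr]\big|_{\tau=-1} = \tfrac{1}{2}r(y)^{-2}\phi'(\log r(y))$, which is only $O(r^{-2})$, not $O(r^{-4})$. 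The $(i,j)=(0,1)$ term in the parabolic norm is then $\rho(x)\cdot r_x^{2} \cdot r(x)^{-2} \sim r(x)^{2}$, unbounded. Your error is in the sentence ``the parabolic weights $r_x^{i+2j}$ exactly cancel the loss of decay per derivative'': iterated $X$-derivatives \emph{preserve} the decay rate (as you correctly note), but preservation is not improvement, and then the weight $r_x^{2j}$ has nothing to cancel against at $\tau=-1$.

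The paper's proof succeeds by exploiting a specific polynomial-in-$\tau$ structure of the data that is \emph{not} captured by the hypothesis ``$O(r^{-2})$ with $g_0$-derivatives''. Since $(-\tau)\varphi_\tau^*\omega_0 = \omega_0$ and $\varphi_\tau^*\rho_{\omega_0} = \rho_{\omega_0}$ on the cone, one has $\omega(\tau) = \omega_0 + (-\tau)\rho_{\omega_0}$ there, and consequently $(-\tau)^{-1}\widetilde{X\cdot F}$ is computed explicitly as $-2\sum_i \lambda_i/(1-\tau\lambda_i)$ in an eigenbasis of $\rho_{\omega_0}$ relative to $\omega_0$. Its $j$-th $\tau$-derivative is $O(\lambda^{j+1}) = O(r^{-2(j+1)})$, exactly the extra decay needed to absorb $r_x^{2j}$. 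The same rational structure governs $\omega_s^n/\omega^n$, and Fa\`a di Bruno then transports these bounds to $G_s$. Your step one (uniform $O(r^{-2})$ decay with derivatives) is correct and is used, but the translation into the weighted parabolic norm requires these explicit formulas rather than generic asymptotics.
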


\begin{proof}
We work on the complement of a fixed compact subset $K$ of $M$ containing the exceptional set $E$ of the resolution for which $\omega=\omega_{0}+\rho_{\omega_{0}}$
and $f=\frac{r^{2}}{2}-n$ on $M\setminus K$ (cf.~Proposition \ref{mainprop}).

First observe that on $M\setminus K$,
\eqref{normal12} can be rewritten as
\begin{equation*}
\begin{split}
-\frac{X}{2}\cdot F&=\Delta_{\omega}f-\Delta_{\omega_0}f=\tr_{\omega}\omega_0-\tr_{\omega_0}\omega_0,
\end{split}
\end{equation*}
because $i\partial \bar{\partial}f=\omega_0$ on this set. In particular, if $\omega(\tau):=(-\tau)\varphi_{\tau}^*\omega$, then at all points
$x\in M\setminus K$ and $\tau\in(-\iota_0r(x)^2,\,0)$ with $\iota_0>0$ small enough, we have that
\begin{equation}\label{persistence}
\begin{split}
-(-\tau)\partial_{\tau}\tilde{F}(\cdot\,,\tau)=-\varphi_{\tau}^*\left(\frac{X}{2}\cdot F\right)=\tr_{\omega(\tau)}\omega_0-\tr_{\omega_0}\omega_0=\tr_{\omega_0+(-\tau)\rho_{\omega_0}}\omega_0-\tr_{\omega_0}\omega_0,
\end{split}
\end{equation}
because $(-\tau)\varphi_{\tau}^*\omega_0=\omega_0$ and $\varphi_{\tau}^*\rho_{\omega_0}=\rho_{\varphi_{\tau}^*\omega_0}=\rho_{(-\tau)^{-1}\omega_0}=\rho_{\omega_0}$
at such points $x$ and times $\tau$.

We next show that $X\cdot F$, and correspondingly $F$, lie in the correct function space.
\begin{claim}\label{claim-partial-der-F}
\begin{equation*}
\|X\cdot F\|_{C^{2k,\,2\alpha}_{X,\,2}(M)}\leq C(k,\alpha).
\end{equation*}
Moreover, $\|F\|_{C^{2k,\,2\alpha}_{X,\,0}(M)}\leq C(k,\alpha)$.
\end{claim}

\begin{proof}[Proof of Claim \ref{claim-partial-der-F}]
By construction of the data $\omega$ and $F$ and the definition of the function spaces, it suffices to prove that there exists a compact subset $K\subset M$ such that for all $i,\,j\geq 0$, $$\sup_{s\,\in\,[0,\,1]}\sup_{x\,\in\,M\setminus K}\sup_{P_{r_x}(x)}r_x^{i+2j+2}\cdot|\nabla^{g,\,i}(\partial_{\tau}^{(j)})\partial_{\tau}\tilde{F}|_{g}<\infty.$$
To this end, for a fixed point $x\in M\setminus K$ and $\tau\in(-\iota_0r(x)^2,\,0)$, let $$U(\tau):=(-\tau)^{-1}(\tr_{\omega_0+(-\tau)\rho_{\omega_0}}\omega_0-\tr_{\omega_0}\omega_0).$$
Choose an orthonormal frame based at $x$ with respect to the Riemannian metric induced by $\omega_0$ such that $\rho_{\omega_0}$ is diagonal with eigenvalues $\lambda_{1},\ldots,\lambda_{n}$.
Then $U(\tau)=-\sum_{i=1}^{n}\frac{\lambda_i}{1-\tau\lambda_i}$ so that for all $k\geq 1$, we see that
\begin{equation*}
\begin{split}
U^{(k)}(\tau)&=-\sum_{i\,=\,1}^{n}k!\frac{\lambda_i^{k+1}}{(1-\tau\lambda_i)^{k+1}}.
\end{split}
\end{equation*}
Multiplying \eqref{persistence} across by $r(x)^{2j+2}$ and noting that $ |\tau|\leq \iota_0r(x)^2$ therefore leads to the bounds
\begin{equation*}
\sup_{s\,\in\,[0,\,1]}\sup_{x\,\in\,M\setminus K}\sup_{P_{r_x}(x)}r_x^{2j+2}\cdot|\partial_{\tau}^{(j)}\partial_{\tau}\tilde{F}|<\infty,
\end{equation*}
because $\rho_{\omega_0}=O(r^{-2})$ with $g_0$-derivatives.

The corresponding result for $i\geq 1$ follows by differentiating \eqref{persistence} $i$-times, which can be expressed schematically in tensorial form as
$$\partial_{\tau}^{(j)}\partial_{\tau}\tilde{F}(\tau)=\underbrace{\rho_{\omega_0}\ast...\ast \rho_{\omega_0}}_{\text{$(j+1)$-times}}\ast \underbrace{\omega(\tau)^{-1}...\ast \omega(\tau)^{-1}}_{\text{$(j+1)$-times}},$$
where $\ast$ denotes linear combinations of tensorial contractions with respect to the cone  metric $g_0$ (induced by $\omega_0$). This completes the proof of the first statement.

The second statement concerning $F$ follows from the observation that the radial derivative $X \cdot F$ decays quadratically at spatial infinity, which in particular implies that $F$ is bounded on $M$. A similar argument to above applies to the covariant derivatives $\rho^{\frac{i}{2}}\cdot|\nabla^{g,\,i} F|_{g}$ for any $i \geq 1$.
\end{proof}

Recall from Section \ref{sec-path-reparam} that
\begin{equation}\label{G-concrete-expr}
G_s=\log\left(1+\frac{se^{c_{0}}}{1+s(e^{c_{0}}-1)}(e^{F-c_{0}}-1)\right)-\log\left(\frac{\omega_{s}^{n}}{\omega^{n}}\right),
\end{equation}
where $\omega_s=\omega+i\partial\bar{\partial}\Phi_s$. It now follows from Claim \ref{claim-partial-der-F} and Fa\`a di Bruno's formula applied to the universal smooth function $U(y):=\log\left(1+\frac{se^{c_{0}}}{1+s(e^{c_{0}}-1)}(e^{y}-1)\right),\,y\in\R,$ in a neighborhood of $0\in\R$ that the first term on the right-hand side of \eqref{G-concrete-expr} lies uniformly in $C^{2k,\,2\alpha}_{X,\,2}(M)$. The same also holds true for the second term of the right-hand side of \eqref{G-concrete-expr}. Indeed, since $\varphi_{\tau}^*\left(i\partial\bar{\partial}\Phi_s\right)$ is independent of $\tau$ on $M\setminus K$ and decays quadratically with $g_0$-derivatives, one has that
 \begin{equation*}
\varphi_{\tau}^*\left(\frac{\omega_{s}^{n}}{\omega^{n}}\right)=\frac{\left(\omega_0+(-\tau)\left(\rho_{\omega_0}+i\partial\bar{\partial}\Phi_s\right)\right)^{n}}
{(\omega_0+(-\tau)\rho_{\omega_0})^n}=1+\sum_{k\,=\,0}^{n-1}{n \choose k}(-\tau)^{n-k}\frac{\omega_0^{k}\wedge\left(\rho_{\omega_0}+i\partial\bar{\partial}\Phi_s\right)^{n-k}}{(\omega_0+(-\tau)\rho_{\omega_0})^n},
\end{equation*}
from which it can be deduced that $\frac{\omega_{s}^{n}}{\omega^{n}}-1\in C^{2k,\,2\alpha}_{X,\,2}(M)$ uniformly in $s$. A further application of Fa\`a di Bruno's formula to $U(y):=\log (1+y),\, y\in\R,$ in a neighborhood of $0\in\R$ ensures that $\log \left(\frac{\omega_{s}^{n}}{\omega^{n}}\right) \in C^{2k,\,2\alpha}_{X,\,2}(M)$ uniformly in $s$. From this, the result follows.
\end{proof}

Finally, we arrive at the promised sharp form of the weighted $C^{k}$-estimate.
\begin{corollary}[Sharp weighted $C^k$-bounds]\label{coro-bds-Ck-weight}
Let $(\vartheta_s)_{0\,\leq\, s\,\leq\, 1}$ be a path of solutions in $ C^{\infty}_{X}(M)$ to \eqref{starstar-s}. For each $k\geq 0$ and $\alpha\in\left(0,\,\frac{1}{2}\right)$, there exists a positive constant $C_{k,\,\alpha}$ such that for all $s\in[0,\,1]$,
\begin{equation*}\label{bds-Ck-weight}
\begin{split}
\|\vartheta_s\|_{C_{X,\,0}^{2k+2,\,2\alpha}(M)}\leq C_{k,\,\alpha}.
\end{split}
\end{equation*}
\end{corollary}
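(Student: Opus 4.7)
The plan is to combine a parabolic rescaling argument with the regularity theory for parabolic complex Monge-Amp\`ere equations, using all the uniform and weighted a priori estimates already established in Sections \ref{sec-a-priori-est}--\ref{sec-wei-bd}. By Proposition \ref{prop-loc-reg} the bound already holds uniformly on any fixed compact subset, so it will suffice to prove it on the complement of a large compact set $K\subset M$.

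First I would reformulate \eqref{starstar-s} as a parabolic equation for the time-dependent function $\tilde\vartheta_{s}(x,\tau):=(\varphi^{X}_{\tau})^{\ast}\vartheta_{s}(x)$. Using the scaling properties of the pullback under $\varphi^{X}_{\tau}$ and the identities $(\varphi^{X}_{\tau})^{\ast}\omega_{s}=(-\tau)^{-1}\omega_{s}(\tau)$ and $(\varphi^{X}_{\tau})^{\ast}(i\partial\bar\partial\vartheta_{s})=i\partial\bar\partial\tilde\vartheta_{s}$, the equation \eqref{starstar-s} becomes
$$(-\tau)\,\partial_{\tau}\tilde\vartheta_{s}\;=\;\log\!\left(\frac{\bigl(\omega_{s}(\tau)+(-\tau)\,i\partial\bar\partial\tilde\vartheta_{s}\bigr)^{n}}{\omega_{s}(\tau)^{n}}\right)-\tilde G_{s}(\tau),$$
which at $\tau=-1$ reduces to $(\star\star_{s})$. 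The norm $\|\cdot\|_{C^{2k+2,2\alpha}_{X,0}(M)}$ is by design precisely the supremum over $x\in M$ of the parabolic Schauder norm of $\tilde\vartheta_{s}$ on $P_{r_{x}}(x)$, so controlling $\vartheta_{s}$ in this function space amounts to controlling $\tilde\vartheta_{s}$ uniformly in parabolic Schauder norms on parabolic neighbourhoods of scale $r_{x}\sim\sqrt{f(x)}$.

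Next, for $x_{0}\in M\setminus K$ I would perform the parabolic rescaling $(x,\tau)\mapsto(r_{x_{0}}^{-1}(x-x_{0}),\,r_{x_{0}}^{-2}(\tau+1))$ together with the metric rescaling $\hat g:=r_{x_{0}}^{-2}g$, which carries $P_{r_{x_{0}}}(x_{0})$ onto the standard unit parabolic neighbourhood $P_{1}(0,0)$. By Lemma \ref{lemma-basic-equiv-ass}(ii), the rescaled background metrics $\hat\omega$ and $\hat\omega_{s}(\tau)$ have all rescaled derivatives bounded in $C^{2k,2\alpha}(P_{1})$ uniformly in $x_{0}$ and $s$; by Lemma \ref{lemma-check-data-F-s-fct-space}, the rescaled data $\hat G_{s}(\tau)$ is similarly bounded in $C^{2k,2\alpha}(P_{1})$ for every $k$ (the weight $r_{x_{0}}^{2}$ absorbing precisely the built-in quadratic decay of $G_{s}$). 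The rescaled solution $\hat\vartheta_{s}$ is uniformly bounded in $C^{0}(P_{1})$ by Propositions \ref{prop-bd-abo-uni-psi} and \ref{prop-bd-bel-uni-psi} (together with Proposition \ref{prop-bd-uni-X-psi} for the time derivative at $\tau=-1$), its complex Hessian is uniformly bounded by Proposition \ref{prop-C^2-est}, and Corollary \ref{coro-equiv-metrics-0} ensures uniform parabolicity of the rescaled Monge-Amp\`ere equation on $P_{1}$.

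With these uniform data in place, the final step is to invoke the regularity theory for parabolic complex Monge-Amp\`ere equations. Concavity of $\log\det$ in the Hermitian positive-definite Hessian, uniform parabolicity and the $C^{0}$-Hessian bound together produce, via the parabolic Evans-Krylov theorem, a uniform $C^{2,2\alpha}$ bound for $\hat\vartheta_{s}$ on $P_{1/2}(0,0)$. Differentiating the rescaled equation once yields a linear parabolic equation for the spatial derivatives of $\hat\vartheta_{s}$ whose coefficients are now in $C^{2\alpha}$, so linear parabolic Schauder estimates upgrade this to $C^{3,2\alpha}$; iterating this bootstrap (using Lemmas \ref{lemma-basic-equiv-ass} and \ref{lemma-check-data-F-s-fct-space} at each stage to feed in the required regularity of the coefficients and forcing term) produces a uniform $C^{2k+2,2\alpha}(P_{1/2}(0,0))$ bound for every $k\ge0$. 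Undoing the rescaling is exactly the claimed bound in $C^{2k+2,2\alpha}_{X,0}(M)$ on $M\setminus K$, which together with Proposition \ref{prop-loc-reg} completes the argument.

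The main obstacle I expect is the opening Evans-Krylov step. The weighted estimates of Section \ref{sec-wei-bd} (Corollary \ref{coro-weak-wei-C2-est}, Propositions \ref{prop-C3-eswei} and \ref{prop-bds-Ck-weight}) contain logarithmic and $\varepsilon$-losses that, taken at face value, would fail to provide bounded rescaled derivatives in $C^{2k+2,2\alpha}_{X,0}(M)$; it is precisely the concavity of the Monge-Amp\`ere operator invoked at the rescaled level that removes these losses by producing a genuine parabolic Schauder estimate on the unit parabolic neighbourhood. Once this initial rescaled $C^{2,2\alpha}$ estimate is in hand, the bootstrap to arbitrary order is routine linear Schauder theory.
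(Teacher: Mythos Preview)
Your parabolic rescaling setup is correct, and the route you describe is a natural one to try. However, there is a genuine gap at the Evans--Krylov step. After rescaling by $\hat g=r_{x_0}^{-2}g$ and $\hat\tau=r_{x_0}^{-2}(\tau+1)$, the equation reads
\[
\partial_{\hat\tau}\hat\vartheta_s=c(\hat\tau)^{-1}\log\det\!\bigl(I+c(\hat\tau)\,\hat\omega_s^{-1}i\partial\bar\partial\hat\vartheta_s\bigr)-c(\hat\tau)^{-1}\tilde G_s,\qquad c(\hat\tau)=r_{x_0}^{-2}-\hat\tau.
\]
Uniform parabolicity (i.e.\ $\hat\sigma_s\sim\hat\omega_s$) only gives $|c\cdot i\partial\bar\partial\hat\vartheta_s|_{\hat g}\le C$; it does \emph{not} give $|i\partial\bar\partial\hat\vartheta_s|_{\hat g}\le C$, since $c(\hat\tau)\to r_{x_0}^{-2}\to 0$ at $\hat\tau=0$. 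Your appeal to Proposition~\ref{prop-C^2-est} for a rescaled Hessian bound therefore fails: that proposition yields the unweighted $|i\partial\bar\partial\vartheta_s|_g\le C$, which after rescaling becomes $|i\partial\bar\partial\hat\vartheta_s|_{\hat g}=O(r_{x_0}^{2})$. The standard parabolic Evans--Krylov theorem requires $\|D^2\hat\vartheta_s\|_{L^\infty}$ as input and returns $[D^2\hat\vartheta_s]_{C^{2\alpha}}$ with the same dependence; concavity alone does not produce a $C^2$ bound from $C^0$ here, because the $\hat\tau$-dependence of the nonlinearity (through $c(\hat\tau)$) degenerates. Feeding in the non-sharp bounds of Proposition~\ref{prop-bds-Ck-weight} only yields $|i\partial\bar\partial\hat\vartheta_s|_{\hat g}=O(f(x_0)^{\varepsilon})$ on $P_1$, and Evans--Krylov would then return a $C^{2,2\alpha}$ estimate of the same order---precisely the $\varepsilon$-loss you hoped to remove.

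The paper sidesteps this degeneration entirely. It linearises \eqref{starstar-s} via \eqref{equ:taylor-exp} around the solution metric $\sigma_s$ to obtain the \emph{linear} drift equation $\Delta_{h_s,X}\vartheta_s=\overline G_s$, where $\overline G_s$ is $2G_s$ plus a term quadratic in $\partial\bar\partial\vartheta_s$. The non-sharp weighted bounds of Proposition~\ref{prop-bds-Ck-weight} are now used as \emph{input} to control this quadratic term, giving $f^{1+k/2}|\nabla^{g_s,k}\overline G_s|_{g_s}\le C_k$. From here the argument is purely linear: a Bochner-formula maximum-principle argument for $\Delta_{g_s,X}$ (sketched for $k=1$) yields $f^{k/2}|\nabla^{g_s,k}\vartheta_s|_{g_s}\le C_k$ for all $k$, and iterated differentiation of the equation along $X$ recovers the time-derivative bounds, giving the full $C^{2k+2,2\alpha}_{X,0}$ norm. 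In short, the paper uses Proposition~\ref{prop-bds-Ck-weight} to reduce to a linear problem with sharply decaying data, rather than attempting to extract the sharp decay directly from nonlinear parabolic regularity.
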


\begin{proof}
We linearize \eqref{starstar-s} by applying \eqref{equ:taylor-exp} with $\tilde{\omega}:=\omega_s$ and $\psi:=\vartheta_s$ to obtain the equality
\begin{equation}
\begin{split}\label{lin-eqn-final-coro}
\Delta_{h_s,\,X}\vartheta_s=2\int_0^1\int_0^{u}\arrowvert \partial\bar{\partial}\vartheta_s\arrowvert^2_{(g_s)_{\tau\vartheta_s}}\,d\tau\,du+2G_s=:\overline{G}_s,
\end{split}
 \end{equation}
where, for $\tau\in[0,\,1]$, $(g_s)_{\tau\vartheta_s}$ is the Riemannian metric induced by the K\"ahler form $\omega_s+\tau(i\partial\bar{\partial}\vartheta_s)$.
For each $k\geq 0$, we then see from Proposition \ref{prop-bds-Ck-weight} and Lemma \ref{lemma-check-data-F-s-fct-space} that $\overline{G}_s$ satisfies the bounds
 \begin{equation}\label{data-new-look-good}
 f^{1+\frac{k}{2}}|\nabla^{g_s,\,k}\overline{G}_s|_{g_s}\leq C_k.
 \end{equation}
In particular, interior estimates for the drift Laplacian $\Delta_{g_s,\,X}$ now tell us that for each $k\geq 0$,
  \begin{equation}\label{ber-shi-k=1}
 f^{\frac{k}{2}}|\nabla^{g_s,\,k}\vartheta_s|_{g_s}\leq C_k.
 \end{equation}
Indeed, let us demonstrate \eqref{ber-shi-k=1} for $k=1$ ($k=0$ has already been established in combining Propositions \ref{prop-bd-abo-uni-psi} and \ref{prop-bd-bel-uni-psi}).

The proof is purely Riemannian. The Bochner formula for functions asserts that
 \begin{equation*}
 \begin{split}
\Delta_{g_s,\,X}|\nabla^{g_s}\vartheta_s|^2_{g_s}&=2|\nabla^{g_s,\,2}\vartheta_s|^2_{g_s}+2\left(\Ric(g_s)+\frac{1}{2}\mathcal{L}_Xg_s\right)(\nabla^{g_s}\vartheta_s,\nabla^{g_s}\vartheta_s)+2g_s(\nabla^{g_s}\Delta_{g_s,\,X}\vartheta_s,\nabla^{g_s}\vartheta_s)\\
&\geq 2|\nabla^{g_s,\,2}\vartheta_s|^2_{g_s}+\left(2-\frac{C}{f}\right)|\nabla^{g_s}\vartheta_s|^2_{g_s}-\frac{C}{f^{\frac{3}{2}}}|\nabla^{g_s}\vartheta_s|_{g_s}\\
&\geq 2|\nabla^{g_s,\,2}\vartheta_s|^2_{g_s}+\left(2-\frac{C}{f}\right)|\nabla^{g_s}\vartheta_s|^2_{g_s}-\frac{C}{f^{2}},
\end{split}
\end{equation*}
where $C$ is a uniform positive constant that may vary from line to line. Here, we have used the asymptotics of $g_s$,
\eqref{data-new-look-good} in the second line, and Young's inequality in the last line.
An adaptation of the proof of Lemma \ref{lemma-dec-sub-fct} then ensures that \eqref{ber-shi-k=1} holds for $k=1$. The proof for higher values of $k$ can be proved in a similar fashion and so we omit the details.

Now,  $X\cdot \vartheta_s=\Delta_{h_s}\vartheta_s-\overline{G}_s$ by \eqref{lin-eqn-final-coro}. Therefore, after recalling that $\widetilde{\vartheta_s}(\tau):=(\varphi_{\tau}^X)^*\vartheta_s$,
we see that for each $k\geq 0$,
  \begin{equation*}
 f^{1+\frac{k}{2}}|\nabla^{g_s,\,k}\left(\partial_{\tau}\vartheta_s|_{\tau=-1}\right)|_{g_s}\leq C_k.
 \end{equation*}
Similarly, one can prove (by differentiating \eqref{lin-eqn-final-coro} iteratively) that for all $j\geq 2$ and $k\geq 0$,
\begin{equation*}
 f^{j+\frac{k}{2}}|\nabla^{g_s,\,k}\left(\partial_{\tau}^{(j)}\vartheta_s|_{\tau=-1}\right)|_{g_s}\leq C_{k,\,j}.
\end{equation*}
This completes the proof of the corollary, and correspondingly, of the weighted estimates.
\end{proof}

\subsection{Proof of Theorem \ref{mainthm2}: Existence when $t=0$}\label{sec-proof-main-thm}

We are now able to prove Theorem \ref{mainthm2}. To this end, set
\begin{equation*}
\begin{split}
S&:=\left\{s\in[0,\,1]\,|\,\textrm{there exists $\psi_s\in\mathcal{M}_{X}^{\infty}$
satisfying \eqref{star-s}}\right\}.
\end{split}
\end{equation*}
Note that $S\neq\emptyset$ because $0\in S$ (take $\psi_{0}=0$).

We first claim that $S$ is open. Indeed, this follows from Theorem \ref{Imp-Def-Kah-Ste}: if $s_0\in S$, then by Theorem \ref{Imp-Def-Kah-Ste}, there exists $\epsilon_{0}>0$ such that for all $s\in(s_{0}-\epsilon_{0},\,s_{0}+\epsilon_{0})$, there exists a solution $\psi_{s}\in \mathcal{M}^{4,\,2\alpha}_{X}(M)$
to $\eqref{star-s}$ with data $F_{s}\in\left(\mathcal{C}^{2,\,2\alpha}_{X}(M)\right)_{\omega,\,0}$. Since the data $F_{s}$ lies in $\mathcal{C}^{\infty}_{X}(M)$ by Lemma \ref{lemma-check-data-F-s-fct-space},
Theorem \ref{Imp-Def-Kah-Ste} ensures that for each
$s$ in this interval, $\psi_{s}\in \mathcal{M}^{\infty}_{X}(M)$.
It follows that $(s_{0}-\epsilon_{0},\,s_{0}+\epsilon_{0})\cap[0,\,1]\subseteq S$.

We next claim that $S$ is closed. To see this, take a sequence
$(s_k)_{k\,\geq\,0}$ in $S$ converging to some $s_{\infty}\in S$.
Then for $F_k:=F_{s_{k}}$, $k\geq 0$, the corresponding solution $\psi_{s_k}=:\psi_k$,
$k\geq 0$, of \eqref{star-s} satisfies
 \begin{equation}
(\omega+i\partial\bar{\partial}\psi_k)^n=
e^{F_{k}+\frac{X}{2}\cdot\psi_k}\omega^{n},\qquad k\geq 0.\label{MA-seq}
\end{equation}
Thanks to Lemma \ref{lemma-check-data-F-s-fct-space}, the sequence $(F_{{k}})_{k\,\geq\,0}$ is uniformly bounded in $\mathcal{C}^{2,\,2\alpha}_{X}(M)$.
 As a consequence, the sequence $(\psi_k)_{k\,\geq\,0}$ is
 uniformly bounded in $\mathcal{M}^{4,\,2\alpha}_{X}(M)$
by Corollary \ref{coro-bds-Ck-weight}. Indeed, recall the correspondence between solutions of \eqref{star-s} and \eqref{starstar-s}: $\psi_k$ is a solution to \eqref{star-s} if and only if $\vartheta_{s_k}=\psi_{s_k}-\Phi_{s_k}$ is a solution to \eqref{starstar-s}.  The
Arzel\`a-Ascoli theorem therefore allows us to pull out a subsequence of
 $(\psi_k)_{k\,\geq\,0}$ that converges to some $\psi_{\infty}\in C^{4,\,2\alpha'}_{\operatorname{\operatorname{loc}}}(M)$,
 $\alpha'\in(0,\alpha)$. As $(\psi_k)_{k\,\geq\,0}$ is uniformly bounded in
  $\mathcal{M}^{4,\,2\alpha}_{X}(M)$,
  $\psi_{\infty}$ will also lie in $\mathcal{M}^{4,\,2\alpha}_{X}(M)$.
  We need to show that\linebreak $(\omega+i\partial\bar{\partial}\psi_{\infty})(x)>0$
at every point $x\in M$. For this, it
 suffices to show that
  $(\omega+i\partial\bar{\partial}\psi_{\infty})^n(x)>0$
  for every $x\in M$. This is seen to hold true by letting $k$ tend to $+\infty$
   (up to a subsequence) in \eqref{MA-seq}. The fact that $\psi_{\infty}\in\mathcal{M}^{\infty}_{X}(M)$
   follows from Corollary \ref{coro-bds-Ck-weight}.

Finally, as an open and closed non-empty subset of $[0,\,1]$, connectedness of
$[0,\,1]$ implies that $S=[0,\,1]$. This completes the proof of the Theorem \ref{mainthm2}.

\section{Proof of Theorem \ref{mainthm3}: Openness at $t=0$}\label{opennesss}

In this section, we prove Theorem \ref{mainthm3} which states that for $t\in[0,\,1)$ sufficiently small, there exists a torus-invariant K\"ahler metric $\omega_t:=\omega+i\partial\bar{\partial}\varphi_t$ on $M$ with
\begin{equation}\label{bak-eme-cond}
\rho_{\omega_{t}}+\frac{1}{2}\mathcal{L}_{X}\omega_{t}=t\omega_{t}+(1-t)\omega.
\end{equation}
In order to do this, it suffices to solve the following complex Monge-Amp\`ere equation with data $F=O(r^{-2})$ with $g_{0}$-derivatives:
\begin{equation}\tag{$\ast_{t}$}
\log\left(\frac{(\omega+i\partial\bar{\partial}\varphi_{t})^{n}}{\omega^{n}}\right)=F+\frac{X}{2}\cdot\varphi_{t}-t\varphi_{t},\quad
\textrm{$\varphi_{t}\in C^{\infty}(M)$ torus-invariant},\quad\omega+i\partial\bar{\partial}\varphi_{t}>0,\quad t\in[0,\,1),
\end{equation}
where recall from Proposition \ref{mainprop}(i) that $F$ satisfies
\begin{equation*}\label{pot-Ric}
i\partial\bar{\partial}F=\rho_{\omega}+\frac{1}{2}\mathcal{L}_{X}\omega-\omega.
\end{equation*}

As in the setting of a Fano manifold, the main issue in proving Theorem \ref{mainthm3}
is that the linearisation of \eqref{ast-t-bis} at $t=0$ is not an isomorphism without further constraints on the function spaces involved, as the statement of Theorem \ref{iso-sch-Laplacian-pol} alludes to. In addition, a solution to \eqref{ast-t-bis} can be shown to grow polynomially at infinity under rather mild assumptions, as demonstrated below.
To overcome these issues, one has to work with function spaces that are strictly larger than those that were introduced previously in Section \ref{linear-theory-section}.
To this end, let $\Psi:=\psi_1$ ($=\varphi_{0}$) be the solution to $(\star_{1})$ ($=(\ast_{0})$) dictated by Theorem \ref{mainthm2} and \textbf{define}
$\tilde{\omega}:=\omega+i\partial\bar{\partial}\Psi$. This is the K\"ahler form of our new background metric $\tilde{g}$. Since $\tilde{\omega}$ coincides with $\sigma_{1}$ from before, thanks to Lemma \ref{lemma-tr-star-star}, we have a normalised Hamiltonian potential $\tilde{f}:=f_{\sigma_{1}}$ of the real holomorphic vector field $X$ on $M$ with respect to $\sigma_{1}$.
Let $\tilde{\rho}$ be any strictly positive $JX$-invariant smooth function on $M$ equal to $\tilde{f}$ outside a compact subset and bounded below by $1$.

We introduce the following map:
\begin{equation*}
\begin{split}
MA:(t,\psi)\in\,&[0,1]\times\left\{\varphi\in C^2_{\operatorname{\operatorname{loc}}}(M)\,|\,\tilde{\omega}_{\varphi}:=\tilde{\omega}+i\partial\bar{\partial}\varphi>0\right\}\mapsto\log\left(\frac{\tilde{\omega}_{\psi}^n}{\tilde{\omega}^n}\right)
-\frac{X}{2}\cdot\psi+t\psi+t\varphi_0\in\mathbb{R}.
\end{split}
\end{equation*}
Observe that solving \eqref{ast-t-bis} with $\varphi_{t}$ is equivalent to solving $MA(t,\psi_t)=0$ with $\psi_t:=\varphi_t-\varphi_0$.
For any $\psi\in C_{\operatorname{\operatorname{loc}}}^{2}(M)$, let $\tilde{g}_{\psi}$ (respectively $\tilde{g}_{t\psi}$) denote the K\"ahler metric associated to the K\"ahler form $\tilde{\omega}_{\psi}$
(resp.~$\tilde{\omega}_{t\psi}$ for any $t\in[0,\,1]$). We summarize the main local properties of the map $MA$. Brute force computations show that
\begin{equation}
\begin{split}
MA(0,0)&=0,\nonumber\\
D_{0}MA(0,\,\cdot)(u)&=\Delta_{\tilde{\omega}}u-\frac{X}{2}\cdot u,\quad u\in C^2_{\operatorname{\operatorname{loc}}}(M),\nonumber\\
\frac{d^2}{d\tau^2}\left(MA(0,\tau\psi)\right)&=\frac{d}{d\tau}(\Delta_{\tilde{\omega}_{\tau\psi}}\psi)=-\arrowvert\partial\bar{\partial}\psi\arrowvert^2_{\tilde{g}_{\tau\psi}}\quad\textrm{for $\tau\in[0,\,1]$},\label{equ:sec-der-bis}
 \end{split}
 \end{equation}
\begin{equation}\label{equ:taylor-exp-bis}
 \begin{split}
 MA(t,\psi)&=t\psi+t\varphi_0+\left.\frac{d}{d\tau}\right|_{\tau\,=\,0}MA(0,\tau\psi)+\int_0^1\int_0^{u}\frac{d^2}{d\tau^2}(MA(0,\tau\psi))\,d\tau\,du\\
 &=\Delta_{\tilde{\omega}}\psi-\frac{X}{2}\cdot\psi+t\psi+t\varphi_0-\int_0^1(1-\tau)\arrowvert \partial\bar{\partial}\psi\arrowvert^2_{\tilde{g}_{\tau\psi}}\,d\tau.
 \end{split}
 \end{equation}

\subsection{Function spaces}\label{fuckionspaces}
We next introduce the function spaces within which we will work.
\begin{itemize}
\item For $(x,\,\tau)\in M\times(-\infty,0)$, define the parabolic neighborhood $P_{r}(x,\,\tau)$ of $(x,\tau)$ of radius $r>0$ by
\begin{equation*}
P_{r}(x,\tau):=B_{\tilde{g}}(x,r)\times(-r^2+\tau,\tau].
\end{equation*}
Let $(\varphi^{X}_{\tau})_{\tau<0}$ denote the flow of $\frac{X}{2(-\tau)}$ such that $\varphi^{X}_{\tau}\big|_{\tau=-1}=\Id_M$. Define for a real-valued function $u:M\rightarrow\R$ the following time-dependent function:
\begin{equation*}
\tilde{u}(x,\tau):=(\varphi^X_{\tau})^*u(x),\qquad x\in M,\qquad\tau<0.
\end{equation*}
\item For $r=\iota_0\in (0,\inj_{\tilde{g}}M)$ and $\tau_0\in (-1,\min\{0,-1+\iota_0^2\})$ fixed once and for all, denote for $k\geq 0$ and $\alpha\in\left(0,\frac{1}{2}\right)$ the standard $C^{2k,\,2\alpha}$-norm on $P_{\iota_0}(x,\tau_0)$ by $\|\cdot\|_{C^{2k,\,2\alpha}(P_{\iota_0}(x))}$. For $\beta\in \R$, define $C_{X,\,\beta}^{2k,\,2\alpha}(M)$ to be the space of $JX$-invariant continuous functions $u$ on $M$ such that
\begin{equation*}
\norm{u}_{C^{2k,\,2\alpha}_{X,\,\beta}(M)} :=\sup_{x\,\in\,M}\tilde{\rho}(x)^{\frac{\beta}{2}}\|\tilde{u}\|_{C^{2k,\,2\alpha}(P_{\iota_0}(x))} < \infty.
\end{equation*}
\item For $\beta\in\R$, the source function space is defined as
\begin{equation*}
\mathcal{D}_{X,\,\beta}^{4,\,2\alpha}(M):=\left\{u\in C^{4,\,2\alpha}_{\operatorname{loc}}(M)\,|\,\tilde{\rho}^{\frac{j}{2}}(\nabla^{\tilde{g}})^ju\in C^{2,2\alpha}_{X,\,\beta}(M),\quad j=0,1,2,\quad\textrm{and}\quad X\cdot u\in C^{2,\,2\alpha}_{X,\,\beta}(M)\right\},
\end{equation*}
endowed with the norm
\begin{equation*}
\|u\|_{\mathcal{D}_{X,\,\beta}^{4,\,2\alpha}(M)}:=\sum_{j\,=\,0}^2\|\tilde{\rho}^{\frac{j}{2}}(\nabla^{\tilde{g}})^ju\|_{C^{2,2\alpha}_{X,\,\beta}(M)}+\|X\cdot u\|_{C^{2,2\alpha}_{X,\,\beta}(M)}.
\end{equation*}
\item For $\beta\in\R$, the target function space is defined as
\begin{equation*}
\mathcal{C}_{X,\,\beta}^{2,\,2\alpha}(M):=\left\{u\in C^{2,\,2\alpha}_{\operatorname{loc}}(M)\,|\,\tilde{\rho}^{\frac{j}{2}}(\nabla^{\tilde{g}})^ju\in C^{0,\,2\alpha}_{X,\,\beta}(M),\quad j=0,1,2,\quad\textrm{and}\quad X\cdot u\in C^{0,\,2\alpha}_{X,\,\beta}(M)\right\},
\end{equation*}
endowed with the norm
\begin{equation*}
\|u\|_{\mathcal{C}_{X,\,\beta}^{2,\,2\alpha}(M)}:=\sum_{j\,=\,0}^2\|\tilde{\rho}^{\frac{j}{2}}(\nabla^{\tilde{g}})^ju\|_{C^{0,\,2\alpha}_{X,\,\beta}(M)}+\|X\cdot u\|_{C^{0,\,2\alpha}_{X,\,\beta}(M)}.
\end{equation*}

\begin{remark}
  Note that the condition on $X \cdot u$ here is redundant because it is already implied by the $j = 1$ term in the first summand on the right-hand side. However, for consistency with the function spaces defined in Section \ref{function-spaces-subsection}, we choose to keep this term in the definition.
\end{remark}

\item Denote by $\pi_0$ the $L^2(e^{-\tilde{f}}\tilde{\omega}^n)$-projection on the kernel of the drift Laplacian, i.e.,
$$\pi_0(u):=\frac{1}{\int_{M}e^{-\tilde{f}}\tilde{\omega}^n}\int_{M}u\,e^{-\tilde{f}}\tilde{\omega}^n.$$
\item Finally, we define the space
\begin{equation*}
\begin{split}
\mathcal{M}^{4,\,2\alpha}_{X,\,\beta}(M)&:=\left\{\varphi\in C^2_{\operatorname{\operatorname{loc}}}(M)\,|\,\tilde{\omega}+i\partial\bar{\partial}\varphi>0\right\}\bigcap \mathcal{D}^{4,\,2\alpha}_{X,\,\beta}(M).
\end{split}
\end{equation*}
\end{itemize}

We need to show that $\mathcal{M}^{4,\,2\alpha}_{X,\,\beta}(M)$ is an open convex subspace of $\mathcal{D}^{4,\,2\alpha}_{X,\,\beta}(M)$. This is true for $\beta>-2$.
\begin{lemma}\label{nice-cosy-lemma}
If $\beta>-2$, then the space of K\"ahler potentials $\mathcal{M}^{4,\,2\alpha}_{X,\,\beta}(M)$ is an open convex subspace of $\mathcal{D}^{4,\,2\alpha}_{X,\,\beta}(M)$.
\end{lemma}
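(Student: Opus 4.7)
The plan is to verify separately the convexity and the openness of $\mathcal{M}^{4,\,2\alpha}_{X,\,\beta}(M)$ inside $\mathcal{D}^{4,\,2\alpha}_{X,\,\beta}(M)$. Convexity is formal and in fact holds for any value of $\beta$: given $\varphi_0,\varphi_1\in \mathcal{M}^{4,\,2\alpha}_{X,\,\beta}(M)$ and $\tau\in[0,1]$, the straight-line interpolation $\varphi_\tau:=(1-\tau)\varphi_0+\tau\varphi_1$ lies in the linear space $\mathcal{D}^{4,\,2\alpha}_{X,\,\beta}(M)$ and satisfies
\begin{equation*}
\tilde{\omega}+i\partial\bar{\partial}\varphi_\tau=(1-\tau)\bigl(\tilde{\omega}+i\partial\bar{\partial}\varphi_0\bigr)+\tau\bigl(\tilde{\omega}+i\partial\bar{\partial}\varphi_1\bigr)>0
\end{equation*}
pointwise on $M$ as a convex combination of positive $(1,\,1)$-forms.

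For openness, the key point, and the place where the threshold $\beta>-2$ enters, is that this hypothesis forces the $\tilde{g}$-Hessian of every $\varphi\in \mathcal{D}^{4,\,2\alpha}_{X,\,\beta}(M)$ to be uniformly controlled on $M$ and in fact to decay at infinity. Unwinding the definition of $\mathcal{D}^{4,\,2\alpha}_{X,\,\beta}(M)$, we have $\tilde{\rho}\cdot (\nabla^{\tilde{g}})^{2}\varphi\in C^{2,\,2\alpha}_{X,\,\beta}(M)$ with a norm controlled by $\|\varphi\|_{\mathcal{D}^{4,\,2\alpha}_{X,\,\beta}(M)}$, whence the pointwise bound
\begin{equation*}
|i\partial\bar{\partial}\varphi|_{\tilde{g}}(x)\leq C\,\tilde{\rho}(x)^{-1-\beta/2}\|\varphi\|_{\mathcal{D}^{4,\,2\alpha}_{X,\,\beta}(M)}\qquad\text{for all $x\in M$,}
\end{equation*}
for some universal constant $C>0$. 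The exponent $-1-\beta/2$ is strictly negative precisely when $\beta>-2$, and since $\tilde{\rho}\geq 1$, this simultaneously provides a uniform $C^{0}$ bound for $i\partial\bar{\partial}\varphi$ on $M$ and the decay $|i\partial\bar{\partial}\varphi|_{\tilde{g}}\to 0$ at infinity.

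Given $\varphi_0\in \mathcal{M}^{4,\,2\alpha}_{X,\,\beta}(M)$, I would apply this first to $\varphi_0$ itself to choose a compact subset $K\subset M$ outside of which $|i\partial\bar{\partial}\varphi_0|_{\tilde{g}}<1/2$, so that $\tilde{\omega}+i\partial\bar{\partial}\varphi_0\geq \tilde{\omega}/2$ on $M\setminus K$; continuity and strict positivity of $\tilde{\omega}+i\partial\bar{\partial}\varphi_0$ on the compact set $K$ then upgrade this to a uniform positive lower bound $\tilde{\omega}+i\partial\bar{\partial}\varphi_0\geq c\,\tilde{\omega}$ on all of $M$ for some $c>0$. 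Applying the same estimate to a perturbation $\varphi-\varphi_0$ yields $|i\partial\bar{\partial}(\varphi-\varphi_0)|_{\tilde{g}}\leq C\|\varphi-\varphi_0\|_{\mathcal{D}^{4,\,2\alpha}_{X,\,\beta}(M)}$ uniformly on $M$; setting $\varepsilon:=c/(2C)$ then guarantees $\tilde{\omega}+i\partial\bar{\partial}\varphi\geq(c/2)\,\tilde{\omega}>0$ for every $\varphi$ with $\|\varphi-\varphi_0\|_{\mathcal{D}^{4,\,2\alpha}_{X,\,\beta}(M)}<\varepsilon$, which is the desired openness. The sole genuine obstacle is the Hessian decay step displayed above, and it is precisely what fails when $\beta\leq -2$: the Hessian is then only bounded (if at all) at infinity, and small perturbations in the $\mathcal{D}$-norm can destroy positivity of $\tilde{\omega}+i\partial\bar{\partial}\varphi$ on the non-compact region, so the sharp range $\beta>-2$ in the statement cannot be improved by this argument.
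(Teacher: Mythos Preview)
Your proof is correct and follows essentially the same approach as the paper's: both rely on the key pointwise estimate $|i\partial\bar{\partial}(\varphi-\varphi_0)|_{\tilde{g}}\leq C\tilde{\rho}^{-1-\beta/2}\|\varphi-\varphi_0\|_{\mathcal{D}^{4,\,2\alpha}_{X,\,\beta}(M)}\leq C\|\varphi-\varphi_0\|_{\mathcal{D}^{4,\,2\alpha}_{X,\,\beta}(M)}$ when $\beta>-2$ and $\tilde{\rho}\geq 1$. You are in fact more careful than the paper, which omits both the convexity check and the verification that $\tilde{\omega}+i\partial\bar{\partial}\varphi_0$ enjoys a uniform lower bound $c\,\tilde{\omega}$ on all of $M$ (the step you carry out via the decay of $i\partial\bar{\partial}\varphi_0$ at infinity together with compactness).
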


\begin{proof}
Let $\psi_0\in \mathcal{D}_{X,\,\beta}^{4,\,2\alpha}(M)$ be such that $\tilde{\omega}+i\partial\bar{\partial}\psi_{0}>0$. 
We need to show that there exists a neighborhood $U_{0}$ of $\psi_0$ in $\mathcal{D}_{X,\,\beta}^{4,\,2\alpha}(M)$ such that $\tilde{\omega}+i\partial\bar{\partial}\psi>0$ 
for all $\psi\in U_{0}$. To this end, we estimate the difference between the $(1,\,1)$-forms $\tilde{\omega}+i\partial\bar{\partial}\psi$ and $\tilde{\omega}+i\partial\bar{\partial}\psi_0$ in the following way:
\begin{equation*}
\begin{split}
\tilde{\omega}+i\partial\bar{\partial}\psi-(\tilde{\omega}+i\partial\bar{\partial}\psi_0)&=i\partial\bar{\partial}(\psi-\psi_0).
\end{split}
\end{equation*}
 By virtue of the very definition of the norm on $\mathcal{D}_{X,\,\beta}^{4,\,2\alpha}(M)$, we see that
\begin{equation*}|i\partial\bar{\partial}(\psi-\psi_0)|_{\tilde{\omega}}\leq C\tilde{\rho}^{-\frac{\beta}{2}-1}\|\psi-\psi_0\|_{\mathcal{D}_{X,\,\beta}^{4,\,2\alpha}(M)}\leq C\|\psi-\psi_0\|_{\mathcal{D}_{X,\,\beta}^{4,\,2\alpha}(M)}
\end{equation*}
for some uniform positive constant $C>0$. Here we have used the fact that $\beta>-2$ and  $\tilde{\rho}\geq 1$ on $M$. Thus, if $\|\psi-\psi_0\|_{\mathcal{D}_{X,\,\beta}^{4,\,2\alpha}(M)}(M)$ is small enough, then $\tilde{\omega}+i\partial\bar{\partial}\psi$ defines a K\"ahler metric on $M$.
\end{proof}

\subsection{Fredholm properties of the linearised operator}
The main technical result of this section is that the modified drift Laplacian of $\tilde{g}$ is an isomorphism between polynomially weighted function spaces.
\begin{prop}\label{iso-sch-Laplacian-pol-bis}
Let $\alpha\in\left(0,\,\frac{1}{2}\right)$ and $\beta< 0$. Then the map
\begin{equation*}
\begin{split}
\Delta_{\tilde{g},\,X}+\pi_0:\mathcal{D}^{4,\,2\alpha}_{X,\,\beta}(M)
\rightarrow \mathcal{C}^{2,\,2\alpha}_{X,\,\beta}(M)
\end{split}
\end{equation*}
is an isomorphism of Banach spaces.
\end{prop}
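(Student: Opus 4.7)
The plan is to prove bijectivity in two stages: first produce a weak solution of $(\Delta_{\tilde g,X} + \pi_0) u = F$ in the weighted space $L^2(e^{-\tilde f}\tilde\omega^n)$ via the self-adjoint spectral theory already set up in the proof of Theorem \ref{iso-sch-Laplacian-pol}, then upgrade it to a polynomially growing Hölder solution in $\mathcal{D}^{4,2\alpha}_{X,\beta}$ using the polynomial barrier supplied by Lemma \ref{lemma-sub-sol-barrier}. Continuity of the inverse will then follow from the open mapping theorem. The structure closely parallels that of Theorem \ref{iso-sch-Laplacian-pol}, but because the source here is allowed to grow polynomially rather than being asymptotically constant, the logarithmic correction used before has to be replaced by the power $f^{-\beta/2}$, which by Lemma \ref{lemma-sub-sol-barrier} satisfies $\Delta_{\tilde g,X} f^{-\beta/2} = \beta f^{-\beta/2} + O(f^{-\beta/2-1})$ and so plays the role of an approximate eigenfunction at the relevant indicial weight.

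For injectivity, I would start from $(\Delta_{\tilde g,X} + \pi_0) u = 0$ with $u \in \mathcal{D}^{4,2\alpha}_{X,\beta}$ and note that, since $\beta < 0$ allows only polynomial growth while $e^{-\tilde f}\tilde\omega^n$ decays faster than any polynomial, $u$ lies in $L^2(e^{-\tilde f}\tilde\omega^n)$. Integration by parts against this measure, justified by cutting off with the exhausting domains $\{\tilde f \leq R\}$ exactly as in the proof of Theorem \ref{iso-sch-Laplacian-pol}, gives $\pi_0(u) = 0$; then $u$ is an $L^2(e^{-\tilde f}\tilde\omega^n)$-element of the kernel of $\Delta_{\tilde g,X}$ orthogonal to constants and must vanish by the spectral theory recalled there.

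For surjectivity, given $F \in \mathcal{C}^{2,2\alpha}_{X,\beta}$, I would decompose $F = F_0 + c_F$ with $c_F := \pi_0(F)$ and $\pi_0(F_0) = 0$, and use the self-adjoint theory to produce a unique $u_0 \in H^1(e^{-\tilde f}\tilde\omega^n)$ with $\pi_0(u_0) = 0$ solving $\Delta_{\tilde g,X} u_0 = F_0$; then $u := u_0 + c_F$ is the required weak solution. Local parabolic Schauder estimates applied to the rescaled equation $\partial_\tau \tilde u = \tfrac{1}{2}\Delta_{\tilde g(\tau)}\tilde u - (2\tau)^{-1}\widetilde{F_0}$ on each parabolic cylinder $P_{\iota_0}(x,\tau_0)$, in the spirit of Claim \ref{claim-a-priori-rough-bd-hih-der} and using the bounded geometry provided by Lemma \ref{lemma-basic-equiv-ass}, will then supply the $C^{4,2\alpha}_{\operatorname{loc}}$ regularity of $u$ together with all the required weighted spatial and time-derivative bounds as soon as the correct pointwise growth of $u$ is in hand.

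The main obstacle will be establishing the pointwise bound $|u(x)| \leq C\tilde\rho(x)^{-\beta/2}\|F\|_{\mathcal{C}^{2,2\alpha}_{X,\beta}}$. My plan is to first obtain a rough bound $|u| \leq Ce^{\delta\tilde f}$ for some $\delta \in (0,1)$ via a Nash--Moser iteration on the conjugated equation, mirroring Claim \ref{claim-first-rough-growth}, and then to run the maximum principle for the comparison functions $\pm u - Af^{-\beta/2} - \varepsilon e^{\delta \tilde f}$. For $A > C\|F_0\|_{C^0_{X,\beta}}/|\beta|$, Lemma \ref{lemma-sub-sol-barrier} yields $\Delta_{\tilde g,X}(\pm u - Af^{-\beta/2}) = \pm F_0 + A|\beta| f^{-\beta/2} + O(f^{-\beta/2-1}) > 0$ outside a fixed compact set, and subtracting the penalty $\varepsilon e^{\delta \tilde f}$ (whose drift Laplacian is non-positive by the same lemma) preserves this sign while forcing the comparison function to tend to $-\infty$ at infinity courtesy of the rough exponential bound. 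The maximum principle then locates its supremum inside the fixed compact set, and letting $\varepsilon \to 0$ yields $|u| \leq Af^{-\beta/2} + O(1)$ globally, as required. The delicate point is that the indicial weights of $\Delta_{\tilde g,X}$ sit at the non-positive integers, corresponding to polynomially growing eigenfunctions $\sim f^{k/2}$; the constant contribution is absorbed by the $\pi_0$ term in the operator, and the barrier argument above, combined with the already established $L^2$-orthogonality of $u_0$ to constants, must be organized to rule out interference from the higher eigenfunctions when $|\beta|$ crosses a positive integer.
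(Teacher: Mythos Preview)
Your approach is essentially the same as the paper's: weak $L^2(e^{-\tilde f}\tilde\omega^n)$-solution followed by Nash--Moser for a rough exponential bound, then the polynomial barrier $Af^{-\beta/2}+\varepsilon e^{\delta\tilde f}$, and finally parabolic Schauder on the fixed-radius cylinders $P_{\iota_0}(x,\tau_0)$. Two comments. First, your closing worry about ``interference from higher eigenfunctions when $|\beta|$ crosses a positive integer'' is unfounded: the barrier argument you describe is uniform in $\beta<0$ precisely because $\Delta_{\tilde g,X}f^{-\beta/2}=\beta f^{-\beta/2}+O(f^{-\beta/2-1})$ with $\beta<0$, so $-A\beta>0$ dominates the right-hand side for $A$ large regardless of the integer part of $\beta$; no spectral avoidance is needed. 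Second, you underestimate the work for the full $\mathcal{D}^{4,2\alpha}_{X,\beta}$-norm: the pointwise bound on $u$ together with parabolic Schauder gives only $u\in C^{2,2\alpha}_{X,\beta}$, whereas membership in $\mathcal{D}^{4,2\alpha}_{X,\beta}$ demands $\tilde\rho^{j/2}\nabla^{\tilde g,j}u\in C^{2,2\alpha}_{X,\beta}$ for $j=1,2$ and $X\cdot u\in C^{2,2\alpha}_{X,\beta}$. The paper handles this by computing the commutators $[\Delta_{\tilde g,X},\,\tilde\rho^{j/2}\nabla^{\tilde g,j}]$ explicitly (lower-order terms involving $\Rm(\tilde g)$ and $\mathcal{L}_X\tilde g-\tilde g$, all controlled by the asymptotics of $\tilde g$) to derive an elliptic equation for each $\tilde\rho^{j/2}\nabla^{\tilde g,j}u$, and then re-running the parabolic Schauder step on these tensor equations.
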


\begin{remark}
Contrary to Theorem \ref{iso-sch-Laplacian-pol}, observe that both the source and the target spaces are modelled on functions with the same polynomial growth at infinity.
In order to produce solutions to \eqref{bak-eme-cond} for small $t>0$, we will further impose the restriction
$\beta\in (-2,\,0)$ so that the non-linear terms make sense within these function spaces. 
This also explains why we work with functions that are four times differentiable, since the standard parabolic regularity theory at the local scale $\iota_0>0$ does not improve the decay of higher derivatives.
\end{remark}

\begin{remark}
The choice of the coefficient $1$ in front of the map $\pi_0$ is arbitrary. Any non-zero real number would work.
\end{remark}

\begin{proof}[Proof of Proposition \ref{iso-sch-Laplacian-pol-bis}]
We first show that the map in question is well-defined and continuous between the stated Banach spaces.

\begin{claim}\label{fuckme}
  $\Delta_{\tilde{g},\,X}+\pi_0:\mathcal{D}^{4,\,2\alpha}_{X,\,\beta}(M)
\rightarrow \mathcal{C}^{2,\,2\alpha}_{X,\,\beta}(M)$ is a well-defined continuous map.
\end{claim}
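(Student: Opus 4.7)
The plan is to verify directly, piece-by-piece, that each of the three summands $\Delta_{\tilde{g}}u$, $-X\cdot u$, and $\pi_0(u)$ lies in the target space $\mathcal{C}^{2,\,2\alpha}_{X,\,\beta}(M)$ with norm controlled by $\|u\|_{\mathcal{D}^{4,\,2\alpha}_{X,\,\beta}(M)}$. The underlying reason the whole thing works is that $\tilde{\rho}$ is comparable to a constant on each parabolic neighborhood $P_{\iota_0}(x)$ and satisfies $\tilde{\rho}\geq 1$, so passing from the source norm to weighted quantities involving $\tilde{\rho}^{j/2}$ typically costs nothing.

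First, the bound for $X\cdot u$ is essentially for free: the control $X\cdot u\in C^{2,\,2\alpha}_{X,\,\beta}(M)$ is built into the source norm, so $\tilde{\rho}^{j/2}(\nabla^{\tilde{g}})^{j}(X\cdot u)\in C^{0,\,2\alpha}_{X,\,\beta}(M)$ for $j=0,1,2$ follows at once, while the remaining piece $X\cdot(X\cdot u)$ is handled via $|X|_{\tilde{g}}\sim\tilde{\rho}^{1/2}$ together with the two-derivative control on $X\cdot u$ already present. For $\Delta_{\tilde{g}}u$, I would expand $(\nabla^{\tilde{g}})^{j}\Delta_{\tilde{g}}u$ schematically as a contraction of $\tilde{g}^{-1}$ with $(\nabla^{\tilde{g}})^{j+2}u$, plus lower-order terms involving $\Rm(\tilde{g})$ and its covariant derivatives. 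Here I exploit that $\Psi\in\mathcal{M}^{\infty}_{X}(M)$ by Theorem \ref{mainthm2}, so Lemma \ref{lemma-basic-equiv-ass} applies to $\tilde{\omega}=\omega+i\partial\bar{\partial}\Psi$ and yields uniform bounds on $\tilde{g}$, $\tilde{g}^{-1}$, and their covariant derivatives on every $P_{\iota_0}(x)$, while Corollary \ref{coro-bds-Ck-weight} applied to $\psi_1=\Psi$ produces the required curvature bounds on $\tilde{g}$. Combining these with the source-norm bounds on $\tilde{\rho}^{k/2}(\nabla^{\tilde{g}})^{k}u$ for $k=0,1,2$, I would arrive at
$$\tilde{\rho}(x)^{\beta/2}\bigl\|\tilde{\rho}^{j/2}(\nabla^{\tilde{g}})^{j}\Delta_{\tilde{g}}u\bigr\|_{C^{0,\,2\alpha}(P_{\iota_0}(x))}\leq C\,\tilde{\rho}(x)^{-1}\|u\|_{\mathcal{D}^{4,\,2\alpha}_{X,\,\beta}(M)}$$
for $j=0,1,2$, and would control the radial piece $X\cdot\Delta_{\tilde{g}}u$ via the identity $X\cdot\Delta_{\tilde{g}}u=\Delta_{\tilde{g}}(X\cdot u)+[X,\Delta_{\tilde{g}}]u$, the commutator being a combination of curvature-type terms contracted with derivatives of $u$ of order at most two.

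Finally, for the projection $\pi_0(u)$, I would use $\beta<0$: the pointwise bound $|u(x)|\leq\tilde{\rho}(x)^{-\beta/2}\|u\|_{\mathcal{D}^{4,\,2\alpha}_{X,\,\beta}(M)}$, combined with the exponential decay of $e^{-\tilde{f}}$ at infinity, ensures $u\in L^1(e^{-\tilde{f}}\tilde{\omega}^n)$ and gives $|\pi_0(u)|\leq C(\beta)\|u\|_{\mathcal{D}^{4,\,2\alpha}_{X,\,\beta}(M)}$; since constants lie in $\mathcal{C}^{2,\,2\alpha}_{X,\,\beta}(M)$ with norm bounded by their absolute value (using $\tilde{\rho}^{\beta/2}\leq 1$), this yields continuity of $u\mapsto\pi_0(u)$. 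The principal obstacle is the book-keeping of the weights $\tilde{\rho}^{j/2}$ under the differential operator $\Delta_{\tilde{g}}-X\cdot$ and the verification that $\tilde{g}$ inherits the correct asymptotic regularity from the solution $\Psi$; once this is settled, the estimates reduce to routine interior weighted H\"older calculus.
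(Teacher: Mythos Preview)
Your proposal is correct and follows the same piece-by-piece strategy as the paper: handle $\pi_0(u)$, $X\cdot u$, and $\Delta_{\tilde{g}}u$ separately. The treatment of $\pi_0$ and the invocation of Theorem~\ref{mainthm2} (to guarantee $\tilde{g}$ has the right asymptotic regularity) match the paper essentially verbatim.

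There is, however, a small but genuine difference in how the $X$-derivative terms are handled. You extract $\tilde{\rho}^{j/2}(\nabla^{\tilde{g}})^{j}(X\cdot u)\in C^{0,\,2\alpha}_{X,\,\beta}$ directly from the hypothesis $X\cdot u\in C^{2,\,2\alpha}_{X,\,\beta}$, and bound $X\cdot(X\cdot u)$ using the spatial estimate $|X|_{\tilde{g}}\sim\tilde{\rho}^{1/2}$. The paper instead exploits the parabolic structure of the norms more systematically: it uses the identity $\widetilde{X\cdot v}=(-2\tau)\partial_{\tau}\tilde{v}$ to convert each $X$-derivative into a time derivative on $P_{\iota_0}(x)$, so that for instance $X\cdot(X\cdot u)\in C^{0,\,2\alpha}_{X,\,\beta}$ follows immediately from the time-derivative control already built into $\|X\cdot u\|_{C^{2,\,2\alpha}_{X,\,\beta}}$, and $\tilde{\rho}^{1/2}\nabla^{\tilde{g}}(X\cdot u)$ is obtained from the Lie-derivative commutator $\mathcal{L}_X(\tilde{\rho}^{1/2}\nabla^{\tilde{g}}u)=\tilde{\rho}^{1/2}\nabla^{\tilde{g}}(X\cdot u)+(\text{lower order})$, using the source control on $\tilde{\rho}^{1/2}\nabla^{\tilde{g}}u\in C^{2,\,2\alpha}_{X,\,\beta}$ rather than on $X\cdot u$. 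Your route is valid because the parabolic neighborhoods here have \emph{fixed} size $\iota_0$ (so $\varphi_\tau$ stays uniformly close to the identity and $\varphi_\tau^*\tilde{\rho}\sim\tilde{\rho}$), but the ``follows at once'' hides the verification that $\varphi_\tau^*(\nabla^{\tilde{g},\,j}v)$ is controlled by $\nabla^{\tilde{g},\,j}\tilde{v}$ via the comparison of $\varphi_\tau^*\tilde{g}$ with $\tilde{g}$. The paper's approach sidesteps this by working with time derivatives from the outset, which is marginally cleaner.
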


\begin{proof}[Proof of Claim \ref{fuckme}]
By finiteness of the weighted volume of $M$, the projection map $\pi_0$ restricted to $\mathcal{D}^{4,\,2\alpha}_{X,\,\beta}(M)$ is well-defined. Being constant for $u\in L^2(e^{-\tilde{f}}\tilde{\omega}^n)$, $\pi_0(u)$ lies in $\mathcal{C}^{2,\,2\alpha}_{X,\,\beta}(M)$ as soon as $\beta\leq 0$. The continuity of $\pi_0$ is a consequence of
the following sequence of elementary (in)equalities:
\begin{equation*}
\begin{split}
\|\pi_0(u)\|_{\mathcal{C}^{2,\,2\alpha}_{X,\,\beta}(M)}&=\norm{\pi_0(u)}_{C^{0}_{X,\,\beta}(M)}=\sup_{x\,\in\,M}\tilde{\rho}(x)^{\frac{\beta}{2}}\|\widetilde{\pi_0(u)}\|_{C^{0}(P_{\iota_0}(x))}\\
&\leq |\pi_0(u)|\leq \underbrace{C\left(\int_M\tilde{\rho}(x)^{-\frac{\beta}{2}}\,e^{-\tilde{f}}d\mu_{\tilde{g}}\right)}_{=:C'}\left(\sup_{x\,\in\,M}\tilde{\rho}(x)^{\frac{\beta}{2}}|u(x)|\right)\\
&\leq C'\left(\sup_{x\,\in\,M}\tilde{\rho}(x)^{\frac{\beta}{2}}\|\tilde{u}\|_{C^{0}(P_{\iota_0}(x))}
\right)\\
&\leq C'\|u\|_{\mathcal{D}^{4,\,2\alpha}_{X,\,\beta}(M)}.
\end{split}
\end{equation*}
Here, we have used the fact that $\tilde{u}(x,\tau)=u(x)$ at $\tau=-1$ in the penultimate line.

We next show that $u\in \mathcal{D}^{4,\,2\alpha}_{X,\,\beta}(M)\mapsto X\cdot u\in \mathcal{C}^{2,\,2\alpha}_{X,\,\beta}(M)$ is a well-defined continuous map. The proof that $u\in \mathcal{D}^{4,\,2\alpha}_{X,\,\beta}(M)\mapsto \Delta_{\tilde{\omega}} u\in \mathcal{C}^{2,\,2\alpha}_{X,\,\beta}(M)$ is a well-defined continuous map is similar and will therefore 
be omitted.

First observe that $X\cdot u\in C^{0,\,2\alpha}_{X,\,\beta}(M)$ by definition of $\mathcal{D}^{4,\,2\alpha}_{X,\,\beta}(M)$, since $X\cdot u\in C^{2,\,2\alpha}_{X,\,\beta}(M)$.
We henceforth denote the flow of $\varphi^X_{\tau}$ by $\varphi_{\tau}$. As $\widetilde{X\cdot(X\cdot u)}=(-2\tau)\partial_{\tau}(\varphi_{\tau}^*(X\cdot u))=(-2\tau)\partial_{\tau}(\widetilde{X\cdot u})$, we see that
\begin{equation*}
\|X\cdot(X\cdot u)\|_{C^{0,\,2\alpha}_{X,\,\beta}(M)}\leq C\sup_{x\,\in\,M}\tilde{\rho}(x)^{\frac{\beta}{2}}\|\widetilde{X\cdot u}\|_{C^{2,2\alpha}(P_{\iota_0}(x))}\leq C\|u\|_{\mathcal{D}^{4,\,2\alpha}_{X,\,\beta}(M)},
\end{equation*}
where $C$ depends on an upper bound of $|\tau_0|$ (the definition of which one can recall in Section \ref{fuckionspaces}).

We next consider the term $\tilde{\rho}^{\frac{1}{2}}\nabla^{\tilde{g}}(X\cdot u)$:
\begin{equation*}
\begin{split}
\mathcal{L}_X\left(\tilde{\rho}^{\frac{1}{2}}\nabla^{\tilde{g}}u\right)&=\left(X\cdot \tilde{\rho}^{\frac{1}{2}}\right)\nabla^{\tilde{g}}u+\tilde{\rho}^{\frac{1}{2}}[\mathcal{L}_X\left(\nabla^{\tilde{g}}\right)]u+\tilde{\rho}^{\frac{1}{2}}\nabla^{\tilde{g}}(X\cdot u).
\end{split}
\end{equation*}
Now, one can write schematically: $[\mathcal{L}_X\left(\nabla^{\tilde{g}}\right)]u=\tilde{g}^{-1}\ast \mathcal{L}_X\tilde{g}\ast \nabla^{\tilde{g}}u.$
In particular,
\begin{equation*}
\begin{split}
\varphi_{\tau}^*\left(\tilde{\rho}^{\frac{1}{2}}\nabla^{\tilde{g}}(X\cdot u)\right)=(-2\tau)\partial_{\tau}\left(\widetilde{\tilde{\rho}^{\frac{1}{2}}\nabla^{\tilde{g}}u}\right)+\varphi_{\tau}^*\left(-X\cdot\log \tilde{\rho}^{\frac{1}{2}}+\tilde{\rho}^{-\frac{1}{2}}\tilde{g}^{-1}\ast \mathcal{L}_X\tilde{g}\,\ast\right)\widetilde{\tilde{\rho}^{\frac{1}{2}}\nabla^{\tilde{g}}u}.
\end{split}
\end{equation*}
As $\widetilde{\tilde{\rho}^{\frac{1}{2}}\nabla^{\tilde{g}}u}\in C^{2,\,2\alpha}_{X,\,\beta}(M)$ by assumption and since the expression in front of $\widetilde{\tilde{\rho}^{\frac{1}{2}}\nabla^{\tilde{g}}u}$ on the right-hand side of the previous identity
lies in $C^{0,\,2\alpha}_{X,\,0}(M)$ thanks to Proposition \ref{mainprop}, it follows that there exists $C>0$ such that $\|\tilde{\rho}^{\frac{1}{2}}\nabla^{\tilde{g}}(X\cdot u)\|_{C^{0,\,2\alpha}_{X,\,\beta}(M)}\leq C\|u\|_{\mathcal{D}^{4,\,2\alpha}_{X,\,\beta}(M)}.$
Similar reasoning shows that $\tilde{\rho}\nabla^{\tilde{g},\,2}(X\cdot u)\in C^{0,\,2\alpha}_{X,\,\beta}(M)$ which concludes the desired claim on the map $u\in \mathcal{D}^{4,\,2\alpha}_{X,\,\beta}(M)\rightarrow X\cdot u\in \mathcal{C}^{2,\,2\alpha}_{X,\,\beta}(M)$.
\end{proof}

In order to show that the map $\Delta_{\tilde{g},\,X}+\pi_0$ is injective, note the following: if $u\in \mathcal{D}^{4,\,2\alpha}_{X,\,\beta}(M)$ satisfies $\Delta_{\tilde{g},\,X}(u)+\pi_0(u)=0$, then an integration by parts with respect to $e^{-\tilde{f}}\tilde{\omega}^n$, justified by the asymptotics of $u$ and its derivatives up to order $2$, leads to the fact that $\pi_0(u)=0$. Multiplying the equation $\Delta_{\tilde{g},\,X}(u)=0$ by $u$ and integrating by parts once again, one also sees that $\|\nabla^{\tilde{g}}u\|_{L^2(e^{-\tilde{f}}\tilde{\omega}^n)}=0$, i.e., that $u$ is constant on $M$. Since the weighted mean value of $u$ is $0$, this results in the desired conclusion, namely that $u=0$ identically.

Finally, we deal with the surjectivity of the map $\Delta_{\tilde{g},\,X}+\pi_0$. If $u\in \mathcal{D}^{4,\,2\alpha}_{X,\,\beta}(M)$, then we proceed as at the beginning of the proof of Theorem \ref{iso-sch-Laplacian-pol}: given $F\in \mathcal{C}^{2,2\alpha}_{X,\,\beta}(M)$, there exists a solution $\overline{u}$ to \eqref{love-drift-lap} with right-hand side given by $F-\pi_0(F)$ with $\pi_0(\overline{u})=0$.

The following claim is analogous to the proof of Claim \ref{claim-first-rough-growth} and therefore its proof will be omitted.
\begin{claim}\label{claim-first-rough-growth-bis}
There exists a positive constant $C=C(\tilde{\omega},n)$ such that $$|\bar{u}(x)|
\leq Ce^{ \frac{\tilde{f}(x)}{2}}\|F\|_{C^{0}_{X,\,\beta}(M)},\qquad x\in M.$$
\end{claim}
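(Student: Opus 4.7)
The plan is to mimic the proof of Claim \ref{claim-first-rough-growth} almost verbatim, with the necessary modifications to account for the fact that $F\in\mathcal{C}^{2,\,2\alpha}_{X,\,\beta}(M)$ with $\beta<0$ is no longer globally bounded but is instead allowed to grow like $\tilde{\rho}^{-\beta/2}$, and that the equation satisfied by $\bar{u}$ involves the projected right-hand side $F-\pi_0(F)$ rather than $F$ itself.

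First, I would start by recalling that $\bar{u}\in H^{1}_{\tilde{f}}(M)$ is the unique weak solution of $\Delta_{\tilde{g},\,X}\bar{u}=F-\pi_{0}(F)$ with $\pi_{0}(\bar{u})=0$, so that the weighted $L^{2}$-estimate analogous to \eqref{weak-apriori-bd-lin-th} gives
\begin{equation*}
\|\bar{u}\|_{L^{2}(e^{-\tilde{f}}\tilde{\omega}^{n})}+\|\nabla^{\tilde{g}}\bar{u}\|_{L^{2}(e^{-\tilde{f}}\tilde{\omega}^{n})}\leq C\|F-\pi_{0}(F)\|_{L^{2}(e^{-\tilde{f}}\tilde{\omega}^{n})}.
\end{equation*}
Since $|F(x)|\leq\tilde{\rho}(x)^{-\beta/2}\|F\|_{C^{0}_{X,\,\beta}(M)}$ pointwise, and $|\pi_{0}(F)|\leq C\|F\|_{C^{0}_{X,\,\beta}(M)}$ by the continuity of $\pi_{0}$ established in Claim \ref{fuckme}, the polynomial growth of $F-\pi_{0}(F)$ is controlled by $\tilde{\rho}^{-\beta/2}\|F\|_{C^{0}_{X,\,\beta}(M)}$, which is integrable against $e^{-\tilde{f}}\tilde{\omega}^{n}$ because $\tilde{f}$ grows quadratically at infinity and thus dominates any polynomial weight. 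This yields
\begin{equation*}
\|\bar{u}\|_{L^{2}(e^{-\tilde{f}}\tilde{\omega}^{n})}\leq C\|F\|_{C^{0}_{X,\,\beta}(M)}
\end{equation*}
for a constant $C$ depending only on $\tilde{\omega}$ and $\beta$.

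Next, I would repeat the conjugation trick: setting $v:=e^{-\tilde{f}/2}\bar{u}$, the pointwise differential inequality
\begin{equation*}
\Delta_{\tilde{g}}|v|\geq-C|v|-e^{-\tilde{f}/2}|F-\pi_{0}(F)|
\end{equation*}
holds in the weak sense, exactly as in the proof of Claim \ref{claim-first-rough-growth}, using the boundedness of $\Delta_{\tilde{g}}\tilde{f}$ and the non-negativity of $|X|^{2}_{\tilde{g}}$. The key point is that although $F$ itself may grow polynomially, the prefactor $e^{-\tilde{f}/2}$ decays super-exponentially, so $\|e^{-\tilde{f}/2}(F-\pi_{0}(F))\|_{L^{\infty}(B_{\tilde{g}}(x,\,r))}$ is bounded by $Ce^{-\tilde{f}(x)/2}\tilde{\rho}(x)^{-\beta/2}\|F\|_{C^{0}_{X,\,\beta}(M)}$ on balls of radius $r\leq r_{0}$, uniformly in $x\in M$, once one uses the fact that $\tilde{f}$ and $\tilde{\rho}$ fluctuate only by a bounded amount on such balls by \eqref{hyp-basic-ass}.

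Performing the local Nash-Moser iteration on $|v|$ in $B_{\tilde{g}}(x,\,r)$ using the Saloff-Coste Sobolev inequality \eqref{sob-inequ-loc} yields, for some uniform $r_{0}>0$,
\begin{equation*}
\sup_{B_{\tilde{g}}(x,\,r/2)}|v|\leq C\left(\|v\|_{L^{2}(B_{\tilde{g}}(x,\,r))}+\|e^{-\tilde{f}/2}(F-\pi_{0}(F))\|_{L^{\infty}(B_{\tilde{g}}(x,\,r))}\right),
\end{equation*}
whence $|v(x)|\leq C(\|\bar{u}\|_{L^{2}(e^{-\tilde{f}}\tilde{\omega}^{n})}+e^{-\tilde{f}(x)/2}\tilde{\rho}(x)^{-\beta/2}\|F\|_{C^{0}_{X,\,\beta}(M)})$, using the crude bound $\tilde{\rho}^{-\beta/2}\geq 1$ to absorb the second term. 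Unravelling the definition of $v$ and combining with the weighted $L^{2}$-estimate above, one arrives at $|\bar{u}(x)|\leq Ce^{\tilde{f}(x)/2}\|F\|_{C^{0}_{X,\,\beta}(M)}$ on all of $M$, as desired. The main (mild) obstacle is bookkeeping the polynomial growth of $F$ against the fast decay of the weighted measure, but this is straightforward since $\tilde{f}$ grows quadratically.
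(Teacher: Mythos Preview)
Your proposal is correct and follows exactly the approach the paper intends: the paper explicitly omits the proof of this claim, stating only that it ``is analogous to the proof of Claim \ref{claim-first-rough-growth},'' and you have filled in the adaptation correctly. The only minor quibble is the parenthetical remark ``using the crude bound $\tilde{\rho}^{-\beta/2}\geq 1$ to absorb the second term,'' which is slightly misleading in wording; what you actually need (and clearly use) is that $e^{-\tilde{f}/2}\tilde{\rho}^{-\beta/2}$ is uniformly bounded on $M$ since exponential decay dominates polynomial growth, so that the second term in your Nash--Moser estimate is controlled by $C\|F\|_{C^{0}_{X,\,\beta}(M)}$.
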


The next claim echoes Claim \ref{claim-sec-rough-growth}.
\begin{claim}\label{claim-sec-rough-growth-bis}
There exists a positive constant $A=A(\tilde{\omega},\,n)$ such that
$$|\bar{u}(x)|\leq A\,\tilde{\rho}(x)^{-\frac{\beta}{2}}\|F\|_{C^0_{X,\,\beta}(M)},\qquad x\in M.$$
\end{claim}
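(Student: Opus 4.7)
The plan is to upgrade the crude exponential bound of Claim \ref{claim-first-rough-growth-bis} to the sharp polynomial bound of order $-\beta/2>0$ via a maximum principle argument centered on the barrier $\tilde{\rho}^{-\beta/2}$. Since $\tilde{\rho}=\tilde{f}$ outside a compact set and $|\tilde{f}-f|$ is uniformly bounded on $M$ by Lemma \ref{lemma-barrier-fct}, the asymptotic identity of Lemma \ref{lemma-sub-sol-barrier} transfers verbatim to $\tilde{\rho}^{-\beta/2}$ (with $\delta=\beta/2<0$), giving
\begin{equation*}
\Delta_{\tilde{g},X}\tilde{\rho}^{-\beta/2} = \beta\tilde{\rho}^{-\beta/2}+O(\tilde{\rho}^{-\beta/2-1})
\end{equation*}
outside a compact set. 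As usual, it suffices to prove the upper bound $\bar{u}\leq A\tilde{\rho}^{-\beta/2}\|F\|_{C^0_{X,\,\beta}(M)}$; the matching lower bound then follows by applying the same argument to $-\bar{u}$, which satisfies a Poisson equation with source of equal weighted norm.

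First I would assemble the ingredients. The proof of Claim \ref{fuckme} provides $|\pi_0(F)|\leq C\|F\|_{C^0_{X,\,\beta}(M)}$, which, combined with $\tilde{\rho}\geq 1$, yields the pointwise control $|F-\pi_0(F)|\leq C_*\|F\|_{C^0_{X,\,\beta}(M)}\tilde{\rho}^{-\beta/2}$ on $M$. Setting $v_A:=\bar{u}-A\|F\|_{C^0_{X,\,\beta}(M)}\tilde{\rho}^{-\beta/2}$, these estimates combine to give
\begin{equation*}
\Delta_{\tilde{g},X}v_A\geq\|F\|_{C^0_{X,\,\beta}(M)}\tilde{\rho}^{-\beta/2}\bigl(-A\beta-C_*+O(\tilde{\rho}^{-1})\bigr)
\end{equation*}
on the complement of a compact set $K$. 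Since $\beta<0$, fixing $A$ so that $-A\beta>C_*+1$ and enlarging $K$ if needed renders this right-hand side strictly positive on $M\setminus K$.

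The main obstacle is that the crude exponential growth of $\bar{u}$ from Claim \ref{claim-first-rough-growth-bis} dominates the polynomial barrier at infinity, which blocks a direct application of the maximum principle. To circumvent this I would, exactly as in the proof of Claim \ref{claim-sec-rough-growth}, introduce for each $\varepsilon>0$ and a fixed $\delta\in(1/2,\,1)$ the auxiliary function $w_\varepsilon:=v_A-\varepsilon e^{\delta f}$. By Claim \ref{claim-first-rough-growth-bis}, $w_\varepsilon\to-\infty$ as $f\to\infty$, so $w_\varepsilon$ attains a global maximum on $M$; enlarging $K$ once more to contain the compact set of Lemma \ref{lemma-sub-sol-barrier} outside of which $\Delta_{\tilde{g},X}e^{\delta f}\leq 0$, the strict subsolution inequality $\Delta_{\tilde{g},X}w_\varepsilon>0$ persists on $M\setminus K$. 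The maximum principle then forces the maximum of $w_\varepsilon$ into $K$, and letting $\varepsilon\to 0$ yields $\sup_M v_A\leq\max_K\bar{u}$.

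The final step uses Claim \ref{claim-first-rough-growth-bis} once more to bound $\max_K\bar{u}\leq C(K)\|F\|_{C^0_{X,\,\beta}(M)}$, and since $\tilde{\rho}^{-\beta/2}\geq 1$ on $M$, this additive constant is absorbed into the principal term by a further enlargement of $A$. I expect the only delicate point in the plan to be the uniform bookkeeping of constants required to confirm that $A$ depends only on $\tilde{\omega}$ and $n$, as claimed; the zero-mean condition on $\bar{u}$ itself enters only through the continuity of $\pi_0$ established in Claim \ref{fuckme}.
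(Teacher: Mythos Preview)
Your proof is correct and follows essentially the same approach as the paper: the paper's own proof is just a two-sentence sketch that points back to Claim \ref{claim-sec-rough-growth}, using the barrier $Af^{-\beta/2}+\varepsilon e^{\delta f}$ with $\delta\in(1/2,1)$ and invoking Lemma \ref{lemma-sub-sol-barrier} twice together with Claim \ref{claim-first-rough-growth-bis}, which is exactly what you do in detail. The only cosmetic difference is that the paper works with $f^{-\beta/2}$ rather than $\tilde{\rho}^{-\beta/2}$, but as you note these agree up to bounded multiplicative factors at infinity via Lemma \ref{lemma-barrier-fct}.
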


 \begin{proof}[Proof of Claim \ref{claim-sec-rough-growth-bis}]
Since the proof follows the same lines as that of Claim \ref{claim-sec-rough-growth}, we provide only a sketch.
It suffices to use a barrier function of the form $Af^{-\frac{\beta}{2}}+\varepsilon e^{\delta f}$ outside a compact set for $\delta\in\left(\frac{1}{2},\,1\right)$ fixed and where $\varepsilon>0$ and $A>0$ are arbitrary. This one can do by twice applying Lemma \ref{lemma-sub-sol-barrier}. Moreover, Claim \ref{claim-first-rough-growth-bis} ensures that the function $\overline{u}-Af^{-\frac{\beta}{2}}-\varepsilon e^{\delta f}$ is a proper function bounded from above. For this, we crucially rely on the fact that $\beta<0$.
\end{proof}

\begin{claim}\label{claim-first-step-u-overline}
The function $u:=\overline{u}+\pi_0(F)$ lies in $\mathcal{C}^{2,2\alpha}_{X,\,\beta}(M)$ and there exists a positive uniform constant $C>0$ such that
\begin{equation*}
\|u\|_{C^{2,2\alpha}_{X,\,\beta}(M)}\leq C\|F\|_{\mathcal{C}^{2,2\alpha}_{X,\,\beta}(M)}.
\end{equation*}
\end{claim}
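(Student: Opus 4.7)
The plan is as follows. First, I would dispense with the constant $\pi_0(F)$: since $\beta \leq 0$ and the weighted measure $e^{-\tilde{f}}\tilde{\omega}^n$ has finite moments against any polynomial weight $\tilde{\rho}^{-\beta/2}$ (thanks to the Gaussian decay of $e^{-\tilde{f}}$), one has $|\pi_0(F)| \leq C\|F\|_{C^0_{X,\beta}(M)}$; moreover, a constant trivially satisfies all defining conditions of $\mathcal{C}^{2,2\alpha}_{X,\beta}(M)$ as soon as $\beta \leq 0$ and $\tilde{\rho} \geq 1$. Hence it suffices to establish $\overline{u} \in \mathcal{C}^{2,2\alpha}_{X,\beta}(M)$ together with the corresponding bound.

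The main step is a parabolic scaling argument analogous to Claim \ref{claim-a-priori-rough-bd-hih-der}. Starting from the pointwise bound $|\overline{u}(x)| \leq C\tilde{\rho}(x)^{-\beta/2}\|F\|_{C^0_{X,\beta}(M)}$ supplied by Claim \ref{claim-sec-rough-growth-bis}, I would apply standard interior parabolic Schauder estimates to the equation
\begin{equation*}
\partial_\tau \widetilde{\overline{u}} = \tfrac{1}{2}\Delta_{\tilde{g}(\tau)}\widetilde{\overline{u}} + (2\tau)^{-1}\widetilde{(F-\pi_0(F))}
\end{equation*}
on the scaled parabolic cylinder $P_{r_{x_0}}(x_0,\tau_0)$ with $r_{x_0}^2 := \iota_0^2 \tilde{\rho}(x_0)$, valid for $\tilde{\rho}(x_0)$ sufficiently large so that $r_{x_0} \geq 2\iota_0$. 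Lemma \ref{lemma-basic-equiv-ass}(ii) ensures that the coefficients of $\Delta_{\tilde{g}(\tau)}$ have uniformly bounded covariant derivatives when measured at the scale $r_{x_0}$. Using the pointwise bound on $\overline{u}$ and the fact that $F-\pi_0(F)$ lies in $\mathcal{C}^{2,2\alpha}_{X,\beta}(M)$ (so that $|\widetilde{(F-\pi_0(F))}| \lesssim \tilde{\rho}(x_0)^{-\beta/2-1}$ on the cylinder), the scaled estimate yields
\begin{equation*}
\sum_{i+2j\leq 2}r_{x_0}^{i+2j}\|\widetilde{\nabla^{\tilde{g},i}\partial_\tau^j\overline{u}}\|_{C^0(P_{r_{x_0}/2}(x_0))} + \sum_{i+2j=2}r_{x_0}^{2+2\alpha}[\widetilde{\nabla^{\tilde{g},i}\partial_\tau^j\overline{u}}]_{C^{0,2\alpha}(P_{r_{x_0}/2}(x_0))} \leq C\tilde{\rho}(x_0)^{-\beta/2}\|F\|_{\mathcal{C}^{2,2\alpha}_{X,\beta}(M)}.
\end{equation*}
Restricting this bound to the fixed subcylinder $P_{\iota_0}(x_0) \subset P_{r_{x_0}/2}(x_0)$, using that $\tilde{\rho}(y)$ is comparable to $\tilde{\rho}(x_0)$ there, and invoking the identity $\partial_\tau\widetilde{\overline{u}}|_{\tau=-1} = \tfrac{1}{2}X\cdot\overline{u}$ together with its analogues for Hölder seminorms, yields precisely the four required ingredients: $\tilde{\rho}^{j/2}(\nabla^{\tilde{g}})^j\overline{u}\in C^{0,2\alpha}_{X,\beta}(M)$ for $j=0,1,2$ and $X\cdot\overline{u}\in C^{0,2\alpha}_{X,\beta}(M)$. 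The complementary region where $\tilde{\rho}$ is bounded is handled by standard interior elliptic Schauder estimates applied directly to $\Delta_{\tilde{g},X}\overline{u} = F - \pi_0(F)$.

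The main obstacle is conceptual rather than technical. The function spaces here use a fixed-scale cylinder $P_{\iota_0}$, so a naive one-scale parabolic Schauder estimate applied on $P_{\iota_0}(x_0)$ would only give $|(\nabla^{\tilde{g}})^j\overline{u}|\lesssim \tilde{\rho}^{-\beta/2}\|F\|$, which is too weak by precisely the factor $\tilde{\rho}^{-j/2}$ required to absorb the weight $\tilde{\rho}^{j/2}$ in the definition of $\mathcal{C}^{2,2\alpha}_{X,\beta}(M)$. The resolution is to exploit the natural asymptotic scale $r_{x_0}\sim \tilde{\rho}(x_0)^{1/2}$ on which the parabolic equation is uniformly non-degenerate with bounded coefficients; each power of $r_{x_0}$ in the scaled Schauder estimate supplies exactly one power of $\tilde{\rho}(x_0)^{-1/2}$, so that the estimate transfers, after restriction to $P_{\iota_0}(x_0)$, to the sharp weighted decay prescribed by the target space.
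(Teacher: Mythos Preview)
Your proposal aims for the full $\mathcal{C}^{2,2\alpha}_{X,\beta}(M)$ membership (with the $\tilde{\rho}^{j/2}$-weighted derivative bounds) suggested by the claim's first sentence. The paper's proof, however, establishes only the displayed inequality $\|u\|_{C^{2,2\alpha}_{X,\beta}(M)}\leq C\|F\|_{\mathcal{C}^{2,2\alpha}_{X,\beta}(M)}$, where the left-hand norm is the fixed-scale norm of Section~\ref{opennesss} (cylinders $P_{\iota_0}(x,\tau_0)$, no extra $\tilde{\rho}^{j/2}$ weights on derivatives); the weighted bounds are deferred to Claim~\ref{claim-second-step-u-overline}. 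For the displayed bound the paper simply applies fixed-scale parabolic Schauder on $P_{\iota_0}(x)$, multiplies by $\tilde{\rho}(x)^{\beta/2}$, takes the supremum over $x$, and invokes Claim~\ref{claim-sec-rough-growth-bis} for the $C^0$ term. Your ``main obstacle'' is therefore not an obstacle for what this claim's proof actually establishes.

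More seriously, your scaled approach for the weighted bounds has a gap. You assert that $|\widetilde{(F-\pi_0(F))}|\leq C\tilde{\rho}(x_0)^{-\beta/2-1}$ on $P_{r_{x_0}}(x_0)$, but $F\in\mathcal{C}^{2,2\alpha}_{X,\beta}(M)$ only gives $|F|\leq C\tilde{\rho}^{-\beta/2}$; there is no extra factor of $\tilde{\rho}^{-1}$. At late times $\tau\approx\tau_0$ the source $(2\tau)^{-1}\widetilde{F}$ is of size $\tilde{\rho}(x_0)^{-\beta/2}$, so the scaled Schauder input $r_{x_0}^{2}\sup_{P_{r_{x_0}}}|(2\tau)^{-1}\widetilde{F}|$ is of order $\tilde{\rho}(x_0)^{1-\beta/2}\|F\|$ and dominates $\|\widetilde{\overline{u}}\|_{C^0}\leq C\tilde{\rho}(x_0)^{-\beta/2}\|F\|$; the scaled estimate then delivers no improvement over fixed-scale. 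The analogy with Claim~\ref{claim-a-priori-rough-bd-hih-der} breaks down precisely because there the data lies in $C^{2k,2\alpha}_{X,2}$, two weights better than the solution in $C^{2k+2,2\alpha}_{X,0}$, whereas here $u$ and $F$ carry the \emph{same} weight $\beta$. The paper obtains the $\tilde{\rho}^{j/2}$ gain in Claim~\ref{claim-second-step-u-overline} not by scaling but by computing the commutator $[\Delta_{\tilde{g},X},\tilde{\rho}^{j/2}\nabla^{\tilde{g},j}]$ and applying fixed-scale Schauder to the resulting equation for $\tilde{\rho}^{j/2}\nabla^{\tilde{g},j}u$.
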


\begin{proof}[Proof of Claim \ref{claim-first-step-u-overline}]
Observe that $\tilde{u}$ satisfies the parabolic equation
\begin{equation*}\label{self-sim-eqn}
\partial_{\tau}\tilde{u}-\frac{1}{2}\Delta_{\tilde{g}(\tau)}\tilde{u}=-\frac{1}{2\tau}\left(\widetilde{F}-\pi_0(F)\right).
\end{equation*}
We apply standard interior parabolic Schauder estimates as stated for instance in \cite[Theorem $8.12.1$]{Kry-Boo} to a parabolic ball $P_{\iota_0}(x)$. This yields a positive constant 
$C>0$ such that for all $x\in M$,
\begin{equation*}
\|\tilde{u}\|_{C^{2,2\alpha}(P_{\iota_0/2}(x))}\leq C\left(\|-\widetilde{F}+\pi_0(F)\|_{C^{0,2\alpha}(P_{\iota_0}(x))}+\|\widetilde{u}\|_{C^{0}(P_{\iota_0}(x))}\right).
\end{equation*}
Multiplying across by $\tilde{\rho}(x)^{\frac{\beta}{2}}$ and considering the supremum over $x\in M$ then leads to the bound
\begin{equation*}
\begin{split}
\|\tilde{u}\|_{C^{2,2\alpha}_{X,\,\beta}(M)}&\leq C\left(\|-\widetilde{F}+\pi_0(F)\|_{C^{0,2\alpha}_{X,\,\beta}(M)}+\|\widetilde{u}\|_{C^{0}_{X,\,\beta}(M)}\right)\\
&\leq C\|\widetilde{F}\|_{C^{0,\,2\alpha}_{X,\,\beta}(M)}.
\end{split}
\end{equation*}
Here we have used Claim \ref{claim-sec-rough-growth-bis} in the last line. The term $\pi_0(F)$ has been handled in the same way as $\pi_0(u)$ was in the proof
of Claim \ref{fuckme}. Recalling the definition of the norm on $\mathcal{C}^{2,2\alpha}_{X,\,\beta}(M)$, we reach the desired result.
\end{proof}

\begin{claim}\label{claim-second-step-u-overline}
    There exists a positive uniform constant $C$ such that
\begin{equation*}
\sum_{j\,=\,1}^2\|\tilde{\rho}^{\frac{j}{2}}\nabla^{\tilde{g},\,j}u\|_{C^{2,2\alpha}_{X,\,\beta}(M)}+\|X\cdot u\|_{C^{2,2\alpha}_{X,\,\beta}(M)}\leq C\|F\|_{\mathcal{C}^{2,2\alpha}_{X,\,\beta}(M)}.
\end{equation*}
\end{claim}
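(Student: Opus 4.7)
The plan is to upgrade the argument behind Claim \ref{claim-first-step-u-overline} by applying interior parabolic Schauder to the same self-similar equation
\begin{equation*}
\partial_\tau \tilde u - \tfrac{1}{2}\Delta_{\tilde g(\tau)}\tilde u = -\tfrac{1}{2\tau}(\tilde F - \pi_0(F))
\end{equation*}
satisfied by $\tilde u := (\varphi_\tau^X)^* u$, but at the $C^{4,\,2\alpha}$-level rather than the $C^{2,\,2\alpha}$-level. This is legitimate because $\tilde\omega = \omega + i\partial\bar\partial\Psi$ with $\Psi \in \mathcal{M}^{\infty}_{X}(M)$, together with the asymptotics of $\omega$ from Proposition \ref{mainprop}(i), ensure that $\tilde g$ has bounded geometry to all orders at the fixed scale $\iota_0$, so that the coefficients of the parabolic operator are uniformly smooth on every $P_{\iota_0}(x)$.

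The next step would be to invoke \cite[Theorem $8.12.1$]{Kry-Boo} at the $C^{4,\,2\alpha}$-level to produce, uniformly in $x\in M$, an estimate of the form
\begin{equation*}
\|\tilde u\|_{C^{4,\,2\alpha}(P_{\iota_0/2}(x))} \leq C\left(\|\tilde F - \pi_0(F)\|_{C^{2,\,2\alpha}(P_{\iota_0}(x))} + \|\tilde u\|_{C^{0}(P_{\iota_0}(x))}\right).
\end{equation*}
The parabolic $C^{4,\,2\alpha}$-norm on the left controls the parabolic $C^{2,\,2\alpha}$-norms of $\nabla^{\tilde g}\tilde u$, of $\nabla^{\tilde g,\,2}\tilde u$, and of $\partial_\tau \tilde u$, and the self-similar identity $\widetilde{X\cdot u} = (-2\tau)\partial_\tau \tilde u$ converts the last of these into a parabolic $C^{2,\,2\alpha}$-control of $\widetilde{X\cdot u}$ on the same set. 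Multiplying across by $\tilde\rho(x)^{(\beta+j)/2}$ for $j=1,\,2$ (respectively by $\tilde\rho(x)^{\beta/2}$ for $X\cdot u$) and taking the supremum in $x$, using that $\tilde\rho$ varies only by a bounded multiplicative factor on $P_{\iota_0}(x)$ since $\iota_0$ is fixed and $|\nabla^{\tilde g}\tilde f|^2_{\tilde g} = O(\tilde f)$, would then yield the desired global weighted bounds. The $\|\tilde u\|_{C^{0}(P_{\iota_0}(x))}$-term would be absorbed by Claims \ref{claim-sec-rough-growth-bis} and \ref{claim-first-step-u-overline}.

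The main obstacle is that the norm on $\mathcal{C}^{2,\,2\alpha}_{X,\,\beta}(M)$ does not a priori control the parabolic $C^{2,\,2\alpha}$-norm of $\tilde F$ on $P_{\iota_0}(x)$: it only furnishes the separate pieces $\tilde\rho^{j/2}\nabla^{\tilde g,\,j}F \in C^{0,\,2\alpha}_{X,\,\beta}(M)$ for $j=0,\,1,\,2$ together with $X\cdot F \in C^{0,\,2\alpha}_{X,\,\beta}(M)$. To overcome this I would exploit the fact that at the fixed local scale $\iota_0$ the weights $\tilde\rho^{j/2}$ are essentially constant on $P_{\iota_0}(x)$, so that the three spatial-derivative pieces assemble into the spatial portion of the parabolic $C^{2,\,2\alpha}$-norm of $\tilde F$ weighted by $\tilde\rho(x)^{\beta/2}$, while the self-similar identity $\partial_\tau \tilde F = (-2\tau)^{-1}\widetilde{X\cdot F}$ converts the $X\cdot F$-bound into the missing $\partial_\tau$-portion. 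Combining these pieces into the estimate $\|\tilde F - \pi_0(F)\|_{C^{2,\,2\alpha}(P_{\iota_0}(x))} \leq C\,\tilde\rho(x)^{-\beta/2}\|F\|_{\mathcal{C}^{2,\,2\alpha}_{X,\,\beta}(M)}$ and feeding this into the Schauder bound above should then conclude the proof of the claim.
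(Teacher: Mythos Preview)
Your approach has a genuine gap. Applying $C^{4,\,2\alpha}$ interior parabolic Schauder at the \emph{fixed} scale $\iota_0$ only yields
\[
\tilde\rho(x)^{\frac{\beta}{2}}\,\|\tilde u\|_{C^{4,2\alpha}(P_{\iota_0/2}(x))}\leq C\|F\|_{\mathcal{C}^{2,2\alpha}_{X,\beta}(M)},
\]
i.e.\ every derivative of $\tilde u$ up to order four carries the \emph{same} weight $\tilde\rho^{-\beta/2}$. When you then multiply by $\tilde\rho(x)^{(\beta+j)/2}$ as you propose, the zeroth-order term $\|\tilde u\|_{C^0(P_{\iota_0}(x))}\sim\tilde\rho(x)^{-\beta/2}$ from Claim \ref{claim-sec-rough-growth-bis} produces a factor $\tilde\rho(x)^{j/2}$ which blows up for $j=1,2$; the same happens with the $\|\tilde F\|_{C^{2,2\alpha}}$-term. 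In other words, unlike the rescaled balls $P_{r_x}(x)$ of Section \ref{function-spaces-subsection}, the fixed-scale balls used here cannot manufacture the extra $\tilde\rho^{-j/2}$ decay for $\nabla^{\tilde g,j}u$ that the norm on $\mathcal{D}^{4,2\alpha}_{X,\beta}(M)$ demands.

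The paper circumvents this by \emph{not} differentiating the Schauder estimate but rather differentiating the equation: it computes the commutator $[\Delta_{\tilde g,X},\,\tilde\rho^{j/2}\nabla^{\tilde g,j}]$ and derives a drift-Laplacian equation for the tensor $V_j:=\tilde\rho^{j/2}\nabla^{\tilde g,j}u$ itself, whose right-hand side is $\tilde\rho^{j/2}\nabla^{\tilde g,j}F$ plus lower-order terms in $V_k$, $k\leq j$, with bounded (curvature- and $\mathcal{L}_X\tilde g$-type) coefficients and an extra harmless first-order drift $j\nabla_{\nabla\log\tilde\rho}$. Since $\tilde\rho^{j/2}\nabla^{\tilde g,j}F\in C^{0,2\alpha}_{X,\beta}(M)$ by the very definition of $\mathcal{C}^{2,2\alpha}_{X,\beta}(M)$, the barrier-plus-Schauder argument of Claim \ref{claim-first-step-u-overline} applied directly to $V_j$ (inductively in $j$) gives $V_j\in C^{2,2\alpha}_{X,\beta}(M)$. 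The $X\cdot u$ bound is handled the same way after commuting $X$ through $\Delta_{\tilde g,X}$.
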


\begin{proof}[Proof of Claim \ref{claim-second-step-u-overline}]
We compute the following commutator for a given function $v\in C^4_{\operatorname{loc}}(M)$:
\begin{equation*}
\begin{split}
\left[\Delta_{\tilde{g},\,X},\tilde{\rho}^{\frac{j}{2}}\nabla^{\tilde{g},\,j}\right]v&=\left(\Delta_{\tilde{g},\,X}\tilde{\rho}^{\frac{j}{2}}\right)\nabla^{\tilde{g},\,j}v+j\nabla^{\tilde{g}}_{\nabla^{\tilde{g}}\log\tilde{\rho}}\left(\tilde{\rho}^{\frac{j}{2}}\nabla^{\tilde{g},\,j}v\right)-\frac{j^2}{2}|\nabla^{\tilde{g}}\log\tilde{\rho}|^2_{\tilde{g}}\left(\tilde{\rho}^{\frac{j}{2}}\nabla^{\tilde{g},\,j}v\right)\\
&\quad+\tilde{\rho}^{\frac{j}{2}}\left[\Delta_{\tilde{g},\,X},\nabla^{\tilde{g},\,j}\right]v.
\end{split}
\end{equation*}
Thanks to commutation formulae for the rough Laplacian acting on tensors, one has that
\begin{equation*}
\begin{split}
\left[\Delta_{\tilde{g}},\nabla^{\tilde{g},\,j}\right]v&=\sum_{k\,=\,0}^j\nabla^{\tilde{g},\,j-k}\Rm(\tilde{g})\ast_{\tilde{g}} \nabla^{\tilde{g},\,k}v,\\
\left[\nabla^{\tilde{g}}_{X},\nabla^{\tilde{g},\,j}\right]v&=-j\nabla^{\tilde{g},\,j}v+\sum_{k\,=\,0}^j\nabla^{\tilde{g},\,j-k}\Rm(\tilde{g})\ast_{\tilde{g}} \nabla^{\tilde{g},\,k}v+\sum_{k\,=\,0}^j\nabla^{\tilde{g},\,j-k}\left(\mathcal{L}_{\frac{X}{2}}\tilde{g}-\tilde{g}\right)\ast_{\tilde{g}} \nabla^{\tilde{g},\,k}v.
\end{split}
\end{equation*}
The last identity can be proved by induction on $j\geq 1$ after invoking Proposition \ref{mainprop}. 
In particular, the tensors $\tilde{\rho}^{\frac{j}{2}}\nabla^{\tilde{g},\,j}u$, $j=1,2$, satisfy
\begin{equation*}
\begin{split}
\Delta_{\tilde{g},\,X}\left(\tilde{\rho}^{\frac{j}{2}}\nabla^{\tilde{g},\,j}u\right)&=\tilde{\rho}^{\frac{j}{2}}\nabla^{\tilde{g},\,j}F+\left(-\frac{j^2}{2}|\nabla^{\tilde{g}}\log\tilde{\rho}|^2_{\tilde{g}}+O(\tilde{\rho}^{-1})+\left(\mathcal{L}_{\frac{X}{2}}\tilde{g}-\tilde{g}\right)
\ast_{\tilde{g}}+\Rm(\tilde{g})\ast_{\tilde{g}}\right)\tilde{\rho}^{\frac{j}{2}}\nabla^{\tilde{g},\,j}u\\
&\quad+j\nabla^{\tilde{g}}_{\nabla^{\tilde{g}}\log\tilde{\rho}}\left(\tilde{\rho}^{\frac{j}{2}}\nabla^{\tilde{g},\,j}u\right)
+\sum_{k\,=\,0}^{j-1}\tilde{\rho}^{\frac{j-k}{2}}\nabla^{\tilde{g},\,j-k}\Rm(\tilde{g})\ast_{\tilde{g}} \left(\tilde{\rho}^{\frac{k}{2}}\nabla^{\tilde{g},\,k}u\right)\\
&\quad+\sum_{k\,=\,0}^{j-1}\tilde{\rho}^{\frac{j-k}{2}}\nabla^{\tilde{g},\,j-k}\left(\mathcal{L}_{\frac{X}{2}}\tilde{g}-\tilde{g}\right)\ast_{\tilde{g}} \left(\tilde{\rho}^{\frac{k}{2}}\nabla^{\tilde{g},\,k}v\right),\\
\end{split}
\end{equation*}
where the term $O(\tilde{\rho}^{-1})$ holds with $g$-derivatives thanks to Lemma \ref{lemma-sub-sol-barrier} applied to $\delta:=-\frac{j}{2}$.

By arguing as in the proof of Claim \ref{claim-first-step-u-overline}, one arrives at the desired estimate on the $C^{2,2\alpha}_{X,\,\beta}$-norm of $\tilde{\rho}^{\frac{j}{2}}\nabla^{\tilde{g},\,j}u$ thanks to Proposition \ref{mainprop} and Theorem \ref{mainthm2}, both of which we use to handle the lower order terms involving $\Rm(\tilde{g})$ and $\mathcal{L}_{\frac{X}{2}}\tilde{g}-\tilde{g}$. In order to obtain the estimate on the $C^{2,2\alpha}_{X,\,\beta}$-norm of $X\cdot u$, one first derives the equation satisfied by $\Delta_{\tilde{g},\,X}(X\cdot u)$ and then 
concludes by invoking standard parabolic estimates as in the proof of Claim \ref{claim-first-step-u-overline}.
\end{proof}

Combining Claims \ref{claim-first-step-u-overline} and \ref{claim-second-step-u-overline} shows that the map $\Delta_{\tilde{g},\,X}+\pi_0:\mathcal{D}^{4,\,2\alpha}_{X,\,\beta}\rightarrow  \mathcal{C}^{2,\,2\alpha}_{X,\,\beta}(M)$ is surjective, as asserted.
\end{proof}

\subsection{Small perturbations at $t=0$}
In this section we show, using the implicit function theorem, that the
invertibility of the modified drift Laplacian given by Proposition \ref{iso-sch-Laplacian-pol-bis}
allows for small perturbations in polynomially weighted function spaces
of solutions to the complex Monge-Amp\`ere equation at $t=0$.
This demonstrates openness at $t=0$, and in turn, the existence of solutions to \eqref{bak-eme-cond} for small values of $t>0$, thereby proving Theorem
\ref{mainthm3}.

\begin{theorem}
For $\alpha\in\left(0,\,\frac{1}{2}\right)$ and $\beta\in(-2,0)$, the map
\begin{equation*}
\Phi:(t,\psi)\in [0,\,1]\times\mathcal{M}^{4,\,2\alpha}_{X,\,\beta}(M)
\rightarrow  MA(t,\psi)+\frac{1}{2}\pi_0(\psi)\in\mathcal{C}^{2,\,2\alpha}_{X,\,\beta}(M)
\end{equation*}
is a well-defined $C^1$-map. In particular, there exists $t_0\in (0,\,1)$ and a neighborhood $U_0$ of $0\in \mathcal{D}^{4,\,2\alpha}_{X,\,\beta}(M)$ such that for all $t\in (0,t_0)$, the equation $\Phi(t,\psi_{t})=0$ has a unique solution $\psi_{t}$ in $U_0$.
\end{theorem}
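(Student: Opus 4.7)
The plan is to apply the implicit function theorem in Banach spaces to $\Phi$ at the point $(0,0)$. We therefore need to establish three things: (a) $\Phi(0,0)=0$; (b) $\Phi$ is a well-defined $C^{1}$-map from $[0,1]\times\mathcal{M}^{4,2\alpha}_{X,\beta}(M)$ into $\mathcal{C}^{2,2\alpha}_{X,\beta}(M)$; and (c) the partial linearisation $D_{\psi}\Phi(0,0):\mathcal{D}^{4,2\alpha}_{X,\beta}(M)\to\mathcal{C}^{2,2\alpha}_{X,\beta}(M)$ is an isomorphism. Item (a) is immediate from the definition of $MA$ and from $\pi_{0}(0)=0$. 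For (b), that $\mathcal{M}^{4,2\alpha}_{X,\beta}(M)$ is an open convex subset of $\mathcal{D}^{4,2\alpha}_{X,\beta}(M)$ is exactly Lemma \ref{nice-cosy-lemma} (which requires $\beta>-2$).

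To see that $\Phi$ maps into $\mathcal{C}^{2,2\alpha}_{X,\beta}(M)$, I plan to rely on the Taylor expansion \eqref{equ:taylor-exp-bis} to decompose
$$MA(t,\psi)=\Delta_{\tilde{\omega}}\psi-\tfrac{X}{2}\cdot\psi+t\psi+t\varphi_{0}-\int_{0}^{1}(1-\tau)\bigl|\partial\bar{\partial}\psi\bigr|^{2}_{\tilde{g}_{\tau\psi}}\,d\tau.$$
The three linear terms in $\psi$ lie in the target space by arguments parallel to those in Claim \ref{fuckme}; the affine term $t\varphi_{0}$ lies in $\mathcal{C}^{2,2\alpha}_{X,\beta}(M)$ because, by Theorem \ref{mainthm2}, $\varphi_{0}=c\log f+\vartheta$ with $\vartheta$ bounded with $g_{0}$-derivatives, and logarithmic growth is subsumed by $\tilde{\rho}^{-\beta/2}$ for any $\beta<0$. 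The decisive nonlinear term has decay rate $2\beta+4$ in $\tilde{\rho}$, which is no worse than the required rate $\beta$ precisely because $\beta>-4$ (in particular for $\beta>-2$); the $X$-derivative component built into the definition of $\mathcal{C}^{2,2\alpha}_{X,\beta}(M)$ is controlled by the same token, since differentiating along $X$ costs no extra weight in $\tilde{\rho}$. Smoothness in $(t,\psi)$ follows since the dependence on $t$ is linear and the nonlinearity in $\psi$ involves only $\partial\bar{\partial}\psi$, the metric $\tilde{g}_{\tau\psi}$, and the logarithm of a ratio of volume forms bounded away from zero on the open set $\mathcal{M}^{4,2\alpha}_{X,\beta}(M)$; moreover, $\pi_{0}$ is a bounded linear functional on $\mathcal{D}^{4,2\alpha}_{X,\beta}(M)$, as essentially shown at the start of the proof of Claim \ref{fuckme}.

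For (c), differentiating \eqref{equ:taylor-exp-bis} in $\psi$ at the point $(0,0)$ gives $D_{\psi}\Phi(0,0)(u)=\Delta_{\tilde{\omega}}u-\tfrac{X}{2}\cdot u+\tfrac{1}{2}\pi_{0}(u)=\Delta_{\tilde{g},X}u+\tfrac{1}{2}\pi_{0}(u)$, since the contribution of $t\psi$ vanishes at $t=0$. Proposition \ref{iso-sch-Laplacian-pol-bis} then asserts that this operator is an isomorphism of Banach spaces; the coefficient $\tfrac{1}{2}$ in front of $\pi_{0}$ is immaterial, as noted in the remark following that proposition. The implicit function theorem in Banach spaces now yields $t_{0}\in(0,1)$ and a neighborhood $U_{0}$ of $0\in\mathcal{D}^{4,2\alpha}_{X,\beta}(M)$ together with a $C^{1}$ curve $t\mapsto\psi_{t}\in U_{0}$ uniquely solving $\Phi(t,\psi_{t})=0$. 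Extracting a genuine solution of $(\ast_{t})$ is then a one-line adjustment: for $t>0$ the constant shift $\psi_{t}\mapsto\psi_{t}+\tfrac{1}{2t}\pi_{0}(\psi_{t})$ leaves $\partial\bar{\partial}\psi_{t}$ and $X\cdot\psi_{t}$ unchanged and absorbs the $\tfrac{1}{2}\pi_{0}$ correction, which yields the solution to \eqref{ast-t-bis} claimed in Theorem \ref{mainthm3}.

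The principal technical obstacle is the verification that the nonlinear term $\int_{0}^{1}(1-\tau)\bigl|\partial\bar{\partial}\psi\bigr|^{2}_{\tilde{g}_{\tau\psi}}\,d\tau$ has the correct decay and H\"older regularity, together with its $C^{1}$-dependence on $\psi$, when measured in the weighted target norm that includes both spatial and $X$-directional derivatives. The weights of $\mathcal{D}^{4,2\alpha}_{X,\beta}(M)$ are precisely calibrated so that the quadratic gain in decay of $|\partial\bar{\partial}\psi|^{2}$ lands back in $\mathcal{C}^{2,2\alpha}_{X,\beta}(M)$ for $\beta\in(-2,0)$; this is precisely the range that forces the choice of $\beta$ in the statement.
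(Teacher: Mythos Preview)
Your proposal is correct and follows essentially the same route as the paper: decompose $MA(t,\psi)$ via the Taylor expansion \eqref{equ:taylor-exp-bis}, verify that each term (including $\varphi_0$ and the quadratic nonlinearity, whose weight index $2\beta+4\geq\beta$ exactly when $\beta\geq-4$) lands in $\mathcal{C}^{2,2\alpha}_{X,\beta}(M)$, and then invoke Proposition \ref{iso-sch-Laplacian-pol-bis} for the linearisation at $(0,0)$. The paper verifies the $C^{1}$-property slightly more explicitly by computing $D_{(t_0,\psi_0)}\Phi$ and estimating differences directly, and your final remark on the constant shift $\psi_t\mapsto\psi_t+\tfrac{1}{2t}\pi_0(\psi_t)$ to recover \eqref{ast-t-bis} is a useful addendum not spelled out in the paper's proof of this particular theorem.
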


\begin{proof}
We first show that the map $\Phi$ is well-defined provided that $\beta\in(-2,\,0)$. Lemma \ref{nice-cosy-lemma} already restricts $\beta >-2$ to ensure that $\mathcal{M}^{4,\,2\alpha}_{X,\,\beta}(M)$ is a convex open subspace of $\mathcal{D}^{4,\,2\alpha}_{X,\,\beta}(M)$.

By \eqref{equ:taylor-exp-bis} and Proposition \ref{iso-sch-Laplacian-pol-bis}, it suffices to prove the following claim.
\begin{claim}\label{claim-well-def-Phi}
If $\psi\in \mathcal{M}^{4,\,2\alpha}_{X,\,\beta}(M)$, then $$\psi,\qquad \varphi_0,\qquad \int_0^1(1-\tau)\arrowvert \partial\bar{\partial}\psi\arrowvert^2_{\tilde{g}_{\tau\psi}}\,d\tau\in \mathcal{C}^{2,\,2\alpha}_{X,\,\beta}(M).$$
\end{claim}

\begin{proof}[Proof of Claim \ref{claim-well-def-Phi}]
By definition of $ \mathcal{M}^{4,\,2\alpha}_{X,\,\beta}(M)$, it is clear that $\psi\in \mathcal{C}^{2,\,2\alpha}_{X,\,\beta}(M)$ 
for every value of $\beta$. Since $\varphi_0$ lies in $\mathcal{M}^{\infty}_{X}(M)$ (as defined in Section \ref{function-spaces-subsection}) thanks to Theorem \ref{mainthm2},
it is also clear that $\varphi_0\in \mathcal{C}^{2,\,2\alpha}_{X,\,\beta}(M)$ \textit{provided that} $\beta<0$ because of the logarithmic term appearing in the definition of the spaces $\mathcal{D}^{2k+2,\,2\alpha}_{X}(M)$ (again as defined in Section \ref{function-spaces-subsection}). The last term can be written schematically as
\begin{equation*}
\int_0^1\tilde{g}_{s\psi}^{-1}\ast\tilde{g}_{s\psi}^{-1}\ast \nabla^{\tilde{g},\,2}\psi\ast \nabla^{\tilde{g},\,2}\psi\,ds.
\end{equation*}
By definition of the function space $\psi$ lies in, the above expression grows at most like $\tilde{\rho}^{-\beta-2}$, which is $O(\tilde{\rho}^{-\frac{\beta}{2}})$ provided that $\beta\geq -4$, but $\beta>-2$ in any case. A similar reasoning applies to higher derivatives. This concludes the proof of the claim.
\end{proof}

To show that the map $\Phi$ is $C^1$, consider the differential of $\Phi$ at $(t_0,\psi_0)\in[0,\,1]\times \mathcal{M}^{4,\,2\alpha}_{X,\,\beta}(M)$ which is given by 
\begin{equation*}
D_{(t_0,\psi_0)}\Phi(s,\psi)=\Delta_{\tilde{\omega}_{\psi_0},\,X}\psi+\frac{1}{2}\pi_0(\psi)+s(\psi_0+\varphi_0)+t_0\psi,\quad (s,\psi)\in \R\times \mathcal{D}^{4,\,2\alpha}_{X,\,\beta}(M).
\end{equation*}
For $(t_i,\psi_i)\in[0,1]\times \mathcal{M}^{4,\,2\alpha}_{X,\,\beta}(M)$, $i=0,1$, and $(s,\psi)\in \R\times \mathcal{D}^{4,\,2\alpha}_{X,\,\beta}(M)$, we then see that
\begin{equation*}
\begin{split}
\|&\left(D_{(t_1,\psi_1)}\Phi-D_{(t_0,\psi_0)}\Phi\right)(s,\psi)\|_{\mathcal{C}^{2,\,2\alpha}_{X,\,\beta}(M)}=\left\|\left(\Delta_{\tilde{\omega}_{\psi_1}}-\Delta_{\tilde{\omega}_{\psi_0}}\right)\psi+s(\psi_1-\psi_0)+(t_1-t_0)\psi\right\|_{\mathcal{C}^{2,\,2\alpha}_{X,\,\beta}(M)}\\
&\leq \left\|\left(\tilde{g}_{\psi_1}^{-1}-\tilde{g}_{\psi_0}^{-1}\right)\ast i\partial\bar{\partial}\psi\right\|_{\mathcal{C}^{2,\,2\alpha}_{X,\,\beta}(M)}+|s|\|\psi_1-\psi_0\|_{\mathcal{C}^{2,\,2\alpha}_{X,\,\beta}(M)}+|t_1-t_0|\|\psi\|_{\mathcal{C}^{2,\,2\alpha}_{X,\,\beta}(M)}\\
&\leq C\left(\|\psi_1-\psi_0\|_{\mathcal{D}^{4,\,2\alpha}_{X,\,\beta}(M)}+|t_1-t_0|\right)\left(|s|+\|\psi\|_{\mathcal{D}^{4,\,2\alpha}_{X,\,\beta}(M)}\right).
\end{split}
\end{equation*}
This proves the desired regularity property on $\Phi$ by definition of the operator norm on continuous linear maps $\R\times\mathcal{D}^{4,\,2\alpha}_{X,\,\beta}(M)\mapsto\mathcal{C}^{2,\,2\alpha}_{X,\,\beta}(M)$. The implicit function theorem applied to $\Phi$ at $(0,0)\in [0,1]\times\mathcal{M}^{4,\,2\alpha}_{X,\,\beta}(M)$ now gives the desired result, since $D_{(0,0)}\Phi=\Delta_{\tilde{\omega},\,X}+\frac{1}{2}\pi_0$ is an isomorphism of Banach spaces thanks to Proposition \ref{iso-sch-Laplacian-pol-bis}.
\end{proof}

\newpage

\bibliographystyle{amsalpha}

\bibliography{ref2}

\end{document}